  \def\({}%
  \def\){}%
  \def\Gamma{}
\tikzset{
  my label/.style={font=\scriptsize,inner sep=0.6pt},
  a/.style={my label,above,node contents={\fontsize{4}{5}\selectfont{$ $}}},
  b/.style={my label,right,node contents={\fontsize{4}{4}\selectfont{$ $}}},
  a-1/.style={my label,above,node contents={\fontsize{4}{4}\selectfont{$ $}}},
  b-1/.style={my label,right,node contents={\fontsize{4}{4}\selectfont{$ $}}},
}
\newcommand\caley[6]{
  \ifthenelse{0<#1}{
    \pgfmathtruncatemacro\newlev{#1-1}
    \pgfmathtruncatemacro\len{#2}
    \draw[color=blue] (0,0) -- (\len pt,0) coordinate (O);
    \begin{scope}[shift={(O)}]
      \begin{scope}[rotate=90] \caley{\newlev}{\len/2}{#4}{#5}{#6}{#3} \end{scope}
      \begin{scope}[rotate=0]  \caley{\newlev}{\len/2}{#3}{#4}{#5}{#6} \end{scope}
      \begin{scope}[rotate=-90]\caley{\newlev}{\len/2}{#6}{#3}{#4}{#5} \end{scope}
    \end{scope}
  }{\fill[blue] circle(0.5pt);}
}
\def\acts{\curvearrowright}
\newcommand{\chapterauthor}[1]{%
    {\parindent0pt\vspace*{-25pt}%
    \linespread{1.1}\large\scshape#1%
    \par\nobreak\vspace*{35pt}}
    \@afterheading%
}
\theoremstyle{plain}
\newtheorem{thm}{Teorema}[chapter] 
\newtheorem{conjecture}[thm]{Conjetura}
\newtheorem{probl}[thm]{Problema}
\newtheorem*{pergunta}{Pergunta}
\theoremstyle{definition}
\newtheorem{definition}[thm]{Definição} 
\newtheorem{example}[thm]{Exemplo}
\newtheorem{examples}[thm]{Exemplos}
\newtheorem{corollary}[thm]{Corolário}
\newtheorem{proposition}[thm]{Proposição}
\newtheorem{lemma}[thm]{Lema}
\newtheorem{remark}[thm]{Observação}
\newtheorem{exercise}[thm]{Exercício}
\newtheorem{claim}{Afirmação}
\newcommand{\Z}{\mathbb{Z}}
\newcommand{\Q}{\mathbb{Q}}
\newcommand{\N}{\mathbb{N}}
\newcommand{\R}{\mathbb{R}}
\newcommand{\s}{\mathbb{S}}
\newcommand{\C}{\mathbb{C}}
\newcommand{\Fc}{\mathcal{F}}
\newcommand{\hip}{\mathbb{H}^2}
\newcommand{\hipn}{\mathbb{H}^n}
\newcommand{\SL}{\mathrm{SL}}
\newcommand{\PSL}{\mathrm{PSL}}
\newcommand{\GL}{\mathrm{GL}}
\newcommand{\CAT}{\mathrm{CAT}}
\newcommand{\dist}{\mathrm{dist}}
\renewcommand{\sin}{\mathrm{sen}}
\renewcommand{\sinh}{\mathrm{senh}}
\newcommand*{\QED}{\hfill\ensuremath{\square}}
\renewcommand{\ker}{\mathrm{Ker}}
\newcommand{\im}{\mathrm{Im}}
\newcommand{\re}{\mathrm{Re}}
\newcommand{\id}{\mathrm{Id}}
\newcommand{\cay}{\mathrm{Cay}}
\newcommand{\QI}{\mathrm{QI}}
\newcommand{\Bij}{\mathrm{Bij}}
\newcommand{\Isom}{\mathrm{Isom}}
\newcommand{\Homeo}{\mathrm{Homeo}}
\newcommand{\qi}{\stackrel{\mathclap{\mathrm{QI}}}{\sim}}
\newcommand{\vi}{\stackrel{\mathclap{\mathrm{VI}}}{\sim}}
\definecolor{light-gray}{gray}{0.75}
\newcommand{\vol}{\mathrm{vol}}
\newcommand{\card}{\mathrm{card}}
\newcommand{\limdir}{\underrightarrow{\lim}\,}
\newcommand{\liminv}{\underleftarrow{\lim}\,}
\newcommand{\tr}{\mathrm{tr}}
\newcommand{\Hyp}{\mathrm{Hyp}}
\newcommand{\Sol}{\mathrm{Sol}}
\newcommand\rightAngle[4]{
  \pgfmathanglebetweenpoints{\pgfpointanchor{#2}{center}}{\pgfpointanchor{#1}{center}}
  \coordinate (tmpRA) at ($(#2)+(\pgfmathresult+45:#4)$);
  \draw[blue!80!black,thick] ($(#2)!(tmpRA)!(#1)$) -- (tmpRA) -- ($(#2)!(tmpRA)!(#3)$);
}
\begin{document}
\thispagestyle{empty}
\begin{center}
	\textcolor{white}{.}\\[6cm]
	\textrm{{\Huge Introdução à Teoria Geométrica de Grupos}}\\[5mm]
	\textit{{\large Mikhail Belolipetsky e Gisele Teixeira Paula}}\\[7.6cm]
	{\Large 2026}\\
\end{center}

\chapter*{}
\begin{center}
\textit{Para María e Roberto}
\end{center}

\tableofcontents




\chapter*{Prefácio}
\addcontentsline{toc}{chapter}{Prefácio}
Grupos são estruturas matemáticas abstratas usadas para capturar a simetria de objetos aritméticos e geométricos. A teoria geométrica de grupos trata-os  como objetos geométricos por si próprios. Esta ideia remonta ao trabalho de Cayley, Klein e Dehn no final do século XIX e início do século XX. O estudo das propriedades geométricas dos grupos continuou nos trabalhos de Efremovich, Milnor, Schwarz, Stallings, Bass, Serre e outros. Nas últimas décadas do século XX, a área foi fortemente influenciada e ampliada por Gromov. Desde então, a teoria geométrica de grupos tornou-se um assunto matemático bem definido.

O tema central da teoria geométrica de grupos diz respeito às conexões entre as propriedades algébricas de grupos finitamente gerados e as propriedades geométricas dos espaços métricos sobre os quais um grupo atua de maneira agradável. Em particular, um grupo finitamente gerado pode ser visto como tal espaço métrico em si, considerando seu grafo de Cayley dotado da métrica das palavras. Este é o ponto de partida para a teoria que estudamos.

O presente livro baseia-se nos cursos ministrados no IMPA pelo primeiro autor durante vários anos. Já existe extensa literatura inglesa sobre o assunto, da qual podemos citar os livros ``Topics in geometric group theory'' por P.~de la Harpe, ``How groups grow'' por A.~Mann, ``Geometric group theory: An Introduction'' por C.~L\"oh, e ``Geometric group theory'' por C.~Dru\c{t}u e M.~Kapovich. Nosso texto tem diversas interseções com essas fontes, mas a escolha do material e da apresentação costuma ser diferente. O livro deve ser acessível aos alunos de pós-graduação e pode ser utilizado como uma introdução ao assunto.

Passamos agora a revisar brevemente o conteúdo. Os Capítulos 1 a 4 consistem de conceitos básicos e preliminares de todo o material contido no livro. Grupos hiperbólicos são discutidos nos Capítulos 7 e 8, enquanto grupos amenáveis aparecem no Capítulo 11.  Leitores interessados em crescimento de grupos podem fazer uma leitura dos Capítulos 1 a 4, seguindo ao Capítulo 6 para estudar as ferramentas necessárias para a prova dos principais teoremas sobre crescimento, e passar diretamente aos Capítulos 9 e 10.

No Capítulo~\ref{cap1} lembramos os conceitos básicos de teoria de grupos, grupos livres e apresentações de grupos. No final do capítulo apresentamos alguns exemplos e problemas bem conhecidos que desempenharam papel importante no desenvolvimento da teoria combinatória e da teoria geométrica de grupos.

O Capítulo~\ref{cap2} trata de ações de grupos. O principal resultado aqui é o lema do pingue-pongue, também conhecido como lema de Tits. Aplicações desse lema  a subgrupos de grupos livres são consideradas na última seção.

No Capítulo~\ref{cap3} estudamos os grafos de Cayley e grafos de Schreier. Os grafos de Schreier podem ser considerados uma generalização dos grafos de Cayley. Eles são muito úteis para aplicações, mas um pouco menos conhecidos.

No Capítulo~\ref{cap4} encontramos pela primeira vez os principais conceitos da teoria geométrica de grupos. Começamos com as definições e as propriedades básicas das quasi-isometrias. Seguindo Gromov, damos duas definições diferentes de quasi-isometria exibindo suas características distintas e depois provamos sua equivalência. Na sequência, provamos o teorema de Milnor--Schwarz e consideramos alguns corolários. O capítulo termina com um levantamento sobre rigidez quasi-isométrica.

O próximo capítulo é sobre invariantes topológicos de quasi-isometrias. Definimos fins de espaços topológicos e estudamos fins de grupos finitamente gerados, a partir de seus grafos de Cayley.

Como preparação para o material a seguir, no Capítulo~\ref{cap6} definimos ultrafiltros, ultralimites e cones assintóticos. Na última seção consideramos a ação de um grupo finitamente gerado em seu cone assintótico.

O Capítulo~\ref{cap7} começa com uma breve recapitulação da geometria hiperbólica e depois avança para a noção de hiperbolicidade no sentido de Gromov e Rips. Apresentamos um estudo  detalhado da geometria de espaços $\delta$-hiperbólicos e suas fronteiras ideais.

No Capítulo~\ref{cap8} estudamos a hiperbolicidade de grupos segundo Gromov. Além disso, trabalhamos com relações entre suas propriedades isoperimétricas e algorítmicas e apresentamos a alternativa de Tits para esta classe de grupos.

No Capítulo~\ref{cap:crescimento} consideramos outro tópico central da teoria geométrica de grupos, que é o crescimento dos grupos. O estudo de crescimento de grupos está relacionado com o crescimento de volume em variedades Riemannianas, devido ao Teorema de Milnor-Schwarz. Neste capítulo, revisamos a conjectura de Milnor e provamos os teoremas de Wolf, Bass--Guivarch e Milnor. O capítulo termina com uma prova do teorema de Gromov sobre grupos de crescimento polinomial. Esta parte segue essencialmente a abordagem original de Gromov  \cite{Gromov81}, embora existam pelo menos outras duas provas mais recentes desse resultado, a de Kleiner \cite{kleiner2010new} e a de Shalom--Tao \cite{shalomtao2010finitary}. Optamos por apresentar a prova original de Gromov devido à sua relação natural com o desenvolvimento de vários tópicos em teoria geométrica de grupos.

O Capítulo~\ref{cap:crescimento-interm} trata da célebre construção devida a Grigorchuk dos grupos com crescimento intermediário.

Grupos amenáveis são definidos e estudados no Capítulo~\ref{cap11}. Esses grupos foram definidos por John von Neumann em 1929 em seu trabalho sobre o paradoxo de Banach--Tarski. Nós começamos com a definição de von Neumann, discutimos a relação com decomposições paradoxais da esfera, depois continuamos com a condição de Følner e seus corolários.

Por fim, o Capítulo~\ref{cap12} coleciona alguns problemas em aberto bem conhecidos.

\medskip

\noindent \textbf{Agradecimentos.} Gostaríamos de agradecer aos alunos do IMPA e da UFPR que frequentaram o curso e leram a versão inicial do livro pela participação e comentários. Agradecimentos especiais vão para Alcides de Carvalho Junior, cujas notas digitadas foram utilizadas no início do nosso projeto. Agradecemos a Alexey Talambutsa pelas referências bibliográficas. Por fim, gostaríamos de agradecer ao revisor anônimo pela cuidadosa revisão e pelos valiosos comentários.

O trabalho de Belolipetsky foi apoiado pela Bolsa de Produtividade do CNPq e Bolsas da FAPERJ E-26/201.189/2021, E-26/204.250/2024, Teixeira Paula foi apoiada pela Bolsa do CNPq 408834/2023-4 e pelo Edital 19/2025-UFPR/R/PRPI/COFPI.

\chapter{Grupos e apresentações de grupos}
\label{cap1}
 \section{Teoria básica de grupos}
Esta seção consiste de uma revisão dos conceitos principais de teoria básica de grupos, os quais serão utilizados ao longo do texto. O leitor interessado pode se referir a \cite{artin} ou \cite{lang} para mais resultados e demonstrações. 

\begin{definition}
Um \textit{grupo}\index{grupo} é um conjunto $G$ munido de uma operação binária $\cdot: G\times G \to G$ que satisfaz as seguintes condições:
\begin{enumerate}[(i)]
    \item{\textit{Associatividade}:} Para todos $g_1, g_2, g_3 \in G$, tem-se $g_1 \cdot (g_2 \cdot g_3) = (g_1 \cdot g_2) \cdot g_3$;
    \item{\textit{Existência de elemento neutro}:} Existe um elemento $e \in G$ tal que $e\cdot g = g = g\cdot e$, para todo $g\in G$;
    \item{\textit{Existência de inversos}:} Para todo $ g\in G$, existe um elemento, denotado por $g^{-1}$, de $G$, tal que $g\cdot g^{-1} = e = g^{-1}\cdot g$.
    \end{enumerate}
    O grupo $G$ será dito \textit{abeliano} se a operação $\cdot$ for comutativa, isto é, se $g_1 \cdot g_2 = g_2 \cdot g_1$, para todos $g_1, g_2 \in G$.\index{grupo abeliano}
\end{definition}

Dado  um grupo $(G,\cdot)$, um subconjunto $H \subset G$ é dito um \textit{subgrupo}\index{subgrupo} de $G$ se $H$ é, ele próprio, um grupo com respeito à restrição de $\cdot$ ao subconjunto  $H\times H$ de $G\times G$. Usamos a notação $H\leq G$ para dizer que $H$ é subgrupo de $G$.

\begin{remark}
A menos que seja necessário explicitar a operação de $G$, denotaremos a partir de agora $g \cdot h$ simplesmente por $g h$.
\end{remark}

\begin{example}[Grupo simétrico]
Se $X$ é um conjunto qualquer, o conjunto $S_X$ de todas as bijeções
$f:X\to X$ é um grupo com respeito à composição de funções. Chamamos
$S_X$ de \textit{grupo simétrico} sobre $X$. Os elementos de $S_X$ são
chamados de \textit{permutações} de $X$. Se $n\in\N$, abreviamos
$S_{\{1,2,\ldots,n\}}=S_n$. Esse grupo, em geral, não é abeliano.

Uma \textit{transposição} é uma permutação que troca exatamente dois
elementos e fixa  os demais. Toda permutação de $S_n$ pode ser
escrita como um produto de transposições. Além disso, a paridade do
número de transposições nessa decomposição independe da
decomposição escolhida. Dizemos que uma permutação é \textit{par} se ela
pode ser escrita como produto de um número par de transposições e
\textit{ímpar} caso contrário. O conjunto das permutações pares
forma um subgrupo de $S_n$, chamado \textit{grupo alternado} e denotado
por $A_n$.

\end{example}

\begin{definition}
Sejam $(G,\cdot)$ e $(H, \times)$ dois grupos. 
\begin{itemize}
    \item Um mapa $\varphi: G \to H$ é um \textit{homomorfismo de grupos}\index{homomorfismo} se, para todos $g_1, g_2 \in G$, vale $\varphi(g_1 \cdot g_2) = \varphi(g_1)\times \varphi(g_2)$. Note que todo homomorfismo de $G$ em $H$ leva o elemento neutro de $G$ no elemento neutro de $H$, e vale $\varphi(g^{-1}) = (\varphi(g))^{-1}$, para todo $g\in G$.
    \item Um homomorfismo de grupos é dito um \textit{epimorfismo} \index{epimorfismo} se for sobrejetivo e um \textit{isomorfismo} \index{isomorfismo} se for bijetivo. Naturalmente, neste último caso, a inversa $\varphi^{-1}:H \to G$ também é um homomorfismo de grupos. Caso exista um isomorfismo entre $G$ e $H$, dizemos que eles são \textit{isomorfos} e escrevemos  $G \cong H$.
\end{itemize}
\end{definition}

\begin{example}[Grupos lineares] Um grupo $G$ é dito linear\index{grupo linear} se existe um corpo $K$, um inteiro $n$ e um homomorfismo injetivo de $G$ no grupo linear geral $\mathrm{GL}(n, K)$, das matrizes $n\times n$ invertíveis. Por exemplo, são lineares os grupos:
\begin{itemize}
    \item $\mathrm{SL}(n, K)$, das matrizes $n\times n$ com entradas em $K$ e determinante 1;
    \item O grupo das matrizes triangulares superiores  invertíveis com entradas no corpo $K$.
\end{itemize}
\end{example}

\begin{example}
Seja $\varphi:G\to H$ um homomorfismo de grupos. Os conjuntos $\ker(\varphi):= \{g \in G \mid \varphi(g) = e_H\}$ e $\im(\varphi):=\{\varphi(g) \mid  g \in G\}$ são subgrupos de $G$ e $H$, respectivamente. Chamamos $\ker(\varphi)$ de \textit{núcleo} de $\varphi$ e $\im(\varphi)$ de \textit{imagem} de $\varphi$.
\end{example} 

\begin{example}
Seja $G$ um grupo. O conjunto $\mathrm{Aut}(G)$ dos isomorfismos de $G$ sobre ele mesmo, também chamados \textit{automorfismos}\index{automorfismo} de $G$, é um grupo com respeito à composição de funções. Evidentemente, $ \mathrm{Aut}(G)$ é um subgrupo de $S_G$.
\end{example}

\begin{definition}
Seja $H\leq G$. 
\begin{itemize}
    \item A cardinalidade de $H$ será chamada de \textit{ordem} de $H$ e denotada por $|H|$.
     \item A \textit{ordem} de um elemento $g\in G$ é a ordem do subgrupo $\langle g \rangle :=\{g^k\mid k\in \Z\}$, isto é, o menor $n\in \N$ para o qual $g^n = e$, caso exista tal inteiro. Se não existir, dizemos que $g$ tem ordem infinita. Neste caso, $\langle g \rangle \cong \Z$.
    \item As relações $g\sim g' \Leftrightarrow g^{-1}g' \in H$  e $g\equiv g' \Leftrightarrow g(g')^{-1} \in H$ são relações de equivalência em $G$. A classe de um elemento $g \in G$ por $\sim$ é o conjunto $gH :=\{gh; h \in H\}$, chamado \textit{classe lateral à direita} de $g$. Já a classe de $g$ por $\equiv$ é o conjunto $Hg :=\{hg; h \in H\}$, chamado \textit{classe lateral à esquerda} de $g$.\index{classe lateral}
    \item O \textit{quociente}\index{quociente de grupos} $G/ H$ (respectivamente $H\backslash G$) de $G$ por $H$ é o conjunto de classes laterais à direita (respect., à esquerda) de $G$ módulo $\sim$ (respect., módulo  $\equiv$). Esse conjunto nem sempre terá estrutura de grupo, a menos que $H$ seja normal, como veremos a seguir.  
    \item A cardinalidade do quociente $G/H$, a qual coincide com a do quociente $H\backslash G$, é chamada \textit{índice} \index{indice de um subgrupo@índice de um subgrupo} de $H$ em $G$, e denotada por $|G:H|$.
   
\end{itemize}
\end{definition}

Um subgrupo $N$ de um grupo $G$ é dito \textit{normal} em $G$ se for invariante por conjugações, isto é, se $gng^{-1} \in N$, sempre que $n\in N$ e $g\in G$. Neste caso, escrevemos $N \triangleleft G$. Um grupo que não possui nenhum subgrupo normal próprio não trivial é chamado de \textit{grupo simples}\index{grupo simples}.

Uma das principais propriedades de subgrupos normais é que o quociente de $G$ por um subgrupo normal $N$ herda naturalmente uma estrutura de grupo, como vemos na seguinte proposição, cuja prova pode ser encontrada em \cite[Chapter~2]{artin}.

\begin{proposition}
Sejam $G$ um grupo e $N$ um subgrupo normal de $G$. Considere o quociente $G/N$. 
Então:
\begin{enumerate}
    \item  O mapa 
    \begin{eqnarray*}
       \cdot:  G/N \times G/N &\to & G/N \\
(g_1N, g_2N) & \mapsto & (g_1N)\cdot (g_2N) :=(g_1  g_2)N
    \end{eqnarray*}
está bem-definido se, e somente se,  $N$ é normal em $G$. Neste caso, $G/N$ é um grupo com respeito à operação acima, chamado grupo quociente de $G$ por $N$.
\item A projeção canônica 
 \begin{eqnarray*}
      \pi: G &\to & G/N \\
g & \mapsto & gN
    \end{eqnarray*}
é um homomorfismo de  grupos, e o grupo quociente $G/N$, munido da operação $\cdot$, possui a seguinte propriedade universal: Para todo grupo $H$ e todo homomorfismo de grupos $\varphi:G \to H$ com $N\subset \ker(\varphi)$, existe exatamente um homomorfismo de grupos $\tilde{\varphi}:G/N \to H$ tal que $\tilde{\varphi} \circ \pi =\varphi$.

\begin{figure}[!ht]
	\centering
	\resizebox{3cm}{3cm}{
	\begin{tikzpicture}
\draw (0,0)node{$ G $}; 
\draw (0,-2)node{$G/N $}; 
\draw[->] (0.2,0) -- (1.7,0);
\draw[->] (0,-0.2) -- (0,-1.7);
\draw (-0.3,-1)node{$ \pi $}; 
\draw (1,0.2)node{$ \varphi $}; 
\draw (1.5,-1)node{$ \tilde{\varphi}$}; 
\draw (2,0)node{$ H$}; 
\draw[->] (0.3, -1.7) -- (1.9,-0.2);

 	\end{tikzpicture}}\\
 
	\end{figure}

\end{enumerate}
\end{proposition}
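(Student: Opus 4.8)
O plano é tratar os dois itens separadamente, já que o cerne da demonstração está inteiramente na boa-definição da operação do item (1). Para a implicação direta, suponho $N \triangleleft G$ e tomo representantes arbitrários de cada classe: se $g_1 N = g_1' N$ e $g_2 N = g_2' N$, escrevo $g_1' = g_1 n_1$ e $g_2' = g_2 n_2$ com $n_1, n_2 \in N$. Quero mostrar que $(g_1' g_2')N = (g_1 g_2)N$, o que equivale a verificar $(g_1 g_2)^{-1} g_1' g_2' \in N$. Simplificando essa expressão obtenho $g_2^{-1} n_1 g_2\, n_2$, e é precisamente aqui que a normalidade entra em ação: o fator $g_2^{-1} n_1 g_2$ está em $N$ por conjugação e, como $n_2 \in N$, o produto pertence a $N$. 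Para a recíproca, suponho a operação bem-definida e recorro a um caso particular: fixados $g \in G$ e $n \in N$, uso que $n N = N = e N$ para aplicar a boa-definição aos pares $(n, g)$ e $(e, g)$, obtendo $n g N = g N$, isto é, $g^{-1} n g \in N$; fazendo $g$ e $n$ variarem, recupero a normalidade de $N$.

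Garantida a boa-definição, a verificação dos axiomas de grupo é rotineira e toda herdada de $G$: a associatividade decorre da associatividade em $G$ aplicada aos representantes, o elemento neutro é a classe $eN = N$, e a inversa de $gN$ é $g^{-1}N$. Assim $G/N$ é um grupo.

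No item (2), que $\pi$ é homomorfismo sai de imediato da definição da operação, pois $\pi(g_1 g_2) = (g_1 g_2)N = (g_1 N)\cdot(g_2 N)$. Para a propriedade universal, a estratégia é definir $\tilde{\varphi}(gN) := \varphi(g)$ --- a única fórmula compatível com a exigência $\tilde{\varphi} \circ \pi = \varphi$ --- e então checar três pontos. Primeiro, que $\tilde{\varphi}$ está bem-definida: se $gN = g'N$, então $g' = g n$ com $n \in N \subset \ker(\varphi)$, donde $\varphi(g') = \varphi(g)\varphi(n) = \varphi(g)$. Segundo, que $\tilde{\varphi}$ é homomorfismo, o que novamente é herdado de $\varphi$. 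Terceiro, a unicidade: qualquer $\psi$ satisfazendo $\psi \circ \pi = \varphi$ coincide com $\tilde{\varphi}$ em toda classe $gN = \pi(g)$, pela sobrejetividade de $\pi$.

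O principal obstáculo --- e o único ponto em que a hipótese de normalidade é realmente indispensável --- é a boa-definição do item (1); todo o restante é transporte formal das propriedades de $G$ e de $\varphi$ para o quociente. Observo ainda que, na recíproca, a escolha astuta dos representantes no caso particular é o que torna a equivalência precisa.
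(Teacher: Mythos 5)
Sua demonstração está correta e completa: a boa-definição nas duas direções, os axiomas de grupo, e a propriedade universal (existência, boa-definição e unicidade de $\tilde{\varphi}$) estão todos verificados com precisão — em particular, o passo da recíproca, em que você aplica a boa-definição aos pares $(n,g)$ e $(e,g)$ para extrair $g^{-1}ng \in N$, está exatamente certo. Observo apenas que o livro não apresenta demonstração própria desta proposição (remete a \cite[Chapter~2]{artin}); o seu argumento é precisamente o argumento canônico que se encontra nessa referência, de modo que não há diferença de abordagem a comentar.
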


A relação entre homomorfismos de grupos e subgrupos normais é descrita pelo seguinte resultado (veja ibid.):

\begin{thm}\label{1th} Se $\varphi :G \to H$ é um homomorfismo de grupos, então  $\ker(\varphi)$ é normal em $G$ e $G/\ker(\varphi) \cong \varphi(G)$.

\end{thm}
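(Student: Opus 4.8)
The plan is to prove the two assertions separately, leaning on the universal property established in the preceding proposition for the second one. First I would verify that $\ker(\varphi)$ is normal. That it is a subgroup was already recorded in an earlier example, so only invariance under conjugation needs checking: for any $k \in \ker(\varphi)$ and $g \in G$, I compute $\varphi(gkg^{-1}) = \varphi(g)\,\varphi(k)\,\varphi(g^{-1})$, and since $\varphi(k) = e_H$ and $\varphi(g^{-1}) = \varphi(g)^{-1}$, this collapses to $\varphi(g)\,\varphi(g)^{-1} = e_H$. Hence $gkg^{-1} \in \ker(\varphi)$ for all such $g$ and $k$, which is precisely the defining condition of normality, so $\ker(\varphi) \triangleleft G$.

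For the isomorphism $G/\ker(\varphi) \cong \varphi(G)$, I would set $N = \ker(\varphi)$ and apply the universal property: since trivially $N \subset \ker(\varphi)$, the proposition furnishes a unique homomorphism $\tilde{\varphi} : G/N \to H$ with $\tilde{\varphi} \circ \pi = \varphi$, which acts by $\tilde{\varphi}(gN) = \varphi(g)$. This is the key move, because it absorbs the one genuinely delicate point — the well-definedness of the induced map on cosets — into a result already proved, rather than re-deriving it by hand. It then remains to show $\tilde{\varphi}$ is a bijection onto $\varphi(G)$. Surjectivity onto $\varphi(G)$ is immediate, as every element there has the form $\varphi(g) = \tilde{\varphi}(gN)$; for injectivity I would check that the kernel of $\tilde{\varphi}$ is trivial, namely that $\tilde{\varphi}(gN) = e_H$ forces $\varphi(g) = e_H$, hence $g \in N$ and $gN = N$.

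Restricting the codomain of $\tilde{\varphi}$ to its image $\varphi(G)$ then produces a bijective homomorphism, that is, an isomorphism, giving $G/\ker(\varphi) \cong \varphi(G)$. The main obstacle in a from-scratch argument would be the coset well-definedness, but here it is handled by the universal property, so the residual work is the routine verification of injectivity and surjectivity sketched above.
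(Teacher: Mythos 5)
Your proof is correct: normality follows from the conjugation computation $\varphi(gkg^{-1}) = \varphi(g)\varphi(k)\varphi(g)^{-1} = e_H$, and the isomorphism follows by applying the universal property of the quotient (the proposition stated just before the theorem) to $N = \ker(\varphi)$, then verifying that $\tilde{\varphi}$ has trivial kernel and maps onto $\varphi(G)$. The paper itself gives no proof of this theorem — it defers to \cite[Chapter~2]{artin} — so your argument, which correctly establishes normality first (needed before the universal property can even be invoked) and then absorbs the well-definedness issue into the already-proved proposition, is exactly the standard route consistent with the paper's development.
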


\begin{example}
O \textit{centro} de um grupo $G$ é o subgrupo $Z(G)$ formado pelos elementos que comutam com todos os outros elementos de $G$, isto é, $Z(G) = \{g \in G\mid gh=hg, \mbox{ para todo } h \in G\}$. É imediato verificar que $Z(G)\triangleleft G$.
\end{example}

\begin{definition}
Seja $I$ um conjunto e considere uma família de grupos $(G_i)_{i \in I}$. O \textit{produto direto} \index{produto direto de grupos} dos grupos dessa família é o grupo $\displaystyle\prod_{i \in I}G_i$, produto cartesiano dos $(G_i)_{i \in I}$, com a operação feita pontualmente: 
$$(g_i)_{i\in I}\cdot (h_i)_{i\in I} = (g_i\cdot h_i)_{i\in I}.$$
\end{definition}

O produto direto de grupos tem a propriedade universal de que homomorfismos desse produto estão em bijeção com as famílias de homomorfismos em cada fator.

\begin{definition}
Sejam $N$ e $H$ grupos e $\varphi: H \to \mathrm{Aut}(N)$ um homomorfismo de grupos. O \textit{produto semidireto} \index{produto semidireto de grupos} de $H$ por $N$ com respeito a $\varphi$ é o grupo $N \rtimes_{\varphi} H$, definido como o produto cartesiano $N \times H$, munido da operação:
$$(n_1,h_1)*(n_2, h_2) = (n_1\cdot\varphi(h_1)(n_2), h_1h_2).$$
Se $H = \Z$, $\varphi: \Z \to \mathrm{Aut}(N)$ é determinado por $\varphi(1) = \psi$, por isso também podemos escrever $N \rtimes_{\psi} \Z$ com $\psi \in \mathrm{Aut}(N)$.  
\end{definition}

\begin{example}
\begin{enumerate}
    \item Se $N$ e $H$ são grupos e $\varphi: H \to \mathrm{Aut}(N)$ é o automorfismo identidade, então
$N \rtimes_{\varphi} H = N \times H$.
    \item Seja $n\geq 3$ um inteiro. O grupo diedral (veja Exemplo~\ref{diedral}) $D_{n}$ é isomorfo ao produto semidireto $\Z / n\Z \rtimes_{\varphi} \Z / 2\Z$, onde $\varphi: \Z/ 2\Z \to \mathrm{Aut} (\Z/ n\Z)$ é dado por multiplicação por $-1$.
\end{enumerate}
\end{example}
Equivalentemente, poderíamos definir o produto semidireto da seguinte maneira, usando o grupo ambiente: se $G$ é um grupo e $N\triangleleft G$, $H \leq G$ são subgrupos, dizemos que $G = N \rtimes H $ (é o produto semidireto de $H$ e $N$) se, e somente se, vale um dos itens equivalentes a seguir:
\begin{enumerate}[(1)]
    \item $G=NH$ e $H \cap N = \emptyset$;
    \item $G=HN$ e $H \cap N = \emptyset$;
    \item Para todo $g \in G$, existem únicos $n\in N$ e $h \in H$ tais que $g=nh$;
    \item Para todo $g \in G$, existem únicos $n\in N$ e $h \in H$ tais que $g=hn$;
    \item Existe o homomorfismo de retração $f:G\to H$ cujo núcleo é $N$.
\end{enumerate}

\begin{exercise}
Se $G = N \rtimes H $ segundo a definição acima, então $G = N \rtimes_{\varphi} H $ com $\varphi(h): n \mapsto hnh^{-1}$. 
\end{exercise}

Considere uma ação natural de $H$ na soma direta $\displaystyle \bigoplus_{h\in H} G$ dada por
$$ \phi: H \to \mathrm{Aut}\Big(\bigoplus_{h\in H} G\Big),\ \phi(h)f(x) = f(h^{-1}x),\ \forall x\in H.$$
Assim, podemos definir o produto semidireto
\begin{equation}\label{eq:wr prod}
\Big(\bigoplus_{h\in H} G\Big) \rtimes_{\phi} H.    
\end{equation}
\begin{definition}\label{def:wr prod}
O produto semidireto \eqref{eq:wr prod} é chamado de \textit{produto entrelaçado}\index{produto entrelaçado de grupos} de $G$ com $H$ e é denotado por $G\wr H$.
\end{definition}
Esta construção é fonte de muitos exemplos interessantes na teoria dos grupos.

\begin{example}
    \label{def:lamplighter} Um grupo que pode ser descrito como produto entrelaçado é o \textit{grupo acendedor de lâmpadas} $L$. A ideia por trás desse grupo é a seguinte: considere uma rua infinita (pensada como a reta real), com uma origem $\ell_0$ bem definida e postes de iluminação $\ldots \ell_{-2}, \ell_{-1}, \ell_{0}, \ell_{1}, \ell_{2}, \ldots$, situados sobre cada número inteiro e podendo cada um estar aceso ou apagado. Considere também um acendedor de lâmpadas, cuja função é caminhar pela rua, acendendo ou apagando alguma lâmpada, e então terminando sua jornada em algum dos postes, digamos $\ell_k$. 

    Uma boa analogia para descrever cada elemento do grupo é como um conjunto de instruções. Por exemplo, se estamos numa configuração onde o acendedor acendeu as lâmpadas nas posições -2, 1, 3 e 4, e terminou sua jornada na posição -4, o elemento correspondente pode ser descrito como o seguinte conjunto de instruções, assumindo que a posição inicial do acendedor é a origem: 
\begin{enumerate}[(1)]
     \item Vá uma vez para a direita; 
     \item Acenda a lâmpada;
     \item Ande duas vezes para a direita; 
     \item Acenda a lâmpada; 
     \item Ande uma vez para a direita; 
     \item Acenda a lâmpada;
     \item Ande seis vezes para a esquerda;
     \item Acenda a lâmpada;
     \item Ande duas vezes para a esquerda.
\end{enumerate}
Para compor dois elementos, basta aplicar os dois conjuntos de instruções correspondentes, um após o outro, notando que a posição inicial do segundo grupo de comandos não será necessariamente a origem, e sim a posição final correspondente ao elemento anterior. 

Para dar uma descrição mais algébrica deste grupo, observe que cada elemento é caracterizado por dois objetos:
\begin{enumerate}
    \item Um inteiro $k \in \Z$, correspondente à posição do acendedor.
    \item Uma sequência com suporte finito $(a_n)_{n\in\Z}$, onde $a_n \in  \Z_2$ representa o estado da lâmpada na posição $n$.
\end{enumerate}

Existem dois geradores para o grupo: um gerador, digamos $t$, incrementa $k$, de modo que o acendedor se move para a próxima lâmpada ($t^{-1}$ decrementa $k$), enquanto o outro gerador, $a$, age de modo que o estado da lâmpada $\ell_k$ é alterado (de apagado para aceso ou vice-versa).

Podemos assumir que apenas um número finito de lâmpadas estão acesas em qualquer momento, uma vez que a ação de qualquer elemento de $L$ altera no máximo um número finito de lâmpadas. O número de lâmpadas acesas é, no entanto, ilimitado.
\end{example}

\begin{exercise}
    Mostre que o grupo acendedor de lâmpadas pode ser escrito como o produto entrelaçado $\Z_2 \wr \Z$.
\end{exercise}

\begin{definition}
Um grupo $G$ é dito  \textit{grupo de torção}\index{torção} se todos os seus elementos tem ordem finita, e \textit{sem torção}, e é dito \textit{livre de torção}, se todo elemento não-trivial tem ordem infinita. 
\end{definition}

Note que o subconjunto $ \mathrm{Tor}\, G = \{g \in G \mid g \mbox{ tem ordem finita}\}$ em geral não é um subgrupo de $G$.

\begin{definition}
Dizemos que um grupo tem \textit{virtualmente} uma certa propriedade $\mathcal{P}$  se algum subgrupo $H$ de índice finito em $G$ tem tal propriedade. 
\end{definition}

Por exemplo, um grupo é virtualmente livre de torção se ele contém algum  subgrupo de índice finito livre de torção; é virtualmente  abeliano se contém subgrupo de índice finito que é abeliano, etc.

\section{Grupos livres}
\label{gruposlivres}

Definiremos agora um tipo especial de grupos, chamados grupos livres, que possuem um papel importante no estudo de geometria hiperbólica e topologia, por exemplo. Como veremos, esses grupos tem as mais simples apresentações possíveis.


\begin{definition}[Propriedade universal dos grupos livres]
\index{grupo livre} Seja $X$ um subconjunto de um grupo $F$. Dizemos que $F$ é \textit{livremente gerado} por $X$ se toda função $\varphi: X \to G,$ do conjunto $X$ para um grupo $G$ pode ser estendida a um único homomorfismo $\Phi : F \to G.$
\end{definition}

Diremos que $F$ é um grupo \textit{livre} se ele for livremente gerado por algum conjunto $X\subset F$. 

\begin{lemma}
Se $F$ é livremente gerado por $X$, então $F$ é o menor subgrupo de $F$ que contém $X$.
\end{lemma}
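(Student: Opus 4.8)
The plan is to reformulate the statement: writing $H = \langle X \rangle$ for the subgroup of $F$ generated by $X$, this $H$ is by definition the smallest subgroup of $F$ containing $X$. Since $H \subseteq F$ always holds, proving the claim amounts to establishing the reverse inclusion $F \subseteq H$, which forces $F = H$. So the goal is simply to show that $X$ generates all of $F$.

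The engine of the argument is the \emph{uniqueness} half of the universal property, which I would isolate first as a lemma-in-passing: if two homomorphisms $F \to G$ agree on every element of $X$, then they are equal, because both are extensions of one and the same function $X \to G$, and such an extension is unique by hypothesis.

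With that in hand I would manufacture a homomorphism from $F$ back into $H$. Since $X \subseteq H$ and $H$ is itself a group, the inclusion $X \to H$, $x \mapsto x$, extends to a unique homomorphism $\Psi : F \to H$. Composing with the inclusion $j : H \hookrightarrow F$ yields a homomorphism $j \circ \Psi : F \to F$, and for each $x \in X$ one has $j(\Psi(x)) = j(x) = x$, so $j \circ \Psi$ restricts on $X$ to the inclusion $X \hookrightarrow F$. The identity map $\mathrm{id}_F$ restricts to that very same function on $X$. Applying the uniqueness observation with $G = F$ then gives $j \circ \Psi = \mathrm{id}_F$.

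Finally, this equality says that every $g \in F$ satisfies $g = j(\Psi(g)) = \Psi(g) \in H$, so $F \subseteq H$ and hence $F = H = \langle X \rangle$, as desired. The only genuine subtlety — and the step I expect to be the conceptual obstacle — is the idea of feeding the universal property back into $F$ itself (and into its subgroup $H$) rather than into some auxiliary external group: the argument lives or dies on comparing $j \circ \Psi$ against $\mathrm{id}_F$ and invoking uniqueness, and one must check carefully that both maps really do restrict to the same function on $X$.
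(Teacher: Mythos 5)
Sua prova está correta e segue essencialmente o mesmo caminho da prova do livro: estender a inclusão $X \to \langle X \rangle$ a um homomorfismo $F \to \langle X \rangle$ via a propriedade universal, compor com a inclusão de volta em $F$, e comparar o resultado com $\id_F$ usando a unicidade da extensão. A única diferença é notacional ($H$, $\Psi$, $j$ no lugar de $\Gamma$, $f$, $i_2$).
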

\begin{proof}
Denote por $\Gamma$ o menor subgrupo de $F$ que contém $X$. Claramente, $\Gamma \subset F$. Para verificar a inclusão contrária, note que a inclusão $i_1:X \to \Gamma$ se estende, pela propriedade universal acima, a um único homomorfismo $f : F \to \Gamma$. Compondo $f$ com a inclusão $i_2: \Gamma \to F$, obtemos um homomorfismo $\theta: F \to F$. Mas note que os mapas $\theta$ e $\id_F$ são homomorfismos de $F$ em $F$ que estendem a inclusão $i:X \to F$. Pela unicidade desse homomorfismo, devemos ter $\theta = \id_F$, e disso segue que $F\subset \Gamma$.
\end{proof}

\begin{lemma}
\label{cardS}
Se $F$ e $F'$ são grupos livremente gerados por $X$ e $X'$, respectivamente, e $|X|= |X'|$, então $F\cong F'$.
\end{lemma}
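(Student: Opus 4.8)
The plan is to leverage the universal property of free groups twice, building a homomorphism in each direction out of the bijection between the two generating sets, and then to invoke uniqueness to conclude that the two homomorphisms are mutually inverse. Since $|X| = |X'|$, I would first fix a bijection $\sigma: X \to X'$ with inverse $\sigma^{-1}: X' \to X$. Composing $\sigma$ with the inclusion $X' \hookrightarrow F'$ yields a set map $X \to F'$, which the universal property of $F$ (freely generated by $X$) extends to a unique homomorphism $\Phi: F \to F'$. Symmetrically, $\sigma^{-1}$ followed by the inclusion $X \hookrightarrow F$ extends, by the universal property of $F'$, to a unique homomorphism $\Psi: F' \to F$.

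The core step is to show that $\Psi \circ \Phi = \id_F$ and $\Phi \circ \Psi = \id_{F'}$. For the first equality I would observe that $\Psi \circ \Phi : F \to F$ is a homomorphism whose restriction to $X$ sends each $x \in X$ to $\Psi(\sigma(x)) = \sigma^{-1}(\sigma(x)) = x$; in other words, $\Psi \circ \Phi$ extends the inclusion $X \hookrightarrow F$. But $\id_F$ also extends this same inclusion, so the uniqueness clause in the universal property of $F$ forces $\Psi \circ \Phi = \id_F$. The identical argument, with the roles of $F$ and $F'$ interchanged, gives $\Phi \circ \Psi = \id_{F'}$. Hence $\Phi$ is a bijective homomorphism, that is, an isomorphism, and therefore $F \cong F'$.

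This argument is essentially a repackaging of the technique already used in the preceding lemma, where uniqueness of the extending homomorphism was exploited to identify a self-map of $F$ with the identity. I do not anticipate a serious obstacle here: the only point that demands care is the bookkeeping of which universal property is being applied (the one for $F$ extends the $X$-indexed data, the one for $F'$ extends the $X'$-indexed data), together with the recognition that both $\Psi \circ \Phi$ and $\id_F$ are genuine extensions of the same set map, so that uniqueness is applicable. Notably, no finiteness hypothesis on $X$ is used, so the same proof covers free groups of infinite rank.
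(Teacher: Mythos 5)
Sua prova está correta e segue essencialmente o mesmo caminho da prova do livro: fixar uma bijeção entre $X$ e $X'$, estendê-la (composta com as inclusões) a homomorfismos $\Phi: F \to F'$ e $\Psi: F' \to F$ pela propriedade universal, e usar a unicidade da extensão para concluir que $\Psi \circ \Phi = \id_F$ e $\Phi \circ \Psi = \id_{F'}$. A observação final de que nenhuma hipótese de finitude sobre $X$ é necessária também está de acordo com o argumento do texto.
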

\begin{proof}
Da hipótese de que $X$ e $X'$ tem a mesma cardinalidade, segue que existe uma bijeção $f:X\to X'$, com inversa $g= f^{-1}:X'\to X$. Essas bijeções, quando compostas com as inclusões de $X'$ em $F'$ e de $X$ em $F$, respectivamente, se estendem a homomorfismos $\Bar{f}: F \to F'$ e $\Bar{g}:F'\to F$. Mas $\Bar{g} \circ \Bar{f}$ é um automorfismo de $F$ que estende a inclusão de $X$ em $F$, bem como a aplicação $\id_F$, e portanto esses mapas são iguais pela unicidade da extensão. O mesmo argumento mostra que $\Bar{f} \circ \Bar{g} = \id_{F'}$. Concluímos portanto que $\Bar{f}$ e $\Bar{g}$ são, na verdade, isomorfismos.
\end{proof}

\begin{definition}
Se $F$ é livremente gerado por $X$ e $|X|= n < \infty$,  denotamos $F$ por $F_n$ e esse grupo é dito \textit{grupo livre de posto $n$}\index{posto de um grupo livre}. 
\end{definition}

O grupo $F_n$ está bem definido, a menos de isomorfismo, devido ao  Lema~\ref{cardS}. Veremos no Capítulo~\ref{cap2} que $F_m \cong F_n$ se, e somente se, $m=n$.

\subsection{Construção de grupos livres}

Até então, vimos uma caracterização de grupos livres, mas não exibimos exemplos de que esses grupos de fato existem. O objetivo desta seção é fornecer uma construção do grupo livre gerado por um conjunto qualquer.

Seja $X$ um conjunto. Na construção que faremos a seguir, seus elementos serão chamados {\it letras} ou {\it símbolos}. Definimos o conjunto de {\it letras inversas} $X^{-1}=\{x^{-1}; x \in X\}$ e vamos pensar em $X \cup X^{-1}$ como um {\it alfabeto}.  Uma {\it palavra}  em $X \cup X^{-1}$ é uma sequência finita (possivelmente vazia) de letras em  $X \cup X^{-1}$, isto é, uma expressão da forma $x_{i_1}^{\varepsilon_1}\ldots x_{i_k}^{\varepsilon_k}$ onde  $x_{i_j} \in X$ e $\varepsilon_j=\pm 1.$ Vamos chamar de $1$ a palavra vazia, isto é, uma palavra que não usa nenhuma letra do alfabeto $X \cup X^{-1}$.

Denotamos por $W(X)$ o conjunto de todas as palavras, incluindo a palavra vazia,  no alfabeto $X \cup X^{-1}$. Por exemplo, se $x_1, x_2 \in X$, então $x_1x_2x_2x_1x_1^{-1}\in W(X)$.

\begin{remark}
No caso em que $X$ é subconjunto de um grupo $(G, \cdot)$, teremos uma ambiguidade na notação, já que $x_1 \ldots x_n$ poderá representar tanto uma palavra formal no alfabeto $X \cup X^{-1}$, quanto um produto de elementos do grupo, isto é, $x_1\cdot \ldots \cdot x_n$. É preciso ser claro sobre em que sentido estamos usando essa notação neste caso, pois uma palavra determina um elemento do grupo, mas um dado elemento do grupo pode ser representado por diferentes palavras. Por exemplo, $ab$ e $ba$ são palavras diferentes, mas se o grupo que as contém for abeliano, elas representam ali o mesmo elemento.
\end{remark}

 O {\it comprimento} de uma palavra $w$ no alfabeto $X \cup X^{-1}$ é definido como  o número de letras usadas para escrever $w$ neste alfabeto. O comprimento da palavra vazia é igual a $0$. Uma palavra é dita {\it reduzida} se não contiver nenhum  par de letras consecutivas da forma $x^{-1}x$ ou $xx^{-1}$, com $x \in X.$ Uma {\it redução} \index{redução de uma palavra} de uma palavra $w \in W(X)$ é uma operação que  elimina da escrita de $w$ algum par de letras consecutivas de uma das formas acima.
 
 Por exemplo, se $X = \{x_1, x_2\}$, as palavras $1$, $x_1$, $x_{1}^{-1}$, $x_{1}x_2$ e $x_{1}x_2x_1^{-1}$ são reduzidas, enquanto $w = x_1x_2x_2x_1x_1^{-1}$ não é reduzida por conter o termo $x_1x_1^{-1}$. Neste último caso, uma redução de $w$ é a eliminação deste termo, levando à palavra $ w'= x_1x_2x_2$.
 
 Definimos uma relação de equivalência em $W(X)$ dizendo que $w \sim w'$ se, e somente se, $w$ pode ser obtida de $w'$ por uma sequência finita de reduções ou de reduções inversas (o acréscimo de termos do tipo $x^{-1}x$ ou $xx^{-1}$, com $x \in X$). Por exemplo, $ux^{-1}xv \sim uv$, onde $u, v \in W(X)$ e $x \in X$.

 \begin{proposition}\label{proposition1}
 Cada palavra  $w \in W(X)$ é equivalente a uma única palavra reduzida.
\end{proposition}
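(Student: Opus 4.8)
The plan is to split the statement into existence and uniqueness and to notice that all the real content lies in uniqueness. For existence I would argue that repeatedly applying reductions to $w$ must terminate: each reduction strictly decreases the length, a nonnegative integer, so after finitely many steps we reach a word with no cancellable pair, i.e.\ a reduced word, which is equivalent to $w$ by construction. This part is routine.

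For uniqueness, the naive idea — follow two different reduction sequences and show they meet — runs into the combinatorics of overlapping cancellations, which is precisely the obstacle I want to sidestep. Instead I would use van der Waerden's trick, manufacturing a single $\sim$-invariant that reads off the reduced form. Let $R = R(X) \subseteq W(X)$ be the set of reduced words. For each letter $a \in X \cup X^{-1}$ define a map $\sigma_a : R \to R$ by $\sigma_a(r) = r'$ when $r$ can be written as $r = a^{-1} r'$, and $\sigma_a(r) = a r$ otherwise; note that $a r$ is again reduced precisely because in this case $r$ does not begin with $a^{-1}$.

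The first key step is to check that $\sigma_a$ and $\sigma_{a^{-1}}$ are mutually inverse, so that each $\sigma_a$ is a bijection of $R$; this is a short case analysis on whether $r$ begins with $a$, with $a^{-1}$, or with neither, and it is the one computation I would carry out in full. Writing $w = a_1 \cdots a_n$, I then set $\Phi(w) = (\sigma_{a_1} \circ \cdots \circ \sigma_{a_n})(1) \in R$, with $\Phi(1) = 1$. I would establish three properties: (i) if $r$ is already reduced, then $\Phi(r) = r$, by induction on the length of $r$, since no cancellation can occur when we successively prepend the letters of a reduced word; (ii) $\Phi$ is constant on $\sim$-classes, because inserting or deleting a pair $a a^{-1}$ inside $w$ replaces a factor $\sigma_a \circ \sigma_{a^{-1}} = \id$ in the composition and hence leaves $\Phi(w)$ unchanged; and (iii) $\Phi(w) \sim w$ for every $w$, which follows by induction once one observes that $\sigma_a(r) \sim a r$ always (in the cancelling case $a r = a a^{-1} r' \sim r' = \sigma_a(r)$) and that $\sim$ is compatible with concatenation on the left.

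Combining these finishes the proof: given any reduced word $r \sim w$, property (ii) yields $\Phi(r) = \Phi(w)$, while property (i) yields $\Phi(r) = r$, so $r = \Phi(w)$ is determined by $w$ alone, and by (iii) it is genuinely equivalent to $w$. The main obstacle, as flagged, is extracting a well-defined global invariant from the purely local reduction moves; van der Waerden's action resolves exactly this, the crux being the verification that $\sigma_a \circ \sigma_{a^{-1}} = \id$, i.e.\ that the $\sigma_a$ are bijections. As an alternative I would mention the rewriting-systems viewpoint: the reduction relation is terminating and locally confluent (the only overlaps have the form $a a^{-1} a$, which reduces to $a$ either way), so by Newman's lemma it is confluent, giving uniqueness of normal forms directly.
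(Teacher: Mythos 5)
Your proof is correct and follows essentially the same route as the paper: your maps $\sigma_a$ are exactly the paper's translation maps $L_a$, your invariant $\Phi(w)$ is the paper's $L_w(1)$, and your properties (i) and (ii) are precisely the two lemmas the paper establishes (reduced words are fixed by $\Phi$, and $\Phi$ is constant on equivalence classes). The only cosmetic differences are your termination argument for existence (the paper uses induction on length) and the added remarks on property (iii) and Newman's lemma.
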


\begin{proof}
\textit{Existência:} Vamos provar a afirmação por indução no comprimento de $w$. Para palavras de comprimento 0 ou 1, o resultado é direto, já que ambas são reduzidas. Assuma que a afirmação é verdadeira para todas palavras de comprimento $n$ e considere uma palavra de comprimento $n+1$, $w = a_1 \ldots a_n a_{n+1}$, onde $a_i \in X \cup X^{-1}$. De acordo com a hipótese de indução, existe uma palavra reduzida $ u = b_1 \cdots b_k$ com $b_j\in X \cup X^{-1}$, tal que $ a_2\ldots a_{n+1} \sim u$. Assim, $w \sim a_1u$. Se $a_1 \neq b_1^{-1}$, então $a_1u$ é reduzida, e a afirmação está provada. Caso   $a_1 = b_1^{-1}$, teremos $a_1u \sim b_2 \ldots b_k$ e esta última palavra é reduzida.

\textit{Unicidade:} Seja $F(X)$ o conjunto de todas as  palavras reduzidas em $ X \cup X^{-1}$. Para cada $a\in  X \cup X^{-1}$, definimos a função de translação  $L_a : F(X) \to  F(X)$ por 
\begin{equation*}
    L_a(b_1\ldots b_k) = \left\{\begin{array}{cc}
       ab_1 \ldots b_k, & \mbox{ se } a \neq b_1^{-1};\\
        b_2 \ldots b_k, & \mbox{ se } a = b_1^{-1}.
    \end{array} \right.
\end{equation*}

Para cada palavra $w= a_1 \ldots a_n$, defina $L_w = L_{a_1}\circ \ldots\circ L_{a_n}$. Se $w =1 $ é a palavra vazia, defina  $L_1 = \id$. Note que $ L_a \circ L_{a^{-1}} = \id$, para todo $a \in X \cup X^{-1}$, e disso podemos concluir que $v \sim w $ implica $ L_v = L_w$. 

Vamos também provar por indução que se $w$ é reduzida, então $w = L_w(1)$. De fato, se $w$ tem comprimento 0 ou 1, não há o que provar. Assuma que a afirmação seja válida para palavras de comprimento $n$, e seja $w$ uma palavra reduzida de comprimento $n+1$. Então $w= au$, onde $a\in X \cup X^{-1}$ e $u$ é uma palavra reduzida em $X \cup X^{-1}$, de comprimento $n$, que não começa com $a^{-1}$, isto é, tal que $L_a(u) = au$. Então $L_w(1) = L_a \circ L_u(1) = L_a(u) = au = w$. 

Para provar a unicidade, é suficiente mostrar que se $v\sim w$, com $v$ e $w$ reduzidas, então $ v = w$. Mas como $v \sim w$ implica que $L_v = L_w$, temos $v = L_v(1) = L_w(1) = w$.
\end{proof}

Se $F(X) $ é como na prova da Proposição~\ref{proposition1}, $W(X)/{\sim}$ pode ser identificado com $F(X)$. A classe de uma palavra $w \in W(X)/{\sim}$ é denotada por $[w]$.

O conjunto $F(X)$ é um grupo, se equipado com a seguinte operação: para $[w],[w'] \in F(X)$ o produto $[w]\cdot [w']$ é a única palavra reduzida equivalente a $ww'$. O elemento neutro desse grupo é a palavra vazia. A cardinalidade de $X$ é chamada de {\it posto} de $F(X).$\index{posto do grupo livre}

 \begin{proposition} O grupo $F(X)$ definido acima é livremente gerado por $X$.
 
\end{proposition}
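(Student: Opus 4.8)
The plan is to verify directly the universal property defining free generation, identifying $X$ with the set of single-letter reduced words $\{[x] : x \in X\} \subseteq F(X)$. So I fix an arbitrary group $G$ and an arbitrary function $\varphi : X \to G$, and I must produce a unique homomorphism $\Phi : F(X) \to G$ with $\Phi([x]) = \varphi(x)$ for all $x \in X$. The strategy is to define a candidate at the level of all words in $W(X)$, show it is constant on $\sim$-classes, and then argue that the induced map on $F(X)$ is the desired homomorphism.

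First I would extend $\varphi$ to the full alphabet by setting $\varphi(x^{-1}) := \varphi(x)^{-1}$ for each $x \in X$, and then define $\bar\varphi : W(X) \to G$ on an arbitrary word by $\bar\varphi(a_1 \cdots a_n) = \varphi(a_1)\cdots \varphi(a_n)$, with $\bar\varphi(1) = e$. By construction $\bar\varphi$ turns concatenation of words into multiplication in $G$, i.e.\ $\bar\varphi(ww') = \bar\varphi(w)\bar\varphi(w')$. The one genuine verification is that $\bar\varphi$ is invariant under the equivalence relation $\sim$: it suffices to treat a single elementary reduction, which deletes a consecutive pair $xx^{-1}$ or $x^{-1}x$; since $\varphi(x)\varphi(x^{-1}) = \varphi(x)\varphi(x)^{-1} = e$ and likewise for the other order, deleting such a pair does not change the product $\bar\varphi(w)$. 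As $\sim$ is generated by reductions and their inverses, $\bar\varphi$ is constant on each equivalence class, so it descends to a well-defined map $\Phi : F(X) = W(X)/{\sim} \to G$ given by $\Phi([w]) = \bar\varphi(w)$.

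It then remains to check that $\Phi$ is a homomorphism and extends $\varphi$. Recall that the product $[w]\cdot[w']$ in $F(X)$ is the reduced word equivalent to the concatenation $ww'$; hence $\Phi([w]\cdot[w']) = \bar\varphi(ww') = \bar\varphi(w)\bar\varphi(w') = \Phi([w])\Phi([w'])$, using the descent and the compatibility of $\bar\varphi$ with concatenation. Moreover $\Phi([x]) = \varphi(x)$, so $\Phi$ extends $\varphi$. For uniqueness, any homomorphism $\Psi : F(X) \to G$ extending $\varphi$ is forced to satisfy $\Psi([x]) = \varphi(x)$ and $\Psi([x^{-1}]) = \Psi([x])^{-1} = \varphi(x)^{-1}$; since every element of $F(X)$ is a reduced word $[a_1 \cdots a_n] = [a_1]\cdots[a_n]$, a product of generators and their inverses, the homomorphism property pins $\Psi$ down completely, giving $\Psi = \Phi$. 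I expect the main obstacle to be the well-definedness step --- checking that $\bar\varphi$ is constant on $\sim$-classes --- since everything else is formal; this step is precisely where the content of Proposition~\ref{proposition1} (uniqueness of the reduced representative) is morally used, although working with $\bar\varphi$ on all of $W(X)$ lets us bypass reduced-word bookkeeping and reduce to the single-reduction check.
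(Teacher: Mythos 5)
Your proof is correct, and it takes a route that is genuinely different from (and more complete than) the paper's. The paper defines $\Phi$ \emph{only on reduced words} by $\Phi(x_1\ldots x_k) = \varphi(x_1)\ldots\varphi(x_k)$ and then verifies uniqueness; well-definedness is immediate there, but the paper never actually checks that $\Phi$ is a homomorphism --- i.e.\ that $\Phi$ is compatible with the ``concatenate, then reduce'' product of $F(X)$ --- which is precisely the non-formal step. You instead define $\bar\varphi$ on \emph{all} of $W(X)$, observe it converts concatenation into multiplication, check invariance under a single elementary reduction (using $\varphi(x)\varphi(x)^{-1}=e$), and descend to $W(X)/{\sim}$; the homomorphism property then falls out formally, since $[w]\cdot[w']$ is by definition the class of $ww'$. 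This is the standard clean way to fill the gap the paper leaves. One small correction to your closing remark: the descent of $\bar\varphi$ needs only $\sim$-invariance, not Proposição~\ref{proposition1}; where that proposition is really used is earlier, in making $F(X)=W(X)/{\sim}$ into a group at all (so that ``the unique reduced word equivalent to $ww'$'' makes sense). Your uniqueness argument coincides with the paper's.
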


\begin{proof}
Seja $\varphi: X \to G$ uma função qualquer de $X$ em um grupo $G$. Podemos definir uma extensão  $\Phi: F(X)\to G$, começando de maneira natural em $X$ e estendendo-a, inicialmente a $X\cup X^{-1}$ pondo  $\varphi (x^{-1}) = 
(\varphi(x))^{-1}$. Em seguida, estendemos para $F(X)$, escrevendo, para cada palavra reduzida $w =x_1\ldots x_k$ em $F(X)$: $$\Phi(w) := \varphi(x_1)\ldots \varphi (x_k).$$

Para verificar que a extensão $\Phi$ é única, suponha que exista um homomorfismo $ \Psi: F(X) \to  G$  com $\Psi(x) =\varphi(x)$, para todo $x \in X$. Então para cada palavra reduzida $ w = a_1 \ldots a_n$ em $F(X)$, vale:
$$	\Psi(w) = \Psi(a_1)\ldots \Psi(a_n) = \varphi(a_1) \ldots \varphi(a_n) = \Phi(w).$$
Pela propriedade universal dos grupos livres, segue que $F(X)$ é livremente gerado por $X$.
\end{proof}

\begin{remark} Um grupo livre com posto $\geq 2$ é não-abeliano. 
\end{remark}

\section{Geradores e relações de grupos}

Alguns objetos matemáticos são definidos de forma abstrata, como um conjunto munido de uma ou mais operações que satisfazem certas condições. Esse é o caso de espaços vetoriais, anéis, corpos, grupos, entre outros. Muitas vezes, é difícil visualizar ou trabalhar com esses objetos através da definição, e buscamos ``modelos'' para facilitar o trabalho. No caso de espaços vetoriais, por exemplo, escolhemos trabalhar com bases de vetores linearmente independentes que geram o espaço, tendo como referência o espaço euclidiano $\R^n$.  

Neste sentido, trabalharemos com grupos a partir de suas apresentações, isto é, listas de elementos geradores, junto a uma descrição das relações entre esses elementos. A escolha de um conjunto de geradores para um grupo funciona como a escolha de uma base em um espaço vetorial. No entanto, em geral esses geradores podem não ser independentes. Grupos que admitem apresentações onde os geradores são independentes são chamados de grupos livres.

Seja $S$ um subconjunto de um grupo $G$ e $H\leq G$ um subgrupo. As seguintes afirmações são equivalentes: 
\begin{enumerate}[(1)]
    \item $H$ é o menor subgrupo de $G$ contendo $S$;
    \item $H = \displaystyle \bigcap_{S \subset \Gamma \leq G} \Gamma $;
    \item $H = \{s_1s_2 \cdots s_n \mid s_i  \mbox{ ou } s_i^{-1}\in S, \, \forall i= 1, 2, \ldots, n \}$.
\end{enumerate}

O subgrupo $H$ satisfazendo qualquer uma das condições acima é chamado \textit{subgrupo gerado por $S$}\index{subgrupo gerado por $S$}, denotado por $\langle S \rangle$. Neste caso, $S$ é dito um \textit{conjunto de geradores} de $H$. \index{geradores de um grupo}

Quando $S$ consiste de um único elemento, digamos  $S = \{g\}$, o grupo $\langle S \rangle$ é usualmente denotado por $\langle g \rangle$. Este é o grupo \textit{cíclico} gerado por $g$, e consiste de todas as potências de $g$ com respeito à operação de $G$, isto é, $\langle g \rangle = \{g^n \mid n \in \Z\}$. Um grupo cíclico é isomorfo a $\Z$ ou a $\Z_n$, para algum $n\in \N$. Caso seja infinito, este grupo (na verdade, esta classe de grupos, a menos de isomorfismos) também será referido como grupo livre abeliano de posto 1.

Mais geralmente, nos referimos à classe de isomorfismo do grupo $\Z^n$ como o ``grupo livre abeliano'' de posto $n$.\index{grupo livre abeliano} Este grupo é gerado pelos elementos $e_1,\ldots, e_n$, da base canônica de $\R^n$. O grupo livre abeliano de posto 0 é o grupo trivial. 

Note que o termo ``livre abeliano'' deve ser pensado como uma só palavra. Um grupo livre abeliano não é ``livre'' no sentido da Seção~\ref{gruposlivres}, a menos que seja trivial ou cíclico.

Dizemos que um subgrupo normal $K\triangleleft G$ é \textit{normalmente gerado} \index{subgrupo normalmente gerado} por um subconjunto $R\subset K$ se $K$ é o menor subgrupo normal de $G$ que contém $R$, isto é $$ K = \bigcap_{R \subset N \triangleleft \, G}N. $$
Usaremos a notação $K = \langle \langle R\rangle \rangle$ para representar este subgrupo. Dado qualquer subconjunto $A\subset G$, o subgrupo $\langle\langle A\rangle\rangle$ é chamado de \textit{fecho normal} de $A$ em $G$.\index{fecho normal}

\begin{exercise}
Dado um grupo $G$ e um subconjunto $A\subset G$, então $\langle\langle A\rangle\rangle$ é gerado por $S=\{gag^{-1} \mid a \in A, g\in G\}$.
\end{exercise}

\begin{definition}
Um grupo $G$ é dito \index{grupo finitamente gerado}{\it finitamente gerado} se tem um conjunto finito de geradores, ou seja, se existe $S \subset G$ finito tal que $\langle S \rangle =G$.
\end{definition}

\begin{example}
O conjuto $S=\{\pm 1\}$ gera $(\mathbb{Z},+)$. Já $(\mathbb{Q},+)$ não é
finitamente gerado.
\end{example}

\begin{example}
Os grupos lineares $\mathrm{GL}(n,\Z)$, $\mathrm{SL}(n,\Z)$ e $\mathrm{PSL}(n,\Z)$ são finitamente gerados, mas $\mathrm{GL}(n,\R)$ não o é.
\end{example}

Para $\mathrm{GL}(n,\Z)$, os geradores são: 
\begin{eqnarray*}
 s_1 = \left(\begin{array}{ccccc}
     0 & 0&\cdots &0 &1  \\
    1 &0 &\cdots & 0 &0 \\
    0 &1 &\ddots &0 &0 \\
    \vdots & \ddots&\ddots &\ddots &\vdots \\
    0 &0 &\cdots  & 1&0  \end{array}\right) &
     s_2 = \left(\begin{array}{ccccc}
    0 &1& 0&\cdots &0 \\
    1 &0 & 0&\cdots & 0 \\
    0 &0 & 1& \ddots &0 \\
    \vdots & \ddots& \ddots &\ddots &\vdots \\
    0 &0 & 0&\cdots  &1  \end{array}\right)\\
     s_3 = \left(\begin{array}{ccccc}
    1 &1& 0&\cdots &0 \\
    0 &1 & 0&\cdots & 0 \\
    0 &0 & 1& \ddots &0 \\
    \vdots & \ddots& \ddots &\ddots &\vdots \\
    0 &0 & 0&\cdots  &1  \end{array}\right) & 
     s_4 = \left(\begin{array}{ccccc}
    -1 &0& 0&\cdots &0 \\
    0 &1 & 0&\cdots & 0 \\
    0 &0 & 1& \ddots &0 \\
    \vdots & \ddots& \ddots &\ddots &\vdots \\
    0 &0 & 0&\cdots  &1  \end{array}\right)\\
\end{eqnarray*}
Uma demonstração pode ser encontrada em \cite[Seção~7.1]{kd}.

\begin{proposition}\label{prop:fggroups} Sejam $G$ um grupo e $N\triangleleft G$ um subgrupo normal. 
\begin{enumerate}
\item Se $N$ e $G/N$ são
finitamente gerados, então $G$ também o é.
\item Se $G$ é finitamente gerado, então $G/N$ também o é.
\end{enumerate}
\end{proposition}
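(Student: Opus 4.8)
The plan is to prove each part by exhibiting an explicit finite generating set and checking it against the characterization of $\langle S\rangle$ as the set of all finite products $s_1 s_2\cdots s_n$ with each $s_i$ or $s_i^{-1}$ in $S$ (item (3) in the list preceding the definition of the generated subgroup), using throughout that the canonical projection $\pi: G\to G/N$ is a homomorphism.

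For part (1), I would fix a finite generating set $\{n_1,\ldots,n_k\}$ of $N$ and a finite generating set $\{\bar g_1,\ldots,\bar g_m\}$ of $G/N$, choosing representatives $g_i\in G$ with $\pi(g_i)=\bar g_i$. I claim $S=\{n_1,\ldots,n_k,g_1,\ldots,g_m\}$ generates $G$. Given $g\in G$, since the cosets $g_iN$ generate $G/N$ one can write $gN=(g_{i_1}N)^{\varepsilon_1}\cdots(g_{i_r}N)^{\varepsilon_r}$; as $\pi$ is a homomorphism this equals $hN$ with $h=g_{i_1}^{\varepsilon_1}\cdots g_{i_r}^{\varepsilon_r}$. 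From $gN=hN$ I deduce $h^{-1}g\in N$, so $h^{-1}g$ is a product of the $n_j^{\pm 1}$. Therefore $g=h\,(h^{-1}g)$ is a product of elements of $S^{\pm1}$, which gives $\langle S\rangle=G$.

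For part (2), I would take a finite generating set $\{s_1,\ldots,s_n\}$ of $G$ and show that its image $\{s_1N,\ldots,s_nN\}$ generates $G/N$. For an arbitrary coset $gN$, writing $g=s_{i_1}^{\varepsilon_1}\cdots s_{i_r}^{\varepsilon_r}$ and applying the homomorphism $\pi$ yields $gN=(s_{i_1}N)^{\varepsilon_1}\cdots(s_{i_r}N)^{\varepsilon_r}$, a product of the proposed generators. This is the general fact that a surjective homomorphic image of a finitely generated group is finitely generated.

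I do not anticipate a real obstacle: both statements reduce to manipulating words and invoking the homomorphism property of $\pi$. The only step needing a word of care is the passage in part (1) from the coset identity $gN=hN$ to the membership $h^{-1}g\in N$, which is precisely the definition of the relation $\sim$ defining right cosets; this is what licenses splicing the lifted quotient-generators together with the $N$-generators into a single word in $S$.
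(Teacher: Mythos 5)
Your proposal is correct and follows exactly the paper's approach: the same generating set $\{n_i, g_j\}$ for part (1) and the image of the generators under $\pi$ for part (2). The only difference is that you spell out the verification (splitting $g = h\,(h^{-1}g)$ via the coset identity) which the paper leaves as ``é fácil verificar''.
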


\begin{proof}

\begin{enumerate}
\item Considere um conjunto finito $\{n_1,\ldots,n_k\}$ de geradores  de $N$ e um conjunto finito $\{g_1N,\ldots,g_mN\}$  de geradores de $G/N$. É fácil verificar que o conjunto finito abaixo gera $G$: $$\{n_i,g_j \mid 1\leq i\leq k, 1\leq j\leq m \},$$
\item Basta notar que o conjunto formado pelas imagens dos geradores de $G$ pela projeção canônica $\pi :G \to G/N$  gera $G/N$.
\end{enumerate}
\end{proof}

\begin{remark} Mesmo que tenhamos $G$ e $G/N$ finitamente gerados, isto não implica  que $N$ seja finitamente gerado.
\end{remark}
\begin{example}
Seja $H$ o subgrupo das permutações de $\mathbb{Z}$ gerado por $ t=(0,1)$ e $s(i)=i+1$. Denote por $H_{n}$, para cada $n\in \N$, o subgrupo de $H$ das permutações de $\Z$ suportadas em $[-n,n],$ e seja $H_{\omega}$ o subgrupo das permutações de $\Z$ com suporte finito, i.e., o grupo das bijeções $f: \Z \to \Z$ tais que $f$ é a identidade fora de um conjunto finito de $\Z$. Note que  $$H_{\omega} =  \bigcup_{n=0}^{\infty}H_n.$$ 

Então $H_{\omega} \triangleleft H$ e $H/H_{\omega} \cong \Z$. De fato, a relação $s^{k}ts^{-k} = (k,k+1)$, para todo $k \in \Z$, implica que $H_{\omega}$ é subgrupo de $H,$ pois  toda permutação é gerada por transposições. Para ver que $H_{\omega}$ é subgrupo normal, basta mostrar que $t^{k}H_{\omega}t^{-k} \subset H_{\omega}$ e $s^{k}H_{\omega}s^{-k} \subset H_{\omega}$. Dada $f \in H_{n}$, temos  $s^{k}fs^{-k}(i) = s^{k}f(i-k) = f(i-k) + k$. Mas caso $i-k\geq n$ temos $f(i-k) = i-k$ e, portanto, $s^{k}fs^{-k}$ é a identidade fora de $[-(k+n),k+n]$, ou seja, $s^{k}fs^{-k}\in H_{k+n}$, donde segue que $s^{k}H_{\omega}s^{-k} \subset H_{\omega}$. A inclusão $t^{k}H_{\omega}t^{-k} \subset H_{\omega}$ é imediata, já que $t \in H_{1}$. Como $t \in H_{\omega}$, segue que $H/H_{\omega}$ é cíclico gerado pela classe de $s$.

Os grupos $H$ e $H/H_{\omega}$ são finitamente gerados, mas $H_{\omega}$ não o é. Note que, se  $g_1, \ldots, g_k$ fosse um conjunto finito gerando $H_{\omega}$, então existiria  $n_0  = \max \{n_j \in \N, \mathrm{supp}(g_j) \subset [-n_j,n_j]\} \in \N$ com $g_j \in H_{n_0}$ para todo $j$. Portanto teríamos $H_{\omega} =  H_{n_0}$, o que é um absurdo.
\end{example}

\begin{proposition}\label{prop:finindexfg}
Se $H\leq G$ possui índice finito, então $G$ é finitamente gerado se, e somente se, $H$ é finitamente gerado.
\end{proposition}

\begin{proof} 
Sejam inicialmente $G = \langle S \rangle$, onde $S$ é finito e $H$ um subgrupo de $G$ tal que $|G : H| = n$, e tome $\mathcal{T} = \{a_1 = 1, \ldots , a_n\}$  uma \textit{transversal} para $H$ em $G$, i.e. um conjunto de representantes para as classes laterais à direita de $H$. Tome $x = s_1 \cdots s_n$ qualquer elemento de $G$, onde cada $s_i \in S\cup S^{-1}$. Seja $a_{i_1}$ o representante de $Hs_1$, e escreva $x = s_1 a_{i_1}^{-1}a_{i_1} s_2 \cdots s_n$. Agora seja $a_{i_2}$ o representante de $Ha_{i_1}s_2$, e escreva 
$$x = s_1 a_{i_1}^{-1} \cdot a_{i_1} s_2 a_{i_2}^{-1} \cdot a_{i_2} s_3\cdots y_n.$$ 

Procedendo da mesma maneira sucessivamente, obtem-se:
$$x = s_1 a_{i_1}^{-1} \cdot a_{i_1} s_2 a_{i_2}^{-1} \cdots a_{i_{n-1}} s_n a_{i_n}^{-1}\cdot a_{i_n}.$$ 

Aqui, os termos da forma $a_j s_k a_l^{-1}$ estão em $H$. Logo, se $x \in H$, temos $a_{i_n} = 1$ e $x$ é escrito como um produto de um número finito de produtos triplos $a_j s_k a_l^{-1}$. Portanto, o conjunto finito $$\{a_j s_k a_l^{-1} \mid a_j, a_l \in \mathcal{T}, s_k \in S \cup S^{-1}\}$$ gera $H$. 

 Suponha agora que $H=\langle T\rangle \leq G$, onde $[G:H] = n$ e $T$ é finito. Escolha representantes
$g_i,\, i = 1, \ldots,n$ das classes laterais de $H$ em $G$ e considere o conjunto finito
$$X = T \cup \{g_1,\dots,g_n\}.
$$
Vejamos que $G= \langle X\rangle$. De fato, dado $g\in G$, existem $i \in \{1, \ldots, n\}$ e $h \in H$ tais que $g=g_ih$. Mas como $H$ é gerado por $T$, segue que $g$ se escreve em termos dos elementos de $X$. 
\end{proof}

\begin{proposition}\label{prop:fin_many_subgrps}
Um grupo finitamente gerado contém apenas um número finito de subgrupos de um índice finito fixo. Além disso, $H$ contém um subgrupo de índice finito $K$, que é normal em $G$
\end{proposition}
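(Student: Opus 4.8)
The plan is to encode each index-$n$ subgroup as a homomorphism into a fixed symmetric group, and then use finite generation to bound how many such homomorphisms can occur.

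First I would fix a finite generating set $\{g_1,\dots,g_d\}$ of $G$ and the integer $n$, and consider an arbitrary subgroup $H\leq G$ with $|G:H|=n$. To $H$ I associate the action of $G$ on the coset space $G/H$ by left multiplication, $g\cdot(xH)=(gx)H$, which is well defined and produces a homomorphism from $G$ into the group of permutations of $G/H$. After choosing a bijection $\beta_H\colon G/H\to\{1,\dots,n\}$ with the normalization $\beta_H(H)=1$, this becomes a homomorphism $\varphi_H\colon G\to S_n$, where $\varphi_H(g)$ is the permutation determined by $\varphi_H(g)\bigl(\beta_H(xH)\bigr)=\beta_H(gxH)$.

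The key step is that $H$ can be recovered from $\varphi_H$. Indeed, $\varphi_H(g)(1)=\varphi_H(g)\bigl(\beta_H(H)\bigr)=\beta_H(gH)$, so $\varphi_H(g)(1)=1$ holds precisely when $gH=H$, that is, when $g\in H$. Hence $H=\varphi_H^{-1}\bigl(\mathrm{Stab}_{S_n}(1)\bigr)$, which shows that the assignment $H\mapsto\varphi_H$ is \emph{injective} into the set $\mathrm{Hom}(G,S_n)$. Finally I would count: since $G=\langle g_1,\dots,g_d\rangle$, any homomorphism $G\to S_n$ is completely determined by the tuple of images $\bigl(\varphi(g_1),\dots,\varphi(g_d)\bigr)\in S_n^d$, so $|\mathrm{Hom}(G,S_n)|\leq (n!)^d$. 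Combining this with the injectivity just established, the number of subgroups of $G$ of index $n$ is at most $(n!)^d<\infty$, as desired.

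The main obstacle is really the recovery/injectivity step rather than the counting: one must pin down the label of the trivial coset by imposing $\beta_H(H)=1$, since otherwise distinct labelings of the same $H$ yield conjugate homomorphisms, and $H\mapsto\varphi_H$ would only be finite-to-one rather than injective (which would still give finiteness, but less cleanly). Finite generation itself enters only at the very end, and it is precisely the hypothesis that forces $\mathrm{Hom}(G,S_n)$ to be finite; the transitivity of the coset action and the well-definedness of the left-multiplication map are routine and can be checked directly.
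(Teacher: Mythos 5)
Your proof is correct, and it takes a genuinely different route from the one in the text. You encode each index-$n$ subgroup $H$ by the permutation representation of $G$ on $G/H$, normalize so that the trivial coset gets label $1$, recover $H$ as $\varphi_H^{-1}\bigl(\mathrm{Stab}_{S_n}(1)\bigr)$ to get injectivity, and then bound $|\mathrm{Hom}(G,S_n)|\leq (n!)^d$ using the $d$ generators --- the classical argument, self-contained and yielding the explicit bound $(n!)^d$. The text instead continues the Schreier-type argument of the preceding proposition: since any union of fewer than $s$ cosets cannot be closed under right multiplication by the generators, every coset of an index-$s$ subgroup $H$ admits a representative of word length at most $s-1$; consequently the ``triple products'' $a_j y_k a_l^{-1}$ that generate $H$ all have length at most $2s-1$, and since the ball of radius $2s-1$ in $G$ is finite, there are only finitely many possible generating sets, hence finitely many such $H$. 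What each approach buys: yours is independent of the previous proposition and gives a clean quantitative bound on the number of subgroups; the text's stays inside the word-metric framework being developed and, as a byproduct, shows that every subgroup of index $s$ is generated by elements lying in the ball of radius $2s-1$, a geometric fact of independent use. Your closing remark is also accurate: without the normalization $\beta_H(H)=1$ the assignment would only be finite-to-one (distinct labelings give conjugate homomorphisms), which still suffices for finiteness but is less tidy.
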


\begin{proof}
Sejam dados $G$ um grupo finitamente gerado, $H\leq G$ tal que $|G : H| = n < \infty$, e escreva $G/H = \{g_iH \mid i = 1,\ldots,n\}$. Podemos construir um homomorfismo natural entre $G$ e o grupo simétrico $S_n$, vendo este último como o grupo $\Sigma(G/H)$, das permutações em $G/H$: 
\begin{align*}
\varphi_H :\; & G \to \Sigma(G/H) \\
              & g \mapsto (g_iH \mapsto (gg_i)H).
\end{align*}

Note que $H=\{g\in G \mid gH=H\} =\{g\in G \mid \varphi_H(g)H=H\}$. Isto implica que $H$ pode ser completamente recuperado se conhecemos apenas o homomorfismo entre $G$ e $S_n$. Sendo $G$ finitamente gerado, existem finitos homomorfismos possíveis, logo existem finitos subgrupos $H$ de índice $n$ em $G$.

O núcleo deste homomorfismo é $K := \ker(\varphi_H)=\displaystyle \bigcap_{x \in G} xHx^{-1},$
um subgrupo normal de $G$, contido em $H$ e que possui índice finito em $G$ (e em $H)$, pelo Teorema do Isomorfismo. 
\end{proof}


 
\begin{proposition}
\label{quocientelivre}
Todo grupo é quociente de um grupo livre.
 \end{proposition}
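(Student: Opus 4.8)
The plan is to realize $G$ as a surjective homomorphic image of a free group and then to apply the first isomorphism theorem (Teorema~\ref{1th}). The whole argument is a direct combination of two tools that are already available: the universal property defining free groups, and Teorema~\ref{1th}.

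First I would choose a set $X$ that generates $G$. The choice that works uniformly for \emph{every} group, with no hypotheses whatsoever, is to take $X = G$ itself, regarded merely as a set (forgetting its group structure). Let $F = F(X)$ be the free group on $X$, whose existence was established by the construction in the previous subsection.

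Next, consider the set map $\varphi : X \to G$ given by the identity $\varphi(g) = g$, sending each element of $X = G$ to itself, now viewed as an element of the group $G$. By the universal property of free groups, $\varphi$ extends to a unique group homomorphism $\Phi : F \to G$. I would then check surjectivity: since every $g \in G$ belongs to $X$ and equals $\Phi(g)$ (reading $g$ as the length-one reduced word in $F$), we get $\Phi(F) = G$. Finally, I would invoke Teorema~\ref{1th}: the kernel $N := \ker(\Phi)$ is normal in $F$, and $F/N \cong \Phi(F) = G$, which exhibits $G$ as the quotient $F/N$ of a free group, as desired.

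There is essentially no hard step here, only one point requiring a little care: the choice of $X$. If one insisted on a small or finite generating set, the argument would only cover a restricted class of groups (and would force extra hypotheses such as finite generation); taking $X = G$ sidesteps any question of whether $G$ admits an economical set of generators and makes the statement hold in full generality. The mild cost is that the resulting free group $F$ is as large as $G$, but since the statement only asserts the \emph{existence} of such a presentation, this is harmless.
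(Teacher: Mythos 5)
Your proof is correct and follows essentially the same route as the paper: extend the inclusion of a generating set into $G$ to a homomorphism from the free group via the universal property, observe surjectivity, and conclude with Teorema~\ref{1th}. The paper phrases it for an arbitrary generating set $S \subset G$ and notes $S = G$ as the universal choice, which is exactly the choice you make.
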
 
\begin{proof}
Seja $G$ um grupo e escolha um subconjunto $S\subset G$ tal que $G = \langle S\rangle$ (por exemplo, $S = G$). A inclusão $i: S \to G$ se estende a um homomorfismo $\pi_s: F(S) \to G$. Como  $G = \langle S\rangle$, segue que $\pi_S$ é sobrejetivo. Assim, pelo Teorema~\ref{1th}, segue que $G \cong  F(S)/\ker(\pi_S)$. 
\end{proof}

\subsection{Apresentações} 
 Sejam $G$ um grupo e $S$ um conjunto de geradores de $G$. Como vimos na Proposição~\ref{quocientelivre}, a inclusão $i: S \to G$ pode ser estendida a um único homomorfismo $\pi_S : F(S) \to G$, e  $G \cong  F(S)/\ker(\pi_S)$.   Os elementos de $\ker (\pi_S)$ são chamados {\it relações} do grupo $G = \langle S \rangle$.
 
 Se $R=\{r_i\mid i \in I\}$ é tal que $\ker(\pi_S) = \langle\langle R \rangle \rangle $, o par denotado por $\langle S\mid R \rangle$ é dito uma \textit{apresentação}\index{apresentação de um grupo} de $G$.  Escrevemos $G = \langle S\mid R \rangle$ para indicar que esta é uma apresentação de $G$.

 \begin{definition}
 Um grupo $G$ é dito {\it finitamente apresentado}\index{grupo finitamente apresentado}  se existir uma apresentação finita de $G$, isto é, se existirem $S$ e $R$ finitos tais que $\langle S\mid R \rangle= F(S)/\langle\langle R \rangle \rangle $ é isomorfo a $G$.
 \end{definition}
 
\begin{example}
Se $R=\emptyset$, então $\langle\langle R \rangle \rangle= \{e\}$, e portanto $\langle S \mid R \rangle = F(S)/\{e\}$ é isomorfo a $F(S)$. Assim, $\langle a \mid \emptyset\rangle$ é uma apresentação de $\Z$, $\langle a,b \mid \emptyset\rangle$ é uma apresentação de $F_2$, etc. Em geral, um grupo livre é um grupo sem relações.
\end{example}

\begin{example}
Os grupos $\Z^n$ e $\Z_n$ são finitamente apresentados. De fato:
\begin{enumerate}

\item $\langle x_1,\ldots ,x_n \mid [x_i,x_j], i,j \in \{1,\ldots,n\} \rangle$ é uma apresentação de $\mathbb{Z}^n$, onde $[x_i,x_j] = x_ix_jx_i^{-1}x_j^{-1}$. 

\item  $\langle x \mid x^n \rangle$ é uma apresentação do grupo cíclico $\mathbb{Z}_{n}.$

\item Apresentações finitas não são únicas: por exemplo, $\langle x,y \mid x^2,y^3, xyx^{-1}y^{-1} \rangle$ é outra apresentação do grupo cíclico $\mathbb{Z}_{6}.$
\end{enumerate}
\end{example} 
 
 Note que dizer que $xyx^{-1}y^{-1}$ é uma relação no grupo $G$ significa dizer que $x$ e $y$ comutam. É comum escrevermos $\langle x,y \mid xy=yx  \rangle$ em vez de $\langle x,y \mid xyx^{-1}y^{-1} \rangle$.
 
\begin{example}[Grupo Diedral Finito]\label{diedral} Sejam $n\in\N$, com $n\geq 3$ e $X_n \subset \R^2$ um polígono regular de $n$ lados, com métrica induzida da métrica Euclidiana em $\R^2$. Então o grupo de isometrias de $X_n$ é chamado \textit{grupo diedral}:
$$ D_{n} :=  \langle s, t\mid s^n, t^2, tst^{-1} = s^{-1} \rangle = \langle s, t\mid s^n, t^2, (ts)^2 \rangle.$$

Geometricamente, $s$ corresponde à rotação de $\frac{2\pi}{n}$ em torno do centro do polígono regular $X_n$, e $t$  corresponde à reflexão sobre um diâmetro passando por um dos vértices de $X_n$, como na Figura~\ref{Dn}. Além disso, podemos mostrar que o grupo $D_{n}$ é finito de ordem $2n$.
\end{example}
\begin{figure}[H]
     \centering
 \begin{tikzpicture}
\tikzset{VertexStyle/.style = {shape = circle,fill = black,minimum size = 4pt,inner sep=0pt}}
\Vertex[x=1,y=0]{1}
\Vertex[x=0.5,y=1]{2} 
\Vertex[x=-0.5,y=1]{3}
\Vertex[x=-1,y=0]{4}
\Vertex[x=-0.5,y=-1]{5}
\Vertex[x=0.5,y=-1]{6}
\Vertex[x=0,y=0]{7}

\Edge[lw=1pt](1)(2)
\Edge[lw=1pt](2)(3)
\Edge[lw=1pt](3)(4)
\Edge[lw=1pt](4)(5)
\Edge[lw=1pt](5)(6)
\Edge[lw=1pt](6)(1)

\draw [color=blue, dashed] (-2,0) -- (2,0);
\draw [color=red, dashed] (0,0) -- (0.5,1);

\draw [color=blue, <->] (1.7,0.2) -- (1.7,-0.2);
\draw  [color=blue] (1.8,0.4)node{$t$};
\draw  [color=red] (0.45,0.2)node{$s$};

 \draw[red,->] (0.3,0) arc (0:60:0.3);
  
\end{tikzpicture}
         \caption{O grupo diedral $D_{6}$.}
     \label{Dn}   
\end{figure}

\begin{example}
Um exemplo de grupo finitamente gerado que não é finitamente apresentável é o grupo \textit{acendedor de lâmpadas} \index{grupo acendedor de lâmpadas} , que tem apresentação 
$ \langle s,t \mid \{[t^nst^{-n},  t^mst^{-m}],\, m,n \in \Z\} \rangle$
(veja o Exemplo \ref{def:lamplighter} e \cite{Baumslag}).
\end{example}


Em geral, é um problema difícil reconhecer a classe de isomorfismos de um grupo a partir de uma apresentação dada. Por exemplo, não existe algoritmo que reconheça se uma dada apresentação fornece o grupo trivial. De fato, existem muitas apresentações ``complicadas'' para o grupo trivial. Vejamos um exemplo:

\begin{example}
O grupo $G := \langle x, y \mid xyx^{-1}= y^2, yxy^{-1}= x^2\rangle$ é trivial. 
De fato, sejam $ \Bar{x} \in G$ e $\Bar{y} \in  G$ as imagens de $x$ e $y$, 
respectivamente, pela projeção canônica
\begin{equation*}
    F(\{x, y\}) \to F(\{x, y\})/ \langle\langle\{xyx^{-1}y^{-2}, yxy^{-1}x^{-2}\}\rangle\rangle = G.
\end{equation*}

Por definição, em $G$ obtemos 
$$\Bar{x}= \Bar{x} \Bar{y} \Bar{x}^{-1}\Bar{x} \Bar{y}^{-1}= \Bar{y}^2 \Bar{x} \Bar{y}^{-1} = \Bar{y} \Bar{y} \Bar{x} \Bar{y}^{-1}= \Bar{y} \Bar{x}^2;$$
e assim,  $\Bar{x} = \Bar{y}^{-1}$. Portanto,
$$\Bar{y}^{-2} = \Bar{x}^2 = \Bar{y} \Bar{x} \Bar{y}^{-1}= \Bar{y}\Bar{y}^{-1}y^{-1} = \Bar{y}^{-1}$$
e então $\Bar{x} = \Bar{y}^{-1} = e$. Como $\Bar{x}$ e $\Bar{y}$ geram $G$, concluímos que $G$ é trivial.
\end{example}

Uma conjectura ainda em aberto é a Conjectura de Andrews--Curtis\index{conjectura de Andrews--Curtis}, que afirma que qualquer apresentação do grupo trivial com o mesmo número de geradores e relações pode ser reduzida a uma apresentação trivial por uma sequência de movimentos simples, dados pela conjugação de relatores ou por transformações de Nielsen elementares. 

As \textit{transformações de Nielsen}\index{transformações de Nielsen} são definidas da  seguinte forma: dados um grupo $G$ e $n > 1$, uma transformação de Nielsen elementar em uma $n$-upla $T = (g_1, \ldots , g_n)$ de elementos de $G$ é um dos seguintes movimentos:
\begin{enumerate}
    \item Para algum $i \in  \{1,\ldots , n\}$ substitua  $g_i$ por  $g_i^{-1}$ em $T$.
    \item  Para $i \neq j$, com $i, j \in \{1,\ldots, n\}$, permute $g_i$ e $g_j$ em $T$.
    \item Para $i \neq j$, com $i, j \in \{1,\ldots, n\}$, troque $g_i$ por $g_ig_j$ em $T$. 
\end{enumerate}

Dizemos que duas $n$-uplas $T$ e  $T'$ são Nielsen equivalentes se existe uma sequência finita de $n$-uplas $T = T_0, T_1, \ldots , T_{k-1}, T_k = T'$, tal que $T_i$ é obtido de $T_{i-1}$ por uma transformação de  Nielsen elementar para $1 \leq i \leq k$.

\begin{exercise}
   O grupo de Thompson \index{grupo de Thompson} é definido como $$F := \langle x_0, x_1, \ldots \mid \{x_k^{-1}x_nx_k = x_{n+1} \mid  k, n \in \N, k < n\}\rangle.$$
   Mostre que F admite uma apresentação finita, a saber:
$$F \cong \langle A,B \mid\ [AB^{-1},A^{-1}BA],\ [AB^{-1},A^{-2}BA^{2}]\rangle.$$
\end{exercise}

\medskip

Agora vamos discutir as propriedades básicas das apresentações dos grupos.

Seja $\langle S\mid R \rangle$ uma apresentação de um grupo $G$. Dados um grupo $H$ e uma função $\varphi: S \to H$  que preserva as relações, isto é, tal que  $\varphi(r)=1$ para cada $r \in R$ (rigorosamente, estamos identificando $\varphi$ com sua extensão definida no grupo livre com base em $S$ e  pedindo que sua extensão preserve as relações). Então vale a seguinte:
\begin{proposition}[Propriedade Universal] 
A função $\varphi$ pode ser estendida a um homomorfismo $\tilde{\varphi} : G \to H$.
\end{proposition}
\begin{proof} 
Seja $H$ um grupo e $\varphi: S \to H$ uma função que preserva as relações. Pela propriedade universal dos grupos livres $\varphi$ se estende a um único homomorfismo $\Bar{\varphi} : F(S)\to H$. Como, por hipótese, $R \subset \ker (\varphi)$ temos que $\langle \langle R \rangle\rangle\subset \ker (\varphi).$ Portanto, $\bar{\varphi}$ desce ao quociente $F(S)/\langle\langle R \rangle \rangle$, definindo um homomorfismo $\tilde{\varphi} : G \to H$ dado por $\tilde{\varphi} \circ \pi_S=\bar{\varphi}$.

\begin{figure}[H]
	\centering
	\resizebox{2.5cm}{2.5cm}{
	\begin{tikzpicture}
\draw (0,0)node{$ F(S) $}; 
\draw (0,-2)node{$G$}; 
\draw[->] (0.5,0) -- (1.7,0);
\draw[->] (0,-0.3) -- (0,-1.7);
\draw (-0.3,-1)node{$ \pi_S $}; 
\draw (1,0.2)node{$ \bar{\varphi} $}; 
\draw (1.5,-1)node{$ \tilde{\varphi}$}; 
\draw (2,0)node{$ H$}; 
\draw[->] (0.3, -1.7) -- (1.9,-0.2);
 	\end{tikzpicture}}
	\end{figure}
	\vspace{-0.8cm}
\end{proof}

A proposição a seguir mostra que a propriedade de ser finitamente apresentado não depende da escolha de geradores:

\begin{proposition} Assuma que $G$ tem apresentação finita $\langle S\mid R\rangle$ e seja $\langle X\mid T\rangle$ uma apresentação arbitrária de $G$ com $X$ finito. Então existe um subconjunto finito $T_0 \subset T$ tal que $\langle X\mid T_0 \rangle$ é uma apresentação finita de $G$. 
\end{proposition}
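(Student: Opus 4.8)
The plan is to show that the normal closures $\langle\langle T_0\rangle\rangle$ and $\langle\langle T\rangle\rangle$ coincide in $F(X)$ for a suitable finite $T_0\subseteq T$; this is a Tietze-transformation argument that exploits the finiteness of $S$, $R$ and $X$. Write $\pi_S:F(S)\to G$ and $\pi_X:F(X)\to G$ for the canonical projections, so that $\ker(\pi_S)=\langle\langle R\rangle\rangle$ and $\ker(\pi_X)=\langle\langle T\rangle\rangle$. First I would build two ``lifts of the identity of $G$''. Since $X$ generates $G$, for each $s\in S$ I can choose a word $u_s\in F(X)$ with $\pi_X(u_s)=\pi_S(s)$; extending $s\mapsto u_s$ yields a homomorphism $\Phi:F(S)\to F(X)$ with $\pi_X\circ\Phi=\pi_S$. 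Symmetrically, since $S$ generates $G$, there is a homomorphism $\Psi:F(X)\to F(S)$ with $\pi_S\circ\Psi=\pi_X$.

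Next I would produce the finitely many elements of $\langle\langle T\rangle\rangle$ that will define $T_0$. For each $r\in R$ we have $\pi_X(\Phi(r))=\pi_S(r)=1$, so $\Phi(r)\in\langle\langle T\rangle\rangle$. For each $x\in X$, the composite $\Phi\circ\Psi$ satisfies $\pi_X(\Phi(\Psi(x)))=\pi_X(x)$, so the element $v_x:=\Phi(\Psi(x))\,x^{-1}$ lies in $\langle\langle T\rangle\rangle$. Because $R$ and $X$ are finite, this is a finite list of elements of $\langle\langle T\rangle\rangle$; each of them, being in the normal closure of $T$, is a finite product of conjugates of elements of $T^{\pm 1}$, hence involves only finitely many letters of $T$. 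I would let $T_0$ be the finite collection of all these letters, so that by construction $\Phi(r)\in\langle\langle T_0\rangle\rangle$ for all $r\in R$ and $v_x\in\langle\langle T_0\rangle\rangle$ for all $x\in X$.

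It then remains to prove $\langle\langle T_0\rangle\rangle=\langle\langle T\rangle\rangle$; the inclusion $\subseteq$ is immediate from $T_0\subseteq T$. For the reverse, set $Q:=\langle X\mid T_0\rangle=F(X)/\langle\langle T_0\rangle\rangle$ with projection $p_0:F(X)\to Q$, and let $q:Q\to G$ be the canonical surjection with $q\circ p_0=\pi_X$; I want $q$ to be an isomorphism. The key step is to manufacture an inverse from $\Phi$: the homomorphism $p_0\circ\Phi:F(S)\to Q$ sends each $r\in R$ to $p_0(\Phi(r))=1$ (as $\Phi(r)\in\langle\langle T_0\rangle\rangle=\ker(p_0)$), so it kills $\langle\langle R\rangle\rangle$ and descends to a homomorphism $\tilde\Phi_0:G\to Q$ with $\tilde\Phi_0\circ\pi_S=p_0\circ\Phi$. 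Then $q\circ\tilde\Phi_0\circ\pi_S=\pi_X\circ\Phi=\pi_S$ forces $q\circ\tilde\Phi_0=\id_G$, while on a generator $p_0(x)$ of $Q$ one computes, using $q(p_0(x))=\pi_X(x)=\pi_S(\Psi(x))$, that $\tilde\Phi_0(q(p_0(x)))=p_0(\Phi(\Psi(x)))=p_0(v_x\,x)=p_0(x)$ (the last equality because $v_x\in\langle\langle T_0\rangle\rangle$); hence $\tilde\Phi_0\circ q=\id_Q$. Thus $q$ is an isomorphism, $\langle\langle T_0\rangle\rangle=\langle\langle T\rangle\rangle$, and $\langle X\mid T_0\rangle$ is a presentation of $G$ with both $X$ and $T_0$ finite.

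I expect the main obstacle to be the bookkeeping in the last paragraph: one must notice that the ``images of the old relators'' $\Phi(r)$ alone do not suffice, and that the ``change-of-generators'' words $v_x$ also have to be absorbed into $T_0$, after which the real content is checking that $\Phi$ induces a genuine \emph{two-sided} inverse to $q$ rather than merely a one-sided one. The remaining ingredients — choosing the lifts $\Phi$ and $\Psi$, and extracting finitely many letters — are routine once the finiteness of $R$ and $X$ is invoked.
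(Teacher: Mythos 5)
Your proof is correct and follows essentially the same route as the paper's: the paper's maps $p,q$ are your $\Phi,\Psi$, its finite set $\{p(r)\}\cup\{x^{-1}p(q(x))\}$ is your $\{\Phi(r)\}\cup\{v_x\}$, and both arguments conclude by showing the induced map $G\to F(X)/\langle\langle T_0\rangle\rangle$ (the paper's $\varphi$, your $\tilde\Phi_0$) is an isomorphism before/after extracting the finitely many letters of $T$ needed to absorb these elements. The only cosmetic differences are that you extract $T_0$ at the start rather than the end, and you verify the isomorphism via an explicit two-sided inverse instead of the paper's surjectivity-plus-injectivity bookkeeping.
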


\begin{proof}
Cada elemento $s \in S$ pode ser escrito como uma palavra reduzida $s=a_s(X)$ no alfabeto $X \cup X^{-1}$. Considere a função $i_{SX}: S \to F(X)$ definida por $i_{SX}(s) = a_s(X)$. Pela propriedade universal de grupos livres, $i_{SX}$ se estende a um único homomorfismo $p : F(S) \to F(X)$. 

Lembramos que se $G$ tem apresentação $\langle S \mid R \rangle$, denotamos por $\pi_S : F(S) \to G $ o epimorfismo que estende a inclusão $i:S \to G$, e neste caso $\ker(\pi_S)$ é normalmente gerado por $R$. 

Além disso, como $\pi_X \circ  i_{SX}$ é a inclusão de $S$ em  $G$ (já que $\pi_X(a_s(X)) = a_s(X)$, por essa palavra ser reduzida em $X\cup X^{-1}$), e ambos $\pi_S$ e  $\pi_X \circ p$ são homomorfismos de $F(S)$ para $G$ estendendo esse mapa, pela unicidade da extensão, temos $\pi_S = \pi_X \circ p$.

Analogamente, todo $x \in X$ pode ser expresso como  $b_x(S)$, uma palavra em $S \cup S^{-1}$, e isso define um mapa $i_{XS} : X \to F(S)$, $i_{XS}(x) = bx(S)$, o qual se estende a um homomorfismo 
$q : F(X) \to F(S)$. Um argumento análogo ao anterior mostra que $\pi_S \circ q = \pi_X$.

Para todo $x \in X$, vale $\pi_X(p(q(x))) = \pi_S(q(x)) = \pi_X(x)$, o que implica $x^{-1}p(q(x)) \in  \ker(\pi_X)$. Seja $N$ o subgrupo de $ F(X)$ normalmente gerado por $\{p(r) \mid r \in R\} \cup \{x^{-1}p(q(x)) \mid x \in X\}$.
Note que $N \leq \ker(\pi_X)$, já que $N$ é normalmente gerado por um conjunto contido em $\ker(\pi_X)$. 

Portanto, existe uma projeção natural
$$proj : F(X)/N \to F(X)/ \ker(\pi_X).$$
Seja $\tilde{p} : F(S) \to  F(X)/N$ o homomorfismo natural induzido por $p$. Como $\tilde{p}(r) = 1$ para todo $r \in R$, segue que $\tilde{p}(\ker (\pi_S)) = 1$, e portanto, $\tilde{p}$ induz um  homomorfismo 
$$\varphi : F(S)/\ker(\pi_S) \to  F(X)/N.$$
Vejamos que $\varphi$ é sobrejetivo. De fato, $ F(X)/N$ é gerado pelos elementos da forma $xN = p(q(x))N$, e estes são a imagem por $\varphi$ de $q(x) \ker(\pi_S)$.

Considere o  homomorfismo 
$$proj \circ \varphi : F(S)/\ker(\pi_S) \to  F(X)/\ker(\pi_X).$$
Tanto o domínio quanto o contradomínio de $proj \circ \varphi $ são isomorfos a $G$. Cada  $x \in X$ é enviado por $G \cong  F(S)/ \ker(\pi_S)$ em $q(x) \ker(\pi_S)$, e pelo isomorfismo  $G \cong  F(X)/ \ker(\pi_X)$ em $x \ker(\pi_X)$.

Note que $$proj \circ \varphi(q(x) \ker(\pi_S)) = proj(xN) = x \ker(\pi_X).$$
Isto significa que, módulo os dois isomorfismos mencionados acima, o mapa $proj \circ\varphi$ é a identidade $\id_G$. Daí, segue que $\varphi$ é injetivo, portanto uma bijeção. Assim, $ proj $ também é uma bijeção entre $F(X)/N$ e $F(X)/ \ker(\pi_X)$, o que pode ocorrer se, e somente se $ N = \ker(\pi_X)$. Em particular, $\ker(\pi_X)$ é normalmente gerado pelo conjunto finito de  relatores 
$$ \mathcal{R} := \{p(r) \mid r \in R\} \cup \{x^{-1}p(q(x)) \mid x \in X\}.$$
Como  $ \mathcal{R} \subset \langle \langle T\rangle \rangle$, todo relator $\rho \in  \mathcal{R}$ pode ser escrito como um produto
$$\rho = \displaystyle \prod_{i \in I_{\rho}} (\nu_i t_i \nu _i^{-1}), \,\, \nu_i \in F(X), t_i \in T \mbox{ e } I_{\rho} \mbox{ finito}.$$
Segue que $\ker(\pi_X)$ é  normalmente gerado pelo conjunto finito $$T_0 = \displaystyle\bigcup _{\rho \in \mathcal{R}} \{t_i, i \in I_{\rho}\} \subset T.$$
\end{proof}

\begin{proposition}
Sejam $G$ um grupo e $N\triangleleft G$. Se $N$ e $G/N$ são finitamente apresentados, então $G$ também o é.
\end{proposition}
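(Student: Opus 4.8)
The plan is to build an explicit finite presentation of $G$ out of the finite presentations of $N$ and of the quotient $Q := G/N$, and then to certify it with a short-five-lemma argument.

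First I would fix finite presentations $N \cong \langle A \mid R\rangle$ with $A = \{a_1,\dots,a_k\}$ and $Q \cong \langle B \mid S\rangle$ with $B = \{b_1,\dots,b_m\}$, where $R$ and $S$ are finite. I regard $A$ as a subset of $N \subseteq G$, let $\pi:G\to Q$ be the projection, and for each $b_j$ choose a lift $t_j \in G$ with $\pi(t_j)=b_j$. By Proposição~\ref{prop:fggroups}(1) the finite set $\{a_i\}\cup\{t_j\}$ generates $G$, so $G$ is finitely generated and the real work is to exhibit finitely many relations. Taking the abstract alphabet $X = A \sqcup B$, I collect three finite families of relators. \textbf{(i)} the relators $R$, read as words in $F(A)\subseteq F(X)$; \textbf{(ii)} for each $s\in S$, the element $s(t_1,\dots,t_m)\in G$ lies in $N$ (its image in $Q$ is trivial), hence equals a word $w_s$ in the $a_i$, and I add the relator $s\,w_s^{-1}$; \textbf{(iii)} for each $i,j$, normality of $N$ gives $t_j a_i t_j^{-1}\in N$ and $t_j^{-1} a_i t_j\in N$, say equal to words $u_{ij}$ and $v_{ij}$ in the $a$'s, and I add the relators $b_j a_i b_j^{-1}\,u_{ij}^{-1}$ and $b_j^{-1} a_i b_j\,v_{ij}^{-1}$. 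All three families are finite.

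Next I set $\tilde G := \langle X \mid \mathcal R\rangle$, with $\mathcal R$ the union of (i)--(iii). Since every relator of $\mathcal R$ holds in $G$ under $a_i\mapsto a_i$, $b_j\mapsto t_j$, the universal property of presentations gives a homomorphism $\phi:\tilde G\to G$, and $\phi$ is onto because the images generate $G$. It remains to prove injectivity. I let $\bar N\le\tilde G$ be the subgroup generated by the images of $A$. The conjugation relators (iii) make $\bar N$ invariant under conjugation by every $b_j^{\pm1}$, and it is trivially invariant under the $a_i$, so $\bar N\triangleleft\tilde G$. Killing $A$ sends the relators (i) and (iii) to the trivial relation and each relator (ii) to $s$, whence $\tilde G/\bar N\cong\langle B\mid S\rangle\cong Q$; moreover the induced map $\bar\phi:\tilde G/\bar N\to Q$ sends $b_j\mapsto b_j$, so it is exactly this isomorphism. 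I thereby obtain a morphism of short exact sequences, with $\phi$ in the middle and $\bar\phi$ an isomorphism of the quotients.

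The crux is to show the restriction $\phi|_{\bar N}:\bar N\to N$ is an isomorphism; the short five lemma then forces $\phi$ itself to be an isomorphism, identifying $\tilde G=\langle X\mid\mathcal R\rangle$ as a finite presentation of $G$. The trick I expect to settle this is a retraction: because the relators $R$ hold inside $\bar N\le\tilde G$, the assignment $a_i\mapsto a_i$ defines a homomorphism $\psi: N=\langle A\mid R\rangle\to\bar N$, which is visibly surjective. The composite $\phi|_{\bar N}\circ\psi:N\to N$ is the identity on each generator $a_i$, hence equals $\id_N$; therefore $\psi$ is also injective, so $\psi$ is an isomorphism and $\phi|_{\bar N}=\psi^{-1}$ is one too. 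I expect this retraction step to be the only delicate point: without the conjugation relators (iii) the subgroup $\bar N$ need not be normal, and without presenting $\bar N$ as a quotient of the given $N$ one could not exclude that $\bar N$ is a proper quotient collapsing part of $N$. Everything else---the finiteness of $\mathcal R$, the well-definedness and surjectivity of $\phi$, and the identification $\tilde G/\bar N\cong Q$---is routine.
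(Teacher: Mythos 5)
Your proof is correct, and it builds exactly the presentation the paper uses: generators $A \sqcup B$, the relators $R$ of $N$, the lifted relators $s\,w_s^{-1}$ of the quotient, and the two-sided conjugation relations. Where you genuinely diverge is in certifying that these finitely many relations suffice. The paper argues by direct rewriting: it takes an arbitrary element $wK$ of the kernel of $\varphi : F(S)/K \to G$, uses the conjugation relations to put it in the form $w_1(X)w_2(Y)K$, projects to $G/N$ to see that $w_2(Y)$ is a product of conjugates of the relators $\rho_j$, converts those into words in $X$ via the lifted relators, and finally invokes the presentation of $N$ to kill the resulting $X$-word. You instead package the same content structurally: the retraction trick (the composite $\phi|_{\bar N}\circ\psi$ is the identity on $N$, forcing the surjection $\psi : N \to \bar N$ to be injective) identifies $\bar N \cong N$, the Tietze computation after killing $A$ identifies $\tilde G/\bar N \cong G/N$, and the short five lemma closes the argument. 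Your route avoids the word-by-word bookkeeping of rewriting kernel elements and makes explicit the one point the paper handles only implicitly in its last step, namely why no part of $N$ can collapse inside the presented group; the paper's route, in exchange, stays entirely at the level of words and normal closures and needs no exact-sequence formalism. Both are complete proofs; yours is the more structural, the paper's the more elementary.
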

\begin{proof}
Sejam $X$ um conjunto finito que gera $N$ e $Y$ um subconjunto finito de $G$ tal que $\Bar{Y} = \{ yN \mid y \in Y \} $ gera $G/N$, de modo que  $\langle X \mid  r_1, \ldots r_k \rangle$ e $\langle \Bar{Y} \mid \rho_1, \ldots \rho_n \rangle$ sejam apresentações finitas de $N$ e de $G/N$, respectivamente.

Na Proposição~\ref{prop:fggroups}, vimos que $G$ é gerado por  $S = X \cup Y $. Esse conjunto de geradores satisfaz a seguinte lista de relações:
\begin{eqnarray}
     r_i(X) = 1, \forall 1\leq i \leq k ; \:
     \rho_j(Y) = u_j(X),  \forall 1 \leq j \leq n; \label{rel1}\\ 
     yxy^{-1} = v_{xy}(X) ; y^{-1}xy = w_{xy}(X), \forall x \in X, y\in Y \label{rel2}
\end{eqnarray}para algumas palavras $u_j,  v_{xy}, w_{xy}$ de $S$, escritas com elementos de $X$, já que todos esses elementos estão em $N$. 

Afirmamos que este é um conjunto completo de relações de $G$.
Todas as relações acima podem ser reescritas como $t(X, Y ) = 1$, para um conjunto finito $T$ de palavras  em $S$. Se $K = \langle\langle T \rangle\rangle \triangleleft F(S)$, então o epimorfismo $\pi_S : F(S)\to  G$ define um epimorfismo  $\varphi : F(S)/K \to  G$.
Dada $wK \in \ker(\varphi)$, onde $w$ é uma palavra em $S$, graças às relações 
\eqref{rel2}, existem $ w_1(X)$ em $X$ e $w_2(Y)$ em $Y$ tais que $wK = w_1(X)w_2(Y )K$.
Aplicando a projeção $\pi : G \to G/N$, vemos que $\pi \varphi(wK))=1$, isto é, $\pi(\varphi (w_2(Y)K))= 1$. Isto implica que $w_2(Y)$ é um produto de uma quantidade finita de conjugados de $\rho_i(Y)$, e portanto $w_2(Y )K$ é o produto de uma quantidade finita de conjugados de $u_j(X)K$, pelo conjunto de relações \eqref{rel1}. Esse fato, junto às relações \eqref{rel2} implica que $w_1(X)w_2(Y)K = v(X)K$, para alguma palavra $v(X)$ em $X$. Então $\varphi(wK) = \varphi(v(X)K)$ está em $N$. Logo, $v(X)$  é o produto de uma quantidade finita de relatores $ r_i(X)$, o que implica que $v(X)K = K$.
Provamos assim que $\ker(\varphi)$ é trivial, e portanto $\varphi$ é um isomorfismo. Equivalentemente, $K = \ker(\pi_S)$. Isto implica que $\ker(\pi_S)$ é normalmente gerado pelo conjunto finito de relatores listado em \eqref{rel1} e \eqref{rel2}. Com isso, $G \cong \langle S \mid T \rangle$ é finitamente apresentado.
\end{proof}

\begin{exercise}
Sejam $F$ um grupo livre não abeliano e $\varphi : F \to \Z$ um homomorfismo não trivial. Mostre que $\ker (\varphi)$ não é finitamente gerado.
\end{exercise}

\begin{exercise}
Um elemento $g$ de um grupo $G$ é chamado \textit{não-gerador} se para cada
conjunto de geradores $S$ contendo $g$, o conjunto  $S\setminus\{g\}$ continua gerando $G$.
\begin{enumerate}[(a)]
\item Mostre que o conjunto de todos os não-geradores formam um subgrupo de $G$, chamado \textit{subgrupo de Frattini}.
\item Determine o subgrupo de Frattini de $(\Z,+).$
\item Determine o subgrupo de Frattini de $(\Z^{n},+).$
\end{enumerate}
\end{exercise}

\subsection{Abelianizações}
 
Para terminar esta seção, faremos um comentário sobre abelianizações. Seja $G$ um grupo. O \textit{comutador} \index{comutador de dois elementos} de $x,y \in G$ é o  elemento $[x, y] = xyx^{-1}y^{-1} \in G$. Sejam $H,K$ dois subgrupos de $G$. Denotamos por $[H,K]$ o subgrupo de $G$ gerado por todos os comutadores $ [h,k]$, com $h\in H, k\in K$.

Note que  para  $g,x,y \in X$, tem-se $ g[x, y]g^{-1} =
[gxg^{-1}, gyg^{-1}]$. 
Segue daí que $[G,G]$ é normal em $G$. Este grupo é usualmente chamado de \textit{grupo derivado de G} \index{grupo derivado} denotado  por $G'$. O grupo quociente  $G/G'$ é abeliano, e será chamado  \textit{abelianização} \index{abelianização} de $G$.

\begin{exercise}
Mostre que $G/[G,G]$ é abeliano. 
\end{exercise}

\begin{exercise}
     Mostre que $F_n/[F_n,F_n] \cong \Z^n$.
\end{exercise}

\begin{exercise}
Encontre o grupo derivado e a abelianização do grupo diedral finito $D_{n}$ e do grupo diedral infinito $D_{\infty}$. Observamos que $D_{\infty}$ é o grupo de isometrias de $\Z$, o qual é gerado pela translação $t(x) = x+1$ e pela simetria $s(x) = -x$.
\end{exercise}

Observe que não é necessariamente verdade que o grupo derivado $G'$ seja formado somente de comutadores $[x, y]$, com $ x, y\in G$. No entanto, eventualmente isso pode acontecer. Por exemplo, todo elemento do grupo alternado $A_n \leq S_n$ é um comutador de elementos de $S_n$ (veja \cite{Ore51}).

Isto leva a um invariante geométrico interessante, chamado \textit{comprimento do comutador} (cf. \cite{Bav91, Cal08}),\index{comprimento do comutador}  e denotado por $\ell_c(g)$, para $g \in G'$, o qual é definido como o menor número inteiro $k$ de modo que $g$ possa ser expresso como um produto de comutadores
$$g=[x_1, y_1] \ldots [x_k, y_k].$$

Também podemos definir o \textit{comprimento estável do comutador} de $g\in G'$:
$$\limsup{\frac{\ell_c(g^n)}{n}}.$$

Por exemplo, se $G$ é livre de posto 2, então todo elemento não trivial de $G'$ tem comprimento estável do comutador maior que 1.

Uma importante aplicação de abelianizações é que, se $X$ é um espaço topológico conexo por caminhos e $x\in X$, então o primeiro grupo de homologia $H_1(X)$ é isomorfo à abelianização do grupo fundamental $\pi_1(X,x)$.

\subsection{Produtos com amalgamação e extensões HNN}\label{sec:amalgam e HNN}

Amálgamas (produtos livres com amalgamação e extensões HNN) são ferramentas que permitem construir grupos mais complicados a partir de um par de grupos dado ou de um grupo junto com um par de seus subgrupos que são isomorfos entre si.

Primeiramente, definiremos o \textit{produto livre} \index{produto livre de grupos} de dois grupos com apresentações  $G_1 = \langle X_1\mid R_1\rangle$ e $G_2 = \langle X_2\mid R_2\rangle$. Este produto é dado pela apresentação 
$$G_1 * G_2 = \langle G_1, G_2\mid \quad \rangle,$$
que denota, de forma abreviada, o grupo:
$$\langle X_1 \sqcup X_2\mid R_1  \sqcup R_2\rangle.$$
Por exemplo, o grupo livre $F_2$ é isomorfo a $\mathbb{Z} * \mathbb{Z}$. É interessante observar aqui que o produto livre de dois grupos não é necessariamente um grupo livre. O exemplo anterior é livre, mas, por exemplo, $\mathbb{Z}/2\mathbb{Z} * \mathbb{Z}/2\mathbb{Z}$ não é. Um grupo livre não tem relações escritas nos geradores, nem \textit{entre} os geradores.

\begin{example}
    O grupo diedral infinito $D_\infty$ é o grupo das isometrias de $\mathbb{Z}$ (com a métrica induzida de $\mathbb{R}$). Este grupo é isomorfo
ao produto livre $\mathbb{Z}/2\mathbb{Z} * \mathbb{Z}/2\mathbb{Z}$. Por exemplo, a reflexão em \(0\)
e a reflexão em \(1/2\) fornecem geradores de \(D_\infty\) correspondentes aos
geradores óbvios de $\mathbb{Z}/2\mathbb{Z} * \mathbb{Z}/2\mathbb{Z}$.
\end{example}

Pode acontecer que, embora os geradores em si sejam mantidos livres de relações, queiramos impor condições que permitam que as palavras se misturem de alguma forma controlada. Este é o objetivo da próxima construção. Suponha que sejam dados grupos $H$, $G_{\alpha} $, $\alpha \in L$ e, para cada $\alpha \in L$,  um monomorfismo
$$\varphi_{\alpha} : H \to G_{\alpha}.$$

Definimos o \textit{produto livre com amalgamação:} \index{produto livre com amalgamação}
$$*_{\alpha} G_{\alpha} \;=\;
\langle \{G_{\alpha}\}_{\alpha\in L} \mid \varphi_{\alpha}(h)\varphi_{\beta}(h)^{-1}, \; h \in H, \, \alpha, \beta \in L \rangle.$$

Em outras palavras, além dos relatores em cada \(G_{\alpha}\), identificamos \(\varphi_{\alpha}(h)\) com \(\varphi_{\beta}(h)\) para cada \(h \in H\). Em comparação com o que foi feito antes, estamos identificando dentro do produto livre $* G_{\alpha}$ todas as cópias de um subgrupo em comum.

Uma notação usual para o produto livre com amalgamação, no caso de dois grupos $G_1$ e $G_2$, e quando os mapas $\varphi_1$ e $\varphi_2$ estão claros pelo contexto, é $$G_1 *_H G_2,$$
onde $H$ é o subgrupo comum (a menos de isomorfismo) de $G_1$ e $G_2$.

Caso $H = \{e\}$ seja o grupo trivial, então o produto amalgamado coincide com o produto livre dos grupos $G_{\alpha}$.

\begin{example}
O grupo $\mathrm{PSL}_2(\mathbb{Z}) = \mathrm{SL}_2(\mathbb{Z}) / \{\pm 1\}$ é isomorfo ao produto livre 
$$
\mathbb{Z}/2\mathbb{Z} * \mathbb{Z}/3\mathbb{Z}.
$$
Já  $\mathrm{SL}_2(\mathbb{Z}) $ é isomorfo ao produto livre amalgamado
$$
\mathbb{Z}/4\mathbb{Z} *_{\mathbb{Z}/2\mathbb{Z}} \mathbb{Z}/6\mathbb{Z}.$$
Neste último exemplo, os grupos cíclicos de ordens 4 e 6 dentro de $\mathrm{SL}_2(\mathbb{Z}) $ são gerados, respectivamente, por

$$\begin{pmatrix}
0 & 1 \\
-1 & 0
\end{pmatrix}
\quad \text{e} \quad
\begin{pmatrix}
0 & -1 \\
1 & 1
\end{pmatrix}.$$
\end{example}

Produtos livres com amalgamação surgem naturalmente em topologia. Por exemplo, pelo Teorema de Seifert–van Kampen, enunciado abaixo, o grupo fundamental de um espaço pontuado obtido pela colagem de vários componentes é um produto livre com amalgamação dos grupos fundamentais dos componentes sobre o grupo fundamental das interseções desses componentes (os subespaços envolvidos e suas interseções devem ser não vazios e conexos por caminhos).

\begin{thm}[Seifert-van Kampen]
Se um espaço topológico $X$ pode ser escrito como a união de subconjuntos abertos e conexos por caminhos $A_{\alpha}$, cada um deles contendo o ponto base $x_0\in  X$, e se cada interseção $A_{\alpha} \cap A_{\beta}$ é também conexa por caminhos, então o 
homomorfismo de grupos $\Phi : \displaystyle *_{\alpha} \pi_1(A_{\alpha})\to \pi_1
(X)$ é sobrejetivo. Se supomos ainda que cada interseção $A_{\alpha} \cap A_{\beta} \cap A_{\gamma}$ é conexa por caminhos, então  $\mathrm{Ker}(\Phi)$ é o subgrupo normal $N$ 
gerado por todos os elementos da forma $i_{\alpha \beta}(w)i_{\beta \alpha}(w)^{-1}$, com  $w \in \pi_1(A_{\alpha} \cap A_{\beta})$, onde  $i_{\alpha \beta} : \pi_1
(\pi_1(A_{\alpha} \cap A_{\beta})
)\to \pi_1
(A_{\alpha})$ é o homomorfismo induzido pela inclusão $\pi_1(A_{\alpha} \cap A_{\beta}) \to A_{\alpha}$. Portanto $\Phi$ induz um isomorfismo entre $\pi_1
(X)$ e $ \displaystyle*_{\alpha} \pi_1(A_{\alpha})/N$.    
\end{thm}

\begin{example}\label{ex:bitoro}
Considere a decomposição do bitoro $ X = T^2 \# T^2$, representada na Figura \ref{bitoro}:
$$X = A_1 \cup A_2,$$
onde $A_1$ e $A_2$ são toros com um disco removido, e $A_1 \cap A_2$ é homeomorfo a um anel.

Escolhendo um ponto base em $A_1 \cap A_2$, temos:
$$\pi_1(A_1) \cong F_2 = \langle a_1, b_1 \rangle, \quad
\pi_1(A_2) \cong F_2 = \langle a_2, b_2 \rangle
\mbox{ e} \quad
\pi_1(A_1 \cap A_2) \cong \mathbb{Z}.
$$

O gerador de $\pi_1(A_1 \cap A_2)$ corresponde ao laço da borda, que em cada $A_i$ é homotópico ao comutador
$[a_i,b_i] = a_i b_i a_i^{-1} b_i^{-1}$.

Pelo Teorema de Seifert-van Kampen,
$$
\pi_1(T^2 \# T^2)
\cong
\pi_1(A_1) \ast_{\pi_1(A_1 \cap A_2)} \pi_1(A_2),
$$
onde a amalgamação identifica os elementos
$[a_1,b_1] = [a_2,b_2].$

Portanto,
$$
\pi_1(T^2 \# T^2)
\cong
\langle a_1, b_1, a_2, b_2 \mid [a_1,b_1] = [a_2,b_2] \rangle.
$$
Equivalentemente,
$$
\pi_1(T^2 \# T^2)
\cong
\langle a_1, b_1, a_2, b_2 \mid [a_1,b_1][a_2,b_2] = 1 \rangle.
$$
\end{example}

\begin{figure}[H]
\begin{center}
\resizebox{4cm}{2cm}{
\begin{tikzpicture}
\draw[smooth] (0,1) to[out=30,in=150] (2,1) to[out=-30,in=210] (3,1) to[out=30,in=150] (5,1) to[out=-30,in=30] (5,-1) to[out=210,in=-30] (3,-1) to[out=150,in=30] (2,-1) to[out=210,in=-30] (0,-1) to[out=150,in=-150] (0,1);

 \fill[blue!40, opacity=0.5](0,1)
to[out=30,in=150] (2,1)to[out=-30,in=210] (3,1)to[out=90,in=270]  (3,-1) to[out=150,in=30] (2,-1) to[out=210,in=-30] (0,-1) to[out=150,in=-150] (0,1);

\fill[red!40, opacity=0.5] (2,1) to[out=-30,in=210] (3,1) to[out=30,in=150] (5,1) to[out=-30,in=30] (5,-1) to[out=210,in=-30] (3,-1) to[out=150,in=30] (2,-1) to[out=90,in=-90] (2,1);

\fill[white]
(0.4,0.1)
.. controls (0.8,-0.25) and (1.2,-0.25) .. (1.6,0.1)
.. controls (1.2,0.45) and (0.8,0.45) .. (0.4,0.1)
-- cycle;

\draw
(0.4,0.1)
.. controls (0.8,-0.25) and (1.2,-0.25) .. (1.6,0.1)
.. controls (1.2,0.45) and (0.8,0.45) .. (0.4,0.1);

\fill[white]
(3.4,0.1)
.. controls (3.8,-0.25) and (4.2,-0.25) .. (4.6,0.1)
.. controls (4.2,0.45) and (3.8,0.45) .. (3.4,0.1)
-- cycle;

\draw
(3.4,0.1)
.. controls (3.8,-0.25) and (4.2,-0.25) .. (4.6,0.1)
.. controls (4.2,0.45) and (3.8,0.45) .. (3.4,0.1);

\node at (2.5,0) {\tiny{$A_1 \cap A_2$}};
\node at (1,-1) {$A_1$};
\node at (4,-1) {$A_2$};
\end{tikzpicture}}
\end{center}
    \caption{Bitoro.}
    \label{bitoro}
\end{figure}

Por fim, definiremos o que é uma \textit{extensão HNN} \index{extensão HNN} de um grupo, com respeito a algum subgrupo. Esta é uma variação do
produto livre amalgamado, onde $G_1 = G_2$. Suponha que nos sejam dados um grupo $G$, um subgrupo $H\leq G$ e um monomorfismo $\phi: H \to G$. Então a extensão HNN de $G$ via $\phi$ é definida como
$$G *_{H, \phi}= \langle G , t \mid tht^{-1} = \phi(h), \, \forall h \in  H
\rangle. $$
É comum aparecer a notação $G *_{H}$ quando o monomorfismo $\phi$ for claro pelo contexto. As extensões HNN receberam esse nome em homenagem a G.~Higman, B.H.~Neumann e H.~Neumann, que foram os primeiros a estudar sistematicamente esses grupos.

\begin{example}
    A apresentação  $\langle a,t \mid tat^{-1}=a\rangle $  define uma extensão HNN do grupo 
$\langle a\rangle\cong\mathbb Z$ associada ao automorfismo identidade. 

Como a relação equivale a $at=ta$, obtemos $$\langle a,t \mid tat^{-1} = a\rangle  \cong \mathbb Z^2.$$ 
\end{example}

\begin{example}

Considere o grupo cíclico infinito $G= \langle a\rangle\cong\mathbb Z$. Os subgrupos $H=\langle a\rangle$ e $K=\langle a^2\rangle$ são isomorfos, via $\phi(a)=a^2$.

A extensão HNN associada a $\phi$ é
$$ \mathrm{BS}(1,2)= \langle a,t \mid tat^{-1}=a^2\rangle.$$
Este grupo é um caso particular da classe de grupos de Baumslag--Solitar $\mathrm{BS}(p,q)$. Mais geralmente, temos 
$$\mathrm{BS}(p,q)= \langle a,t \mid ta^pt^{-1}=a^q\rangle$$ 
é a extensão HNN de $G = \langle a\rangle$ com $H = \langle a^p\rangle$ e $\phi(a^p)=a^q$.
\end{example}
\begin{exercise}
    Se $H = \{e\}$, então $G *_{H} \simeq G*\mathbb{Z}$.
\end{exercise}

De maneira geral,  pode-se definir a extensão HNN simultânea de $G$ ao longo de uma coleção
de subgrupos isomorfos: dados $H_j \leq G,$ $j\in  J$ e imersões isomórficas $\phi_j : H_j \to G$, podemos definir o grupo
$$ G *_{\{\phi_j : H_j \to G, j \in J\}}= \langle G , t_j, j \in J \mid t_jht_j^{-1} = \phi_j(h), \, \forall h \in  H_j, j \in J
\rangle.$$

\section{Exemplos e problemas na teoria combinatória dos grupos}\label{exemplosteoriacombgrupos}
  
Esta seção é dedicada aos exemplos interessantes da teoria combinatória dos grupos e aos problemas algorítmicos na teoria.

\subsection{Exemplos}

\begin{enumerate}
\item \textit{Grupos de superfícies}\index{grupos de superfícies}: Generalizando o Exemplo \ref{ex:bitoro}, temos os grupos do tipo
$$G = \langle a_1, b_1,\ldots, a_g, b_g \mid \prod_{i=1}^{g}[a_i,b_i]\rangle, $$ que são grupos fundamentais de superfícies compactas, conexas, orientadas de gênero $g\geq 1$. Uma tal superfície é obtida identificando lados de um polígono com $4g$ lados, e cada par de lados identificado origina uma classe diferente no grupo fundamental da superfície. Na versão planar da superfície fica mais evidente a relação que aparece entre os lados, com o produto dos comutadores sendo trivial no grupo fundamental.

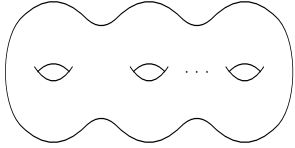
\begin{figure}[H]
\begin{center}
\resizebox{4cm}{2cm}{
\begin{tikzpicture}
\draw[smooth] (0,1) to[out=30,in=150] (2,1) to[out=-30,in=210] (3,1) to[out=30,in=150] (5,1) to[out=-30,in=210] (6,1) to[out=30,in=150] (8,1) to[out=-30,in=30] (8,-1) to[out=210,in=-30] (6,-1) to[out=150,in=30] (5,-1) to[out=210,in=-30] (3,-1) to[out=150,in=30] (2,-1) to[out=210,in=-30] (0,-1) to[out=150,in=-150] (0,1);
\node at (5.5,0) {$. \; \; . \; \; .$};
\draw[smooth] (0.4,0.1) .. controls (0.8,-0.25) and (1.2,-0.25) .. (1.6,0.1);
\draw[smooth] (0.5,0) .. controls (0.8,0.2) and (1.2,0.2) .. (1.5,0);
\draw[smooth] (3.4,0.1) .. controls (3.8,-0.25) and (4.2,-0.25) .. (4.6,0.1);
\draw[smooth] (3.5,0) .. controls (3.8,0.2) and (4.2,0.2) .. (4.5,0);
\draw[smooth] (6.4,0.1) .. controls (6.8,-0.25) and (7.2,-0.25) .. (7.6,0.1);
\draw[smooth] (6.5,0) .. controls (6.8,0.2) and (7.2,0.2) .. (7.5,0);

\end{tikzpicture}}
\end{center}
    \caption{Superfície compacta orientável de gênero $g$.}
    \label{genusg}
\end{figure}


\item \textit{Grupos de Artin de ângulos retos} (RAAG)\index{grupos de Artin de ângulos retos}:
Seja $\Gamma $ um grafo finito com conjunto de vértices $V=\{v_1, \ldots ,v_n\}$ e com conjunto de arestas $E = \{(v_i, v_j)\}_{ij}$ (cf. Seção~\ref{grafos}). 

O grupo de Artin de  ângulos retos correspondente a $\Gamma$, denotado por $A_{\Gamma}$, é o grupo com a seguinte apresentação: $$A_{\Gamma} : =  \left\langle V \mid [v_i,v_j], \mbox{ se }\ (v_i, v_j)\in E \right\rangle .$$ 

Por exemplo, se $\Gamma = (V,A)$ não tem arestas ($A=\emptyset$), então o grupo de Artin de ângulos retos $A_{\Gamma}$ é o grupo livre $F(V)$, de posto  $|V|$.
Se $\Gamma = (V,A)$ é um grafo completo, então o grupo de Artin de ângulos retos $A_{\Gamma}$ é o grupo $\Z^{n}$, onde $n = |V|$.

\item \textit{Grupos de Coxeter}:
\index{grupos de Coxeter} Seja $\Gamma = (V,E)$ um grafo finito simples. Associamos a cada aresta $e=(v_i, v_j) \in E$ um número natural $ m(e)  \neq 1$, chamado \textit{marcação} de $e$. Chamamos de \textit{grafo de Coxeter} \index{grafo de Coxeter} o par
$(\Gamma, m: E \to \N\setminus \{1\} )$. Definimos, a partir desse grafo o grupo de Coxeter $C_{\Gamma}$ com apresentação 
$$C_{\Gamma} = \left\langle V\mid v_i^{2}, (v_iv_j)^{m(e)} \mbox{ se existe uma aresta } e = (v_i,v_j)\right\rangle .$$

Por exemplo, se $\Gamma$ é um grafo com dois vértices $v_1$ e $v_2,$ e uma única aresta $e =  (v_1,v_2)$, com marcação $m$, então $C_{\Gamma}$ tem apresentação $\langle v_1, v_2 \mid v_1^{2},v_2^{2}, (v_1v_2)^{m} \rangle$, e portanto é isomorfo ao grupo diedral $D_{m}$.

\item \textit{Grupos de Artin}: 
\index{grupos de Artin}
Seja $(\Gamma, m: E \to \N\setminus \{1\} )$ um grafo de Coxeter. Definimos o grupo de Artin associado a $\Gamma$ por: 
$$A_{\Gamma} =  \left\langle V \mid \underbrace{v_iv_j\ldots }_{\text{m(e) termos}}= \underbrace{ v_jv_i\ldots}_{\text{m(e) termos}} \quad \mbox{se}\quad e =(v_i,v_j) \in E \right\rangle .$$ Note que  $A_{\Gamma}$ é um grupo de Artin de ângulos reto se, e somente se, $m (e) = 2$ para cada $e \in A$. 
Em geral, $C_{\Gamma}$ é o quociente de $A_\Gamma$ pelo subgrupo normal gerado pelo conjunto $\{v_i^{2}; v_i \in V \}$.

\item \textit{Grupo de Heisenberg discreto}\index{grupo de Heisenberg discreto}:
\begin{equation*} 
\begin{split}
  H_{2n+1} = \langle  x_1,\ldots, x_n,\, & y_1, \ldots, y_n, z \mid [x_i,z], [y_i,z], [x_i,x_j], \\
      &  [y_i,y_j], [x_i,y_j]z^{-\delta_{ij}}, \forall 1\leq i,j \leq n\rangle.\\
\end{split}
\end{equation*}

É possível verificar que 
\begin{equation*} 
  H_{2n+1} \cong \left\{  \begin{pmatrix}
1 & x_1 &  \cdots & x_n &  z \\
0 & 1 & \cdots&0 & y_n \\
\vdots  & \vdots  & \ddots & \vdots& \vdots \\
0 & 0 & \cdots &1 & y_1 \\
0 & 0 & \cdots &0 & 1 
\end{pmatrix} \mid x_i, y_j, z \in \Z \right\}.\\
\end{equation*}
Os grupos de Heisenberg são exemplos de grupos nilpotentes não-abelianos (cf. Seção~\ref{sec:sol-nilp-pol}).

\item \textit{Grupos de Baumslag--Solitar}:
\index{grupos de Baumslag--Solitar}
Dados $p,q\in\Z$, o grupo de Baumslag--Solitar associado a $p$ e $q$ é o grupo com apresentação
$$\mathrm{BS}(p,q): = \langle a, b \mid ab^{p}a^{-1}b^{-q} \rangle.$$

Todos os grupos de Baumslag--Solitar podem ser obtidos como extensões HNN de $\Z$ (veja Seção~\ref{sec:amalgam e HNN}). 
Observamos que $\mathrm{BS}(1,1) \cong \Z^2$, e $\mathrm{BS}(1,-1) \cong \pi_1(K)$, onde $K$ é a garrafa de Klein. 

Essa é uma classe interessante de grupos finitamente gerados com uma única relação que foi surpreendentemente útil para a construção de exemplos e contra-exemplos em combinatória e teoria geométrica de grupos, exemplos esses que marcam fronteiras entre diferentes classes de grupos (cf. \cite{BSgroups}). 

\item \textit{Grupos de Burnside}:
\index{grupos de Burnside}
Para $m,n\in \N$, definimos o grupo de Burnside por 
$$B(m,n) = \langle a_1,\ldots, a_n \mid \omega^{m}\rangle,$$
para qualquer palavra  $\omega$ nos geradores $ a_1,\ldots, a_n $.

O problema de Burnside consiste em determinar para quais inteiros positivos $m$, $n$ o grupo $B(m,n)$ é finito. A solução completa para este problema não é conhecida.

O próprio Burnside considerou alguns casos mais simples em seu artigo original e mostrou que $B(1, n)$ é o grupo cíclico de ordem $n$, e que $B(m, 2)$ é o produto direto de $m$ cópias do grupo cíclico de ordem $2$ e, portanto, esses casos são finitos. Os seguintes resultados adicionais são conhecidos (devido a Burnside, Sanov, M.~Hall): $B(m, 3)$, $B(m, 4)$ e $B(m, 6)$ são finitos para todos os valores de $m$. O caso particular de $B(2, 5)$ permanece em aberto.

Um grande avanço na resolução do problema de Burnside foi alcançado por P.~S.~Novikov e S.~I.~Adian em 1968. Usando um argumento combinatório complicado, eles demonstraram que, para cada número ímpar suficientemente grande $n$, o grupo $B(2,n)$ é infinito.
\end{enumerate}

\subsection{Problemas algorítmicos na teoria combinatória dos grupos}
No início do século XX,  Max Dehn introduziu a ideia de apresentações de grupos como uma ferramenta compreensível para estudar grupos infinitos, até então pouco compreendidos. Dehn percebeu que grupos infinitos frequentemente apareciam no estudo de topologia. Por exemplo, os \textit{grupos de superfície}, bem como os grupos fundamentais de uma \textit{esfera de homologia} (uma variedade que tem os mesmos grupos de homologia de uma esfera $S^n$), são  infinitos. Em seu artigo \cite{Dehn1911}, Dehn apresenta os seguintes problemas para grupos que possuem apresentação finita:

\begin{itemize}
\item {\bf Problema da Palavra.} Seja $G=\langle S \mid R\rangle$ um grupo finitamente apresentado. Construir ou provar a existência de  uma máquina de Turing (um algoritmo) tal que, dada uma palavra $\omega$, esse algoritmo determina se $\omega$ representa o elemento trivial de $G$, isto é, se $\omega \in \langle \langle R \rangle \rangle.$

\item {\bf Problema da Conjugação.} Seja $G=\langle S \mid R\rangle$ um grupo finitamente apresentado. Construir um algoritmo que, tendo como entrada um par de palavras $v,w$ no alfabeto $ S\cup S^{-1}$, determina se $v$ e $w$ representam elementos conjugados de $G$, isto é, se existe $g \in G$ tal que $[w] = g^{-1} [v] g $.

\item {\bf Problema do Isomorfismo.} Dados dois grupos finitamente apresentados $G_i= \langle S_i \mid R_i\rangle$, com $ i=1,2 $, determinar se $G_1$ é isomorfo a $G_2$.
\end{itemize}

 Os mais importantes resultados no artigo de Dehn de 1911 foram as soluções dos problemas da palavra e da conjugação para a classe de grupos de superfície. Há também outros artigos de Dehn com provas alternativas. Na primeira prova, ele usa conceitos de distância e área em grafos de Cayley associados aos grupos e relaciona os problemas algébricos listados com a geometria de tesselações do plano hiperbólico, enquanto a última é quase completamente combinatorial. Para mais sobre a história de seu trabalho, veja \cite{Peifer}.

Um pouco depois da morte de Dehn, nos anos 1950s, Novi\-kov \cite{Novikov} e Boone \cite{Boone} provaram de forma independente que o problema da palavra para grupos finitamente apresentados em geral é algoritmicamente insolúvel. Eles mostram que existem grupos finitamente apresentados para os quais nenhum algoritmo resolve o problema da palavra.

Outros problemas que também se mostraram insolúveis para grupos finitamente apresentados são: 
\begin{itemize}
    \item {\bf Problema de Trivialidade.} Dada uma apresentação finita $G= \langle S \mid R\rangle $, determinar algoritmicamente se $G$ é trivial.
    
    \item \textbf{Problema de Conjugação Simultânea}. 
    Dadas n-uplas $(v_1,\ldots v_n)$ e $(w_1,\ldots, w_n)$ de palavras em $X\cup  X^{-1}$ e uma apresentação finita $G = \langle X \mid R \rangle$, determinar se existe $g \in G$ tal que $[w_i] = g^{-1}[v_i]g, \, \forall i = 1, \ldots n$.
    \item \textbf{Problema do Mergulho.} Dados dois grupos finitamente apresentados $G_i = \langle X_i \mid R_i\rangle, \, i =1, 2$, determinar se $G_1$ é ou não isomorfo a algum subgrupo de $G_2$.
    
    \item \textbf{Problema da Associação (Membership Problem).}  Sejam $G$ um grupo finitamente apresentado, $h_1\ldots,  h_k \in G$ e $H$ o subgrupo de $G$ gerado pelos elementos $h_i$. Dado um elemento $g \in G$, determinar se $g\in H$.
\end{itemize}

Isso decorre de um teorema geral que foi provado independentemente por Adian \cite{Ad57}, na União Soviética e Rabin \cite{Rabin}, nos Estados Unidos. \index{teorema de Adian--Rabin} Eventualmente, uma situação semelhante a respeito de provas independentes pode acontecer com outros resultados que aparecem neste livro.

Fridman \cite{Fridman} provou que existem grupos para os quais o problema da palavra é solúvel, mas o da conjugação não é. Há também exemplos de grupos hiperbólicos com problema da palavra solúvel, mas problema da associação insolúvel. Mais ainda, há exemplos de grupos finitamente apresentados onde o problema da conjugação é solúvel, mas o problema da conjugação simultânea é insolúvel para $n>2$.
Referimo-nos aos artigos de pesquisa \cite{AdDu00} e \cite{Miller92} para informações mais detalhadas sobre problemas de decisão na teoria dos grupos. Mais recentemente, há estudos que discutem a decidibilidade desses problemas em tempo polinomial.

\medskip

A área de Teoria Geométrica de Grupos tem como um dos objetivos estudar sob quais hipóteses geométricas nos grupos e em seus subgrupos, um problema de um dos tipos acima é algoritmicamente solúvel. Alguns exemplos de solubilidade do problema da palavra são dados pelos seguintes resultados:

\begin{proposition}
Em todo grupo livre de posto finito, o problema da palavra é solúvel.
\end{proposition}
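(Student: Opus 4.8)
O plano é reduzir a questão à forma normal das palavras reduzidas. Como $G$ é livre de posto finito, identificamo-lo com $F(S)$, onde $S=\{s_1,\ldots,s_n\}$ é uma base finita, e a apresentação correspondente é $\langle S \mid \emptyset\rangle$, de modo que $\langle\langle \emptyset\rangle\rangle=\{e\}$. Assim, o problema da palavra se traduz no seguinte: dada uma palavra $\omega$ no alfabeto $S\cup S^{-1}$, decidir se $\omega$ representa o elemento trivial de $G$, o que ocorre se, e somente se, $\omega$ é equivalente (no sentido da relação $\sim$ sobre $W(S)$) à palavra vazia.

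Primeiro descreveria o algoritmo de \emph{redução livre}: dada $\omega$, varremos a palavra procurando um par de letras consecutivas da forma $xx^{-1}$ ou $x^{-1}x$ com $x\in S$; se existir, eliminamos esse par, obtendo uma palavra de comprimento menor, e repetimos enquanto houver algum par cancelável. Como cada redução diminui o comprimento em exatamente $2$, o procedimento termina após no máximo $\pint{|\omega|/2}$ passos, fornecendo assim uma máquina de Turing que sempre para e cuja saída é uma palavra reduzida $\bar\omega$ equivalente a $\omega$.

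A correção do algoritmo repousa inteiramente na Proposição~\ref{proposition1}: como cada palavra de $W(S)$ é equivalente a uma \emph{única} palavra reduzida, o resultado $\bar\omega$ não depende da ordem em que efetuamos os cancelamentos, e $\bar\omega$ é precisamente o representante canônico da classe de $\omega$ em $F(S)$. Logo $\omega$ representa $e$ em $G$ se, e somente se, $\bar\omega=1$, condição que o algoritmo verifica trivialmente ao término da execução. Isso estabelece a solubilidade do problema da palavra.

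A parte que poderia parecer delicada — garantir que a redução esteja bem definida, isto é, a confluência dos cancelamentos independentemente da ordem escolhida — não precisa ser reprovada aqui: é exatamente o conteúdo da Proposição~\ref{proposition1}, obtida por meio das translações $L_a$, que já está à nossa disposição. Uma vez invocada a unicidade da forma reduzida, a conclusão segue imediatamente; vale ainda observar que o algoritmo é eficiente, rodando em tempo linear no comprimento de $\omega$.
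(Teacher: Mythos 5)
Sua prova está correta e segue essencialmente o mesmo caminho da prova do livro: reduzir livremente a palavra, cancelando recursivamente os pares $x x^{-1}$ e $x^{-1}x$ até obter uma palavra reduzida, e verificar se ela é vazia. A única diferença é que você torna explícito o ponto que a prova do texto deixa implícito — a invocação da Proposição~\ref{proposition1} para garantir a unicidade da forma reduzida (e, portanto, a correção do algoritmo independentemente da ordem dos cancelamentos) — o que é um cuidado bem-vindo, mas não constitui uma abordagem diferente.
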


\begin{proof}
Dada uma palavra $w$ nos geradores $x_i$ de $F$, podemos cancelar recursivamente os possíveis pares $x_ix_i^{-1}$ ou $x_i^{-1}x_i$ em $W$. Eventualmente, em finitos passos, este processo resulta numa palavra reduzida $w_0$ equivalente a $w$. Se $w_0$ é não vazia, então $w$ representa um elemento não trivial de $F$. Caso $w_0$ seja vazia, então $[w] = 1$ em $F$.
\end{proof}

Uma ferramenta importante para definir solubilidade do problema da palavra é a função de Dehn, cuja definição veremos  no Capítulo \ref{cap8} (veja a Definição \ref{dehnfunction}). O seguinte resultado, devido a Gersten \cite{gersten}, caracteriza os grupos para os quais o problema da palavra é solúvel.
\begin{thm}
O problema da palavra é solúvel em um grupo $G$ se, e somente se, sua \textit{função de Dehn} for recursiva.

\end{thm}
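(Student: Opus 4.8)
O plano é provar as duas implicações, reduzindo-as a um mesmo lema de decidibilidade. Recordando a definição da função de Dehn (Definição~\ref{dehnfunction}): para $w \in F(S)$ tem-se $w =_G 1$ se, e somente se, $w \in \langle\langle R\rangle\rangle$, e neste caso a \emph{área} $\mathrm{Area}(w)$ é o menor $N$ para o qual $w = \prod_{i=1}^{N} u_i r_i^{\varepsilon_i} u_i^{-1}$ em $F(S)$, com $r_i \in R$, $\varepsilon_i = \pm 1$ e $u_i \in F(S)$; além disso, $\delta(n) = \max\{\mathrm{Area}(w) \mid |w|\le n,\ w=_G 1\}$. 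Escrevo $C = \max_{r\in R}|r|$ e suponho, como de praxe, que $S$ e $R$ são finitos.

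O passo central é um lema de limitação dos conjugadores: se $\mathrm{Area}(w) \le N$, então $w$ admite uma expressão como acima em que cada conjugador satisfaz $|u_i| \le g(|w|,N)$, para uma função computável $g$ que depende apenas de $C$. Eu o provaria passando a um diagrama de van Kampen reduzido para $w$ com no máximo $N$ faces (lema de van Kampen): a soma dos comprimentos das fronteiras das faces não excede $NC$; contando incidências aresta--face (cada aresta interior pertence a duas faces, cada aresta da fronteira a uma), conclui-se que o número total de arestas é limitado por uma função computável de $N$, $C$ e $|w|$ (por exemplo, $NC + |w|$). Como cada conjugador $u_i$ pode ser lido como um caminho no $1$-esqueleto do diagrama, seu comprimento é controlado por esse número.

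Desse lema decorre que o predicado ``$\mathrm{Area}(w) \le N$'' é \emph{decidível}: basta enumerar os finitos produtos de no máximo $N$ conjugados de relatores com conjugadores de comprimento $\le g(|w|,N)$, reduzir livremente cada um e compará-lo com $w$. Com isso, a implicação ($\Leftarrow$) é imediata: se $\delta$ é recursiva, dado $w$ calculo $N = \delta(|w|)$; então $w =_G 1$ se, e somente se, $\mathrm{Area}(w)\le N$, pois $w=_G 1$ implica $\mathrm{Area}(w)\le\delta(|w|)=N$, e reciprocamente $\mathrm{Area}(w)\le N<\infty$ já garante $w\in\langle\langle R\rangle\rangle$. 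A busca finita acima decide essa condição, resolvendo o problema da palavra.

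Para a implicação ($\Rightarrow$), suponho o problema da palavra solúvel e computo $\delta(n)$ assim. Como $S$ é finito, existem apenas finitas palavras de comprimento $\le n$; aplico o algoritmo do problema da palavra para isolar aquelas que são triviais em $G$. Para cada uma dessas (sabidamente em $\langle\langle R\rangle\rangle$), calculo $\mathrm{Area}(w)$ testando ``$\mathrm{Area}(w)\le N$'' para $N=0,1,2,\ldots$ pela busca finita do lema; o processo termina porque a área é finita. Tomando o máximo das áreas obtidas, obtenho $\delta(n)$, o que mostra que $\delta$ é recursiva. Espero que o principal obstáculo seja exatamente o lema de limitação dos conjugadores: sem ele, a equivalência ``$w=_G 1 \Leftrightarrow \mathrm{Area}(w)\le N$'' seria apenas semidecidível, pois poder-se-ia confirmar $\mathrm{Area}(w)\le N$ enumerando expressões, mas nunca refutá-lo. É a combinatória dos diagramas de van Kampen (Capítulo~\ref{cap8}) que converte essa semidecisão em decisão efetiva, sustentando ambas as direções.
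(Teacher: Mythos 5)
Sua prova está correta, mas não há no livro uma demonstração com a qual compará-la: o texto apenas enuncia o teorema, atribuindo-o a Gersten \cite{gersten}, e remete a definição da função de Dehn ao Capítulo~\ref{cap8}. O argumento que você apresenta é essencialmente a prova padrão desse resultado, e o seu diagnóstico do ponto central está certo: toda a dificuldade se concentra no lema de limitação dos conjugadores, que converte o predicado ``$\mathrm{Area}(w)\le N$'' de semidecidível em decidível; sem ele, a direção ($\Leftarrow$) falharia e, na direção ($\Rightarrow$), seria possível certificar cotas superiores para a área, mas nunca refutá-las, logo não se calcularia o valor exato de $\delta(n)$. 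Dois reparos menores. Primeiro, na contagem de incidências, a afirmação de que ``cada aresta da fronteira pertence a uma face'' não é exata: um diagrama de van Kampen pode conter arestas que não bordeiam face alguma (partes em forma de árvore); tais arestas são percorridas duas vezes pelo circuito de fronteira, de modo que a cota $NC+|w|$ que você usa permanece válida --- as arestas em faces somam no máximo $NC$ e as demais no máximo $|w|$. Segundo, note que a Definição~\ref{dehnfunction} do livro chama de função de Dehn \emph{qualquer} cota isoperimétrica $f$ com $A(w)\le f(|w|_S)$, e não a função minimal $\delta$ que você define; isso não cria problema, pois seu argumento cobre as duas leituras de uma vez: a existência de uma cota recursiva $f$ já resolve o problema da palavra (basta usar $N=f(|w|)$ no seu teste finito), e a solubilidade do problema da palavra torna computável a própria função minimal $\delta$, que em particular é uma função de Dehn recursiva no sentido do livro.
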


\begin{exercise}
Mostre que um grupo para o qual o problema da conjugação ou o problema da associação é solúvel também terá solução para o problema da palavra.
\end{exercise}

\chapter{Geometria das ações de grupos}
\label{cap2}

Neste capítulo, revisamos o conceito de ação de um grupo sobre um conjunto, uma ferramenta fundamental para compreender como estruturas algébricas se manifestam como simetrias de objetos matemáticos. As noções aqui apresentadas serão importantes para a compreensão do Teorema de Milnor–Schwarz, no Capítulo \ref{cap4}, que relaciona grupos finitamente gerados com espaços métricos nos quais esses grupos agem de forma adequada.

\begin{definition}
Seja $G$ um grupo e $ X$ um conjunto qualquer. Uma \textit{ação à esquerda de $G$ em $X$} \index{ações de grupos}
é uma função 
\begin{eqnarray}
	\label{action}
\rho & : & G \times X   \rightarrow  X \nonumber \\
& & (g, x)  \longmapsto  gx =: \rho_g(x)
\end{eqnarray}
 tal que:
 \begin{enumerate}[(i)]
     \item $\rho_e(x) = x$ para todo $x\in X$;
     \item $\rho_{g_1g_2}(x) = \rho_{g_1}(\rho_{g_2}(x))$ para todos $g_1, g_2 \in G$ e $x \in X$.
 \end{enumerate}

\end{definition}
Podemos definir analogamente ações à direita de $G$ em $X$. A única diferença estará na ordem em que o produto de dois elementos agem. Para ações à direita, escrevemos $\rho_g(x)=:xg$ e substituímos a segunda condição por  $\rho_{g_1g_2}(x) = \rho_{g_2}( \rho_{g_1}(x))$.

Em geral, quando não especificarmos com que tipo de ação de $G$ em $X$ estamos lidando, esta será uma ação à esquerda, denotada por $G \acts X$. Toda ação à esquerda pode ser vista como um homomorfismo de $G$ no grupo $\Bij(X)$, das bijeções de $X$ sobre ele mesmo. A ação será chamada \textit{efetiva}\index{ação efetiva} ou \textit{fiel} se esse homomorfismo for injetivo. 

\section{Noções básicas em ações de grupos} 

Sejam $G$ um grupo e $X$ um conjunto onde $G$ age. Apresentamos a seguir algumas propriedades de ações de grupos que surgem naturalmente em diversos contextos e que poderão ser usadas ao longo do texto.

Se $G$ e $X$ são espaços topológicos, dizemos que a ação é \textit{contínua} se o mapa \eqref{action} for contínuo. Nesse caso, para cada $g \in G$, a aplicação $\rho_g : X \to X$, dada por $\rho_g(x)=gx$, é um homeomorfismo, com inversa $\rho_{g^{-1}}$. Um exemplo básico é a ação de $(\mathbb{R},+)$ em si mesmo por translações, dada por $\rho_t(x) = x+t$.

Quando $X$ é um espaço métrico, é comum considerar ações que preservam a estrutura métrica. Dizemos que a ação é \textit{isométrica} se cada $\rho_g$ é uma isometria de $X$. Ainda nesse contexto, a ação é dita \textit{colimitada} se existem $R \in \mathbb{R}$ e $x \in X$ tais que
$$
X=\bigcup_{g \in G} g(B(x,R)),
$$
onde $B(x,R)=\{y\in X \mid \dist(x,y)<R\}$ é a bola de centro $x$ e raio $R$. Por exemplo, a ação de $\mathbb{Z}$ em $\mathbb{R}$ por translações é tanto isométrica quanto colimitada.

No caso em que $X=M$ é uma variedade diferenciável e $G$ é um grupo de Lie, dizemos que a ação é \textit{suave} (ou \textit{diferenciável}) se o mapa \eqref{action} for diferenciável. Nesse caso, cada $\rho_g$ é um difeomorfismo, com inversa $\rho_{g^{-1}}$. Um exemplo é a ação  em $\mathbb{R}^n$ do grupo ortogonal $\mathrm{O}(n)$, formado por matrizes $A_{n\times n}$ reais que cumprem $A^{t}A = AA^{t} = I_n$, dada por $A\cdot x = Ax$.

Dada uma ação $G \acts X$ e um ponto $x \in X$, a \textit{órbita} de $x$ é o conjunto $Gx=\{gx \mid g \in G\}$, enquanto o \textit{estabilizador} (ou \textit{grupo de isotropia}) de $x$ é o grupo $G_x=\{g \in G \mid gx=x\}\leq G$. 

Nesse contexto, podemos definir uma relação de equivalência em $ X $, a saber,
$$x \sim y \mbox{ se, e somente se, existe } g \in G \mbox{ tal que } y = gx.$$
\begin{definition}
 O conjunto das classes de equivalência da relação acima será denotado por $ X/G $ e chamado de \textit{quociente} \index{quociente pela ação de um grupo} de $ X $ pelo grupo  $ G $. Esse quociente corresponde ao conjunto de órbitas da ação: $ X/G = \{Gx \mid x\in X\} $.
 \end{definition}

Dizemos que a ação é \textit{livre} se $G_x=\{e\}$ para todo $x \in X$ e  \textit{transitiva} se $Gx=X$ para todo $x \in X$, isto é, se há apenas uma órbita. Ações livres aparecem usualmente em contextos onde se quer passar alguma estrutura boa vinda de $X$ para o quociente $X/G$. Já as ações transitivas são importantes, por exemplo, em contextos geométricos, pois propriedades locais em $X$ se propagam globalmente, permitindo que se entenda a geometria do espaço a partir da geometria em um ponto.
Por exemplo, o grupo $\mathbb{Z}^n$ age livremente em $\R^n$ por translações inteiras.
Os grupos de isometrias de $\R^2, \mathbb{H}^2$ e $\mathbb{S}^2$ agem transitivamente nesses espaços. 


Voltando ao contexto de ações contínuas de grupos topológicos, uma noção importante é a de \textit{ação própria}, que ocorre quando o mapa
$$ G \times X \to X \times X, \quad (g,x) \mapsto (gx,x),
$$
é próprio, ou seja, a pré-imagem de compactos é compacta. 

No caso em que $G$ é um grupo discreto agindo continuamente em um espaço topológico $X$,
uma noção frequentemente mais adequada  e, em geral, mais fraca é a de
\textit{ação propriamente descontínua}. Dizemos que a ação de $G$ em $X$ é propriamente descontínua se, para todo compacto
$K \subset X$, o conjunto
$$
\{g \in G \mid gK \cap K \neq \emptyset\}
$$
é finito.
Por exemplo, a ação de $\mathbb{Z}$ em $\mathbb{R}$ por translações inteiras é
propriamente descontínua, enquanto a ação de $\mathbb{R}$ em si mesmo
por translações não satisfaz essa propriedade.

Ações propriamente descontínuas desempenham um papel importante na geometria hiperbólica. Em geral, quando um grupo discreto $\Gamma$ age por isometrias no espaço hiperbólico $\mathbb{H}^n$
 de maneira propriamente descontínua, o quociente $\mathbb{H}^n /\Gamma$ herda uma estrutura geométrica bem comportada, a de \textit{orbifold} hiperbólica. Se a ação é livre, esse quociente terá estrutura de variedade hiperbólica. Mais geralmente, se um grupo $G$ age livremente e propriamente descontinuamente em um espaço topológico simplesmente conexo $X$, então o espaço topológico
$X/G$, com a topologia quociente, tem grupo fundamental isomorfo a $G$.

\begin{exercise}
    Mostre que, se $X$ é um espaço métrico próprio (onde bolas fechadas são compactas) e
$G$ é um grupo discreto que age por isometrias em $X$, então a ação é própria
se, e somente se, é propriamente descontínua.
\end{exercise} 

Em particular, nos principais
exemplos provenientes da geometria, como as ações por isometrias em espaços
hiperbólicos mencionadas acima, essas duas noções coincidem. Deixamos como referência o artigo recente de Kapovich \cite{kapovich2024note}, que fornece relações entre outras noções semelhantes de ações próprias.

Por fim, ainda no contexto de ações contínuas, dizemos que uma ação é \textit{discreta} se o conjunto $\{\rho_g \mid g \in G\}$ é discreto no espaço dos homeomorfismos de $X$, munido da topologia compacto-aberta.
 
Caso $X$ seja um espaço topológico, essa ação será dita \textit{cocompacta} \index{ação cocompacta} se o quociente $X/G$ for compacto quando munido da topologia quociente. Além disso, se a ação for isométrica e propriamente descontínua, dizemos que é uma \textit{ação geométrica}\index{ação geométrica}.

\section{Lema do pingue-pongue}
O lema que apresentaremos a seguir remonta ao trabalho de Klein. Na teoria combinatória de grupos, é chamado de critério de Klein ou teorema de combinação de Klein. Mais tarde, foi usado por Tits na prova de seu famoso resultado, agora conhecido como a alternativa de Tits \cite{Tits72}. Desde então, o lema é frequentemente chamado de lema de Tits ou lema do pingue-pongue. Este resultado básico tem várias aplicações importantes em geometria e teoria dos grupos.

\begin{lemma}[Pingue-pongue]
\label{pingponglemma}
Sejam $X$ um conjunto e $g,h : X \to X$ bijeções. Se $A, B$ são subconjuntos não vazios  de $X$, tais que $A \nsubseteq B$ e $$g^{n}(A) \subset B \quad\forall n \in \Z\setminus\{0\},$$
 $$h^{m}(B) \subset A \quad\forall m \in \Z\setminus\{0\},$$ então $g$ e $h$ geram um subgrupo livre de posto $2$ de $\Bij(X)$.
\end{lemma}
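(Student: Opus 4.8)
The plan is to show that the natural homomorphism $\Phi\colon F_2 \to \Bij(X)$ sending the two free generators to $g$ and $h$ is injective; since its image is $\langle g,h\rangle$, this exhibits $\langle g,h\rangle$ as a free group of rank $2$. Concretely, I must check that every nontrivial reduced word $w$ in the free group maps to a bijection $\Phi(w)\neq\id$. Writing $w$ in syllable normal form $w=g^{a_0}h^{b_1}g^{a_1}\cdots$ with all exponents nonzero, there are four cases according to whether the first and last syllables are powers of $g$ or of $h$.

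First I would settle the basic case, where $w$ both begins and ends with a power of $g$ (reading $\Phi(w)$ as a composition, applied right to left): $w=g^{a_k}h^{b_k}\cdots h^{b_1}g^{a_0}$. Since $A\nsubseteq B$, pick a witness $a\in A\setminus B$, and chase this point through the word one syllable at a time. Each power $g^{a_i}$ applied to a point of $A$ lands in $B$ (by the hypothesis $g^{n}(A)\subset B$ for $n\neq 0$), and each power $h^{b_i}$ applied to a point of $B$ lands in $A$ (by $h^{m}(B)\subset A$ for $m\neq 0$). Because the syllables alternate and both the innermost and outermost syllables are powers of $g$, the orbit visits $A,B,A,B,\dots$ and ends in $B$. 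Hence $\Phi(w)(a)\in B$, while $a\notin B$, so $\Phi(w)(a)\neq a$ and therefore $\Phi(w)\neq\id$.

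The remaining cases I would reduce to this one by conjugating inside $F_2$ by a power of the generator mapping to $g$. For a word $w$ not already of the above form, replace it by $w'=g^{n}wg^{-n}$ for a suitable integer $n\neq 0$: choosing $n$ outside the finite set of values that would cause cancellation at the two ends (for instance $n\notin\{0,-a_0,a_k\}$, where $a_0,a_k$ denote the leading and trailing $g$-exponents when present), the word $w'$ is reduced and begins and ends with a nonzero power of $g$. By the basic case $\Phi(w')\neq\id$; since $\Phi(w')=\Phi(g)^{n}\,\Phi(w)\,\Phi(g)^{-n}$ is a conjugate of $\Phi(w)$ in $\Bij(X)$, it follows that $\Phi(w)\neq\id$ as well. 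This covers every nontrivial reduced word, so $\Phi$ is injective and $\langle g,h\rangle\cong F_2$ is free of rank $2$.

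I expect the only real subtlety to be the asymmetry of the hypotheses: unlike the textbook ping-pong setup with two disjoint sets, here the inclusions for $g$ and $h$ feed into $B$ and $A$ respectively and we are given only $A\nsubseteq B$. This is exactly why the point-chasing must be run on words bracketed by powers of $g$, so that the witness $a\in A\setminus B$ can detect nontriviality, and why the conjugation trick is needed to bring an arbitrary word into that shape. As a byproduct the argument shows that $g$ and $h$ each have infinite order, which is consistent with $\langle g,h\rangle$ being free.
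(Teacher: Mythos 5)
Your proposal is correct and follows essentially the same route as the paper's own proof: reduce to words beginning and ending in a nonzero power of $g$ via conjugation by a suitable power of $g$ (avoiding the finitely many exponents that cause cancellation), then ping-pong a point of $A\setminus B$ through the alternating syllables to land in $B$, contradicting triviality. The only cosmetic difference is that you phrase the conclusion with an explicit witness $a\in A\setminus B$, while the paper states the set inclusion $\omega(A)\subset B$ and derives the contradiction $A\subset B$; these are the same argument.
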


\begin{proof}
Seja $\omega$ uma palavra reduzida em $\{g,h,g^{-1},h^{-1}\}$, com $\omega \neq 1$. Queremos provar que $\omega$ não é igual a $1$, como elemento de $\Bij(X)$. É suficiente considerar $\omega$ da seguinte forma:
\begin{equation}\label{pingpong}
\omega = g^{n_1}h^{m_1}g^{n_2}h^{m_2}\ldots g^{n_k}, \mbox{ com }  n_i, m_i \in \Z\setminus\{0\}.
\end{equation} 

 De fato: \begin{itemize}
 \item Se $\omega = h^{m_1}g^{n_1}h^{m_2}\ldots g^{n_k}h^{m_{k+1}},$ então $g\omega g^{-1}$ é da forma \eqref{pingpong}, e $g\omega g^{-1}\neq 1$ implica $\omega \neq 1$.
 \item Se $\omega =  g^{n_1}h^{m_1}g^{n_2}h^{m_2}\ldots g^{n_k}h^{m_k},$ então para $m \neq -n_1$, tem-se que $g^{m}\omega g^{-m}$ é da forma \eqref{pingpong}, e $g^m\omega g^{-m}\neq 1$ implica $\omega \neq 1$.
 \item Se $\omega = h^{m_1} g^{n_1}h^{m_2}\ldots g^{n_k},$ então para $m \neq n_k$, $g^{m}\omega g^{-m}$ é da forma \eqref{pingpong}, e novamente  $g^m\omega g^{-m}\neq 1$ implica $\omega \neq 1$.
\end{itemize} 
Agora, se $\omega$ é da forma \eqref{pingpong}, então
\begin{equation*}
    \begin{split}
        \omega(A) &= g^{n_1}h^{m_1}g^{n_2}h^{m_2}\ldots g^{n_k}(A) \\
        &\subset g^{n_1}h^{m_1}g^{n_2}h^{m_2}\ldots h^{n_{k-1}}(B) \\
        &\subset \ldots \subset  g^{n_1}(A) \subset B.
    \end{split}
\end{equation*}
Caso tivéssemos $\omega=1$, concluiríamos que $A \subset B$, contradizendo as hipóteses.
\end{proof}

\begin{exercise}
Suponha que $g : X\to X$ seja uma bijeção tal que $g(A) \subsetneq A$, para algum conjunto $A \subset X$. Mostre que $g$ tem ordem infinita.
\end{exercise}

\begin{example}[Subgrupos livres do grupo modular]
\label{exemplogrupomodular}

Para cada inteiro $k\geq 2$, as matrizes 
$$
g = \left( \begin{array}{cc}
1 & k \\ 
0 & 1
\end{array} \right)\quad\mbox{e}\quad
h = \left( \begin{array}{cc}
1 & 0 \\ 
k & 1
\end{array} \right)$$
geram um subgrupo livre de $\mathrm{SL}(2,\Z)$. 

Com efeito, o grupo $\mathrm{SL}(2,\Z)$ age em $\mathbb{H}^{2} =\{z \in \mathbb{C} \mid \im{(z)} > 0\}$ como transformações de Möbius: 
$$z \mapsto \dfrac{az+b}{cz+d}.$$
A matriz $g$ age como translação horizontal $z\mapsto z+k,$ enquanto $$
h = \left( \begin{array}{cc}
0 & 1 \\ 
-1 & 0
\end{array} \right)\left( \begin{array}{cc}
1 & -k \\ 
0 & 1
\end{array} \right)\left( \begin{array}{cc}
0 & -1 \\ 
1 & 0
\end{array} \right)$$ 
age 
 levando o interior de um disco limitado $C$, de centro $-\frac{1}{k}$ e raio $\frac{1}{k}$ no exterior de um disco limitado $C'$, de centro $\frac{1}{k}$ e raio $\frac{1}{k}$ . Aplicamos o lema do pingue-pongue  para os elementos $g, h\in \mathrm{SL}(2,\Z)$ e aos conjuntos $$A=\left\{z\in\mathbb{H}^2; -\frac{k}{2}<\Re(z)<\frac{k}{2}\right\} \mbox{ e } B= \mathbb{H}^2\setminus \overline{A}.$$
Note que $g^{n}(A) \subset B$ e $h^{n}(B) \subset A$. Segue portanto que $g$ e $h$ geram um subgrupo livre de $\mathrm{SL}(2,\Z)$.
\end{example}

\noindent\textit{Segunda prova.}
Outra forma de demonstrar a afirmação feita no Exemplo~\ref{exemplogrupomodular} é considerando a ação linear de $\mathrm{SL}(2,\Z)$ em $\R^2$ dada por 
$$\left( \begin{array}{cc}
a & b \\ 
c & d
\end{array} \right) \left( \begin{array}{c}
x\\ 
y
\end{array} \right) = \left( \begin{array}{c}
ax+by\\ 
cx+dy
\end{array} \right),$$
aplicando o Lema~\ref{pingponglemma} para as matrizes $g$ e $h$, e para os conjuntos 
$A:= \left\{ \left( \begin{array}{c}
x\\ 
y
\end{array} \right)\Big|\ |x| < |y| \right\}$ e $B:= \left\{\left( \begin{array}{c}
x\\ 
y
\end{array} \right)\Big|\ |x| > |y|\right\}$.

\begin{remark}
O Exemplo~\ref{exemplogrupomodular} não funciona para $k=1$, já que $$g^{-1}hg^{-1} = \left( \begin{array}{cc}
1 & -1 \\ 
0 & 1
\end{array} \right)\left( \begin{array}{cc}
1 & 0 \\ 
1 & 1
\end{array} \right)\left( \begin{array}{cc}
1 & -1 \\ 
0 & 1
\end{array} \right) = \left( \begin{array}{cc}
0 & 1 \\ 
-1 & 0
\end{array} \right).$$
Assim, $(g^{-1}hg^{-1})^4 = \id$, e o grupo gerado por $g$ e $h$ não é livre.
\end{remark}

O seguinte enunciado é uma outra versão do lema do pingue-pongue que é frequentemente usado.
\begin{lemma}
Sejam $g$ e $h$ bijeções de um conjunto $X$ e sejam $A^+$, $A^-$, $B^+$ e $B^-$ subconjuntos não vazios disjuntos de $X$ tais que 
$$h(A^+ \cup B^- \cup B^+) \subset A^+,\ g(B^+ \cup A^- \cup A^+) \subset B^+,$$ $$h^{-1}(A^- \cup B^- \cup B^+) \subset A^- \mbox{ e }\ g^{-1} (B^- \cup A^- \cup A^+) \subset B^-.$$ 
Então $g$ e $h$ geram um subgrupo livre de posto $2$ de $\Bij(X)$.
\end{lemma}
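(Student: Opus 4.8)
The plan is to run the ping-pong argument of Lemma~\ref{pingponglemma}, but now exploiting that we control each of the four letters $g, g^{-1}, h, h^{-1}$ separately, together with the disjointness of the four sets. This lets me treat an arbitrary nontrivial reduced word directly, without first conjugating it into a normal form as was done before. As there, the goal is to show that no nontrivial reduced word in $\{g, g^{-1}, h, h^{-1}\}$ can act as the identity on $X$.

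First I would set up a dictionary between letters and \emph{target} sets: to $h$ associate $A^+$, to $h^{-1}$ associate $A^-$, to $g$ associate $B^+$, and to $g^{-1}$ associate $B^-$; write $P(s)$ for the set assigned to a letter $s$. The four hypotheses then read uniformly: each letter $s$ maps every one of the four sets \emph{except} $P(s^{-1})$ into $P(s)$. Since the four sets are nonempty and disjoint they are pairwise distinct, so $s \mapsto P(s)$ is a bijection onto $\{A^+, A^-, B^+, B^-\}$; in particular $P(s') \ne P(s^{-1})$ whenever $s' \ne s^{-1}$.

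Next, given a reduced word $\omega = s_1 s_2 \cdots s_n$ with $n \ge 1$, I would track the image of a suitable point $p \in X$ from the right. Reducedness, $s_i \ne s_{i+1}^{-1}$, translates through the dictionary into $P(s_{i+1}) \ne P(s_i^{-1})$, i.e. $P(s_{i+1})$ is one of the \emph{allowed} source sets for $s_i$. A short downward induction then gives $s_i s_{i+1} \cdots s_n(p) \in P(s_i)$ for every $i$, provided the starting point lies in an allowed source of the last letter, that is $p \notin P(s_n^{-1})$: the base case is exactly this hypothesis on $p$, and the inductive step uses that $s_{i+1}\cdots s_n(p) \in P(s_{i+1})$ is disjoint from the forbidden source $P(s_i^{-1})$ of $s_i$. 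Taking $i = 1$ yields $\omega(p) \in P(s_1)$.

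It remains to choose $p$ so as to force $\omega(p) \ne p$, and this is the only delicate point. I want $p$ in an allowed source of $s_n$ (three of the four sets) while also $p \notin P(s_1)$: deleting $P(s_n^{-1})$ removes one set and deleting $P(s_1)$ removes at most one more, so at least two of the four nonempty sets remain, and I pick $p$ there. Then $p \notin P(s_1)$ but $\omega(p) \in P(s_1)$, and disjointness forces $\omega(p) \ne p$, so $\omega \ne \mathrm{id}$ in $\Bij(X)$. Finally the surjection $F_2 \to \langle g, h\rangle$ sending the free generators to $g$ and $h$ has kernel consisting precisely of the reduced words acting as the identity, which we have shown to be only the empty word; hence it is an isomorphism and $\langle g, h\rangle$ is free of rank $2$. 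I expect the bookkeeping of the allowed-source condition under reduction, and in particular the case $s_1 = s_n^{-1}$ (where $P(s_1) = P(s_n^{-1})$ is itself the forbidden source, so the allowed sources automatically avoid $P(s_1)$), to be the steps most worth writing out carefully.
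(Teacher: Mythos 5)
Your proof is correct, but it takes a genuinely different route from the paper. The paper disposes of this lemma in one line: it sets $A = A^+ \cup A^-$ and $B = B^+ \cup B^-$ and invokes the previous ping-pong lemma (Lema~\ref{pingponglemma}), the hypotheses of which follow because, for $n\geq 1$, $g(A)\subset B^+$ and $g(B^+)\subset B^+$ give $g^n(A)\subset B^+\subset B$, while $g^{-1}(A)\subset B^-$ and $g^{-1}(B^-)\subset B^-$ give $g^n(A)\subset B^-\subset B$ for $n\leq -1$ (and symmetrically for $h$), with $A\nsubseteq B$ coming from nonemptiness and disjointness. That reduction buys brevity: all of the word combinatorics, including the conjugation trick needed to put an arbitrary reduced word into the normal form $g^{n_1}h^{m_1}\cdots g^{n_k}$, was already paid for in the earlier lemma. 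Your argument instead redoes the ping-pong directly with the four sets, using the dictionary $s\mapsto P(s)$ and the right-to-left induction; what this buys is a self-contained proof that avoids any normal-form reduction, handles every nontrivial reduced word uniformly (your treatment of the choice of base point, including the cases $s_1=s_n^{-1}$ and $n=1$, is sound: removing at most two of the four nonempty disjoint sets always leaves room for $p$), and in fact establishes the sharper statement that every nontrivial reduced word moves an explicit point of $A^+\cup A^-\cup B^+\cup B^-$. This four-table version is also the form of the argument that adapts directly to Schottky-type situations, whereas the paper's version leans on having already proved the two-set lemma.
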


\begin{proof}
Esta afirmação segue como corolário do lema anterior se assumimos $A = A^-\cup A^+$ e $B = B^- \cup B^+$.
\end{proof}

Concluímos esta seção com o lema do pingue-pongue generalizado para posto maior que dois.

\begin{lemma} \label{pingponglemmagen} Sejam $X$ um conjunto e $g_i : X \to X$ bijeções, onde  $i \in \{1,\ldots, k\}$. Suponha que $A_1, \ldots, A_k$ são subconjuntos não vazios de $X$, tais que $\displaystyle \bigcup_{i=2}^{k}A_i \nsubseteq A_1$ e que para todo $i \in \{1,\ldots , k \}$, 
$$g_{i}^{n}\left(\bigcup_{j\neq i}A_j\right) \subset A_i \mbox{ para cada }  n \in \Z\setminus\{0\}.$$
Então $g_1, \ldots, g_k$ geram um subgrupo livre de posto $k$ de $\Bij(X)$.
\end{lemma}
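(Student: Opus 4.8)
The plan is to prove injectivity of the canonical homomorphism $\Phi\colon F_k \to \Bij(X)$ that sends the $i$-th free generator to $g_i$; since the image of $\Phi$ is precisely $\langle g_1,\ldots,g_k\rangle$, injectivity forces this subgroup to be free of rank $k$. Concretely, I would take an arbitrary nontrivial reduced word $\omega = g_{i_1}^{n_1}\cdots g_{i_\ell}^{n_\ell}$ (with $\ell\geq 1$, each $n_s\in\Z\setminus\{0\}$, and $i_s\neq i_{s+1}$) and show it cannot equal $\id_X$. As in the proof of Lemma~\ref{pingponglemma}, the two ingredients are a normalization of the shape of $\omega$ by conjugation, followed by tracking the image of a fixed test set.

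The first move is to reduce, using that conjugation preserves triviality, to the case $i_1=i_\ell=1$, i.e. $\omega$ begins and ends with a power of $g_1$. I would argue this is always achievable by conjugating by a suitable power $g_1^{t}$ with $t\in\{-1,0,1\}$: if $i_1=i_\ell=1$ already, take $t=0$; if neither the first nor the last syllable involves $g_1$, then $g_1\omega g_1^{-1}$ stays reduced and now begins and ends with $g_1$; and in the two mixed cases I choose the sign of $t$ to avoid cancellation at the $g_1$-end. For instance, if $i_1=1$ but $i_\ell\neq 1$, conjugating by $g_1^{\mathrm{sign}(n_1)}$ lengthens the leading syllable to $g_1^{\,n_1+\mathrm{sign}(n_1)}\neq 1$ while creating a trailing $g_1^{-\mathrm{sign}(n_1)}$, so the conjugate again begins and ends with a power of $g_1$; the opposite mixed case is symmetric. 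The main obstacle is exactly this cancellation bookkeeping, which is the rank-$k$ analogue of the three-case split carried out for the rank-$2$ lemma; everything else is formal.

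Finally I would run the telescoping argument. Assume now $\omega = g_1^{m_1} g_{j_2}^{m_2}\cdots g_{j_{r-1}}^{m_{r-1}} g_1^{m_r}$ with $j_2,j_{r-1}\neq 1$ and consecutive indices distinct, and set $Y=\bigcup_{j\neq 1}A_j$. Applying $\omega$ to $Y$ from right to left, the rightmost factor gives $g_1^{m_r}(Y)\subset A_1$ by the hypothesis for $i=1$. Inductively, if the current image lies in some $A_s$ and the next factor is $g_{s'}^{m}$ with $s'\neq s$, then $A_s\subset\bigcup_{j\neq s'}A_j$, so the hypothesis for $i=s'$ yields $g_{s'}^{m}(A_s)\subset A_{s'}$; reducedness guarantees $s'\neq s$ at every step. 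Iterating through to the leading factor $g_1^{m_1}$, which is applied to a set contained in $A_{j_2}$ with $j_2\neq 1$, produces $\omega(Y)\subset A_1$. Were $\omega=\id_X$, this would give $\bigcup_{i=2}^{k}A_i = Y \subset A_1$, contradicting the standing hypothesis $\bigcup_{i=2}^{k}A_i\nsubseteq A_1$. Hence every nontrivial reduced word acts nontrivially, $\Phi$ is injective, and $g_1,\ldots,g_k$ generate a free subgroup of $\Bij(X)$ of rank $k$.
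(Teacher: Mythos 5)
Your proof is correct and follows essentially the same route as the paper's: conjugate a nontrivial reduced word so that it begins and ends with a nonzero power of $g_1$, then track the image of $\bigcup_{i=2}^{k}A_i$ through the word to land inside $A_1$, contradicting $\bigcup_{i=2}^{k}A_i \nsubseteq A_1$ if the word were the identity. The only difference is that you spell out the conjugation bookkeeping and the telescoping induction explicitly, which the paper compresses into a reference to Lemma~\ref{pingponglemma}.
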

\begin{proof}
Seja $\omega$ uma palavra  reduzida não trivial. Como na prova do Lema~\ref{pingponglemma} podemos supor sem perda de generalidade que $\omega$ começa com $g_1^{n}$ e termina com $g_1^{m},$ onde $n,m \in \Z$ são não nulos. Observamos que  $\omega \left(\displaystyle\bigcup_{i=2}^{k}A_i\right) \subset A_1.$ Se $\omega=1$ teríamos $\displaystyle\bigcup_{i=2}^{k}A_i \subset A_1,$ uma contradição.
\end{proof}

\section{Subgrupos de grupos livres}

\begin{proposition}Dois grupos livres $F(X)$ e $F(Y)$ são isomorfos se, e somente se,  $X$ e $Y$ tem a mesma cardinalidade.
\end{proposition}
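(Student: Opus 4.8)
O plano é provar as duas implicações separadamente. A direção ``se'' é imediata: supondo $|X| = |Y|$, o Lema~\ref{cardS} garante diretamente que $F(X) \cong F(Y)$, já que esses grupos são livremente gerados por $X$ e $Y$, respectivamente.

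A direção ``somente se'' é o cerne da proposição: suponho $F(X) \cong F(Y)$ e quero concluir $|X| = |Y|$. A estratégia é passar a um invariante algébrico mais manejável, a saber, a abelianização. Primeiro estabeleço que $F(X)/[F(X),F(X)] \cong \bigoplus_{x \in X} \Z$, o grupo abeliano livre sobre $X$. Isso segue comparando propriedades universais: todo homomorfismo de $F(X)$ num grupo abeliano $A$ fatora de modo único pela abelianização, e, pela propriedade universal de $F(X)$, tais homomorfismos correspondem bijetivamente a funções $X \to A$; logo a abelianização satisfaz exatamente a propriedade universal que caracteriza o grupo abeliano livre sobre $X$, o que força o isomorfismo desejado.

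Como a passagem à abelianização transforma o isomorfismo $F(X) \cong F(Y)$ num isomorfismo $\bigoplus_X \Z \cong \bigoplus_Y \Z$ de grupos abelianos, resta extrair daí a igualdade das cardinalidades. Para isso reduzo módulo $2$: o quociente $\left(\bigoplus_X \Z\right)/2\left(\bigoplus_X \Z\right)$ é isomorfo a $\bigoplus_X \Z/2\Z$, que é um espaço vetorial sobre o corpo $\mathbb{F}_2 = \Z/2\Z$ com base canônica indexada por $X$, portanto de dimensão $|X|$; o mesmo vale para $Y$. Um isomorfismo de grupos abelianos induz um isomorfismo desses quocientes, isto é, um isomorfismo $\mathbb{F}_2$-linear entre espaços vetoriais de dimensões $|X|$ e $|Y|$. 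Como a dimensão de um espaço vetorial é um invariante de isomorfismo — todas as bases têm a mesma cardinalidade —, concluo $|X| = |Y|$.

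A parte delicada é o caso em que $X$ e $Y$ são infinitos. Seria tentador contar homomorfismos em $\Z/2\Z$, mas isso produz apenas $2^{|X|} = 2^{|Y|}$, que \emph{não} implica $|X|=|Y|$ para cardinais infinitos. É por essa razão que o argumento via dimensão de espaço vetorial, apoiado na invariância da cardinalidade de uma base para qualquer cardinalidade, é indispensável, e é exatamente aí que reside o principal cuidado técnico a observar na redação completa.
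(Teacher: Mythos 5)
Your proof is correct and takes essentially the same approach as the paper: both reduce the problem to comparing dimensions of $\mathbb{F}_2$-vector space quotients of the free groups and invoke the invariance of dimension (also for infinite bases), exactly the subtlety you flag at the end. The only difference is cosmetic — the paper quotients $F(X)$ in one step by the normal subgroup $N(X)$ generated by all squares, obtaining $\Z_2^{\oplus X}$ directly, whereas you abelianize first and then reduce mod $2$; since $N(X)$ contains the commutator subgroup, the two quotients coincide.
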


\begin{proof} Já vimos no Lema~\ref{cardS}
 que dois grupos livres $F(X)$ e $F(Y)$ são isomorfos se $X$ e $Y$ tem a mesma cardinalidade. 
 
 Reciprocamente, seja $\Phi : F(X)\to F(Y)$ um isomorfismo. Denote por $N(X)$ o subgrupo de $F(X)$ gerado por $\{g^2 \mid g \in F(X)\}$. Claramente, $N(X)$ é normal em $F(X)$, já que $hg^{2}h^{-1} = (hgh^{-1})^{2}$, para todos $g,h \in F(X)$.  Analogamente,  $\Phi (N(X)) = N(Y)$ é o subgrupo normal gerado por $\{h^{2}\mid h\in F(Y)\}$. Assim, $\Phi$ induz um isomorfismo $\Psi:  F(X)/N(X)\to F(Y)/N(Y)$. 
 
 Vamos mostrar que $F(X)/N(X) \cong \Z_2^{\oplus X}$ e,  do isomorfismo entre  $F(X)/N(X)$ e $F(Y)/N(Y)$ obtemos $\Z_2^{\oplus X} \cong \Z_2^{\oplus Y}$, como espaços vetoriais sobre $\Z_2$.  Pela unicidade da dimensão de espaço vetoriais, concluiremos que $X$ e $Y$ tem a mesma cardinalidade. 
 
 Seja $\pi : F(X)\to F(X)/N(X)$ a projeção canônica. Como $ g^2 \in N(X)$, para todo $g \in F(X)$, temos  $\pi (g^2) =1,$ ou seja, $\pi(g) =  \pi(g^{-1})$ para todo $g \in F(X).$ Concluímos então que 
 $$\arraycolsep=0pt\def\arraystretch{1.2}
\begin{array}{rcl}
\pi([g,h])\*&\;=\;&\* \pi(gh)\pi((hg)^{-1}) \\
\*&\;=\;&\* \pi(gh)\pi(hg)  \\
\*&\;=\;&\*\pi(g)\pi(h)^2\pi(g)   \\
\*&\;=\;&\* \pi(g)^2=1.
\end{array}$$

Ou seja, $F(X)/N(X)$ é abeliano. Note que $\Z_2^{\oplus X}$ tem a seguinte apresentação $\langle X\mid \{ x^2,[x,y] \mid  x, y \in X\} \rangle$. Além disso, a projeção $\pi: F(X)\to F(X)/N(X)$ preserva as relações de $\Z_2^{\oplus X}$. Pela propriedade universal, $\pi$ induz um homomorfismo $\varphi :\Z_2^{\oplus X} \to F(X)/N(X).$ Por outro lado, considere um homomorfismo $\tilde{\eta}: F(X)\to \Z_2^{\oplus X}$ que envia  geradores em geradores. É claro que $N(X)\subset \ker(\tilde{\eta})$, logo $\tilde{\eta}$ induz um homomorfismo $\eta : F(X)/N(X)\to A$ dado por $\tilde{\eta} = \eta \circ \pi$. Não é difícil verificar que $\eta$ e $\varphi$ são inversos um do outro.
\end{proof}

\begin{proposition}[Nielson--Schreier] \label{prop:NS}
Todo subgrupo de um grupo livre é um grupo livre.
\end{proposition}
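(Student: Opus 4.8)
The plan is to leave the combinatorial setting and exploit covering space theory. First I would realize $F = F(X)$ as the fundamental group $\pi_1(R)$ of the graph $R$ with a single vertex and one oriented loop for each generator $x \in X$ (a wedge, or ``rose'', of circles). This identification is essentially a restatement of the universal property of free groups: homotopy classes of loops based at the unique vertex correspond bijectively to reduced words in $X \cup X^{-1}$, with concatenation of loops matching multiplication of words.

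Given a subgroup $H \leq F$, I would then invoke the Galois correspondence of covering space theory: there exists a connected covering $p : \tilde{R} \to R$ with $p_*(\pi_1(\tilde{R})) = H$, and since $p_*$ is injective this yields an isomorphism $H \cong \pi_1(\tilde{R})$. The crucial structural remark is that a covering space of a graph is again a graph: because $p$ is a local homeomorphism, the vertices of $\tilde{R}$ lie over the single vertex of $R$ and the edges of $\tilde{R}$ lie over the loops of $R$, so $\tilde{R}$ inherits the structure of a $1$-dimensional CW complex.

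The core of the argument is the lemma that the fundamental group of any connected graph is free. To establish it I would select a spanning tree $T \subseteq \tilde{R}$, whose existence in the possibly infinite case follows from Zorn's lemma. Collapsing $T$ to a point is a homotopy equivalence, and the resulting quotient is a wedge of circles indexed by the edges of $\tilde{R}$ not lying in $T$; hence $\pi_1(\tilde{R})$ is free on that edge set. Combining this with $H \cong \pi_1(\tilde{R})$ shows that $H$ is free, as desired.

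The main obstacle is this graph lemma, and within it two points demand care when $\tilde{R}$ is infinite: the existence of a maximal tree (handled by Zorn) and the assertion that collapsing it is a genuine homotopy equivalence with free $\pi_1$. A purely combinatorial alternative avoids topology altogether: fix a Schreier transversal for $H$ in $F$, that is, a system of coset representatives closed under taking prefixes of reduced words, and form the Schreier generators $t\,x\,\overline{tx}^{\,-1} \in H$, where $t$ ranges over the transversal, $x$ over $X$, and $\overline{g}$ denotes the representative of the coset containing $g$. One then argues by a reduced-word cancellation analysis that the nontrivial such elements freely generate $H$; there the delicate step is proving the absence of relations, exactly the kind of normal-form bookkeeping that the ping-pong philosophy of the preceding section is designed to handle.
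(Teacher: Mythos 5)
Your proof is correct, but it follows a genuinely different route from the book's. Immediately after the statement, the book announces that Nielsen--Schreier will be deduced from the characterization (Teorema~\ref{caracterizaçãogplivre}) that a group is free if and only if it acts freely by automorphisms on a nonempty tree; that characterization is proved purely group-theoretically — Serre's criterion (Teorema~\ref{serretrees}) identifying Cayley graphs that are trees with free generating sets, existence of a spanning tree for an action via Zorn, and contraction of its translates — and then a subgroup of a free group acts freely on the ambient Cayley graph, which is a tree, hence is free. Your covering-space argument is precisely the ``much shorter proof'' that the book acknowledges in a remark after that theorem ($G$ acting freely on a tree $T$ gives $G\cong\pi_1(T/G)$, free by Seifert--van Kampen) but deliberately declines to write out, preferring techniques internal to group theory that are reused later (árvores de extensão, grafos de Schreier). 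What your route buys is brevity and uniformity in cardinality: the book's tree characterization is stated only for finitely generated groups, whereas your covering argument treats an arbitrary subgroup at no extra cost. What it costs is the imported machinery — the subgroup/covering correspondence, the fact that collapsing a possibly infinite spanning tree is a homotopy equivalence, and van Kampen for infinite wedges — all standard, but outside the toolkit the book restricts itself to. Finally, your combinatorial fallback via Schreier transversals is exactly the book's second proof (the Corolário Nielsen--Schreier in Chapter 3): there, freeness is not obtained by a direct cancellation analysis on the Schreier generators, but as an instance of the Reidemeister--Schreier rewriting process (Proposição~\ref{red-sch}) — a free group admits a presentation with $R=\emptyset$, so the rewritten relator set $R^*$ for the subgroup is empty as well.
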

Veremos mais adiante que um grupo é livre se, e somente se, age livremente numa árvore, donde o Teorema de Nielson--Schreier vai seguir como corolário.

\begin{proposition}
O grupo livre de posto $2$ contém um subgrupo isomorfo a $F_k$, para cada $k\in \N$ ou para $k=\aleph_{0}.$
\end{proposition}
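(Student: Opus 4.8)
The plan is to exhibit one explicit countable family of elements of $F_2 = F(\{a,b\})$ that freely generates a free subgroup, and to read off both the finite-rank and the $\aleph_0$ cases from it. The natural candidates are the conjugates
\[
g_i := a^{i}\,b\,a^{-i}, \qquad i \ge 1 .
\]
First I would show that $g_1, g_2, \ldots$ are free generators. Restricting to $g_1,\ldots,g_k$ then yields a copy of $F_k$ for every finite $k$ (and $F_0, F_1$ are trivially present as the trivial subgroup and as $\langle a\rangle$), while the full family yields a copy of $F_{\aleph_0}$, since the index set is countably infinite.

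To prove freeness I would argue directly on reduced words. A nontrivial reduced word in the symbols $g_i$ has the form $w = g_{i_1}^{n_1} g_{i_2}^{n_2} \cdots g_{i_m}^{n_m}$ with $m \ge 1$, each $n_j \neq 0$, and $i_j \neq i_{j+1}$ for all $j$. Substituting $g_{i_j}^{n_j} = a^{i_j} b^{n_j} a^{-i_j}$ and cancelling the adjacent blocks $a^{-i_j}a^{\,i_{j+1}}$ gives
\[
w = a^{i_1}\,b^{n_1}\,a^{\,i_2-i_1}\,b^{n_2}\,a^{\,i_3-i_2}\cdots a^{\,i_m-i_{m-1}}\,b^{n_m}\,a^{-i_m}.
\]
Here every $b$-exponent $n_j$ is nonzero, every interior $a$-exponent $i_{j+1}-i_j$ is nonzero (because consecutive indices differ), and the outer exponents $i_1$ and $-i_m$ are nonzero (because $i_1,i_m\ge 1$). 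Hence $w$ is already an alternating product of nonzero powers of $a$ and $b$, i.e.\ a reduced word of positive length in $F_2$, so $w \neq 1$. This says precisely that the $g_i$ satisfy no nontrivial relation, so they generate freely.

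Alternatively, for finite $k$ one can invoke the generalized ping-pong lemma (Lemma~\ref{pingponglemmagen}): letting $F_2$ act on itself by left translation and setting $A_i := \{\, w \in F_2 : w \text{ reduced with initial syllable } a^{i} \,\}$, one checks that for $w \in A_j$ with $j\neq i$ the remainder after $a^{j}$ is empty or starts with $b^{\pm 1}$, so $g_i^{\,n} w = a^{i}b^{n}a^{\,j-i}(\cdots)$ has initial syllable $a^{i}$; thus $g_i^{\,n}\bigl(\bigcup_{j\neq i}A_j\bigr)\subset A_i$ for all $n\neq 0$, while the $A_i$ are nonempty and pairwise disjoint so $\bigcup_{i=2}^{k}A_i\nsubseteq A_1$, and the lemma gives $\langle g_1,\ldots,g_k\rangle\cong F_k$. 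The only genuinely delicate point is the passage to rank $\aleph_0$, since the ping-pong lemma as stated covers finitely many maps only: I would note that any relation among $\{g_i\}_{i\ge 1}$ involves finitely many indices and hence lies in some $\langle g_{i_1},\ldots,g_{i_r}\rangle$, which is free by the finite case; equivalently, the reduced-word computation above applies verbatim to the infinite family, giving $\langle g_i : i\ge 1\rangle\cong F_{\aleph_0}$. I expect the bookkeeping of the cancellations — confirming that the substituted word is genuinely reduced — to be the main thing to get right; everything else is formal.
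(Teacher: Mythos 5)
Your proposal is correct, but your primary argument takes a genuinely different route from the paper's. The paper works with the family $x_k = y^k x y^{-k}$ (the same kind of conjugate family as your $g_i = a^i b a^{-i}$), but proves freeness via the generalized ping-pong lemma: it lets the left translations $L_{x_k}$ act on the sets $A_k$ of reduced words beginning with $y^k x$, checks $L_{x_k}(A_j) \subset A_k$ for $j \neq k$, and invokes Lemma~\ref{pingponglemmagen} — exactly your ``alternative'' argument, down to the choice of sets. Your main argument instead verifies freeness by direct computation in the normal form of $F_2$: after substitution and cancellation of the blocks $a^{-i_j}a^{i_{j+1}}$, a nontrivial reduced word in the $g_i$ becomes an alternating product of nonzero powers of $a$ and $b$, hence is reduced of positive length and nontrivial. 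What the direct computation buys is twofold: it is more elementary (no appeal to any ping-pong machinery), and it treats the finite and countably infinite cases uniformly, since the calculation never uses finiteness of the index set. This is a real advantage, because the paper's Lemma~\ref{pingponglemmagen} is stated only for finitely many bijections, so the paper's proof literally establishes freeness of $\{x_k \mid k = 1,\ldots,m\}$ for each finite $m$ and leaves the passage to $\langle S \rangle \cong F_{\aleph_0}$ implicit — precisely the local-finiteness observation (any relation involves only finitely many generators) that you spell out explicitly. What ping-pong buys, by contrast, is a template that generalizes to situations where no normal form is available (actions on $\mathbb{H}^2$, linear actions, boundaries of hyperbolic groups), which is why the paper favors it; but for subgroups of a free group your computation is the sharper and more self-contained proof.
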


\begin{proof} 
Se $x$ e $y$ são os geradores livres de $F_2$, considere o conjunto $S = \{x_k := y^kxy^{-k} \mid k \in \N \}$. Vamos mostrar que $\langle S \rangle = F_{\aleph_{0}}$.

De fato, seja $$A_k:=\{ \mbox{palavras reduzidas que começam com } y^kx\}.$$ Consideramos a translação a esquerda de $a = x^\epsilon$ ou $y^\epsilon$, $\epsilon = \pm 1$, dada por  
$$L_a(a_1\ldots a_n) = \left\{\begin{array}{l}
aa_1\ldots a_n \quad \mbox{se}\quad a \neq a_1^{-1},\\
a_2\ldots a_n \quad \mbox{se}\quad a = a_1^{-1}. 
\end{array}\right.$$ 
Para $w = b_1\ldots b_m$ definimos $L_w = L_{b_1}\circ\ldots\circ L_{b_m}$. Para a palavra vazia $1$ defina $L_1 = \id$. Temos $L_a \circ L_{a^{-1}} = \id$ para todos geradores $a$ e assim $v \sim w$ implica $L_v = L_w$.

Agora temos que $L_{x_k}(A_j) \subset A_k$ para todo $j \neq k$. Então, pelo Lema~\ref{pingponglemmagen}, as translações $\{L_{x_k}\mid k = 1,\ldots m\}$ gera um subgrupo livre de posto $m$ de $\Bij(F_2)$, o que implica que  $\{x_k\mid k=1,\ldots,m\}$ gera um subgrupo livre de posto $m$ de $F_2$.
\end{proof}

\chapter{Grafos de Cayley e grafos de Schreier}
\label{cap3}
\section{Preliminares}\label{grafos}
    
Antes de definir o grafo de Cayley associado à apresentação de um grupo, relembramos algumas definições relacionadas a grafos.

\begin{definition}
Um \textit{grafo} \index{grafo} é um par $\Gamma=(V,A)$ de conjuntos, onde A é um conjunto de pares de elementos de V.  
\end{definition}

Chamamos os elementos de $V$ de \textit{vértices} e os elementos de $A$ de \textit{arestas} do grafo $\Gamma$.  Dizemos que dois vértices $v, v' \in V$ são \textit{adjacentes} ou \textit{vizinhos} se $\{v,v'\} \in A$, isto é, se há uma aresta que os contenha. Um grafo é dito \textit{$k$-regular} \index{grafo $k$-regular} se de cada vértice saem exatamente $k$ arestas. O \textit{grau} (ou \textit{valência}) de um vértice de um grafo é o número de  vértices adjacentes a ele.

\begin{example}
\label{grafo1}
 O par $\Gamma =(V,A)$, onde  $V=\{a,b,c,d,e\}$ e  $A=\{\{a,c\},\{a,d\},\{b,e\},\{c,e\}\}$,  é um grafo que podemos desenhar como segue: 
 
 \begin{center}
      \begin{tikzpicture}[scale=0.5]
 \draw (0.4,0.2)node{$a$};
  \draw (2.4,0.2)node{$b$};
  \draw (-0.4,1.8)node{$c$};
  \draw (-2.4,1.8)node{$d$};
  \draw (3.4,2.2)node{$e$};
  \draw (0,0)node{$\bullet$};  
  \draw (3,2.5)node{$\bullet$};
  \draw (2,0)node{$\bullet$};
  \draw (-2,2)node{$\bullet$};
  \draw (0,2)node{$\bullet$};
\draw [color=blue] (2,0) -- (3,2.5);
\draw [color=blue] (0,0) -- (-2,2);   
\draw [color=blue] (0,0) -- (0,2);
\draw [color=blue] (0,2) -- (3,2.5);      
      
 \end{tikzpicture}
 \end{center}
\end{example}
 
\begin{example}
\label{grafo2}
  Um grafo é um objeto combinatório. Mais de uma figura pode representar o mesmo grafo geometricamente. Por exemplo, se $V = \{1, 2, 3, 4\}$ e $A=\{ \{1, 2\},\{2, 3\}, \{3, 1\}\}$, então as duas figuras abaixo representam bem o grafo $G=(V,A)$:
 \begin{center}
  \begin{tikzpicture}[scale=0.5]
  \draw (2.4,0)node{$3$};
  \draw (-0.4,1.8)node{$2$};
  \draw (4.8,0.8)node{$4$};
  \draw (3.4,2.5)node{$1$};
  \draw (3,2.5)node{$\bullet$};
  \draw (2,0)node{$\bullet$};
  \draw (0,2)node{$\bullet$};
  \draw (4.7,1.2)node{$\bullet$};
\draw [color=blue] (2,0) -- (3,2.5);
\draw [color=blue] (2,0) -- (0,2);
\draw [color=blue] (0,2) -- (3,2.5); 
 \end{tikzpicture}
 \hspace{2cm}
   \begin{tikzpicture}[scale=0.5]
  \draw (2.4,0.2)node{$2$};
  \draw (-0.4,2.4)node{$3$};
  \draw (1.3,2.2)node{$4$};
  \draw (3.4,2.4)node{$1$};
  \draw (3,2)node{$\bullet$};
  \draw (2,0)node{$\bullet$};
 \draw (1.7,2.2)node{$\bullet$};
  \draw (0,2)node{$\bullet$};
\draw [color=blue] (2,0) -- (3,2);
\draw [color=blue] (2,0) -- (0,2);
\draw[color=blue]  (3,2) arc[start angle=0, end angle=180,radius=1.5cm] -- (0,2);
 \end{tikzpicture}
 \end{center}
\end{example}
 
 \begin{definition}
 (Isomorfismo de grafos). Sejam $G = (V,A)$ e $G' = (V',A')$ dois grafos. Diremos que eles são \textit{isomorfos} se existir uma bijeção $f:V \to V'$ tal que para todos $v,w \in V$ temos $\{v,w\} \in A$ se e somente se $\{f(v), f(w)\} \in A'$.
 \end{definition} 
 O problema de decidir quando dois grafos dados são isomorfos ou não é bastante difícil. Mesmo no caso de grafos finitos, esse problema tem uma complexidade algorítmica alta, embora sua classe de complexidade não seja ainda conhecida (veja \cite{graphs}).

\medskip

Trabalharemos com métricas em grafos de Cayley,  que são grafos associados a um grupo através de apresentações desse grupo. Para isso, introduzimos agora  alguns conceitos geométricos para grafos.
\begin{definition}
Seja $G = (V,A)$ é um grafo.
\begin{itemize}
    \item Para $n\in \N \cup \{\infty\}$, um \textit{caminho}\index{caminho em um grafo} de comprimento $n$ em $G$  é uma sequência $v_0, \ldots, v_n$, de  vértices distintos $v_i \in V$, com a propriedade de que $\{v_j , v_{j+1}\} \in A$ vale para todo $j \in \{0,\ldots, n-1\}$. Se $ n < \infty$, dizemos que esse caminho conecta os vértices $ v_0$ e $v_n$.
    \item Um \textit{ciclo} \index{ciclo em um grafo} em $G$ de tamanho $n\geq2$ é um caminho $v_0, \ldots, v_{n-1}$, com a propriedade adicional de que $\{v_{n-1}, v_0\} \in A$. Um ciclo é chamado \textit{simples} se todos os vértices $v_0, \ldots, v_{n-1}$ são distintos.
    \item O grafo $G$ é dito \textit{conexo} se qualquer par de vértices podem ser conectados por um caminho em $G$.
    \item Uma \textit{árvore} \index{arvore@árvore} é um grafo conexo que não contém ciclos simples.
    \item Uma \textit{árvore de extensão} \index{arvore de extensao@árvore de extensão} é o subconjunto de arestas de um grafo que forma uma árvore contendo todos os vértices.
\end{itemize}
\end{definition}

O grafo do Exemplo~\ref{grafo1} é conexo e não contém ciclos, logo é uma árvore. Já o grafo do Exemplo~\ref{grafo2} não é conexo, pois o vértice 4 não pode ser conectado a nenhum dos outros vértices, Além disso, esse último contém um ciclo 1,2,3.

 \section{Grafos de Cayley}
 
Seja $G$ um grupo finitamente gerado com um conjunto de geradores $S$. Suponha que $1 \notin S$ e que $S= S^{-1}$, isto é, se $x \in S$, então $x^{-1} \in S$.

\begin{definition} O \textit{grafo de Cayley}\index{grafo de Cayley}  de $G$ com respeito a $S$, denotado por $\cay (G,S)$, é o grafo $(V,A)$, onde 
\begin{enumerate}[(1)]
\item O conjunto de vértices é $V=G$;
\item O par de vértices $\{g,h\}$ é uma aresta se, e somente se, existe $s \in S$ tal que $h=gs.$
\end{enumerate}
\end{definition}

\begin{example}
 O grafo de Cayley de $\mathbb{Z}$, com respeito aos geradores $\{\pm 1\}$ é uma árvore $2$-regular infinita:
\begin{figure}[!ht]
\begin{center}
    \begin{tikzpicture}[scale=0.9]
  \draw (-3,0)node{$\bullet$}; 
  \draw (-2,0)node{$\bullet$};
  \draw (-1,0)node{$\bullet$};
  \draw (0,0)node{$\bullet$};
  \draw (1,0)node{$\bullet$};
  \draw (2,0)node{$\bullet$};
  \draw (3,0)node{$\bullet$};
  \draw (4,0)node{$\bullet$};
  \draw (-3.1,0.3)node{\fontsize{4}{4}\selectfont{$-3$}}; 
  \draw (-2.1,0.3)node{\fontsize{4}{4}\selectfont{$-2$}};
  \draw (-1.1,0.3)node{\fontsize{4}{4}\selectfont{$-1$}};
  \draw (0,0.3)node{\fontsize{4}{4}\selectfont{$0$}};
  \draw (1,0.3)node{\fontsize{4}{4}\selectfont{$1$}};
  \draw (2,0.3)node{\fontsize{4}{4}\selectfont{$2$}};
  \draw (3,0.3)node{\fontsize{4}{4}\selectfont{$3$}};
  \draw (4,0.3)node{\fontsize{4}{4}\selectfont{$4$}};
  
   \draw [color=blue] (0.5, -0.2)node{\fontsize{7}{7}\selectfont{$+1$}};
   \draw [color=blue] (-0.5, -0.2)node{\fontsize{7}{7}\selectfont{$-1$}};
\draw [color=blue] (-4,0) -- (-3,0);
\draw [color=blue] (-3,0) -- (-2,0);
\draw [color=blue] (-2,0) -- (-1,0);
\draw [color=blue] (-1,0) -- (0,0);
\draw [color=blue] (0,0) -- (1,0);
\draw [color=blue] (1,0) -- (2,0);
\draw [color=blue] (2,0) -- (3,0);
\draw [color=blue] (3,0) -- (4,0);
\draw [color=blue] (4,0) -- (5,0);
\end{tikzpicture}
\end{center}
 \caption{$\cay (\mathbb{Z},\{\pm 1\})$}
    \label{cayZpm1}
\end{figure}
\end{example}

\begin{example}
Alterar o conjunto de geradores altera também os grafos de  Cayley. Por exemplo, $\cay (\Z,\{\pm 2, \pm 3\})$, mostrado na Figura~\ref{cayZ23}, não é isomorfo ao grafo da Figura~\ref{cayZpm1}:
\end{example}
\begin{figure}[H]
\begin{center}
    \begin{tikzpicture}[scale=0.7]
    \draw (-4,0)node{$\ldots$}; 
    \draw (5,0)node{$\ldots$}; 
  \draw (-3,0)node{$\bullet$}; 
  \draw (-2,0)node{$\bullet$};
  \draw (-1,0)node{$\bullet$};
  \draw (0,0)node{$\bullet$};
  \draw (1,0)node{$\bullet$};
  \draw (2,0)node{$\bullet$};
  \draw (3,0)node{$\bullet$};
  \draw (4,0)node{$\bullet$};
    \draw (-0.3,0)node{\fontsize{4}{4}\selectfont{$0$}};
   \draw  [color=blue](1,1.2)node{\fontsize{4}{4}\selectfont{$+2$}};
   \draw  [color=red](-1.5,-1.2)node{\fontsize{4}{4}\selectfont{$-3$}};
   \draw  [color=red](1.5,-1.2)node{\fontsize{4}{4}\selectfont{$+3$}};
  \draw  [color=blue](-1,1.2)node{\fontsize{4}{4}\selectfont{$-2$}};
\draw [color=blue] (-2,0) arc[start angle=0, end angle=90,radius=1cm] -- (-3,1);
\draw [color=blue] (0,0) arc[start angle=0, end angle=180,radius=1cm] -- (-2,0);
\draw [color=blue] (2,0) arc[start angle=0, end angle=180,radius=1cm] -- (0,0);
\draw[color=blue]  (4,0) arc[start angle=0, end angle=180,radius=1cm] -- (2,0);
\draw [color=blue]  (-1,0) arc[start angle=0, end angle=180,radius=1cm] -- (-3,0);
\draw [color=blue] (1,0) arc[start angle=0, end angle=180,radius=1cm] -- (-1,0);
\draw [color=blue] (3,0) arc[start angle=0, end angle=180,radius=1cm] -- (1,0);
\draw [color=blue]  (4,1) arc[start angle=90, end angle=180,radius=1cm] -- (3,0);
\draw [color=red] (-3,0) arc[start angle=180, end angle=360, x radius=1.5cm, y radius =1cm] -- (0,0);
\draw [color=red](0,0) arc[start angle=180, end angle=360, x radius=1.5cm, y radius =1cm] -- (3,0);
\draw [color=red](3,0) arc[start angle=180, end angle=270, x radius=1.5cm, y radius =1cm] -- (4.5,-1);
\draw [color=red] (-2,0) arc[start angle=180, end angle=360, x radius=1.5cm, y radius =1cm] -- (1,0);
\draw [color=red](-1,0) arc[start angle=180, end angle=360, x radius=1.5cm, y radius =1cm] -- (2,0);
\draw [color=red] (1,0) arc[start angle=180, end angle=360, x radius=1.5cm, y radius =1cm] -- (4,0);
\draw [color=red](-3.5,-1) arc[start angle=270, end angle=360, x radius=1.5cm, y radius =1cm] -- (-2,0);
\draw [color=red](-3,-0.94) arc[start angle=252.6, end angle=360, x radius=1.5cm, y radius =1cm] -- (-1,0);
\draw [color=red](2,0) arc[start angle=180, end angle=287.4, x radius=1.5cm, y radius =1cm] -- (4,-0.94);
\end{tikzpicture}
\end{center}
    \caption{$\cay (\Z,\{\pm 2, \pm 3\})$}
    \label{cayZ23}
\end{figure}

\begin{example}
 O grafo de Cayley de $\Z^2$, com respeito ao conjunto de geradores $S=\{\pm a = (\pm 1,0), \pm b = (0,\pm 1)\}$ está na Figura~\ref{cayZ2}.
 \end{example}
\begin{figure}[ht]
    \centering
    \begin{center}
    \begin{tikzpicture}[scale=0.7]
      \draw[step=1.0,blue,thin,xshift=0cm,yshift=0cm] (-2.5,-1.5) grid (2.5,2.5);
 \draw (-2,0)node{$\bullet$};
  \draw (-2,1)node{$\bullet$};
  \draw (-2,2)node{$\bullet$}; 
  \draw (-2,-1)node{$\bullet$}; 
   \draw (-1,0)node{$\bullet$};
  \draw (-1,1)node{$\bullet$};
  \draw (-1,2)node{$\bullet$}; 
  \draw (-1,-1)node{$\bullet$}; 
  
   \draw (0,0)node{$\bullet$};
  \draw (0,1)node{$\bullet$};
  \draw (0,2)node{$\bullet$}; 
  \draw (0,-1)node{$\bullet$}; 
  
  \draw (1,0)node{$\bullet$};
  \draw (1,1)node{$\bullet$};
  \draw (1,2)node{$\bullet$}; 
  \draw (1,-1)node{$\bullet$}; 

  \draw (2,0)node{$\bullet$};
  \draw (2,1)node{$\bullet$};
  \draw (2,2)node{$\bullet$}; 
  \draw (2,-1)node{$\bullet$}; 
  
  \draw [color=blue] (0.6,0.15)node{\fontsize{6}{6}\selectfont{$a$}};
   \draw [color=blue] (0.15,0.6)node{\fontsize{6}{6}\selectfont{$b$}};
   \draw [color=blue] (0.2,-0.6)node{\fontsize{5}{5}\selectfont{$-b$}};
   \draw [color=blue] (-0.6,0.15)node{\fontsize{6}{6}\selectfont{$-a$}};
  


  \draw (-0.2,-0.2)node{\fontsize{4}{4}\selectfont{$0$}};
  

    \end{tikzpicture}
\end{center}
\caption{$\cay ( \Z^2, \{(\pm 1,0), (0,\pm 1)\})$}
\label{cayZ2}
\end{figure}

\begin{example}
 $\cay (F_2, \{a^{\pm1}, b^{\pm1}\})$ é uma árvore  $4$-regular:
 \end{example}
\begin{figure}[H]
   \begin{center}
   \begin{tikzpicture}[scale=0.5]
   \begin{scope}[rotate=-90] \caley{4}{4cm}{b-1}{a}{b}{a-1} \end{scope}
   \begin{scope}[rotate=0]   \caley{4}{4cm}{a}{b}{a-1}{b-1} \end{scope}
   \begin{scope}[rotate=90]  \caley{4}{4cm}{b}{a-1}{b-1}{a} \end{scope}
   \begin{scope}[rotate=180] \caley{4}{4cm}{a-1}{b-1}{a}{b} \end{scope}
   \draw [color=blue] (2,0.2)node{\fontsize{8}{8}\selectfont{$a$}};
   \draw [color=blue] (-2,0.3)node{\fontsize{8}{8}\selectfont{$a^{-1}$}};
   \draw [color=blue] (0.2,2)node{\fontsize{8}{8}\selectfont{$b$}};
   \draw [color=blue] (0.55,-2)node{\fontsize{7}{7}\selectfont{$b^{-1}$}};
   \draw (0.2,0.25)node{\fontsize{6}{6}\selectfont{$1$}};
   \draw (0,0)node{\fontsize{7}{7}\selectfont{$\bullet$}};
\end{tikzpicture}
\end{center}
\caption{$\cay (F_2, \{a^{\pm1}, b^{\pm1}\})$}
\label{cayF2}
\end{figure}

O seguinte teorema será utilizado na prova do Teorema~\ref{caracterizaçãogplivre}, que fornece uma caracterização de grupos livres. 

\begin{thm}
\label{serretrees}
Seja $\Gamma = \cay (G,S)$ o grafo de Cayley de um grupo $G$ com respeito ao conjunto de geradores $S$. São equivalentes: 
\begin{enumerate}
    \item O grafo $\Gamma$ é uma árvore;
    \item Supondo que $S$ satisfaz que $s \neq t^{-1}$ para todos $s, t\in S$, o grupo $G$ é livremente gerado por $S$.
\end{enumerate}
\end{thm}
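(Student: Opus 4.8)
The plan is to translate the purely combinatorial notion of a \emph{cycle} in $\cay(G,S)$ into the algebraic notion of a \emph{relation} in $G$, via the standard dictionary between edge-walks and words. First I would record that $\Gamma=\cay(G,S)$ is connected: since $S$ generates $G$, any vertex $g=s_1\cdots s_n$ (with $s_i\in S\cup S^{-1}$) is joined to $1$ by the walk $1,\,s_1,\,s_1s_2,\,\ldots,\,g$. Hence ``$\Gamma$ is a tree'' is equivalent to ``$\Gamma$ contains no simple cycle,'' and it suffices to relate simple cycles to nontrivial relations.

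The heart of the argument is the dictionary. A walk $g_0,g_1,\ldots,g_m$ in $\Gamma$ is determined by its starting vertex together with the labels $s_i\in S\cup S^{-1}$ given by $g_i=g_{i-1}s_i$; it is closed ($g_m=g_0$) exactly when $s_1\cdots s_m=1$ in $G$, and it is \emph{non-backtracking}, meaning $g_{i+1}\neq g_{i-1}$, exactly when no cancellation $s_{i+1}=s_i^{-1}$ occurs, i.e.\ when the word $w=s_1\cdots s_m$ is reduced in $F(S)$. It is precisely here that the hypothesis $s\neq t^{-1}$ for all $s,t\in S$ is used: it forces $S\cap S^{-1}=\emptyset$ and rules out involutions ($s\neq s^{-1}$), so that $S\cup S^{-1}$ is a genuine alphabet and ``reduced word in $F(S)$'' matches ``non-backtracking walk'' with no degenerate digon to worry about. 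I would then prove the key lemma about trees: in a tree, a non-backtracking closed walk has length $0$. The reason is that a non-backtracking walk in a tree cannot repeat a vertex — the first repetition would enclose a simple cycle — so it can never return to its starting vertex with positive length. I expect this lemma, together with the bookkeeping matching simple cycles to cyclically reduced relations, to be the only real subtlety.

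For (1) $\Rightarrow$ (2), assume $\Gamma$ is a tree. The homomorphism $\pi_S:F(S)\to G$ extending the inclusion $S\hookrightarrow G$ is surjective because $S$ generates. If some reduced, nonempty $w=s_1\cdots s_m\in\ker(\pi_S)$ existed, the associated walk based at $1$ would be closed, non-backtracking, and of positive length, contradicting the tree lemma. Thus $\ker(\pi_S)$ is trivial, so $\pi_S$ is an isomorphism and $G$ is freely generated by $S$. For (2) $\Rightarrow$ (1), assume $G$ is freely generated by $S$. A simple cycle $g_0,\ldots,g_{n-1},g_0$ would yield, upon reading its labels, a nonempty cyclically reduced word $w$ with $w=1$ in $G$; since $\pi_S$ is now injective, $w=1$ in $F(S)$, contradicting that a nonempty reduced word represents a nontrivial element of the free group (Proposição~\ref{proposition1}). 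Hence $\Gamma$ has no simple cycle and is therefore a tree.

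In summary, the whole proof rests on the correspondence ``closed walk $\leftrightarrow$ relation,'' ``non-backtracking/simple $\leftrightarrow$ reduced/cyclically reduced,'' refined by the lemma that trees admit no nontrivial non-backtracking closed walks. The main obstacle is stating and proving that tree lemma cleanly and checking the boundary (wrap-around) case of a cycle so that the read-off word is genuinely cyclically reduced, with the hypothesis $s\neq t^{-1}$ invoked exactly to keep the alphabet $S\cup S^{-1}$ non-degenerate.
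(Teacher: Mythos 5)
Correct. The paper itself offers no proof of Teorema~\ref{serretrees} --- it is left as an exercise with a pointer to Serre (\emph{Trees}, Proposição~15) --- and your argument is essentially that canonical one: the dictionary ``closed walk $\leftrightarrow$ relation'', ``non-backtracking $\leftrightarrow$ reduced'', together with the lemma that a tree admits no non-backtracking closed walk of positive length. Your handling of the hypothesis $s\neq t^{-1}$ is exactly right (it makes $S\cup S^{-1}$ embed in $G$ as a genuine alphabet, and note it also forces $1\notin S$, since $s=t=1$ would violate it), so both implications go through as you describe.
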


\begin{exercise}
    Dar uma prova do Teorema \ref{serretrees} (cf. \cite[Proposição 15]{Serre}).
\end{exercise}

O fato de que $S$ gera $G$ garante que $\cay (G,S)$ é conexo, já que todo vértice $g$ pode ser conectado ao vértice $1$ pelo caminho que passa pelas arestas correspondentes à escrita mínima de $g$ em termos dos geradores. Um problema em teoria dos grafos é saber se um dado grafo $\Gamma = (V,A)$ é $k$-{\it colorível}, isto é,  se podemos atribuir $k$ cores para colorir os vértices $V$ de modo que vértices adjacentes possuam cores diferentes. Note que, como $\cay (G,S)$ é $|S|$-regular, ou seja, de cada vértice do grafo de Cayley saem no máximo $|S|$ arestas, podemos sempre colorir $\cay (G,S)$ com $|S|+1$ cores.

Podemos colocar uma métrica em $\cay (G,S)$,  supondo que cada aresta seja isométrica ao intervalo $[0,1]$.  Assim, temos uma maneira natural de definir o comprimento de um caminho $p$ em $\cay (G,S)$, onde esse comprimento é $\ell_S(p) = n$ se ele é obtido da concatenação de $n$ arestas. Definimos assim, a distância $\dist_S (x, y)$ entre dois vértices $x, y \in \cay (G,S)$ como o ínfimo dos comprimentos de caminhos ligando $x$ a $y$ e estendemos de forma natural essa definição a pontos das arestas usando o fato de que cada aresta é isométrica ao intervalo $[0,1]$. 
\begin{remark}

O espaço métrico  $\cay (G,S)$ é próprio, isto é, bolas fechadas são compactas. 
Com efeito, como todo subconjunto fechado de um compacto é compacto, basta considerar bolas fechadas com raio $n\in \N$. Observe que em cada bola fechada centrada em $1 \in G$ de raio $n$ existem, no máximo, $\displaystyle \sum_{j=1}^{n} |S|(|S|- 1)^{j-1}$ arestas. Assim, cada bola de raio $n$ é a união de uma quantidade finita de arestas. O resultado segue, já que cada aresta é compacta.
\end{remark}

\begin{definition}
A restrição desta métrica a  $G$ é chamada de {\it métrica das palavras}\index{métrica das palavras}. Denotamos, para cada $g \in G$, $\mathrm{dist}_S(1,g) = |g|_S$, e assim, $\mathrm{dist}_S(g,h) = |g^{-1}h|_S = |h^{-1}g|_S$.
\end{definition}

O grupo $G$ age isometricamente em $\cay (G,S).$ Tal ação estende a ação isométrica de $G$ em si mesmo por translação à esquerda $L_g(h) = gh$.  
Mais formalmente, a fim de definir uma ação por isometrias de $G$ em $\cay  (G, S)$ temos que atribuir a cada $g \in G$ uma isometria $\varphi_g$. Tal isometria pode ser escrita como segue: seja $x \in \cay (G,S)$. Caso $x \in G$, definimos $\varphi_g (x)$ como $L_g(x) = gx$. Caso $x \notin G$, isto é, $x$ está no interior de alguma aresta $\{h_1,h_2\} $, $\varphi_g (x)$ será definido como o único ponto na aresta $\{gh_1,gh_2\}$ que  satisfaz $\dist_S (gh_1, \varphi_g (x)) = \dist_S (h_1, x)$.

\begin{exercise}
\label{açãolivre} Prove que a ação de $G$ em $\cay (G, S)$ é livre se, e somente se, $S$ não possui elementos de ordem 2. Além disso, a ação é própria, discreta e cocompacta.
\end{exercise}

\begin{example}
    A ação de $\Z_4$ em $\cay(\Z_4, \{\Bar{1}, \Bar{3}\})$ é livre, enquanto sua ação em $\cay(\Z_4, \{\Bar{1},\Bar{2}, \Bar{3}\})$ não o é.
\end{example}

\begin{exercise}
Mostre que, supondo que $1 \notin S$ e que $S= S^{-1}$ e $S$ não possui elementos de ordem $2$, então o quociente $\cay(G,S)/G$ é homeomorfo ao buquê de $n$ círculos, onde $n= \frac{|S|}{2}$.
\end{exercise}

\begin{remark} 
A ação de $G$ em $\cay (G, S)$ pela direita, $R_g(h) = hg$  não é necessariamente isométrica, já que podemos ter 
$$ \{x,xs\} \stackrel{R_{x^{-1}}}{\longrightarrow} \{xx^{-1}, xsx^{-1}\} = \{1, xsx^{-1}\} \notin A.$$
No entanto, ainda vale a seguinte propriedade: 
$$ \dist_S(\id(h), R_g(h)) = |g|_S.$$
\end{remark}

 O seguinte resultado é uma caracterização clássica de grupos livres em termos de ações desse grupo.
\begin{thm}
\label{caracterizaçãogplivre}
Um grupo $G$ finitamente gerado é livre se, e somente se, $G$ age  por automorfismos livremente numa árvore não vazia.
\end{thm}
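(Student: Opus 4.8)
The statement is an equivalence, and the two directions are of very different character. The forward implication is short and rests entirely on the machinery about Cayley graphs already built up; the reverse implication is the geometric core (it is essentially Nielsen--Schreier seen through the lens of actions on trees), and it is where all the work lies. First I would dispatch the easy direction. Assuming $G$ is free and finitely generated, we have $G\cong F_n$ for some $n$; I would pick a basis $X$ with $X\cap X^{-1}=\emptyset$ and set $S=X\cup X^{-1}$, so that $1\notin S=S^{-1}$. By Theorem~\ref{serretrees} the graph $\cay(G,S)$ is then a tree, and it is nonempty since it contains the vertex $1$. Since free groups are torsion-free, $S$ contains no element of order $2$, so by Exercício~\ref{açãolivre} the left-translation action of $G$ on $\cay(G,S)$ is free and by automorphisms. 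This already exhibits the desired free action on a nonempty tree.

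For the converse, suppose $G$ acts freely by automorphisms on a nonempty tree $T$. My first move would be to reduce to an action \emph{without inversions}: if some $g\in G$ swaps the two endpoints of an edge, then $g^2$ fixes a vertex, so freeness forces $g^2=1$ and $g$ has order $2$. Passing to the barycentric subdivision $T'$ (still a tree, with $G$ still acting freely on vertices) eliminates all inversions, so I may assume from the start that no nontrivial element inverts an edge. Then the quotient $\bar T=G\backslash T$ is a connected graph and the projection $p\colon T\to\bar T$ is a covering of graphs, precisely because the action is free and inversion-free. Since $T$ is a tree it is simply connected, so $p$ is the universal covering and $G$ is identified with the group of deck transformations, whence $G\cong\pi_1(\bar T)$. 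The proof concludes by invoking that the fundamental group of any connected graph is free: the graph deformation retracts onto a wedge of circles, one for each edge lying outside a chosen spanning tree.

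If one prefers to stay closer to the combinatorial treatment of Serre used earlier, the same conclusion can be reached by hand: choose a spanning tree $\bar T_0$ of $\bar T$, lift it to a subtree $\Sigma\subset T$ mapping isomorphically onto $\bar T_0$, so that $\Sigma$ is a fundamental domain meeting each vertex-orbit exactly once; then for each edge of $\bar T$ outside $\bar T_0$, lifting it from its origin in $\Sigma$ lands its terminus in a unique translate $g\,\Sigma$, and I would claim that the collection of these elements $g$ is a free basis of $G$. Generation follows from connectivity of $T$, and I expect \textbf{freeness (the absence of relations) to be the main obstacle}: a nontrivial reduced word in these generators would trace out a non-backtracking closed edge-path running through distinct translates of $\Sigma$, i.e.\ a cycle in $T$, contradicting that $T$ is a tree. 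This is exactly a ping-pong-type argument in the spirit of Lemma~\ref{pingponglemmagen}, powered by the acyclicity of $T$; equivalently, in the covering-space formulation, the hard input is precisely that $\pi_1$ of a graph is free together with the inversion-free reduction.
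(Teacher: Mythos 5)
Your forward direction coincides with the paper's: take a basis, note the Cayley graph is a tree (Teorema~\ref{serretrees}), observe that $S$ has no elements of order $2$, and invoke Exercício~\ref{açãolivre}. For the converse you offer two routes, and both are strategically sound. Your primary route (quotient graph, covering map, $G\cong\pi_1(T/G)$, fundamental groups of graphs are free) is exactly the ``prova bem mais curta'' that the paper itself mentions immediately after its demonstration and deliberately sets aside, preferring an argument that uses only group actions on trees. Your secondary, Serre-style sketch is in essence the paper's actual proof: the paper constructs the árvore de extensão $T_0$ directly inside $T$ via Zorn (Teorema~\ref{spanningtreethm}) rather than lifting a spanning tree from the quotient, defines $\tilde S=\{g_a\}$ indexed by the essential edges, proves generation by tracking which translates $g_jT_0$ a path from $v$ to $gv$ crosses, and rules out relations by showing that a cycle in $\cay(G,\tilde S)$ would concatenate to a cycle in $T$ --- exactly your ``generation from connectivity, freeness from acyclicity.'' The combinatorial route buys self-containedness; the covering-space route buys brevity at the cost of importing $\pi_1$ of graphs.

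The genuine flaw is your treatment of inversions. You argue that an edge-inverting $g$ satisfies $g^2=1$, hence may exist with order $2$, and then claim that passing to the barycentric subdivision $T'$ eliminates inversions \emph{while $G$ still acts freely on vertices}. This is self-contradictory: if such a $g$ of order $2$ existed, it would fix the new midpoint vertex of the inverted edge, so the action on $T'$ would not be free --- the subdivision cannot rescue freeness. Worse, no argument could, because if ``free'' meant only ``free on vertices'' the theorem would be false: $\Z_2$ acting on the one-edge tree by swapping its endpoints acts freely on vertices, by automorphisms, on a nonempty tree, yet $\Z_2$ is not free. The resolution is the convention this book actually uses: an action on a graph is free when no nontrivial element fixes \emph{any} point of the geometric realization, edge interiors included (this is precisely the convention behind Exercício~\ref{açãolivre}, where a generator of order $2$ inverts an edge and fixes its midpoint, and it is what the paper's proof invokes to conclude that $\tilde S$ has no elements of order $2$). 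Under this reading an inverting element is excluded outright --- it fixes the midpoint of the inverted edge in $T$ itself --- so there are no inversions to eliminate, $G$ is torsion-free, and your covering-space argument (and your combinatorial sketch) goes through as written once the subdivision step is simply deleted.
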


\noindent
\textit{Demonstração do Teorema~\ref{caracterizaçãogplivre} -- Parte 1:}
Caso $G$ seja um grupo livre finitamente gerado, seu grafo de Cayley será uma árvore, e podemos portanto considerar a ação de $G$ à esquerda no grafo de Cayley. Observe que, neste caso, $S$ não pode possuir elementos de ordem $2$. Usando o  Exercício~\ref{açãolivre}, concluímos que essa ação é livre.\hfill \qedsymbol

Para demonstrar a recíproca do Teorema~\ref{caracterizaçãogplivre}, vamos mostrar que, supondo que existe uma ação livre de $G$ em uma árvore não vazia $T$, a partir de $T$ e dessa ação, podemos construir, contraindo certas subárvores, uma árvore que é o grafo de Cayley de $G$ com respeito a um conjunto de geradores adequado. Pelo Teorema~\ref{serretrees}, seguirá que $G$ é livre. As subárvores que serão contraídas são conhecidas como ``\textit{árvores de extensão}'', 
e serão discutidas a seguir.

\begin{definition}
Considere a ação de um grupo $G$ em um grafo conexo $X$ via automorfismos de grafos. Uma \textit{árvore de extensão para essa ação} \index{arvore de extensao@árvore de extensão} é uma subárvore de $X$ que contém exatamente um vértice de cada órbita da ação induzida de $G$ no conjunto de vértices de $X$.
\end{definition}

\begin{remark}
Um subgrafo  de um grafo $(V,A)$ é um grafo $(V',A')$ com  $V'\subset V$ e $A'\subset A$. Um subgrafo é uma subárvore se, além disso, for uma árvore.
\end{remark}

\begin{example}
\label{spanningtree}
Seja $\Gamma$ é o grafo representado pela Figura~\ref{spannintreeexample}, e consideramos a ação de $\Z$ em $\Gamma$ por translação horizontal de vértices e arestas, então o conjunto destacado é um exemplo de  árvore de extensão para a ação de $\Z$ em $\Gamma$.
\end{example}
\begin{figure}[!ht]
\begin{center}
    \begin{tikzpicture}[scale=0.6]
  \draw (-3,0)node{$\bullet$}; 
  \draw [color=red] (-2,0)node{$\bullet$};
  \draw (-1,0)node{$\bullet$};
  \draw (0,0)node{$\bullet$};
  \draw (1,0)node{$\bullet$};
  \draw (2,0)node{$\bullet$};
  \draw (3,0)node{$\bullet$};
  \draw (4,0)node{$\bullet$};
  
  \draw (-3,1)node{$\bullet$}; 
      \draw  [color=red](-2,1)node{$\bullet$};
  \draw (-1,1)node{$\bullet$};
  \draw (0,1)node{$\bullet$};
  \draw (1,1)node{$\bullet$};
  \draw (2,1)node{$\bullet$};
  \draw (3,1)node{$\bullet$};
  \draw (4,1)node{$\bullet$};
  
  \draw (-3,-1)node{$\bullet$}; 
  \draw  [color=red] (-2,-1)node{$\bullet$};
  \draw (-1,-1)node{$\bullet$};
  \draw (0,-1)node{$\bullet$};
  \draw (1,-1)node{$\bullet$};
  \draw (2,-1)node{$\bullet$};
  \draw (3,-1)node{$\bullet$};
  \draw (4,-1)node{$\bullet$};

\draw[color=gray, opacity=0.7]   (-4,0) -- (-3,0);
\draw[color=gray, opacity=0.7]   (-3,0) -- (-2,0);
\draw[color=gray, opacity=0.7]   (-2,0) -- (-1,0);
\draw[color=gray, opacity=0.7]   (-1,0) -- (0,0);
\draw[color=gray, opacity=0.7]   (0,0) -- (1,0);
\draw[color=gray, opacity=0.7]   (1,0) -- (2,0);
\draw[color=gray, opacity=0.7]   (2,0) -- (3,0);
\draw[color=gray, opacity=0.7]   (3,0) -- (4,0);
\draw[color=gray, opacity=0.7]   (4,0) -- (5,0);

\draw[color=gray, opacity=0.7]   (-3,-1) -- (-3,1);
\draw   [color=red, thick](-2,-1) -- (-2,1);
\draw[color=gray, opacity=0.7]  (-1,-1) -- (-1,1);
\draw[color=gray, opacity=0.7]  (0,-1) -- (0,1);
\draw[color=gray, opacity=0.7]  (1,-1) -- (1,1);
\draw[color=gray, opacity=0.7]  (2,-1) -- (2,1);
\draw[color=gray, opacity=0.7]  (3,-1) -- (3,1);
\draw[color=gray, opacity=0.7]  (4,-1) -- (4,1);
 \draw [color=red] (-2,0)node{$\bullet$};
\end{tikzpicture}
\end{center}
 \caption{Árvore de extensão}
    \label{spannintreeexample}
\end{figure}

\begin{thm} \label{spanningtreethm}
Toda ação, por automorfismos de grafos, de um grupo $G$ em um grafo conexo $X\neq \emptyset$ admite uma árvore de extensão.
\end{thm}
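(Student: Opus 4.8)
The plan is to construct the required subtree by a maximality argument based on Zorn's lemma. Let $V$ denote the vertex set of $X$, so that $G$ acts on $V$ and partitions it into orbits. I would consider the collection $\mathcal{T}$ of all subtrees $T \subseteq X$ with the property that $T$ contains \emph{at most} one vertex of each $G$-orbit; equivalently, distinct vertices of $T$ lie in distinct orbits. Since $X \neq \emptyset$, any single vertex (with no edges) is such a subtree, so $\mathcal{T} \neq \emptyset$. I order $\mathcal{T}$ by inclusion of subgraphs and aim to produce a maximal element $T$; I will then argue that maximality forces $T$ to meet every orbit, and hence exactly once, yielding the desired árvore de extensão.

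First I would verify the hypothesis of Zorn's lemma: every chain $(T_i)_{i\in I}$ in $\mathcal{T}$ has an upper bound, the natural candidate being the union $T = \bigcup_i T_i$. This union is connected, because any two of its vertices lie in a common $T_i$ (the chain is nested), and it is acyclic, because a simple cycle, being a finite subgraph, would be contained in a single $T_i$, contradicting that $T_i$ is a tree; hence $T$ is a subtree. Moreover $T$ still meets each orbit at most once, since two vertices of $T$ in the same orbit would both lie in a single $T_i$, contradicting $T_i \in \mathcal{T}$. Thus $T \in \mathcal{T}$ is an upper bound, and Zorn's lemma yields a maximal element, which I continue to call $T$; let $W = V(T)$ be its vertex set.

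The heart of the argument is to show that this maximal $T$ already represents every orbit. Let $GW = \bigcup_{w \in W} Gw$ be the union of the orbits met by $T$. If $GW = V$ we are done, since then every orbit contains a vertex of $T$, and by membership in $\mathcal{T}$ exactly one. Suppose for contradiction that $GW \neq V$. Since $X$ is connected, there is an edge joining some $x \in GW$ to some $y \notin GW$. Write $x = g w$ with $w \in W$ and $g \in G$, and set $y' = g^{-1} y$. Because $G$ acts by graph automorphisms, $g^{-1}$ preserves adjacency, so $\{w, y'\}$ is an edge of $X$; and since the orbit of $y'$ equals that of $y$, which is unrepresented, $y'$ lies in no orbit met by $W$, in particular $y' \notin W$. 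Adjoining the vertex $y'$ and the edge $\{w, y'\}$ to $T$ produces a subgraph $T'$ that is still a tree, since a new leaf has been attached to a connected acyclic graph and so no simple cycle is created, and that still meets each orbit at most once, as the only orbit newly introduced was previously unrepresented. Hence $T' \in \mathcal{T}$ strictly contains $T$, contradicting maximality.

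I expect the main obstacle to be this extension step: one must simultaneously preserve the tree structure \emph{and} the one-vertex-per-orbit condition, and the mechanism that makes both work is the translation by $g^{-1}$, which transports the chosen edge back to an edge incident to the representative $w \in W$ while keeping the new vertex's orbit fresh. The verifications in the chain step, namely that a nested union of subtrees is again a subtree and that acyclicity reduces to the case of finite cycles, are routine but are where the graph-theoretic definitions must be applied with care. Everything else, including the nonemptiness of $\mathcal{T}$ and the final deduction that \emph{at most once} together with \emph{meeting every orbit} gives \emph{exactly once}, is immediate.
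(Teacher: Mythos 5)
Your proof is correct, and its overall skeleton coincides with the paper's: both apply Zorn's lemma to the collection of subtrees meeting each $G$-orbit at most once, and both derive a contradiction from maximality. The difference lies in the key extension step. The paper, given an orbit $Gv$ disjoint from $T$, takes a path $p$ from $T$ to $v$, looks at the first vertex $v'$ of $p$ outside $T$, and then runs a case analysis: if some translate $gv'$ lies in $T$, it replaces $p$ by the translated, strictly shorter subpath $gp'$ and repeats, so that only after finitely many iterations does it obtain an unrepresented vertex adjacent to $T$. You avoid this iteration entirely by working with the $G$-saturation $GW$ of the vertex set of $T$: connectivity gives a single edge crossing from $GW$ to its complement, and one translation by $g^{-1}$ transports that edge so that its endpoint in $GW$ becomes the actual representative $w \in W$, while the other endpoint stays in an unrepresented orbit. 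This is a genuinely cleaner mechanism for the same step --- it trades the paper's induction on path length for the observation that ``adjacent to the saturation $GW$'' can be pulled back to ``adjacent to $T$ itself'' in one move. Your verification of the chain condition (nested unions of subtrees are subtrees because simple cycles are finite) is also spelled out more carefully than in the paper, which asserts the upper bound without detail.
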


\begin{proof}
Considere o conjunto $T_G$ de todas as subárvores de $X$ que contém no máximo um vértice de cada $G$-órbita. Mostraremos que $T_G$ possui um elemento $T$ que é maximal com respeito à relação de subárvores. O conjunto $T_G$ é não vazio, pois qualquer árvore com um único vértice de $X$ é um elemento de $T_G$. Além disso, $T_G$ é parcialmente ordenado pela relação de subgrafos, e qualquer cadeia totalmente ordenada de $T_G$ possui uma cota superior em $T_G$ (a saber, a união de todas as árvores nessa cadeia). Pelo Lema de Zorn, existe então algum elemento maximal $T$ em $T_G$. Como $X$ é não vazio, $T$ também é não vazia. 

Assuma, por contradição, que $T$ não é uma  árvore de extensão para a ação de $G$ em $X$. Então deve existir algum vértice $v$ de $X$ para o qual nenhum dos vértices na órbita $Gv$ está em $T$. Mostraremos agora que existe algum vértice $v'$ com essa propriedade e, mais ainda, tal que algum vértice adjacente a $v'$ é um vértice de $T$.

Como $X$ é conexo, existe um caminho $p$ ligando algum vértice $u \in T$ a $v$. Seja $v'$ o primeiro vértice de $p$ que não está em $T$. Podem ocorrer dois casos:
\begin{enumerate}
    \item Nenhum dos vértices da órbita de $v'$ está em $T$. Então $v'$ terá a propriedade desejada.
    \item Existe $g\in G$ tal que $gv' \in T$. Neste caso, se $p'$ denota o subcaminho de $p$ começando em $v'$ e terminando em $v$, então $gp'$ é um caminho começando em $gv'$, que está em $T$, e terminando em $gv$, o qual é um vértice tal que nenhum dos vértices em $G(gv)  = Gv$ está em $T$. Note que o caminho $p'$ é estritamente mais curto que $p$, de forma que  podemos repetir o processo finitas vezes e obter um vértice com a propriedade desejada.
\end{enumerate}
Seja $v$  um vértice de $X$ para o qual nenhum dos vértices na órbita $Gv$ está em $T$, mas algum vértice $u$ de $T$ é adjacente a $v'$. Com isso, podemos adicionar o vértice $v'$ e a aresta $\{u,v'\}$ à arvore $T$, obtendo um elemento de $T_G$ que contém propriamente $T$ como subgrafo. Isso contradiz a maximalidade de $T$. Portanto, $T$ deve ser uma árvore de extensão para a ação de $G$ em $X$.
\end{proof}

Vamos agora finalizar a demonstração do Teorema~\ref{caracterizaçãogplivre}. A  ideia da prova é usar o grafo obtido de $T$ após contrair cada cópia $gT_0$, com $g\in G$, de uma árvore de extensão $T_0$ para essa ação a um único vértice (observe que as árvores são contratíveis). Os  geradores livres de $G$ virão das arestas em $T_0$ que ligam esses novos vértices.

Um exemplo do que será feito, pensando no caso do Exemplo~\ref{spannintreeexample}, está na Figura~\ref{spannintreeexample2}. Vamos trabalhar com as arestas que ligam os novos vértices após todas as contrações.

\begin{figure}[!ht]
\begin{center}
 \begin{tikzpicture}[scale=0.6]
  \draw (-3,0)node{$\bullet$}; 
  \draw [color=red] (-2,0)node{$\bullet$};
  \draw (-1,0)node{$\bullet$};
  \draw (0,0)node{$\bullet$};
  \draw (1,0)node{$\bullet$};
  \draw [color=red](2,0)node{$\bullet$};
  \draw (3,0)node{$\bullet$};
  \draw (4,0)node{$\bullet$};
  
  \draw (-3,1)node{$\bullet$}; 
  \draw  [color=red](-2,1)node{$\bullet$};
  \draw (-1,1)node{$\bullet$};
  \draw (0,1)node{$\bullet$};
  \draw (1,1)node{$\bullet$};
  \draw [color=red](2,1)node{$\bullet$};
  \draw (3,1)node{$\bullet$};
  \draw (4,1)node{$\bullet$};
  \draw  [color=red](-1.9,-1.5)node{$T_0$};
  \draw  [color=red](2,-1.5)node{$gT_0$};
  
  \draw (-3,-1)node{$\bullet$}; 
  \draw  [color=red] (-2,-1)node{$\bullet$};
  \draw (-1,-1)node{$\bullet$};
  \draw (0,-1)node{$\bullet$};
  \draw (1,-1)node{$\bullet$};
  \draw [color=red](2,-1)node{$\bullet$};
  \draw (3,-1)node{$\bullet$};
  \draw (4,-1)node{$\bullet$};

\draw  (-4,0) -- (-3,0);
\draw  (-3,0) -- (-2,0);
\draw  (-2,0) -- (-1,0);
\draw  (-1,0) -- (0,0);
\draw  (0,0) -- (1,0);
\draw  (1,0) -- (2,0);
\draw  (2,0) -- (3,0);
\draw  (3,0) -- (4,0);
\draw  (4,0) -- (5,0);

\draw  (-3,-1) -- (-3,1);
\draw   [color=red](-2,-1) -- (-2,1);
\draw  (-1,-1) -- (-1,1);
\draw  (0,-1) -- (0,1);
\draw  (1,-1) -- (1,1);
\draw  [color=red](2,-1) -- (2,1);
\draw  (3,-1) -- (3,1);
\draw  (4,-1) -- (4,1);
 \draw [color=red] (-2,0)node{$\bullet$};
 \draw [color=red](2,0)node{$\bullet$};
 
 \draw [->] (0,-1.5) to (0,-2.5);
\end{tikzpicture}

\begin{tikzpicture}[scale=0.6]
  \draw (-3,0)node{$\bullet$}; 
  \draw[color=lightgray](-2,0)node{$\bullet$};
  \draw (-1,0)node{$\bullet$};
  \draw (0,0)node{$\bullet$};
  \draw (1,0)node{$\bullet$};
  \draw [color=lightgray](2,0)node{$\bullet$};
  \draw (3,0)node{$\bullet$};
  \draw (4,0)node{$\bullet$};
  
  \draw (-3,1)node{$\bullet$}; 
  \draw  [color=lightgray](-2,1)node{$\bullet$};
  \draw (-1,1)node{$\bullet$};
  \draw (0,1)node{$\bullet$};
  \draw (1,1)node{$\bullet$};
  \draw [color=lightgray](2,1)node{$\bullet$};
  \draw (3,1)node{$\bullet$};
  \draw (4,1)node{$\bullet$};
  
  \draw (-3,-1)node{$\bullet$}; 
  \draw  [color=lightgray] (-2,-1)node{$\bullet$};
  \draw (-1,-1)node{$\bullet$};
  \draw (0,-1)node{$\bullet$};
  \draw (1,-1)node{$\bullet$};
  \draw[color=lightgray](2,-1)node{$\bullet$};
  \draw (3,-1)node{$\bullet$};
  \draw (4,-1)node{$\bullet$};

\draw  (-4,0) -- (-3,0);
\draw  (-3,0) -- (-2,0);
\draw  (-2,0) -- (-1,0);
\draw  (-1,0) -- (0,0);
\draw  (0,0) -- (1,0);
\draw  (1,0) -- (2,0);
\draw  (2,0) -- (3,0);
\draw  (3,0) -- (4,0);
\draw  (4,0) -- (5,0);

\draw  (-3,-1) -- (-3,1);
\draw   [color=lightgray](-2,-1) -- (-2,1);
\draw  (-1,-1) -- (-1,1);
\draw  (0,-1) -- (0,1);
\draw  (1,-1) -- (1,1);
\draw  [color=lightgray] (2,-1) -- (2,1);
\draw  (3,-1) -- (3,1);
\draw  (4,-1) -- (4,1);
 \draw [color=red] (-2,0)node{$\bullet$};
 \draw [color=red](2,0)node{$\bullet$};
\draw (2.6,-1.8)node{\tiny{\mbox{Contraindo todas as}}};
\draw (2.6,-2.3 )node{\tiny{\mbox{cópias de} $T_0$}};

 \draw [->] (0,-1.5) to (0,-2.5);
\end{tikzpicture}

    \begin{tikzpicture}[scale=0.6]

  \draw [color=lightgray](-3,1)node{$\bullet$}; 
  \draw  [color=lightgray](-2,1)node{$\bullet$};
  \draw [color=lightgray](-1,1)node{$\bullet$};
  \draw [color=lightgray](0,1)node{$\bullet$};
  \draw [color=lightgray](1,1)node{$\bullet$};
  \draw [color=lightgray](2,1)node{$\bullet$};
  \draw [color=lightgray](3,1)node{$\bullet$};
  \draw [color=lightgray](4,1)node{$\bullet$};
  
  \draw [color=lightgray](-3,-1)node{$\bullet$}; 
  \draw  [color=lightgray] (-2,-1)node{$\bullet$};
  \draw [color=lightgray](-1,-1)node{$\bullet$};
  \draw [color=lightgray](0,-1)node{$\bullet$};
  \draw [color=lightgray](1,-1)node{$\bullet$};
  \draw [color=lightgray](2,-1)node{$\bullet$};
  \draw [color=lightgray](3,-1)node{$\bullet$};
  \draw [color=lightgray] (4,-1)node{$\bullet$};

\draw  (-4,0) -- (-3,0);
\draw  (-3,0) -- (-2,0);
\draw  (-2,0) -- (-1,0);
\draw  (-1,0) -- (0,0);
\draw  (0,0) -- (1,0);
\draw  (1,0) -- (2,0);
\draw  (2,0) -- (3,0);
\draw  (3,0) -- (4,0);
\draw  (4,0) -- (5,0);

\draw  [color=lightgray](-3,-1) -- (-3,1);
\draw  [color=lightgray](-2,-1) -- (-2,1);
\draw  [color=lightgray](-1,-1) -- (-1,1);
\draw  [color=lightgray](0,-1) -- (0,1);
\draw  [color=lightgray](1,-1) -- (1,1);
\draw [color=lightgray] (2,-1) -- (2,1);
\draw  [color=lightgray](3,-1) -- (3,1);
\draw  [color=lightgray](4,-1) -- (4,1);

  \draw [color=red] (-3,0)node{$\bullet$}; 
  \draw [color=red] (-2,0)node{$\bullet$};
  \draw [color=red] (-1,0)node{$\bullet$};
  \draw [color=red] (0,0)node{$\bullet$};
  \draw [color=red] (1,0)node{$\bullet$};
  \draw [color=red] (2,0)node{$\bullet$};
  \draw [color=red] (3,0)node{$\bullet$};
  \draw [color=red] (4,0)node{$\bullet$};
\end{tikzpicture}
\end{center}
 \caption{Contração das cópias de uma árvore de extensão.}
    \label{spannintreeexample2}
\end{figure}
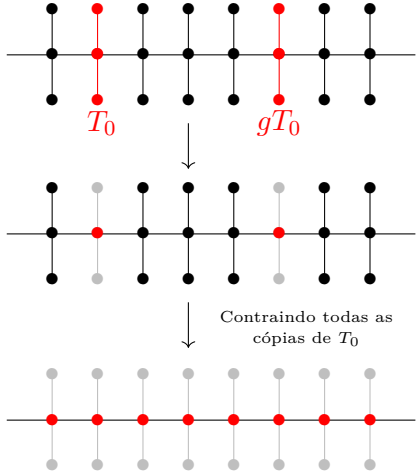

\noindent
\textit{Demonstração do Teorema~\ref{caracterizaçãogplivre} -- Parte 2:} Suponhamos agora que $G$ age por automorfismos livremente em uma árvore  $T$ não vazia. Pelo Teorema~\ref{spanningtreethm}, existe uma árvore de extensão $T_0$ para essa ação. Fazendo a contração de todas as cópias de $T_0$ pela ação de $G$ como no exemplo da Figura~\ref{spannintreeexample2}, diremos que uma aresta de $T$ é essencial se ela não estiver contida em $T_0$, mas algum dos seus vértices estiver em $T_0$ (portanto o outro vértice não estará em $T_0$, pela unicidade de caminhos ligando vértices em árvores).


Seja $a=\{u,v\}$ uma aresta essencial de $T$, com $u \in T_0$ e $v\notin T_0$. Como $T_0$ é uma árvore de extensão, existe um elemento $g_a \in G$ tal que $g_a^{-1}\cdot v \in T_0$.  Equivalentemente, $v \in g_a\cdot T_0$. O elemento $g_a$ é unicamente determinado por essa propriedade, dado que a órbita $G\cdot v$ intersecta $T_0$ em um único ponto e $G$ age livremente em $T$.

Definimos agora $$\Tilde{S} := \{g_a \in G \mid a \mbox{ é uma aresta essencial de } T\}.$$
O conjunto $\tilde{S}$ possui as seguintes propriedades:

\begin{enumerate}[(1)]
    \item Por definição, $1 \in G$ não está em $\tilde{S}$.
    \item  $\tilde{S}$ não contém elementos de ordem 2, pois $G$ age livremente na árvore não vazia $T$ (vide Exercício~\ref{açãolivre}).
    \item Se $a$ e $a'$ são arestas essenciais com $g_a = g_{a'}$, então $a = a'$ (como $T$ é uma árvore, não podem existir diferentes arestas conectando os subgrafos $T_0$ e $g_a \cdot T_0 = g_{a'}\cdot T_0$).
    \item Se $ g = g_a$ para uma aresta essencial $a$, então $g^{-1} = g_{g^{-1}\cdot a} \in \tilde{S}$, já que $g^{-1}\cdot a$ também é uma aresta essencial de $T$.
\end{enumerate}

Em particular, existe $S\subset \tilde{S}$ tal que $ S\cap S^{-1} = \emptyset $ e $$|S| = \frac{|\tilde{S}|}{2} =  \frac{\mbox{número de arestas essenciais de T}}{2}.$$

Vejamos que $\tilde{S}$, e portanto $S$, gera $G$: sejam dados $g\in G$ e um vértice $v\in T_0$. Como $T$ é conexa, existe algum caminho $p$ em $T$ ligando $v$ a $g\cdot v$. Esse caminho passa por algumas cópias de $T_0$, digamos $g_0\cdot T_0$, \ldots, $g_n\cdot T_0$, nessa ordem, com $g_0= 1$, $g_n=g$ e $g_{j+1} \neq g_j$ para todo $j=0, 1, \ldots, n-1$.

Seja $j \in \{0, 1, \ldots, n-1\}$. Como $T_0$ é árvore de extensão e $g_j \neq g_{j+1}$, as cópias $g_j\cdot T_0$ e $g_{j+1}\cdot T_0$ são conectadas por uma aresta $a_j$. Por definição, $g_j^{-1}\cdot a_j$ é uma aresta essencial, e o elemento 
$s_j := g^{-1}_j  g_{j+1}$ está em $\tilde{S}$. Assim, obtemos que
$$g = g_n = g^{-1}_0  g_n = g^{-1}_0 g_1 g_1^{-1}  g_2 \ldots g^{-1}_{n-1}g_n
= s_0 \ldots s_{n-1}$$
está no subgrupo de $G$ gerado por $\tilde{S}$.

\begin{figure}[!ht]
\begin{center}
 \begin{tikzpicture}[scale=0.6]
  \draw [color=lightgray] (-3,0)node{$\bullet$}; 
  \draw [color=red] (-2,0)node{$\bullet$};
  \draw [color=lightgray](-1,0)node{$\bullet$};
  \draw [color=red](0,0)node{$\bullet$};
  \draw [color=red](1,0)node{$\bullet$};
  \draw [color=lightgray](2,0)node{$\bullet$};
  \draw [color=red](3,0)node{$\bullet$};
  \draw [color=lightgray](4,0)node{$\bullet$};
  
\draw [color=red] (-2.3,0.2)node{$v$};
  
  \draw [color=lightgray](-3,1)node{$\bullet$}; 
  \draw  [color=red](-2,1)node{$\bullet$};
  \draw [color=lightgray](-1,1)node{$\bullet$};
  \draw [color=lightgray](0,1)node{$\bullet$};
  \draw [color=lightgray](1,1)node{$\bullet$};
  \draw [color=lightgray](2,1)node{$\bullet$};
  \draw [color=red](3,1)node{$\bullet$};
  \draw [color=lightgray](4,1)node{$\bullet$};
  \draw  [color=red](-1.9,-1.5)node{\tiny$T_0$};
  \draw  [color=red](3,-1.5)node{\tiny$gT_0$};
  \draw  [color=lightgray](0,-1.5)node{\tiny$g_jT_0$};
  \draw  [color=lightgray](1,1.5)node{\tiny$g_{j+1}T_0$};
   \draw [color=red](0.5,0.3)node{\tiny $a_j$};

  \draw [color=red] (3.5,0.3)node{\tiny $g\cdot v$};
  
  \draw [color=lightgray](-3,-1)node{$\bullet$}; 
  \draw  [color=red] (-2,-1)node{$\bullet$};
  \draw [color=lightgray](-1,-1)node{$\bullet$};
  \draw [color=lightgray](0,-1)node{$\bullet$};
  \draw [color=lightgray](1,-1)node{$\bullet$};
  \draw [color=lightgray](2,-1)node{$\bullet$};
  \draw [color=red](3,-1)node{$\bullet$};
  \draw [color=lightgray](4,-1)node{$\bullet$};

\draw  [color=lightgray](-4,0) -- (-3,0);
\draw  [color=lightgray](-3,0) -- (-2,0);
\draw  [color=lightgray](-2,0) -- (-1,0);
\draw  [color=lightgray](-1,0) -- (0,0);
\draw  [color=red, thick](0,0) -- (1,0);
\draw  [color=lightgray](1,0) -- (2,0);
\draw  [color=lightgray] (2,0) -- (3,0);
\draw [color=lightgray] (3,0) -- (4,0);
\draw [color=lightgray] (4,0) -- (5,0);
\draw [color=red,style={dashed,thick}] (-2,0) -- (3,0);

\draw  [color=lightgray](-3,-1) -- (-3,1);
\draw   [color=red](-2,-1) -- (-2,1);
\draw  [color=lightgray](-1,-1) -- (-1,1);
\draw [color=lightgray] (0,-1) -- (0,1);
\draw  [color=lightgray](1,-1) -- (1,1);
\draw [color=lightgray] (2,-1) -- (2,1);
\draw  [color=red] (3,-1) -- (3,1);
\draw  [color=lightgray](4,-1) -- (4,1);
 \draw [color=red] (-2,0)node{$\bullet$};
 \draw [color=red](3,0)node{$\bullet$};
 
\end{tikzpicture}
 \caption{$\tilde{S}$ gera $G$.}
    \label{spannintreeexample3}
    \end{center}
\end{figure}

Mais ainda, vemos que o grafo obtido ao colapsar cada transladado de $T_0$ em $T$ é o grafo de Cayley de $G$ com respeito a $\tilde{S}$.

Por fim, afirmamos que o conjunto $S$ gera $G$ livremente. Pelo Teorema~\ref{serretrees}, é suficiente mostrar que o grafo de Cayley de $G$ com respeito a $S$ não possui ciclos. Suponha que existe algum $n\in \N_{\geq 3}$ e um ciclo $ g_0,\ldots ,g_{n-1}$ em $\cay(G, S) = \cay(G, \tilde{S})$. Por  definição, para todo $j\in\{0,\ldots,n-1\}$, os elementos $s_{j}= g_j^{-1}  g_{j+1}$ e $s_n = g^{-1}_{n-1}g_0$ estão em $\tilde{S}$. Para cada $j =1\ldots n$, seja $a_j$ uma aresta essencial ligando $T_0$ e $s_j\cdot T_0$. 

Como cada um dos transladados de $T_0$ é um subgrafo conexo, podemos conectar os vértices das arestas $g_ja_j$ e $g_js_ja_{j+1}= g_{j+1}a_{j+1}$ que estão em $g_{j+1}\cdot T_0$  por um caminho em $g_{j+1}\cdot T_0$.

\begin{figure}[!ht]
\begin{center}
 \begin{tikzpicture}[scale=0.8]
  \draw [color=lightgray] (-3,0)node{$\bullet$}; 
  \draw [color=lightgray] (-3,1)node{$\bullet$}; 
  \draw [color=lightgray] (-3,-2)node{$\bullet$}; 
  \draw [color=lightgray]
  (-3,-1)node{$\bullet$}; 
  \draw [color=lightgray]
  (-3,2)node{$\bullet$}; 
    
  \draw [color=lightgray] (-2,0)node{$\bullet$}; 
  \draw [color=lightgray] (-2,1)node{$\bullet$}; 
  \draw [color=lightgray] (-2,-2)node{$\bullet$}; 
  \draw [color=lightgray]
  (-2,-1)node{$\bullet$}; 
  \draw [color=lightgray]
  (-2,2)node{$\bullet$}; 
  
    \draw [color=lightgray] (-1,0)node{$\bullet$}; 
  \draw [color=lightgray] (-1,1)node{$\bullet$}; 
  \draw [color=lightgray] (-1,-2)node{$\bullet$}; 
  \draw [color=lightgray]
  (-1,-1)node{$\bullet$}; 
  \draw [color=lightgray]
  (-1,2)node{$\bullet$};

\draw  [color=lightgray](-3.5,2)node{\tiny$g_jT_0$};
\draw  [color=lightgray](-2.2,2.3)node{\tiny$g_{j+1}T_0$};
\draw  [color=lightgray](-0.1,2)node{\tiny$g_{j+2}T_0$};
\draw  (-3.4,1)node{\tiny$g_{j}a_j$};
\draw  (0.1,-1)node{\tiny$g_{j+1}a_{j+1}$};
 
\draw  [color=lightgray](-3,2) -- (-3,-2);
\draw  [color=lightgray](-2,2) -- (-2,-2);
\draw  [color=lightgray](-1,2) -- (-1,-2);
\draw  [color=red,thick, dashed](-2,1) -- (-2,-1);
\draw  [thick](-3,1) -- (-2,1);
\draw  [thick] (-2,-1) -- (-1,-1);

\draw [color=red] (-2,1)node{$\bullet$}; 
  \draw [color=red] (-2,-0)node{$\bullet$}; 
  \draw [color=red]
  (-2,-1)node{$\bullet$}; 
  \draw  (-3,1)node{$\bullet$}; 
  \draw  (-1,-1)node{$\bullet$}; 

\end{tikzpicture}
 \caption{$\cay(G,\tilde{S})$ não contém ciclos.}
    \end{center}
\end{figure}

Usando o fato de que $ g_0,\ldots, g_{n-1}$ é um ciclo em $\cay(G,S)$, vemos que ao concatenar esses caminhos, obtemos um ciclo em $T$, uma contradição com o fato de que $T$ é árvore. 

\hfill \qedsymbol

Observamos que existe uma prova bem mais curta desse teorema, a qual segue da teoria de recobrimentos. De fato, se $G$ age por automorfismos livremente em uma árvore não vazia $T$, então $G \cong \pi_1(F)$, onde $F = T/G$. Como $F$ é um grafo conexo, segue do Teorema de Seifert--Van Kampen que $\pi_1(F)$ é livre. No entanto, optamos por escrever a demonstração acima pelo fato das técnicas usadas dependerem puramente da teoria de grupos e das propriedades da ação de grupo na árvore $T$. Além disso, a noção de árvore de extensão usada na prova tem várias outras aplicações. 

\begin{exercise}
 Mostre que a métrica das palavras em $G$, com conjunto de geradores $S$ simétrico, também pode ser definida por: 
\begin{enumerate}[(1)]
    \item Única métrica maximal invariante à esquerda, com $\dist(1,s)=\dist(1,s^{-1})=1$ para todo $s \in S$; ou
    \item $\dist_S(h,g) $ é o comprimento minimal de uma palavra $w$ escrita em $S$ tal que $w= g^{-1}h$ como elemento de $G$.
\end{enumerate}
\end{exercise}

\begin{exercise}\label{metricadaspalavras}
  Sejam $S$ e $\Bar{S}$ conjuntos finitos que geram $G$. Mostre que $\dist_S$ e $\dist_{\Bar{S}}$ são \textit{bi-Lipschitz equivalentes}, \index{equivalência bi-Lipschitz} isto é, existe $L>0$ tal que 
  $$\frac{1}{L} \dist_S(g,g') \leq \dist_{\Bar{S}}(g,g') \leq L\,\dist_S(g,g'). $$
  Conclua que cada isomorfismo $f : G\to H$ de grupos finitamente gerados é bi-Lipschitz.
\end{exercise}

\begin{exercise}\label{exerc:geodesicabiinfinita}
Mostre que o grafo de Cayley de um grupo infinito, mas finitamente gerado,  contém uma cópia isométrica de $\R$, ou seja, uma geodésica bi-infinita.
\end{exercise}

\section{Grafos de Schreier e o método de Reide\-meis\-ter--Schreier}

Sejam $G=\langle X \mid R\rangle$ um grupo finitamente apresentado e $H \leq G$ um subgrupo. O método de Reidemeister--Schreier permite recuperar uma apresentação do grupo $H$ usando informações sobre o conjunto quociente $G/H$. 
Para apresentar o subgrupo $H$ precisamos de palavras em $X$, que geram $H$. Mas, além desses geradores de $H$, precisaremos de um processo para ``reescrever'' uma palavra em $X$ que defina um elemento de $H$, como uma palavra nos geradores de $H$. O processo que vamos definir nesta seção é chamado de processo de reescrita de Reidemeister--Schreier (para mais sobre esse processo, o leitor pode se referir a \cite{magnus2004combinatorial} ou \cite{bogopolski}).

\subsection{Transversal de Schreier}

\begin{definition}
Sejam $G = F(X)$ um grupo livre e $H \leq G.$ Um conjunto $\mathcal{T} \subset G$ é dito um  \textit{transversal de Schreier} \index{transversal de Schreier} para $H$ quando:
\begin{itemize}
\item[(i)] $Ht\neq Hs$, para $t \neq s \in \mathcal{T}$;
\item[(ii)] $\displaystyle \bigcup_{t \in \mathcal{T}}Ht=G$;
\item[(iii)] se $t=x_1\ldots x_n \in \mathcal{T}$, com $x_i \in X$, então $x_1\ldots x_j \in \mathcal{T}$, para  todo $1\leq j< n$.
\end{itemize}
\end{definition}
Note que sempre vale $e\in \mathcal{T}$. Para cada $g \in G$, denote por $\overline{g}$ o elemento de $\mathcal{T}$ com a propriedade $Hg = H\overline{g}$.

\begin{lemma}[Existência de transversais de Schreier]\label{existST} Seja $G$ um grupo livre gerado por $X$. Para cada subgrupo $H \leq G$, existe um transversal de Schreier para $H$ em $G$. 
\end{lemma}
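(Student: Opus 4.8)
The plan is to construct the transversal by a maximality argument via Zorn's lemma, in the same spirit as the proof of Theorem~\ref{spanningtreethm}: indeed, right cosets $Hg$ play the role of vertices of the action, the Schreier condition (iii) is exactly prefix-closedness, and a Schreier transversal is the analogue of a spanning tree. Concretely, I would let $\mathcal{C}$ be the collection of all subsets $\mathcal{S}\subseteq F(X)$ that contain $e$ and satisfy conditions (i) and (iii) of the definition, that is, prefix-closed sets meeting each right coset of $H$ in at most one element. The set $\{e\}$ lies in $\mathcal{C}$, so $\mathcal{C}\neq\emptyset$, and I would order $\mathcal{C}$ by inclusion. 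For any chain in $\mathcal{C}$ the union $\mathcal{U}$ is again in $\mathcal{C}$: every element of $\mathcal{U}$ comes from some member of the chain, so its prefixes lie in that same member, giving (iii); and if two distinct elements of $\mathcal{U}$ lay in the same coset they would both belong to a common member of the (totally ordered) chain, contradicting (i) there. By Zorn's lemma, $\mathcal{C}$ has a maximal element $\mathcal{T}$, which automatically satisfies (i) and (iii).

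It remains to verify condition (ii), and the whole content of the proof is that maximality forces it. I would argue by contradiction: suppose some right coset is \emph{not} represented by $\mathcal{T}$, and among all reduced words (unique by Proposition~\ref{proposition1}) whose coset is unrepresented, choose one, $w=x_1\cdots x_n$, of minimal length. Since $He=H$ is represented by $e\in\mathcal{T}$, we have $n\geq 1$. The prefix $w'=x_1\cdots x_{n-1}$ is strictly shorter, so by minimality its coset $Hw'$ \emph{is} represented, say $Hw'=Ht'$ with $t'\in\mathcal{T}$. Writing $w'=ht'$ with $h\in H$ gives $Hw=Hw'x_n=H(t'x_n)$, so the coset of $t'x_n$ is also unrepresented.

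Now I would split into two cases according to whether the product $t'x_n$ involves cancellation. If $t'x_n$ is already reduced, then I can adjoin it to $\mathcal{T}$: all its proper prefixes are prefixes of $t'$, hence in $\mathcal{T}$ by prefix-closedness, so (iii) is preserved, and (i) is preserved because the coset $H(t'x_n)$ was unrepresented; this produces a strictly larger element of $\mathcal{C}$, contradicting maximality. If instead there is cancellation, then $t'=t''x_n^{-1}$ for a reduced word $t''$, whence $t'x_n=t''$ is a prefix of $t'$ and therefore already lies in $\mathcal{T}$; but then $H(t'x_n)=Ht''$ \emph{is} represented, contradicting the previous paragraph. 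Either way we reach a contradiction, so every coset is represented and $\mathcal{T}$ is a Schreier transversal.

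The routine parts are the chain-union verification and the bookkeeping in the cancellation case; the main obstacle to watch is the extension step, where one must be careful that $t'x_n$ (not $w$ itself) is the correct candidate to adjoin, since it is $t'\in\mathcal{T}$ whose prefixes are guaranteed to lie in $\mathcal{T}$ and thereby preserve the Schreier property (iii). The minimal-length choice of $w$ is exactly what lets the prefix $w'$ be replaced by the honest transversal element $t'$, and this replacement is what makes the induction on word length close.
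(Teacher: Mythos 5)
Your proof is correct, but it takes a genuinely different route from the paper's. The paper constructs the transversal directly by induction on the length of a coset (defined as the length of the shortest word it contains): having chosen representatives for all cosets of length less than $k$, it picks any word $x_1\cdots x_k$ of length $k$ in a coset of length $k$ and declares $\overline{x_1\cdots x_{k-1}}\,x_k$ to be its representative; since this element lies in a coset of minimal length $k$ and is a product of a word of length at most $k-1$ with one letter, it must have length exactly $k$, so no cancellation can occur, prefix-closedness is built in, and no case analysis is needed. Your argument instead extracts a maximal partial transversal (prefix-closed, meeting each coset at most once) from Zorn's lemma and shows that maximality forces every coset to be hit, via a minimal counterexample and a two-case analysis on cancellation — note that your Case 2 is precisely the situation the paper's length count rules out a priori. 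Both proofs hinge on the same replacement trick (substitute the prefix by its representative in the partial transversal and append the last letter), and both ultimately invoke choice, yours explicitly through Zorn and the paper's implicitly in choosing a word in each coset. Your version has the conceptual merit of running parallel to the spanning-tree argument of Theorem~\ref{spanningtreethm}, an analogy the paper itself draws later when it identifies Schreier transversals with spanning trees of Schreier graphs; the paper's version buys a slightly stronger conclusion, namely a transversal in which every representative has minimal length within its own coset, a property the Zorn construction does not guarantee.
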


\begin{proof}
Definimos o comprimento de uma classe de $G$ modulo $H$ como o comprimento da palavra mais curta nesta classe. Os representantes de Schreier serão definidos indutivamente, usando o comprimento de classes.

Escolha a palavra vazia como representante de $H$, a classe de comprimento zero. Se $S_1$ for uma classe de comprimento um, escolha qualquer palavra $x_1$ de comprimento um em $S_1$ como seu representante. Se $S_2$ tiver comprimento dois, selecione uma palavra $x_1x_2$ de comprimento dois em $S_2$ (onde $x_1$ e $x_2$ são geradores de $G$ ou seus inversos). Agora $\overline{x}_1x_2$ está bem definido e, como $Hx_1 = H\overline{x}_1$, ele pertence a $S_2$. Como $\overline{x}_1$ tem comprimento igual a um, $\overline{x}_1x_2$ deve ter comprimento dois. Escolhemos $\overline{x}_1x_2$ como o representante de $S_2$. Em geral, assumindo que escolhemos representantes para todas as classes de comprimento menor que $k$, se $S_k$ for uma classe de comprimento $k$ e $x_1\ldots x_{k-1}x_k$ for um elemento em $S_k$, escolhemos $\overline{x_1\ldots x_{k-1}}x_k$ (que tem comprimento $k$) como o representante de $S_k$. Claramente, por construção, se o último gerador for excluído de um representante, outro representante será obtido. Assim, qualquer segmento inicial de um representante é um representante, e todas as propriedades de uma transversal de Schreier estão satisfeitas.
\end{proof}

\begin{proposition}
Seja $\mathcal{T}$ um  transversal de Schreier para $H$ em $G$. Então $H$ tem base de geradores 
$$\{tx(\overline{tx})^{-1} \mid t \in  \mathcal{T},\ x \in X \mbox{ e } tx(\overline{tx})^{-1} \neq  1\}.$$
\end{proposition}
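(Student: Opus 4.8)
O plano é verificar, nesta ordem, três coisas: que cada $tx(\overline{tx})^{-1}$ pertence de fato a $H$; que tais elementos geram $H$, por um argumento telescópico elementar; e — o que será o ponto delicado — que eles formam uma \emph{base livre}, passo no qual a condição (iii) de transversal de Schreier se torna indispensável. Fixo a notação: para $g\in F(X)$, escrevo $\overline{g}$ para o representante de Schreier da classe $Hg$, de modo que $Hg=H\overline{g}$, $\overline{e}=e$ e $\overline{t}=t$ para todo $t\in\mathcal{T}$. A pertinência é imediata, pois $H(tx)=H\overline{tx}$ fornece $tx(\overline{tx})^{-1}\in H$ (e o mesmo vale com $x^{-1}$ no lugar de $x$).

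Para a geração, tomo $h\in H$ e o escrevo como palavra reduzida $h=y_1\cdots y_n$, com $y_i\in X\cup X^{-1}$. Defino $t_0:=e$ e $t_i:=\overline{y_1\cdots y_i}$, observando que $t_i=\overline{t_{i-1}y_i}$ (pois $Hy_1\cdots y_{i-1}=Ht_{i-1}$) e que $t_n=\overline{h}=e$, já que $h\in H$. O produto telescópico
\begin{equation*}
\prod_{i=1}^{n}\big(t_{i-1}\,y_i\,t_i^{-1}\big)=t_0\,y_1\cdots y_n\,t_n^{-1}=h
\end{equation*}
exprime $h$ como produto dos fatores $s_i=t_{i-1}\,y_i\,\overline{(t_{i-1}y_i)}^{-1}$. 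Se $y_i=x\in X$, então $s_i$ é exatamente um gerador da forma listada. Se $y_i=x^{-1}$, ponho $t'=\overline{tx^{-1}}$ com $t=t_{i-1}$; de $Ht'x=Ht$ vem $\overline{t'x}=t$, donde $s_i=\big(t'x(\overline{t'x})^{-1}\big)^{-1}$ é o inverso de um gerador listado. Descartando os fatores iguais a $1$, concluo que o conjunto gera $H$.

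O obstáculo principal é a liberdade desses geradores, e é precisamente aqui que a propriedade de Schreier (iii), até então não usada, entra em cena. A ideia é mostrar que o conjunto é Nielsen-reduzido, tendo como ponto de partida a seguinte observação: para $\gamma=tx(\overline{tx})^{-1}\neq 1$, a palavra $tx$ é reduzida, $\overline{tx}\neq tx$, e a própria expressão $tx(\overline{tx})^{-1}$ já é uma palavra reduzida em $X\cup X^{-1}$, na qual a letra $x$ na posição $|t|+1$ é distinguida. De fato, se ela se cancelasse, então $\overline{tx}$ terminaria em $x$; removendo essa letra final, a condição (iii) forneceria um representante de $Ht$ pertencente a $\mathcal{T}$, que por unicidade seria $t$, forçando $\overline{tx}=tx$ e, portanto, $\gamma=1$. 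A parte mais trabalhosa é então controlar os cancelamentos num produto reduzido $\gamma_1^{\pm1}\cdots\gamma_k^{\pm1}$ e concluir que essas letras distinguidas nunca se anulam entre fatores consecutivos, de modo que o produto jamais reduz à palavra vazia. Uma rota alternativa, alinhada à abordagem via árvores do texto, é interpretar esses geradores como provenientes das arestas essenciais relativas a uma árvore de extensão para a ação livre de $H$ no grafo de Cayley (que é uma árvore) de $F(X)$, obtendo a liberdade a partir da construção usada na demonstração do Teorema~\ref{caracterizaçãogplivre}.
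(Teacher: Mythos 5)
Sua verificação de pertinência e seu argumento de geração estão corretos e coincidem, em essência, com a demonstração do livro: lá também se usa o produto telescópico com $v_1 = 1$ e $v_{i+1} = v_i x_i$ (os seus $t_i$ são exatamente os $\overline{v_{i+1}}$ do texto), e a mesma observação de que $tx^{-1}(\overline{tx^{-1}})^{-1}$ é o inverso do gerador $ux(\overline{ux})^{-1}$, com $u = \overline{tx^{-1}} \in \mathcal{T}$, obtida das propriedades $\overline{\overline{w}} = \overline{w}$ e $\overline{wv} = \overline{\overline{w}v}$.

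A diferença está justamente no ponto que você chama de delicado. A demonstração do livro para nesse estágio: ela prova apenas a geração e não aborda a liberdade do conjunto, embora o enunciado fale em ``base''. Você foi além ao reconhecer que isso exige argumento adicional, e sua observação preliminar está correta: para $\gamma = tx(\overline{tx})^{-1} \neq 1$, a propriedade (iii) e a unicidade dos representantes garantem que $tx$ é reduzida e que a concatenação $tx(\overline{tx})^{-1}$ é uma palavra reduzida (caso contrário teríamos $\gamma = 1$). Entretanto, o passo decisivo — mostrar que, em um produto reduzido $\gamma_1^{\pm 1}\cdots\gamma_k^{\pm 1}$, os cancelamentos entre fatores consecutivos nunca atingem as letras distinguidas, de modo que o produto não colapsa na palavra vazia — fica apenas enunciado na sua proposta, e é precisamente aí que mora todo o conteúdo técnico da prova clássica (redução de Nielsen). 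Assim, tal como está escrita, sua proposta demonstra exatamente o que o livro demonstra, nem mais nem menos. A rota alternativa que você aponta é a maneira mais econômica de fechar essa lacuna com as ferramentas do próprio texto: a condição (iii) diz exatamente que $\mathcal{T}$, vista como conjunto de vértices, é uma árvore de extensão para a ação livre de $H$ na árvore $\cay(F(X), X\cup X^{-1})$; os elementos $tx(\overline{tx})^{-1}$ são precisamente os geradores associados às arestas essenciais; e a Parte 2 da demonstração do Teorema~\ref{caracterizaçãogplivre} mostra que tais geradores geram livremente (a hipótese de geração finita no enunciado desse teorema não é usada nessa parte do argumento).
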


\begin{proof}
Nós vamos usar as seguentes propriedades básicas de representantes de classes:
\begin{itemize}
    \item[(i)] $\overline{w} = 1$ se e somente se $w\in H$;
    \item[(ii)] $\overline{\overline{w}} = \overline{w}$;
    \item[(iii)] $\overline{wv} = \overline{\overline{w}v}$.
\end{itemize}

Claramente, $tx(\overline{tx})^{-1}$ é um elemento de $H$ pois $tx$ e $\overline{tx}$ pertencem à mesma classe. Falta mostrar que todos elementos de $H$ pode ser escritos como produtos dos elementos $tx(\overline{tx})^{-1}$ e de seus inversos.

Observe que 
$$v = tx^{-1}(\overline{tx^{-1}})^{-1}$$
é inverso de um elemento da base. De fato, por (ii) e  (iii), obtemos
$$\overline{\overline{tx^{-1}}x} = \overline{tx^{-1}x} = \overline{t} = t.$$ 
Então $tx^{-1}(\overline{tx^{-1}})^{-1}$ é inverso de $ux(\overline{ux})^{-1}$ com $u = \overline{tx^{-1}} \in \mathcal{T}$.

Seja $w = x_1x_2\ldots x_r$ com $x_i \in X\cup X^{-1}$ um elemento de $H$. Temos que escrever $w$ como um produto de elementos da base e seus inversos. Para isso vamos inserir, antes e depois cada $x_i$ em $w$, palavras $\overline{v_i}$ e $(\overline{v_ix_i})^{-1}$, respectivamente, e tentar escolher as palavras $v_i$ tais que o produto $$\overline{v_1}x_1(\overline{v_1x_1})^{-1} \overline{v_2}x_2(\overline{v_2x_2})^{-1}\ldots \overline{v_r}x_r(\overline{v_rx_r})^{-1}$$
defina o mesmo elemento $w \in H$. Isso será verdadeiro se escolhermos
$$v_1 = 1,\ v_2 = v_1x_1, \ldots, v_r = v_{r-1}x_{r-1}.$$
De fato, neste caso temos 
$$\overline{v_1}x_1(\overline{v_1x_1})^{-1} \overline{v_2}x_2(\overline{v_2x_2})^{-1}\ldots \overline{v_r}x_r(\overline{v_rx_r})^{-1} = $$
$$
\overline{v_1}w(\overline{v_rx_r})^{-1} = \overline{1}w\overline{w}^{-1} = 1 w 1^{-1} = w.$$
(aqui $\overline{w} = 1$ pois $w \in H$). 

A propriedade (iii) implica que as palavras $\overline{v_i}x_i(\overline{v_ix_i})^{-1}$ definem elementos da base ou seus inversos (veja também a observação sobre $v$ no início da prova). Portanto, cada $w\in H$ é um produto de elementos da base ou seus inversos. 
\end{proof}

\begin{example}
O conjunto $\{a^nb^m \mid m,n\in\Z\}$ é um  transversal de Schreier para o grupo derivado do grupo livre $F(a,b)$. Além disso, $\overline{a^n b^m\cdot a}=a^{n+1}b^m$
e $\overline{a^nb^m \cdot b} =  a^nb^{m+1}$. Portanto, esse subgrupo tem como base o conjunto $$ \{a^nb^mab^{-m}a^{-(n+1)} \mid n,m \in \Z;\ m \neq 0\}.$$
\end{example}

\begin{exercise}
Represente o grupo  diedral $D_n$ como o quociente do grupo $F(a,c)$ pelo fecho normal do conjunto $\{a^2,c^2,(ac)^n\}$. Seja $H$ o núcleo do epimorfismo canônico $\varphi: F(a,c) \to D_n$. Prove que os seguintes conjuntos são  transversais de Schreier para $H$ em $F(a,c)$:
\begin{itemize}
    \item[(i)] o conjunto de todos os segmentos iniciais das palavras $(ac)^k$ e $(ca)^{k-1}c$, se  $n=2k$ é par;
    \item[(ii)]  o conjunto de todos os segmentos iniciais das palavras $(ac)^ka$ e $(ca)^k$, se $n=2k+1$ é ímpar.
\end{itemize}
Em seguida, encontre uma base de geradores para o grupo $H$.
\end{exercise}

\begin{exercise}
O grupo diedral infinito $D_{\infty}$ é o grupo das isometrias de $\Z$ (com a métrica induzida de $\R$), o qual é gerado pela translação $t(x) = x + 1$ e pela simetria $s(x) = -x$ e tem a seguinte apresentação:  $D_{\infty}= \langle s,t \mid s^2,s^{-1}tst \rangle$.
Represente esse grupo como o quociente de $F(a,c)$ pelo fecho normal do conjunto $\{a^2,c^2\}$. Encontre uma base de geradores para o núcleo do epimorfismo canônico $\varphi: F(a,c) \to D_{\infty}$.
\end{exercise}

\subsection{Processo de reescrita de Reidemeister--Schreier}

Sejam $F$ um grupo livremente gerado por um conjunto $X$, $H$ um subgrupo de $F$ e $\mathcal{T}$ um transversal de Schreier para $H$ em $F$. Para cada $t\in \mathcal{T}$ e cada $x \in X \cup X^{-1}$, defina $\gamma(t,x)= tx(\overline{tx})^{-1}$. Já vimos que os elementos não triviais  $\gamma(t,x)$, com $t\in \mathcal{T}$ e  $x\in X$, formam uma  base do grupo livre $H$, a qual denotaremos a partir de agora por $Y$. O processo de reescrever $w$ como uma palavra na base $Y$ é chamado de \textit{processo  de reescrita de Reidemeister--Schreier}. \index{processo de Reidemeister--Schreier}

Seja $H^*$ o grupo livre gerado por uma cópia $Y^*:=\{y^* \mid y \in Y\}$ de $Y$. A correspondência $y \mapsto y^*$ estende-se unicamente a um isomorfismo
$$
\tau: H \to H^*.
$$

Mais explicitamente, o mapa $\tau$ é definido da seguinte forma: dado $w \in H$, escrevemos $w$ como uma palavra nos geradores de Schreier $Y$ por meio do processo de reescrita de Reidemeister--Schreier. Então $\tau(w)$ é obtido substituindo cada gerador $y \in Y$ pela sua cópia $y^* \in Y^*$.

Em particular, se $w = x_1 \cdots x_n \in H$, com $x_i \in X \cup X^{-1}$, então
\[
w = \gamma(1, x_1)\cdot
\gamma (\overline{x_1}, x_2)\cdots  
\gamma(\overline{x_1\ldots x_{n-1}},  x_n),
\]
e, portanto,
\[
\tau(w) = \gamma(1, x_1)^* \cdot
\gamma (\overline{x_1}, x_2)^* \cdots  
\gamma(\overline{x_1\ldots x_{n-1}},  x_n)^*.
\]



\begin{proposition} \label{red-sch}
Sejam $G$ um grupo com apresentação $\langle X \mid  R\rangle$ e $\varphi: F(X) \to  G$ o epimorfismo correspondente a essa apresentação. Considere $G_1 \leq G$ e defina $H = \varphi^{-1}(G_1)$.  Então, de acordo com a notação acima, $G_1$ tem a seguinte apresentação:
$$G_1= \langle Y^* \mid R^*\rangle, \mbox{ onde } R^*= \{\tau(trt^{-1}) \mid  t\in \mathcal{T},  r\in  R\}.$$
\end{proposition}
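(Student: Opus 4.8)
The plan is to reduce the statement to a computation of a normal closure inside the free group $H$, exploiting that $N := \ker(\varphi) = \langle\langle R\rangle\rangle$ is contained in $H$. Indeed, since $\{e\}\subset G_1$ we have $N = \varphi^{-1}(\{e\}) \subset \varphi^{-1}(G_1) = H$, and because $\varphi$ is surjective, $\varphi(H) = G_1$ with $\ker(\varphi|_H) = H\cap N = N$. By Theorem~\ref{1th} this gives $G_1 \cong H/N$. Everything then comes down to describing $N$, viewed as a normal subgroup of $H$, in terms of the free basis $Y$ of $H$ furnished by the previous proposition.

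First I would record that $N$ is normal in $H$ (it is even normal in all of $F(X)\supset H$). The key claim is that $N$ is the normal closure \emph{in $H$} of the set $D := \{trt^{-1}\mid t\in\mathcal{T},\ r\in R\}$. One inclusion is immediate: each $trt^{-1}$ lies in $N$ (as $r\in R\subset N$ and $N\triangleleft F(X)$), so $\langle\langle D\rangle\rangle_H \subset N$. For the reverse inclusion I would use that $N=\langle\langle R\rangle\rangle$ is generated as a group by the conjugates $frf^{-1}$ with $f\in F(X)$ and $r\in R$. Writing the coset decomposition $f = ht$ with $t=\overline{f}\in\mathcal{T}$ and $h = f t^{-1}\in H$, we obtain
\[
frf^{-1} = h\,(t r t^{-1})\,h^{-1},
\]
which exhibits every such generator as an $H$-conjugate of an element of $D$. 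Hence $N\subset\langle\langle D\rangle\rangle_H$, and the two normal closures coincide.

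Finally I would transport this through the isomorphism $\tau: H\to H^*$ introduced above. Since $H$ is freely generated by $Y$ and $\tau$ is an isomorphism onto the free group $H^*$ on $Y^*$, it carries normal closures to normal closures, so
\[
\tau(N) = \langle\langle \tau(D)\rangle\rangle_{H^*} = \langle\langle R^*\rangle\rangle_{H^*},
\]
because $R^* = \{\tau(trt^{-1})\mid t\in\mathcal{T},\ r\in R\} = \tau(D)$ by definition. As $\tau$ maps $N$ isomorphically onto $\tau(N)$, it descends to an isomorphism
\[
G_1 \cong H/N \;\xrightarrow{\ \sim\ }\; H^*/\tau(N) = F(Y^*)/\langle\langle R^*\rangle\rangle = \langle Y^*\mid R^*\rangle,
\]
which is exactly the asserted presentation.

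The main obstacle is the reverse inclusion in the middle paragraph: one must be confident that the Schreier coset decomposition $f=ht$ genuinely reduces an arbitrary $F(X)$-conjugate of a relator to an $H$-conjugate of an element $trt^{-1}$ of $D$. The role of the rewriting process encoded by $\tau$ is then purely bookkeeping — it expresses each $\tau(trt^{-1})$ concretely as a word in the generators $Y^*$ — so no further difficulty arises there.
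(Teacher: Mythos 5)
The paper does not actually prove Proposition~\ref{red-sch}: it states it and immediately leaves the proof as an exercise to the reader, so there is no author proof to compare yours against. Your argument is correct and complete, and it is the standard route for this result. The three steps all check out: (1) $N=\ker(\varphi)=\langle\langle R\rangle\rangle \subset H$ and $G_1\cong H/N$ by Theorem~\ref{1th}; (2) the Schreier decomposition $f=ht$ with $t=\overline{f}\in\mathcal{T}$, $h=ft^{-1}\in H$ turns every $F(X)$-conjugate $frf^{-1}$ of a relator into the $H$-conjugate $h(trt^{-1})h^{-1}$, and since $N$ is generated by such conjugates (an exercise recorded in Chapter~1 of the paper), this gives $N=\langle\langle D\rangle\rangle_H$ where $D=\{trt^{-1}\mid t\in\mathcal{T},\ r\in R\}$; note also $D\subset N\subset H$, so the normal closure in $H$ makes sense; (3) transporting through the isomorphism $\tau: H\to H^*$, which the paper's setup grants you (it asserts that the nontrivial $\gamma(t,x)$ form a free basis $Y$ of $H$), carries $\langle\langle D\rangle\rangle_H$ onto $\langle\langle R^*\rangle\rangle_{H^*}$ and hence yields $G_1\cong F(Y^*)/\langle\langle R^*\rangle\rangle = \langle Y^*\mid R^*\rangle$. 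Your closing remark is also the right one to make explicit: the Reidemeister--Schreier rewriting process itself is only needed to express each $\tau(trt^{-1})$ concretely as a word in $Y^*$; the isomorphism argument does not depend on it.
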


\begin{exercise}
Prove a Proposição~\ref{red-sch}.
\end{exercise}

Como um corolário imediato, obtemos:
\begin{corollary}\label{cor:reid-sc} Seja $H\leq G$ um subgrupo de índice finito.
\begin{enumerate}
\item Se $G$ é finitamente gerado, então $H$ é finitamente gerado.
\item Se $G$ é finitamente apresentado, então $H$ é finitamente apresentado.
\end{enumerate}
\end{corollary}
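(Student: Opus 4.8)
O plano é aplicar diretamente a Proposição~\ref{red-sch} juntamente com o Lema~\ref{existST}, reduzindo toda a afirmação a uma contagem de geradores e relatores. Primeiro escrevo uma apresentação $G = \langle X \mid R\rangle$ escolhendo $X$ finito, o que é possível pois $G$ é finitamente gerado; no item (2), como $G$ é finitamente apresentado, posso tomar também $R$ finito. Seja $\varphi : F(X) \to G$ o epimorfismo canônico associado a essa apresentação e defino $\tilde{H} := \varphi^{-1}(H) \leq F(X)$.

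O passo-chave é observar que $[F(X) : \tilde{H}] = [G : H]$. De fato, como $\ker(\varphi) = \langle\langle R\rangle\rangle \subset \tilde{H}$ e $\varphi$ é sobrejetivo, o mapa $\varphi$ induz uma bijeção entre as classes laterais de $\tilde{H}$ em $F(X)$ e as classes laterais de $H$ em $G$. Portanto, escrevendo $s := [G:H] < \infty$, obtemos $[F(X):\tilde{H}] = s$.

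Em seguida, pelo Lema~\ref{existST}, existe um transversal de Schreier $\mathcal{T}$ para $\tilde{H}$ em $F(X)$. Como os elementos de $\mathcal{T}$ estão em bijeção com as classes laterais de $\tilde{H}$, temos $|\mathcal{T}| = s < \infty$. Agora aplico a Proposição~\ref{red-sch} com $G_1 = H$ e sua preimagem $\tilde{H}$: ela fornece a apresentação $H = \langle Y^* \mid R^*\rangle$, onde $Y^*$ é uma cópia da base de Schreier $Y = \{tx(\overline{tx})^{-1} \mid t\in\mathcal{T},\ x\in X,\ tx(\overline{tx})^{-1}\neq 1\}$ e $R^* = \{\tau(trt^{-1}) \mid t\in\mathcal{T},\ r\in R\}$.

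Resta apenas contar. Como $|Y^*| = |Y| \leq |\mathcal{T}|\cdot|X| = s\,|X|$, o conjunto $Y^*$ é finito, de modo que $H$ é finitamente gerado, o que prova (1). Para (2), além disso $|R^*| \leq |\mathcal{T}|\cdot|R| = s\,|R|$ também é finito, logo $\langle Y^*\mid R^*\rangle$ é uma apresentação finita de $H$, provando (2). A única sutileza que exige cuidado é justamente a igualdade de índices $[F(X):\tilde{H}] = [G:H]$, que garante a finitude de $\mathcal{T}$; uma vez estabelecida, todo o resto se reduz à contagem direta acima.
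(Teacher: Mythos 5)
Sua prova está correta e segue essencialmente o mesmo caminho do texto: o livro enuncia este resultado como ``corolário imediato'' da Proposição~\ref{red-sch}, e seu argumento é exatamente o preenchimento dos detalhes dessa imediatez — a correspondência de classes laterais que dá $[F(X):\tilde{H}]=[G:H]$, a finitude do transversal de Schreier e as contagens $|Y^*|\leq s\,|X|$ e $|R^*|\leq s\,|R|$. Vale apenas observar que, para o item (1), você corretamente não precisa de $R$ finito, pois a Proposição~\ref{red-sch} fornece o conjunto gerador $Y^*$ independentemente da cardinalidade de $R$.
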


Note que o item 1 no Corolário \ref{cor:reid-sc} já havia sido apresentado na Proposição  \ref{prop:finindexfg} com outra abordagem, no entanto o item 2 ainda não havia sido provado. 

Uma outra consequência é o Teorema de Nielsen--Schreier:
\begin{corollary}[Nielsen--Schreier]
Um subgrupo de um grupo livre é livre.
\end{corollary}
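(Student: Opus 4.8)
The plan is to derive Nielsen--Schreier as an immediate corollary of the Reidemeister--Schreier presentation established in Proposition~\ref{red-sch}. The whole point of the preceding subsection was to set this up, so the remaining argument should be essentially one line: a free group is precisely a group whose relator set can be taken to be empty, and Proposition~\ref{red-sch} sends an empty relator set to an empty relator set for any subgroup.

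More precisely, let $F = F(X)$ be free on $X$ and let $H \leq F$ be an arbitrary subgroup. I would first record that $F$ admits the presentation $\langle X \mid \emptyset\rangle$, so that the canonical epimorphism $\varphi \colon F(X) \to F$ is the identity. I then apply Proposition~\ref{red-sch} with $G = F$, $R = \emptyset$, and $G_1 = H$; since $\varphi = \id$ we have $H = \varphi^{-1}(H)$, and Lemma~\ref{existST} supplies a Schreier transversal $\mathcal{T}$ for $H$ in $F$ (no finiteness hypothesis is needed, since that lemma inducts on the length of cosets). Proposition~\ref{red-sch} then yields the presentation
$$ H = \langle Y^* \mid R^*\rangle, \qquad R^* = \{\tau(trt^{-1}) \mid t \in \mathcal{T},\ r \in R\}. $$
Because $R = \emptyset$, the relator set $R^*$ is empty as well, so $H = \langle Y^* \mid \emptyset\rangle \cong F(Y^*)$ is free on the Schreier generators $Y^*$, which is exactly the claim.

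The honest remark is that there is no real obstacle left at this stage: all of the difficulty has been front-loaded into Lemma~\ref{existST} (existence of Schreier transversals, via induction on coset length) and into Proposition~\ref{red-sch} (the rewriting process identifying $R^*$ as a normal generating set). If one instead wished to argue only from the earlier proposition exhibiting the Schreier generators $tx(\overline{tx})^{-1}$, the missing --- and genuinely hard --- step would be to check that these generators satisfy no nontrivial relation, i.e. that they form a \emph{free} basis rather than a mere generating set; the vanishing of $R^*$ is precisely the packaging of that fact. I would also note in passing the alternative topological route hinted at elsewhere in the text (realize $H$ as $\pi_1$ of a covering graph of a wedge of circles and invoke that the fundamental group of a graph is free), but since Proposition~\ref{red-sch} is already available, the combinatorial argument above is the most economical.
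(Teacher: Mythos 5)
Sua prova está correta e segue essencialmente o mesmo caminho do texto: o livro também deduz Nielsen--Schreier como corolário imediato da Proposição~\ref{red-sch}, observando que um grupo livre admite apresentação com conjunto vazio de relatores, donde $R^* = \emptyset$ e o subgrupo é livre. Os comentários adicionais (a identificação $\varphi = \id$, o uso do Lema~\ref{existST} sem hipótese de finitude, e a observação de que a dificuldade real está concentrada na Proposição~\ref{red-sch}) são corretos e apenas explicitam o que o texto deixa implícito.
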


\begin{proof}
Se $G$ é um grupo livre, ele pode ser apresentado com um conjunto vazio de relatores $R$. A proposição acima implica que, neste caso, temos $R^* = \emptyset$, e então o subgrupo $H$ é livre.
\end{proof}

\begin{example}
Sejam  $G$ o grupo fundamental do nó de trevo (Figura~\ref{trevo}), dado por $G = \langle a, b\mid  a^2= b^3 \rangle$, e $S_3$ o grupo simétrico das permutações de 3 elementos. Considere $\theta: G \to S_3$, um  homomorfismo entre esses grupos definido por  $a \mapsto(12), b \mapsto (123)$. Vamos obter uma apresentação do seu núcleo $G_1$.

\begin{figure}[!ht]
    \centering
   \begin{tikzpicture}
    \begin{knot}[
        clip width=5,
        consider self intersections
    ]
        \strand[blue,thick] (90:1)
            \foreach \x in {1,2,3} {
                to [bend left=117,looseness=1.9] ({90+120*\x}:1)
            }
        ;
        \flipcrossings{1,3}
    \end{knot}
\end{tikzpicture}
    \caption{Nó de trevo}
    \label{trevo}
\end{figure}

Seja $\varphi:F(a, b) \to  G$ o epimorfismo canônico e $H = \varphi^{-1}(G_1)$. Como representantes de Schreier das classes laterais à direita de $H$ em $F(a, b)$, escolhemos $1, b, b^2, a, ab, ab^2$. Então os seguintes elementos geram $H$:
\begin{equation*} 
\begin{array}{ll}
     1 \cdot a \cdot\overline{a}^{-1}=1  & \,\, 1 \cdot b \cdot\overline{b}^{-1} =1 \\
     x = b\cdot a \cdot (\overline{ba})^{-1} = bab^{-2}a^{-1} & \, \, b \cdot b \cdot(\overline{b^2})^{-1}=1 \\
     y = b^2 \cdot a \cdot (\overline{b^2a})^{-1} = b^2ab^{-1}a^{-1}  & \,\, w = b^2 \cdot b \cdot(\overline{b^3})^{-1} = b^3 \\
     z= a \cdot a \cdot(\overline{a^2})^{-1} = a^2  &\, \, a \cdot b \cdot(\overline{ab})^{-1} = 1 \\
      u = ab \cdot a \cdot (\overline{aba})^{-1} =abab^{-2} &\, \, ab\cdot b \cdot(\overline{ab^2})^{-1} = 1 \\
      v = ab^2 \cdot a \cdot (\overline{ab^2a})^{-1} =ab^2ab^{-1}  & \,\,s = ab^2 \cdot b \cdot (\overline{ab^3})^{-1} =ab^3a^{-1}  \\
\end{array}
\end{equation*}
Podemos assumir que esses elementos geram $G_1$. Para obter as relações que definem $G_1$, precisamos reescrever as relações $t rt^{-1}$, onde $t \in \{1, b, b^2, a, ab, ab^2\}$ e $r = b^3a^{-2}$, como palavras nos geradores $x, y, z, u, v, w$ e $s$. Temos assim:
\begin{equation*} 
\begin{array}{lll}
     r=wv^{-1}  & \, brb^{-1}= wv^{-1}x^{-1} & \, b^2rb^{-2}= wu^{-1}y^{-1} \\
     ara^{-1} =sz^{-1} & \, abr(ab)^{-1} = sy^{-1}u^{-1} &\, ab^2r(ab^2)^{-1} = sx^{-1}v^{-1}.
\end{array}
\end{equation*}

Agora  eliminamos geradores $w, v, u, s$, substituindo-os por todas as relações nas palavras $z, x^{-1}z, y^{-1}z, z$. Obtemos como resultado a apresentação $\langle x, y, z \mid yz = zy, xz= zx\rangle$ do grupo $ G_1$. Segue que $ G_1 \cong F(z)\times F(x,y)$.
\end{example}

\subsection{Grafos de Schreier}
\begin{definition} Seja $H$ um subgrupo do grupo finitamente gerado $G=\langle X\mid R\rangle$. O \textit{grafo de classes de Schreier} \index{grafo de Schreier}  é o grafo $\Sigma(G,X,H)$, cujos vértices são as classes laterais à direita $Hg$ e o par de vértices 
$(Hg_1, Hg_2)$ define uma aresta se, e somente se, existe $x \in X$ tal que $g_1=g_2x$. 
Note que $\Sigma(G,S,\{e\})$ é o grafo de Cayley de $G$ com respeito a $S$.
\end{definition}

Observe que o grupo $G$ age transitivamente no conjunto $\Omega$ de todas as classes laterais de $H$ por permutações com $H= \mathrm{Stab}(p)$, para todo $p\in\Omega$.

\begin{example}\label{sch1}
Sejam $G=F(\{x,y,x^{-1},y^{-1}\})$ e $H \leq G$ um subgrupo de índice $5$ tal que a ação de $G$ em $G/H \cong \{1,2,3,4,5\} $ é dada por $x  \mapsto  (12)(45)$ e $y \to  (2354)$.
Podemos fazer uma tabela de classes, que auxilia na construção do grafo de classes de Schreier: 
\begin{center}
\begin{tabular}{| l | c | c | c | c|}
\hline
 & $x$ & $y$ & $x^{-1}$ & $y^{-1}$\\
\hline
1 & 2 & 1 & 2 & 1\\ \hline
2 & 1 & 3 & 1 & 4\\ \hline
3 & 3 & 5 & 3 & 2\\ \hline
4 & 5 & 2 & 5 & 5\\ \hline
5 & 4 & 4 & 4 & 3\\ 
\hline
\end{tabular}
\end{center}
Com isso, o grafo de Schreier obtido para esse exemplo é:
\end{example}

\begin{figure}[H]
     \centering
 \begin{tikzpicture}[scale=1]
  \tikzset{VertexStyle/.style = {shape = circle,fill = black,minimum size = 4pt,inner sep=0pt}}
\Vertex[x=0,y=1]{1}
\Vertex[x=0,y=0]{2} 
\Vertex[x=0,y=-1.5]{3}
\Vertex[x=1.5,y=0]{4}
\Vertex[x=1.5,y=-1.5]{5}

\draw (-0.2,1)node{\tiny $1$};
\draw (-0.2,0)node{\tiny $2$};
\draw (0,-1.7)node{\tiny $3$};
\draw (1.5,0.2)node{\tiny $4$};
\draw (1.5,-1.7)node{\tiny $5$};

\draw (0.75,1.5)node{\tiny $y,y^{-1}$};
\draw (-0.4,0.5)node{\tiny $x$};
\draw (0.5,0.6)node{\tiny $x^{-1}$};
\draw (1,-0.7)node{\tiny $x^{-1}$};
\draw (2.3,-0.7)node{\tiny $y^{-1}$};
\draw (-1.15,-1.5)node{\tiny $x,x^{-1}$};

\Edge[lw=1pt,bend=-45,fontscale=.9](1)(2)
\Edge[lw=1pt,loopsize=1.3cm,loopposition=90](1)(1)
\Edge[lw=1pt,label=$y$,fontscale=.9](2)(3)
\Edge[lw=1pt,label=$y^{-1}$,fontscale=.8](2)(4)
\Edge[lw=1pt,label=$y$,fontscale=.9](3)(5)
\Edge[lw=1pt,fontscale=.9,bend=-20](4)(5)
\Edge[lw=1pt, bend=45,fontscale=.9](1)(2)
\Edge[lw=1pt,label=$x$,fontscale=1,bend=20](4)(5)
\Edge[lw=1pt,bend=50,fontscale=.8](4)(5)
\Edge[lw=1pt,loopsize=1.3cm,loopposition=180](3)(3)

\end{tikzpicture}
     \caption{Grafo de Schreier do Exemplo~\ref{sch1}}  
\end{figure}

\begin{remark}
 Os elementos de $H$ correspondem aos circuitos de $\Sigma(G,S,H)$ com base em $p= \{H\}$. Observe que cada caminho no grafo de Schreier representa uma palavra $w=w(X)$. Esse caminho será fechado se, e somente se, $Hw = H$, ou seja, se $w \in H$. Além disso, os geradores de $H$ correspondem aos circuitos simples no grafo  $\Sigma(G,S,H)$.
\end{remark} 
   
    Um transversal de Schreier de $H$ corresponde a uma árvore de extensão $T$ de $\Sigma(G,S,H)$. Um caminho na árvore de extensão, baseado em $\{H\}$, corresponde a uma palavra $w=w(X)$ com subpalavras iniciais correspondentes a subcaminhos desse caminho original.
   
    Um conjunto de geradores de Schreier de $H$ em $G$ corresponde a um conjunto de  arestas de $\Sigma(G,S,H)$ não contidas em $T$. De fato, cada aresta fora de $T$ produz um circuito simples no grafo.

\begin{figure}[H]
     \centering
 \begin{tikzpicture}
\tikzset{VertexStyle/.style = {shape = circle,fill = black, minimum size = 4pt,inner sep=0pt}}
\Vertex[x=0,y=0]{1}
\Vertex[x=1,y=2]{2} 
\Vertex[x=1,y=0.1]{3}
\Vertex[x=1.9,y=1]{4}
\Vertex[x=2.3,y=-0.7]{5}
\Vertex[x=3,y=2]{6}
\Vertex[x=4, y=0.1]{7}

\draw (-0.2,0)node{\tiny $H$};
\draw (0.6,2)node{\tiny $Hx_i$};
\draw (1,-0.2)node{\tiny $Hx_k$};
\draw (1.9,0.7)node{\tiny $H(x_ix_j)$};
\draw (2.3,-0.9)node{\tiny $H(x_kx_l)$};
\draw (3.3,2)node{\tiny $Hu$};
\draw (4.5,-0.2)node{\tiny $Hv = H(ux_t)$};

\Edge[lw=1pt,opacity=.2](1)(2)
\Edge[lw=1pt, opacity=.2](1)(3)
\Edge[lw=1pt, opacity=.2](2)(4)
\Edge[lw=1pt, opacity=.2](3)(5)
\Edge[lw=1pt,style={dashed}, opacity=.2](4)(6)
\Edge[lw=1pt,style={dashed}, opacity=.2](5)(7)
\Edge[lw=1pt,color=blue](6)(7)

\end{tikzpicture}
     \caption{Geradores de Schreier}    
\end{figure}

Ainda no Exemplo~\ref{sch1}, obtemos os seguintes geradores de Schreier:
\begin{figure}[H]
     \centering
 \begin{tikzpicture}[scale=1]
  \tikzset{VertexStyle/.style = {shape = circle,fill = black,minimum size = 4pt,inner sep=0pt}}
\Vertex[x=0,y=1]{1}
\Vertex[x=0,y=0]{2} 
\Vertex[x=0,y=-1.5]{3}
\Vertex[x=1.5,y=0]{4}
\Vertex[x=1.5,y=-1.5]{5}

\draw (-0.2,1)node{\tiny $1$};
\draw (-0.2,0)node{\tiny $2$};
\draw (-0.2,-1.5)node{\tiny $3$};
\draw (1.5,0.2)node{\tiny $4$};
\draw (1.5,-1.7)node{\tiny $5$};


\Edge[lw=1pt,opacity=.2,bend=-45](1)(2)
\Edge[lw=1pt,color=blue,loopsize=1.3cm,loopposition=90](1)(1)
\Edge[lw=1pt,opacity=.2](2)(3)
\Edge[lw=1pt,opacity=.2](2)(4)
\Edge[lw=1pt,opacity=.2](3)(5)
\Edge[lw=1pt,color=blue](4)(5)
\Edge[lw=1pt,color=blue, bend=45](1)(2)
\Edge[lw=1pt,color=blue,bend=35](4)(5)
\Edge[lw=1pt,color=blue,bend=90](4)(5)
\Edge[lw=1pt,color=blue,loopsize=1.3cm,loopposition=180](3)(3)

\end{tikzpicture}
     \caption{Geradores de Schreier do Exemplo~\ref{sch1}}    
\end{figure}

\begin{example}[Grupo modular] \label{sch2}
Considere  $G=\langle x,y,x^{-1},y^{-1} \mid$ $x^2,y^3\rangle$ e tome $H=\mathrm{Stab}(1)$, um subgrupo de índice $3$ em $G$. Se considerarmos a ação de $G$ em $G/H \cong \{1,2,3\} $ dada por $x \to (23)$ e $y \to (123)$, a tabela de classes para esse exemplo será 
\begin{center}
\begin{tabular}{| l | c | c | c | c|}
\hline
 & $x$ & $y$ & $x^{-1}$ &  $y^{-1}$\\
\hline
1 & 1 & 2 & 1 & 3\\ \hline
2 & 3 & 3 & 3 & 1\\ \hline
3 & 2 & 1 & 2 & 2\\ 
\hline
\end{tabular}
\end{center}

Os geradores de Schreier serão $A= x$  ($=x^{-1}$), $B=y^3$ ($=1$), $C=yxy$ e $D=y^{-1}xy^{-1}$.

A relação $x^2 = 1$ fornece $A^2 = 1$ e $CD = 1$. Já $y^3 = 1$ gera a  nova relação $B = 1$. Portanto, $H$ tem apresentação $\langle A,C \mid A^2\rangle$ com $A=x$ e $C=yxy$.

\begin{figure}[H]
     \centering
 \begin{tikzpicture}
\tikzset{VertexStyle/.style = {shape = circle,fill = black,minimum size = 4pt,inner sep=0pt}}
\Vertex[x=0,y=0]{1}
\Vertex[x=1.5,y=1]{2} 
\Vertex[x=1.5,y=-1]{3}

\draw (0,-0.3)node{\tiny $H$};
\draw (1.1,1)node{\tiny $Hy$};
\draw (1.4,-1.3)node{\tiny $Hy^{-1}$};

\Edge[lw=1pt,color=blue,loopsize=1.5cm,label=$x$,fontscale=1,loopposition=180](1)(1)
\Edge[lw=1pt](1)(2)
\Edge[lw=1pt](1)(3)

\Edge[lw=1pt,color=blue,label=$x$,fontscale=1,bend=-25](2)(3)
\Edge[lw=1pt,color=blue,label=$x^{-1}$,fontscale=.9,bend=25](2)(3)
\Edge[lw=1pt,color=blue,bend=100,label=$y$,fontscale=1](2)(3)
\end{tikzpicture}
     \caption{Grafo e geradores de Schreier do Exemplo~\ref{sch2} }    
\end{figure}
\end{example}


\begin{exercise}
Seja $G=\langle x, y \mid x^2, y^3\rangle$, e considere em $G$ os seguintes
subgrupos: 
\begin{align*}
H_1 &= G' = \langle \{[a,b]; a,b \in G \}\rangle,\\  
H_2 &= G^{(2)} =\langle\{a^2; a \in G\}\rangle.
\end{align*}
Usando o processo de Reidmeister--Schreier e os grafos de Schreier, mostre que $H_1$ é livre de posto $2$ e dê uma apresentação para $H_2$.
\end{exercise}

\chapter{Geometria grosseira e quasi-isometrias}
\label{cap4}
 \section{Quasi-isometrias}

Queremos definir uma noção geométrica a uma larga escala de similaridade. Para isto, formalizaremos a ideia de que dois espaços métricos são quasi-isométricos se eles ``se parecem o mesmo quando olhados de longe''. Um exemplo para se ter em mente é que queremos que a reta real e o conjunto dos inteiros, com a métrica induzida da reta real, sejam equivalentes via quasi-isometrias. Um outro bom exemplo é o par $\R^2$ e $\Z^2$, com a métrica induzida do plano euclidiano. Essa noção será particularmente útil na teoria geométrica de grupos, pois permitirá remover a dependência do modelo geométrico da escolha da apresentação do grupo.

No que segue, definiremos duas noções equivalentes  de quasi-isome\-tri\-as. Uma é fácil de visualizar, e a outra facilita a compreensão de por que se trata de uma relação de equivalência. Começamos com a primeira definição na Subseção~\ref{subsec:QI1}, continuamos com a segunda na Subseção~\ref{subsec:QI2} e demonstramos a sua equivalência na Subseção~\ref{subsec:QI3}.

\subsection{Discretização da métrica}\label{subsec:QI1}
Inicialmente, relembramos a definição de espaço métrico.

\begin{definition}
Um \textit{espaço métrico} \index{espaço métrico} é um par $(X,d)$, onde $X$ é um conjunto e $ d: X \times X \to \R_{\geq 0}$ é uma função que satisfaz:
\begin{enumerate}[(i)]
    \item para $x, y \in X$, temos $d(x, y) = 0$ se e somente se $x=y$;
    \item para todos $x,y \in X$, temos $ d(x, y) = d(y, x)$;
    \item vale a \textit{desigualdade triangular}\index{desigualdade triangular}: $d(x, z) \leq d(x, y) + d(y, z)$, para todos $x,y,z \in X$.
\end{enumerate}
\end{definition}

Há duas definições equivalentes de quasi-isometrias, sendo uma delas mais geométrica e a outra mais analítica. Como veremos, cada uma delas tem suas vantagens. Falaremos sobre as duas e mostraremos em seguida que elas são equivalentes.

\begin{definition}
Seja $(X,d)$ espaço métrico. Dado $\varepsilon>0$, dizemos que um conjunto  $A \subset X$ é \textit{$\varepsilon$-separado} \index{conjunto $\varepsilon$-separado} se $d(a_1,a_2)\geq \varepsilon$ para todos $a_1 \neq a_2 \in A$.
\end{definition}

Relembramos que a \textit{distância de Hausdorff}\index{distância de Hausdorff} entre dois subconjuntos $A$ e $B$ de um espaço métrico $X$ é dada por 
$$d_{Haus}(A,B):=\inf\{r\ge 0 \mid A \subset \overline{\mathcal{N}_r(B)} \mbox{ e } B \subset \overline{\mathcal{N}_r(A) \}},$$ 
onde $\mathcal{N}_r(B) =  \{x \in X \mid d(x, B)< r\}$ e $\overline{\mathcal{N}_r(B)}$ denota o fecho de $\mathcal{N}_r(B)$. Observamos que, embora o nome ``distância'' possa sugerir que $d_{Haus}$ é uma métrica em $\mathcal{P}(X)$, esta é na verdade uma \textit{pseudo-métrica}, já que $d_{Haus}(A,B)=0$ não  implica necessariamente que $A$ seja igual a $B$.

Um subconjunto $S$ de um espaço métrico $X$ é dito \textit{$\delta$-denso} em $X$ se a distância de Hausdorff entre $S$ e $X$ é no máximo $\delta$. Em outras palavras, $S$ é \textit{$\delta$-denso} em $X$ se, para cada $x \in  X$, vale a desigualdade $d(x, S)\leq \delta$.\index{conjunto $\delta$-denso}

\begin{definition}
Seja $(X,d_X)$ um espaço métrico. Dizemos que $A \subset X$ é uma \textit{$\delta$-rede $\varepsilon$-separada} se $A$ é $\varepsilon$-separado e $\delta$-denso em $X$. 
Caso $\delta= \varepsilon$, diremos que $A$
 é uma \textit{rede $\varepsilon$-separada}. \index{rede $\varepsilon$-separada}
 \end{definition}

\begin{proposition} \label{redeseparada}
Um conjunto $\varepsilon$-separado maximal em $X$ é uma rede $\varepsilon$-separada.
\end{proposition}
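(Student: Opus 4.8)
The plan is to unwind the two defining conditions of an $\varepsilon$-separated net and to observe that one of them is free while the other is exactly what maximality buys us. Let $A \subset X$ be a maximal $\varepsilon$-separated set, where maximal means that no point of $X$ can be adjoined to $A$ while keeping it $\varepsilon$-separated. Since a rede $\varepsilon$-separada is by definition a $\delta$-rede $\varepsilon$-separada with $\delta = \varepsilon$, and $A$ is $\varepsilon$-separated by hypothesis, the only thing left to verify is that $A$ is $\varepsilon$-denso, that is, that $d(x, A) \leq \varepsilon$ for every $x \in X$.

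First I would dispose of the trivial case: if $x \in A$, then $d(x, A) = 0 \leq \varepsilon$. The substance is the case $x \in X \setminus A$. Here I would invoke maximality directly: the set $A \cup \{x\}$ strictly contains $A$, so by maximality it cannot be $\varepsilon$-separated. Because $A$ alone already satisfies the separation inequality $d(a_1,a_2) \geq \varepsilon$ for all of its own pairs, the failure of separation in $A \cup \{x\}$ must involve the new point $x$; hence there exists $a \in A$ with $d(x, a) < \varepsilon$. Consequently $d(x, A) \leq d(x, a) < \varepsilon$, which in particular yields $d(x, A) \leq \varepsilon$. Combining the two cases, $A$ is $\varepsilon$-denso, and together with the separation hypothesis this shows that $A$ is a rede $\varepsilon$-separada.

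There is essentially no hard step here: the whole argument is a direct reading of the word "maximal," and the main (minor) obstacle is only to phrase the dichotomy $x \in A$ versus $x \notin A$ cleanly and to extract the separating element $a$ from the failure of $\varepsilon$-separation. The one point that deserves a remark, should one want a self-contained statement, is that a maximal $\varepsilon$-separated set exists at all; I would note that the family of $\varepsilon$-separated subsets of $X$, ordered by inclusion, has the property that the union of any totally ordered chain is again $\varepsilon$-separated, since $\varepsilon$-separation is a condition on pairs and any given pair already lies in a single member of the chain. Zorn's lemma then furnishes a maximal element. The proposition as phrased simply takes a maximal set as given, so this existence remark is optional and can be relegated to a preceding comment.
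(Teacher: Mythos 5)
Your proof is correct and follows essentially the same route as the paper's: separation is automatic, and density is obtained by applying maximality to $A \cup \{x\}$ for $x \notin A$ to produce a point $a \in A$ with $d(x,a) < \varepsilon$. Your closing remark on existence via Zorn's lemma also mirrors the paper, which makes exactly that observation immediately after the proposition.
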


\begin{proof}
Seja $A\subset X$ um conjunto $\varepsilon$-separado maximal. Se $x \in X\setminus A$, note que $A \cup \{x\}$ não é $\varepsilon$-separado, pela maximalidade de $A$. Então existe $y \in A$ tal que $d(x,y) < \varepsilon$. Portanto,   $A$
 é uma \textit{rede $\varepsilon$-separada}.
\end{proof}
Pelo Lema de Zorn, sempre existe um subconjunto  $\varepsilon$-separado maximal em um dado espaço métrico. Então, para cada $\varepsilon>0$, todo espaço métrico possui uma rede $\varepsilon$-separada.
\begin{example}
Se $(X,d)$ é compacto, então cada rede $\varepsilon$-separada é finita e, portanto,  todo subconjunto $\varepsilon$-separado é finito. 
Com efeito, se $A \subset X$ é uma rede $\varepsilon$-separada, então $A$ deve ser um conjunto discreto, por ser $\varepsilon$-separado, e $\displaystyle \bigcup_{x \in A}B(x,\varepsilon)=X$, já que $A$ é  $\varepsilon$-denso, portanto A é finito pela compacidade de $X$. Agora, dado qualquer conjunto $\varepsilon$-separado $B\subset X$, ele está contido num subconjunto $\varepsilon$-separado maximal. Pela Proposição~\ref{redeseparada}, $B$ está contido na verdade em uma rede $\varepsilon$-separada, a qual deve ser finita pelo argumento acima, logo o conjunto $B$ é finito. 
\end{example}

Lembramos que um mapa $f:X \to Y$ entre dois espaços métricos $(X, d_X)$ e $(Y, d_Y)$ é dito \textit{bi-Lipschtiz} \index{função bi-Lipschtiz} se for uma bijeção e se existe $L\geq 1$ tal que, para quaisquer $x,y \in X$, 
$$\frac{1}{L}d_X(x,y) \leq d_Y(f(x),f(y)) \leq Ld_X(x,y).$$

\begin{definition}\label{qi1}
Sejam $(X,d_X), (Y,d_Y)$ espaços métricos. Dizemos que $X$ e $Y$ são \textit{quasi-isométricos} \index{espaços quasi-isométricos} se existem $A \subset X$ e $B \subset Y$ redes separadas tais que $(A,d_X)$ e $(B,d_Y)$ são bi-Lipschitz equivalentes, isto é, existe um mapa bi-Lipschitz $f:A \to B$, onde $A$ e $B$ tem métricas induzidas das métricas de $X$ e $Y$, respectivamente.
\end{definition}

\begin{example}
 Se  $X$ é um espaço métrico com diâmetro finito, então $X$ é quasi-isométrico a um conjunto unitário.
\end{example}
\begin{example}
O espaço $\R^n$ é quasi-isométrico a $\Z^n.$
\end{example}

É direto ver que a relação quasi-isometria definida acima é uma relação reflexiva e simétrica, mas ao tentar provar a transitividade, a seguinte pergunta surge naturalmente:

\medskip

\begin{pergunta}[\cite{gromov1993asymptotic}] Pode um espaço métrico ter duas redes separadas que não são bi-Lipschitz?
\end{pergunta}

A pergunta de Gromov foi respondida positivamente por Burago--Kleiner \cite{burago1998separated}, através do seguinte teorema:
\begin{thm}\label{thmbk}
Existe uma rede separada no plano Euclidiano que não é bi-Lipschitz a $\Z^2.$
\end{thm}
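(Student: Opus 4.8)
O plano é seguir a estratégia de Burago--Kleiner \cite{burago1998separated}, reduzindo a construção de uma rede separada ``ruim'' a um problema analítico sobre jacobianos de aplicações bi-Lipschitz. Primeiro, eu estabeleceria o dicionário entre redes separadas e funções de densidade: a cada função mensurável $\rho:\R^2\to[\lambda,\Lambda]$, com $0<\lambda\le\Lambda<\infty$, associa-se uma rede separada $N_\rho$ cuja ``densidade local'' de pontos em cada quadrado grande $Q$ é aproximadamente $\int_Q\rho$. Construir $N_\rho$ a partir de $\rho$ é a parte fácil e essencialmente combinatória: basta subdividir o plano em quadrados diádicos e distribuir em cada um deles um número de pontos comparável a $\int_Q\rho$, de modo uniformemente espaçado, verificando que o resultado é de fato separado e $\delta$-denso para algum $\delta$. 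Reciprocamente, toda rede separada provém, a menos de perturbação limitada, de uma tal densidade.

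Em seguida, eu provaria o passo central da redução: $N_\rho$ é bi-Lipschitz equivalente a $\Z^2$ se, e somente se, $\rho$ é \emph{realizável}, isto é, se existe um homeomorfismo bi-Lipschitz $f:\R^2\to\R^2$ cujo jacobiano $J_f$ coincide com $\rho$ quase sempre. A ideia é que uma equivalência bi-Lipschitz entre $N_\rho$ e $\Z^2$ se estende a uma aplicação bi-Lipschitz do plano que redistribui a medida de área de acordo com $\rho$, e vice-versa. Dessa forma, exibir uma rede não bi-Lipschitz a $\Z^2$ equivale a exibir uma densidade limitada entre constantes positivas que não é o jacobiano de nenhuma aplicação bi-Lipschitz de $\R^2$.

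O coração da prova é um critério de não-realizabilidade. Eu mostraria que, se $\rho$ é realizada por uma aplicação $L$-bi-Lipschitz $f$, então as médias de $\rho$ sobre quadrados encaixados não podem oscilar de maneira arbitrária: comparando, para cada quadrado diádico $Q$ com metades $Q_1,Q_2$, a imagem por $f$ dessas metades, o desbalanço entre $\frac{1}{|Q_1|}\int_{Q_1}\rho$ e $\frac{1}{|Q_2|}\int_{Q_2}\rho$ força um cisalhamento de $f$ controlado por $L$; iterando essa comparação ao longo das escalas, obtém-se uma cota superior uniforme para a soma dos desbalanços. A densidade $\rho$ é então construída por um esquema auto-similar: em cada escala diádica impõe-se um pequeno desbalanço de densidade entre as metades de cada quadrado, com os sinais escolhidos para que as contribuições se \emph{acumulem} em vez de se cancelarem. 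Assim a soma dos desbalanços ao longo das escalas diverge, contradizendo o critério, e portanto tal $\rho$ não é realizável.

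A maior dificuldade estará precisamente na prova da cota de não-realizabilidade: controlar quantitativamente como uma aplicação bi-Lipschitz do plano pode distribuir área entre sub-regiões de um quadrado, e converter esse controle em uma estimativa integral somável sobre $\log\rho$ ao longo de uma torre de quadrados encaixados. Isso exige um argumento cuidadoso de teoria geométrica da medida (o conteúdo analítico de \cite{burago1998separated}), sensível ao fato de que a conclusão análoga é falsa em dimensão $1$ e depende fortemente da rigidez das aplicações bi-Lipschitz do plano. Uma vez estabelecidos esse critério e a construção auto-similar de $\rho$, a rede $N_\rho$ fornece o exemplo desejado, respondendo afirmativamente à pergunta de Gromov e concluindo a demonstração.
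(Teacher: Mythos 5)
Antes de avaliar a sua proposta, observe que o livro não apresenta demonstração alguma do Teorema~\ref{thmbk}: o resultado é enunciado com citação a Burago--Kleiner \cite{burago1998separated}, e o texto apenas comenta que a prova é uma ``construção implícita''. Assim, a comparação relevante é com a prova do artigo citado — e, nesse sentido, o seu roteiro é fiel à estratégia de Burago--Kleiner: o dicionário entre redes separadas e densidades $\rho$ limitadas entre constantes positivas, a redução do problema à realizabilidade de $\rho$ como jacobiano de um homeomorfismo bi-Lipschitz de $\R^2$, e a construção de uma densidade não realizável acumulando desbalanços ao longo de uma torre de escalas. Você identificou corretamente o caminho, indo além do que o próprio livro faz.

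Como demonstração, porém, a proposta tem lacunas genuínas. A primeira, que você mesmo admite, é que o coração analítico — a cota que impede uma aplicação $L$-bi-Lipschitz de realizar densidades cujos desbalanços somam infinito ao longo das escalas — não é provado, apenas descrito e remetido a \cite{burago1998separated}; como esse é exatamente o conteúdo principal do teorema, sem ele nada foi estabelecido. A segunda está no passo de redução: dizer que uma equivalência bi-Lipschitz entre $N_\rho$ e $\Z^2$ ``se estende'' a uma aplicação bi-Lipschitz do plano com jacobiano $\rho$ esconde um argumento real. A passagem do discreto ao contínuo exige considerar reescalonamentos de $\rho$ (a rede é montada com cópias de $\rho$ em escalas cada vez mais finas), aplicar um argumento de compacidade do tipo Arzelà--Ascoli à família de equivalências reescalonadas (que têm constantes bi-Lipschitz uniformes) e, mais delicado ainda, garantir que o jacobiano da aplicação limite coincide com $\rho$ em quase todo ponto — a priori, jacobianos só convergem fracamente, e é a estrutura especial da construção que permite concluir a igualdade q.t.p. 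Por fim, um ponto menor: o ``se, e somente se'' que você enuncia é mais do que o necessário e mais do que se sabe provar de modo elementar; a prova só usa a implicação ``$N_\rho$ bi-Lipschitz a $\Z^2$ $\Rightarrow$ $\rho$ realizável'', e é apenas essa direção que o argumento de compacidade fornece.
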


De maneira independente, McMullen \cite{mcmullen1998lipschitz} dá uma resposta negativa à pergunta de Gromov, no caso em que relaxamos a condição de redes bi-Lipschitz para redes bi-Hölder equivalentes.

A demonstração do Teorema~\ref{thmbk} é uma construção implícita da existência dessa rede separada. A partir desse trabalho, em \cite{burago2002rectifying}, Burago e Kleiner propuseram uma candidata explícita a rede separada que não é bi-Lipschitz a $\Z^2$, a saber, o conjunto  $X:=\{x\in \R^2 \mid x \mbox{ é o baricentro de uma telha de Penrose}\}$. No entanto, Solomon provou em \cite{solomon2011substitution} que $X$ é, na verdade, bi-Lipschitz a $\Z^2$. Há vários outros trabalhos relacionados a este problema. Assim, a pergunta sobre propriedades básicas de quasi-isometrias provocou um interessante campo de pesquisa.

\subsection{Funções Lipschitz grosseiras} \label{subsec:QI2}
Felizmente, contornamos o problema de mostrar a transitividade da relação de quasi-isometrias a partir de uma segunda definição, equivalente à anterior.

\begin{definition}
Sejam $(X, d_X)$ e $(Y,d_Y)$ dois espaços métricos, $L\geq 1$ e $C\geq 0$. Uma aplicação $f : X \to Y$ é chamada \textit{$(L,C)$-Lipschitz grosseira} \index{função Lipschitz grosseira} se $$d_Y(f(x),f(y)) \leq Ld_X(x,y)+ C,$$ para todos $x, y \in X.$  Dizemos que $f$ é um \textit{$(L,C)$-mergulho quasi-isométrico} \index{mergulho quasi-isométrico} se $$\frac{1}{L} d_X(x,y)- C\leq d_Y(f(x),f(y)) \leq Ld_X(x,y)+ C,$$  para todos $x, y \in X.$
\end{definition}

Note que um mergulho quasi-isométrico não precisa ser um mergulho no sentido topológico. No entanto, pontos suficientemente distantes tem imagens distintas.

\begin{example}
\begin{enumerate}
\item A função piso, $f : \R \to \Z$, $f(x) =  \lfloor x\rfloor $, que tem como imagem de $x \in \R$ o maior inteiro $\leq x$, é um $(1,1)$-mergulho quasi-isométrico de $\R$ em $\Z.$
\item A função $f : \R \to \R$, $f(x) = x^2$ não é Lipschitz grosseira.
\end{enumerate}
\end{example}

Se $X=[a,b]$ é um intervalo compacto então um mergulho $(L,C)$-Lipschitz grosseiro $q : X \to Y$, bem como sua imagem, são ditos um \textit{segmento $(L,C)$-quasi-geodésico} ou simplesmente uma \textit{quasi-geodésica} em $Y$. Se um dos extremos $a=-\infty$ ou $b=\infty$ então $q$ é chamado de \textit{raio $(L,C)$-quasi-geodésico}. Se ambos $a=-\infty$ e $b=\infty$ então $q$ é chamado de linha \textit{$(L,C)$-quasi-geodésica}\index{quasi-geodésica}. Observamos que, muitas vezes, os valores das constantes $L$ e $C$ não serão importantes, mas sim a sua existência. Nesse caso, podemos escrever apenas ``mergulho quasi-isométrico'' ou ``segmento quasi-geodésico'', entre outros, sem necessariamente mencionar as constantes.

\begin{definition}
Um espaço métrico $X$ é dito \textit{quasi-geodésico} \index{espaço quasi-geodésico} se existirem constantes positivas $L,C$ tais que para todos $x,y \in X$ existe uma $(L,C)$-quasi-geodésica ligando $x$ a $y$.
\end{definition} 
\begin{definition}
Dado $C\geq 0$, duas aplicações entre espaços métricos $f: X \to Y$ e $\overline{f}: Y\to X$ são ditas \textit{$C$-quasi inversas} \index{quasi inversa} se, para todos $x\in X$ e $y\in Y$,  
$$d_X(\overline{f}\circ f(x), x) \leq C \mbox{ e } d_Y (f\circ \overline{f}(y), y) \leq C.$$
\end{definition}

Note que, em particular, um mapa $0$-quasi inverso de $f$ é o mapa inverso $f^{-1}$, no sentido usual.
Finalmente, podemos definir quasi-isometrias e falar sobre a segunda definição de espaços quasi-isométricos:

\begin{definition}
\label{qi2}
Um mapa $f: X \to Y$ entre espaços métricos é chamado de \textit{quasi-isometria} \index{quasi-isometria}
se for um  mergulho quasi-isométrico e admitir uma quasi-inversa que, por sua vez, também é um mergulho quasi-isométrico.
Dois espaços  métricos $X$ e $Y$ são ditos \textit{quasi-isométricos}\index{espaços quasi-isométricos} se existir uma quasi-isometria $f: X \to Y$.
\end{definition}

\begin{example}
 Considere as métricas usuais em $\R$ e $\R^2$. Se $h: \R \to \R$ é uma função $L$-Lipschitz, então a aplicação $$f: \R \to \R^2, \quad f(x)= (x,h(x))$$ é um mergulho quasi-isométrico. Com efeito, $$\arraycolsep=0pt\def\arraystretch{1.2}
\begin{array}{rcl}
\*& d(x,y )\;\leq\;&\*  d(f(x),f(y))\\
\*&\;=\;&\*   \sqrt{d(x,y)^2+d(h(x),h(y))^2} \\
\*&\;\leq\;&\* \sqrt{d(x,y)^2+L^2d(x,y)^2} \\
\*&\;=\;&\* \sqrt{1+L^2}d(x,y)
\end{array}.$$ Portanto, $$ \dfrac{1}{\sqrt{1+L^2}} d(x,y ) \leq  d(f(x),f(y)) \leq \sqrt{1+L^2}d(x,y ) $$ e $f$ é um $(\sqrt{1+L^2},0)$-mergulho quasi-isométrico.
\end{example} 

\begin{example} 

Seja $\varphi : [1,\infty)\to \R$ uma função  diferenciável tal que $\displaystyle\lim_{r\to \infty}\varphi(r) = \infty ,$ e existe $C \in \R $ para o qual $|r\varphi'(r)|\leq C$ para todo $r \in [1, \infty)$ (por exemplo, tome $\varphi(r) = \log(r)$). 

 Considere a função $F : \R^2\setminus B(0,1) \to \R^2\setminus B(0,1)$ dada em coordenadas polares por $F(r,\theta) = (r, \theta + \varphi(r))$, como na Figura \ref{fig:distlog}.

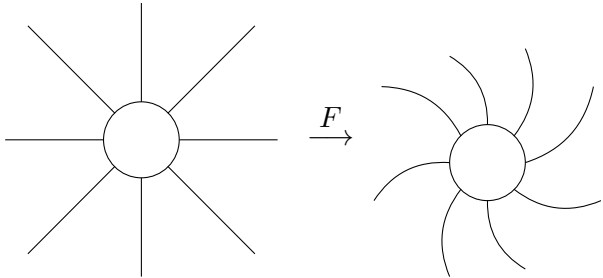
\begin{figure}[!ht]
 \centering
 \begin{tikzpicture}
    \draw  (0,0) circle (0.5);
    \draw   (-1.8,0) -- (-0.5,0);
    \draw   (0.5,0) -- (1.8,0);
    \draw   (0,0.5) -- (0,1.8);
    \draw   (0,-0.5) -- (0,-1.8);
   \draw    (0.35,0.35) -- (1.5,1.5);
   \draw    (-0.35,-0.35) -- (-1.5,-1.5);
   \draw    (-0.35,0.35) -- (-1.5,1.5);
   \draw    (0.35,-0.35) -- (1.5,-1.5);
   \draw (2.5,0) node{$\longrightarrow $};
  \draw (2.5,0.3) node{$F$};
\end{tikzpicture}
 \begin{tikzpicture}
    \draw  (0,0) circle (0.5);
     \draw   (-1.4,1) to [bend left] (-0.35,0.35);
    \draw   (0.5,0) to [bend right] (1.4,1);
    \draw   (0,0.5) to [bend right]  (-0.5,1.4);
    \draw   (0,-0.5) to [bend right]  (0.5,-1.4);
   \draw    (0.35,0.35) to  [bend right]  (0.5,1.5);
   \draw    (-0.5,0) to [bend right] (-1.5,-0.5);
   \draw    (-0.35,-0.35) to [bend right] (-0.5,-1.5);
   \draw    (0.35,-0.35) to [bend right](1.5,-0.5);
\end{tikzpicture}
\caption{Quasi-isometria não é sensível a distorção logarítmica.}
 \label{fig:distlog}
 \end{figure}
 
Note que $F$ é L-bi-Lipschitz com $L = \sqrt{1+C^2}$. De fato, a métrica euclidiana em coordenadas polares é dada por $$ds^2=dr^2+r^2d\theta ^2 .$$ Então, 
$$F^{\ast}(ds^2)=[(r\varphi'(r))^2+1]dr^2 + 2r^2\varphi'(r) dr d\theta + r^2d\theta ^2 $$ 
e a afirmação segue porque a alteração máxima da distância ocorre nas direções radiais.
Podemos estender $F$ como identidade em $B(0,1).$ Então, $\tilde{F} : \R^2 \to \R^2$ é um mergulho quasi-isométrico sobrejetivo, o que implica que $\tilde{F}$ é uma quasi-isometria.

Este exemplo mostra uma propriedade importante de que espirais logarítmicas podem ser retificadas por mapas bi-Lipschitz.
\end{example}

\begin{exercise}
 Mostre, usando a Definição~\ref{qi2}, que a relação de quasi-isometria de espaços métricos é uma
relação de equivalência.
\end{exercise} 

\subsection{Equivalência das definições de quasi-iso\-met\-ria} \label{subsec:QI3}

Começamos esta seção com um fato básico que será utilizado na demonstração. Esse resultado não é difícil e o deixamos como um bom exercício para o leitor.

\begin{exercise} \label{imagemrdensa} Se $f: X\to Y$ é um mergulho quasi-isométrico e $f(X)$ é
$r$-denso em $Y$, mostre que $f$ é uma quasi-isometria.
\end{exercise}

\begin{proposition}
Dois espaços métricos $(X,d_X)$ e $(Y,d_Y)$ são qua\-si-isométricos no sentido da Definição~\ref{qi1} se, e somente se, existe uma quasi-isometria $f: X \to Y$ no sentido da Definição~\ref{qi2}. 
\end{proposition}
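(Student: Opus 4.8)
The plan is to prove the two implications separately. The direction from Definition~\ref{qi1} to the existence of a quasi-isometry (Definition~\ref{qi2}) is the routine one; the converse is the delicate part, where the obstacle is that a quasi-isometric embedding carries an additive error $C$ that a bi-Lipschitz map is not allowed to have.

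For the implication Definition~\ref{qi1} $\Rightarrow$ Definition~\ref{qi2}, I would start from separated nets $A \subset X$ and $B \subset Y$ with density constants $\delta_X, \delta_Y$ and an $L$-bi-Lipschitz bijection $\phi: A \to B$. I would define $f: X \to Y$ by choosing, for each $x$, a point $a_x \in A$ with $d_X(x, a_x) \le \delta_X$ (taking $a_a = a$ when $x = a \in A$) and setting $f(x) = \phi(a_x)$. Using the estimate $|d_X(a_x, a_y) - d_X(x,y)| \le 2\delta_X$ together with the two bi-Lipschitz bounds on $\phi$, a short computation shows that $f$ is an $(L, 2L\delta_X)$-quasi-isometric embedding. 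Since $f(A) = \phi(A) = B$ is $\delta_Y$-dense in $Y$, the image $f(X)$ is $\delta_Y$-dense, and Exercise~\ref{imagemrdensa} then upgrades $f$ to a quasi-isometry, so I never need to construct a quasi-inverse by hand.

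For the converse, Definition~\ref{qi2} $\Rightarrow$ Definition~\ref{qi1}, let $f: X \to Y$ be an $(L,C)$-quasi-isometry with quasi-inverse $\bar f$; the quasi-inverse guarantees that $f(X)$ is $C$-dense in $Y$. The key step will be to choose the separation large: fix $\varepsilon > 2LC$ and let $A$ be a maximal $\varepsilon$-separated subset of $X$, which is an $\varepsilon$-net by Proposition~\ref{redeseparada}. For distinct $a,a' \in A$ one has $d_X(a,a') \ge \varepsilon > 2LC$, so $C \le \tfrac{1}{2L}\, d_X(a,a')$, and the embedding inequalities collapse to
$$\frac{1}{2L}\, d_X(a,a') \le d_Y(f(a), f(a')) \le \Big(L + \frac{C}{\varepsilon}\Big) d_X(a,a').$$
This is exactly a bi-Lipschitz bound; in particular $f|_A$ is injective and $f(A)$ is $\tfrac{\varepsilon}{2L}$-separated. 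To see that $f(A)$ is coarsely dense, I would take any $x \in X$, pick $a \in A$ with $d_X(x,a) \le \varepsilon$, note $d_Y(f(x), f(a)) \le L\varepsilon + C$, and combine this with the $C$-density of $f(X)$ to conclude that $f(A)$ is $(L\varepsilon + 2C)$-dense in $Y$. Then $A$ and $f(A)$ are separated nets and $f|_A : A \to f(A)$ is the required bi-Lipschitz bijection.

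The hard part is precisely this last implication, and within it the lower bi-Lipschitz estimate: without the choice $\varepsilon > 2LC$, the term $-C$ could swamp $\tfrac{1}{L}\, d_X(a,a')$ for points at distance close to $\varepsilon$, destroying both injectivity and the multiplicative lower bound. Enlarging the separation until $C$ is dominated by the multiplicative term is the whole trick; everything else is bookkeeping with the triangle inequality.
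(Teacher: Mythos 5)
Your proposal is correct and follows essentially the same route as the paper in both directions: for Definition~\ref{qi1} $\Rightarrow$ Definition~\ref{qi2} you snap each point to a nearby net point, apply the bi-Lipschitz map, and invoke Exercise~\ref{imagemrdensa}, exactly as the paper does; for the converse you pick a maximal separated set with separation large enough to absorb the additive constant $C$ into a multiplicative bound, which is the paper's trick with $\delta = L(C+1)$ replaced by your $\varepsilon > 2LC$. The differences are only cosmetic choices of constants (your image net is $\tfrac{\varepsilon}{2L}$-separated where the paper normalizes to $1$-separated).
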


\begin{proof}
Assuma inicialmente que, para $L\geq 1$ e $C\geq 0$, existe uma $(L,C)$-quasi-isometria $f : X \to Y$, com uma $C$-quasi-inversa $\overline{f}$. Escrevendo $\delta = L(C+1)$, e dado $\varepsilon >0$, sabemos pelo Lema de Zorn que existe uma $\varepsilon$-rede $\delta$-separada $A\subset X$. Afirmamos que  $B=f(A)$ é uma $(L\varepsilon+2C)$-rede $1$-separada. 

De fato, como $A$ é $\delta$-separado, dados $b_1,b_2 \in B=f(A)$ temos $d(b_1,b_2)=d(f(a_1),f(a_2))\geq \dfrac{1}{L}d(a_1,a_2)-C \geq \dfrac{1}{L}\delta-C =1,$ logo $B$ é $1$-separado.

Agora, dado $y \in Y$, deve existir $x \in X$ tal que $d(f(x),y)\leq C$ (tome, por exemplo, $x=\overline{f}(y)$). Como $A$ é $\varepsilon$-denso em $X$, existe \mbox{$a \in A$} tal que $d(a,x)\leq \varepsilon$, assim $d(f(a),y)\leq d(f(a),f(x))+d(f(x),y)\leq Ld(a,x)+C+C\leq L\varepsilon +2C$ e portanto $B$ é $(L\varepsilon+2C)$-rede $1$-separada.

 Além disso, dados $a, a' \in A$, valem as desigualdades:
$$d(f(a),f(a'))\leq L d(a,a')+C \leq \left(L+\dfrac{C}{\delta}\right)d(a,a');$$
\begin{eqnarray*}
d(f(a),f(a'))\geq \dfrac{1}{L} d(a,a')-C &\geq& \left(\dfrac{1}{L}-\dfrac{C}{\delta}\right)d(a,a')\\
&=& \frac{1}{L(C+1)}d(a,a').
\end{eqnarray*} Portanto, $A$ e $B$ são bi-Lipschitz equivalentes via $f$ e então os espaços métricos $(X,d_X)$ e $(Y,d_Y)$ são quasi-isométricos no sentido da Definição~\ref{qi1}.

Reciprocamente, assuma que $A \subset X$ e $B \subset Y$ são duas $\delta$-redes $\varepsilon$-separadas, e que existe um aplicação $L$-bi-Lipschitz $g: A \to B$. Podemos definir $f: X \to Y$ por $x \in X \mapsto g(a_x) \in B=g(A),$ onde $a_x$ é escolhido de modo que $d(x,a_x) \leq \delta$. Como $f(X)=g(A)=B$, a imagem de $f$ é $\delta$-densa. Além disso, para cada par $x,y \in X$, $$d(f(x),f(y))=d(g(a_x),g(a_y))\leq L d(a_x,a_y) \leq L(d(x,y)+2\delta)$$
e
$$d(f(x),f(y))=d(g(a_x),g(a_y))\geq \dfrac{1}{L} d(a_x,a_y) \geq \dfrac{1}{L}(d(x,y)-2\delta).$$ Portanto, $f$ é um $(L, 2\delta L )$-mergulho quasi-isométrico com imagem $\delta$-densa, e o resultado segue do Exercício~\ref{imagemrdensa}.
\end{proof}

\section{Grupo de quasi-isometrias}

Dado um espaço métrico $(X,d_X)$, denotamos por $\QI(X)$ o quociente do conjunto $\{f :  X \to X \mid f \mbox{ é quasi-isometria}\}$ pela relação de equivalência $f\sim g$ se, e somente se, $d(f,g)<\infty $, onde $$d(f,g) = \sup_{x\in X}\{d_X(f(x),g(x))\}.$$
A classe de equivalência de uma quasi-isometria $f:X\to X$ será denotada por $[f]$. O conjunto $\QI(X)$ é um grupo com a operação $[f]\cdot[g]:= [f\circ g]$, chamado {\it grupo de quasi-isometrias} do espaço métrico $X$ \index{grupo de quasi-isometrias}. Quando $G$ é um grupo finitamente gerado, denotamos por $\QI(G)$ o grupo de quasi-isometrias de $G$, visto como espaço métrico, equipado com a métrica das palavras.

\begin{exercise}
Mostre que a operação de grupo em $\QI(X)$ está bem-definida e que a quasi-inversa de uma quasi-isometria $f:X\to X$ define o inverso  de $[f]$ em $\QI(X)$, concluindo assim que este quociente é um grupo.
\end{exercise}

\begin{exercise}
Mostre que se $f:X \to Y$ é uma $(L,C)$-quasi-isometria, e $g:X\to Y$ é uma função tal que $d(f,g) \leq a < \infty$, então  $g$ é uma $(L,C+2a)$-quasi-isometria.
\end{exercise}

\begin{lemma}\label{isomQI(X)QI(Y)}
Se $h: X\to Y$ é uma quasi-isometria entre dois espaços métricos, então os grupos $\QI(X)$ e $\QI(Y)$ são isomorfos, sendo um isomorfismo entre eles dado pela aplicação $$[f]\mapsto [h\circ f \circ \overline{h}],$$
onde $\overline{h}$ é alguma quasi-inversa de $h.$
\end{lemma}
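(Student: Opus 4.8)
O plano é exibir explicitamente uma aplicação inversa para $\Phi\colon [f]\mapsto[h\circ f\circ\overline{h}]$ e verificar a propriedade de homomorfismo, reduzindo tudo à observação central de que $\overline{h}\circ h$ e $h\circ\overline{h}$ estão a distância uniformemente limitada das identidades de $X$ e de $Y$, respectivamente, e ao fato de que toda quasi-isometria é, em particular, $(L,C)$-Lipschitz grosseira.

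Primeiro, fixo uma quasi-inversa $\overline{h}$ de $h$, com constante $C_0$ tal que $d_X(\overline{h}\circ h,\id_X)\leq C_0$ e $d_Y(h\circ\overline{h},\id_Y)\leq C_0$. Para que $\Phi([f])=[h\circ f\circ\overline{h}]$ faça sentido, verifico que $h\circ f\circ\overline{h}$ é de fato uma quasi-isometria de $Y$: isto decorre de que a composição de quasi-isometrias é uma quasi-isometria (contido na transitividade da relação de quasi-isometria) e de que $\overline{h}$ é ela própria uma quasi-isometria, por definição. Em seguida mostro que $\Phi$ está bem definida nas classes: se $d(f,f')<\infty$, então, usando que $h$ é $(L,C)$-Lipschitz grosseira, obtenho $d_Y(h\circ f\circ\overline{h},\, h\circ f'\circ\overline{h})\leq L\,d(f,f')+C<\infty$, logo $[h\circ f\circ\overline{h}]=[h\circ f'\circ\overline{h}]$.

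O passo que espero ser o principal obstáculo é a propriedade de homomorfismo, pois é aí que intervém de modo essencial o fato de $\overline{h}\circ h$ estar próxima da identidade. Preciso comparar $h\circ f\circ g\circ\overline{h}$ com $h\circ f\circ\overline{h}\circ h\circ g\circ\overline{h}$. Avaliando num ponto $y\in Y$ e pondo $x=g(\overline{h}(y))$, os dois valores são $h(f(x))$ e $h(f(\overline{h}(h(x))))$. Como $d_X(\overline{h}(h(x)),x)\leq C_0$, aplicando sucessivamente que $f$ e $h$ são Lipschitz grosseiras obtenho uma cota da forma $d_Y\big(h(f(\overline{h}(h(x)))),h(f(x))\big)\leq L_h(L_f C_0+C_f)+C_h$, independente de $y$. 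Portanto as duas composições estão a distância finita, donde $\Phi([f]\cdot[g])=\Phi([f])\cdot\Phi([g])$.

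Por fim, estabeleço a bijetividade exibindo o candidato a inverso $\Psi\colon\QI(Y)\to\QI(X)$, $\Psi([k])=[\overline{h}\circ k\circ h]$, e verificando $\Psi\circ\Phi=\id$ e $\Phi\circ\Psi=\id$. Para $\Psi\circ\Phi$, calculo $\Psi(\Phi([f]))=[\overline{h}\circ h\circ f\circ\overline{h}\circ h]$ e comparo com $[f]$: avaliando em $x\in X$ e usando duas vezes que $\overline{h}\circ h$ dista no máximo $C_0$ de $\id_X$, junto ao fato de $f$ ser Lipschitz grosseira, obtenho $d_X\big(\overline{h}(h(f(\overline{h}(h(x))))),f(x)\big)\leq C_0+L_f C_0+C_f<\infty$, donde $\Psi\circ\Phi=\id_{\QI(X)}$. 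O cálculo para $\Phi\circ\Psi=\id_{\QI(Y)}$ é simétrico, usando agora que $h\circ\overline{h}$ está próxima de $\id_Y$. Concluo que $\Phi$ é um homomorfismo bijetivo, logo um isomorfismo de grupos.
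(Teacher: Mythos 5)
Sua demonstração está correta e completa: a paper deixa este lema como exercício (não há prova no texto para comparar), e o seu argumento é exatamente o esperado — conjugação por $h$, boa definição via o caráter Lipschitz grosseiro de $h$, propriedade de homomorfismo reduzida ao fato de $\overline{h}\circ h$ estar a distância limitada de $\id_X$, e inversa explícita $[k]\mapsto[\overline{h}\circ k\circ h]$. O único ponto deixado implícito é a boa definição de $\Psi$ nas classes, mas ela segue do mesmo cálculo feito para $\Phi$ com os papéis de $h$ e $\overline{h}$ trocados, como você indica ao invocar a simetria.
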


\begin{exercise}
Prove o Lema~\ref{isomQI(X)QI(Y)}.
\end{exercise}

\begin{corollary} Se $G$ é um grupo finitamente gerado, então $\QI(G)$ não depende da escolha de geradores.
\end{corollary}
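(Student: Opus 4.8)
The plan is to reduce the statement to the combination of two facts already established in the text: the bi-Lipschitz equivalence of the word metrics coming from different finite generating sets (Exercise~\ref{metricadaspalavras}) and the invariance of the quasi-isometry group under quasi-isometries (Lemma~\ref{isomQI(X)QI(Y)}). The only content of the corollary is to glue these two together, so the proof should be very short.

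Concretely, let $S$ and $\bar{S}$ be two finite generating sets of $G$, and consider the two metric spaces $(G,\dist_S)$ and $(G,\dist_{\bar{S}})$. First I would invoke Exercise~\ref{metricadaspalavras}, which yields a constant $L\geq 1$ with $\frac{1}{L}\dist_S(g,g')\leq \dist_{\bar{S}}(g,g')\leq L\,\dist_S(g,g')$ for all $g,g'\in G$; equivalently, the identity map $\id:(G,\dist_S)\to(G,\dist_{\bar{S}})$ is an $L$-bi-Lipschitz bijection. The next step is to observe that such a map is in particular a quasi-isometry in the sense of Definition~\ref{qi2}: it is an $(L,0)$-mergulho quasi-isométrico, and its inverse (again $L$-bi-Lipschitz, hence also a quasi-isometric embedding) serves as a $0$-quasi-inversa. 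Thus $(G,\dist_S)$ and $(G,\dist_{\bar{S}})$ are quasi-isométricos.

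Finally, I would apply Lemma~\ref{isomQI(X)QI(Y)} to the quasi-isometry $\id:(G,\dist_S)\to(G,\dist_{\bar{S}})$, obtaining an isomorphism $\QI(G,\dist_S)\cong\QI(G,\dist_{\bar{S}})$. Since $S$ and $\bar{S}$ were arbitrary, this shows that $\QI(G)$ is well defined up to isomorphism, independently of the chosen finite generating set. There is no genuine obstacle here; the only points deserving a line of care are the verification that a bi-Lipschitz bijection qualifies as a quasi-isometry (so that Lemma~\ref{isomQI(X)QI(Y)} is applicable) and the remark that ``não depende da escolha de geradores'' must be read as ``up to the canonical isomorphism'' furnished by that lemma, rather than as a literal equality of groups.
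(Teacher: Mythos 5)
Your proof is correct and follows exactly the same route as the paper: Exercício~\ref{metricadaspalavras} makes the identity map $(G,\dist_S)\to(G,\dist_{\bar S})$ a quasi-isometry, and Lema~\ref{isomQI(X)QI(Y)} yields the isomorphism of the quasi-isometry groups. The only difference is that you spell out the (trivial) verification that a bi-Lipschitz bijection is a quasi-isometry, which the paper leaves implicit.
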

\begin{proof}
Se $S$ e $S'$ são dois conjuntos finitos de geradores de $G$, então a aplicação identidade $id:(G,d_S) \to (G,d_{S'})$ é uma quasi-isometria, pelo Exercício~\ref{metricadaspalavras}, e o resultado segue do Lema~\ref{isomQI(X)QI(Y)}. 
\end{proof}

Para cada espaço métrico $X$, existe um homomorfismo natural $q_X : \Isom(X)\to \QI(X),$ dado por $f\mapsto [f]$. Em geral, este homomorfismo não é injetivo. Por exemplo, se $X=\R^n$ então o núcleo de $q_X$ é o grupo de todas as  translações, isomorfo a $\R^n$.

\begin{exercise}
Dê um exemplo de um espaço métrico não compacto $X$ tal que:
\begin{enumerate}[(a)]
    \item $q_X: \Isom(X) \to \QI(X)$ é isomorfismo;
    \item $\Isom(X)$ é trivial mas $\QI(X)$ é infinito;
    \item $\QI(X)$ é trivial mas $\Isom(X)$ é infinito.
\end{enumerate}
\end{exercise}

Considere agora um grupo finitamente gerado $G$, com conjunto de geradores $S$. Como $G$ age isometricamente em $(G,d_S)$, obtemos um homomorfismo natural $q_G: G \to \QI(G)$ definido por $$q_G(g):=q_{\cay(G,S)}( L_{g} )=[L_g],$$ onde $L_g$ é a aplicação de translação à esquerda definida na demonstração da Proposição~\ref{proposition1}.

Se $H$ é um grupo $(L,C)$-quasi-isométrico a $G$, sendo $q: H\to G$ uma quasi-isometria com quasi-inversa $\overline{q}$, então podemos definir um homomorfismo $\varphi: H \to \QI(G)$ dado por $ \varphi (h) = q_h := q\circ L_h \circ \overline{q} \in \QI(G)$. O homomorfismo $\varphi$
tem as seguintes propriedades:
\begin{enumerate}[(i)]
\item Existe uma constante $D = D(L,C)$ tal que $d(q_h\circ q_k, q_{hk})\leq D.$ 
\item O núcleo $\ker (\varphi)$ é quasi-finito, isto é, para todo $r>0$, o conjunto $\{h\in H \mid d(q_h, 1)\leq r\}$ é finito. Mais ainda, se $G$  age geometricamente em algum espaço de curvatura negativa, então $\ker (\varphi)$ é finito.
\item A imagem $\varphi (H)$ age quasi-transitivamente em $G$, ou seja,  para todos $g, g' \in G$ existe $h \in H$ tal que $d(q_h(g),g')\leq D_1$, onde $D_1 = D_1(L,C)$.
\end{enumerate}

\begin{exercise}
Verifique as propriedades (i) a (iii).
\end{exercise}

\section{O teorema de Milnor--Schwarz}
Um dos principais exemplos de quasi-isometrias (e de sua importância), 
que parcialmente justifica o interesse nesses mapas, vem do teorema de Milnor--Schwarz. Ele foi demonstrado inicialmente por Schwarz \cite{vsvarc1955volume} no contexto de variedades Riemannianas, e 13 anos depois de modo independente e na forma mais geral por Milnor \cite{milnor1968note}. Ambos tinham interesse em relacionar o crescimento de volume em recobrimentos universais de variedades Riemannianas compactas e o crescimento da cardinalidade de bolas no grupo fundamental dessas variedades, munido da métrica das palavras.  Efremovich foi o primeiro a observar, em \cite{efremovic1953proximity}, que essas funções cresciam na mesma taxa.
Inclusive, é comum que este teorema apareça na literatura enunciado em sua versão que captura essa relação. 

Antes de enunciar o teorema, relembramos algumas definições a respeito de espaços métricos que serão usadas nesta seção.

Um espaço métrico $(X,d)$ é dito \textit{próprio}\index{espaço métrico próprio} se, para todos $x \in X$ e $R>0$, a bola fechada $\overline{B}(x,R)$ é compacta.
Uma \textit{geodésica}\index{geodésica} em X é uma isometria $\alpha : [a,b] \to X $. Dizemos que o espaço métrico $X$ é \textit{geodésico}\index{espaço métrico geodésico} se cada par de pontos em $X$ pode ser conectado por uma geodésica.

Um \textit{grupo topológico} é um grupo $G$ munido de uma topologia tal que as operações de multiplicação e inversão sejam contínuas. Dizemos que uma ação $\mu : G\curvearrowright X$ é uma \textit{ação topológica}\index{ação topológica} se $G$ é um grupo topológico e $\mu : G \times X \to X$ é contínua.

Uma ação topológica $\mu : G\curvearrowright X$ é própria se, e somente se, para todo par de compactos $K_1$ e $K_2$ de $X$, o conjunto $G_{K_1,K_2}:=\{g \in G \mid gK_1\cap K_2 \neq \emptyset\}$ é compacto, como subespaço de $G$. Se $G$ é discreto, uma ação própria também é \textit{propriamente descontínua}\index{ação propriamente descontínua}. Note que, neste último caso, os conjuntos $G_{K_1,K_2}$ são  finitos. Além disso,  $\mu$ é \textit{cocompacta}\index{ação cocompacta} se, e somente se, existe um compacto $K \subset X$ tal que $G\cdot K:= \displaystyle \bigcup_{g\in G}gK =X.$ 

\begin{example}
A ação de um grupo $G$ finitamente gerado no seu  grafo de Cayley $\cay(G,S)$ é cocompacta (note que $G$, com a topologia dada pela métrica das palavras, é discreto e que a ação de $G$ em $\cay(G,S)$ é topológica). Basta tomar, segundo a definição acima, $K$ igual à bola fechada de raio $1$ e centro em $1_G$.
\end{example}

Relembramos que uma ação topológica $\mu : G\curvearrowright X$ sobre um espaço métrico $(X,d)$ é \textit{colimitada} \index{ação colimitada} se existirem uma constante $r>0$ e um ponto $x \in X$ tal que $G\cdot B(x,r):= \displaystyle \bigcup_{g\in G}gB(x,r) =X$.  

Claramente, se $X$ é um espaço métrico próprio então uma ação $\mu : G\curvearrowright X$ é colimitada se e somente se ela é cocompacta.

Uma ação isométrica, cocompacta e pro\-pria\-mente descontínua é chamada \textit{ação geométrica}\index{ação geométrica}. 

\begin{exercise}
Mostre que todo grupo finitamente gerado age propriamente e de forma colimitada por isometrias em um espaço métrico geodésico e próprio: seu grafo de Cayley.
\end{exercise}

Estamos agora em condições de enunciar o principal resultado desta seção:

\begin{thm}[Milnor--Schwarz] \label{MS} \index{teorema de Milnor--Schwarz}
Sejam $(X,d)$ um espaço métrico próprio e geodésico e $G$ um grupo que age geometricamente em $X$. Então
\begin{enumerate}
\item O grupo $G$ é finitamente gerado;
\item Para qualquer métrica das palavras $d_S$ em $G$ e qualquer ponto $x\in X$, a aplicação $g\in G\mapsto gx \in X$ é uma quasi-isometria.
\end{enumerate}
\end{thm}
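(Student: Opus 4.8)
O plano é produzir explicitamente um conjunto gerador finito a partir da geometria da ação e, em seguida, verificar que a aplicação órbita $\varphi\colon g\mapsto gx$ é um mergulho quasi-isométrico com imagem densa. Como $X$ é próprio, a cocompacidade da ação equivale à colimitação (conforme observado acima), de modo que existe $R>0$ com $G\cdot B(x,R)=X$. Fixado o ponto base $x$, defino
$$S:=\{g\in G\setminus\{e\}\mid d(x,gx)\leq 3R\}.$$
Como a ação é isométrica, $d(x,gx)=d(g^{-1}x,x)$ e portanto $S=S^{-1}$. Além disso, se $g\in S$ então $gx\in g\overline{B}(x,3R)\cap\overline{B}(x,3R)$, donde $S\subset\{g\in G\mid g\overline{B}(x,3R)\cap\overline{B}(x,3R)\neq\emptyset\}$; sendo $\overline{B}(x,3R)$ compacta (pois $X$ é próprio) e a ação propriamente descontínua, esse último conjunto é finito, logo $S$ é finito.

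O passo central será simultaneamente mostrar que $S$ gera $G$ e obter a cota inferior de distâncias. Dado $g\in G$, tomo uma geodésica de $x$ a $gx$, de comprimento $\ell=d(x,gx)$, e a subdivido em $n=\lceil\ell/R\rceil$ trechos por pontos $x=p_0,p_1,\ldots,p_n=gx$ com $d(p_{i-1},p_i)\leq R$. Como os transladados de $B(x,R)$ cobrem $X$, para cada $i$ escolho $g_i\in G$ com $d(g_ix,p_i)<R$, impondo $g_0=e$ e $g_n=g$. Pondo $s_i:=g_{i-1}^{-1}g_i$, a desigualdade triangular dá $d(x,s_ix)=d(g_{i-1}x,g_ix)<3R$, ou seja, $s_i\in S$. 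Como $g=s_1\cdots s_n$, concluo que $S$ gera $G$ (item 1) e ainda que $|g|_S\leq n\leq \ell/R+1$.

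A partir daí as duas estimativas saem rapidamente. Aplicando o parágrafo anterior a $g^{-1}h$ obtenho a cota inferior $d_S(g,h)=|g^{-1}h|_S\leq \frac{1}{R}\,d(gx,hx)+1$. Para a cota superior (tipo Lipschitz), escrevo $g^{-1}h$ como palavra de comprimento $m=d_S(g,h)$ nos geradores de $S$ e aplico a desigualdade triangular ao longo das translações sucessivas, usando que $d(x,sx)\leq 3R$ para $s\in S$, o que fornece $d(gx,hx)=d(x,g^{-1}hx)\leq 3R\,d_S(g,h)$. Reunindo ambas, $\varphi$ é um mergulho quasi-isométrico. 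Por fim, como $G\cdot B(x,R)=X$, a órbita $Gx$ é $R$-densa em $X$; pelo Exercício~\ref{imagemrdensa}, um mergulho quasi-isométrico com imagem densa é uma quasi-isometria, o que encerra o item 2.

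A principal dificuldade está no passo da subdivisão: é nele que a geodesicidade de $X$ (para cortar um caminho minimizante em pedaços de comprimento $\leq R$) e a colimitação (para aproximar cada $p_i$ por um ponto da órbita) se combinam para, de uma só vez, entregar o conjunto gerador e a estimativa inferior. Um ponto técnico a vigiar é que a ação pode não ser livre, tornando $\varphi$ não injetiva; isso não atrapalha, pois a descontinuidade própria força os estabilizadores a serem finitos e contidos em $S\cup\{e\}$, de modo que a cota inferior permanece válida mesmo quando $d(gx,hx)=0$. Os demais passos são verificações rotineiras com a desigualdade triangular.
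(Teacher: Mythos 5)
Sua prova está correta e segue essencialmente o mesmo caminho da prova do livro: conjunto gerador finito extraído da cobertura de $X$ por transladados de uma bola, subdivisão de uma geodésica de $x$ a $gx$ que fornece de uma só vez a geração finita e a cota inferior, cota superior pela desigualdade triangular e conclusão pela densidade da órbita (Exercício~\ref{imagemrdensa}). Apenas dois comentários: (a) sua escolha $S=\{g\neq e\mid d(x,gx)\leq 3R\}$ com subdivisão na escala $R$ dispensa o Passo~2 do livro (a positividade de $2d=\inf\{d(\overline{B},g\overline{B})\mid g\in G\setminus S\}$, necessária lá para que a escala de subdivisão esteja bem definida), pois a pertinência dos quocientes consecutivos $g_{i-1}^{-1}g_i$ a $S\cup\{e\}$ sai diretamente da desigualdade triangular — uma simplificação bem-vinda; (b) como o enunciado pede \emph{qualquer} métrica das palavras, falta somente a observação final de que todas são bi-Lipschitz equivalentes (Exercício~\ref{metricadaspalavras}), exatamente como o livro faz no início do Passo~4.
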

\begin{proof}

 \textbf{Passo 1 (os geradores de $G$):}
Lembramos que uma ação é geométrica se for cocompacta, isométrica e pro\-pria\-mente
descontínua. Cada ação geométrica $G\curvearrowright X$ é colimitada, já que $X$ é próprio. Logo, existe alguma bola fechada $\overline{B}$ de raio $D>0$ e centro $x_0 \in X$, tal que $G\cdot \overline{B} = X$ (observe que, com isso, para qualquer $x\in X$, vale que $G\cdot \overline{B}(x,2D) = X$). Como $X$ é próprio, segue que $\overline{B}$ é compacta. Defina 
$$ S = \{s\in G \mid s\overline{B}\cap \overline{B}\neq \emptyset\}. $$
Note que $S$ é finito, pois a ação é própria, e que $1 \in S = S^{-1}$. Se $G=S $, então claramente $S$ gera $G$. Assumimos assim que $G \neq S$.

 \textbf{Passo 2 (o que ocorre fora do conjunto de geradores):}
Considere, conforme a Figura \ref{fig:generatorsofG},
$$2d := \inf\{d(\overline{B},g\overline{B}) \mid g \in G\setminus S\}.$$ 

\begin{figure}[!ht]
    \centering
    \begin{tikzpicture}[scale=0.8]
         \draw[thick, fill=lightgray, opacity=0.6]  (0,0) circle (1);
         \draw[thick, fill=lightgray,  opacity=0.6] (3,-1) circle (1);
         \draw[thick] (-0.7,0) node{$\overline{B}$};
         \draw (0,1.2) circle (1);
         \draw (0,-1.1) circle (1);
         \draw (0.707,0.707) circle (1);
         \draw (-1.4,1.2) node{\tiny{$s_1\overline{B}$}};
         \draw (1.7,1.6) node{\tiny{$s_2\overline{B}$}};
         \draw (-1.3,-1.4) node{\tiny{$s_3\overline{B}$}};
         \draw (3,0.2) node{$g\overline{B}$};
         \draw[dashed] (0.95, -0.32) -- (2.05,-0.68);
         \draw (1.7, -0.3) node{\tiny{$\geq 2d$}};
    \end{tikzpicture}
    \caption{definição de $d$.}
    \label{fig:generatorsofG}
\end{figure}

Fixe $ g_0 \in G\setminus S$. Da definição de $S$, segue que a distância $R = d(\overline{B},g_0\overline{B})$ será positiva. Pondo $H  = \{h \in G \mid d(\overline{B},h\overline{B})\leq R\}$, temos
$$H \subset \{g\in G \mid g (\overline{B}(x_0,D+R))\cap \overline{B}(x_0,D+R)\neq \emptyset\}.$$ 
Logo $H$ é finito. Mais ainda, 
$$\inf\{d(\overline{B},g\overline{B}) \mid g \in G\setminus S\}= \inf\{d(\overline{B},g\overline{B}) \mid g \in H\setminus S\}.$$ 
Note que o último ínfimo é tomado sobre um conjunto finito de números positivos. Portanto, existe $h_0 \in H \setminus S$ tal que $d(\overline{B} , h_0\overline{B})$ atinge esse ínfimo, que é, portanto, positivo. Concluímos que se $d(\overline{B}, g\overline{B})<2d$, então $g \in S$.

  \textbf{Passo 3 ($G$ é finitamente gerado por $S$):}
Considere uma geodésica $[x, gx] \subset X$, onde $g \in G$  e $x\in X$, e defina $$k =\Big\lfloor\dfrac{d(x,gx)}{d}\Big\rfloor.$$ 
Então existe uma sequência contendo $k+2$ pontos sobre  $[x, gx]$, digamos $y_0=x$, $y_1, \ldots , y_k$, $y_{k+1}=gx$, tal que $d(y_i,y_{i+1})\leq d$ para $i=0,\ldots, k.$ Para cada $i$,  seja $h_i \in G$ tal que $y_i \in h_i \overline{B}$ e tome $h_0=1$, $h_{k+1}=g$. Como $$d(\overline{B},h_i^{-1} h_{i+1} \overline{B}) = d(h_i\overline{B}, h_{i+1} \overline{B}) \leq d(y_i, y_{i+1})\leq d,$$ segue do passo 2 que $h_i^{-1} h_{i+1}= s_i \in S$. Assim, $g$ pode ser escrito como $g = s_0s_1\ldots s_k$ com $s_i\in S$.

\begin{figure}[!ht]
    \centering
    \begin{tikzpicture}[scale=0.8]
         \draw  (0,0) circle (0.9);
         \draw  (6,2) circle (0.9);
         \draw (-0.6,0) node{$\overline{B}$};
         \draw (0,-0.2) node{$x$};
         \draw (0,0) node{\tiny{$\bullet$}};
          \draw (6.4,2) node{$gx$};
         \draw (6,2) node{\tiny{$\bullet$}};
          \draw (0,0) .. controls (2,1) and (3,1) .. (6,2); 
          
         \draw (0.9,0.7) node{\tiny{$y_1$}};
         \draw (1,0.45) node{\tiny{$\bullet$}}; 
         \draw (1,0.45) circle (0.9);
         \draw (0.7,1.6) node{\tiny{$h_1(\overline{B})$}};

         \draw (2.1,1) node{\tiny{$y_2$}};
         \draw (2,0.8) node{\tiny{$\bullet$}}; 
         \draw (2,0.8) circle (0.9);
         \draw (2.7,-0.3) node{\tiny{$h_2(\overline{B})$}};

        \draw (3.9,1.2) node{$\cdot$};
        \draw (4,1.23) node{$\cdot$};
        \draw (4.1,1.26) node{$\cdot$};

         \draw (6,0.8) node{\tiny{$h_{k+1}(\overline{B})$}};
    \end{tikzpicture}
    \caption{O grupo $G$ é gerado por $S$.}
    \label{fig:SgeneratesG}
\end{figure}
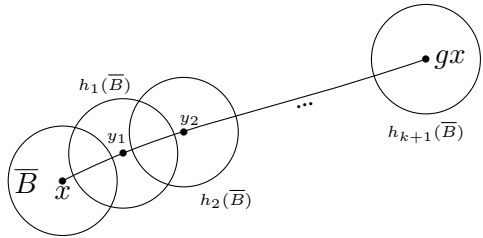

  \textbf{Passo 4 (a quasi-isometria):}
Como todas as métricas das palavras em $G$ são bi-Lipschitz equivalentes, basta provar a parte $(2)$ do teorema para a métrica das palavras $d_S$ em $G$, onde $S$ é o conjunto gerador finito encontrado acima a partir do ponto escolhido~$x$. O espaço $X$ está contido, pelo fato da ação ser colimitada, numa vizinhança $2D$-tubular da órbita $Gx$. Portanto, resta provar que o mapa órbita $f:G \to X$, dado por $g \mapsto gx$ é um mergulho quasi-isométrico. Vimos no passo anterior que $g = s_0s_1\ldots s_k$, onde $s_i \in S$. Logo, pela definição da métrica das palavras e de $k$, obtemos:
$$|g|_S\leq k+1 \leq \dfrac{1}{d}d(x,gx)+1.$$ 
Por outro lado, se $|g|_S = m \leq k+1$ e $t_1\ldots t_m =w$ é uma palavra reduzida em $S$ que representa $g$, obtemos pela desigualdade triangular: $$d(x,gx)=d(x,t_0t_1\ldots t_m x)$$ $$ \leq d(x,t_1x)+d(t_1x,t_1t_2 x)+\ldots +d(t_1t_2\ldots t_{m-1}x,t_1t_2\ldots t_{m}x)$$ $$= d(x,t_1x)+d(x,t_2 x)+\ldots +d(x,t_{m}x)\leq (2D)m= 2D|g|_S,$$ onde $2D> \displaystyle \max_{s \in S} d(x,sx)$, já que se $s\in S$, temos $s\overline{B}\cap \overline{B} \neq \emptyset$. 

\begin{figure}[!ht]
    \centering
    \begin{tikzpicture}[scale=0.8]
         \draw (0,-0.3) node{$x$};
         \draw (0,0) node{\tiny{$\bullet$}};
          \draw (7.2,1.8) node{\tiny{$gx = t_1t_2\ldots t_m x$}};
         \draw (6,2) node{\tiny{$\bullet$}};
          \draw (0,0) .. controls (2,1) and (3,1) .. (6,2); 

        \draw (1,-0.6) node{\tiny{$t_1x$}};
        \draw (1,-0.4) node{\tiny{$\bullet$}};

        \draw (2.2,0) node{\tiny{$t_1t_2x$}};
        \draw (2,0.3) node{\tiny{$\bullet$}};

        \draw (3,0.2) node{\tiny{$\bullet$}};
        \draw (4,1.1) node{\tiny{$\bullet$}};
        \draw (5,0.8) node{\tiny{$\bullet$}};

        \draw (0,0)-- (1,-0.4) -- (2,0.3) -- (3,0.2) -- (4,1.1) -- (5,0.8) -- (6,2);

        \draw (3.9,0.35) node{$\cdot$};
        \draw (4,0.4) node{$\cdot$};
        \draw (4.1,0.45) node{$\cdot$};
    
    \end{tikzpicture}
\end{figure}

Assim, para todo $g \in G$, vale 
$$d\,d_S(1,g)-d\leq d(x,gx)\leq (2D) d_S(1,g).$$
Como tanto a métrica das palavras $|\cdot|_S=d_S(1,\cdot)$ como a métrica $d(\cdot,\cdot)$ em $X$ são invariantes  em relação à ação de $G$ à esquerda, na desigualdade acima, $1$ pode ser substituído por qualquer elemento $h \in G$.

Assim, tomando $L=\max\{2D,d\}$, concluímos que o mapa $f$ é um $(L,d)$-mergulho quasi-isométrico, cuja imagem é $2D$-densa em $X$. Portanto, $f$ é uma quasi-isometria.

\end{proof}

\begin{corollary} \label{fundgroup}
Dada uma variedade Riemaniana $M$ compacta, considere seu recobrimento universal  $\tilde{M}$, dotado da métrica Riemanniana de pull-back. Então o grupo fundamental $\pi_1(M)$ age geometricamente em $\tilde{M}$.
Pelo teorema de Milnor--Schwarz, o grupo $\pi_1(M)$ é finitamente gerado, e o espaço métrico $\tilde{M}$ é quasi-isométrico a $\pi_1 (M)$, munido com alguma métrica das palavras.
Em particular, podemos usar o Teorema~\ref{MS} para obter restrições a respeito dos tipos de métricas Riemannianas que uma certa variedade pode ter.
\end{corollary}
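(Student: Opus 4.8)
O plano é verificar que $\pi_1(M)$ age geometricamente sobre $\tilde{M}$ e então invocar diretamente o Teorema~\ref{MS}. Primeiro, munindo $\tilde{M}$ da métrica de pull-back pela aplicação de recobrimento $p:\tilde{M}\to M$, esta aplicação torna-se uma isometria local, e o grupo fundamental $\pi_1(M)$ age sobre $\tilde{M}$ por transformações de recobrimento. Como cada transformação de recobrimento $\gamma$ satisfaz $p\circ\gamma = p$ e $p$ é isometria local, segue que $\gamma$ preserva a métrica de pull-back; logo a ação é isométrica.

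O próximo passo é estabelecer que $\tilde{M}$ é um espaço métrico próprio e geodésico, que são exatamente as hipóteses do Teorema~\ref{MS}. Aqui o ingrediente analítico central é o teorema de Hopf--Rinow: como $M$ é compacta, ela é completa, e todo recobrimento de uma variedade Riemanniana completa, munido da métrica de pull-back, é completo. Sendo $\tilde{M}$ uma variedade Riemanniana completa, Hopf--Rinow garante simultaneamente que as bolas fechadas são compactas (donde $\tilde{M}$ é próprio) e que quaisquer dois pontos são ligados por uma geodésica minimizante (donde $\tilde{M}$ é geodésico).

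Resta verificar que a ação é propriamente descontínua e cocompacta. A descontinuidade própria é a propriedade padrão da ação de recobrimento de $\pi_1(M)$ sobre o recobrimento universal $\tilde{M}$: em torno de cada ponto existe uma vizinhança uniformemente recoberta, e apenas finitos transladados dela intersectam um compacto dado; combinando isto com a compacidade local, conclui-se que $\{\gamma \mid \gamma K\cap K\neq\emptyset\}$ é finito para todo compacto $K\subset\tilde{M}$. A cocompacidade é imediata, pois o quociente $\tilde{M}/\pi_1(M)$ é homeomorfo a $M$, que é compacta. Com isto a ação é isométrica, propriamente descontínua e cocompacta, isto é, geométrica.

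Finalmente, aplico o Teorema~\ref{MS} a essa ação geométrica de $\pi_1(M)$ sobre o espaço métrico próprio e geodésico $\tilde{M}$: conclui-se que $\pi_1(M)$ é finitamente gerado e que, para qualquer métrica das palavras $d_S$ e qualquer ponto base $x\in\tilde{M}$, a aplicação órbita $\gamma\mapsto \gamma x$ é uma quasi-isometria entre $(\pi_1(M),d_S)$ e $\tilde{M}$. O ponto mais delicado é a parte analítica --- garantir properidade e geodesicidade via Hopf--Rinow e justificar com cuidado a descontinuidade própria da ação de recobrimento; uma vez estabelecidas essas propriedades topológico-métricas, a conclusão é uma aplicação formal do teorema de Milnor--Schwarz.
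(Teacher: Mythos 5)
Sua proposta está correta e segue essencialmente o mesmo caminho do livro, que enuncia o corolário como consequência direta do Teorema~\ref{MS} sem detalhar as verificações. Você preenche exatamente as lacunas implícitas — ação por transformações de recobrimento é isométrica para a métrica de pull-back, $\tilde{M}$ é próprio e geodésico via Hopf--Rinow (usando a completude herdada da compacidade de $M$), e a ação é propriamente descontínua e cocompacta pela teoria padrão de recobrimentos — e então aplica o teorema de Milnor--Schwarz, como o texto pretende.
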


\begin{corollary}\label{Mil} Seja $G$ um grupo finitamente gerado.
\begin{enumerate}
\item Se $H$ é um subgrupo  de índice finito de $G$, então $H$ também é finitamente gerado. Além disso, $H$ é quasi-isométrico a $G$.
\item Dado um subgrupo normal e finito $N\triangleleft G$, então os grupos $G$ e $G/N$ são quasi-isométricos.
\end{enumerate}
\end{corollary}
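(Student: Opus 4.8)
The plan is to derive both statements from the Milnor--Schwarz theorem (Theorem~\ref{MS}), relying on the fact that quasi-isometry is an equivalence relation so that transitivity is available. The common device is to exhibit, in each case, a proper geodesic metric space on which the relevant group acts geometrically; Theorem~\ref{MS} then supplies both finite generation and a quasi-isometry onto that space, and transitivity closes the argument.

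For part~(1), I would fix a finite symmetric generating set $S$ of $G$ and work with $\cay(G,S)$, a proper geodesic space on which $G$ acts geometrically. The key observation is that the restricted action of $H$ on the \emph{same} space $\cay(G,S)$ is again geometric. Isometry is immediate, and proper discontinuity is inherited, since for any compact $K$ the set $\{h\in H \mid hK\cap K\neq\emptyset\}$ is contained in the corresponding set for $G$, which is finite. The only step that genuinely uses $[G:H]<\infty$ is cocompactness: choosing coset representatives $g_1,\dots,g_n$ with $G=\bigcup_i Hg_i$ and a compact $K$ with $G\cdot K=\cay(G,S)$, one gets $\cay(G,S)=H\cdot\big(\bigcup_i g_iK\big)$, a finite union of compacts translated by $H$. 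Applying Theorem~\ref{MS} to this $H$-action yields that $H$ is finitely generated and that $H$ is quasi-isometric to $\cay(G,S)$; since $G$ is also quasi-isometric to $\cay(G,S)$, transitivity gives $H \qi G$.

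For part~(2), I first note that $G/N$ is finitely generated by Proposition~\ref{prop:fggroups}, so I may fix the finite generating set $\bar S=\pi(S)$, where $\pi:G\to G/N$ is the projection, and form the proper geodesic space $\cay(G/N,\bar S)$. Let $G$ act on it through $\pi$ followed by the left-translation action of $G/N$. This action is isometric, and it is cocompact because $G/N$ already acts cocompactly and $\pi$ is surjective. The essential point, and the only place where finiteness of $N$ enters, is proper discontinuity: for a compact $K$ the set $\{\bar g\in G/N \mid \bar gK\cap K\neq\emptyset\}$ is finite, and its preimage under $\pi$ is a finite union of cosets of $N=\ker\pi$, hence finite because $|N|<\infty$. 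Theorem~\ref{MS} then gives that $G$ is quasi-isometric to $\cay(G/N,\bar S)$, which is itself quasi-isometric to $G/N$, and transitivity yields $G\qi G/N$.

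The main obstacles are precisely the two verifications flagged above: cocompactness of the $H$-action in part~(1) and proper discontinuity of the $G$-action in part~(2); everything else is either immediate or quoted from Theorem~\ref{MS}. As an alternative for part~(2) that avoids invoking Milnor--Schwarz, I would instead show directly that $\pi:(G,d_S)\to(G/N,d_{\bar S})$ is a quasi-isometry: it is surjective, hence has dense image; it is $1$-Lipschitz since a word in $S$ projects to a word in $\bar S$ of the same length; and setting $D=\max_{n\in N}|n|_S$ one checks $d_S(g,g')\le d_{\bar S}(\pi(g),\pi(g'))+D$ by lifting a shortest $\bar S$-word for $\pi(g^{-1}g')$ and correcting by an element of $N$. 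This makes $\pi$ a $(1,D)$-quasi-isometric embedding with dense image, so $\pi$ is a quasi-isometry by Exercise~\ref{imagemrdensa}.
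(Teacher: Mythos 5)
Your proof is correct and follows essentially the same route as the paper: both parts are obtained from the Milnor--Schwarz theorem, applied in (1) to the restricted action of $H$ on $\cay(G,S)$ (cocompactness coming from a finite set of coset representatives) and in (2) to the action of $G$ on $\cay(G/N,\bar S)$ induced by the projection $\pi$. Two points in your favour: where the paper merely asserts that finiteness of $N$ makes the induced action proper (``sendo $N$ finito, podemos mostrar que esta ação é própria''), you spell out the verification via preimages of finite sets under $\pi$ being finite unions of cosets of $N$; and your alternative argument for (2) --- showing directly that $\pi$ is a $(1,D)$-quasi-isometric embedding with $D=\max_{n\in N}|n|_S$ and concluding via Exercise~\ref{imagemrdensa} --- is also correct and more elementary, bypassing Milnor--Schwarz entirely, at the cost of being an ad hoc computation rather than an instance of a reusable statement about geometric actions.
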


\begin{proof}
É claro que a ação de $H$ no espaço métrico próprio e geodésico $\cay(G,S)$ é isométrica e propriamente descontínua. Agora, como $H$ tem índice finito, segue que $G =\displaystyle \bigcup_{i=1}^ng_iH$, para finitos elementos $g_1 = 1, g_2, \ldots, g_n \in G$. Escrevendo $R=\max{|g_i|_S}$, obtemos que  $\cay(G,S)= H\cdot \overline{B}(1,R)$ e portanto a ação de $H$ em $\cay(G,S)$ é colimitada. Assim, o item 1 segue do teorema de Milnor--Schwarz.

Como $G$ é finitamente gerado, então $G/N$ também é gerado por um conjunto finito, digamos $S'$, pela Proposição \ref{prop:fggroups}. Sejam $\mu : G/N \curvearrowright \cay(G/N,S')$ a ação de $G/N$ por translação à esquerda e  $\pi: G \to G/N$ a projeção canônica. Podemos, usando $\mu$ e $\pi$, definir uma ação induzida $\mu': G \curvearrowright \cay(G/N, S')$, a qual é uma ação geométrica. Com efeito, da definição da distância em $G/N$  segue que esta é uma ação por isometrias. Seja $B=B(N,1)$ é uma bola em $\cay(G/N,S')$ então  $G\cdot B = \cay(G/N,S')$ e portanto a ação é colimitada. Por fim,  sendo $N$ finito, podemos mostrar que esta ação é própria. Daí, pelo teorema de Milnor--Schwarz concluímos que $G$ e $G/N$ são quasi-isométricos.
\end{proof}

\begin{corollary}
Sejam $(X,d_i)$, $i=1,2$ espaços métricos próprios e geodésicos. Suponha que exista uma ação de um grupo $G $ em $ X$, a qual é geométrica com respeito a ambas as distâncias $d_1$ e $d_2$. Então a aplicação identidade $$ id: (X,d_1)\to (X,d_2)$$ é uma quasi-isometria. Em particular, cada geodésica em $X$ com relação a $d_1$ é uma quasi-geodésica com relação a $d_2$.
\end{corollary}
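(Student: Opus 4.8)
The plan is to deduce everything from the Milnor--Schwarz theorem (Theorem~\ref{MS}) applied twice, to the same action but with respect to each of the two metrics. Fix a base point $x\in X$. Applying Theorem~\ref{MS} to the geometric action $G\curvearrowright(X,d_1)$ shows that $G$ is finitely generated and that the orbit map $\phi\colon g\mapsto gx$ is a quasi-isometry $(G,d_S)\to(X,d_1)$ for any word metric $d_S$. Applying it again to $G\curvearrowright(X,d_2)$ shows that the \emph{same} function $\phi\colon g\mapsto gx$ is a quasi-isometry $(G,d_{S'})\to(X,d_2)$. Since any two word metrics on $G$ are bi-Lipschitz equivalent (Exercise~\ref{metricadaspalavras}), I may use a single generating set $S$ for both, so that $\phi$ is simultaneously a quasi-isometry from $(G,d_S)$ onto each of $(X,d_1)$ and $(X,d_2)$.

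Next I would let $\overline{\phi}\colon(X,d_1)\to(G,d_S)$ be a quasi-inverse of the first orbit map and consider the composition $\Psi:=\phi\circ\overline{\phi}\colon(X,d_1)\to(X,d_2)$. Because quasi-isometry is an equivalence relation, hence closed under composition, $\Psi$ is a quasi-isometry from $(X,d_1)$ to $(X,d_2)$. The point of choosing the same function $\phi$ in both applications is that $\Psi$ is essentially the identity: by the quasi-inverse property one has $d_1\bigl(\Psi(y),y\bigr)=d_1\bigl(\phi\overline{\phi}(y),y\bigr)\le C$ for every $y\in X$, so $\Psi(y)$ is an orbit point lying within bounded $d_1$-distance of $y$. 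If I can upgrade this to a uniform bound $d_2\bigl(\Psi(y),y\bigr)\le a<\infty$, then, since $\Psi$ is a quasi-isometry and $\mathrm{id}_X$ lies at distance $\le a$ from it in the supremum metric, the exercise asserting that a map at finite distance from a quasi-isometry is again a quasi-isometry yields that $\mathrm{id}\colon(X,d_1)\to(X,d_2)$ is a quasi-isometry.

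The main obstacle is precisely this upgrade: passing from a $d_1$-bound to a $d_2$-bound, that is, showing that $d_1$-bounded sets are $d_2$-bounded (and conversely). The intended mechanism is to first observe that composing the two orbit quasi-isometries compares the metrics \emph{on the orbit}: there are $\Lambda\ge 1$, $\kappa\ge 0$ with $\frac1\Lambda d_1(gx,hx)-\kappa\le d_2(gx,hx)\le \Lambda d_1(gx,hx)+\kappa$ for all $g,h\in G$. Cocompactness for $d_1$ makes $Gx$ coarsely dense in $(X,d_1)$, so any closed $d_1$-ball can be pushed into a fixed compact region; cocompactness for $d_2$ makes $Gx$ coarsely dense in $(X,d_2)$ as well, and the on-orbit comparison then controls $d_2$-distances between the relevant orbit points. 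This is the step where both properness hypotheses and both cocompactness hypotheses are genuinely used, and it is the technically delicate part: the difficulty is that coarse density for one metric approximates points in that metric only, so reconciling $d_1$- and $d_2$-proximity on the fundamental region is exactly where care is required.

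Finally, the ``in particular'' clause is immediate once $\mathrm{id}$ is known to be, say, an $(L,C)$-quasi-isometry: if $\gamma\colon[a,b]\to(X,d_1)$ is a geodesic, then $d_1(\gamma(s),\gamma(t))=|s-t|$, whence $\frac1L|s-t|-C\le d_2(\gamma(s),\gamma(t))\le L|s-t|+C$, so $\gamma$, regarded in $(X,d_2)$, is an $(L,C)$-quasi-geodesic. Since the hypotheses are symmetric in $d_1$ and $d_2$, running the same argument with the roles reversed shows as well that every $d_2$-geodesic is a quasi-geodesic for $d_1$.
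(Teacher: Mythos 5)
Your route coincides with the paper's: apply Milnor--Schwarz to both actions with the same orbit map $\phi(g)=gx$, form $\Psi=\phi\circ\overline{\phi}$ (the paper's $f_2\circ\overline{f}_1$), and reduce everything to showing that $\Psi$ lies at uniformly bounded $d_2$-distance from the identity. Everything you write up to that reduction, and also the final ``in particular'' clause, is correct.

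The genuine gap is the step you yourself call the main obstacle and leave as an ``intended mechanism'': upgrading $d_1\bigl(\Psi(y),y\bigr)\le C$ to a uniform bound on $d_2\bigl(\Psi(y),y\bigr)$, i.e.\ showing that sets of bounded $d_1$-diameter have bounded $d_2$-diameter. The mechanism you sketch (on-orbit comparison of the two metrics plus cocompactness of both actions) cannot close it: coarse density of the orbit $Gx$ with respect to each metric approximates a point only in that same metric, so it never converts $d_1$-proximity into $d_2$-proximity --- which is exactly the difficulty you acknowledge but do not resolve. What actually closes it is an ingredient absent from your proposal: the (implicit) assumption that $d_1$ and $d_2$ induce the same topology on $X$, used through properness. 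Then the closed ball $\overline{B}_{d_1}(x_0,R)$ is compact, hence $d_2$-bounded, so $\beta(R):=\sup\{d_2(x_0,y)\mid d_1(x_0,y)\le R\}<\infty$; and if $d_1(x,y)\le R$, choosing $g$ with $d_1(g^{-1}x,x_0)\le A_1$ (cocompactness for $d_1$) and using $G$-invariance of \emph{both} metrics yields $d_2(x,y)=d_2(g^{-1}x,g^{-1}y)\le \beta(A_1)+\beta(R+A_1)$, which is the uniform conversion you need.

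This is not a removable technicality: with only the hypotheses as you (and the statement) literally use them, the conclusion is false. On $X=\R$ with $\Z$ acting by integer translations, let $d_1$ be the usual metric and $d_2(x,y)=|F(x)-F(y)|$, where $F$ is the $\Z$-equivariant bijection of $\R$ that shifts each progression $\tfrac{1}{k+2}+\Z$ by $k$ and fixes all other points. Then $F$ is an equivariant isometry from $(X,d_2)$ onto standard $\R$, so $(X,d_2)$ is proper and geodesic and the $\Z$-action is geometric for $d_2$ as well; yet $d_2\bigl(\tfrac{1}{k+2},1\bigr)=\bigl|\tfrac{1}{k+2}+k-1\bigr|\to\infty$ while $d_1\bigl(\tfrac{1}{k+2},1\bigr)\le 1$, so the identity is not even coarse Lipschitz. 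Hence the step you left open is precisely where the topological compatibility of the metrics must enter. In fairness, the paper's own proof elides the same point: its claim that it suffices to verify the bound on a $d_1$-dense subset of $X$ already presupposes the conversion above.
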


\begin{proof} 
Pelo teorema de Milnor--Schwarz, $G$ é finitamente gerado e, fixado $x_0\in X$ e escolhendo uma métrica das palavras $d_S$ com respeito a algum conjunto de geradores $S$ de $G$, para cada $i=1,2$, o mapa $f_i:G \to (X,d_i)$,  dado por $g \mapsto g x_0$ é uma quasi-isometria, com quasi-inversa denotada por $\Bar{f}_i$.
Afirmamos que $d_2(x,f_2\circ \overline{f}_1(x))\leq K$, para alguma constante $K>0$. Para ver isso, basta mostrarmos a desigualdade em $x$ num subconjunto $A$-denso em $(X,d_1)$. Como $f_1$ é mergulho quasi-isométrico, $f_1(G)$ é $A$-denso em $(X,d_1)$. Assim, temos:
\begin{eqnarray*}
d_2(f_1(g),f_2\circ \overline{f}_1(f_1(g)) &=&d_2(f_2(g),f_2\circ \overline{f}_1(f_1(g))\\
& \leq & L d_S(g, \overline{f}_1(f_1(g)))+C \\
& \leq & LD+C, 
\end{eqnarray*}
onde usamos que $f_2$ é $(L,C)$-quasi-isometria e $d_S(g, \overline{f}_1(f_1(g))) \leq D$, para todo $g\in G$.
\end{proof}

Apresentaremos agora algumas consequências do Teorema de Milnor--Schwarz com respeito a noções de comensurabilidade e isomorfismo virtual de grupos.

\begin{definition}\label{def:comm}
Dois grupos $G_1$ e $G_2$ são ditos \textit{comensuráveis} \index{grupos comensuráveis} se existem subgrupos de índice finito $H_i\leq G_i, \, i=1,2$, tais que $H_1$ é isomorfo $H_2$.
\end{definition}

\begin{definition}\label{def:vi}
Dois grupos $G_1$ e $G_2$ são ditos \textit{virtualmente isomorfos} \index{grupos virtualmente isomorfos} (denotamos por $G_1 \vi G_2$) se existirem subgrupos de índice finito $H_i \leq G_i$ e subgrupos finitos e normais $F_i \triangleleft H_i$, para cada $i=1,2$, tais que  os quocientes $H_1/F_1$ e $H_2/F_2$ são isomorfos. Tal isomorfismo $\varphi: H_1/F_1 \to H_2/F_2$, caso exista, é chamado \textit{isomorfismo virtual} entre $G_1$ e $G_2$. 
\end{definition}

\begin{exercise}
    Verifique que ``ser virtualmente isomorfo'' é uma relação de equivalência.
\end{exercise}

\begin{exercise}
Mostre que um homomorfismo $f:G\to H$, entre dois grupos finitamente gerados $G$ e $H$, é uma quasi-isometria se, e somente se, seu núcleo $\mathrm{Ker}(f)$ é finito e sua imagem $\mathrm{Im}(f)$ tem índice finito em $H$.
\end{exercise}

\begin{corollary}
Se $G_1$ e $G_2$ são dois grupos finitamente gerados virtualmente isomorfos então $G_1$ e $G_2$ são quasi-isométricos.
\end{corollary}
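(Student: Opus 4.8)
The plan is to derive this corollary purely formally, by chaining together the three quasi-isometry facts already at our disposal and invoking transitivity of the quasi-isometry relation. First I would unwind the hypothesis: by Definition~\ref{def:vi}, $G_1 \vi G_2$ furnishes finite-index subgroups $H_i \leq G_i$, finite normal subgroups $F_i \triangleleft H_i$, and an isomorphism $\varphi : H_1/F_1 \to H_2/F_2$, for $i=1,2$. The goal is then to exhibit a chain of quasi-isometries running $G_1 \qi \cdots \qi G_2$ that passes through these auxiliary groups.

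Next, for each $i \in \{1,2\}$ I would connect $G_i$ to the quotient $H_i/F_i$ by a pair of quasi-isometries. Since $H_i$ has finite index in the finitely generated group $G_i$, Corollary~\ref{Mil}(1) guarantees that $H_i$ is itself finitely generated and that $G_i$ is quasi-isometric to $H_i$. Since $F_i$ is a finite normal subgroup of the finitely generated group $H_i$, Corollary~\ref{Mil}(2) then gives that $H_i$ is quasi-isometric to $H_i/F_i$. Note that $H_i/F_i$ is finitely generated, being a quotient of a finitely generated group (Proposition~\ref{prop:fggroups}), so the word metric and the notion of quasi-isometry are meaningful at every node of the chain.

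It remains to bridge the two middle terms. The isomorphism $\varphi : H_1/F_1 \to H_2/F_2$ is an isomorphism of finitely generated groups, hence bi-Lipschitz by Exercise~\ref{metricadaspalavras}, and in particular a quasi-isometry. Assembling everything produces the chain
$$G_1 \qi H_1 \qi H_1/F_1 \qi H_2/F_2 \qi H_2 \qi G_2,$$
and since quasi-isometry of metric spaces is an equivalence relation, and therefore transitive, we conclude that $G_1$ and $G_2$ are quasi-isometric.

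I do not expect a genuine obstacle here: the statement is a direct bookkeeping consequence of Corollary~\ref{Mil} together with the bi-Lipschitz nature of group isomorphisms. The only point that deserves explicit care is verifying that each intermediate group ($H_i$ and $H_i/F_i$) is finitely generated, so that it legitimately carries a word metric and the equivalence relation of Definition~\ref{qi2} applies to it; this is exactly what Corollary~\ref{Mil}(1) and Proposition~\ref{prop:fggroups} supply.
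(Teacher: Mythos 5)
Your proposal is correct and follows essentially the same route as the paper's proof: decompose via Definition~\ref{def:vi}, apply both parts of Corolário~\ref{Mil} to get $G_i \qi H_i \qi H_i/F_i$, note the isomorphic quotients are quasi-isométricos, and conclude by transitivity. The extra care you take in checking finite generation of $H_i$ and $H_i/F_i$ and citing Exercício~\ref{metricadaspalavras} for the bi-Lipschitz step is implicit in the paper's argument but is a sound addition.
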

\begin{proof}
Como $G_1$ e $G_2$ são virtualmente isomorfos, existem  subgrupos de índice finito $H_i\leq G_i$, $i=1,2$  e subgrupos normais finitos $F_i \triangleleft H_i$, $i=1,2$ tais que os quocientes $H_1/F_1$ e $H_2/F_2$ são isomorfos, portanto quasi-isométricos. Pelo item $2$ do Corolário~\ref{Mil} temos que $H_i/F_i$ é quasi-isométrico a $H_i$ e pelo item $1$ desse mesmo corolário, temos $G_i$ quasi-isométrico a $H_i$. Assim, concluímos que  $G_1$ é quasi-isométrico a $G_2.$
\end{proof}

Em geral, a existência de uma quasi-isometria entre dois grupos não implica que eles sejam virtualmente isomorfos. A seguir, apresentaremos alguns exemplos desse fenômeno, cada um exigindo uma técnica diferente. Não abordaremos todos os detalhes, mas mostraremos as ideias envolvidas nas construções.

\begin{example}
Seja $\Hyp (2,\mathbb{Z})$ o conjunto das matrizes  $A \in \SL_2(\mathbb{Z})$ diagonalizáveis sobre $\mathbb{R}$ tais que $A^2\neq I$. Para cada $A \in \Hyp (2,\mathbb{Z})$, considere a ação de $\mathbb{Z}\curvearrowright \mathbb{Z}^2$ tal que $1 \in \mathbb{Z}$ age como $A$ em $\mathbb{Z}^2$. Seja $G_A=\mathbb{Z}^2\rtimes_A \mathbb{Z}$ o produto semidireto associado. Podemos ver $G_A$ como grupo fundamental da 3-variedade 
$$T_A = \mathbb{T}^2\times [0,1]/ \big((p,0) \sim (f_A(p), 1)\big),$$ 
onde  $\mathbb{T}^2 $ é o toro $\mathbb{R}^2 / \mathbb{Z}^2$ e $f_A: \mathbb{T}^2 \to \mathbb{T}^2$ é induzida pela matriz $A$. 

Deixamos como exercício verificar  que $\mathbb{Z}^2$ é o único subgrupo maximal normal abeliano de $G_A.$ 

Pela diagonalização, $A \sim \left( \begin{array}{cc}
\lambda & 0 \\ 
0 & \lambda^{-1}
\end{array}\right)$ com $\lambda > 1$. Isso implica que  existe um mergulho  
$$G_A\hookrightarrow \Sol_3 := \mathbb{R}^2\rtimes_D \mathbb{R},$$ 
onde 
$$
D(t)=
 \left( \begin{array}{cc}
e^t & 0 \\ 
0 & e^{-t}
\end{array}\right): \mathbb{R} \to \mathrm{Aut}(\mathbb{R}^2),\ 
D(t_0)=
 \left( \begin{array}{cc}
\lambda & 0 \\ 
0 & \lambda^{-1}
\end{array}\right).
$$
A imagem de $G_A$ é discreta e cocompacta em $\Sol_3$, além disso, $G_A$ é livre de torção pois ele é isomorfo a um subgrupo do grupo sem torção. O grupo $\Sol_3$ tem uma métrica Riemanniana invariante a esquerda. Portanto, $G_A$ age isometricamente em $\Sol_3$, munido dessa métrica. Assim, cada $G_A$ como acima é quasi-isométrico a $\Sol_3$. Construiremos agora dois grupos $G_A$, $G_B$ dos tipos mencionados acima e que não são virtualmente isomorfos entre si. Escolha duas matrizes $A, B\in \Hyp(2,\mathbb{Z})$ tais que para cada $n, m \in \mathbb{Z}\setminus\{0\}$, $A^n$ não seja conjugada a $B^m$ em $\SL_2(\mathbb{R})$. Por exemplo, 
$$A=
 \left( \begin{array}{cc}
2 & 1 \\ 
1 & 1
\end{array} \right), \quad B=
 \left( \begin{array}{cc}
3 & 2 \\ 
1 & 1
\end{array} \right).$$  
A propriedade sobre as potências de $A$ e $B$ seguem considerando-se os autovalores de $A$ e $B$ e observando que os corpos que eles geram são extensões quadráticas distintas de $\Q$.
Vamos verificar que $G_A$ não é virtualmente isomorfo a $G_B$. Em primeiro lugar, uma vez que ambos $G_A$, $G_B$ são livres de torção, basta mostrar que eles não são comensuráveis, isto é, não contêm subgrupos de índice finito isomorfos. Seja $H_A$ um subgrupo de índice finito em $G_A$. Então $H_A$ intersecta o subgrupo normal, livre abeliano, de posto $2$ de $G_A$ ao longo de um subgrupo  $L_A$  livre abeliano de posto $2$. Ou seja $L_A=H_A \cap \mathbb{Z}^2.$

A imagem de $H_A$ pela projeção $G_A\to G_A/\mathbb{Z}^2 = \Z$  deve ser um subgrupo cíclico infinito, gerado por algum $n \in \N$. Portanto, $H_A$ é isomorfo a $\Z^2 \rtimes_{A^n} \Z$. Pela mesma razão, $H_B \cong \Z^2 \rtimes_{B^m} \Z$. Qualquer isomorfismo $H_A\to H_B$ deve levar $L_A$ isomorficamente em $L_B$. Contudo, isto implicaria que $A^n$ é conjugado com $B^m$,o que é  uma contradição.
\end{example}

\begin{example}
Seja $S$ uma superfície compacta de gênero $g\geq 2$. Considere $G_1 =\pi_1(S)\times \Z$, $M$ o espaço total do fibrado unitário sobre $S$ e $G_2 = \pi_1(M)$. Temos então a sequência exata $$1\to \Z\to G_2\to \pi_1(S)\to 1,$$
$$G_2 = \langle a_1, b_1,\cdots, a_g, b_g, t\mid [a_1, b_1] \cdots [a_g, b_g]t^{2n-2}, [a_i, t], [b_i, t], \forall i \rangle.$$

Deixamos ao leitor verificar que a passagem para qualquer subgrupo de índice finito em $G_2$ não o torna uma extensão central trivial do grupo fundamental da superfície hiperbólica. Portanto, $G_1$ e $G_2$ não são virtualmente isomorfos. Por outro lado, como o grupo $\pi_1(S)$ é hiperbólico, no sentido do Capítulo \ref{cap8}, segue que os grupos $G_1$ e $G_2$ são quasi-isométricos.
De fato, nesta última afirmação usamos o seguinte teorema (veja \cite[Teorema~11.159]{kd} ou \cite{gersten1992bounded} para o caso particular $A=\Z$):

\begin{thm} Dada uma co-extensão central
$$1\to A\to G\to H\to 1,$$ onde $A$ é um grupo abeliano finitamente gerado e $H$ um grupo hiperbólico, tem-se que $G$ é quasi-isométrico a $A\times H$. 
\end{thm}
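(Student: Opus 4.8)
O plano é realizar tanto $G$ quanto $A\times H$ como extensões centrais de $H$ por $A$ sobre o \emph{mesmo} conjunto subjacente e mostrar que elas são quasi-isométricas sempre que os $2$-cociclos que as definem diferem por uma cocadeia limitada; o papel da hiperbolicidade é precisamente garantir que a classe de extensão de $G$ seja limitada. Começo reduzindo ao caso $A=\Z^n$. Escreva $A\cong\Z^n\oplus T$, onde $T$ é o subgrupo (finito) de torção. Como $T$ é central em $G$, logo um subgrupo normal finito, o Corolário~\ref{Mil} fornece que $G$ é quasi-isométrico a $G/T$; analogamente $T\triangleleft A\times H$ é normal e finito, donde $A\times H$ é quasi-isométrico a $(A\times H)/T\cong\Z^n\times H$. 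Como $G/T$ é uma extensão central de $H$ por $\Z^n$, basta tratar o caso $A=\Z^n$.

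Descrevo então $G$ por cociclos. Escolhendo uma seção conjuntista normalizada $s:H\to G$ da projeção $\pi$ (com $s(1)=1$), defino $\omega(h_1,h_2)=s(h_1)s(h_2)s(h_1h_2)^{-1}\in A$; como $A$ é central, $\omega$ é um $2$-cociclo, e a aplicação $g\mapsto (g\,s(\pi(g))^{-1},\pi(g))$ identifica o conjunto subjacente de $G$ com $A\times H$, tornando a multiplicação $(a_1,h_1)(a_2,h_2)=(a_1+a_2+\omega(h_1,h_2),h_1h_2)$. O produto direto $A\times H$ corresponde ao cociclo trivial $\omega\equiv 0$. Pela Proposição~\ref{prop:fggroups} ambos os grupos são finitamente gerados; fixo conjuntos geradores finitos formados por geradores de $A$ juntamente com levantamentos $s(t)$ de um conjunto gerador de $H$.

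O passo central é o seguinte lema: se $\omega_1,\omega_2$ são cociclos normalizados com $B:=\sup_{h_1,h_2}\|\omega_1(h_1,h_2)-\omega_2(h_1,h_2)\|_A<\infty$, então a aplicação identidade no conjunto subjacente comum $A\times H$ é uma quasi-isometria entre os grupos $G_1$ e $G_2$ associados. O mecanismo é elementar: ler uma palavra geodésica de $x$ em $G_1$ dentro de $G_2$ produz um elemento com a mesma coordenada em $H$ e coordenada em $A$ deslocada por uma soma de no máximo $|x|_{G_1}-1$ diferenças $\omega_1-\omega_2$, donde $|x|_{G_2}\leq (1+B)\,|x|_{G_1}$; um cálculo análogo de inversos e produtos mostra que $g^{-1}g'$ calculado em $G_1$ e em $G_2$ diferem por um elemento limitado do $A$ central, fornecendo $d_{G_2}(g,g')\leq (1+B)\,d_{G_1}(g,g')+4B$ e, por simetria, a desigualdade recíproca. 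Logo a identidade é uma $(1+B,4B)$-quasi-isometria no sentido da Definição~\ref{qi2}.

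Aplicando o lema com $G_1=G$ e $G_2=\Z^n\times H$, resta mostrar que a classe de extensão $[\omega]\in H^2(H;\Z^n)$ admite um representante limitado — equivalentemente, que após trocar a seção $s$ o cociclo $\omega$ se torna limitado. Este é o coração do argumento e o único ponto em que a hiperbolicidade de $H$ é usada. Invoco o teorema de Gersten de que, para um grupo hiperbólico, o mapa de comparação $H^2_b(H;\R)\to H^2(H;\R)$ é sobrejetivo; aplicado a cada uma das $n$ coordenadas, ele fornece uma cocadeia real limitada $\beta$ e uma $1$-cocadeia real $\phi$ com $\omega=\beta+\delta\phi$. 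Arredondando, ponha $\psi=\lfloor\phi\rfloor$ coordenada a coordenada; então $\omega-\delta\psi=\beta+\delta(\phi-\psi)$ é inteiro e limitado, pois $\phi-\psi$ toma valores em $[0,1)^n$ e portanto $\delta(\phi-\psi)$ é limitado. Assim $[\omega]$ possui representante limitado e o lema-chave encerra a demonstração. O principal obstáculo é exatamente essa limitação da classe de extensão, ou seja, o resultado profundo de Gersten--Mineyev sobre a sobrejetividade do mapa de comparação em grau dois; todo o restante é contabilidade elementar com cociclos e comprimentos de palavras. Que a hiperbolicidade é indispensável fica ilustrado pelo grupo de Heisenberg discreto, extensão central do grupo (não hiperbólico) $\Z^2$ por $\Z$ que \emph{não} é quasi-isométrica a $\Z^3$.
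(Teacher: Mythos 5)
O livro não apresenta demonstração própria deste teorema: ele apenas o enuncia, remetendo a \cite[Teorema~11.159]{kd} e a \cite{gersten1992bounded} para o caso $A=\Z$. Sua proposta está correta e reconstrói essencialmente o argumento dessas referências — redução ao caso $A=\Z^n$ quocientando a torção (Corolário~\ref{Mil}), o lema de comparação de extensões com cociclos que diferem por uma cocadeia limitada, e a sobrejetividade do mapa de comparação $H^2_b(H;\R)\to H^2(H;\R)$ para grupos hiperbólicos (Gersten), com o truque de arredondamento para obter um representante limitado com valores inteiros — de modo que não há nada a corrigir.
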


\end{example}

\begin{example}
Em seu trabalho \cite{thurston1982three}, Thurston deixou uma lista com 24 problemas envolvendo  3-variedades e grupos Kleinianos, dos quais apenas um ainda está em aberto atualmente: ``\textit{Mostrar que os volumes de 3-variedades hiperbólicas não são todos racionalmente relacionados.}'' Suponhamos que  existam $M_1$ e $M_2$ variedades compactas de dimensão $3$ e curvatura $-1$ tais que o quociente de seus  volumes  seja irracional. Os grupos fundamentais dessas variedades serão quasi-isométricos a $\mathbb{H}^3$, pelo Corolário~\ref{fundgroup} e, por transitividade, são quasi-isométricos entre si. Se $\pi_1(M_1)$ é comensurável a $\pi_1(M_2)$, então existem subgrupos $H_i \leq \pi_1(M_i)$, de índice finito $n_i$, os quais são isomorfos. Logo, existem recobrimentos finitos $\overline{M}_i$, com $n_i$ folhas, das variedades $M_i$, tais que $\pi_1(\overline{M}_i) \cong H_i$. Assim, pelo teorema da rigidez de Mostow (vide \cite{mostow1968quasi}), $\overline{M}_1$ é isométrico a $\overline{M}_2$ e então $\vol(\overline{M}_1)= \vol(\overline{M}_2)$. No entanto, $\vol(\overline{M}_i) = n_i \vol(M_i)$, o que contradiz a irracionalidade do quociente dos volumes das variedades $M_i$. De fato, é suficiente considerar duas $3$-variedades hiperbólicas compactas não comensuráveis (cuja existência é conhecida) e, usando a rigidez de Mostow, deduzir que seus grupos não são virtualmente isomorfos, enquanto são quasi-isométricos por Milnor--Schwarz.
\end{example}

\begin{exercise}
Mostre que para todos $m,n\geq 2$, os grupos  $F_m$ e $F_n$ são  quasi-isométricos e virtualmente isomorfos.
\end{exercise}

O resultado apresentado a seguir, devido a Gromov \cite{gromov1993asymptotic}, fornece um critério alternativo para verificar se dois grupos são quasi-isométricos. 

\begin{definition} Sejam $G_1, G_2$ dois grupos. Um \textit{acoplamento topológico} \index{acoplamento topológico} de $G_1$ e $G_2$ é um espaço metrizável $X$, localmente compacto, junto com duas ações $\rho_i : G_i \curvearrowright X$ que comutam, são cocompactas e propriamente descontínuas. Duas ações $\rho_i : G_i \to \mathrm{Homeo}(X)$ comutam se $\rho_1(g_1)\rho_2(g_2)= \rho_2(g_2)\rho_1(g_1)$, para quaisquer $g_i \in G_i$.
\end{definition}

\begin{thm}
\label{thm:topologicalcoupling}
Dois grupos finitamente gerados são quasi-iso\-mé\-tricos se, e somente se, existe um acoplamento topológico entre eles.
\end{thm}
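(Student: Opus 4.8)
The plan is to prove the two implications separately, since they require opposite moves: from a coupling one must \emph{extract} a quasi-isometry, whereas from a quasi-isometry one must \emph{build} a coupling space. A tempting shortcut would be to apply Milnor--Schwarz (Theorem~\ref{MS}) directly to $X$, but this fails: a topological coupling is only required to be locally compact and metrizable, and the two actions are by homeomorphisms rather than isometries, so the word metrics of $G_1$ and $G_2$ will have to be compared by hand through the combinatorics of how compact sets are translated around in $X$.

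For the direction ``coupling $\Rightarrow$ quasi-isometry'', suppose $X$ carries commuting, cocompact, properly descontínua actions $\rho_1:G_1\acts X$ and $\rho_2:G_2\acts X$. First I would fix a compact set $\Delta\subset X$ with $G_1\Delta=G_2\Delta=X$ (the union of two compacta witnessing each cocompactness), enlarged by adjoining finitely many orbit points so that the finite sets $E_i:=\{g: g\Delta\cap\Delta\neq\emptyset\}$ contain chosen finite generating sets of $G_i$; these $E_i$ are finite because the actions are properly descontínuas. I then define $\phi:G_1\to G_2$ by choosing, for each $g\in G_1$, some $\phi(g)\in G_2$ with $g\Delta\cap\phi(g)\Delta\neq\emptyset$ (possible since $g\Delta\subset X=G_2\Delta$), and symmetrically $\psi:G_2\to G_1$. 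The commutativity $g\cdot(h\cdot x)=h\cdot(g\cdot x)$ is exactly what links the two group structures: from $h\Delta\cap g\Delta\neq\emptyset$ one deduces $gh^{-1}\Delta\cap\Delta\neq\emptyset$, converting a ``crossing'' relation into membership in a finite $E$-type set. Using this I would check that $\phi$ moves generators a bounded amount (hence is coarsely Lipschitz), that $\psi\circ\phi$ and $\phi\circ\psi$ lie at bounded distance from the identity, and conclude, via the equivalence of the two notions of quasi-isometry (Definition~\ref{qi2}), that $\phi$ is a quasi-isometry.

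For the converse, let $f:G_1\to G_2$ be a quasi-isometry, normalized so that $f(e)=e$. I would realize the coupling inside the space $G_2^{\,G_1}$ of all maps $G_1\to G_2$ with the topology of pointwise convergence; as $G_1$ is countable and $G_2$ discrete, this product is metrizable. The two commuting actions are the obvious ones: $G_1$ acts by precomposition, $(\gamma\cdot\phi)(x)=\phi(\gamma^{-1}x)$, and $G_2$ by postcomposition, $(\delta\cdot\phi)(x)=\delta\,\phi(x)$; a one-line computation shows they commute. I would then let $X$ be the closure of the bi-orbit $\{\delta\cdot\gamma\cdot f\}$ inside the set of all $(L,C)$-quasi-isometries lying at bounded sup-distance from a translate of $f$. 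The uniform constants force, for each fixed $x$ and fixed value $\phi(e)$, the value $\phi(x)$ to range over a \emph{finite} subset of $G_2$, which together with a bound on $\phi(e)$ yields local compactness. Finally I would verify cocompactness and proper descontinuidade of both composition actions, which is where the quasi-isometry inequalities re-enter.

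The main obstacle appears in both directions as the same phenomenon in two guises: controlling how the homeomorphic (and \emph{non}-isometric) translates of a compact set are distributed in $X$. In the backward direction the crux is the \emph{uniform} finiteness estimate $\sup_{g\in G_1}\operatorname{diam}_{G_2}\{h\in G_2: h\Delta\cap g\Delta\neq\emptyset\}<\infty$, which is precisely what makes $\phi$ coarsely well defined; I would establish it by proving that the union of all translates (by the group $\Gamma$ generated by $G_1$ and $G_2$) of $\Delta$ that meet $\Delta$ is relatively compact, and then invoking proper descontinuidade of the $G_2$-action — here the commutativity of the actions is essential, as it is what confines the relevant translates to a bounded region. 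In the forward direction the analogous difficulty is simultaneously securing local compactness of $X$ and proper descontinuidade of the two composition actions; both follow from the quasi-isometry inequalities, but only after checking that the constants stay uniform over the whole bi-orbit and its closure.
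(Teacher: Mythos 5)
The paper itself gives no proof of this theorem (it is left as an exercise, with references to Gromov and de la Harpe), so your proposal has to stand on its own. Your converse direction (quasi-isometry $\Rightarrow$ coupling) is the standard construction and is essentially correct: the closure of the bi-orbit of $f$ under pre/post-composition, with the product topology, does satisfy all the required properties by exactly the finiteness arguments you indicate. The genuine gap is in the direction coupling $\Rightarrow$ quasi-isometry, and it is not a detail: the matching rule you use, ``$\varphi(g)=h$ whenever $\rho_1(g)\Delta\cap\rho_2(h)\Delta\neq\emptyset$'', does not define a quasi-isometry, and both auxiliary statements you propose are false. Take $G_1=G_2=F_2=\langle a,b\rangle$, $X=F_2$ discrete, $\rho_1(g)x=gx$, $\rho_2(h)x=xh^{-1}$, $\Delta=\{e,a\}$: this satisfies every clause of the paper's definition of acoplamento topológico. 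Here $\{h:\rho_2(h)\Delta\cap\rho_1(g)\Delta\neq\emptyset\}=\{g^{-1},\,g^{-1}a,\,a^{-1}g^{-1},\,a^{-1}g^{-1}a\}$, so any selection gives $\varphi(g)\approx g^{-1}$; but inversion is not a quasi-isometry of $F_2$ (the pairs $(b^n,ab^n)$ have distance $2n+1$, while their images have distance $1$). Accordingly, your key estimate $\sup_g\operatorname{diam}_{G_2}\{h: h\Delta\cap g\Delta\neq\emptyset\}<\infty$ fails — that set contains $g^{-1}$ and $a^{-1}g^{-1}$, at word distance $|ga^{-1}g^{-1}|\to\infty$ — and the lemma you propose to prove it (``the union of all $\Gamma$-translates of $\Delta$ meeting $\Delta$ is relatively compact'') fails as well: for $h=g a$ the translate $\rho_1(g)\rho_2(h)\Delta=g\Delta a^{-1}g^{-1}$ meets $\Delta$ yet contains $ga^{-1}g^{-1}$, so the union is unbounded.

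The phenomenon behind the failure is a left/right twist: comparing \emph{same-side} translates $g\Delta$ and $h\Delta$ only controls the \emph{right}-invariant metric on $G_2$, which is not quasi-isometric to the word metric. The repair is to use commutativity to \emph{compose} the two actions rather than to compare them. Fix $x_0\in\Delta$ and define $\varphi(g):=h$ for some $h$ with $x_0\in\rho_1(g)\rho_2(h)\Delta$ (such $h$ exists by cocompactness of $\rho_2$). If $h,h'$ both work for $g$, then $\rho_2(h'^{-1}h)\Delta\cap\Delta\neq\emptyset$, and properness bounds the \emph{left} quotient $h'^{-1}h$ — which is exactly what $d_{G_2}$ measures. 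For the Lipschitz bound, write $\rho_1(gs)\rho_2(h')\Delta=\rho_1(g)\rho_2(h')\rho_1(s)\Delta$ (this is where commutativity enters), enlarge $\Delta$ to $\Delta'=\Delta\cup\bigcup_{s\in S}\rho_1(s)\Delta$, and conclude that $\varphi(g)^{-1}\varphi(gs)$ lies in the finite set $\{\eta:\rho_2(\eta)\Delta'\cap\Delta'\neq\emptyset\}$. With this modified recipe (in my example it yields $\varphi(g)\in\{g,ga\}$, a bona fide quasi-isometry), the rest of your outline — telescoping along generators, the symmetric map $\psi$, and the quasi-inverse estimates — goes through as you describe.
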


\begin{exercise}
    Forneça uma prova do Teorema \ref{thm:topologicalcoupling} (\textit{sugestão:} veja  \cite[Teorema 0.2.C$_2$]{gromov1993asymptotic} ou \cite[páginas 97 e 98]{de2000topics}).
    \end{exercise}

\begin{definition}
Dizemos que dois grupos $G_1$ e $G_2$ possuem um \textit{modelo geométrico comum}\index{modelo geométrico comum} se existe um espaço métrico quasi-geodésico próprio $(X,d)$ no qual ambos, $G_1$ e $G_2$, agem geometricamente.
\end{definition}

Note que o teorema de Milnor--Schwarz implica que, se $G_1$ e $G_2$ tem um modelo geométrico comum, então eles são quasi-isométricos. No entanto, não vale a recíproca:

\begin{thm}[\cite{mosher2003quasi}]
Dados $p\neq q$ primos ímpares distintos, considere os grupos $$G_1= \Z_p \ast \Z_p, \quad G_2= \Z_q \ast \Z_q,$$ os quais são virtualmente isomorfos a $F_2$. Então $G_1$ e $G_2$ são quasi-isométricos, mas não tem modelo geométrico comum.
\end{thm}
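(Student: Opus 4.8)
O plano é tratar separadamente as duas afirmações: a existência de uma quasi-isometria e a inexistência de um modelo geométrico comum.

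Para a quasi-isometria, a observação central é que cada $G_i$ é virtualmente livre de posto $\geq 2$. De fato, o homomorfismo $\phi\colon \Z_p\ast\Z_p\to\Z_p$ que envia os dois geradores ao gerador de $\Z_p$ tem índice $p$, e seu núcleo é livre de torção: pelo teorema de Kurosh, todo elemento de torção de um produto livre é conjugado a um elemento de um dos fatores, e nenhum conjugado desses elementos pertence a $\ker\phi$. Como um grupo virtualmente livre e livre de torção é livre, e a característica de Euler dá $1-r = p\cdot(2/p - 1) = 2-p$, concluímos que $\ker\phi\cong F_{p-1}$ com $p-1\geq 2$. Assim $G_1$ contém $F_{p-1}$ e $G_2$ contém $F_{q-1}$, ambos com índice finito. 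Pelo exercício anterior, todos os grupos livres de posto $\geq 2$ são quasi-isométricos entre si, e pelo Corolário~\ref{Mil} cada grupo é quasi-isométrico a qualquer subgrupo seu de índice finito; por transitividade, $G_1\qi F_2\qi G_2$. (Isso também confirma que ambos são virtualmente isomorfos a $F_2$, tomando-se os subgrupos livres $F_{p-1}\leq G_1$ e $F_{p-1}\leq F_2$ com subgrupos normais finitos triviais.)

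A parte delicada é mostrar que não há modelo geométrico comum. Suponho, por contradição, que exista um espaço métrico quasi-geodésico próprio $X$ sobre o qual $G_1$ e $G_2$ agem geometricamente. Como cada ação é própria e cocompacta, cada $G_i$ é um reticulado cocompacto no grupo localmente compacto $\Isom(X)$ (a compacidade local vem de Arzelà--Ascoli, já que $X$ é próprio). Pelo Teorema~\ref{MS}, $X$ é quasi-isométrico a $G_1$, logo a uma árvore regular de valência $p\geq 3$ --- a árvore de Bass--Serre de $\Z_p\ast\Z_p$, na qual $G_1$ age geometricamente. Em particular, $X$ é quasi-isométrico a uma árvore de valência limitada com ramificação abundante (\emph{bushy}).

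Agora aplico o teorema de rigidez de quasi-ações em árvores \emph{bushy} de valência limitada \cite{mosher2003quasi}: a quasi-ação de $\Isom(X)$ na árvore subjacente é quasi-conjugada a uma ação isométrica por automorfismos numa árvore simplicial $T$ de valência limitada. Dessa forma, $G_1$ e $G_2$ realizam-se simultaneamente como reticulados cocompactos do \emph{mesmo} grupo $\mathrm{Aut}(T)$, agindo com estabilizadores finitos na \emph{mesma} árvore $T$. Pela teoria de Bass--Serre, cada $G_i$ é então o grupo fundamental de um grafo finito de grupos finitos cuja árvore de Bass--Serre é $T$; os grupos de vértices e arestas associados a $G_1$ têm ordem dividindo $p$ e os associados a $G_2$ têm ordem dividindo $q$ (novamente por Kurosh). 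O obstáculo principal --- e o conteúdo essencial de \cite{mosher2003quasi} --- é extrair daqui uma contradição: compartilhar a mesma árvore $T$ força os dois grafos de grupos a admitirem um recobrimento comum (um análogo do teorema de Leighton para grafos de grupos), e o invariante local nos vértices, a saber a ordem dos estabilizadores, teria de ser compatível ao mesmo tempo com $p$ e com $q$. Como $p$ e $q$ são primos distintos, essa compatibilidade local é impossível, e essa incompatibilidade aritmética fornece a contradição procurada. Observo que a parte puramente quasi-isométrica não percebe essa obstrução, pois todas as árvores regulares de valência $\geq 3$ são quasi-isométricas; é exatamente a exigência de uma ação isométrica sobre um único espaço que reintroduz a informação aritmética sobre $p$ e $q$.
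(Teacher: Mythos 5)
Observe primeiro que o livro não demonstra este teorema: o enunciado é apenas citado de \cite{mosher2003quasi}, de modo que sua proposta deve ser comparada com o argumento da fonte. A metade da quasi-isometria está correta e completa (a característica de Euler dá de fato $\ker\phi\cong F_{p-1}$, e a conclusão segue do Corolário~\ref{Mil} e da quasi-isometria entre grupos livres de posto $\geq 2$). Um reparo pontual: para concluir que $\ker\phi$ é livre, a frase ``virtualmente livre e sem torção implica livre'' é circular neste ponto, pois a liberdade virtual de $G_1$ é justamente o que se está estabelecendo; o caminho limpo é notar que $\ker\phi$, sendo sem torção, age livremente na árvore de Bass--Serre de $\Z_p\ast\Z_p$ e aplicar o Teorema~\ref{caracterizaçãogplivre} (ou usar diretamente o teorema de Kurosh, que você já invocou).

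Na segunda metade você acerta a estratégia: usar o teorema de rigidez de quasi-ações de \cite{mosher2003quasi} e, ponto crucial, aplicá-lo à quasi-ação de um único grupo ($\Isom(X)$), o que coloca $G_1$ e $G_2$ agindo geometricamente na \emph{mesma} árvore $T$. A lacuna genuína está no fecho. Primeiro, não há ``teorema de Leighton para grafos de grupos'' a ser citado: essa extensão não é um teorema disponível, e não é o que se faz na fonte. Segundo, e mais decisivo: ainda que tal recobrimento comum existisse, nenhuma contradição seguiria. Como sua própria primeira parte mostra, $G_1$ e $G_2$ são ambos comensuráveis com $F_2$, logo abstratamente comensuráveis entre si; um recobrimento comum teria grupos de vértice de ordem dividindo $\mathrm{mdc}(p,q)=1$, isto é, seria um reticulado \emph{livre} agindo livremente em $T$ --- o que não é absurdo algum (por exemplo, $F_{p-1}$ age livre e cocompactamente na árvore $p$-regular). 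Em suma, qualquer obstrução formulada em termos de comensurabilidade ou de recobrimentos comuns é vazia aqui; a contradição tem de vir da árvore em si, não da classe de comensurabilidade dos grupos.

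O fecho correto (o de Mosher--Sageev--Whyte) é via teoria de Bass--Serre: se $\Gamma=\Z_n\ast\Z_n$ ($n$ primo ímpar) age geometricamente numa árvore de valência limitada e \emph{bushy} $T$, então, passando ao núcleo de $T$ (união das geodésicas bi-infinitas) e colapsando as cadeias de vértices de grau $2$ --- operações invariantes por $\mathrm{Aut}(T)$, portanto compatíveis simultaneamente com as duas ações --- pode-se supor que todo vértice tem grau $\geq 3$. O quociente $Y=T/\Gamma$ é um grafo finito de grupos finitos com grupos de ordem $1$ ou $n$. Como todo quociente livre de $\Z_n\ast\Z_n$ é trivial, $Y$ tem primeiro número de Betti zero, isto é, $Y$ é uma árvore; pela unicidade de Grushko, o subgrafo com grupos $\Z_n$ tem exatamente duas componentes; cada folha de $Y$ precisa carregar $\Z_n$ com aresta de grupo trivial (senão seus levantamentos teriam grau $1$), e vértices interiores produziriam levantamentos de grau $2$. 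Resta apenas a possibilidade de $Y$ ser uma única aresta com grupos $\Z_n$, $1$, $\Z_n$, ou seja, $T$ é a árvore $n$-regular. Aplicado às duas ações na mesma árvore, isso daria que a árvore $p$-regular é isomorfa à árvore $q$-regular, absurdo para $p\neq q$. É essa rigidez da árvore, e não uma incompatibilidade de comensurabilidade, que fornece a contradição.
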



\section{Um levantamento sobre rigidez quasi-iso\-mé\-trica}

Já conseguimos transformar grupos finitamente gerados em espaços métricos, tratando-os como objetos geométricos quasi-isométricos a eles, por exemplo, seus grafos de Cayley. Gostaríamos agora de saber até que ponto somos capazes de reconstruir o grupo $G$, como objeto algébrico, a partir de alguns de seus modelos geométricos, definidos a menos de quasi-isometrias. Em outras palavras, gostaríamos de saber até que ponto o mapa de ``geometrização''
\begin{equation*}
    geo:\{ \mbox{Grupos Finitamente Gerados}\} \to \{ \mbox{Espaços Métricos}\}/\QI
\end{equation*}
é injetivo.

\begin{definition}
Um grupo finitamente gerado $G$ é dito \textit{$\QI$-rígido}\index{rigidez quasi-isométrica} se todo grupo $G_0$ que é quasi-isométrico a $G$  satisfaz $G_0 \vi G$.

Já uma classe de grupos $\mathcal{C}$ é dita \textit{$\QI$-rígida} se, para todo $G'$ com $G' \qi G \in \mathcal{C}$, existe algum $G'' \in \mathcal{C}$ tal que $G''\vi G'$ (veja a Definição~\ref{def:vi} para a relação de  isomorfismo virtual $\vi$ ).
\end{definition}

\begin{definition}
Uma propriedade $\mathcal{P}$ de grupos é dita \textit{geométrica} ou  \textit{$\QI$-invariante} se a classe de todos os grupos que satisfazem $\mathcal{P}$ é $\QI$-rígida.\index{propriedade geométrica}
\end{definition}

Boa parte dos problemas em teoria geométrica de grupos foca em estudar rigidez quasi-isométrica. Listamos a seguir alguns exemplos de grupos e classes de grupos $\QI$-rígidas:
\begin{itemize}
    \item Todos os grupos livres não abelianos (J.~Stallings, \cite{Stal68});
    \item Todos os grupos livres abelianos (M.~Gromov \cite{Gromov81} e P.~Pan\-su \cite{Pan83});
    \item Classe dos grupos nilpotentes (M.~Gromov \cite{Gromov81});
    \item A classe de todos os grupos fundamentais de superfícies fechadas, isto é, superfícies compactas e sem bordo, de gênero $g\geq 2$  (uma combinação dos trabalhos de P.~Tukia \cite{Tuk88, Tuk94}, de  D.~Gabai \cite{Gab91}, e de A.~Casson e D.~Jungreis \cite{CJ94});
    \item Classe dos grupos fundamentais de variedades fechadas de dimensão $3$ (uma combinação dos trabalhos de R.~Schwartz \cite{Sch95}; de M.~Kapovich e B.~Leeb \cite{KL97}; de A.~Eskin, D.~Fisher e K.~Whyte \cite{EFW12, EFW13} e, o mais importante, da solução da conjectura de geometrização por G.~Perelman);
    \item Classe de grupos finitamente apresentados (E. Ghys e P. de la Harpe
 \cite[Seção~10.4]{GH89});
    \item Classe dos grupos hiperbólicos (veja o Capítulo \ref{cap8}; esse item segue de uma aplicação do Lema de Morse para quasi-geodésicas, como no Corolário \ref{cor:ripsQI});
    
    \item Classe dos grupos amenáveis (pode ser verificado em E. Ghys e P. de la Harpe
 \cite[Seção~10.4]{GH89});
 
    \item Classe dos grupos fundamentais de variedades fechadas hiperbólicas de dimensão $n\geq 3$ (P.~Tukia \cite{Tuk86});
    \item Classe dos reticulados cocompactos em um grupo de Lie simples não compacto, por exemplo, em $G=\mathrm{SL}(n,\R)$ (esta é uma combinação dos trabalhos de P.~Tukia \cite{Tuk86},  P.~Pansu \cite{Pan89},  R.~Chow \cite{Cho96},  B.~Kleiner e B.~Leeb \cite{KlL97} e de A.~Eskin e B.~Farb \cite{EF97});
    
    \item Todo subgrupo discreto cofinito e não cocompacto em um grupo de Lie simples não compacto. Por exemplo, se tivermos $\Gamma \qi \mathrm{SL}(n,\Z) \leq \mathrm{SL}(n,\R)$, então $\Gamma \vi \mathrm{SL}(n,\Z) $ (R.~Schwartz \cite{Sch95, Sch96} e A.~Eskin \cite{Esk98}).
\end{itemize}

Intuitivamente, quanto mais perto um grupo ou uma classe de grupos está de ser um grupo de Lie simples, maior a chance de haver $\QI$-rigidez.
Provaremos a rigidez quase-isométrica de alguns desses exemplos mais adiante neste livro, depois de desenvolver as ferramentas apropriadas.

\medskip

Por outro lado, temos os seguintes grupos ou classes de grupos que não são $\QI$-rígidos:
\begin{itemize}
    \item Todo grupo fundamental de uma  variedade hiperbólica fechada de dimensão $n\geq 3$: para um dado $n$, existe uma quantidade enumerável de classes distintas por $\vi$ destes grupos e, pelo teorema de Milnor--Schwarz, eles são todos quasi-isométricos entre si.
    \item A classe dos grupos finitamente gerados solúveis (A.~Erschler \cite{Er00});
    \item A classe dos grupos simples finitamente apresentados: $F_2\times F_2$ é $\QI$ a um grupo simples (M.~Burger e S.~Mozes \cite{BM00});
    \item A classe de grupos residualmente finitos (M.~Burger e S.~Mozes \cite{BM00}).
\end{itemize}

\chapter{Fins de um espaço topológico}
\label{cap5}

\section{O espaço de fins de um espaço topológico}
Nesta seção vamos introduzir a noção de fins de um espaço topológico X. Este é o mais antigo invariante por quasi-isometrias que se conhece. Podemos pensar intuitivamente na seguinte motivação: um dos invariantes topológicos mais simples é o número de componentes conexas de um espaço, ou ainda o número $\pi_0(X)$ de componentes conexas por caminhos de $X$. Suponha, no entanto, que trabalhemos com um espaço conexo. Poderíamos pensar num eventual invariante topológico que envolva o  número de componentes conexas do complementar de um ponto, ou ainda, de um conjunto finito nesse espaço. De fato, se um espaço pode ser desconectado removendo um de seus pontos, e outro não, então esses dois espaços não podem ser homeomorfos. 

No contexto de geometria grosseira, pontos em espaços métricos são indistinguíveis de conjuntos limitados. É então natural olharmos para as componentes conexas do complementar de conjuntos limitados, como as bolas em espaços métricos. Para espaços topológicos em geral, veremos que é possível trabalhar com compactos no lugar de bolas, desde que o espaço admita uma exaustão por compactos.

\subsection{Número de fins de um espaço}
Seja $X$ um espaço topológico não vazio,  segundo contável, conexo, localmente compacto, localmente conexo por caminhos e Hausdorff. Podemos considerar, por exemplo, qualquer espaço métrico próprio e geodésico. 

Para cada subconjunto $B \subset X$, denote por $B^c$ o seu complementar $X\setminus B$. Se $B\subset X$ é fechado, denote por $U_B$ a união de todas as componentes conexas por caminhos de $B^c$ que não são relativamente compactas em $X$. Caso $X$ seja um espaço métrico, $U_B$ pode ser tomado como a união de todas as componentes conexas por caminhos ilimitadas de $B^c$. Assim, para $B\subset X$ fechado, defina $\pi_0^u(B^c):= \pi_0(U_B)$.

\begin{definition}
    O \textit{número de fins} \index{número de fins de um espaço topológico} de $X$ é definido como o supremo, tomado sobre todos os subconjuntos compactos $K \subset X$, das cardinalidades  $\pi_0^u(K^c)$. Esse número será denotado por $\eta(X)$.
\end{definition}

\begin{figure}[h!]
    \centering
    \begin{tikzpicture}[scale=0.8]
      \draw (-3,1.5) .. controls (0,0) and (0,0) .. (-3,-2); 
      \draw (5,1) .. controls (3,0) and (3,0) .. (5,-1); 
      \draw (-3,2.5) .. controls (1,1) and (1,1) .. (5,1.5); 
      \draw (-3,-2.5) .. controls (1,-1) and (1,-1) .. (5,-1.3); 

    \draw[dashed] (0.5,1.3) .. controls (0.1,0) and  (0.1,0).. (0.5,-1.3);
    \draw[dashed] (1.5,1.1) .. controls (2,0) and  (2,0).. (1.5,-1.1);
    \draw (1,0) node{$K_1$};

    \draw (5,-2) node{$X$};
   
    \draw[thick, dotted]  (-1,1.7) to [bend right] (-1.3,0.6);
    \draw[thick, dotted]  (-1,-1.7) to [bend left] (-1.3,-0.8);
   \draw[thick, dotted] (2.5,1.2) .. controls (2.7,0) and  (2.7,0).. (2.5,-1.2);
    \draw (-0.5,0.7) node{$K_2$};

    \end{tikzpicture}
    \caption{Um espaço com número de fins igual a 4.}
    \label{fig:fins}
\end{figure}
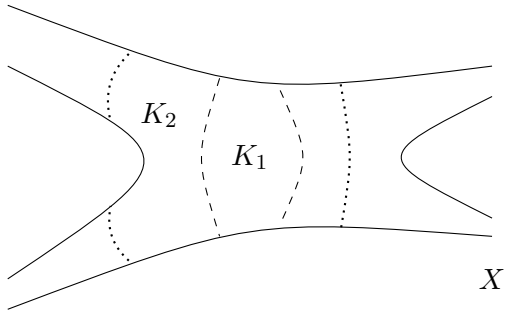

Na próxima seção, definiremos o espaço de fins de um  espaço topológico $X$, denotado por  $e(X)$. Este é também um espaço topológico, cuja cardinalidade satisfaz $\card(e(X)) \geq \eta(X)$, valendo a igualdade se algum deles for finito. Em teoria de grupos, veremos que $e(x)$ terá a cardinalidade do contínuo se for infinito. No entanto, nosso interesse com respeito a grupos estará em investigar a finitude ou infinitude do número de fins, e sua cardinalidade no caso finito. Assim, a distinção entre $\eta(X)$ e $\card(e(X))$ não será tão relevante.
\begin{example}
    \begin{enumerate}[(1)]
        \item $\eta(X) = 0$ se, e somente se, $X$ é compacto;
        \item $\eta(X) = 1$ se, e somente se,  $X$ é não compacto e, para cada compacto $K\subset X$, $K^c$ tem apenas uma componente ilimitada.
    \end{enumerate}
\end{example}

\begin{exercise} Mostre que 
    \begin{enumerate}[(1)]
        \item $\eta(\R) = 2$;
        \item $\eta(\R^n) = 1$, se $n \geq 2$;
        \item Se $T$ é uma árvore regular de valência finita $n\geq 3$, então \mbox{$\eta(T) = \infty$}.
    \end{enumerate}
\end{exercise}

\begin{lemma} \label{lemafins}
    O número de fins $\eta(X)$ de um espaço métrico próprio  e geodésico $X$ é um invariante por quasi-isometrias.
\end{lemma}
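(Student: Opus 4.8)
A ideia central é substituir a família dos compactos pela família das bolas fechadas e depois ``transportar'' caminhos por meio da quasi-isometria, preenchendo-os com geodésicas no espaço de chegada. Primeiro eu reduziria o cálculo de $\eta(X)$ a bolas. Como $X$ é próprio, as bolas fechadas $\overline{B}(x_0,R)$ são compactas e, sendo cofinais entre os compactos (todo compacto está contido em alguma bola), temos
\[
\eta(X) \;=\; \sup_{R>0} \bigl|\pi_0^u\bigl(\overline{B}(x_0,R)^c\bigr)\bigr|.
\]
Além disso, se $R\le R'$ então $\overline{B}(x_0,R)\subset\overline{B}(x_0,R')$, e toda componente ilimitada de $\overline{B}(x_0,R')^c$ está contida numa (única) componente ilimitada de $\overline{B}(x_0,R)^c$; disso segue que a função $R\mapsto |\pi_0^u(\overline{B}(x_0,R)^c)|$ é monótona não-decrescente, de modo que o supremo acima é um limite ao longo de $R\to\infty$. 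Trabalhar com $K=\overline{B}(x_0,R_0)$ tem a vantagem de que caminhos contidos numa componente de $\overline{B}(x_0,R_0)^c$ automaticamente evitam a bola.

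O passo técnico principal é a \emph{construção de preenchimento}. Seja $f:X\to Y$ uma $(L,C)$-quasi-isometria (com $X,Y$ próprios e geodésicos). Dado um caminho $\gamma$ em $X$ que evita $\overline{B}(x_0,R)$, como $X$ é geodésico posso amostrar pontos ao longo de $\gamma$ a distância $\le 1$; suas imagens por $f$ ficam a distância $\le L+C$ em $Y$, e ligando imagens consecutivas por geodésicas de $Y$ obtenho um caminho genuíno $\tilde\gamma$ em $Y$ (aqui uso que $Y$ é geodésico). Cada ponto de $\tilde\gamma$ dista no máximo $(L+C)/2$ de algum $f(\gamma(t))$, e como $d_Y(f(\gamma(t)),f(x_0))\ge \tfrac1L\,d_X(\gamma(t),x_0)-C\ge \tfrac{R}{L}-C$, o caminho $\tilde\gamma$ evita a bola $\overline{B}(f(x_0),R')$ com $R'=\tfrac{R}{L}-C-(L+C)$. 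Portanto: pontos ligados por um caminho fora de uma bola grande de $X$ têm imagens ligadas por um caminho fora de uma bola grande de $Y$, com $R'\to\infty$ quando $R\to\infty$. Um raio indo ao infinito dentro de uma componente ilimitada mostra ainda que a imagem preenchida é ilimitada.

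Com isso eu provaria $\eta(Y)\ge\eta(X)$ (e por simetria a igualdade). Fixo $k\le\eta(X)$, tomo $R_0$ e componentes ilimitadas distintas $C_1,\dots,C_k$ de $\overline{B}(x_0,R_0)^c$, e escolho representantes $x_i\in C_i$ com $d_X(x_i,x_0)$ tão grande quanto eu queira (possível pois cada $C_i$ é ilimitada). Pela construção de preenchimento, os $f(x_i)$ caem em componentes \emph{ilimitadas} de $\overline{B}(f(x_0),R')^c$ para $R'$ adequado. Afirmo que essas componentes são \emph{distintas}: se $f(x_i)$ e $f(x_j)$ ($i\ne j$) estivessem na mesma componente, haveria um caminho em $Y$ ligando-os fora de $\overline{B}(f(x_0),R')$; aplicando a quasi-inversa $\overline{f}$ e preenchendo com geodésicas, e concatenando com os segmentos curtos $[x_i,\overline{f}f(x_i)]$ e $[x_j,\overline{f}f(x_j)]$ (de comprimento $\le D$, onde $d_X(\overline{f}f(x),x)\le D$), obtenho um caminho de $x_i$ a $x_j$ que evita $\overline{B}(x_0,\min\{R'',\,d_X(x_i,x_0)-D,\,d_X(x_j,x_0)-D\})$, onde $R''$ vem novamente da estimativa de preenchimento. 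Escolhendo $R'$ grande (logo $R''\ge R_0$) e os $x_i$ a distância $\ge R_0+D$ de $x_0$, esse caminho evita $\overline{B}(x_0,R_0)$, colocando $x_i$ e $x_j$ na mesma componente ilimitada de $\overline{B}(x_0,R_0)^c$ — contradição. Logo $\overline{B}(f(x_0),R')^c$ tem ao menos $k$ componentes ilimitadas, donde $\eta(Y)\ge k$ e, tomando supremo, $\eta(Y)\ge\eta(X)$.

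O principal obstáculo é que $f$ \emph{não é contínua}, de modo que não envia componentes conexas por caminhos em componentes conexas por caminhos; é justamente para contornar isso que uso a geodesicidade de $Y$ no preenchimento. A parte mais delicada é a contabilidade dos raios: garantir simultaneamente que as imagens permaneçam fora de bolas grandes e que, ao voltar por $\overline{f}$, a bola resultante em $X$ ainda contenha $\overline{B}(x_0,R_0)$, usando de forma coerente que $\overline{f}\circ f$ dista uma constante da identidade. Feita a escolha correta das constantes, a simetria do argumento (trocando os papéis de $f$ e $\overline{f}$) fornece a desigualdade oposta e conclui a igualdade $\eta(X)=\eta(Y)$.
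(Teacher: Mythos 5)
Sua estratégia é essencialmente a mesma do texto (redução a bolas, preenchimento geodésico das imagens discretizadas, e uso da quasi-inversa para puxar de volta um caminho hipotético), e tanto a construção de preenchimento quanto o argumento de contradição estão corretos isoladamente. O problema é a montagem: as exigências que você impõe sobre $R'$ são incompatíveis. Para justificar que $f(x_i)$ cai numa componente \emph{ilimitada} de $\overline{B}(f(x_0),R')^c$, você só dispõe de caminhos de $x_i$ ao infinito dentro de $C_i$, que evitam apenas $\overline{B}(x_0,R_0)$; seus preenchimentos evitam, portanto, apenas $\overline{B}\bigl(f(x_0),\tfrac{R_0}{L}-C-(L+C)\bigr)$, de modo que a ilimitação da componente só fica estabelecida se $R'\le \tfrac{R_0}{L}-C-(L+C)$. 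Já a contradição de distinção exige que o caminho puxado de volta e preenchido evite $\overline{B}(x_0,R_0)$, o que força $R'\ge L\,R_0+\mathrm{const}$, com constante positiva. Nenhum $R'$ satisfaz as duas condições simultaneamente (nem mesmo quando $L=1$), e escolher os $x_i$ ``tão longe quanto se queira'' não resolve: um ponto pode estar arbitrariamente longe de $f(x_0)$ e ainda assim pertencer a uma componente \emph{limitada} de $\overline{B}(f(x_0),R')^c$.

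A ideia que falta — e que é como o texto organiza a prova, ao introduzir uma bola maior e observar que o número de fins independe do compacto escolhido — é trabalhar com três raios $R\gg R'\gg R_0$, usando a monotonicidade que você enunciou no primeiro parágrafo no lugar certo. Primeiro fixe $R'\ge L\,R_0+\mathrm{const}$, de modo que caminhos em $Y$ fora de $\overline{B}(f(x_0),R')$, puxados por $\overline{f}$ e preenchidos, evitem $\overline{B}(x_0,R_0)$. Depois fixe $R\ge L\,R'+\mathrm{const}$ (e $R\ge R_0+D$) e use que cada $C_i$ contém uma componente ilimitada $C_i^{(R)}$ de $\overline{B}(x_0,R)^c$ — é exatamente aqui que entra a sobrejetividade do mapa induzido por inclusão $\pi_0^u(\overline{B}(x_0,R)^c)\to\pi_0^u(\overline{B}(x_0,R_0)^c)$, fato que, aliás, não segue apenas de ``cada componente ilimitada está contida numa única'': ele pede um pequeno argumento usando que $X$ é próprio e localmente conexo por caminhos. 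Escolhendo $x_i\in C_i^{(R)}$, os caminhos ao infinito dentro de $C_i^{(R)}$ evitam $\overline{B}(x_0,R)$, seus preenchimentos evitam $\overline{B}(f(x_0),R')$ e dão a ilimitação desejada, enquanto a contradição de distinção aterrissa no raio $R_0$, onde as componentes $C_i$ são de fato distintas. Com essa correção, e com a simetria entre $f$ e $\overline{f}$, seu argumento coincide com o do texto.
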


\begin{proof}
    Seja $f:X \to Y$ uma $(L,A)$-quasi-isometria entre dois espaços métricos próprios e geodésicos.  
    Suponha que $\eta(X) \geq n$, para algum $n\in \N$, ou seja, existe uma bola $B = \overline{B}(x,R) \subset X$ tal que $B^c$ consiste de pelo menos $n$ componentes ilimitadas. Assim, $f(B)$ será limitada, já que $f$ é quasi-isometria. Além disso, a imagem de qualquer componente ilimitada $C \subset B^c$ é também ilimitada.
    
    No entanto, $f(C)$ pode não ser conexa ou não estar contida no complementar de $f(B)$. Além disso, as imagens de componentes conexas distintas de $B^c$ podem estar contidas numa mesma componente conexa de $f(B)^c$ em $Y$.

    Vamos lidar com cada um desses problemas separadamente. Seja $B'= B(x, R')$ outra bola com $R'>R$.
    \begin{enumerate}
        \item Se $R'-R >t$, onde $(L^{-1}t-A) > 0$, então cada componente conexa $C'$ de $(B')^c$ tem imagem disjunta de $f(B)$. Como o número de fins independe do compacto que escolhemos, podemos trocar $R$ por algum $R'> R+AL$ e assim, $f(C')$ está contida em $f(B')^c$ para qualquer componente conexa $C' \subset (B')^c$. 
        
        \item Se $x_1, x_2 \in X $ são tais que  $d_X(x_1,x_2) \leq 1$, então vale que $d_Y(f(x_1), f(x_2))\leq L+A =: r$. Assim, $\mathcal{N}_r(f(C'))$ é conexa por caminhos em $Y$. Para que esta vizinhança seja disjunta de $f(B)$, é suficiente trocar $R$ por alguma constante $R'$ tal que $L^{-1}R' - A >r$. Assim, se  $R'> R + L(A+r)$, teremos  $f(C')$ conexa por caminhos e disjunta de $f(B)$.
        \item O último problema é um pouco mais sutil. Até o momento, usamos apenas que $f$ é um mergulho quasi-isométrico. Neste ponto, a hipótese da sobrejetividade grosseira de $f$ será importante (veja o  Exercício~\ref{exercfins}).
    
Sejam $C_1$ e $C_2$ componentes conexas por caminhos ilimitadas distintas de $B^c$ e $x_i \in C_i$, para $i=1,2$, pontos mapeados em $y_i=f(x_i)$ que pertencem à mesma componente conexa por caminhos de $(\mathrm{cl}(f(B)))^c$. Considere um caminho $p$ em $(\mathrm{cl}(f(B)))^c$ ligando $y_1 $ a $ y_2$. Seja $\Bar{f}$ uma quasi-inversa de $f$.
Quando tomamos a composição $\Bar{f}\circ p$, este não precisa ser um caminho em $X$, mas podemos torná-lo um caminho grosseiro. 

Considere uma sequência de pontos $z_1,\ldots,z_n$ em $\mathrm{Im}(p)$ com $z_1 = y_1$, $z_n=y_2$ e $\dist(z_i,z_{i+1})\leq 1$ para $i=1, \ldots,n-1$. 

Os pontos $w_i:= \Bar{f}(z_i) \in X$ satisfazem $\dist(w_i,w_{i+1})\leq L+A$, para $i=1, \ldots,n-1$. Temos também que $\dist(w_0, w_1)\leq A$ e $\dist(w_n, w_{n+1})\leq A$, onde  $w_0 := x_1$ e $w_{n+1} := x_2$. 

Conectando os pontos consecutivos $w_i$ e $w_{i+1}$   $(i = 0,\ldots, n)$ por segmentos geodésicos em $X$, obtemos um caminho $q$, que conecta $x_1$ a $x_2$. Substituímos por $q$ o caminho $\Bar{f}\circ p$ (possivelmente descontínuo) em $X$. 

Precisamos garantir que a imagem do caminho contínuo $q$ é disjunta de $B$, o que nos leva a uma contradição, já que assumimos que $C_1$ e $C_2$ eram componentes conexas por caminhos disjuntas de $B^c$. Se a imagem de $p$ está no complementar de $B(y,r')$, onde $y = f(x)$, então $\dist_X(x, \mathrm{Im}(q)) \geq R'' := L^{-1}r' - 3A -L$. 
Escolhemos $r'$ tal que $R'' > R$. Assim, se $x_1$ e $x_2$ estão suficientemente distantes de $x$ (já que $C_1,C_2$ são ilimitados) obtemos que $y_1$ e $y_2$ estão em componentes distintas de $B(y, r')^c$.

Portanto, existe um conjunto limitado   $B' = B(y, r')$ de $Y$ cujo complementar tem pelo menos $n$ componentes ilimitadas, e assim concluímos que $\eta(Y) \geq \eta(X)$. \end{enumerate}

Invertendo os papéis de $X$ e $Y$, o mesmo argumento mostra que na verdade $\eta(Y) = \eta(X)$. 
\end{proof}

\begin{exercise}
    \label{exercfins} Considere o exemplo do mergulho isométrico de $\R$ em $\R^2$ para verificar o que falha no sentido do lema acima caso não tenhamos a sobrejetividade grosseira de $f$.
\end{exercise}

Como consequência do Lema~\ref{lemafins}, podemos fazer a seguinte definição de número de fins de um grupo finitamente gerado:
\begin{definition}
    Sejam $G$ um grupo finitamente gerado e $S$ um conjunto de geradores de $G$. O \textit{número de fins de $G$}\index{número de fins de um grupo} é o número de fins do grafo  $\cay (G,S)$.
\end{definition}

O número de fins de um grupo está bem definido pois diferentes conjuntos finitos de geradores do grupo fornecem grafos de Cayley quasi-isométricos.

\subsection{Limites diretos e inversos de espaços}

Para proceder à definição do espaço dos fins de um espaço topológico, necessitaremos das noções de limites diretos e limites inversos, da teoria de categorias.

Seja $I$ um conjunto \textit{dirigido}, isto é, um conjunto parcialmente ordenado (também chamado \textit{poset}), tal que quaisquer dois elementos $i, j \in I$ tenham uma cota superior, ou seja, existe $k \in I$ com $i,j \leq k$. Os exemplos básicos de conjuntos dirigidos são o conjunto de números reais, ou números reais positivos, ou números naturais.

Um \textit{sistema direto}\index{sistema direto} de conjuntos (resp. espaços topológicos, grupos) indexado por $I$ é uma coleção $\{A_i \mid i \in I\}$  de  conjuntos (resp. espaços topológicos, grupos), junto de uma coleção $\{f_{ij}: A_i \to A_j\}$ de funções (resp. funções contínuas, homomorfismos), definidas para $i \leq j$,  satisfazendo as seguintes condições de compatibilidade:
\begin{enumerate}[(1)]
\item $f_{ik}=f_{jk}\circ f_{ij}$ para todos $i\leq j \leq k;$
\item $f_{ii} = \id$.
\end{enumerate}

Um \textit{sistema inverso}\index{sistema inverso} é definido similarmente, exceto pelos fatos de que $f_{ij}: A_j \to A_i$, para $i \leq j$,
e que a primeira condição de compatibilidade é dada por $f_{ik}=f_{kj}\circ f_{ji}$ para todos $i\leq j \leq k.$

O \textit{limite direto}\index{limite direto} de um sistema direto é o conjunto $$A=\limdir A_i =\displaystyle\bigsqcup_{i\in I}A_i\Bigg/\sim , $$ onde $a_i \sim a_j$ quando $f_{ik}(a_i) = f_{jk}(a_j)$ para algum $k \in I$. Temos aplicações naturais $f_k :A_k\to \limdir A_i$ dadas por $f_k(a_k) = [a_k]$. Note que $\limdir A_i =\displaystyle \bigcup_{k\in I} f_k(A_k).$ 

Se os conjuntos $A_i$ são grupos, equipamos o limite direto $\limdir A_i $ com a operação de grupo $[a_i]\cdot[a_j] =[f_{ik}(a_i)\cdot f_{jk}(a_j)]$ onde $k \in I$ é uma cota superior para $i$ e $j.$ Note que este produto está bem definido pela condição de compatibilidade. 

Se os $A_i$'s forem  espaços topológicos, definimos no limite direto $\limdir A_i$ a topologia fraca induzida pelos $f_k,$ isto é, $U \subset \limdir A_i$ é aberto, se e somente se, $f_k^{-1}(U)$ é aberto em $A_k$ para cada $k \in I$.

O \textit{limite inverso}\index{limite inverso} de um sistema inverso como acima é dado por 
$$\liminv A_i := \left\{(a_i)_{i\in I}\in\displaystyle\prod_{i\in I}A_i \mid a_i = f_{ij}(a_j) \mbox{ para todo }  i\leq j\right\}.$$

Se os conjuntos $A_i$ são grupos, equipamos  $\liminv A_i $ com a operação de grupo induzida pelo produto direto dos grupos $A_i$, ou seja, dados $(a_i)_{i \in I}$ e $(b_i)_{i \in I}\in\liminv A_i$ seu produto é $(a_i\cdot b_i)_{i \in I} \in \liminv A_i.$

Caso sejam espaços topológicos, definimos em  $\liminv A_i$ a topologia inicial, isto é, a topologia gerada por conjuntos da forma  $f_k^{-1}(U_k)$, onde  $U_k \subset A_k$ são abertos e $f_k: \liminv A_i  \to A_k$ é a $k$-ésima projeção.

\begin{exercise}
    Mostre que $\liminv  A_i$ é fechado em $\displaystyle\prod_{i\in I} A_i$.  Conclua portanto que, se cada $A_i$  for compacto,  então $\liminv  A_i$ também o é.
\end{exercise}

\subsection{O espaço de fins} \label{sec:esp de fins}

Seja $X$ um espaço topológico segundo contável, Hausdorff, localmente compacto, conexo e localmente conexo. Então $X$ admite uma exaustão por uma família enumerável $B_n$ de subconjuntos compactos.
Por exemplo, se $X$ for um espaço métrico próprio, então um exemplo de tal exaustão é $X= \displaystyle \bigcup_{n \in \N} \overline{B(x,n)}$, onde $x\in X$.

Considere o poset  $\mathcal{K} = \mathcal{K}_X$ dos subconjuntos compactos de $X$ com ordem parcial dada pela inclusão: $K_1 \leq K_2 $ se $K_1 \subset K_2 $. Note que se $K_1,K_2 \in \mathcal{K}$, então $K_1 \cup K_2 \in \mathcal{K}$, o que significa que o poset $\mathcal{K}$ é dirigido. 

Dado $K \in \mathcal{K}$, o conjunto $\pi_0(K^c)$ é definido como o conjunto de componentes conexas por caminhos de $K^c$.

Para $K_1 \leq K_2$ defina a aplicação natural
$$f_{K_1,K_2}: \pi_0(K_2^c)\to \pi_0(K_1^c),$$
onde a componente conexa por caminhos $c_2 \in \pi_0(K_2^c) $ é levada na componente conexa por caminhos  $c_1 \in \pi_0(K_1^c) $ que a contém.

\begin{exercise}
\label{exer:fKK}
    Mostre que a aplicação $f_{K_1,K_2}$ definida acima é sobrejetiva e que, para todos $K_1 \leq K_2 \leq K_3$ e $K\in \mathcal{K}$, valem: \begin{eqnarray*}
    f_{K_1,K_2}\circ f_{K_2,K_3}& = & f_{K_1,K_3},\\
    f_{K,K}& = & id.
    \end{eqnarray*}
  
\end{exercise}

Pelo Exercício \ref{exer:fKK}, segue que a coleção $\{\pi_0(K^c) \mid K \in \mathcal{K}\}$, junto das aplicações $\{f_{K_1,K_2} \mid K_1 \leq K_2 \}$ é um sistema inverso, e podemos portanto definir o seu limite inverso:
$$\liminv \pi_0(K^c) = \left\{(c_i)\mid c_i \in \pi_0(K_i^c), \,c_i = f_{K_i,K_j}(c_j)\, \mbox{ para } K_i\leq K_j\right\}.$$

\begin{definition}
    O\textit{ espaço de fins de $X$} \index{espaço de fins de um espaço topológico} é o espaço topológico $e(X)= \liminv\pi_0(\mathcal{K}^c)$, munido da topologia inicial definida pelas projeções, onde consideramos cada $\pi_0(K^c)$ com a topologia discreta.
\end{definition}

\begin{exercise}\label{ex-5.9}
     Mostre que $e(X)$ é um espaço de Hausdorff e totalmente desconexo.
\end{exercise}

\begin{exercise}
    Se $K$ é um subconjunto compacto de $X$, mostre que $\pi_0^u(K^c)$ é finito. (Lembre-se que  definimos $\pi_0^u(K^c)$ como o conjunto de componentes conexas por caminhos que não são relativamente compactas de $K^c$.)
\end{exercise}

Note que poderíamos considerar também os espaços $\liminv \pi_0^u(\mathcal{K}^c)$ ou $\liminv \pi_0^u(\mathcal{B}^c)$,  onde  $\mathcal{B} = \{B_n \mid n \in \N \}$ é uma exaustão de $X$. Os espaços acima também 
 são vistos como espaços topológicos com a topologia inicial definida pelas projeções naturais.

As inclusões $\mathcal{B} \xhookrightarrow{} \mathcal{K}$ e  $\pi_0^u(\mathcal{K}^c) \xhookrightarrow{} \pi_0(\mathcal{K}^c)$ induzem aplicações nos limites inversos:
$$\varphi:\liminv \pi_0^u(\mathcal{B}^c)  \to \liminv \pi_0^u(\mathcal{K}^c) \text{ e }$$
$$\psi:\liminv \pi_0^u(\mathcal{K}^c)  \to \liminv \pi_0(\mathcal{K}^c) = e(X).$$
Como cada $\pi_0^u(K^c)$ é finito, obtemos pelo Teorema de Tychonoff que $\liminv \pi_0^u(\mathcal{K}^c)$ é compacto. Deixamos como exercício a verificação de que os mapas $\varphi$ e $\psi$ são homeomorfismos. Isto permite dar outra caracterização do espaço de fins $e(X)$. Sejam $\mathcal{B} = \{B_n \mid n \in \N \}$ uma exaustão de $X$ e tome $e \in e(X)$. Sabemos que $e$ corresponde a uma cadeia $C_1 \supset C_2\supset \ldots$ de componentes de $B_i^c$, para $i \in \N$.
Podemos, portanto, pensar cada fim $e \in e(X)$  como um mapa $e : \mathcal{K}\to  2^X$, que envia cada compacto $K \subset X $ em uma componente $C$  de $K^c$, tal que $K_1 \subset K_2$ implica que  $e(K_2) \subset e(K_1)$.

A topologia em $e(X)$ se estende a uma topologia em $\Bar{X}:= X \cup e(X)$.
Uma base de vizinhanças de $e \in e(X)$ é a coleção de subespaços $B_{K,e} \subset \Bar{X}$, onde $K\in \mathcal{K}$, $B_{K,e} \cap  X = e(K)$ e $B_{K,e} \cap e(X)$ consiste de todos os mapas $e' : \mathcal{K} \to 2^X$, tais que $e'(K) = e(K)$.
Os conjuntos $e(K)$ serão chamados de vizinhanças de $e$ em $X$. Em $X$, a topologia coincide com a original. É imediato verificar que $X$ é aberto e denso em $\Bar{X}$.

Diremos que um compacto $K\subset X$ \textit{separa} dois fins $e$ e $e'$ de $X$ se $e$ e $e'$ estão em componentes distintas de $\Bar{X}\setminus K$ ou, equivalentemente, se existem componentes ilimitadas distintas $C,C'$ de $K^c$ tais que $(C,e)$ e $(C',e')$ são, respectivamente, vizinhanças de $e$ e $e'$ em $e(X)$.

\begin{exercise}\label{ex-5.11}
    Seja $\eta \in e(X)$ representado por $(U_i)_{i\in \N}$, tal que $U_{i+1}\subset U_i$, para todo $i$, com $U_i$ conexo. Defina
    $$ N_i(\eta):=\{\eta'\in e(X)\mid \eta'\mbox{ é representado por } (U'_i)_{i\in \N} \mbox{ com } U'_i\subset U_i\}.$$ 
    Mostre que 
    $$\mathcal{B}=\{N_{i}(\eta)\mid \eta\in e(X)\mbox{ é representado por } (U'_i)_{i\in \N}\}.$$ 
    é base para uma topologia em $e(X)$.
\end{exercise} 

\begin{exercise}
    Mostre que toda ação topológica $G \curvearrowright X$ se estende a uma ação  topológica de $G$ em $\Bar{X}$.
\end{exercise}

Uma terceira abordagem para se definir fins de um espaço topológico é feita a partir de raios e pode ser vista com mais detalhes em \cite[Capitulo~I.8]{bridson2013metric}.

Seja $X$ um espaço topológico. Um \textit{raio}\index{raio} em $X$ é uma aplicação \mbox{$r: [0,\infty)\to X$}. Sejam $r_1, r_2 :[0,\infty)\to X$ dois raios próprios. Dizemos que $r_1$ e $r_2$ são convergentes para o mesmo fim em $X$ se, para cada compacto $K$ de $X$, existe  $N\in \N$ tal que $r_1([N,\infty))$ e $r_2([N,\infty))$ estão contidos na mesma componente conexa por caminhos de $K^c$.

Isto define uma relação de equivalência sobre os raios contínuos e próprios em $X$. A classe de equivalência de $r$ é denotada por $e(r)$ e o conjunto de todas as classes de equivalência é o espaço $e(X)$.

Com esta abordagem, a convergência nos fins é definida da seguinte forma: dizemos que $e(r_n)\to e(r)$ se, e somente se, para cada compacto $K$ de $X$ existe uma sequência $N_n$ de números reais tal que  $r_n([N_n,\infty))$ e $r([N_n,\infty))$ estão contidos na mesma componente conexa por caminhos de $X\setminus K$ para $n$ suficientemente grande.
 A topologia em $e(X)$ é descrita a partir dos seus fechados. Um subconjunto $B\subset e(X)$ é fechado se satisfizer a seguinte condição: caso $e(r_n) \in B$ para todo $n$, então $e(r_n)\to e(r)$ implica $ e(r)\in B.$
 
 \begin{proposition} \label{finsinvarianteqi}
 Se  $X$ e $Y$ são espaços métricos próprios, toda quasi-isometria $f : X\to Y$ induz um homeomorfismo $f_{e}:  e(X)\to e(Y)$ entre seus espaços de fins.
  A aplicação que a cada $f\in \QI(X)$ associa o mapa $f_{e}\in \Homeo(e(X))$ é um homomorfismo de grupos.
 \end{proposition}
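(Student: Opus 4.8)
O plano é construir $f_e$ através da descrição dos fins por raios próprios dada no final da Seção~\ref{sec:esp de fins}, e então verificar que se trata de um homeomorfismo reutilizando as estimativas de rastreamento de componentes da prova do Lema~\ref{lemafins}. Fixemos uma $(L,A)$-quasi-isometria $f$ com quasi-inversa $\overline{f}$. Dado um raio próprio $r:[0,\infty)\to X$, a composta $f\circ r$ não precisa ser contínua, de modo que primeiro eu a substituiria por um raio próprio genuíno: tomo os pontos $f(r(n))$, $n\in\N$, ligo os consecutivos por segmentos geodésicos (ou, no caso meramente próprio, grosseiramente conexos), e chamo o raio resultante de $f_*(r)$. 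Como $f$ é um mergulho quasi-isométrico, vale $d_Y(f(r(t)),f(r(t')))\geq \tfrac1L d_X(r(t),r(t'))-A$, logo a imagem de uma sequência ilimitada é ilimitada e $f_*(r)$ é novamente próprio; ademais, $f_*(r)$ fica bem definido a menos de distância de Hausdorff limitada, de sorte que o fim $e(f_*(r))$ independe das escolhas. Defino então $f_e(e(r)):=e(f_*(r))$.

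A boa definição é o primeiro ponto substantivo. Se $r_1\sim r_2$, isto é, se para cada compacto $K\subset X$ as caudas de $r_1$ e $r_2$ acabam na mesma componente conexa por caminhos de $K^c$, então as caudas de $f_*(r_1)$ e $f_*(r_2)$ devem acabar numa mesma componente de $L^c$ para todo compacto $L\subset Y$. Isto é precisamente o conteúdo dos três passos da prova do Lema~\ref{lemafins}: ampliar $L$ para um compacto maior empurra as imagens relevantes para fora de $L$, engrossar por $r=L+A$ as mantém conexas por caminhos, e a sobrejetividade grosseira de $f$ (via $\overline{f}$) impede que duas caudas \emph{separadas} em $X$ sejam fundidas em $Y$. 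Assim $f_e:e(X)\to e(Y)$ está bem definida.

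Para a bijetividade eu mostraria que $(\overline{f})_e$ é uma inversa bilateral. Como $\overline{f}\circ f$ dista de $\id_X$ por uma constante limitada, o raio $(\overline{f})_*(f_*(r))$ permanece a distância de Hausdorff limitada de $r$; e dois raios próprios a distância de Hausdorff limitada são equivalentes, pois um conjunto limitado está contido em algum compacto e, além desse compacto, ambas as caudas são forçadas à mesma componente. Logo $(\overline{f})_e\circ f_e=\id_{e(X)}$ e, simetricamente, $f_e\circ(\overline{f})_e=\id_{e(Y)}$. A continuidade de $f_e$ (e de sua inversa, o que então a torna um homeomorfismo) eu verificaria na base de vizinhanças $N_i(\eta)$ do Exercício~\ref{ex-5.11}: um fim $\eta'$ cujos representantes acabam dentro de uma componente ilimitada fixada $U_i$ de $B_i^c$ tem imagem, pelas mesmas três estimativas, acabando dentro de uma componente ilimitada fixada do complementar de um compacto adequado em $Y$; escolhendo esse compacto suficientemente grande, realiza-se qualquer vizinhança básica prescrita de $f_e(\eta)$, donde as pré-imagens de abertos básicos são abertas.

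Por fim, a afirmação sobre homomorfismo. Para $f,g\in\QI(X)$ vale $(f\circ g)_e=f_e\circ g_e$ e $(\id_X)_e=\id_{e(X)}$, diretamente da construção de $f_*$ a menos da ambiguidade limitada usual (os raios $f_*(g_*(r))$ e $(f\circ g)_*(r)$ distam um do outro por uma constante limitada, logo definem o mesmo fim); e se $d(f,g)<\infty$ então $f_*(r)$ e $g_*(r)$ estão a distância de Hausdorff limitada, donde $f_e=g_e$. Portanto $f_e$ depende apenas da classe $[f]\in\QI(X)$, e $[f]\mapsto f_e$ é um homomorfismo de grupos $\QI(X)\to\Homeo(e(X))$. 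O principal obstáculo ao longo de toda a prova é a boa definição e a injetividade no nível de componentes individuais — a questão da ``fusão'' do terceiro passo do Lema~\ref{lemafins} — único lugar em que a sobrejetividade grosseira (equivalentemente, a existência da quasi-inversa $\overline{f}$) é genuinamente necessária.
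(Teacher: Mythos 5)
O texto deixa esta proposição como exercício, de modo que não há demonstração oficial com a qual comparar; avalio sua prova por si mesma. A arquitetura que você escolheu — definir $f_e$ pela descrição dos fins via raios próprios, obter a boa definição com as estimativas do Lema~\ref{lemafins}, provar bijetividade tomando $(\overline{f})_e$ como inversa bilateral, verificar continuidade na base do Exercício~\ref{ex-5.11} e descer ao quociente $\QI(X)$ observando que $d(f,g)<\infty$ implica $f_e=g_e$ — é a esperada, e esses blocos estão corretos em substância. Há, porém, um passo que falha como escrito: a construção de $f_*(r)$ ligando por geodésicas os pontos $f(r(n))$, $n\in\N$. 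Como $r$ é apenas contínuo e próprio, nada limita $d_X(r(n),r(n+1))$ uniformemente em $n$, e então a concatenação pode não ser própria, nem bem definida a menos de distância de Hausdorff limitada. Exemplo concreto: em $X=Y=\R^2$, com $f=\id$, tome o raio espiral $r(t)=(t\cos \pi t,\ t\sin \pi t)$; então $r(k)=((-1)^k k,\,0)$, e o segmento euclidiano de $r(k)$ a $r(k+1)$ passa pela origem para todo $k$, logo o seu $f_*(r)$ retorna infinitas vezes a um compacto fixo e não é sequer um raio próprio.

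O conserto é padrão e já está na técnica do Lema~\ref{lemafins} que você invoca: subdivida o raio em pontos $r(t_k)$ com $d_X(r(t_k),r(t_{k+1}))\leq 1$; então os pontos consecutivos da imagem distam no máximo $L+A$, os segmentos de ligação ficam a distância $\leq L+A$ da imagem de $f\circ r$, e a properness (consequência de pré-imagens de limitados por $f$ serem limitadas) e a unicidade a menos de Hausdorff limitado ficam garantidas. Corrigido isso, o restante da prova funciona. Dois reparos menores: (i) a sobrejetividade grosseira não é necessária para a \emph{boa definição} de $f_e$ — para empurrar um caminho que liga as caudas de $r_1$ e $r_2$ basta o mergulho quasi-isométrico e a compacidade de $\overline{f^{-1}(\overline{\mathcal{N}}_{L+A}(L))}$ —; ela é necessária para a \emph{injetividade} (e sobrejetividade), como mostra a inclusão $\R\hookrightarrow\R^2$, que é mergulho QI, induz aplicação bem definida nos fins, mas funde os dois fins de $\R$. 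Sua prova de bijetividade via quasi-inversa já cobre esse ponto; apenas a atribuição do ``passo 3'' à boa definição está trocada. (ii) Assim como o próprio Lema~\ref{lemafins}, seu argumento usa segmentos geodésicos, portanto vale no contexto próprio e geodésico (ou quasi-geodésico) em que o capítulo de fato trabalha, e não literalmente para espaços métricos próprios arbitrários — mas essa é uma imprecisão do enunciado, não da sua prova.
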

 
\begin{exercise}
Dar uma prova da Proposição \ref{finsinvarianteqi}.
\end{exercise}

Vejamos que as noções de fins são equivalentes: Para cada $C_i$, em uma cadeia $(C_i)$ representando um fim $e \in e(X)$, escolha um ponto $x_i$. Defina assim uma sequência $(x_i)$, que diremos representar o fim  $e$. Dada uma sequência $(x_i)$ representando $e$, podemos conectar cada $x_i$ a $x_{i+1}$ por um caminho contínuo contido em $C_i$. A concatenação de todos esses caminhos é um raio em $X$, com  $r(i) = x_i$. 

Por outro lado, dado um raio $r$ em $X$, toda sequência $t_i \in \R_+$ divergindo monotonicamente para infinito define uma sequência $x_i = r(t_i)$ que representa um fim $e$ de $X$.
Esse fim é independente da escolha da sequência $t_i$.

\begin{exercise}
    Mostre que:
    \begin{enumerate}[(1)]
        \item O espaço $\Bar{X} = X \cup e(X)$ é Hausdorff;
        \item Se $X$ é segundo contável, então $\Bar{X}$ também o é;
        \item Uma sequência $(x_i)$ em $X$ representa o fim $e$ se, e somente se, ela converge para $e$ na  topologia de $\Bar{X}$;
        \item Se $X$ é um espaço métrico e $(x_i), (x'_i)$ são sequências com distância limitada, isto é:
        $$\sup_i \{\dist(x_i, x'_i)\} < 1,$$
e $(x_i)$ representa $e \in e(X)$ então $(x'_i)$ representa o mesmo fim.
    \end{enumerate}
\end{exercise}

\begin{proposition}
    O espaço $\Bar{X}$ é compacto.
\end{proposition}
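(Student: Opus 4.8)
O plano é deduzir a compacidade diretamente de uma cobertura aberta, usando que o espaço de fins já é compacto e que seu complementar em $\bar{X}$ é ``limitado''. Primeiro, fixaria uma exaustão compacta $X = \bigcup_{n} B_n$ com $B_n \subseteq \mathrm{int}(B_{n+1})$, que existe pois $X$ é localmente compacto, $\sigma$-compacto e Hausdorff. Lembro que, via os homeomorfismos $\varphi$ e $\psi$, temos $e(X) \cong \liminv \pi_0^u(\mathcal{K}^c)$; como cada $\pi_0^u(K^c)$ é finito, esse limite inverso é compacto pelo Teorema de Tychonoff, e portanto $e(X)$ é um subespaço compacto de $\bar{X}$.

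Seja $\mathcal{U}$ uma cobertura aberta de $\bar{X}$. Como $e(X)$ é compacto, um número finito de membros $U_1,\dots,U_r \in \mathcal{U}$ o recobre; ponha $U = U_1 \cup \cdots \cup U_r$, de modo que $e(X) \subseteq U$ e $U$ é aberto. Então $F := \bar{X} \setminus U$ é fechado em $\bar{X}$ e disjunto de $e(X)$, logo $F \subseteq X$. Basta mostrar que $F$ é compacto: nesse caso, finitos membros adicionais de $\mathcal{U}$ recobrem $F$ e, junto de $U_1,\dots,U_r$, recobrem todo $\bar{X}$. Para ver que $F$ é compacto, mostraria que $F \subseteq B_n$ para algum $n$. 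Se não fosse assim, escolheria $x_n \in F \setminus B_n$ para cada $n$; como todo compacto está contido em algum $B_m$, cada subconjunto compacto de $X$ contém apenas finitos termos de $(x_n)$. A afirmação-chave é que uma tal sequência ``fugindo para o infinito'' possui uma subsequência que converge em $\bar{X}$ para algum fim $e \in e(X)$. Admitida essa afirmação, $e$ pertence ao fecho de $F$, que é o próprio $F$; mas $e \in e(X)$ é disjunto de $F$, uma contradição. Logo $F \subseteq B_n$ e, sendo fechado em $\bar{X}$ e contido no compacto $B_n$ (cuja topologia de subespaço coincide com a original), $F$ é compacto.

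Resta provar a afirmação, que é o cerne do argumento. Passando a uma subsequência, posso supor $x_k \notin B_k$. Afirmo primeiro que, para cada $n$, a união $V_n$ das componentes conexas por caminhos relativamente compactas de $B_n^c$ é relativamente compacta; concedido isso, $V_n \subseteq B_{m(n)}$ para algum $m(n)$, donde, para $k \geq m(n)$, vale $x_k \notin B_k \supseteq V_n$ e, portanto, $x_k$ está numa componente ilimitada, que denoto por $c_n(k) \in \pi_0^u(B_n^c)$. Um argumento diagonal (casa dos pombos sobre conjuntos finitos) fornece conjuntos infinitos de índices encaixados $S_1 \supseteq S_2 \supseteq \cdots$ e componentes $C_n \in \pi_0^u(B_n^c)$ com $c_n(k) = C_n$ para todo $k \in S_n$. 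Para $m \leq n$ e $k \in S_n$, o mesmo ponto $x_k$ está em $C_n$ e em $C_m$, o que força $f_{B_m,B_n}(C_n) = C_m$; assim $(C_n)_n$ é uma família compatível, isto é, um elemento $e$ de $\liminv \pi_0^u(\mathcal{K}^c) \cong e(X)$. Tomando uma subsequência diagonal $x_{k_n}$ com $k_n \in S_n$, para cada $N$ e todo $n \geq N$ temos $x_{k_n} \in C_N = e(B_N)$, que é exatamente o traço em $X$ da vizinhança básica $B_{K,e}$ de $e$ em $\bar{X}$ com $K = B_N$; logo $x_{k_n} \to e$, como afirmado.

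O ponto mais delicado é justificar que $V_n$ é relativamente compacta, equivalentemente que, para cada compacto $K$, a união de $K$ com todas as componentes relativamente compactas de $K^c$ é compacta. A ideia é: tome um aberto relativamente compacto $O$ com $K \subseteq O$; então $\partial O$ é compacto e está contido em $K^c$, e como as componentes de $K^c$ são abertas (conexidade local), apenas um número finito delas intersecta $\partial O$. Qualquer componente não contida em $\overline{O}$ intersecta tanto $O$ (pois sua fronteira está em $K \subseteq O$) quanto $(\overline{O})^c$, logo, sendo conexa, intersecta $\partial O$; portanto todas, exceto finitas, componentes de $K^c$ estão contidas no compacto $\overline{O}$. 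Como as finitas componentes relativamente compactas restantes têm fecho compacto, a união de todas as componentes relativamente compactas é relativamente compacta. Espero que esse passo de confinamento seja o principal obstáculo técnico; admitido ele, o restante é casa dos pombos e o desembrulhar das definições da topologia do limite inverso e da base de vizinhanças de $\bar{X}$.
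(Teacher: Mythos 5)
Your proof is correct and follows essentially the same strategy as the paper's: use compactness of $e(X)$ to cover the ends by finitely many members of the cover, then show the leftover closed subset of $X$ is compact by arguing that a sequence escaping every compact set would (sub)converge in $\Bar{X}$ to an end, producing a contradiction. Your write-up is in fact more detailed than the paper's, since you fill in the confinement lemma for the relatively compact components of $K^c$, the pigeonhole/diagonal construction of the limiting end, and the cofinality of the neighborhoods $B_{B_N,e}$ — steps the paper leaves implicit.
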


\begin{proof}
    Seja  $\mathcal{U} = \{U_{\lambda}\}_{\lambda \in L}$ uma cobertura aberta de $\Bar{X}$. Como $e(X)$ é compacto, deve existir um subconjunto finito $\mathcal{U}_1 = \{(K_{\lambda_i}, e_i) \mid i = 1, \ldots n\} \subset \mathcal{U}$ que cobre $e(X)$, onde $K_{\lambda_i} \in \mathcal{K}$ e $e_i \in e(X)$.

Considere os abertos $C_i = e_i(K_{\lambda_i})$ (estamos olhando para os fins de $X$ como mapas $\mathcal{K}
\to  2^X$). Afirmamos que o fechado $
A = X\setminus \bigcup_{i=1}^n C_i$ é compacto em $X$. Caso contrário, deve existir uma sequência $x_k \in A$  a qual não esteja contida em nenhum dos compactos $K_{\lambda_i}$. Passando a uma subsequência se necessário, obtemos uma  sequência decrescente de conjuntos complementares
$C_{k_l} \subset X \backslash K_{k_l}$, tais que  $x_{k_l} \in C_{k_l}$. Essa sequência define um fim $e \in e(X)$ que não é coberto por nenhum dos conjuntos $(K_{\lambda_i}, e_i)$, $i = 1, \ldots,n$, o que gera uma contradição.

Logo, $A \subset X $ é compacto. Assim, existe outro subconjunto finito $\mathcal{U}_2 \subset \mathcal{U}$, o qual cobre $A$. Portanto, $\mathcal{U}_1 \cup \mathcal{U}_2 $ é uma subcobertura finita de $\Bar{X}$.
\end{proof}

\section{Fins de grupos}

 \begin{definition}[{\it Fins de um grupo}] Sejam $G$ um grupo finitamente gerado e $\cay(G,S)$ seu grafo de Cayley com respeito a um conjunto finito de geradores $S.$ Definimos o espaço de fins de $G$ como $e(G) = e(\cay(G,S))$.
 \end{definition}
 
Pelo Teorema de Milnor--Schwarz e pela Proposição~\ref{finsinvarianteqi}, o espaço de fins de um grupo finitamente gerado não depende da escolha do conjunto de geradores $S$. 

O seguinte teorema é essencialmente devido a Hopf. Nossa exposição é uma adaptação do Teorema~8.32 no livro \cite{bridson2013metric}.

\begin{thm} \label{thm:fins grupos}
O espaço de fins de um grupo $G$ tem as seguintes propriedades: 
\begin{enumerate}
\item O espaço $e(G)$ é compacto, Hausdorff e totalmente desconexo.
\item A cardinalidade de $e(G)$ vale $0$, $1$, $2$, ou então $e(G)$ tem a cardinalidade do contínuo. Além disso, se $\card(e(G))=2^{\omega}$ então $e(G)$  é um conjunto perfeito (i.e. fechado e não possui pontos isolados). 
\item Vale $\card(e(G)) =2$ se e somente se $G$ é virtualmente cíclico infinito, e $\card(e(G)) = 0$ se e somente se $G$ é finito.
\item  Se $G$ é de uma  das formas $A\ast_F B$ ou $A\ast_F$ com $F$ finito,  então $\card(e(G)) > 1$.
\end{enumerate}
\end{thm}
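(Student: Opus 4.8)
\noindent\textit{Plano de demonstração.} O plano é tratar as quatro partes reutilizando ao máximo o material já desenvolvido. A parte (1) é essencialmente imediata: como $\cay(G,S)$ é próprio e geodésico, já vimos que $\Bar{X}=X\cup e(X)$ é compacto, logo $e(G)=e(X)$, sendo fechado em $\Bar{X}$, é compacto, e as propriedades de ser Hausdorff e totalmente desconexo seguem do Exercício~\ref{ex-5.9}. O ingrediente dinâmico para as demais partes será a ação de $G$ em $e(G)$ por homeomorfismos, obtida estendendo a $\Bar{X}$ a ação isométrica de $G$ no grafo de Cayley. Pelo Teorema de Milnor--Schwarz (Teorema~\ref{MS}) e pela Proposição~\ref{finsinvarianteqi}, tanto $\eta(G)$ quanto $\card(e(G))$ independem de $S$, o que legitima os enunciados.

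Para a parte (2), o passo central será um \emph{lema de duplicação}: se $\eta(G)\ge 3$, então $\eta(G)=\infty$. Primeiro fixo um compacto $K=\overline{B}(1,R)$ tal que $K^c$ tenha pelo menos três componentes ilimitadas $C_1,C_2,C_3$; usando que a órbita $G\cdot 1$ é grosseiramente densa (a ação é colimitada), escolho $g\in G$ com $gK\subset C_1$ e $gK$ afastado de $K$, de modo que $(gK)^c$ também tenha $\ge 3$ componentes ilimitadas. Um argumento combinatório sobre $(K\cup gK)^c=K^c\cap(gK)^c$ deverá mostrar que esse compacto separa estritamente mais componentes ilimitadas do que $K$. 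Iterando, construo compactos cujos complementares têm número de componentes ilimitadas crescendo como $2^n$, dispostas num padrão de árvore binária; o limite inverso associado encaixa um conjunto de Cantor em $e(G)$, dando $\card(e(G))\ge 2^{\aleph_0}$. Como $\cay(G,S)$ é segundo contável, a caracterização $e(G)\cong\liminv\pi_0^u(\mathcal{B}^c)$ por uma exaustão enumerável, com cada $\pi_0^u(B_n^c)$ finito, exibe $e(G)$ como subespaço fechado de um produto enumerável de conjuntos finitos, donde $\card(e(G))\le 2^{\aleph_0}$; logo vale a igualdade, e a mesma estrutura de árvore binária garante que nenhum fim é isolado, i.e.\ $e(G)$ é perfeito. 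Quando $\eta(G)<\infty$ tem-se $\card(e(G))=\eta(G)\in\{0,1,2\}$.

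A parte (3) combina o que precede com o Corolário~\ref{Mil}. A igualdade $\card(e(G))=0$ equivale a $\cay(G,S)$ compacto, ou seja, a $G$ finito. Para $\card(e(G))=2$, a implicação $(\Leftarrow)$ segue de $\eta(\Z)=2$ (pois $\Z\qi\R$ e $\eta(\R)=2$), da invariância de $\eta$ por quasi-isometria (Lema~\ref{lemafins}) e do fato de que subgrupos de índice finito são quasi-isométricos (Corolário~\ref{Mil}). A direção difícil é $(\Rightarrow)$: suposto $\eta(G)=2$, a ação em $e(G)=\{\xi_+,\xi_-\}$ fornece $\phi\colon G\to S_2$, e $G^+=\ker\phi$ tem índice $\le 2$ e ainda dois fins. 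Fixo $K$ com $K^c=C_+\sqcup C_-$ (absorvendo em $K$ as componentes limitadas) e produzo $g\in G^+$ com $gK\subset C_+$ bem afastado de $K$; mostro que os transladados $g^nK$ se encaixam monotonamente rumo a $\xi_+$, donde $g$ tem ordem infinita e $\langle g\rangle\cong\Z$, e a cocompacidade da ação (aqui entra o Exercício~\ref{exerc:geodesicabiinfinita}, que fornece a reta ligando $\xi_-$ a $\xi_+$) força $\langle g\rangle$ a ter índice finito. Assim $G$ é virtualmente cíclico infinito.

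Finalmente, para a parte (4), com $G=A\ast_F B$ ou $G=A\ast_F$ e $F$ finito, usarei a caracterização combinatória de ter ao menos dois fins: basta exibir $A_0\subset G$ com $A_0$ e $G\setminus A_0$ infinitos e fronteira de arestas finita no grafo de Cayley. A decomposição dá uma ação de $G$ na árvore de Bass--Serre $T$ com estabilizadores de aresta finitos (conjugados de $F$); removendo uma aresta, $T$ parte-se em duas subárvores ilimitadas $T_1,T_2$, e a pré-imagem $A_0=\{g\in G\mid g\cdot v_0\in T_1\}$ pelo mapa equivariante $G\to V(T)$ tem $A_0$ e complementar infinitos, com fronteira controlada pelos estabilizadores finitos das arestas atravessadas, logo finita; isso fornece $\card(e(G))>1$. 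O principal obstáculo estará na parte (2): justificar rigorosamente a contagem combinatória do argumento de duplicação — que a translação aumenta estritamente o número de componentes ilimitadas e que a iteração gera de fato a árvore binária — e, na parte (3), garantir a existência do elemento $g$ de ordem infinita que transla ao longo da reta de fins; ambos exigem um controle geométrico fino das componentes ilimitadas sob a ação de $G$.
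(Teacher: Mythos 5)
Sua proposta está correta em linhas gerais e, no ponto central do teorema --- o item (2) ---, segue um caminho genuinamente diferente do adotado no texto. O livro estuda a ação de $G$ em $e(G)$: toma o núcleo $H$ do homomorfismo $G \to \Homeo(e(X))$, observa que $H$ tem índice finito quando $e(G)$ é finito, e supõe três fins para obter uma contradição, forçando raios transladados por elementos de $H$ a atravessar uma bola fixa (Arzela--Ascoli entra aí, na compacidade do item (1) e na prova de que $e(G)$ é perfeito). Você, em vez disso, usa o argumento combinatório clássico de duplicação: transladar o compacto separador para dentro de uma componente ilimitada, contar componentes de $(K\cup gK)^c$, iterar até obter uma árvore binária e mergulhar um conjunto de Cantor, com a cota superior $2^{\aleph_0}$ vinda da realização de $e(G)$ como limite inverso enumerável de conjuntos finitos. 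Ambas as rotas funcionam: a do livro é mais curta uma vez montada a ação nos fins; a sua é mais elementar e entrega de uma só vez a cota inferior, a cota superior e a perfeição. Nos demais itens as diferenças são menores: no (1) você substitui Arzela--Ascoli pela observação mais limpa de que $e(X)$ é fechado no compacto $\Bar{X}$; no (3) ambos usam o núcleo de índice finito da ação, trocando o raio geodésico do livro pelo encaixe monótono dos transladados $g^nK$ (e a conclusão de índice finito exige, nos dois textos, o mesmo detalhe não escrito: o ``anel'' $C_+\cap gC_-$ é limitado, pois caso contrário haveria um terceiro fim); no (4) sua redução à árvore de Bass--Serre é, na verdade, mais detalhada que a única frase do livro.

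O ponto concreto que você mesmo aponta como obstáculo no item (2) merece atenção, pois a iteração \emph{como enunciada} falha: o compacto $K\cup gK$ não é conexo, e a conexidade é exatamente o que garante que o compacto corrente fique em uma única componente de $(hK)^c$, sobrando pelo menos duas componentes ilimitadas de $(hK)^c$ disjuntas dele; com compacto corrente desconexo em $m$ pedaços, o mesmo argumento só garante $3-m$ componentes novas, e a duplicação degenera já no segundo passo. A correção padrão: trabalhe com uma exaustão por bolas $K_n=\overline{B}(1,R_n)$, que são conexas. Em cada componente ilimitada $C$ de $K_n^c$, coloque um transladado $gK$ da bola original bem no fundo de $C$; como $K_n$ é conexo, ele ocupa uma única componente de $(gK)^c$, logo pelo menos duas das componentes ilimitadas de $(gK)^c$ são disjuntas de $K_n$ e, sendo adjacentes a $gK\subset C$, estão contidas em $C$. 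Tomando então $K_{n+1}$ como uma bola contendo $K_n$ e todos os transladados usados nesse estágio, as componentes de $K_{n+1}^c$ refinam as de cada $(gK)^c$ (pois $K_{n+1}\supset gK$), e cada $C$ contém pelo menos duas componentes ilimitadas distintas de $K_{n+1}^c$. Isso produz a árvore binária, o mergulho do Cantor e a ausência de pontos isolados de uma só vez. Com esse ajuste, seu plano fecha.
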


\begin{proof} Fixamos um conjunto finito $S$ de geradores  de $G$, e escrevemos  $X = \cay(G,S)$. Seguindo a definição, temos $e(G) = e(X)$. Pelo Exercício \ref{ex-5.9}, $e(X)$ é Hausdorff e totalmente desconexo.
Dada uma sequência de fins $e_n$, escolhemos  raios geodésicos $r_n: [0, \infty) \to X$ com $r_n(0) = 1$ e $e(r_n) = e_n$. O teorema de Arzela--Ascoli mostra que uma subsequência destes raios converge em subconjuntos compactos para algum raio geodésico $r$, e então a sequência correspondente de fins converge para $e(r)$. Assim, $e(X)$ é sequencialmente compacto e, uma vez que satisfaz o primeiro axioma de enumerabilidade (veja o Exercício~\ref{ex-5.11}), é compacto. 

Agora vamos provar o item 2. Sabemos que a ação de $G$ por translação à esquerda se estende a uma ação por isometrias em  $X$. Obtemos assim um homomorfismo $G\to \Homeo(e(X))$. Seja $H$ o núcleo desse homomorfismo. 

Se $e(X)$ é finito, então $\Homeo(e(X))$ é finito e portanto $H$ tem índice finito em $G$. Suponhamos que $\card(e(G)) \geq 3$ e tomemos $e_0,e_1,e_2 \in e(G)$ três fins distintos. Vamos fixar dois raios geodésicos $r_1, r_2 $ partindo da identidade, com $e(r_i)=e_i$. Como $H$ tem índice finito (em particular é  quasi-isométrico a G), podemos construir um raio próprio $r_0 :[0, \infty)\to X$ com $e(r_0)=e_0$, $d(r_0(n),1)\leq n$, e $r_0(n)\in H$ para cada $n\in \N.$ Seja $h_n = r_0(n)$. Fixaremos $\rho>0$ tal que $r_0([\rho,\infty)),r_1([\rho,\infty))$ e $r_2([\rho,\infty))$ estão em componentes conexas distintas do complementar da bola de centro $1$ e raio $\rho$.  Se $t,t'> 2 \rho$, então $d(r_1(t),r_2(t'))> 2 \rho$, pois cada caminho ligando $r_1(t)$ a $r_2(t')$ tem que passar por $B(1,\rho)$. Como $H$ age trivialmente em $e(X)$, temos que $e(h_nr_i) = e(r_i)$ para $i = 1$, $2$. Seja $n>3 \rho$. Daí,  $h_nr_i$ começa numa componente conexa e vai para outra, donde concluímos que deve passar por $B(1,\rho)$. Assim, existem $t,t'$ tais que $h_nr_1(t),h_nr_2(t')\in B(1,\rho)$, com $t,t'>2\rho$, donde $$2\rho<d(r_1(t),r_2(t')) = d(h_nr_1(t),h_nr_2(t')) < 2\rho,$$
uma contradição. Isso mostra que $\card(e(G)) < 3$.

Um espaço compacto de Hausdorff no qual cada ponto é um ponto de acumulação é não-enumerável. Assim basta provar que $e(X)$ é perfeito, isto é, que  caso $e(X)$ seja infinito, temos que todo $e \in e(X)$ é um ponto de acumulação. 

Dados $e_1$, $e_2 \in e(X)$, seja $D = D(e_1, e_2)$ o maior  inteiro tal que $r_1([D, \infty))$ e $r_2([D, \infty))$ estejam na mesma componente conexa por caminhos de $X \setminus B(1, D)$ para alguns (portanto todos) raios geodésicos $r_i$ com $e(r_i) = e_i$ e $r_i(0) = 1$, $i = 1,2$. Observe que $e_n \to e$ se, e somente se, $D(e_n, e) \to \infty$ quando $n \to \infty$.

Fixe $e_0$ em $e(X)$ e um raio geodésico $r_0$ com $r_0(0) = 1$ e $e(r_0) = e_0$. Seja $g_n = r_0(n)$. Construiremos uma sequência de fins $e^m$ tal que  $e^m \neq e_0$ e $D(e^m, e_0) \to \infty$ quando $m \to \infty$. Tome $e_1$, $e_2$ fins distintos e diferentes de $e_0$, e seja $r_i$ um raio geodésico com $r_i(0) = 1$ e $e(r_i) = e_i$, para $i = 1,2$. Seja $M = \max\{ D(e_i, e_j) \mid i,j = 0, 1, 2\}$. Se fixarmos $\rho > M$, argumentando como acima, vemos que, para $n$ suficientemente grande, no máximo um dos raios $g_n r_i$ pode passar por $B(1, \rho)$. Para cada um dos outros dois raios $g_n r_j$ aplicamos o teorema de Arzela--Ascoli para construir um raio geodésico $r_j'$ com $r_j'(0) = 1$ e $e_j' = e(r_j') = e(g_n r_j)$. Como $r_0$ e $r_j'$ possuem segmentos terminais na mesma componente conexa por caminhos de $X \setminus B(1, \rho)$, temos $D(e_0, e_j') > \rho$. Os fins $e'_j$ são distintos, então pelo menos um deles não é igual a $e_0$: renomeie este fim por $e^1$.

Agora repetimos o argumento acima com $e_1$ substituído por $e^1$ e chamamos o novo raio construído com o mesmo argumento de $e^2$. Substituímos $e^1$ por $e^2$ e repetimos novamente. A cada iteração deste processo, o inteiro $M$ é incrementado. Assim obtemos a sequência desejada de fins $e^m$, com $e^m \neq e_0$ e $D(e^m, e_0) \to \infty$ quando $m \to \infty$. Isso conclui a prova do item~2.

No item 3, o caso $\card(e(G)) = 0$ é trivial.  Também é fácil ver que $G = \Z$ tem $2$ fins, e então cada grupo virtualmente isomorfo a $\Z$ tem $2$ fins. Reciprocamente, seja $\card(e(G)) = 2$. Escolhamos uma bola $B = B(1, \rho)$ em $X$ que separa os dois fins. Seja $g \in G\setminus B$ um elemento que age trivialmente em $e(X)$ (lembramos que o núcleo da ação de $G$ em $e(X)$ tem índice finito em $G$ portanto tais elementos existem). Seja $r$ um raio geodésico que começa em $g$ e não passa por $B$. Para cada $n > 0$, $g^n r$ é um raio geodésico que não passa por $B$ (um exercício), então $g^n \neq 1$. Assim o elemento $g$ tem ordem infinita em $G$. As sequências $g^n$ e $g^{-n}$ convergem a diferentes fins de $X$ e qualquer ponto de $X$ está a uma distância limitada de $\langle g \rangle$. 

Para provar o item 4, considere a ação de $G$ na árvore de Bass--Serre \cite[Seção~I.4]{Serre} e conclua de modo semelhante ao caso do grupo livre não abeliano. 
\end{proof}

\begin{exercise}\

\begin{enumerate}
\item Dê exemplos de grupos com $0$, $1$, $2$ e um número infinito de fins.
\item Mostre que $\card(e(G)) = 2$ se e somente se $G$ é da forma $A\ast_F B$ com $|A:F| = |B : F| = 2$ e $F$ finito ou da forma  $F\ast_F$ com $F$ finito.
\end{enumerate}
\end{exercise}

A recíproca do item $4$ do Teorema~\ref{thm:fins grupos} também é verdadeira. Este é um resultado importante que foi provado por Stallings em 1968 para os grupos sem torção e posteriormente estendido por Bergman para o caso geral:

\begin{thm}[Stallings \cite{Stal68}]\label{thm:Stallings}
Se $\card(e(G)) > 1$, então $G$ se divide não trivialmente sobre um subgrupo finito.
\end{thm}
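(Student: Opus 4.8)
The plan is to translate the hypothesis into a geometric statement about the Cayley graph $X = \cay(G,S)$ and then to manufacture a $G$-tree from which Bass--Serre theory reads off the splitting. First I would reformulate ``$G$ has more than one end'' in terms of \emph{almost invariant sets}. Since $\card(e(G)) > 1$, there is a finite set of vertices $K$ whose complement in $X$ has at least two unbounded path components; partitioning those components into two nonempty collections and adjoining part of $K$, one obtains a subset $A \subseteq G$ whose edge coboundary $\delta A$ (the edges joining $A$ to $A^c = G \setminus A$) is finite, while both $A$ and $A^c$ are infinite. Because left translation is an isometric, properly discontinuous, cocompact action, each genuine translate $gA$ differs from $A$ only on a finite set, i.e. $A \triangle gA$ is finite for all $g \in G$; thus $A$ is a proper almost invariant set, the combinatorial avatar of a single ``cut'' of $X$.

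The heart of the argument is to upgrade this one cut into a $G$-invariant \emph{tree}. Following Dunwoody's method (a combinatorial substitute for Stallings' original bipolar structures), I would choose $A$ so that $|\delta A|$ is minimal among all proper cuts, and then show that the $G$-orbit of cuts $\{gA, gA^c : g \in G\}$ can be replaced by a $G$-invariant \emph{nested} family: any two members $B,C$ satisfy one of $B \subseteq C$, $C \subseteq B$, $B \subseteq C^c$, $C \subseteq B^c$. Minimality of $|\delta A|$ is precisely what lets one resolve the finitely many ``crossing'' configurations and force nestedness. A nested $G$-invariant family of halfspaces, ordered by inclusion and discrete (only finitely many members lie between any two), is exactly the data of a simplicial tree $T$: the cuts become edges and the resulting blocks become vertices, and the action of $G$ on cuts induces an action on $T$ by automorphisms, without inversions after a barycentric subdivision.

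Having built $T$, I would check the two hypotheses needed to extract a splitting. The action has no global fixed point, since $A$ and $A^c$ are both unbounded, so $T$ is not a single vertex and $G$ fixes neither a vertex nor an end. The edge stabilizers are finite: the stabilizer of the cut $A$ preserves the finite vertex set $V_0$ of endpoints of $\delta A$, hence lies in $\{g \in G : gV_0 \cap V_0 \neq \emptyset\}$, which is finite because the action on $X$ is properly discontinuous. By Bass--Serre theory, a group acting on a tree without inversions, with no global fixed vertex and finite edge stabilizers, decomposes as a nontrivial amalgam $A' \ast_F B'$ or an HNN extension $A' \ast_F$ with $F$ finite, according to whether $T/G$ is a segment or a loop --- which is exactly the claimed conclusion. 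The main obstacle is the middle step: producing the nested $G$-invariant family. The reformulation and the Bass--Serre application are routine, but the nestedness argument is the genuine substance of Stallings' theorem, and a self-contained treatment would require developing the theory of minimal cuts (or Dunwoody tracks) in detail.
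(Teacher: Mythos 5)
You should first be aware that the paper itself does not prove this theorem: it states it, remarks that all known proofs are quite difficult, and refers the reader to \cite[Capítulo 21]{kd}. So your proposal has to stand on its own, and it does not: it is an outline of Dunwoody's route (almost invariant sets, minimal cuts, a nested $G$-invariant family, Bass--Serre theory), and the decisive step --- upgrading the orbit $\{gA,\, gA^c : g \in G\}$ of a minimal cut to a nested $G$-invariant family --- is exactly the step you leave out, as you yourself acknowledge in your last sentence. That step is not a finishing detail; it is the mathematical content of Stallings' theorem. In particular, the claim that minimality of $|\delta A|$ ``precisely'' resolves the crossing configurations is an overstatement: what minimality plus submodularity of $B \mapsto |\delta B|$ gives is that the corners $gA \cap hA$, $gA \cap hA^c$, etc.\ of two crossing translates are again cuts with controlled coboundary; converting this into nestedness requires showing that a suitably chosen minimal cut crosses only finitely many of its translates and then running a delicate exchange/induction argument (or, alternatively, developing Dunwoody's tracks or structure trees). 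Everything downstream of the tree --- finite edge stabilizers from proper discontinuity, absence of a global fixed point, the Bass--Serre dictionary --- is routine, while everything upstream of the tree is where the difficulty lives; so what you have written is an accurate roadmap, not a proof.

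There is also a technical error in your first paragraph. For a cut $A$ with finite edge coboundary in $\cay(G,S)$, it is the \emph{right} translates that are almost equal to $A$: finiteness of $\delta A$ is equivalent to $As \triangle A$ being finite for every $s \in S$, hence to $Ag \triangle A$ being finite for all $g \in G$. The \emph{left} translates satisfy only $\delta(gA) = g\,\delta A$; in general $gA \triangle A$ is infinite --- for instance in $F_2 = \langle a,b \rangle$ with $A$ the set of reduced words beginning with $a$, the set $bA \triangle A$ contains all of $A$. Your later steps only use that each $gA$ is again a cut with $|\delta(gA)| = |\delta A|$, which is the correct left-translation statement, so this slip does not poison the architecture; but the sentence ``$A \triangle gA$ is finite for all $g \in G$'' is false as written and should be replaced by the right-translation statement or removed.
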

 
Atualmente, existem várias provas diferentes deste teorema, mas todas elas são bastante difíceis. Os fins de grupos lidam com o problema-chave da teoria geométrica de grupos: relacionar as propriedades geométricas de um grupo e sua estrutura algébrica. Quase toda vez que se consegue recuperar a estrutura algébrica de um grupo a partir de informações geométricas sobre ele, algum milagre tem que acontecer. Isso pode explicar por que não há provas fáceis dos resultados como o Teorema de Stallings. Nós nos referimos a \cite[Capítulo 21]{kd} para os detalhes de uma das provas mais recentes.

\chapter{Ultralimites e cones assintóticos}
\label{cap6}

Ultrafiltros e ultralimites apareceram na teoria geométrica dos grupos por conta de sua relação com os grupos amenáveis. Esses elementos de análise não padronizada forneceram um conjunto de ferramentas poderosas que permitiram simplificar e fortalecer vários resultados que remontam a Banach, von Neumann e outros. Mais tarde, ultrafiltros foram usados para definir cones assintóticos. Estudaremos grupos amenáveis e as aplicações de cones assintóticos nos capítulos seguintes, fornecendo agora os pré-requisitos necessários.

\section{Filtros e ultrafiltros}
\label{sec:filtrosultrafiltros}

Seja $S$ um conjunto qualquer. Um \textit{filtro} \index{filtro} em $S$ é uma família $\Fc$ de subconjuntos de $S$
 com as seguintes propriedades:
\begin{enumerate}[(i)]
    \item $S\in \Fc$ e $\emptyset \notin \Fc$;
    \item $A,B \in \Fc \Rightarrow A\cap B \in \Fc$;
    \item $A \in \Fc, A \subset B \Rightarrow B \in \Fc$.
 \end{enumerate}

\begin{example}
\label{frechet}
Se $S$ é um conjunto infinito, a coleção formada pelos complementares de conjuntos finitos em $S$ é um filtro em $S$, chamado \textit{filtro de Fréchet}.\index{filtro de Fréchet} Ele é usado em topologia, para definir a topologia cofinita.
\end{example}

\begin{example}
\label{filtrovizinhanças}
    Em um espaço topológico $T$, o conjunto de todas as vizinhanças de um ponto $x\in T$ é um filtro, conhecido como \textit{filtro das vizinhanças}\index{filtro das vizinhanças} de $x$.
\end{example}

De modo geral, a ideia de filtro é que ele carrega consigo a informação de quais são todos os subconjuntos ``grandes'' o suficiente de $S$, onde a noção exata do que significa um conjunto grande é determinada pelo contexto. No Exemplo~\ref{frechet}, ser ``grande'' significa ter complementar finito, enquanto no Exemplo~\ref{filtrovizinhanças} essa condição é caracterizada por ser um conjunto que contém $x$ em seu interior.

\begin{definition} \label{ultrafiltro}
Dado um filtro $\Fc$ em $S$, dizemos que $\Fc$ é um \textit{ultrafiltro} \index{ultrafiltro} se, para todo $A\subset S$, tem-se $A\in \Fc$ ou $S\setminus A \in \Fc$ (mas não ambos). Equivalentemente, podemos dizer que $\Fc$ é um ultrafiltro em $S$ se $\Fc$ é maximal no conjunto ordenado dos filtros em $S$.  
\end{definition}

\begin{exercise}
Verifique que as noções de ultrafiltros na Definição~\ref{ultrafiltro} são equivalentes.  
\end{exercise}

Um exemplo de ultrafiltro é a família de todos os subconjuntos contendo um elemento fixado $s \in S$. Esse tipo de ultrafiltro é dito \textit{ultrafiltro principal}.\index{ultrafiltro principal} 
Caso $S$ seja finito, todo ultrafiltro de $S$ é principal. Já no caso onde  $S$ é infinito, o Lema de Zorn mostra que existem ultrafiltros contendo o filtro de Fréchet, os quais não são principais. Na verdade, todo ultrafiltro não principal contém o filtro de Fréchet, o qual é a interseção de todos os ultrafiltros que o contém, conforme veremos na proposição a seguir. Note que o filtro de Fréchet não é um ultrafiltro, pois podemos exibir em $S$ um subconjunto $A$ infinito com $S\setminus A$ também infinito.

\begin{proposition}\label{prop:Frechet}
   Um  ultrafiltro em  um conjunto infinito $S$ é não principal se, e somente se, ele contém o filtro de Fréchet.
\end{proposition}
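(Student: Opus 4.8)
O plano é demonstrar as duas implicações separadamente, concentrando o esforço na implicação direta (ultrafiltro não principal $\Rightarrow$ contém o filtro de Fréchet), que é a mais substancial. Como o filtro de Fréchet é formado exatamente pelos subconjuntos cofinitos de $S$, e um ultrafiltro satisfaz a dicotomia $A \in \Fc$ ou $S \setminus A \in \Fc$, a ideia é reduzir tudo a mostrar que um ultrafiltro não principal não contém \emph{nenhum} conjunto finito. De fato, se essa propriedade valer e $C$ for cofinito, então $S \setminus C$ é finito, logo $S \setminus C \notin \Fc$, e pela dicotomia $C \in \Fc$; portanto $\Fc$ conteria todos os cofinitos. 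Assim, o problema fica reduzido a provar que $\Fc$ não contém subconjuntos finitos.

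Primeiro eu estabeleceria dois lemas auxiliares. O primeiro é a propriedade de \emph{cisão} dos ultrafiltros: se $A \cup B \in \Fc$, então $A \in \Fc$ ou $B \in \Fc$. Isto sai por contradição: se ambos faltassem, a dicotomia daria $S\setminus A, S\setminus B \in \Fc$, e pela propriedade (ii) de filtros teríamos $(S\setminus A)\cap(S\setminus B) = S\setminus(A\cup B) \in \Fc$; intersectando com $A\cup B \in \Fc$ obteríamos $\emptyset \in \Fc$, contradizendo (i). O segundo lema identifica os singletons: se $\{s\} \in \Fc$ para algum $s$, então $\Fc$ é o ultrafiltro principal em $s$. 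Com efeito, para qualquer $A\in\Fc$ a interseção $A\cap\{s\}\in\Fc$ é não vazia, logo $s\in A$; reciprocamente, todo $A$ contendo $s$ está em $\Fc$ por conter $\{s\}$ e pela propriedade (iii).

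Com esses lemas, a implicação direta sai por montagem. Se $\Fc$ é não principal, o segundo lema garante que nenhum singleton $\{s\}$ pertence a $\Fc$. Dado então um conjunto finito $A = \{a_1,\dots,a_n\}\in\Fc$, escrevo $A = \{a_1\}\cup\cdots\cup\{a_n\}$ e aplico o lema de cisão por indução em $n$ para concluir que algum $\{a_i\}\in\Fc$, uma contradição. Logo $\Fc$ não contém conjuntos finitos, e pela redução inicial concluo que $\Fc$ contém o filtro de Fréchet.

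Para a recíproca, suponho que $\Fc$ contém o filtro de Fréchet e mostro que $\Fc$ não pode ser principal: se fosse principal em algum $s$, teríamos $\Fc = \{A\subset S \mid s\in A\}$, mas o conjunto $S\setminus\{s\}$ é cofinito, logo pertenceria ao filtro de Fréchet e portanto a $\Fc$, apesar de não conter $s$ — contradição. Espero que o ponto mais delicado seja precisamente o lema de cisão e o passo que dele decorre (ausência de conjuntos finitos), pois é aí que a hipótese de maximalidade/dicotomia do ultrafiltro é genuinamente usada: para um filtro qualquer, nada disso vale.
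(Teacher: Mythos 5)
Sua prova está correta, e a recíproca é essencialmente idêntica à do livro (contrapositiva via $S\setminus\{s\}$). Na implicação direta, porém, você segue uma rota um pouco diferente. O texto aplica a dicotomia no nível dos singletons: da não principalidade conclui que $\{x\}\notin\Fc$ para todo $x$, logo $S\setminus\{x\}\in\Fc$, e então obtém cada conjunto cofinito diretamente como interseção finita $S\setminus F=\bigcap_{x\in F}(S\setminus\{x\})$, usando apenas o fechamento de filtros por interseções finitas. Você, em vez disso, primeiro prova que $\Fc$ não contém \emph{nenhum} conjunto finito — via o lema de cisão ($A\cup B\in\Fc \Rightarrow A\in\Fc$ ou $B\in\Fc$) aplicado indutivamente — e só depois aplica a dicotomia, uma vez, ao complementar do conjunto cofinito. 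Seu lema de cisão é exatamente o item (2) da Proposição~\ref{propultrafiltro} do texto (deixada lá como exercício), de modo que sua prova o redemonstra; a rota do livro dispensa esse lema e é mais econômica. Em contrapartida, sua prova torna explícito um passo que o texto usa implicitamente ao dizer que ``a primeira opção não pode ocorrer'': o fato de que $\{s\}\in\Fc$ força $\Fc$ a ser o ultrafiltro principal em $s$ (seu segundo lema). Ambas as abordagens usam o mesmo ponto de apoio — não principalidade exclui singletons — e diferem apenas em como propagam isso aos conjuntos finitos/cofinitos: o livro por interseção de co-singletons, você por cisão de uniões seguida de dicotomia.
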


\begin{proof}

Seja $\Fc$ um ultrafiltro  não principal em $S$ e considere $x\in S$ qualquer. Como $\Fc$ é um ultrafiltro, exatamente um dos conjuntos $\{x\}$ ou $S\setminus \{x\}$ pertence a $\Fc$. Sendo $\Fc$ não principal, a primeira opção não pode ocorrer. Assim,  $S\setminus \{x\} \in \Fc$ para todo $x \in S$. Mas com isso, dado um subconjunto finito $F\subset S$ qualquer,  obtemos que $S\setminus F = \displaystyle\bigcap_{x\in F} S\setminus \{x\}$ está em $\Fc$, pois $\Fc$ é fechado para interseções finitas. Logo, $\Fc$ contém o filtro de Fréchet.

A recíproca pode ser provada por contrapositiva. Um ultrafiltro principal $\Fc_s$ em $S$ definido por um ponto $s$ contém $\{s\}$. Assim,  ele não contém $S\setminus \{s\}$, o qual é um elemento do filtro de Fréchet.
\end{proof}

O resultado principal sobre a existência dos ultrafiltros é fornecido pelo seguinte lema.

\begin{lemma}[Lema do ultrafiltro]
Todo filtro em um conjunto $S$ é um subconjunto de algum ultrafiltro em $S$.    
\end{lemma}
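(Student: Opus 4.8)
A estratégia é uma aplicação direta do Lema de Zorn ao conjunto parcialmente ordenado dos filtros que contêm $\Fc$. Primeiro, considero a família $\mathcal{P}$ de todos os filtros em $S$ que contêm $\Fc$ como subconjunto, munida da ordem parcial dada pela inclusão. Essa família é não vazia, pois o próprio $\Fc$ pertence a $\mathcal{P}$. A ideia é exibir um elemento maximal de $\mathcal{P}$ e então verificar que ele é necessariamente um ultrafiltro contendo $\Fc$, o que encerra a prova.

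Para aplicar o Lema de Zorn, preciso garantir que toda cadeia em $\mathcal{P}$ possui uma cota superior em $\mathcal{P}$. Dada uma cadeia totalmente ordenada $\{\Fc_\lambda\}_{\lambda \in \Lambda} \subset \mathcal{P}$, afirmo que a união $\mathcal{G} = \bigcup_{\lambda \in \Lambda}\Fc_\lambda$ é um filtro que contém $\Fc$. Verificar os axiomas (i) e (iii) da definição de filtro é imediato: tem-se $S \in \mathcal{G}$, $\emptyset \notin \mathcal{G}$ (pois $\emptyset$ não está em nenhum $\Fc_\lambda$), e o fechamento por supraconjuntos segue do fato de cada $\Fc_\lambda$ ser fechado por supraconjuntos. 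O ponto que exige cuidado é o fechamento por interseções finitas (axioma (ii)): dados $A, B \in \mathcal{G}$, existem índices $\lambda, \mu$ com $A \in \Fc_\lambda$ e $B \in \Fc_\mu$; como a cadeia é totalmente ordenada, um desses filtros contém o outro, digamos $\Fc_\lambda \subset \Fc_\mu$, e então $A, B \in \Fc_\mu$, donde $A \cap B \in \Fc_\mu \subset \mathcal{G}$. É exatamente aqui que a hipótese de a cadeia ser totalmente ordenada se torna essencial. Portanto $\mathcal{G} \in \mathcal{P}$ é uma cota superior da cadeia.

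Pelo Lema de Zorn, $\mathcal{P}$ possui um elemento maximal $\mathcal{U}$, que é um filtro contendo $\Fc$. Resta mostrar que $\mathcal{U}$ é de fato um ultrafiltro, e este é o passo que julgo ser o principal obstáculo. Usando a segunda caracterização da Definição~\ref{ultrafiltro}, basta observar que $\mathcal{U}$ é maximal no conjunto de \emph{todos} os filtros de $S$, e não apenas entre os que contêm $\Fc$: qualquer filtro que contenha propriamente $\mathcal{U}$ conteria também $\Fc$, estando portanto em $\mathcal{P}$ e contradizendo a maximalidade de $\mathcal{U}$ em $\mathcal{P}$. Pela equivalência das duas noções de ultrafiltro, $\mathcal{U}$ é um ultrafiltro. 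Caso se prefira um argumento direto, dado $A \subset S$ com $A \notin \mathcal{U}$, mostraria que $S \setminus A \in \mathcal{U}$: se não fosse assim, cada $F \in \mathcal{U}$ satisfaria $F \cap A \neq \emptyset$ (caso contrário $F \subset S \setminus A$ forçaria $S \setminus A \in \mathcal{U}$ por (iii)), e então a família $\{B \subset S \mid B \supset F \cap A \text{ para algum } F \in \mathcal{U}\}$ seria um filtro contendo propriamente $\mathcal{U}$ e $\Fc$, novamente contradizendo a maximalidade. Em qualquer dos casos, $\mathcal{U}$ é o ultrafiltro procurado.
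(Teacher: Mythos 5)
Sua prova está correta e segue essencialmente o mesmo caminho do texto: aplicação do Lema de Zorn ao conjunto dos filtros que contêm $\Fc$, seguida da observação de que o elemento maximal é um ultrafiltro. A diferença é apenas de nível de detalhe — você explicita as verificações que o texto deixa implícitas (a união de uma cadeia é um filtro, e a maximalidade em $\mathcal{P}$ implica maximalidade entre todos os filtros), e ambas estão corretas.
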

\begin{proof}
Seja $\mathcal{F}$ um filtro em $S$. Pelo Lema de Zorn, existe um filtro maximal $\mathcal{U}$ em $S$ contendo $\mathcal{F}$. Pela maximalidade, $\mathcal{U}$ é um ultrafiltro. 
\end{proof}

Este resultado é particularmente interessante quando começamos com o filtro de Fréchet $\mathcal{F}$ em um conjunto infinito $S$. Neste caso, pela Proposição~\ref{prop:Frechet}, o lema do ultrafiltro implica a existência de um ultrafiltro não principal em $S$ contendo $\mathcal{F}$.


Vimos que filtros capturam informações sobre conjuntos serem suficientemente grandes, segundo algum contexto. Já um ultrafiltro divide todos os subconjuntos entre ``grandes'' e ``pequenos'', onde objetos de tamanho intermediário não são reconhecidos. Embora se saiba que o número de ultrafiltros em um conjunto infinito $S$ seja da ordem de $2^{|S|}$, não existem construções explícitas gerais para nenhum ultrafiltro não principal nesse caso.

Observamos que utilizamos uma forma do Axioma da Escolha (o Lema de Zorn) para estabelecer a existência dos ultrafiltros não principais. Percebe-se, por sua vez, que a existência dos ultrafiltros não principais implica o teorema da extensão de Hahn--Banach, que lembramos agora:
\begin{thm}[Hahn--Banach]\label{theorema:Hahn-Banach}\index{teorema de Hahn--Banach}
Sejam $V$ um espaço vetorial real, $U$ um subespaço de $V$ e $\varphi : U \to \R$ uma função linear. Seja $p : V \to \R$ um mapa com as seguintes propriedades:
$$ p(\lambda x) = \lambda p(x)\text{ e } p(x + y) \leq p(x) + p(y),\ \forall x, y \in V,\ \lambda\in [0, +\infty),$$
tal que $\varphi(x) \leq p(x)$ para todo $x \in U$. Então existe uma extensão linear de $\varphi$, $\Bar{\varphi}: V \to \R$ tal que $\Bar{\varphi}(x) \leq p(x)$ para todo $x \in V$.    
\end{thm}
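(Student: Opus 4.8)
The plan is to prove the theorem by a transfinite exhaustion argument based on Zorn's lemma, reducing the whole problem to a single one-dimensional extension step. First I would consider the set $\mathcal{P}$ of all pairs $(W,\psi)$, where $W$ is a subspace of $V$ with $U\subset W$ and $\psi:W\to\R$ is a linear functional that extends $\varphi$ (that is, $\psi|_U=\varphi$) and satisfies $\psi(x)\leq p(x)$ for every $x\in W$. This set is nonempty, since $(U,\varphi)\in\mathcal{P}$, and it is partially ordered by declaring $(W_1,\psi_1)\leq(W_2,\psi_2)$ whenever $W_1\subset W_2$ and $\psi_2|_{W_1}=\psi_1$. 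Given any totally ordered chain in $\mathcal{P}$, its union (the union of the domains, with the functional defined piecewise) is again an element of $\mathcal{P}$ and is an upper bound for the chain; hence by Zorn's lemma $\mathcal{P}$ has a maximal element $(W_0,\psi_0)$.

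It then remains to show $W_0=V$. I would argue by contradiction: suppose there is some $x_0\in V\setminus W_0$ and try to extend $\psi_0$ to the strictly larger subspace $W_0+\R x_0$, contradicting maximality. Any linear extension to $W_0+\R x_0$ is determined by a single real number $c:=\psi_0'(x_0)$, through the formula $\psi_0'(w+tx_0)=\psi_0(w)+tc$ for $w\in W_0$ and $t\in\R$. The content of the step is to choose $c$ so that the bound $\psi_0(w)+tc\leq p(w+tx_0)$ holds for all $w\in W_0$ and all $t\in\R$.

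The crux, and the step I expect to be the main obstacle, is precisely the existence of such a $c$. Splitting into $t>0$ and $t<0$ and using positive homogeneity of $p$, the required inequality is equivalent to the two families of conditions $\psi_0(u)-p(u-x_0)\leq c$ and $c\leq p(v+x_0)-\psi_0(v)$, ranging over all $u,v\in W_0$, while the case $t=0$ is already known. A suitable $c$ exists if and only if
$$\sup_{u\in W_0}\bigl(\psi_0(u)-p(u-x_0)\bigr)\leq\inf_{v\in W_0}\bigl(p(v+x_0)-\psi_0(v)\bigr),$$
and this is exactly where subadditivity of $p$ enters: for any $u,v\in W_0$ one has $\psi_0(u)+\psi_0(v)=\psi_0(u+v)\leq p(u+v)=p((u-x_0)+(v+x_0))\leq p(u-x_0)+p(v+x_0)$, which rearranges to $\psi_0(u)-p(u-x_0)\leq p(v+x_0)-\psi_0(v)$. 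Taking the supremum over $u$ and the infimum over $v$ gives the desired comparison, so a valid $c$ can be chosen between the two sides. This produces an extension strictly larger than $(W_0,\psi_0)$, the contradiction that forces $W_0=V$ and finishes the proof, with $\Bar{\varphi}:=\psi_0$.

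Finally, in keeping with the preceding discussion, I would note that the same conclusion can be reached directly from the existence of non-principal ultrafilters: one extends $\varphi$ over every finite-dimensional subspace containing $U$ by finitely many applications of the one-dimensional step above, and then glues these partial extensions together by taking, at each point $x$, the ultralimit of their values, which is well defined because it lies in the bounded interval $[-p(-x),p(x)]$. This is the implication referenced in the remark above, although it relies on the ultralimit construction developed later in the text; for a self-contained argument at this stage the Zorn's lemma proof is preferable.
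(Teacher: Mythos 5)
Your Zorn's lemma proof is correct and complete: the poset of dominated linear extensions, the chain/upper-bound verification, and the one-dimensional extension step with the key inequality $\psi_0(u)+\psi_0(v)\leq p(u-x_0)+p(v+x_0)$ are exactly what is needed, and positive homogeneity of $p$ is used correctly to reduce the cases $t>0$ and $t<0$ to the two families of conditions bracketing $c$.

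It does, however, take a genuinely different route from the text. The book never proves Theorem~\ref{theorema:Hahn-Banach} in general: it recalls it as a classical fact and, in the section on ultralimits, establishes only the special case where $V$ is the space of bounded real sequences, $U$ the subspace of convergent sequences, $\varphi=\lim$, and $p$ is the sup-norm or $\limsup$; there the extension is produced directly as $\Bar{\varphi}((x_n))=\mathcal{F}\lim x_n$ for a non-principal ultrafilter $\mathcal{F}$, the point being to exhibit the link between ultrafilters and Hahn--Banach rather than to reprove the theorem. Your argument is the standard complete proof and buys full generality, at the cost of invoking Zorn's lemma a second time; the book's approach buys a concrete, formula-given extension in the one case it actually needs, which serves as the motivating example for ultralimits. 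One caveat about your closing remark: the gluing you sketch there is loose --- if $U$ is infinite-dimensional there are no finite-dimensional subspaces containing $U$, so you would need to work with subspaces of the form $U+F$ with $F$ finite-dimensional, directed by inclusion, and take an ultrafilter on that directed set, checking additivity of the resulting pointwise ultralimit. Since you present this only as an aside and rest the proof on the Zorn argument, it does not affect correctness.
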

Assim, o teorema de Hahn--Banach pode ser visto como o Axioma da Escolha do analista, numa versão mais fraca. Com estas observações em mente, não é surpreendente que alguns resultados obtidos por meio de ultrafiltros também possam ser comprovados usando o teorema de Hahn--Banach. 

Algumas propriedades de ultrafiltros são dadas a seguir, e o leitor pode se referir a \cite{goldbring2022ultrafilters} e a \cite{kd} para mais detalhes, exemplos e propriedades a respeito de filtros e ultrafiltros.

\begin{proposition}
\label{propultrafiltro}
Seja $S$ um conjunto e $\Fc$ um ultrafiltro em $S$. Então:
\begin{enumerate}[(1)]
    \item Se $S = A_1\cup  \ldots \cup A_n$, então $A_i \in \Fc$, para algum $i\in \{1,\ldots,n\}$;
    \item Se $T \in \Fc$, e $T = A_1 \cup \ldots \cup A_n$, então $A_i\in \Fc$ para algum $i$;
    \item Se  $G$ é uma família de subconjuntos não vazios de $S$, tais que a interseção de qualquer número finito de membros de $G$ é não vazia, então $G$ está contida em algum filtro em $S$.
\end{enumerate}
\end{proposition}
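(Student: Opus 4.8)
The plan is to treat parts (1) and (2) together, extracting everything from the two structural facts available: the ultrafilter dichotomy (for every $A\subset S$ exactly one of $A$, $S\setminus A$ lies in $\Fc$) and the filter axioms, in particular closure under finite intersections and the exclusion of $\emptyset$. For part (3) the plan is the classical construction of the filter generated by a family, where the finite intersection hypothesis is precisely the ingredient that keeps $\emptyset$ out.

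For (1) I would argue by contradiction. Suppose $A_i\notin\Fc$ for every $i\in\{1,\dots,n\}$. By the ultrafilter dichotomy, $S\setminus A_i\in\Fc$ for each $i$, and filter axiom (ii) gives $\bigcap_{i=1}^n(S\setminus A_i)\in\Fc$. But this intersection equals $S\setminus\bigcup_{i=1}^n A_i=S\setminus S=\emptyset$, contradicting $\emptyset\notin\Fc$; hence some $A_i\in\Fc$. For (2) I would reduce to (1): write $S=(S\setminus T)\cup A_1\cup\dots\cup A_n$ and apply (1) to conclude that one of these $n+1$ sets belongs to $\Fc$. Since $T\in\Fc$ forces $S\setminus T\notin\Fc$ (both cannot lie in $\Fc$, else $\emptyset=T\cap(S\setminus T)\in\Fc$), the set in $\Fc$ must be one of the $A_i$.

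For (3) I would set $\mathcal{B}$ to be the collection of all finite intersections $A_{i_1}\cap\dots\cap A_{i_k}$ of members of $G$; by hypothesis every element of $\mathcal{B}$ is nonempty. I then define $\Fc$ to be the upward closure $\{B\subset S\mid F\subset B\text{ for some }F\in\mathcal{B}\}$ and check the three filter axioms. Axiom (iii) is immediate from the definition; axiom (ii) follows because the intersection of two finite intersections of members of $G$ is again one, so $\mathcal{B}$ is stable under pairwise intersection; and $S\in\Fc$ while $G\subset\Fc$, since each $A\in G$ is itself a one-term member of $\mathcal{B}$. The point to watch, and the only place the hypothesis is used, is $\emptyset\notin\Fc$: if $\emptyset\in\Fc$ then some $F\in\mathcal{B}$ satisfies $F\subset\emptyset$, i.e. $F=\emptyset$, contradicting the finite intersection property.

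I do not expect a serious obstacle; the substantive content is the bookkeeping in part (3). The step most easily overlooked is recognizing that the finite intersection hypothesis on $G$ is exactly what forces $\emptyset\notin\Fc$ for the generated family, so that checking this axiom is not a separate computation but the very reason the construction yields a filter at all. For the degenerate case $G=\emptyset$ the statement is vacuous, the trivial filter $\{S\}$ serving when $S\neq\emptyset$.
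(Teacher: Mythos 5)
Your proof is correct. The paper does not actually supply a proof of this proposition --- it is left as an exercise immediately after the statement --- and your argument is the standard one: the De Morgan/dichotomy contradiction for (1), the reduction of (2) to (1) via $S=(S\setminus T)\cup A_1\cup\dots\cup A_n$ (or, equivalently, intersecting the complements with $T$ directly), and the upward closure of the finite intersections of $G$ for (3), with the finite intersection property being exactly what excludes $\emptyset$; your handling of the degenerate case $G=\emptyset$ is also fine.
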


\begin{exercise}
    Prove a Proposição \ref{propultrafiltro}.
\end{exercise}

No que segue, dados dois conjuntos quaisquer $A$ e $S$, denote por $A^S$ o conjunto de todas as funções de $S$ para $A$.

\begin{definition}
   Seja $\Fc$ um filtro em $S$, e $A$ algum conjunto. A \textit{potência reduzida de $A$ módulo $\Fc$},\index{potência reduzida} denotada por  $A^S/\Fc$ é o conjunto das classes de equivalência em $A^S$, onde dois elementos $f, g \in  A^S$ são definidos como equivalentes se  $\{s \in S \mid f(s) =
g(s)\} \in \Fc$. 
\end{definition}

Em outras palavras, duas funções de $S$ para $A$ são equivalentes se elas coincidem em um elemento de $\Fc$, isto é, em algum conjunto ``grande''. Neste caso, diremos que tais funções coincidem em quase todo ponto (q.t.p.).
Se  $\Fc$ for um ultrafiltro, uma potência  reduzida módulo $\Fc$ é chamada de  \textit{ultrapotência}.\index{ultrapotência}

\section{Ultralimites}

Vamos verificar como a existência de ultrafiltros não principais implica o teorema de extensão de Hahn--Banach (cf. Teorema~\ref{theorema:Hahn-Banach}) em um caso especial.  Sejam $V$ o espaço vetorial real de sequências limitadas de números reais $\mathbf{x} = (x_n)$, $U \subset V$ o subespaço de sequências convergentes de números reais, $p$  a função a norma do sup:
$$p(\mathbf{x}):= ||\mathbf{x}||_\infty = \sup_{n \in \N} |x_n|,$$
e $\varphi : U \to \R$ a função limite, ou seja,
$$\varphi(\mathbf{x}) = \lim_{n \to\infty} x_n.$$
Em vez da norma do sup, podemos também tomar
$$ p: \mathbf{x} \to \limsup x_n.$$

Em outras palavras, usando um ultrafiltro não principal, pode-se estender a noção de limite de sequências convergentes para sequências limitadas. A principal ferramenta nesta extensão é o conceito de \textit{ultralimite}.


Fixemos um ultrafiltro não principal $ \Fc$ em $\N$. Sejam $T$ um espaço topológico e $(x_n)$ uma sequência em $T$. Para cada $x\in T$, e cada vizinhança $U$ de $x$, defina   $O(x,U) := \{ n \in \N \mid x_n \in U\}$.

\begin{definition}
    Diremos que $x$ é o  \textit{$\Fc$-limite} \index{ultralimite} de $(x_n)$ se, para cada vizinhança $ U$ de $x$, o subconjunto $O(x,U)$ está em $\Fc$. Neste caso, escrevemos $x = \Fc\lim x_n$.
\end{definition}

Com isto, $x$ será o  $\Fc$-limite de uma sequência  se toda vizinhança de $x$
contém quase todos os membros daquela sequência.

\begin{exercise}
    Mostre que um ponto $x\in T$ é o  limite de $(x_n)$, no sentido usual se, e somente se, $x$ é o $\Fc$-limite de $(x_n)$, para todo ultrafiltro não principal $\Fc \subset \mathbb{N}$.
\end{exercise}

\begin{proposition} \label{prop:ultralim} Sejam $T$ um espaço topológico e $\Fc$ um ultrafiltro não principal em $\N$.
    \begin{enumerate}
        \item Se $T$ é Hausdorff, então o  $\Fc$-limite de uma sequência é único.
        \item Se $T$ é compacto, então toda sequência é $\Fc$-convergente. 
    \end{enumerate}
\end{proposition}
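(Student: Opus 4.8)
O plano é tratar os dois itens separadamente, pois cada um explora uma propriedade distinta de $\Fc$: o primeiro usa apenas a estrutura de filtro, enquanto o segundo depende essencialmente da maximalidade do ultrafiltro.

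Para o item (1), suporia por absurdo que $x$ e $y$ são ambos $\Fc$-limites de $(x_n)$, com $x \neq y$. Como $T$ é Hausdorff, existem vizinhanças disjuntas $U$ de $x$ e $V$ de $y$. Por definição de $\Fc$-limite, tanto $O(x,U)$ quanto $O(y,V)$ estão em $\Fc$, logo sua interseção também está, pela propriedade (ii) de filtros. O passo decisivo é notar que essa interseção é precisamente $\{n \in \N \mid x_n \in U \cap V\}$, que é vazia já que $U \cap V = \emptyset$; isso contradiz a condição $\emptyset \notin \Fc$, e concluiríamos $x = y$.

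Para o item (2), que espero ser a parte mais delicada, argumentaria por contradição combinando compacidade com a dicotomia dos ultrafiltros. Suponha que nenhum ponto de $T$ seja o $\Fc$-limite de $(x_n)$. Então, para cada $x \in T$, existe uma vizinhança aberta $U_x$ com $O(x, U_x) \notin \Fc$; como $\Fc$ é um ultrafiltro, isso força $\N \setminus O(x, U_x) = \{n \in \N \mid x_n \notin U_x\} \in \Fc$. A família $\{U_x\}_{x \in T}$ é uma cobertura aberta de $T$, e por compacidade extrairia uma subcobertura finita $U_{x_1}, \ldots, U_{x_k}$. Tomando a interseção finita $\bigcap_{i=1}^{k} \{n \in \N \mid x_n \notin U_{x_i}\}$, que pertence a $\Fc$ pela propriedade (ii), obtém-se o conjunto $\{n \in \N \mid x_n \notin \bigcup_{i} U_{x_i}\} = \emptyset$, novamente contradizendo $\emptyset \notin \Fc$.

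O principal obstáculo é reconhecer, em (2), os dois ingredientes que se encaixam: a passagem de ``$O(x,U_x) \notin \Fc$'' para ``o complementar está em $\Fc$'' só é válida por ser $\Fc$ maximal, e a finitude da subcobertura é exatamente o que habilita a propriedade de interseção finita dos filtros. Um detalhe técnico a verificar é que, na negação de ``$x$ é $\Fc$-limite'', a vizinhança obtida pode ser tomada aberta: basta substituí-la por seu interior, observando pela propriedade (iii) que $O(x, \cdot)$ permanece fora de $\Fc$ ao passar a um subconjunto. Com isso, $\{U_x\}$ é de fato uma cobertura aberta.
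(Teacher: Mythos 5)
Sua demonstração está correta e segue essencialmente o mesmo caminho da prova do texto: no item (1), vizinhanças disjuntas e as propriedades de filtro, e no item (2), a dicotomia do ultrafiltro combinada com interseções finitas e compacidade, diferindo apenas na organização do argumento (você extrai primeiro a subcobertura finita e conclui $\emptyset \in \Fc$, enquanto o texto mostra que nenhuma subfamília finita das vizinhanças $U_y$ cobre $T$). O cuidado técnico de substituir cada vizinhança por seu interior, usando a propriedade (iii) de filtros, está correto, porém dispensável.
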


\begin{proof}
 \begin{enumerate}
     \item Sejam $x$ e $y$ dois pontos distintos, e considere $U$ e $V$ vizinhanças disjuntas desses pontos.  Assim, os conjuntos $O(x,U)$ e $O(y, V )$ são disjuntos, e no máximo um deles pode pertencer a $\Fc$, pelas propriedades (i) e (ii) de filtros.
\item Suponha que $\{x_n\}$ não é  $\Fc$-convergente. Então, para cada $y\in T$, existe uma vizinhança $U_y$ tal que  $O(y,U_y) \notin \Fc$. Como $\Fc$ é um ultrafiltro, o conjunto $O^*(y,U_y) :=
\{n \in \N \mid x_n \notin U_y\}$ está em $\Fc$. Dados  finitos pontos $y_1, \ldots , y_k$,  temos $O^* =
\bigcap O^{*}(y_i, U_{y_i} ) \in \Fc$ e, portanto, esta interseção é não vazia. Se  $i \in O^{*}$, então $x_i$ está fora de todas as vizinhanças $U_{y_i}$, e portanto $ T$ não pode ser recoberto  por uma quantidade finita dessas vizinhanças, o que contradiz a  compacidade de $T$.
 \end{enumerate} 
\end{proof}

\begin{corollary}
Toda  sequência limitada de números  reais é  $\Fc$-convergente, e seu $\Fc$-limite é único.
\end{corollary}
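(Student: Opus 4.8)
The plan is to deduce this directly from Proposition~\ref{prop:ultralim} by exhibiting a compact Hausdorff space in which the given sequence lives. Concretely, let $(x_n)$ be a bounded sequence of reals. First I would fix $M>0$ with $|x_n|\leq M$ for every $n\in\N$, so that the whole sequence takes values in the interval $T=[-M,M]$. This interval is compact by Heine--Borel and Hausdorff as a subspace of $\R$, which is exactly the combination of hypotheses covered by the two parts of the proposition.

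Next I would apply part~(2) of Proposition~\ref{prop:ultralim} to the space $T=[-M,M]$: since $T$ is compact, the sequence $(x_n)$ admits an $\Fc$-limit $x\in T$. The one verification worth spelling out is that $\Fc$-convergence in the subspace $T$ upgrades to $\Fc$-convergence in $\R$. For this I would note that, given any neighborhood $U$ of $x$ in $\R$, the set $U\cap T$ is a neighborhood of $x$ in $T$, and because every term $x_n$ already lies in $T$ we have
$$
O(x,U)=\{n\in\N\mid x_n\in U\}=\{n\in\N\mid x_n\in U\cap T\}=O(x,U\cap T)\in\Fc.
$$
Hence $x=\Fc\lim x_n$ holds in $\R$ as well.

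Finally, uniqueness follows from part~(1) of the same proposition: $\R$ is Hausdorff, so the $\Fc$-limit, when it exists, is unique. I do not anticipate any real obstacle here; the only point requiring a line of care is the passage from convergence in the compact subspace to convergence in $\R$, which the displayed identity settles.
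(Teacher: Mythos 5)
Your proof is correct and follows exactly the route the paper intends: confine the bounded sequence to a compact interval $[-M,M]$, invoke part~(2) of Proposição~\ref{prop:ultralim} for existence and part~(1) (Hausdorffness of $\R$) for uniqueness. The extra care you take in passing from $\Fc$-convergence in the subspace $T$ to $\Fc$-convergence in $\R$ is a valid and welcome detail, not a deviation.
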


\begin{exercise}
    Sejam $x_n$ e $y_n$ duas sequências limitadas de números reais, e $c$ um número real. Mostre que:
    \begin{enumerate}[(1)]
        \item $\Fc \lim(x_n + y_n) = \Fc \lim x_n + \Fc \lim y_n$; 
        \item $\Fc \lim(c x_n) = c\Fc \lim x_n $;
        \item Se $x_n \leq y_n$ para todo $n$, então $\Fc \lim x_n \leq \Fc \lim y_n$.
    \end{enumerate}
\end{exercise}

\begin{exercise}
    Um número real $x$ é o $\Fc$-limite de uma sequência $(x_n)$ em $\R$ para algum ultrafiltro
não principal $\Fc$ se, e somente se, $x$ é o limite usual de alguma subsequência de $(x_n)$.
\end{exercise}

Percebemos assim que um ultrafiltro em $\N$ é uma maneira de escolher, para cada sequência em $T$,  algum ponto de acumulação. Além disso, há uma uniformidade nessa escolha. Dadas sequências $(x_n)$ e $(y_n)$, seus $\Fc$-limites podem ser obtidos através da ``mesma'' subsequência de ambas, isto é, existe uma sequência $(n_i)$ de índices tal que $ \Fc \lim x_n = \lim x_{n_i}$ e $\Fc \lim y_n = \lim y_{n_i}$.
Isso segue do  fato de que dois conjuntos em $\Fc$ tem  interseção não-vazia, e essa interseção deverá ser infinita. A uniformidade se 
estende a qualquer número finito de sequências.

Finalmente, podemos provar o teorema de Hahn--Banach para o espaço de sequências convergentes $U$, o espaço de todas as sequências limitadas $V$ e o funcional $\varphi := \lim : U \to \R$. Seja
$$\Bar{\varphi}((x_n)) = \Fc\lim x_n.$$
A Proposição~\ref{prop:ultralim} implica que toda função limitada $f : \N \to \R$ possui um ultralimite. No caso em que existe o limite ordinário $\displaystyle \lim_{n\to\infty} x_n$, ele é igual ao ultralimite $\Fc\lim x_n$. É fácil verificar que a desigualdade
$$\Fc\lim x_n \leq p((x_n))$$
vale para ambos $p((x_n)) = \sup_{n \in \N} |x_n|$ e $p(x_n) = \limsup x_n$. Isso prova o teorema de Hahn--Banach (no caso especial).

\section{Cones assintóticos}

O cone assintótico de um espaço é um espaço métrico que representa o espaço visto do infinito. Como a geometria em pequena escala desaparece nesse limite, ele se torna uma boa ferramenta para estudar propriedades quase-isométricas de grupos. Construiremos os cones assintóticos usando ultrafiltros. Recordemos que um ultrafiltro $\Fc$ nos permite definir o $\Fc$-limite de qualquer sequência limitada: os ultralimites usam o axioma da escolha para estender a noção de limite a sequências limitadas arbitrárias; eles são lineares e o ultralimite de uma sequência é um ponto limite da sequência. Essas são as propriedades-chave para a construção de cones assintóticos com os quais procedemos nesta seção.

O conceito de cone assintótico foi introduzido pela primeira vez na teoria geométrica de grupos por van den Dries e Wilkie em \cite{vdDriesWilkie}, embora sua versão para grupos de crescimento polinomial já tivesse sido usada por Gromov em \cite{Gromov81}, que usou a convergência de Gromov--Hausdorff como ferramenta em vez de ultrafiltros. Cones assintóticos para espaços métricos gerais foram definidos por Gromov em \cite{gromov1993asymptotic}.


Considere um par $(X, x)$, onde $X$ é um espaço métrico com distância $d$ e $x \in X$ é um ponto fixado. Mantemos a escolha de um ultrafiltro $\Fc$ em $\N$.

\begin{definition}
    Uma sequência $(x_n)$ em $X$ é dita \textit{moderada}\index{sequência moderada} se satisfizer $d(x_n,x) \leq A n$, para todo $n$, onde  $A$ é  alguma constante (que depende da sequência). Vamos denotar por $M$ o \textit{conjunto  das sequências moderadas} em $X$.
    
Dadas sequências moderadas $\mathbf{x} = (x_n)$ e $\mathbf{y} = (y_n)$, a \textit{distância} entre elas é definida por $d_M(\mathbf{x},\mathbf{y}) = \Fc \lim\left(\dfrac{d(x_n, y_n)}{n}\right)$.
\end{definition}

Essa distância satisfaz a desigualdade triangular. No entanto, é possível que  duas sequências distintas tenham distância zero. Desse modo, diremos que duas sequências moderadas são  equivalentes se a distância entre elas é igual a  zero. Obtemos assim uma relação de equivalência $\sim$ em $M$,  o que permite definir a distância $d_K$ entre duas classes de equivalência de elementos de $M/{\sim}$ como a distância $d_M$ entre quaisquer de seus representantes. O espaço métrico 
$$K = (M/{\sim}, d_K)$$ 
obtido desse modo é chamado de \textit{cone assintótico} \index{cone assintótico} definido por $(X,x)$ e $\Fc$. Note que duas sequências que originam o mesmo elemento na ultrapotência $X^{\N} / \Fc$, isto é, que coincidem em algum conjunto de $\Fc$,  são equivalentes.  Isso significa que podemos trocar os elementos de qualquer sequência em um conjunto fora de $\Fc$  sem alterar o elemento correspondente em $K$. Mas duas sequências podem ser 
equivalentes mesmo não sendo  idênticas em um conjunto de $\Fc$. Podemos dizer que duas
sequências são iguais no cone assintótico  $K$ se elas forem  ``infinitesimalmente próximas''.

Seja $G$ um  grupo finitamente gerado, e  fixe um conjunto finito $S$ de geradores de $G$. Seja $\ell_S$ a função de comprimento com respeito a $S$, e considere  o espaço métrico $G$
 munido da distância $d_S(x, y) = \ell_S(x^{-1}y)$. Tomemos a identidade $e$ de $G$ como ponto base, e definamos $K$  como o cone assintótico  definido por $(G, e)$
e $\Fc$. Também denotamos por $e$ o elemento de $K$ definido pela sequência  constante $(e)$.

Uma propriedade importante dos cones assintóticos é que os cones dos espaços quase-isométricos são bi-Lipschitz. Isso ocorre porque, quando reduzimos a métrica por uma sequência de fatores divergentes, os termos constantes aditivos na definição de quase-isometria se reduzem a zero.

\begin{examples}
    \begin{enumerate}
        \item Se $G=\Z^m$, o grupo livre abeliano em $m$ geradores, com a métrica das palavras (a qual coincide com a métrica induzida pela métrica da soma em $\R^m$). O cone assintótico definido por $(G,0)$ e por qualquer ultrafiltro não principal $\Fc$ em $\N$ é o grupo topológico $\R^m$, também com  a métrica da soma $d(x,y) = |x_1 - y_1| + \ldots + |x_m - y_m|$.  De fato, um ponto $x\in \R^m$ é representado pela sequência moderada $x_n = \lfloor nx \rfloor$ em $G$, onde a função piso é aplicada a cada componente de $x$, e qualquer sequencia moderada $(x_n)$ representa o ponto $\Fc\lim(x_n/n) \in \R^m$.

        \item  Se $G = F_m$, o grupo livre de posto $m \ge 2$, seu cone  assintótico $K$ é uma $\R$-árvore onde cada ponto possui grau incontável, que novamente independe do ultrafiltro fixado. Por definição, uma $\R$-árvore é um espaço métrico geodésico onde cada triângulo é um trípode (Definição~\ref{def:tripode}). Para ver isso, consideremos as geodésicas em $K$. Seja $x_n = x_0$ para todo $n$. Há uma quantidade incontável de geodésicas passando por $x_0$ no grafo de Cayley de $G$, que é uma árvore regular, e cada uma delas corresponde a uma geodésica em $K$ que passa pelo ponto $x$ representado pela sequência $(x_0)$. Agora, considere outro ponto $y \neq x$ em $K$ representado por uma sequência $(y_n)$. Observe que quando $n$ aumenta, os $y_n$ se afastam cada vez mais de $x_0$. O limite das geodésicas que conectam $x_0$ e $y_n$ é uma geodésica única entre $x$ e $y$ em $K$. O mesmo é válido para qualquer ponto no cone assintótico. Para verificar se todo triângulo é um trípode, precisamos tomar o terceiro ponto $z\in K$, distinto de $x$ e $y$, e considerar as geodésicas que conectam $x$, $y$ e $z$ (cf. \cite[Seção~2]{gromov1993asymptotic} para o caso mais geral de espaços $\delta$-hiperbólicos). 
    \end{enumerate}
\end{examples}


Vamos prosseguir com algumas propriedades gerais dos cones assintóticos de grupos finitamente gerados.
Note que  o conjunto $M$ forma um grupo com a operação $\mathbf{x} \cdot \mathbf{y} := (x_ny_n)$, se $\mathbf{x} = (x_n)$, $\mathbf{y} = (y_n) \in M$. O conjunto $M_0$ de sequências equivalentes a $e$ é um subgrupo de $M$, e existe uma  correspondência  $1-1$ entre $K$ e as classes laterais de $M_0$ em $M$. No entanto, $M_0$ não é um  subgrupo normal, e portanto $K$ não herda uma estrutura  natural de grupo. Ainda assim, $K$  possui boas propriedades como espaço métrico.

\begin{thm}\label{thm:AsCone} 
O cone assintótico $K$ de um grupo finitamente gerado $G$ é um espaço métrico  homogêneo, conexo por caminhos, localmente conexo e completo.
\end{thm}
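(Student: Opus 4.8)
The plan is to establish the four properties in the order homogeneity, geodesicity (which yields path-connectedness), local connectedness, and finally completeness, since each of the middle two follows cheaply from the geometry already in place and completeness is the only genuinely technical point. Throughout I work with the group $M$ of moderate sequences, the pseudo-distance $d_M(\mathbf{x},\mathbf{y})=\Fc\lim\big(d_S(x_n,y_n)/n\big)$, and $K=(M/{\sim},d_K)$ as defined above.

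For homogeneity, first I would note that $M$ is a group under coordinatewise multiplication and that left multiplication preserves moderateness, since $|g_n|_S\le Cn$ and $|z_n|_S\le An$ give $|g_nz_n|_S\le (C+A)n$. Because $d_S$ is left-invariant, for any moderate $\mathbf{g}=(g_n)$ the map $L_{\mathbf{g}}\colon K\to K$, $[(z_n)]\mapsto[(g_nz_n)]$, is well defined and is an isometry: from $d_S(g_nz_n,g_nw_n)=d_S(z_n,w_n)$ one gets $d_M(L_{\mathbf{g}}\mathbf{z},L_{\mathbf{g}}\mathbf{w})=d_M(\mathbf{z},\mathbf{w})$. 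Given $[\mathbf{x}],[\mathbf{y}]\in K$, the sequence $\mathbf{g}=(y_nx_n^{-1})$ is moderate and $L_{\mathbf{g}}[\mathbf{x}]=[\mathbf{y}]$, so $K$ is homogeneous.

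Next I would show $K$ is geodesic, which gives path-connectedness and, by homogeneity, local connectedness. Write $\ell=d_K([\mathbf{x}],[\mathbf{y}])$. For each $n$ choose a word geodesic $x_n=g^n_0,g^n_1,\dots,g^n_{m_n}=y_n$ in $G$ with $g^n_{i+1}=g^n_is_i$, $s_i\in S$, and $m_n=d_S(x_n,y_n)$, and set $\gamma(t)=[(g^n_{\lfloor tm_n\rfloor})_n]$ for $t\in[0,1]$. Each such sequence is moderate, and from $d_S(g^n_{\lfloor tm_n\rfloor},g^n_{\lfloor t'm_n\rfloor})\le|t-t'|m_n+1$ one obtains, after dividing by $n$ and taking $\Fc\lim$ (the $1/n$ term vanishes), the Lipschitz bound $d_K(\gamma(t),\gamma(t'))\le|t-t'|\ell$; the triangle inequality forces equality, so $\gamma$ is a geodesic joining $[\mathbf{x}]$ to $[\mathbf{y}]$. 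Thus $K$ is geodesic and path-connected. For local connectedness I would use that in a geodesic space every open ball $B(p,r)$ is path-connected, since the geodesic from $p$ to any $q\in B(p,r)$ has all its points within distance $<r$ of $p$ and hence stays in the ball; balls then form a base of path-connected open sets.

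The main obstacle is completeness, where the ultrafilter must do real work. Take a Cauchy sequence $([\mathbf{x}^{(k)}])_k$ and, after passing to a subsequence, assume $d_K([\mathbf{x}^{(k)}],[\mathbf{x}^{(k+1)}])<2^{-k}$. Set $A_k=\{n:d_S(x^{(k)}_n,x^{(k+1)}_n)/n<2^{-k}\}\in\Fc$ and $B_k=\bigcap_{j\le k}A_j\in\Fc$, a nested family of $\Fc$-sets. Define $k(n)=\max\{k\le n:n\in B_k\}$, with $k(n)=0$ when $n\notin B_1$; since $\{n:k(n)\ge k\}\supseteq B_k\cap\{n\ge k\}\in\Fc$, one has $k(n)\to\infty$ along $\Fc$. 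Put $y_n=x^{(k(n))}_n$. A telescoping estimate valid for $n\in B_{k(n)}$ gives $d_S(x^{(1)}_n,y_n)\le\sum_{j=1}^{k(n)-1}2^{-j}n<n$, so $\mathbf{y}=(y_n)$ is moderate, and the same telescoping yields $d_S(x^{(k)}_n,y_n)/n\le 2^{-k+1}$ on the $\Fc$-large set $\{n:k(n)\ge k\}$, whence $d_K([\mathbf{x}^{(k)}],[\mathbf{y}])\le 2^{-k+1}\to 0$. A Cauchy sequence with a convergent subsequence converges, so $[\mathbf{y}]\in K$ is the limit. The delicate point to get right is that $k(n)\to\infty$ only $\Fc$-almost everywhere, which is precisely why all distance bounds are checked on the sets $\{n:k(n)\ge k\}$; this is where non-principality of $\Fc$ enters, guaranteeing that each $B_k$ is infinite and that the cofinite sets $\{n\ge k\}$ lie in $\Fc$.
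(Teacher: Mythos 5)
Your proposal is correct. Homogeneity is handled exactly as in the paper (left multiplication by the moderate sequence $(y_nx_n^{-1})$ induces an isometry of $K$ carrying $[\mathbf{x}]$ to $[\mathbf{y}]$), but the remaining parts take genuinely different routes. For connectivity, the paper only constructs a Lipschitz path from a point $a$ to $e$ by taking prefixes $x_n(\alpha)$ of fixed word representations, obtaining $d_K(f(\alpha),f(\beta))\le A(\beta-\alpha)$, and then deduces local connectedness from the fact that this path stays in the ball of radius $d_K(e,a)$ \emph{together with homogeneity}; you instead prove the stronger statement that $K$ is geodesic (your Lipschitz bound plus the triangle-inequality rigidity argument is valid), which gives path-connectedness and local connectedness at every point without invoking homogeneity, and is a useful fact in its own right. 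For completeness the divergence is sharper: the paper keeps the whole Cauchy sequence and repeatedly modifies representatives off $\Fc$-sets so that $d_S(x_{in},x_{jn})<\varepsilon n$ holds for \emph{all} $n$, then fixes $n$, takes $\varepsilon n<1$, and uses the integer-valuedness of the word metric to conclude that each coordinate sequence $(x_{in})_i$ is eventually constant, the eventual values forming the limit point; your argument passes to a rapidly Cauchy subsequence and builds the limit by the diagonal choice $y_n=x^{(k(n))}_n$ with $k(n)\to\infty$ along $\Fc$, checking all estimates on the $\Fc$-large sets $\{n: k(n)\ge k\}$. Both are valid; the paper's trick avoids diagonal bookkeeping but leans on the discreteness of the word metric, whereas yours is the standard diagonal argument and would work verbatim for asymptotic cones of arbitrary metric spaces. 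One cosmetic repair: when $n\notin B_1$ you set $k(n)=0$, but there is no $\mathbf{x}^{(0)}$, so define $y_n$ arbitrarily there (say $y_n=x^{(1)}_n$); since that set is $\Fc$-null this changes neither $[\mathbf{y}]$ nor any estimate, and moderateness of $\mathbf{y}$ still follows from your telescoping bound combined with the moderateness of $\mathbf{x}^{(1)}$.
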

\begin{proof}
Assim como  em qualquer grupo, a  multiplicação à esquerda em $M$ (por algum elemento $\mathbf{x}=(x_n)\in M$) é uma permutação de $M$. Além disso, com respeito à métrica $d_M$, esse mapa é uma isometria de $M$, que por sua vez induz
uma isometria em  $K$. Em particular, se dois elementos $a, b \in  K$ são representados
pelas sequências $(x_n)$ e $(y_n)$, respectivamente, então o  mapa  que manda cada elemento de $K$, representado por alguma sequência $(z_n)$, no  elemento representado por $(y_nx^{-1}_n z_n)$ é uma
isometria de $K$ que leva  $a$ em  $b$. Com isso, $K$ é  homogêneo.

Provaremos que $K$ é conexo por caminhos mostrando que cada elemento $a\in K$ pode ser conectado a $e$ por um caminho contínuo. Seja $\{x_n\} \in M$ uma sequência que representa $a$. Para cada $n\in \N$ fixe uma escrita de $x_n\in G$ como uma palavra de tamanho $\ell_S(x_n)$ em termos dos geradores $S$ de $G$. Seja $0 \leq \alpha \leq 1$ e, para cada palavra $w$ de comprimento $ k$ em $S$, denote por $w(\alpha)$ a palavra consistindo das primeiras $\lceil k \alpha \rceil$ letras
em $w$. Defina $f:[0,1] \to K$ pondo  $f(\alpha)$ como o elemento representado pela sequência $\{x_n(\alpha)\}$.
É claro que $f(0) = e$ e $f(1) = a$. Além disso, caso $0 \leq \alpha \leq \beta \leq 1$, obtemos $$(\beta-\alpha) \ell_S(x_n)-1 \leq d_S(w(\alpha), w(\beta)) \leq (\beta-\alpha)\ell_S(x_n)+1.$$ Como $\ell_S(x_n) \leq An$, concluímos que $d_K(f(\alpha), f(\beta)) \leq A(\beta -\alpha)$, e portanto 
$f$ é contínua. Concluímos que $K$ é conexo por caminhos, portanto conexo.
Além disso, $d_K(e, f(\beta)) \leq  d_K(e, a)$, e assim o caminho ligando $e$ a $a$ está contido na bola de raio $d_K(e, a)$ centrada em  $e$. Isso mostra que toda bola centrada em $e$ é conexa por caminhos. Pela homogeneidade de $K$, isso vale para qualquer bola, e portanto $K$ é localmente conexo.

Resta verificar que $K$ é completo. Para tal, usaremos a seguinte observação: suponha que $ a$ e $b$ são dois pontos de $K$ e que $d_K(a, b) < A$, para algum número $A$. Logo, podemos encontrar sequências $(x_n)$ e $(y_n)$ representando  $a$ e $b$, tais que a desigualdade $d_S(x_n, y_n) < An$ vale em algum conjunto $S$ do ultrafiltro $\Fc$ em $\N$. Para $n \notin S$, podemos trocar $y_n$ por $x_n$. Essa troca não altera o ponto $b$, mas garante que a desigualdade acima vale para todo $n$. Seja agora $(a_i)$ uma sequência de Cauchy em $K$ e, para cada $i$, tome uma sequência representante ${x_{in}}$ de $a_i$. Podemos assumir que $d_K(a_1, a_i) < 1$
para todo $i$. Trocando como acima as sequências $x_{in}$, para $i > 1$, podemos assumir que para cada $n$ e cada $i$ temos $d_S(x_{1n}, x_{in}) < n$,  e portanto $d_S(x_{in}, x_{jn}) < 2n$, para todo par $i, j \in \N$. 

Seja $k$ o primeiro índice tal que $k > 1$
e $d_K(a_i, a_j) < \frac{1}{2}$ para todos $i, j \geq k$. Em seguida, troque $x_{in}$, para $i > k$,  de forma a obter $d_S(x_{kn}, x_{in}) < \frac{n}{2}$ para todo $i > k$. Isso não altera as desigualdades anteriores pois, no pior dos casos, teremos trocado $x_{in}$ por $x_{kn}$, que ainda satisfaz $d_S(x_{1n}, x_{kn}) < n$. Continuando esse processo, vemos que é possível tomar as sequências $x_{in}$ de modo que para cada $n$ e cada $\varepsilon > 0$, a desigualdade $d_S(x_{in}, x_{jn}) <\varepsilon n$ vale, desde que $i$ e $j$ sejam grandes o suficiente. 

Fixe $n\in \N$, e escolha $\varepsilon$ tal que $\varepsilon n < 1$. Então $x_{in}$ e $x_{jn}$
estão a uma distância menor que 1, e portanto são iguais em $G$. Daí, segue que a sequência ${x_{in}}$ é  eventualmente constante. Denote por $x_n$ o eventual valor de ${x_{in}}$.
Se $m$ é um índice tal que $d_S(x_{in}, x_{jn}) <\varepsilon n$ para $i, j \geq m$, então em  particular $d_S(x_n, x_{jn}) < \varepsilon n$ para todo $j \geq m$ (e para todo $n$). Seja $a$ o ponto de $K$ representado pela sequência $(x_n)$. A última desigualdade implica que $d_K(a, a_j) \leq \varepsilon$. Então a sequência  $(a_i)$ converge para $a$, como queríamos demonstrar.
\end{proof}

\section{Ação de um grupo em seu cone assintótico}

 Seja $G$ um grupo gerado por um  conjunto finito $S$ e tome  $x \in G$. Para cada sequência $(x_n)$ tal que $ \ell_S(x_n) \leq An$, temos $\ell_S(xx_n) \leq An+\ell_S(x)$, e portanto a sequência $(xx_n)$ é também  moderada e o  mapa $(x_n)\mapsto (xx_n)$ é uma isometria de $M$. Todas as isometrias de $M$
preservam equivalência, e portanto  induzem isometrias de $K$. Isso define uma ação de $G$ sobre $K$, isto é,  um homomorfismo  
\mbox{$\Phi: G \to \mathrm{Isom}(K)$}. 
Dados $x \in G$  e $a \in K$, denotamos por $xa$ o elemento $\Phi(x)(a)$, e por $N$ o núcleo de $\Phi$.

\begin{thm}
\label{thm:gpactiononK}
    Sejam $G$ um grupo infinito finitamente gerado ,  $K$ o cone assintótico de $ G$ e  $I := \mathrm{Isom}(K)$ o grupo de isometrias  de $K$. Então o homomorfismo $\Phi : G \to  I$ com núcleo $N=\ker (\Phi)$ definido acima satisfaz uma das seguintes afirmações:

    \begin{itemize}
        \item[(i)] O  quociente $G/N$ é infinito;
        \item[(ii)] O subgrupo $N$ é virtualmente  abeliano;
        \item[(iii)] Para cada vizinhança $ O$ da identidade em $I$, existe um homomorfismo $\varphi_O : N \to I$, tal que $\mathrm{Im} (\varphi_O)\cap O$ contém elementos não triviais.
    \end{itemize}
\end{thm}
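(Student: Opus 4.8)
O plano é tratar a falha de (i) como o caso principal e extrair (ii) ou (iii) a partir dele. Se $\Phi(G) = G/N$ for infinito, estamos no caso (i) e não há o que provar; suponhamos então que $G/N$ é finito. Nesse caso, $N$ é um subgrupo normal de índice finito, logo (pelos resultados sobre índice finito do Capítulo~\ref{cap1} e pelo Corolário~\ref{Mil}) um grupo finitamente gerado, infinito e quasi-isométrico a $G$. Em particular, $N$ age trivialmente em $K$ através de $\Phi$, já que $N = \ker \Phi$; é essa trivialidade que deveremos ``quebrar'' para produzir os homomorfismos de (iii).

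A construção central é uma família de homomorfismos $N \to I$ obtidos por conjugação dentro do grupo $M$ das sequências moderadas. Dada uma sequência moderada $\mathbf z = (z_n) \in M$ e $g \in N$ (visto como a sequência constante), o conjugado $(z_n^{-1} g z_n)_n$ é novamente moderado, pois $\ell_S(z_n^{-1} g z_n) \leq 2\ell_S(z_n) + \ell_S(g)$ cresce no máximo linearmente. Como a conjugação $g \mapsto (z_n^{-1} g z_n)_n$ é um homomorfismo $N \to M$ e a multiplicação à esquerda $M \to I$ é um homomorfismo, a composição $\psi_{\mathbf z} \colon N \to I$ é um homomorfismo. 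Quando $\mathbf z$ é a sequência constante $(e)$, recuperamos $\psi_{(e)} = \Phi|_N$, que é trivial; o ponto é que, para $\mathbf z$ não constante adequada, a distorção de $g$ sob conjugação por $z_n$ pode tornar $\psi_{\mathbf z}(g)$ uma isometria não trivial, cujo deslocamento do ponto base $e \in K$ é exatamente $\Fc\lim \ell_S(z_n^{-1} g z_n)/n$.

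Em seguida, eu separaria (ii) de (iii) por uma dicotomia sobre esses homomorfismos. Suponha que algum $\psi_{\mathbf z}$ seja não trivial, testemunhado por $g \in N$ com $\psi_{\mathbf z}(g) \neq \id$; então, reparametrizando $\mathbf z$ (passando a sequências de crescimento mais lento, por exemplo as truncagens $z^{(t)}_n = z_{\lfloor tn \rfloor}$), os deslocamentos diminuem continuamente até $0$, de modo que os tamanhos das isometrias não triviais resultantes assumem valores positivos arbitrariamente pequenos. Dada qualquer vizinhança $O$ da identidade em $I$, escolhendo o parâmetro suficientemente pequeno obtemos um homomorfismo $\psi_{\mathbf z'}$ cuja imagem contém um elemento não trivial situado em $O$, que é exatamente (iii). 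No caso oposto, todo $\psi_{\mathbf z}$ é trivial, o que significa que a conjugação em $(G,d_S)$ nunca distorce o comprimento das palavras em taxa linear; traduzindo isso de volta para o grupo, os comutadores permanecem uniformemente curtos e conclui-se que $N$ é virtualmente abeliano, que é (ii).

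A parte difícil será a última implicação: mostrar que o desaparecimento de todos os homomorfismos de conjugação $\psi_{\mathbf z}$ força $N$ a ser virtualmente abeliano. É aqui que a relação quantitativa entre o crescimento do comprimento de conjugação e a estrutura algébrica de $N$ precisa ser tornada precisa --- essencialmente que distorção de conjugação sublinear exclui comportamento de comutadores incompatível com abelianidade virtual --- e é o passo que usa genuinamente que $N$ é finitamente gerado e infinito. Um ponto técnico secundário a verificar com cuidado é que o argumento de reescalonamento no caso (iii) produz homomorfismos legítimos em toda vizinhança $O$ (e não isometrias pequenas isoladas): isso segue porque cada $\psi_{\mathbf z'}$ é, por construção, a composição do mapa de conjugação com o homomorfismo de multiplicação à esquerda, e porque a homogeneidade de $K$ (Teorema~\ref{thm:AsCone}) permite controlar o tamanho de uma isometria pelo deslocamento de pontos em compactos a partir do deslocamento do ponto base.
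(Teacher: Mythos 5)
Sua redução inicial (descartar o caso (i) e observar que $N$ é finitamente gerado) e o formato geral dos homomorfismos --- conjugação por uma sequência fixa seguida de multiplicação à esquerda --- coincidem com o que o texto faz. O erro fatal está em restringir os conjugadores a sequências \emph{moderadas} $\mathbf{z}\in M$: com essa restrição, a sua dicotomia fica errada, pois a implicação ``todos os $\psi_{\mathbf{z}}$ triviais $\Rightarrow$ $N$ virtualmente abeliano'' é falsa. Contraexemplo: o grupo de Heisenberg discreto $H_3$. Nele, para $g=u^av^bz^c$ e $w=u^pv^qz^r$ vale $w^{-1}gw=g\,z^{aq-bp}$, com $|aq-bp|\lesssim \ell_S(g)\,\ell_S(w)$ e $\ell_S(z^m)\asymp\sqrt{|m|}$; logo conjugar um elemento fixo por elementos de comprimento $O(n)$ produz elementos de comprimento $O(\sqrt{n})$. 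Assim \emph{todo} $\psi_{\mathbf{z}}$ com $\mathbf{z}$ moderada é trivial e, pelo mesmo cálculo, $N=\ker(\Phi)=H_3$ (ou seja, (i) falha) --- mas $H_3$ não é virtualmente abeliano. Para obter deslocamento linear na escala $k$ em $H_3$ é preciso conjugar por elementos de comprimento $\sim k^2$, isto é, por sequências \emph{não} moderadas. É exatamente isso que a prova do texto permite: os conjugadores $z_k$ (e suas subpalavras iniciais $x_k$) não têm controle algum de comprimento --- são obtidos via a Proposição~\ref{prop:infclasses}, aplicada às classes laterais dos centralizadores de geradores não-FC --- e apenas as sequências \emph{conjugadas} $(x_k^{-1}yx_k)$ são moderadas, o que basta para definir o homomorfismo. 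A linha divisória correta entre (ii) e (iii) não é ``deslocamento de taxa linear nulo vs.\ não nulo'', e sim ``função de deslocamento $D(y_j,k)$ limitada vs.\ ilimitada'' (Proposição~\ref{prop:D2}): geradores com $D$ limitada têm finitos conjugados, seus centralizadores têm índice finito, logo o centro de $N$ tem índice finito e vale (ii).

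O seu argumento para (iii) também não fecha, mesmo admitindo algum $\psi_{\mathbf{z}}$ não trivial. Primeiro, o reescalonamento por truncagens $z^{(t)}_n=z_{\lfloor tn\rfloor}$ não faz o que você afirma: $\Fc\lim_n f(\lfloor tn\rfloor)/n$ não guarda relação com $t\,\Fc\lim_n f(n)/n$, pois o ultrafiltro vive no índice $n$ e não ``enxerga'' a subsequência; o deslocamento pode saltar para $0$ ou permanecer grande. Segundo, e mais grave, não trivialidade e pertencimento a $O$ estão em tensão: as vizinhanças básicas $O(A,\epsilon)$ de $\id$ em $I$ exigem deslocamento $<\epsilon$ em toda a bola de raio $A$, e controlar apenas o deslocamento do ponto base não dá isso --- sua alegação final de que a homogeneidade de $K$ permite ``controlar o tamanho de uma isometria pelo deslocamento do ponto base'' é falsa (uma isometria pode fixar o ponto base e ser grande em compactos, como uma rotação). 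A prova do texto resolve essa tensão com a função $D(\cdot,k)$, que mede o deslocamento máximo sobre bolas de raio $k$ no grupo, e com o truque de tomar a \emph{primeira} subpalavra inicial $x_k$ de $z_k$ que ultrapassa o limiar: isso prende $\Fc\lim D(x_k^{-1}y_{i_0}x_k,k)/k$ exatamente em $\varepsilon$ (entre $\varepsilon k$ e $\varepsilon k+2l$, via Proposição~\ref{prop:D3}), de modo que a cota inferior dá a não trivialidade (pelo argumento da Proposição~\ref{prop:D1}) e a cota superior dá a pequenez, sendo $\varepsilon$ um parâmetro livre. Como você mesmo admite não provar a implicação central do seu caso (ii), e como essa implicação é de fato falsa na forma enunciada, a estratégia precisa ser reorganizada em torno da dicotomia FC/não-FC e de conjugadores sem restrição de comprimento, como no texto.
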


Para estudar a ação de $G$ sobre $K$ via $\Phi$ e demonstrar o teorema acima, precisaremos de alguns resultados intermediários. Defina, para $x \in G$, o \textit{deslocamento} de $x$ como a função  em termos de $k\in \N$  dada por $D(x, k) = \max \{d(a, xa)\} =
\max \{\ell_S(a^{-1}xa)\}$, onde o máximo é tomado sobre os elementos $a \in G$ tais que  $\ell_S(a) \leq k$.
Se $x \in H \leq G$, e restringimos a definição acima para $a\in H$, escrevemos $D_H(x, k)$ para o máximo correspondente. As seguintes proposições fornecem propriedades da função deslocamento, que serão utilizadas na demonstração do Teorema~\ref{thm:gpactiononK}.

\begin{proposition}
\label{prop:D1} 
    Mantendo as notações acima, se $x \in N$, então $$\Fc \displaystyle \lim_{k \to \infty} \frac{D(x, k)}{k} = 0.$$
\end{proposition}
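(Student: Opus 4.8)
The plan is to translate the displacement asymptotics directly into a statement about a single point of the cone $K$ and then use the hypothesis $x\in N$, which says precisely that $x$ fixes every point of $K$. The guiding idea is that the maxima defining $D(x,k)$ can be realized by a sequence of group elements whose equivalence class is a genuine point $a\in K$; the trivial action of $x$ forces $xa=a$, and unwinding the definition of $d_K$ at this point $a$ reproduces exactly the $\Fc$-limit of $D(x,k)/k$.

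First I would check that the sequence $\bigl(D(x,k)/k\bigr)_k$ is bounded, so that its $\Fc$-limit exists: for any $a$ with $\ell_S(a)\le k$ one has $\ell_S(a^{-1}xa)\le 2\ell_S(a)+\ell_S(x)\le 2k+\ell_S(x)$, whence $0\le D(x,k)/k\le 2+\ell_S(x)/k$. Next, since $G$ is finitely generated the ball $\{a\in G\mid \ell_S(a)\le k\}$ is finite, so for each $k$ the maximum defining $D(x,k)$ is attained; choose $a_k\in G$ with $\ell_S(a_k)\le k$ and $\ell_S(a_k^{-1}xa_k)=D(x,k)$. Because $\ell_S(a_k)=d_S(e,a_k)\le k$, the sequence $\mathbf{a}=(a_k)$ is moderate (with constant $A=1$), hence represents a point $a\in K$.

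Now the key computation: the point $xa$ is represented by $(xa_k)$, so by definition of the cone metric
$$
d_K(a,xa)=\Fc\lim \frac{d_S(a_k,xa_k)}{k}=\Fc\lim \frac{\ell_S(a_k^{-1}xa_k)}{k}=\Fc\lim \frac{D(x,k)}{k}.
$$
On the other hand $x\in N=\ker(\Phi)$ means $\Phi(x)=\mathrm{id}_K$, so $x$ fixes $a$, i.e. $xa=a$ and $d_K(a,xa)=0$. Combining the two gives $\Fc\lim_{k\to\infty} D(x,k)/k=0$, as desired.

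I expect no serious obstacle here; the argument is direct rather than by contradiction. The two points requiring a line of care are the attainment of the maximum (which follows from finiteness of balls in $\cay(G,S)$) and the verification that $\mathbf{a}=(a_k)$ is moderate so that it legitimately defines an element of $K$; both are routine. The only genuinely conceptual step is recognizing that choosing the displacement-maximizers $a_k$ as the $k$-th term of a moderate sequence converts the asymptotic growth of $D(x,\cdot)$ into the single value $d_K(a,xa)$, at which the hypothesis $x\in N$ can be applied.
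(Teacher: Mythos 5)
Your proof is correct and is essentially the paper's own argument: both choose maximizers $a_k$ with $\ell_S(a_k)\le k$ and $\ell_S(a_k^{-1}xa_k)=D(x,k)$, observe that $(a_k)$ is a moderate sequence defining a point of $K$, and conclude from $x\in N$ that $d_K(x\alpha,\alpha)=\Fc\lim D(x,k)/k=0$. The extra details you supply (boundedness of $D(x,k)/k$ and attainment of the maximum via finiteness of balls) are routine verifications the paper leaves implicit.
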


\begin{proof}
Para cada $k\in \N$, escolha $a_k \in G$ tal que $\ell_S(a_k) \leq k $ e $\ell_S(a^{-1}_k xa_k) =
D(x, k)$. A sequência $\alpha := (a_k)$ é moderada. Se $x \in N$, então $x\alpha = \alpha$.
Portanto $\Fc \lim \frac{D(x, k)}{k} = d_K(x\alpha, \alpha) = 0$.
\end{proof}

\begin{proposition}
\label{prop:D2}
    A função $D(x, k)$ é limitada se, e somente se, $x$ possui apenas finitos conjugados em $G$, e nesse caso $x \in N$.
\end{proposition}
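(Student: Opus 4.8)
O plano é explorar a interpretação de $D(x,k)$ como o comprimento máximo, na métrica das palavras, dos conjugados $a^{-1}xa$ de $x$ por elementos $a$ da bola fechada de raio $k$. Como $D(x,k)$ é não-decrescente em $k$ e toda bola de raio finito em $(G,d_S)$ contém apenas finitos elementos (pois $S$ é finito), a afirmação de que $D(x,\cdot)$ é limitada deve equivaler a dizer que o conjunto de conjugados de $x$ é finito. Primeiro provaria cada implicação da equivalência separadamente, e em seguida a consequência $x\in N$.

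Para a ida, suporia $D(x,k)\leq C$ para todo $k$. Dado qualquer $a\in G$, tomando $k=\ell_S(a)$ obtenho $\ell_S(a^{-1}xa)\leq D(x,k)\leq C$, de modo que todo conjugado de $x$ está na bola fechada de raio $C$ centrada em $1$. Essa bola é finita, portanto $x$ possui apenas finitos conjugados. Para a volta, se a classe de conjugação $\{a^{-1}xa\mid a\in G\}$ é finita, então $C:=\max\{\ell_S(a^{-1}xa)\mid a\in G\}$ é um número finito, e daí $D(x,k)\leq C$ para todo $k$, ou seja, $D(x,\cdot)$ é limitada.

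Resta mostrar que, quando $D(x,\cdot)$ é limitada (digamos por $C$), tem-se $x\in N=\ker(\Phi)$. Seja $a\in K$ representado por uma sequência moderada $(a_n)$. Então $xa$ é representado por $(xa_n)$ e, como $S=S^{-1}$ garante $\ell_S(g^{-1})=\ell_S(g)$, vale $d_S(xa_n,a_n)=\ell_S(a_n^{-1}x^{-1}a_n)=\ell_S(a_n^{-1}xa_n)\leq D(x,\ell_S(a_n))\leq C$. Logo $\frac{d_S(xa_n,a_n)}{n}\leq \frac{C}{n}\to 0$ no sentido usual, e portanto o seu $\Fc$-limite também é $0$. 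Assim $d_K(xa,a)=\Fc\lim \frac{d_S(xa_n,a_n)}{n}=0$, isto é, $\Phi(x)$ fixa cada ponto de $K$, donde $x\in N$.

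O único ponto que exige atenção é a uniformidade embutida na hipótese: a limitação deve valer sobre todas as bolas (todo $k$ simultaneamente), e é exatamente essa uniformidade que permite controlar $\ell_S(a_n^{-1}xa_n)$ ao longo de uma sequência moderada arbitrária, forçando o colapso da distância no cone. As duas ferramentas subjacentes — a finitude de bolas na métrica das palavras e o fato de que limites usuais coincidem com $\Fc$-limites — já foram estabelecidas anteriormente, de modo que não antecipo obstáculos técnicos sérios nesta demonstração.
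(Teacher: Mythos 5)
Sua demonstração está correta e segue essencialmente o mesmo caminho do texto, que se limita a afirmar que o resultado ``segue da definição''. Você apenas explicita os detalhes que o livro deixa implícitos: a finitude das bolas na métrica das palavras (com $S$ finito) para obter a equivalência entre limitação de $D(x,\cdot)$ e finitude da classe de conjugação, e a verificação direta de que deslocamento limitado força $d_K(xa,a)=\Fc\lim \frac{d_S(xa_n,a_n)}{n}=0$ para toda sequência moderada, donde $\Phi(x)=\id_K$ e $x\in N$.
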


Esse fato segue da  definição. Elementos que possuem apenas finitos conjugados serão chamados de FC-elementos. Eles formam um subgrupo normal de $G$ (também chamado FC-centro de $G$).

Suponha que $|G : H| <\infty$ e $x \in H$. Então $x$ tem apenas finitos conjugados em $G$ se, e somente se, ele tem apenas  finitos  conjugados em $H$. Assim, é equivalente dizer que $D(x, k)$ é limitada ou que $D_H(x, k)$ o é.

\begin{proposition}
\label{prop:D3}
    Para $x, y \in G$ e  $k, n\in \N$ temos $D(x, k +n) \leq D(x, k) + 2n$ e $D(y^{-1}xy, k) \leq D(x,k) + 2\ell_S(y)$.
\end{proposition}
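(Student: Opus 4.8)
The plan is to prove both inequalities straight from the definition
\[
D(x,k)=\max\{\ell_S(a^{-1}xa)\mid a\in G,\ \ell_S(a)\le k\},
\]
using only that $\ell_S$ is subadditive and inversion-invariant, plus the fact that a word in $S\cup S^{-1}$ can be truncated. The second inequality will then be deduced from the first by a simple conjugation/reindexing trick, so the whole argument is self-contained and elementary.

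First I would establish $D(x,k+n)\le D(x,k)+2n$. Since $S$ is finite the maximum is attained, so fix $a\in G$ with $\ell_S(a)\le k+n$ and $D(x,k+n)=\ell_S(a^{-1}xa)$. Choosing a geodesic word for $a$ and cutting it after its $k$-th letter, I factor $a=a'u$ with $a'$ represented by a word of at most $k$ letters and $u$ by a word of at most $n$ letters; hence $\ell_S(a')\le k$ and $\ell_S(u)\le n$. Writing $a^{-1}xa=u^{-1}(a')^{-1}xa'u$ and using subadditivity,
\[
\ell_S(a^{-1}xa)\le \ell_S((a')^{-1}xa')+\ell_S(u)+\ell_S(u^{-1})\le D(x,k)+2n,
\]
where the last step uses $\ell_S(u^{-1})=\ell_S(u)\le n$ and $\ell_S((a')^{-1}xa')\le D(x,k)$, valid because $\ell_S(a')\le k$. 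This is exactly the first claim.

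For the second inequality I would reduce to the first. Pick $a$ with $\ell_S(a)\le k$ realizing $D(y^{-1}xy,k)$ and set $b:=ya$, so that $a^{-1}(y^{-1}xy)a=(ya)^{-1}x(ya)=b^{-1}xb$. By subadditivity $\ell_S(b)\le \ell_S(y)+\ell_S(a)\le k+\ell_S(y)$, whence $\ell_S(b^{-1}xb)\le D(x,k+\ell_S(y))$ by the very definition of $D$. Applying the first inequality with $n=\ell_S(y)$ gives $D(x,k+\ell_S(y))\le D(x,k)+2\ell_S(y)$, and chaining the two estimates yields $D(y^{-1}xy,k)\le D(x,k)+2\ell_S(y)$.

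The computations are routine, so the only point I would flag as a genuine (if minor) subtlety is the factorization $a=a'u$ in the first inequality: one must pass to a \emph{geodesic} representative of $a$ to guarantee that the whole word has at most $k+n$ letters, after which cutting at position $k$ forces the prefix to have length $\le k$ and the suffix length $\le n$, since word length never exceeds the number of letters in a representing word. Everything else is just the triangle inequality for $d_S$ read through $\ell_S$ together with $\ell_S(w)=\ell_S(w^{-1})$.
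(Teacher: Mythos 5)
Your proof is correct and follows essentially the same route as the paper: the same prefix–suffix factorization $a=a'u$ (the paper's $a=bc$) for the first inequality, and the same reduction of the second inequality to the first via the identity $a^{-1}(y^{-1}xy)a=(ya)^{-1}x(ya)$. The only cosmetic difference is that you phrase everything through $\ell_S$ and subadditivity while the paper phrases it through the word metric $d$ and the triangle inequality, which are equivalent formulations of the same argument.
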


\begin{proof}
Seja $\ell(a) \leq k + n$. Então podemos escrever $a = bc$ com $\ell_S(b) \leq k$ e
$\ell_S(c) \leq n$. Logo, $d(xa, a) = d(xbc, bc) \leq d(xbc, xb) + d(xb, b) + d(b, bc) = d(xb, b) + 2d(b, bc) \leq d(xb, b) + 2n \leq D(x, k) + 2n$, e daí  $D(x, k+n) \leq
D(x, k)+2n$. Com isso, se $l(a) \leq k$, então $d(y^{-1}xya, a) = d(xya, ya) \leq D(x, k+\ell_S(y))$, e aplicamos a desigualdade anterior.
\end{proof}

\begin{proposition}
\label{prop:infclasses}
Um grupo $G$ não pode consistir da união de finitas classes laterais de subgrupos de índice infinito.
\end{proposition}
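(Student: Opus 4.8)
The plan is to prove the statement in its standard equivalent form, known as B.~H.~Neumann's lemma: if $G = \bigcup_{i=1}^{n} g_i H_i$ is covered by finitely many left cosets of subgroups $H_1,\dots,H_n \le G$, then at least one $H_i$ has finite index in $G$. This yields the proposition at once, for a covering by cosets of subgroups \emph{all} of infinite index would then be impossible. I would argue by induction on the number $k$ of distinct subgroups occurring among $H_1,\dots,H_n$, first grouping the cosets according to the subgroup they belong to.

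For the base case $k=1$ all cosets lie in a single subgroup $H$, so $G$ is a union of at most $n$ distinct left cosets of $H$; hence $|G:H|\le n<\infty$ and we are done.

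For the inductive step I would single out one subgroup, say $K$, and let $g_j K$ (for $j$ in a finite index set $J$) be the cosets of $K$ appearing in the covering. Using that two left cosets of $K$ are equal or disjoint, the complement $G\setminus\bigcup_{j\in J} g_j K$ is again a union of left cosets of $K$. If this complement is empty, the $g_j K$ exhaust all left cosets of $K$, so $|G:K|\le|J|\le n<\infty$ and we are finished. Otherwise there is a coset $xK$ disjoint from every $g_j K$; since $xK\subseteq G$ but meets none of the $K$-cosets of the covering, necessarily $xK\subseteq\bigcup_{H_i\ne K} g_i H_i$, a union of cosets of the remaining subgroups.

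The crux is then a translation argument that removes $K$ entirely. For each coset $g_j K$ in the covering, left-translating the inclusion $xK\subseteq\bigcup_{H_i\ne K} g_i H_i$ by $g_j x^{-1}$ gives $g_j K=(g_j x^{-1})\,xK\subseteq\bigcup_{H_i\ne K}(g_j x^{-1} g_i)H_i$. Substituting these in place of the $K$-cosets (and retaining the cosets that already belong to subgroups $\ne K$) exhibits $G$ as a finite union of cosets involving only the subgroups $\{H_i : H_i\ne K\}$, i.e.\ exactly $k-1$ distinct subgroups. The induction hypothesis then provides one subgroup of finite index, completing the proof. I expect this translation step to be the main obstacle: the non-obvious point is that omitting a single coset of $K$ forces \emph{every} coset of $K$ to be coverable by the other subgroups, and this is precisely what lets the induction discard a subgroup at each stage.
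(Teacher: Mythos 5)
Your proof is correct and follows essentially the same route as the paper: induction on the number of distinct subgroups, with the key coset-translation trick showing that once a single coset of $K$ is missed by the covering, every coset of $K$ is covered by finitely many cosets of the remaining subgroups, so $K$ can be eliminated. The only difference is packaging: you prove the stronger positive form (B.~H.~Neumann's lemma, that some subgroup in any finite coset covering must have finite index), whereas the paper argues by contradiction assuming all indices are infinite --- but the inductive mechanism and the translation step are identical.
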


\begin{proof}
Suponha que $G$ é dado por uma tal união, e que as classes pertencem a $k$
 subgrupos distintos. Usamos indução em $k$, onde o caso $k = 1$ é direto.
Para $k > 1$, seja $H$ um dos subgrupos envolvidos. Como $|G : H| = \infty$,
alguma classe $Hx$ não ocorre na união, e como é disjunta  das classes de $H$ que aparecem, ela está contida na união das classes dos outros subgrupos. Toda classe $Hy$ pode ser escrita como  $Hx\cdot x^{-1}y$, e isso mostra que $Hy$ está também contido em uma união  finita de classes dos outros subgrupos. Isso implica que todas as classes de $H$ que aparecem na união estão contidas em  uniões finitas de classes de outros  subgrupos, e portanto $G$ é a união de finitas classes dos outros $k - 1$ 
subgrupos, o que contradiz a hipótese de indução.    
\end{proof}

\noindent \textit{Prova do Teorema \ref{thm:gpactiononK}.}
Como queremos mostrar que uma das três afirmações é verdadeira, podemos assumir que $|G : N|$ é finito. Caso contrário, já vale $(i)$ e não temos nada a demonstrar.
Então $N$ também é finitamente gerado, digamos por $\{y_1,\ldots , y_d\}$. Se $D(y_j, k)$ for limitada para todo $j=1,\ldots,d$, então $|N : \displaystyle \bigcap_{j=1}^d C_N(y_j)|$ é finito. Como essa interseção coincide com o  centro de $N$, concluímos que $(ii)$ é verdadeira nesse caso.

Vamos assumir, portanto, que $N$ contém  elementos não-FC, isto é, elementos que não possuem apenas finitos conjugados. Como os FC-elementos formam um subgrupo próprio de $N$, podemos também assumir que nenhum dos geradores é um FC-elemento.

Fixe números $k\in \Z$ e $\varepsilon>0$. Para cada $j=1,\ldots,d$, os conjuntos 
$\{y \in N \mid D(y^{-1}y_jy, k) \leq \varepsilon k\}$ e 
$\{y \in N \mid \ell_S(a^{-1}y^{-1}y_jya) \leq \varepsilon k, \mbox{ para todo } a\in G \mbox{ tal que } \ell_S(a) \leq k\}$ são iguais. Isso implica que o conjugado $y^{ya}_j$ de $y_j$ é um dos finitos elementos, e portanto o elemento $ya$ está em uma das  finitas classes laterais do centralizador $C_G(y_j)$.  Em particular, tomando $a = 1$, vemos que o próprio $y$ está em uma dessas finitas classes. Como cada um dos
$y_1,\ldots, y_d$ possui infinitos conjugados em N, ou seja, seus centralizadores possuem índice infinito, a Proposição~\ref{prop:infclasses} mostra que $N$ não é a união de finitas classes dos centralizadores dos geradores, e portanto existe algum $z_k \in N$ com $D(z^{-1}_k y_jz_k, k) > \varepsilon k$ para todo $j=1,\ldots,d$.

Escrevemos $z_k$ em termos dos geradores $y_j$, e escolhemos a primeira subpalavra inicial $x_k$ de $z_k$  para a qual
$D(x^{-1}_k y_jx_k, k) >\varepsilon k$, para algum $j$. Escolhemos também para cada $k$ um tal índice
$j = j(k)$, e para cada $i \leq d$ escrevemos $S(i) := k$ tal que $j(k) = i$. Os finitos conjuntos $S(i)$ particionam $\N$, e portanto um deles, digamos $S(i_0)$, está em $\Fc$.
Seja $l$ o máximo entre os comprimentos, em termos dos geradores de $G$, dos elementos $y_j$. Podemos tomar $k$ grande o suficiente, de modo que $D(y_j, k) \leq \varepsilon k$, pela Proposição~\ref{prop:D1}. Logo,  $x_k \neq 1$, e podemos escrever $x_k = w_ky$, onde $w_k$ é a subpalavra inicial de $z_k$ que antecede $x_k$, e $y$ é algum gerador. Então a  Proposição~\ref{prop:D3} mostra que $D(x^{-1}_k y_jx_k, k) \leq D(w^{-1}_ky_jw_k, k) + 2l \leq \varepsilon k + 2l$. Isso vale para cada $j$, mas para  $i_0$ também temos $D(x^{-1}_k y_{i_0}x_k, k) > \varepsilon k$. Segue que  $\Fc\lim \dfrac{D(x^{-1}_ky_{i_0}x_k, k)}{k} =\varepsilon$. Sempre vale que  $\ell_S(x) \leq D(x, k)$,
para todo $k$, e portanto a desigualdade anterior mostra que $ \ell_S(x^{-1}_k y_jx_k) \leq \varepsilon k + 2l$, para cada $j$, e assim, se $y \in N$ tem comprimento $m$ nos geradores $y_j$, obtemos $l(x^{-1}_kyx_k) \leq m \varepsilon k + 2ml$.

Portanto, multiplicação à esquerda pela  sequência ${x^{-1}_kyx_k}$ preserva  sequências moderadas, e induz uma isometria em  $K$. Definimos $\varphi(y)$ como essa isometria. Então $\varphi$ é um  homomorfismo $N \to I$. 
Aplicando a Proposição~\ref{prop:D1} a esse homomorfismo, obtemos que $\varphi(y_{i_0} )\neq 1$,
pois $\Fc \lim \dfrac{D(x^{-1}_k y_{i_0}x_k, k)}{k} \neq 0$. Por outro lado, $d_K(\varphi(y_{i_0} )(\alpha), \alpha) \leq \varepsilon$ para todo $\alpha \in K$, o que mostra que $\varphi(y_{i_0} )$ pode ser tomado em qualquer vizinhança dada da identidade em $I$, tomando $\varepsilon$ pequeno o suficiente, como queríamos demonstrar.
\hfill$\square$

\chapter{Espaços hiperbólicos}
\label{cap7}
\section{Geometria hiperbólica}
\label{sec:esphiperbólico}
A geometria hiperbólica surge após uma longa história de tentativas  de estabelecer uma resposta à seguinte questão: 

\begin{quote}\textit{``Na geometria euclidiana, o axioma das retas paralelas é independente, ou decorre dos outros axiomas?''}
\end{quote}

Uma resposta foi dada por N.I.~Lobachevsky, J.~Bolyai e C.F.~Ga\-uss de forma independente no início do século XIX. Eles mostraram que o axioma de paralelas é de fato independente. Foram desenvolvidos modelos de geometrias não-euclidianas, onde o quinto postulado de Euclides é substituído pelo axioma:
\begin{quote}
\textit{``Para cada ponto $P$ que não pertence a uma reta $L$, existem infinitas retas contendo $P$ e que são paralelas a $L$.''}    
\end{quote}  
O primeiro modelo de espaço satisfazendo esse axioma foi o do plano hiperbólico,  construído por E.~Beltrami em 1868. Outros modelos importantes que  vieram depois foram os de Klein (modelo projetivo), Poincaré (modelo do disco e modelo do semiplano superior) e Lorentz (modelo do hiperboloide). Em \cite{milnor1982hyperbolic}, há uma bela descrição da história do desenvolvimento da geometria hiperbólica. Os exemplos de espaços hiperbólicos apresentados a seguir servirão como motivação para definições mais gerais de espaços hiperbólicos de Gromov e Rips. 

Veremos nesta seção algumas propriedades básicas da geometria hiperbólica, começando com o modelo do semiplano superior de Poincaré $\hip$. Em seguida, definiremos o espaço hiperbólico \mbox{$n$-di}\-men\-sional $\mathbb{H}^n$, com o objetivo de entender como a geometria do plano hiperbólico pode ser generalizada para espaços de dimensão maior. Existem muitos textos clássicos que podem ser consultados para encontrar demonstrações dos resultados e outras propriedades aqui brevemente discutidas. O leitor pode se referir, por exemplo,  a \cite{anderson2006hyperbolic}, \cite{beardon2012geometry} ou \cite{katok1992fuchsian}. 

\subsection{O modelo do semiplano superior de Poincaré}

Seja $\hip=\{z\in\mathbb{C}\mid \im(z)>0\}$ o semiplano superior em $\C$, munido da métrica definida por
\begin{equation}
\label{distformulaH}
\cosh{d(z,w)}= 1+\dfrac{|z-w|^2}{2\,\im(z)\,\im(w)}.
\end{equation}
Alternativamente, podemos definir uma métrica riemanniana em $\mathbb{H}^2$ por: $$ds^2 = \dfrac{dx^2+dy^2}{y^2},$$ onde $x=\re(z)$ e $y =\im(z).$ Com essa métrica, obtemos que o comprimento de uma curva $\alpha:[a,b]\to \hip$  dada por $\alpha(t)=x(t)+iy(t)$ é: 
$$L(\alpha) = \int_{a}^{b}\dfrac{\sqrt{x'(t)^2+y'(t)^2}}{y(t)}dt.$$ 

Poderíamos também ter definido a métrica em $\hip$ determinando que a distância entre dois pontos $p,q\in \hip$ é o ínfimo dos comprimento de todas as curvas suaves por partes ligando $p$ a $q$, isto é, $$d_{\hip}(p,q)=\inf_{\alpha} L(\alpha),$$ onde o ínfimo é tomado entre todas as curvas diferenciáveis por partes ligando os dois pontos (essa condição facilita a demonstração da desigualdade triangular e a definição seria equivalente se considerássemos apenas curvas diferenciáveis). 

\begin{exercise}
Verifique que as funções $d$ e $d_{\hip}$ são iguais e definem uma métrica em $\hip$.
\end{exercise}

O espaço $\hip$ munido da métrica $d_{\hip}$ é chamado  \textit{plano hiperbólico}\index{plano hiperbólico}.

\subsubsection*{Isometrias} 
O grupo $\SL(2,\R)$, de matrizes reais $2 \times 2$ com determinante igual a 1, age em $\mathbb{H}^2$ através de transformações fracionais lineares:
\begin{equation}
\label{eq:mobiusaction}
\left( \begin{array}{cc}
a & b \\ 
c & d
\end{array} \right)\cdot z = \dfrac{az+b}{cz+d}.  
\end{equation}

\begin{exercise}
Verifique que a ação acima mapeia $\hip$ em $\hip$ preservando a métrica.
\end{exercise}

As transformações do tipo $T: z \mapsto \dfrac{az+b}{cz+d}$, associadas a matrizes $g= \left(\begin{array}{cc}
a & b \\ 
c & d
\end{array}\right) \in \SL(2,\R)$, formam um grupo de homeomorfismos de $\hip$, e são chamadas de \textit{transformações de Möbius}\index{transformações de Möbius}. Note que a matriz que representa a composição de duas transformações de Möbius é dada pelo produto das matrizes correspondentes, bem como a matriz que representa $T^{-1}$ é a inversa da matriz que representa $T$. Cada transformação de Möbius $T$ está representada na verdade por um par de  matrizes $\pm g$. Por isso, o conjunto de todas as transformações de Möbius de $\hip$ é denotado por $\PSL(2,\R)$ e corresponde ao grupo $\SL(2,\R)/\{\pm \id\}$.

Essa ação é isométrica, transitiva, e bitransitiva sob pares de pontos equidistantes. Além disso, o grupo $\mathrm{Isom}^+(\mathbb{H}^2)$ das isometrias de $\hip$ que preservam orientação é exatamente $\PSL(2,\R)$. É um bom exercício verificar todas essas propriedades, cujas demonstrações também podem ser encontradas nos livros mencionados acima.  

Sejam $w \in \mathbb{H}^2$ e $R \in \R_{>0}$ dados. Então um {\it círculo hiperbólico} de raio $R$ centrado em $w$ é o conjunto 
$$C_{\mathbb{H}^2}(w,R) = \{z \in \mathbb{H}^2 \mid d_{\mathbb{H}^2}(z,w)=R\}.$$

 \begin{exercise}\label{exer-circulo}
 Mostre que o círculo hiperbólico $C_{\mathbb{H}^2}(i,R) $ em $\hip$ coincide, como conjunto, com um círculo euclidiano, com centro $z= i \cosh(R)$ e raio $\delta =\sinh(R)$.
\end{exercise}

Observamos que a ação de $\PSL(2,\R)$ em $\hip$ leva \textit{círculos generalizados} em círculos generalizados, onde estamos nos referindo com o termo círculo generalizado a qualquer elemento do conjunto dos círculos euclidianos e das retas euclidianas em $\C$, que podem ser representados por uma equação do tipo $$\alpha z \Bar{z} + \beta z + \Bar{\beta}\Bar{z} + \rho = 0.$$ onde $\alpha, \rho \in \R$ e $\beta \in \C$. Caso $\alpha=0$, teremos uma reta euclidiana e, caso contrário, uma circunferência. Além disso, o subconjunto dos círculos generalizados que são ortogonais ao eixo real também é mantido invariante por essa ação.

\begin{exercise}
\label{exer:geodésicas}
    Mostre que o conjunto dos círculos generalizados e o conjunto dos círculos generalizados ortogonais ao eixo real são mantidos invariantes pela ação de transformações de Möbius (você pode pensá-las como definidas em $\C$). Além disso, a ação é transitiva em cada um dos conjuntos.
\end{exercise}

\subsubsection*{Geodésicas} 
Um \textit{caminho geodésico} em um espaço métrico $X$ é um caminho de comprimento mínimo que une dois pontos em $X$ e é parametrizado pelo comprimento de arco. Uma \textit{geodésica}\index{geodésica} em $X$ é uma curva em que cada segmento local é um caminho geodésico.

Para descrever as geodésicas em $\hip$, vamos primeiro considerar um caso especial de dois pontos em uma reta ortogonal ao eixo real $\im(z)=0$. 

Sejam $z_0 = y_0i$, $z_1 = y_1i \in \hip$, com $0 < y_0 < y_1$. Vamos verificar que 
$$d(z_0, z_1) = \log\left(\frac{y_1}{y_0}\right)\text{ e }\phi(t) = i y_0 e^{t},\ t \in [0, \log\left(\frac{y_1}{y_0}\right)]$$ 
é o único caminho geodésico parametrizado pelo comprimento de arco que une $z_0$ e $z_1$. 
Temos:
$$d(z_0, z_1) \leq L(\phi) = \int_{0}^{\log(\frac{y_1}{y_0})}\dfrac{y_0 e^{t}}{y_0 e^{t}}dt = \log\left(\frac{y_1}{y_0}\right).$$
Por outro lado, suponha que $\psi : [a, b] \to \hip$ seja outro caminho de $z_0$ para $z_1$. Seja $\psi(t) = \psi_x(t) + i\psi_y(t)$. Então
\begin{align*}
L(\psi) &= \int_{a}^{b}\dfrac{\sqrt{\psi_x'(t)^2+\psi_y'(t)^2}}{\psi_y(t)}dt 
\geq \int_{a}^{b}\dfrac{|\psi_y'(t)|}{\psi_y(t)}dt
\geq \int_{a}^{b}\dfrac{\psi_y'(t)}{\psi_y(t)}dt \\
&= \log(\psi_y(b)) - \log(\psi_y(a)) = \log\left(\frac{y_1}{y_0}\right).
\end{align*}
A igualdade nesta fórmula implica que $\psi_x'(t) = 0$ para todo $t\in [a,b]$ e $\psi_y'(t) > 0$, assim $\psi(t)$ é igual a $\phi(t)$ a menos de uma reparametrização monótona. 

Isso mostra que o raio vertical $\re(z) = 0$ é uma geodésica em $\hip$. Aplicando as isometrias de $\hip$, tendo em vista o Exercício~\ref{exer:geodésicas}, obtemos imediatamente que as geodésicas de $\hip$ são semicírculos centrados no eixo real $\im(z)=0$ e semirretas verticais. 
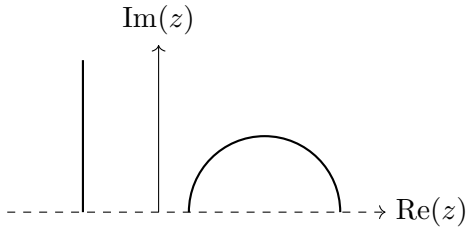
\begin{figure}[!ht]
\label{fig:geodesicsH}
\centering
\begin{tikzpicture}[scale=1]
		\draw[dashed, ->] (-2,0) -- (3,0) node [right] {$\re(z)$};
		\draw[->] (0,0) -- (0,2.2) node [above] {$\im(z)$};

           \coordinate (A) at (-1,0.1);
  \coordinate (B) at (-1,0);
  \coordinate (C) at (-0.9,0);
  \rightAngle{A}{B}{C}{0.2};

           \coordinate (D) at (0.4,0.1);
  \coordinate (E) at (0.4,0);
  \coordinate (F) at (0.5,0);
  \rightAngle{D}{E}{F}{0.2};

             \coordinate (G) at (2.4,0.1);
  \coordinate (H) at (2.4,0);
  \coordinate (I) at (2.5,0);
  \rightAngle{I}{H}{G}{0.2};

		\draw[thick](-1,2) -- (-1,0);
  
		\draw[thick](0.4,0) arc (180:0:1cm) --(2.4,0);	
\end{tikzpicture}
\caption{Geodésicas em  $ \mathbb{H}^2 $. }
\end{figure}

Como corolário, obtemos que, para qualquer par de pontos $z, w \in \mathbb{H}^2$, existe uma única geodésica unindo tais pontos. Além disso, para cada ponto $z$ que não pertence a uma geodésica $L$, existem infinitas geodésicas contendo $z$ e que não se intersectam com $L$.


\subsubsection*{Círculos e discos em $\hip$} 

O \textit{círculo} hiperbólico de raio $R$ com centro $p\in \hip$ é definido por 
$$ C(p,R) = \{x \in \mathbb{H}^2 \mid d(p, x) = R\}.$$
O círculo $C(p,R)$ delimita um \textit{disco} 
$$ D(p,R) = \{x \in \mathbb{H}^2 \mid d(p, x) \le R\}.$$

Como já vimos no Exercício~\ref{exer-circulo}, todo círculo hiperbólico no semiplano superior $\hip$ é um círculo euclidiano (com centro e raio diferentes).

Vamos calcular o comprimento hiperbólico de um círculo e a área de um disco. 

Usando isometrias e o observação anterior, podemos assumir que nosso círculo $C$ é um círculo euclidiano com centro em \((0, y_0)\) e raio euclidiano \(\delta < y_0\). Em coordenadas o círculo é dado por:
\begin{equation*}
\left\{
\begin{aligned}
x(t) &= \delta  \cos(t); \\
y(t) &= y_0 + \delta  \sin(t),\ t\in [0, 2\pi].
\end{aligned}
\right.
\end{equation*}
Então o comprimento hiperbólico de $C$ é
$$L = \int_C \dfrac{\sqrt{dx^{2}+dy^{2}}}{y} = \int _{0}^{2\pi}\dfrac{\delta }{y_{0}+\delta \sin (t)}\,dt = \dfrac{2\pi \delta }{\sqrt{y_{0}^{2}-\delta ^{2}}}.
$$
Resta traduzir esse resultado para os parâmetros hiperbólicos. Observe que os pontos $y_{0}+\delta $ e $y_{0}-\delta $ do círculo são diametralmente opostos. Segue-se que o raio hiperbólico 
$$ R=\frac{1}{2}\ln \left(\frac{y_{0}+\delta }{y_{0}-\delta }\right),
\text{ então } \sinh(R) = \dfrac{\delta }{\sqrt{y_{0}^{2}-\delta ^{2}}}.
$$
Isso implica que $L = 2\pi\sinh(R)$.

Para calcular a área hiperbólica do disco $D(p, R)$, precisamos integrar o elemento de área \(dA=\frac{dx\,dy}{y^{2}}\) sobre a região delimitada por $C$. Deixamos os detalhes do cálculo como exercício. Como resultado, obtemos 
$$A = 4\pi \sinh^{2}\left(\frac{R}{2}\right).$$

Observe que, para um raio $R$ suficientemente grande, o comprimento e a área dependem \textit{exponencialmente} do raio, enquanto que, para $R$ próximo de zero, as fórmulas convergem para o comprimento e a área euclidianos.

\subsubsection*{Classificação das isometrias} 
As isometrias de $\hip$ que preservam orientação são classificadas em termos dos seus pontos fixos. Para isto, definimos o \textit{bordo}, ou \textit{fronteira ideal}, do espaço $\mathbb{H}^2$ como $\partial \mathbb{H}^2 = \{z \in \C \mid \im(z) =0 \}\cup \{\infty \}$. As isometrias são então classificadas entre os seguintes tipos:
\begin{itemize}
    \item $g$ é dita \textit{elíptica}\index{isometria elíptica} se ela possui um único ponto fixo, e ele está em $\hip$;
    \item $g$ é dita \textit{parabólica}\index{isometria parabólica} se ela possui um único ponto fixo, contido em $\partial \mathbb{H}^2$;
    \item $g$ é dita \textit{hiperbólica}\index{isometria hiperbólica} se ela possui dois pontos fixos, ambos em $\partial \mathbb{H}^2$. Uma transformação hiperbólica $g$ deixa invariante uma geodésica $\gamma$ em $\hip$, a qual liga os dois pontos fixos do bordo e é chamada de \textit{eixo}\index{eixo} de $g$. Ao longo dessa geodésica, $g$ age como uma translação por uma distância positiva $\tau_g$, chamada o \textit{número de translação}\index{número de translação} de $g$. 
\end{itemize}

De maneira geral, dada $g\in \mathrm{Isom}(X)$, onde $(X,d)$ é um espaço métrico, podemos definir o \textit{número de translação} de $g$ por 
$$\tau _g := \displaystyle \inf _{x\in X}d(x,gx).$$
Com essa noção, a classificação de isometrias de $\hip$ que preservam orientação dada acima se traduz em:\begin{itemize}
    \item $g$ é  \textit{elíptica} se $\tau_g = 0$ e o ínfimo é atingido em $X$;
    \item $g$ é \textit{parabólica}  se $\tau_g = 0$ e o ínfimo não é atingido em $X$;
    \item $g$ é  \textit{hiperbólica} se $\tau_g > 0$.
\end{itemize}

Levando em consideração que $\mathrm{Isom}^+(\mathbb{H}^2) \cong \PSL(2,\R)$, a classificação pode ser feita a partir dos traços das matrizes $\pm g  \in \SL(2,\R)$ que representam uma isometria. Lembramos que o traço de $g= \left(\begin{array}{cc}
a & b \\ 
c & d
\end{array}\right) $ $\in \SL(2,\R)$ é $\tr(g)=a+d$. Assim, obtemos a seguinte caracterização:

\begin{proposition}
\label{prop:caracterizaçãoisomH}
Seja $g \in \mathrm{Isom}^+(\hip)$. Então 
    \begin{enumerate}[(1)]
        \item $g$ é elíptica se, e somente se, $\tr(g) \in (-2,2)$;
        \item $g$ é parabólica se, e somente se, $\tr(g) \in \{-2,2\}$;
        \item $g$ é hiperbólica se, e somente se, $\tr(g) \notin [-2,2]$.
    \end{enumerate}
\end{proposition}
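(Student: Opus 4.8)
O objetivo é caracterizar o tipo de uma isometria $g \in \mathrm{Isom}^+(\hip) = \PSL(2,\R)$ em termos de $\tr(g)$. O plano é usar a classificação dada anteriormente pelos pontos fixos e a análise do número de translação $\tau_g$, relacionando-a com um cálculo algébrico direto dos pontos fixos da transformação de Möbius associada.

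Primeiro, observo que a ação de $g = \left(\begin{smallmatrix} a & b \\ c & d\end{smallmatrix}\right)$ em $\C \cup \{\infty\}$ tem pontos fixos dados pela equação $cz^2 + (d-a)z - b = 0$ (no caso $c \neq 0$), cujo discriminante é $(a+d)^2 - 4(ad-bc) = \tr(g)^2 - 4$, já que $\det(g) = ad - bc = 1$. O sinal deste discriminante controla a natureza dos pontos fixos. Se $\tr(g)^2 < 4$, isto é $\tr(g) \in (-2,2)$, as raízes são complexas conjugadas, logo exatamente uma delas está em $\hip$ (e a outra no semiplano inferior), caracterizando uma isometria elíptica. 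Se $\tr(g)^2 > 4$, ou seja $\tr(g) \notin [-2,2]$, as raízes são reais e distintas, ambas em $\partial\hip = \R \cup \{\infty\}$, o que corresponde ao caso hiperbólico. Se $\tr(g)^2 = 4$, isto é $\tr(g) \in \{-2,2\}$, há uma única raiz real (dupla) no bordo, o caso parabólico.

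Em seguida, para conectar essa análise à classificação por $\tau_g$ dada no texto, procederia por conjugação: como $\PSL(2,\R)$ age isometricamente e a classificação é invariante por conjugação (tanto $\tr$ quanto o tipo de ponto fixo são invariantes de conjugação), posso reduzir cada caso a uma forma normal. No caso elíptico, conjugo de modo que o ponto fixo seja $i$, obtendo uma rotação que atinge o ínfimo $\tau_g = 0$ em $X$. No caso hiperbólico, conjugo de modo que os pontos fixos sejam $0$ e $\infty$, obtendo $g(z) = \lambda^2 z$ com $\lambda^2 \neq 1$, que age como translação ao longo do eixo imaginário, fornecendo $\tau_g > 0$. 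No caso parabólico, conjugo de modo que o único ponto fixo seja $\infty$, obtendo $g(z) = z + b$, onde $\tau_g = 0$ mas o ínfimo não é atingido em $\hip$.

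O cuidado principal será verificar que os casos se distinguem exatamente como afirmado, sem sobreposição nem omissão --- em particular, confirmar que no caso $\tr(g) \in \{-2,2\}$ a transformação não é a identidade (o que ocorreria apenas para $\pm\id$, já excluída ao passar ao quociente $\PSL$) e que portanto possui de fato um único ponto fixo no bordo. A tricotomia $\tr(g)^2 <4$, $=4$, $>4$ cobre exaustivamente $\R$, e cada caso algébrico corresponde biunivocamente a um dos três tipos geométricos, de modo que a equivalência ``se, e somente se'' segue automaticamente. O tratamento em separado do caso $c = 0$ (pontos fixos incluindo $\infty$) é rotineiro e apenas confirma o mesmo padrão via o traço.
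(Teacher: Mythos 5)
O livro não apresenta demonstração desta proposição — ela é deixada como exercício logo após o enunciado — portanto não há prova do texto com a qual comparar; avalio sua proposta por mérito próprio. Seu argumento está correto e é o caminho padrão: o discriminante da equação de pontos fixos $cz^2+(d-a)z-b=0$ vale $\tr(g)^2-4$ (usando $\det(g)=1$), e a tricotomia $\tr(g)^2<4$, $=4$, $>4$ produz exatamente um ponto fixo em $\hip$, um ponto fixo em $\partial\hip$, ou dois pontos fixos em $\partial\hip$; como os três casos algébricos são exaustivos e mutuamente exclusivos, e os três tipos geométricos também o são, as recíprocas seguem automaticamente. A redução por conjugação às formas normais ($z\mapsto\lambda^2 z$, $z\mapsto z+b$, rotação em torno de $i$) fecha corretamente a ponte com a classificação via número de translação $\tau_g$, e o caso $c=0$ é de fato rotineiro.

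Um único reparo de precisão: sua justificativa para descartar a identidade está mal formulada. A identidade de $\PSL(2,\R)$ não é ``excluída ao passar ao quociente'' — o quociente apenas identifica as matrizes $\pm I$ num único elemento, que continua existindo e tem traço $\pm 2$ sem ser parabólico (nem se encaixa na definição de elíptica por ponto fixo único dada no texto). O enunciado da proposição pressupõe tacitamente $g\neq\id$, e a verificação relevante — que você de fato faz — é que as únicas matrizes de traço $\pm 2$ agindo trivialmente são $\pm I$, de modo que todo $g\neq\id$ com $\tr(g)=\pm 2$ possui um único ponto fixo, situado no bordo. Basta reformular essa frase para que a prova fique completa.
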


\begin{exercise}
    Provar a Proposição \ref{prop:caracterizaçãoisomH}.
\end{exercise}

\subsubsection*{Outros modelos}
Existem diversos outros modelos para o plano hiperbólico, sendo o modelo do disco de Poincaré um dos mais usados. O disco de Poincaré é o conjunto $\mathbb{D} = \{z \in \C \mid |z| < 1\}$, munido da métrica 
definida de modo que a função $h:\mathbb{H}^2 \to \mathbb{D}$ dada por $h(z) = \dfrac{z-i}{iz-1}$ seja uma isometria. Isso permite mostrar que as geodésicas no modelo do disco de Poincaré são os arcos de círculos e diâmetros em $\mathbb{D}$ que cortam o bordo de $\mathbb{D}$ ortogonalmente.

\begin{figure}[!ht]
\label{fig:geodesicsD}
\centering
\begin{tikzpicture}[scale=1]
		\draw[dashed, ->] (0,0) circle (1);
        \draw (1,-1) node {$\mathbb{D}$};
  
		\draw[thick] (-0.707,0.707) -- (0.707,-0.707) ;
 \coordinate (O) at (0,0);
  \coordinate (X) at (0.707,-0.707);
  \coordinate (Y) at (0,-1.4);
  \rightAngle{O}{X}{Y}{0.2};

\coordinate (F) at (-0.707,0.707);
  \coordinate (G) at (-1.4,0);
  \rightAngle{G}{F}{O}{0.2};
  
  \coordinate (B) at (1,0);
  \coordinate (C) at (1,-1);
  \rightAngle{O}{B}{C}{0.2};

    \coordinate (D) at (0,1);
  \coordinate (E) at (-1,1);
  \rightAngle{E}{D}{O}{0.2};
  
		\draw[thick](1,0) to [bend left=45] (0,1);	
\end{tikzpicture}
\caption{Geodésicas em  $ \mathbb{D} $. }
\end{figure}

Uma vantagem do modelo do disco é que o disco unitário $\mathbb{D}$ é um conjunto limitado em $\R^2$. Isso facilita a visualização de todo o plano hiperbólico em uma folha de papel e torna mais simples fazer alguns desenhos. Uma vantagem do modelo do semiplano superior com respeito ao modelo do disco é a possibilidade de uso das coordenadas cartesianas em cálculos.

Referimo-nos a \cite{beardon2012geometry} para as descrições dos modelos de Klein e hiperboloide. Uma discussão mais aprofundada sobre os diferentes modelos de espaços hiperbólicos pode ser encontrada em \cite{Geometry2_EMS_part1}.

\subsection{O espaço hiperbólico \texorpdfstring{$n$}{n}-dimensional}

Semelhantemente à maneira como o grupo $\PSL(2, \R)$ age no plano hiperbólico, podemos identificar $\PSL(2, \C)$ com o grupo de isometrias do espaço hiperbólico $3$-dimensional. Este espaço  pode ser realizado no modelo do semiespaço superior
$$\mathbb{H}^3 = \{(z, t) : z \in \mathbb{C},\ t > 0 \}$$
com a métrica riemanniana de curvatura constante $-1$ dada por $ds^2 = \frac{|dz|^2 + dt^2}{t^2}$. Dada $\gamma = \begin{pmatrix} a & b \\ c & d \end{pmatrix} \in \mathrm{SL}(2,\mathbb{C})$, a ação em $(z, t) \in \mathbb{H}^3$ é:

$$\gamma \cdot (z, t) = \left( \frac{(az+b)\overline{(cz+d)} + a\bar{c}t^2}{|cz+d|^2 + |c|^2 t^2},\ \frac{t}{|cz+d|^2 + |c|^2 t^2} \right).$$
Deixamos para o leitor a verificação de que essa ação é isométrica.

Mais geralmente, podemos definir o \textit{espaço hiperbólico $n$-dimen\-sio\-nal}\index{espaço hiperbólico}  a partir do modelo do semiespaço superior $$\mathbb{H}^n = \{(x_1, \ldots,x_n) \in \R^n \mid x_n>0\},$$ com métrica riemanniana $ds^2= \dfrac{dx_1^2+\ldots+dx_n^2}{x_n^2}$. 

A variedade riemanniana  $(\mathbb{H}^n, ds^2)$ é usualmente chamada de espaço hiperbólico \textit{real} $n$-dimensional, para distingui-lo  de outros espaços também ditos hiperbólicos (por exemplo, espaço hiperbólico complexo, espaço hiperbólico quaterniônico, espaços Gromov-hiperbólicos, etc.).
Usaremos a expressão espaço hiperbólico para $\mathbb{H}^n$, com a adição do adjetivo real caso outras  noções de hiperbolicidade estejam envolvidas na discussão. Qualquer que seja $n\geq 2$, o espaço $\mathbb{H}^n$ é uma variedade riemanniana completa.

As isometrias de $\mathbb{H}^n$ que preservam orientação são dadas pelas transformações de Möbius generalizadas, que passamos a descrever. 
Vamos pensar na esfera $\s^m$ como a compactificação a um ponto do espaço euclidiano $ \R^m$, isto é, $\s^m = \widehat{\R^m} = \R^m \cup \{\infty\} $.

Consequentemente, olharemos a  compactificação a um ponto de um hiperplano em $\R^n$ como uma esfera de raio infinito, e a  compactificação a um ponto de uma reta em $\R^n$ como um círculo.
\begin{definition}
    A \textit{inversão}\index{inversão} na esfera $\Sigma_0 = \{x \in \R^{n} \mid |x| = r\}$ é o mapa $J_{\Sigma_0}:\R^{n}\cup\{\infty \} \to \R^{n}\cup\{\infty \} $ dado por 
\begin{equation*}
\left\{\begin{array}{ccl}
     J_{\Sigma_0}(x) &=& r^2 \frac{x}{|x|^2}, \mbox{ se } x \neq 0, \infty,  \\
     J_{\Sigma_0}(0) &=& \infty, \\
     J_{\Sigma_0}(\infty) &=& 0.
\end{array}
    \right.
\end{equation*}
\end{definition}

Definimos a inversão na esfera $\Sigma_a = \{x \in \R^{n} \mid |x - a| = r\} $ pela
fórmula $J_{\Sigma_a} = T_a \circ J_{\Sigma_0 } \circ T_{-a}: \R^{n}\cup\{\infty \}  \to \R^{n}\cup\{\infty \} $,
\begin{equation*}
     J_{\Sigma_a}: x \mapsto r^2 \frac{x-a}{|x-a|^2}+a, \,\,\, J_{\Sigma_a}(a) = \infty, \,\,\, J_{\Sigma_a}(\infty)=a,
\end{equation*}
onde $T_a: x \mapsto x+a$ é a translação pelo vetor $a\in \R^{n}$. Inversões mapeiam esferas  em esferas e  círculos em círculos. Elas também preservam ângulos euclidianos. Veremos a reflexão sobre um hiperplano euclidiano como uma inversão que 
fixa $\infty$.

\begin{definition}
Uma \textit{transformação de Möbius de $\R^n$}\index{transformações de Möbius} (ou, mais precisamente, de $\s^n$) é uma composição de finitas inversões em $\R^n$. O grupo de todas as transformações de Möbius de $\R^n$ é denotado por $\mathrm{Mob}(\R^n)$ ou $\mathrm{Mob}(\s^n)$.
\end{definition}
Em particular, transformações de Möbius preservam ângulos, enviam círculos em círculos e esferas em esferas.

Observamos que a  métrica $ds^2$ em $\mathbb{H}^n$ é claramente invariante pelas translações horizontais
euclidianas  $x \mapsto x + v$, onde $v = (v_1, \ldots,v_{n-1}, 0)$, já que as translações preservam a métrica euclidiana e a coordenada $x_n$. Analogamente, $ds^2$ é invariante por dilatações
$h : x \mapsto \lambda x$,  $\lambda > 0$
já que $h$ reescala tanto o numerador quanto o denominador na expressão da métrica. Por fim, $ds^2$ é invariante
por rotações que fixam o eixo $x_n$ (pois  preservam a
coordenada $x_n$). O grupo gerado por essas isometrias de $\mathbb{H}^n$ age transitivamente em $\mathbb{H}^n$, o que implica que $\mathbb{H}^n$ é uma variedade riemanniana homogênea.

Podemos verificar, por meio de um cálculo, que a inversão $J_{\Sigma_0}$ na esfera unitária $\Sigma_0$ centrada na origem é uma isometria de $\mathbb{H}^n$. Todas as outras transformações de Möbius que preservam $\mathbb{H}^n$ são composições da inversão $J_{\Sigma_0}$ com translações horizontais e dilatações. Portanto, todo elemento de $\mathrm{Mob}(\s^n)$ que deixa $\mathbb{H}^n$ invariante é uma isometria de $\mathbb{H}^n$. Usando o modelo da bola do espaço hiperbólico, podemos mostrar que, de fato, este conjunto coincide com $\mathrm{Isom}(\mathbb{H}^n)$, como pode ser visto na Seção 4.2 de \cite{kd}.

Utilizando o outro modelo do espaço hiperbólico $n$-dimensional, o modelo do hiperboloide, é possível identificar o grupo de isometrias completo com o grupo ortogonal projetivo $\mathrm{PO}(n,1)$. Sugerimos que o leitor se refira a \cite{thurston1997three} e \cite{ratcliffe2006hyperbolic} para todos os detalhes sobre este modelo.

De forma semelhante ao caso planar, podemos observar que as geodésicas de $\mathbb{H}^n$ são arcos de círculos ortogonais ao plano $x_n=0$ ou segmentos de retas verticais. Além disso, para cada geodésica $\alpha$, existe uma isometria de $\mathbb{H}^n$ que leva $\alpha$ a um segmento do eixo $x_n$. Dado qualquer par de pontos em $\mathbb{H}^n$, existe uma única geodésica que os une.

\subsubsection*{Esferas e horoesferas em $\mathbb{H}^n$} 
Escolha um ponto $p \in \mathbb{H}^n$ e um numero real positivo $R$. A \textit{esfera hiperbólica}\index{{esfera hiperbólica}} de raio $R$ centrada em $p$ é o conjunto 
$$ S(p,R) = \{x \in \mathbb{H}^n \mid d(p, x) = R\}.$$
A região delimitada por uma esfera é uma \textit{bola hiperbólica}\index{bola hiperbólica}
$$ B(p,R) = \{x \in \mathbb{H}^n \mid d(p, x) \leq R\}.$$

\begin{exercise}
\label{exerc:círculos}
    \begin{enumerate}
        \item Prove que $S(e_n,R)\subset \mathbb{H}^n$, $e_n = (0,\ldots, 0,1)$,  coincide com a esfera euclidiana de centro $\cosh(R)e_n$ e raio $\sinh(R)$.
        \item Suponha que  $S \subset \mathbb{H}^n$ é a esfera euclidiana com raio $r$ e centro $x= (x_1,\ldots, x_n)$ de modo que  $x_n = a$. Então $S$ coincide com a esfera hiperbólica $S(p, R)$, onde $R = \frac{1}{2}(\log(a + r) -\log(a- r))$.
    \end{enumerate}
\end{exercise}

A área de uma esfera e o volume de uma bola podem ser calculados por integração, generalizando o cálculo em $\hip$. Isso fornece a área
$$A_{n-1}(R)=c_{n-1}\sinh ^{n-1}(R),$$
onde \(c_{n-1}\) é a área da superfície de uma esfera unitária euclidiana, dada pela fórmula
\(c_{n-1}=\dfrac{2\pi ^{\frac{n}{2}}}{\Gamma \left(\frac{n}{2}\right)}.\)

O volume de uma bola $B(p,R)$ é encontrado integrando a área da superfície ao longo do raio de \(0\) a \(R\):
$$V_{n}(R)=c_{n-1}\int _{0}^{R}\sinh ^{n-1}(t)dt.$$

Por exemplo, em $\mathbb{H}^3$ temos:
$$A_2(R) = 4\pi\sinh^2(R),\ V_3(R) = \pi(\sinh(2R)-2R).$$

\begin{exercise}
Verifique estas fórmulas.   
\end{exercise}

\begin{remark}
Constatamos que o volume de bolas e a área de esferas hiperbólicas crescem exponencialmente em relação ao raio. Disso decorre que a medida (ou volume) de uma  bola hiperbólica suficientemente grande se concentra fortemente em um anel estreito próximo à sua fronteira. Isso difere notavelmente do espaço euclidiano, onde a medida se distribui uniformemente por toda a extensão da bola. Esse fenômeno tem importantes consequências geométricas e analíticas, além de aplicações em áreas modernas, como aprendizado de máquina.
\end{remark}

No modelo do semiespaço superior de $\mathbb{H}^n$, podemos considerar também as esferas euclidianas tangentes à fronteira. Na métrica hiperbólica, essas esferas têm raio infinito. Essa construção nos fornece uma classe interessante de subespaços, denominados horoesferas, que discutiremos agora.
 
Podemos definir horoesferas em um espaço métrico qualquer $(X,d)$. Considere um raio geodésico $\gamma: [0, \infty)\to X$. A função $b_{\gamma}(x,t)=d(x,\gamma(t))-t$ é uma  função decrescente de $t$ (devido à desigualdade triangular) e limitada por baixo por $-d(x, \gamma(0))$ (veja Figura \ref{fig:busemann}). Portanto, existe um limite \begin{equation*}
b_{\gamma}(x)=\lim_{t\to \infty}b_{\gamma}(x,t).
\end{equation*}

\begin{figure}[!ht]
\centering
\begin{tikzpicture}[scale=1]
		\draw[dashed] (-2.9,0) -- (3.9,0);

 \draw (2,0) arc(0:60:3);
\draw (1.25,1.98) -- (1.25, 2.905);
\draw (1.25,2.9)arc(104.48:120:3);

  \node[fill=black, circle, inner sep=1pt] at (1.25,1.98) {};
  \node[fill=black, circle, inner sep=1pt] at (1.25,2.9) {};
  \node[fill=black, circle, inner sep=1pt] at (0.5,2.6) {};
  \node at (0.33,2.77) {\tiny{$\gamma(0)$}};
\node at (1.1,1.8) {\tiny{$\gamma(t)$}};
\node at (1.35,3)   {\tiny{$x$}};
 \node at (0.75,2.2)   {\tiny{$t$}}; 

           \coordinate (A) at (2,0.1);
  \coordinate (B) at (2,0);
  \coordinate (C) at (2.1,0);
  \rightAngle{A}{B}{C}{0.2};

\end{tikzpicture}
\caption{Função de Busemann}
\label{fig:busemann}
\end{figure}

\begin{definition}
A função $b_{\gamma}: X \to \R$ é chamada de  \textit{função de Busemann}\index{função de Busemann} do raio $\gamma$.
\end{definition}

\begin{proposition}
Se $\gamma_1$ e $\gamma_2$ são dois raios assintóticos, então a função $b_{\gamma_1} - b_{\gamma_2}$ é uma função constante.
\end{proposition}

\begin{proof}
Dois raios \(\gamma _{1}\) e \(\gamma _{2}\) são assintóticos se existe uma constante \(C > 0\) tal que
$$d(\gamma _{1}(t),\gamma _{2}(t))\le C\quad \text{para\ todo\ }t\ge 0.$$ 
Fixe um ponto \(x \in X\). Queremos avaliar a diferença das funções de Busemann 
$$ b_{\gamma _{1}}(x)-b_{\gamma _{2}}(x)=\lim _{t\rightarrow \infty }[d(x,\gamma _{1}(t))-d(x,\gamma _{2}(t))].$$
Pela desigualdade triangular aplicada para os pontos de $\gamma_1$ e $\gamma_2$, obtemos 
$$d(x,\gamma _{1}(t))\le d(x,\gamma _{2}(t))+d(\gamma _{2}(t),\gamma _{1}(t)).$$
Como \(\gamma _{1}\) e \(\gamma _{2}\) são assintóticos, \(d(\gamma_2(t), \gamma_1(t)) \le C\), logo:
$$d(x,\gamma _{1}(t))-d(x,\gamma _{2}(t))\le C.$$
Invertendo os papéis de \(\gamma _{1}\) e \(\gamma _{2}\) na desigualdade triangular, obtemos o limite oposto. Ao tomar o limite quando \(t \to \infty\), o termo \(C\) se mantém e o resultado converge para um valor exato. Como isso vale para qualquer ponto \(x \in X\), o valor do limite independe do ponto escolhido. Logo, a diferença entre as duas funções é uma função constante.
\end{proof}

Em particular, segue-se que os conjuntos de subnível e os conjuntos de nível de uma função de Busemann no espaço hiperbólico não dependem do raio $\gamma$, mas apenas do ponto no infinito que $\gamma$ representa.

\begin{definition}
Um subnível da função de Busemann $b_{\gamma}^{-1}(-\infty,a]$, $a \in \R$ é chamado \textit{horobola}\index{horobola} (fechada) com centro $\xi = \gamma(\infty)$. Um nível $b_{\gamma}^{-1}(a)$  da função de Busemann é chamada \textit{horoesfera}\index{horoesfera} centrada em $\xi = \gamma(\infty)$.  
\end{definition}

\begin{figure}[!ht]
\centering
\begin{tikzpicture}[scale=1]
		\draw[dashed, ->] (-1.5,0) -- (3.5,0) node [right] {$\re(z)$};
		\draw[->] (0,0) -- (0,2.5) node [above] {$\im(z)$};
 \draw[thick](3,1) arc (0:360:1cm) --(3,1);	
 \draw[thick] (-1.5,2.2) -- (3.5,2.2);
 \draw[thick] (2,0.5) circle (14pt);
 \draw (2,-0.3) node {$\xi$};
 \draw[fill=white] (2,0) circle (1pt);
\end{tikzpicture}
\caption{Horocírculos em  $ \mathbb{H}^2 $. }
\label{fig:horocirculo}
\end{figure}
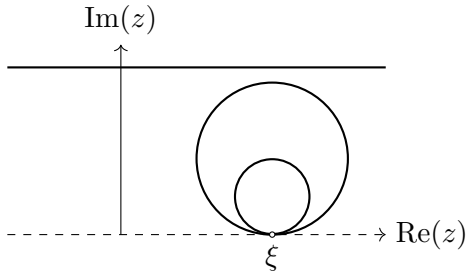

Uma horoesfera em $\mathbb{H}^n$ é uma esfera generalizada que passa por um ponto da fronteira $\partial \mathbb{H}^n := \{(x_1,\ldots,x_n) \in \R^n \mid x_n=0\}\cup \{\infty\}$ de $\mathbb{H}^n$. Observamos que as isometrias de $\mathbb{H}^n$ mapeiam horoesferas em horoesferas. No caso de $\hip$, representado na Figura~\ref{fig:horocirculo}, as horoesferas são também chamadas de horocírculos. Note que, nesse caso, as linhas horizontais também são horocírculos, que passam por $\infty$. A restrição da métrica hiperbólica a uma horoesfera coincide com a métrica euclidiana ali: isso é claro para as horoesferas horizontais e decorre para as demais mediante a aplicação de isometrias.

\subsubsection*{Classificação das isometrias} 

Analogamente ao que ocorre em $\hip$, podemos classificar as isometrias de $\mathbb{H}^n$ em termos de pontos fixos ou do número de translação. Lembre-se que dada $g\in \mathrm{Isom}(\mathbb{H}^n)$, o \textit{número de translação} de $g$ é dado por 
$$\tau_g = \displaystyle \inf _{x\in \mathbb{H}^n}d(x,gx).$$
Para $g \in \mathrm{Isom}^+(\mathbb{H}^n)$ temos as seguintes opções:


\begin{itemize}
    \item $g$ é  \textit{elíptica} se $\tau_g = 0$ e o ínfimo é atingido em $\mathbb{H}^n$; ela possui um único ponto fixo no interior de $\mathbb{H}^n$;
    \index{isometria elíptica}
    \item $g$ é \textit{parabólica} se $\tau_g = 0$ e o ínfimo não é atingido em $\mathbb{H}^n$; ela fixa um único ponto em $\partial \mathbb{H}^n$;
    \index{isometria parabólica}
    \item $g$ é \textit{hiperbólica} se $\tau_g > 0$; ela fixa dois pontos no bordo e deixa invariante a única geodésica que une esses pontos, chamada de \textit{eixo} de $g$. \index{isometria hiperbólica}
\end{itemize}

Observamos que o conjunto de todas as isometrias elípticas que fixam um ponto $p \in \mathbb{H}^n$ forma um subgrupo compacto de isometrias cujas órbitas, sob a ação em $\mathbb{H}^n$, são os círculos hiperbólicos com centro em $p$. De modo semelhante, as órbitas das isometrias parabólicas que fixam $q \in \partial \mathbb{H}^n$ são as horoesferas com centro em $q$.



\subsection{Triângulos no espaço hiperbólico}

Triângulos hiperbólicos desempenham um papel fundamental na extensão da noção de hiperbolicidade a espaços métricos gerais, o que, em particular, conduz à definição de grupos hiperbólicos (veja o Capítulo \ref{cap8}). Nesta seção, consideraremos algumas propriedades básicas dos triângulos hiperbólicos.

Um \textit{triângulo hiperbólico}\index{triângulo hiperbólico} é um triângulo $\Delta = \Delta(a,b,c)$ em $\mathbb{H}^n$ composto por três vértices $a,b,c$, os quais podem ser pontos de $\mathbb{H}^n $ ou pontos da fronteira ideal $\partial \mathbb{H}^n = \{(x_1, \ldots,x_n) \in \R^n \mid x_n=0\}\cup\{\infty\}$, e de três arestas geodésicas ligando esses vértices. No caso em que algum dos vértices de $\Delta$ é um ponto da fronteira ideal, diremos que o triângulo é um \textit{triângulo ideal}\index{triângulo ideal}. Neste último caso, note que o ângulo em $\Delta$ correspondente a um vértice ideal é zero. 

\begin{figure}[h!]
\begin{center}
\begin{tikzpicture}
		\draw[dashed] (-3,0) -- (3,0);
\begin{scope}[scale=0.6]
 \draw (1.37,1.46) arc(46.75:120:2);
\draw  (1.37,1.46)arc(31.58:148.42:2.78);
\draw  (-1,1.73)arc(60:133.24:2);
\end{scope}

\begin{scope}[scale=0.8]
	\draw[dashed] (-2.9,0) -- (3.9,0);

 \draw (3,0) arc(0:12:13.95);
 \draw (3,0) arc(0:60.5:3.01);
\draw (2.7,2.9)arc(84.48:121.3:2);
\end{scope}

\end{tikzpicture}
\end{center}
    \caption{Triângulos no plano hiperbólico.}
    \label{fig:placeholder}
\end{figure}



\begin{exercise} \label{exerc:subesphiperbolico}
    Todo triângulo hiperbólico $\Delta \subset \mathbb{H}^n$ está  contido na  compactificação de um subespaço hiperbólico bidimensional totalmente geodésico $\hip \subset \mathbb{H}^n$. 
    
    \textit{Dica:} Considere um
triângulo $\Delta = \Delta(a,b,c)$, onde $ a, b $ pertencem a uma reta vertical comum.
\end{exercise}

A primeira propriedade básica dos triângulos hiperbólicos que consideraremos é a fórmula para a sua área.

\begin{thm}[Gauss--Bonnet]
Para qualquer triângulo hiperbólico $\Delta$ com ângulos $\alpha$, $\beta$ e $\gamma$, sua área é dada pela fórmula
$$A(\Delta) = \pi - (\alpha + \beta + \gamma).$$
\end{thm}
\begin{proof}
Pelo exercício anterior, podemos supor que $\Delta \subset \hip$.
Suponha, primeiramente, que o ângulo $\gamma$ no vértice c é igual a zero. Aplicando isometrias de $\hip$, podemos assumir que $c = \infty$ e que os vértices $a$ e $b$ estão sobre $|z| = 1$. 

\begin{figure}[!ht]
\centering
\begin{tikzpicture}[scale=1]
  \draw[dashed] (-1.9,0) -- (1.9,0);

  \draw[dashed] (1,0) -- (1,3.5);
  \draw (-1,1.936) -- (-1,3.5);
  \draw [dashed](-1,0) -- (-1,3.5);
  \draw(1,1.323) -- (1,3.5);

   \draw[dashed] (-0.5,0) -- (1,1.323);
 \draw[dashed] (-0.5,0) -- (-1,1.936);

 \coordinate (C) at (-0.5,0);
\coordinate (V) at (1,1.323);
\coordinate (W) at (-1,1.936);
\coordinate (A) at (1,0);
\coordinate (B) at (-1,0);
\coordinate (D) at (-0.8,1.97);
\coordinate (E) at (1,2);
\coordinate (F) at (-1,2);
\coordinate (G) at (0.855,1.45);

\pic [draw,blue, angle radius=0.3cm] {angle = D--W--F};
\pic [draw,blue, angle radius=0.2cm] {angle = A--C--V};
\pic [draw, blue, angle radius=0.3cm] {angle = W--C--B};
\node at (-0.75,2.25) {$\alpha$};
\node at (-0.85,0.28) {$\alpha$};
\pic [draw,blue, angle radius=0.3cm] {angle = E--V--G};
\pic [draw, blue, angle radius=0.2cm] {angle = E--V--G};
\pic [draw,blue, angle radius=0.3cm] {angle = A--C--V};
\node at (0.85,1.8) {$\beta$};
\node at (0,0.2) {$\beta$};

  \draw (1,1.323) arc (41.41:104.48:2);

  \node[fill=black, circle, inner sep=1pt] at (-1,1.936) {};
  \node[fill=black, circle, inner sep=1pt] at (1,1.323) {};

\end{tikzpicture}
\caption{Triângulo hiperbólico com um vértice na fronteira ideal.}
\label{fig:Gauss-Bonnet1}
\end{figure}

Com base na Figura~\ref{fig:Gauss-Bonnet1}, verificamos que
$$A(\Delta) = \int_\Delta \dfrac{dx\;dy}{y^2} = \int_{\cos(\pi-\alpha)}^{\cos(\beta)}\left( \int_{\sqrt{1-x^2}}^{\infty}\dfrac{dy}{y^2}\right) dx = \pi - (\alpha + \beta),$$
o que fornece o resultado desejado quando $\gamma = 0$. De modo geral, qualquer triângulo é a diferença entre dois triângulos desse tipo, conforme ilustrado na Figura \ref{fig:Gauss-Bonnet2}.

\begin{figure}[!ht]
\centering
\begin{tikzpicture}
  \draw[dashed] (-2.9,0) -- (3.9,0);

 \draw (1.25,1.98) arc(82.82:180:2);
 \draw (1.25,1.98) arc(41.40:60:3);
\draw (1.25,1.98) -- (1.25, 2.905);
\draw (1.25,2.9)arc(104.48:180:3);

  \node[fill=black, circle, inner sep=1pt] at (1.25,1.98) {};
  \node[fill=black, circle, inner sep=1pt] at (1.25,2.9) {};
  \node[fill=black, circle, inner sep=1pt] at (0.5,2.6) {};
  \node at (0.4,2.8) {$a$};
\node at (1.45,2) {$b$};
\node at (1.45,2.9) {$c$};

\draw[fill=lightgray, opacity=0.4](1.25,1.98) -- (1.25,2.9) arc(104.48:120:3) arc(60:41.40:3) ; 
  
\end{tikzpicture}
\caption{Um triângulo como diferença de dois triângulos ideais.}
\label{fig:Gauss-Bonnet2}
\end{figure}
\end{proof}

Mostraremos a seguir que triângulos hiperbólicos são ``magros'' no seguinte sentido: dado um triângulo geodésico $\Delta = \Delta(a,b,c)$ em $\hip$, qualquer uma de suas arestas está contida numa $\delta$-vizi\-nhan\-ça da união das outras duas arestas, para $\delta= \log(3)$. Um  triângulo satisfazendo essa condição é dito \textit{$\delta$-magro}.\index{triângulo $\delta$-magro} Este tipo de comportamento será a base para a definição de espaços hiperbólicos num contexto mais geral, na próxima seção. 

Suponha que $\Delta$ é um  triângulo hiperbólico em $\hip$ com lados $\alpha_i$,  para $ i = 1, 2, 3$, de modo que $\Delta$ limita o triângulo sólido $\blacktriangle$. Para cada  ponto $x \in \blacktriangle$, defina
$$\lambda_x(\Delta) := \displaystyle \max_{i=1,2,3} d(x, \alpha_i)$$ e
 $$\lambda(\Delta) := \displaystyle \inf_{x \in \blacktriangle} \lambda_x(\Delta).$$
Para mostrar que os triângulos são $\delta$-magros, usaremos uma estimativa para $\lambda(\Delta)$. É imediato verificar que o ínfimo na definição de $\lambda(\Delta)$ é  realizado por um ponto $x_0\in \blacktriangle $, o qual é equidistante de todos os lados de $\Delta$, isto é, pelo ponto de interseção dos bissetores do triângulo.

\begin{exercise}[Monotonicidade da  distância hiperbólica] \label{exerc:triangles} Seja $\Delta_i$ para  $i = 1$, $2$ o triângulo retângulo hiperbólico com vértices $a_i,b_i,c_i$ (onde $a_i$ ou $b_i$ podem ser vértices ideais) tal que $a = a_1 = a_2$, $\overline{ab_1}\subset \overline{ab_2}$ e $\overline{ac_1}\subset \overline{ac_2}$, como na  Figura \ref{fig:monotonicity}. Então $\alpha_1 \leq \alpha_2$.
\end{exercise}

\begin{figure}[!ht]
\centering
\begin{tikzpicture}[scale=0.8]
		\draw   (-1.5,0) -- (3.5,0) node [right] {$c_2$} ;
		\draw (-1.5,0)  to [bend right=10] (3.5,2) node [right] {$b_2$};
 \draw (3.5,0) -- (3.5,2);	
\draw (-1.7,0) node  {$a$};
\draw (2,-0.2) node  {$c_1$};
\draw (2,1.4) node  {$b_1$};
\draw (2.3,0.6) node  {$\alpha_1$};
\draw (3.8,1) node  {$\alpha_2$};
  \draw (2,0) -- (2,1.15);
\end{tikzpicture}
\caption{Monotonicidade da distância hiperbólica. }
\label{fig:monotonicity}
\end{figure}
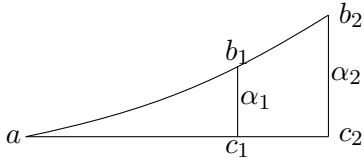

Defina o \textit{raio inscrito} $\mathrm{inrad}(\Delta)$ de $\Delta$ como o  supremo dos raios de discos hiperbólicos
 contidos em $ \blacktriangle$.

\begin{lemma}
    Para qualquer triângulo hiperbólico $\Delta = \Delta(a,b,c)$, vale $\lambda(\Delta) = \mathrm{inrad}(\Delta)$.
\end{lemma}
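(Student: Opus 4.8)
The plan is to reduce the identity to the single quantity $\rho := d(x_0,\alpha_i)$, the common distance from the incenter $x_0$ to the three sides, which the preceding discussion already identifies with $\lambda(\Delta)$ (indeed $\lambda_{x_0}(\Delta)=\max_i d(x_0,\alpha_i)=\rho$ realizes the infimum). It then suffices to prove $\mathrm{inrad}(\Delta)=\rho$, and I would split this into two inequalities. The inequality $\mathrm{inrad}(\Delta)\ge\rho$ is immediate: since $\partial\blacktriangle=\alpha_1\cup\alpha_2\cup\alpha_3$ and $d(x_0,\alpha_i)=\rho$ for each $i$, the distance from $x_0$ to $\partial\blacktriangle$ equals $\rho$, so the hyperbolic disk $D(x_0,\rho)$ is contained in $\blacktriangle$ and exhibits an inscribed disk of radius $\rho$.

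The substance is the reverse inequality $\mathrm{inrad}(\Delta)\le\rho$, equivalently $\sup_{p\in\blacktriangle}\min_i d(p,\alpha_i)\le\rho$; I emphasize that this does not follow from a formal minimax inequality and genuinely uses the geometry. I would join $x_0$ to the three vertices $a,b,c$ by geodesic segments, splitting the convex region $\blacktriangle$ into three geodesic sub-triangles $T_a=\triangle(x_0,b,c)$, $T_b$, $T_c$. Fix $p\in\blacktriangle$, say $p\in T_a$, and let $L$ be the geodesic line carrying the side $\alpha=\overline{bc}$ opposite $a$. Two of the vertices of $T_a$, namely $b$ and $c$, lie on $L$, so $d(b,L)=d(c,L)=0$, while $d(x_0,L)=d(x_0,\alpha)=\rho$. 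Since the distance to a geodesic is a convex function on $\hip$, its maximum over the convex set $T_a$ is attained at a vertex, hence equals $\rho$; thus $d(p,L)\le\rho$. Finally, the nearest-point projection $\pi$ onto $L$ is convex and sends $b,c,x_0$ to $b,c$ and the foot $t$ of the perpendicular from $x_0$, all of which lie in the segment $\overline{bc}$ (here $t\in\overline{bc}$ because the incircle is tangent to the interior of each side); therefore $\pi(T_a)\subset\overline{bc}=\alpha$, giving $d(p,\alpha)=d(p,\pi(p))=d(p,L)\le\rho$. Consequently $\min_i d(p,\alpha_i)\le\rho$ for every $p\in\blacktriangle$, so every inscribed disk has radius at most $\rho$ and $\mathrm{inrad}(\Delta)\le\rho$. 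Combining the two inequalities yields $\mathrm{inrad}(\Delta)=\rho=\lambda(\Delta)$.

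The main obstacle I anticipate is making the upper bound fully rigorous when one or more vertices of $\Delta$ are ideal. In that case the relevant sub-triangle is non-compact and a convex function need not attain its maximum at a vertex, so I would instead argue that along each geodesic ray of $T_a$ running out to an ideal endpoint of $L$ the function $d(\cdot,L)$ is monotonically decreasing (the ray and $L$ share an endpoint at infinity), invoking the monotonicity of hyperbolic distance from Exercise~\ref{exerc:triangles}; this keeps $\sup_{T_a} d(\cdot,L)=\rho$ attained at $x_0$. Alternatively I would approximate an ideal triangle by an exhaustion of triangles with finite vertices and pass to the limit, checking that both $\lambda$ and $\mathrm{inrad}$ vary continuously. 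The remaining ingredients, namely convexity of the distance-to-a-geodesic function and of nearest-point projection in $\hip$, are standard facts about the $\CAT(-1)$ geometry of the hyperbolic plane that I would simply cite.
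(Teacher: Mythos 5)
Your proof is correct, but it follows a genuinely different route from the paper's. You establish the two inequalities directly: $\mathrm{inrad}(\Delta)\ge\rho$ by inscribing the disk $B(x_0,\rho)$ centered at the equidistant point, and $\mathrm{inrad}(\Delta)\le\rho$ by cutting $\blacktriangle$ into the three sub-triangles with apex $x_0$ and showing, via convexity of $d(\cdot,L)$ on $\hip$ and nearest-point projection onto $L$, that every point of the sub-triangle facing a side lies within $\rho$ of that side. The paper instead runs a variational argument on inscribed disks: a disk whose closure touches at most one side can be enlarged, and one touching exactly two sides can be slid along the angle bisector away from the common vertex and enlarged (this is where Exercise~\ref{exerc:triangles} enters for the paper), so a maximal inscribed disk must touch all three sides and be centered at the intersection of the bisectors, which forces $\lambda(\Delta)=\mathrm{inrad}(\Delta)$. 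Both arguments presuppose the paper's preceding remark that $\lambda(\Delta)$ is realized at the equidistant point $x_0$. What your route buys: you never need existence of a maximal inscribed disk (a compactness point the paper leaves implicit), the case analysis is replaced by standard $\CAT(0)$ convexity, the argument generalizes essentially verbatim to $\hipn$ and to $\CAT(-1)$ spaces, and you address the ideal-vertex case explicitly --- which the paper glosses over even though its definition of triangle admits ideal vertices and the application immediately afterwards concerns ideal triangles. What the paper's route buys: it is more elementary and pictorial, using only the comparison exercise rather than convexity of the distance to a geodesic or properties of projections. One phrase of yours should be tightened: ``the nearest-point projection is convex'' is not quite the right statement; what you actually need is that $\pi^{-1}(\overline{bc})$, the slab bounded by the perpendiculars to $L$ at $b$ and $c$, is convex (an intersection of half-planes) and contains $x_0$ because the foot of the perpendicular from $x_0$ is an interior tangency point --- hence it contains all of $T_a$. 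That fact is standard, and the supporting observations you list are exactly the ones it requires, so the gap is purely one of wording.
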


\begin{proof}
    Suponha que $D = B(x,R)\subset\blacktriangle$ é um  disco hiperbólico. Caso  $\Bar{D}$ não toque dois dos lados de $\Delta$, é fácil ver que existe um disco $D_0 = B(x_0,R_0)\subset \blacktriangle$ que contém propriamente $D$ e, portanto, tem raio maior. Assim, $R\neq\mathrm{inrad}(\Delta)$.
    
    Suponha agora que o fecho de $D \subset \blacktriangle$  toque exatamente duas arestas de $\Delta$, digamos $\overline{ab}$ e $ \overline{bc}$. Então o centro $x$ de $D$ pertence ao bissetor $ \sigma $ de um dos ângulos de $\Delta$, neste caso o ângulo correspondente a $b$. Podemos mover o centro $x$ de $D$ ao longo do bissetor $\sigma$ no sentido que se afasta do vértice $b$ e obter um novo disco $D' = B(x',R')$ de modo que $\overline{D'}$ ainda toque apenas os lados $\overline{ab}, \overline{bc}$ de $\Delta$. Afirmamos que o raio $R'$ de $D'$ é maior que o raio $R$ de $D$. Para provar essa afirmação, considere  triângulos hiperbólicos $\Delta(x,y,b)$ e $\Delta(x',y', b)$, onde $y,y'$ são os pontos de tangência entre $D,D'$ e o lado $\overline{ba}$.
    Esses triângulos retângulos tem ângulo comum em $b$  e satisfazem $$d(b,x) \leq d(b,x').$$
   Assim, a desigualdade $R \leq R'$ segue do Exercício \ref{exerc:triangles}. 
   
   Desse modo, concluímos que se $R = \mathrm{inrad}(\Delta)$, então o maior disco de raio $R$ contido em $\blacktriangle$ toca os três lados de $\Delta$, e seu centro é o ponto de interseção dos bissetores desse triângulo. Mas isto significa que $\lambda(\Delta) = \mathrm{inrad}(\Delta)$, como queríamos.    
\end{proof}

Seja $S$ um triângulo hiperbólico com lados $\sigma_1, \sigma_2, \sigma_3$. Como esses lados são segmentos geodésicos, podemos estender um deles, digamos $\sigma_1$ e encontrar um triângulo  hiperbólico ideal (com três vértices ideais) $\Delta \subset \hip $ cujos lados limitam um triângulo sólido $\blacktriangle$, de modo que $ S \subset \blacktriangle$ e um dos lados de $\Delta$ é a extensão de $\sigma_1$, como na figura abaixo, onde isto é visto no modelo do disco de $\mathbb{H}^2$.

\begin{center}
\begin{tikzpicture}[scale=0.5]
 \path (0,0) coordinate (A) (4,1) coordinate (B) (2,-2) coordinate (C);
 \draw[thick] 
 (A)  to[bend right=12] 
 (B)  to[bend right=15] 
 (C)  to[bend right=20] cycle;
\draw[dashed] (-2,0.11) to [bend right=20] (5.51,1.88) ;
 \draw[dotted] (2,0) circle (4);
\draw (2,-0.3) node {$S$};
\draw (3.5,-1) node {$\Delta$};
\draw[dashed] (-2,0.11) to [bend left=35] (2,-4) ;
\draw[dashed] (2,-4) to [bend left=25] (5.51,1.88) ;

\end{tikzpicture}
\end{center}

Assim, para estimar por cima o raio inscrito de um dado triângulo $S$, basta estimar o raio inscrito de um triângulo com três vértices ideais que o contenha no sentido acima. Como todos os triângulos com três vértices ideais são congruentes, já que os três ângulos de um triângulo hiperbólico o determinam, a menos de isometrias, basta estimarmos o raio inscrito do triângulo ideal $\Delta(\infty, -1, 1)$ (veja a Figura \ref{fig:triangideal}).  O Exercício~\ref{exerc:círculos} fornece $\mathrm{inrad}(\Delta(\infty, -1, 1)) = \frac{\log 3}{2}$.
Unindo essas observações ao Exercício~\ref{exerc:subesphiperbolico}, obtemos a seguinte proposição.

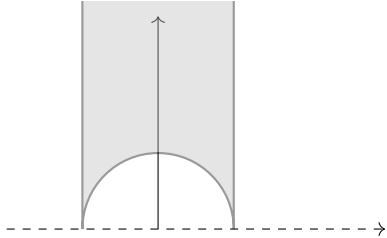
\begin{figure}[!ht]
\centering
\begin{tikzpicture}[scale=1]
		\draw[dashed, ->] (-2,0) -- (3,0);
		\draw[->] (0,0) -- (0,2.8) ;
  
		\draw[thick, fill=lightgray, opacity=0.4](-1,3) -- (-1,0) arc (180:0:1cm) --(1,0) --(1,3);
\end{tikzpicture}
\caption{Triângulo  $ \Delta(\infty, -1, 1) $. }
\label{fig:triangideal}
\end{figure}

\begin{proposition}
 Para cada  triângulo hiperbólico $\Delta$ em $\mathbb{H}^n$,  existe um ponto $p \in  \mathbb{H}^n$ tal que a distância de $p$ a qualquer um dos três lados de $\Delta$ é menor ou igual que $\frac{\log(3)}{2}$. 
\end{proposition}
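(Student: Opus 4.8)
O plano é reduzir o problema ao caso bidimensional e então exibir o ponto procurado como o incentro do triângulo, aproveitando o lema que identifica $\lambda(\Delta)$ com o raio inscrito. Primeiro, pelo Exercício~\ref{exerc:subesphiperbolico}, todo triângulo hiperbólico $\Delta \subset \mathbb{H}^n$ está contido na compactificação de um subespaço totalmente geodésico bidimensional $\mathbb{H}^2 \subset \mathbb{H}^n$. Como esse subespaço é totalmente geodésico, a geodésica que realiza a distância de um ponto $p \in \mathbb{H}^2$ a um lado $\alpha_i$ de $\Delta$ permanece em $\mathbb{H}^2$, de modo que $d_{\mathbb{H}^n}(p, \alpha_i) = d_{\mathbb{H}^2}(p, \alpha_i)$. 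Basta portanto provar a afirmação dentro de $\mathbb{H}^2$, o que nos coloca exatamente no cenário em que os lemas anteriores foram enunciados.

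Em seguida, aplicaria o lema segundo o qual $\lambda(\Delta) = \mathrm{inrad}(\Delta)$. Pela sua demonstração, o ínfimo na definição de $\lambda(\Delta)$ é atingido num ponto $x_0 \in \blacktriangle$ equidistante dos três lados, a saber o ponto de interseção dos bissetores de $\Delta$. Nesse ponto vale $d(x_0, \alpha_i) = \lambda_{x_0}(\Delta) = \lambda(\Delta) = \mathrm{inrad}(\Delta)$ simultaneamente para cada $i = 1,2,3$. Assim, o candidato natural a $p$ é o incentro $x_0$, e resta apenas estimar $\mathrm{inrad}(\Delta)$ por cima por $\frac{\log(3)}{2}$.

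Para essa estimativa, a ideia é circunscrever $\Delta$ por um triângulo ideal, exatamente como na discussão que antecede o enunciado. Estendendo-se adequadamente os lados de $\Delta$, obtém-se um triângulo hiperbólico com três vértices ideais cujo triângulo sólido contém $\blacktriangle$; como qualquer disco hiperbólico inscrito em $\blacktriangle$ também está inscrito no triângulo sólido ideal maior, segue a monotonicidade $\mathrm{inrad}(\Delta) \leq \mathrm{inrad}(\Delta_{\mathrm{ideal}})$. Uma vez que os três ângulos de um triângulo o determinam a menos de isometria, todos os triângulos com três vértices ideais são congruentes, e portanto $\mathrm{inrad}(\Delta_{\mathrm{ideal}}) = \mathrm{inrad}(\Delta(\infty,-1,1)) = \frac{\log 3}{2}$, valor calculado no Exercício~\ref{exerc:círculos}. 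Combinando as desigualdades, $d(x_0, \alpha_i) = \mathrm{inrad}(\Delta) \leq \frac{\log(3)}{2}$ para os três lados, de onde $p = x_0$ resolve o problema.

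O principal obstáculo será tornar precisa a construção do triângulo ideal circunscrito, isto é, garantir que a extensão correta dos lados produz de fato um triângulo sólido ideal contendo $\blacktriangle$, bem como tratar os casos em que $\Delta$ já possui um ou mais vértices ideais. A monotonicidade do raio inscrito sob inclusão de triângulos sólidos e a congruência dos triângulos ideais (sustentada pela monotonicidade da distância hiperbólica do Exercício~\ref{exerc:triangles}) são os ingredientes geométricos que fazem o argumento funcionar; por contraste, a redução ao caso bidimensional e a aplicação do lema são diretas.
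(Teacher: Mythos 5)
Sua proposta está correta e segue essencialmente o mesmo caminho do texto: redução ao caso bidimensional pelo Exercício~\ref{exerc:subesphiperbolico}, identificação do ponto procurado com o incentro via o lema $\lambda(\Delta) = \mathrm{inrad}(\Delta)$, e majoração do raio inscrito pelo de um triângulo ideal circunscrito, que vale $\frac{\log 3}{2}$ pelo Exercício~\ref{exerc:círculos} e pela congruência de todos os triângulos ideais. Os pontos que você sinaliza como delicados (construção do triângulo ideal circunscrito e vértices ideais de $\Delta$) são tratados no texto com o mesmo grau de informalidade, de modo que não há lacuna relativa à prova original.
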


 Na próxima seção, veremos outra maneira de mostrar que os triângulos hiperbólicos são $\delta '$-magros, desta vez com $\delta ' = \mathrm{arccosh}(\sqrt{2})$.

\section{Espaços hiperbólicos segundo Rips e Gromov}
\label{sec:gromovhip}
Gromov e Rips perceberam que a geometria global de espaços de curvatura negativa pode ser capturada pela propriedade de que todos os triângulos geodésicos são magros (vide \cite{gromov1987hyperbolic}). Nesta seção, descreveremos duas noções de hiperbolicidade, mostrando que elas são equivalentes se estivermos em espaços métricos geodésicos.

Seja $(X, d)$ um espaço métrico geodésico. Recordamos que, para $A\subset X$ e $\delta>0$,  $\mathcal{N}_{\delta}(A)$ denota a união de todas as bolas abertas de raio $\delta$ centradas em pontos de $A$.  Um \textit{triângulo geodésico} \index{triângulo geodésico}  $\Delta(a,b,c)$ em $X$ é a concatenação de três segmentos geodésicos $\overline{ab}$, $\overline{bc} $ e $\overline{ca}$, chamados lados do triângulo, que ligam  os seus vértices $a,b$ e $c$  na ordem cíclica natural.
Também denotaremos o triângulo geodésico por $T_{\alpha_1,\alpha_2,\alpha_3}$, onde $\alpha_i$ são os comprimentos dos lados. 

\begin{definition}[Rips]
Seja $\delta \geq 0$ dado. O espaço $X$ é dito \textit{$\delta$-hiperbólico por Rips} \index{espaços $\delta$-hiperbólicos (Rips)} se, para qualquer triângulo geodésico $\Delta(x,y,z)$, tem-se $\overline{xy} \subset \mathcal{N}_\delta(\overline{xz} \cup \overline{yz})$. Ou seja, se dado qualquer $p\in \overline{xy}$ existe $q \in \overline{xz}\cup\overline{yz}$  com $d (p, q)\leq \delta$. 
\end{definition}
Em outras palavras, dizemos que $X$ é $\delta$-hiperbólico por Rips se todo triângulo $\Delta(x,y,z)$ em $X$ é \textit{$\delta$-magro}\index{triângulo $\delta$-magro}. O ínfimo $\delta_0$ das constantes $\delta$ para as quais $X$ é $\delta$-hiperbólico é dito \textit{constante de hiperbolicidade de $X$}.\index{constante de hiperbolicidade} Caso $\delta_0<\infty$, diremos que $X$ é $\delta_0$-hiperbólico ou \textit{Rips-hiperbólico}.

 
Uma desvantagem da definição de hiperbolicidade de Rips é que ela depende das geodésicas do espaço $X$. Apresentaremos agora uma segunda definição, devida a Gromov, menos intuitiva porém mais geral que a definição de Rips, e que se mostra bastante útil em certas situações. Por exemplo, o espaço métrico $X$ em questão não precisa ser geodésico.

Seja $(X,d)$ um espaço métrico. Tome $p \in X$ um ponto base. Para cada $x \in X$, seja $|x|_p:=d(p,x)$ e defina o {\it produto de Gromov} \index{produto de Gromov} de $x,y\in X$ em $p$ por:

$$(x,y)_p:=\dfrac{1}{2}\left(|x|_p+|y|_p-d(x,y)\right).$$

Note que, pela desigualdade triangular, vale que  $(x,y)_p\geq 0$ para todos $x,y,p \in X$

\begin{example} Suponha que $X$ é uma árvore. Então $(x,y)_p$ é a distância $d(p,\gamma)$ de $p$ para a geodésica $\gamma = xy.$ 
\end{example}

\begin{remark}
    O produto de Gromov é uma generalização para o contexto de espaços métricos do produto interno em espaços vetoriais, onde o ponto base $p$ é escolhido como a origem: se $X = \R^n$, e $p=0$, então $|v|_0 = \left\|v\right\|$ e 
    $$\frac{1}{2}(|x|_0^2+|y|_0^2 -\left\|x-y\right\|^2) = x\cdot y.$$    
\end{remark}

\begin{definition}[Gromov]
Um espaço métrico $X$ é dito \textit{$\delta$-hiperbó\-li\-co no sentido de Gromov} \index{espaços Gromov-hiperbólicos} se, para todos $x,y,z,p \in X$, vale: $$(x,y)_p \geq \min\{(x,z)_p,(y,z)_p\}-\delta.$$
\end{definition}

\begin{lemma} 
\label{lemma:ripsgromov1} Suponha que $X$ é $\delta$-hiperbólico no sentido de Rips. Então o produto de Gromov em $X$ é comparável a $d(p,\overline{xy})$, i.e., para cada $x,y,p \in X$ e toda geodésica $\overline{xy}$, $$(x,y)_p \leq d(p,\overline{xy})\leq (x,y)_p+ 2\delta .$$
\end{lemma}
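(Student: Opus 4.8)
We want to prove, for a geodesic $\delta$-hyperbolic (Rips) space $X$, the two-sided estimate
$$(x,y)_p \leq d(p,\overline{xy})\leq (x,y)_p+ 2\delta$$
comparing the Gromov product with the distance from the basepoint $p$ to any geodesic $\overline{xy}$. Let me reason about what each inequality says before planning the proof.

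The left inequality $(x,y)_p \leq d(p,\overline{xy})$ should hold in *any* metric space, with no hyperbolicity needed. Indeed, pick any point $q$ on the geodesic $\overline{xy}$. Since $q$ lies on a geodesic from $x$ to $y$, we have $d(x,y) = d(x,q) + d(q,y)$. Then the triangle inequality gives $|x|_p \leq d(p,q) + d(q,x)$ and $|y|_p \leq d(p,q) + d(q,y)$, so that $|x|_p + |y|_p - d(x,y) \leq 2d(p,q)$, i.e. $(x,y)_p \leq d(p,q)$. Taking the infimum over $q \in \overline{xy}$ yields the claim.
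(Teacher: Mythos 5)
Your proposal establishes only the left inequality $(x,y)_p \leq d(p,\overline{xy})$, and that part of your argument is correct (and indeed valid in any geodesic metric space, as you note). But then it stops: the right inequality $d(p,\overline{xy}) \leq (x,y)_p + 2\delta$ — which is the substantive half of the lemma, and the only place where the $\delta$-hyperbolicity hypothesis is actually used — is never addressed. As written, this is an incomplete proof, not a proof of the statement.

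To close the gap you need an argument along the following lines, which is what the paper does. For $z \in \overline{xy}$ one has the identity $|z|_p - (x,y)_p = (x,p)_z + (y,p)_z$ (this follows from $d(x,y) = d(x,z) + d(z,y)$, exactly the computation you already carried out, read in reverse). So it suffices to produce a point $z \in \overline{xy}$ with $(x,p)_z \leq \delta$ and $(y,p)_z \leq \delta$ simultaneously, for then $d(p,\overline{xy}) \leq |z|_p \leq (x,y)_p + 2\delta$. This is where thinness of the triangle $\Delta(p,x,y)$ enters: every $z \in \overline{xy}$ lies within $\delta$ of $\overline{px} \cup \overline{py}$, and by your own left inequality (applied with basepoint $z$) this forces $\min\{(x,p)_z,(y,p)_z\} \leq \delta$ for every $z \in \overline{xy}$. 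Hence the two sets $A = \{z \in \overline{xy} \mid (x,p)_z > \delta\}$ and $B = \{z \in \overline{xy} \mid (y,p)_z > \delta\}$ are open and disjoint; since $x \notin A$, $y \notin B$, and $\overline{xy}$ is connected, they cannot cover $\overline{xy}$, so the desired point $z$ exists. Without this step the lemma is not proved.
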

\begin{proof}
Seja $z \in \overline{xy}$. Então $d(x,y) = d(x,z)+d(z,y)$. Logo, $$2(x,y)_p = |x|_p+|y|_p-d(x,y) = |x|_p+|y|_p-d(x,z)-d(z,y)$$ $$= |y|_p+|z|_p -2(p,x)_z -d(z,y)= 2|z|_p-2(p,x)_z-2(p,y)_z. $$ Portanto, $(x,y)_p\leq d(p,z)$, para todo $z \in \overline{xy}$, donde segue a primeira desigualdade.

Pelo que vimos, temos $(x,p)_z \leq d(z,\overline{px})$ e $(y,p)_z \leq d(z,\overline{py})$. Logo, \begin{equation}
\label{eq:lemaripsgromov1}
    \min \{(x,p)_z ,(y,p)_z \}\leq \min\{d(z,\overline{px}),d(z,\overline{py})\}.
\end{equation} Como o triângulo $\Delta(p,x,y)$ é $\delta$-magro, vale $\overline{xy}\subset \mathcal{N}_{\delta}(\overline{px})\cup \mathcal{N}_{\delta}(\overline{py})$. Daí, $\min\{d(z,\overline{px}),d(z,\overline{py})\}\leq \delta$ para cada $z \in \overline{xy}.$ Logo, segue de  \eqref{eq:lemaripsgromov1} que 
\begin{equation}
\label{eq:lemaripsgromov2}
    \min \{(x,p)_z ,(y,p)_z \}\leq \delta, 
\end{equation} 
para cada $z \in \overline{xy}$.

Usando a conexidade da geodésica $\overline{xy}$ e a continuidade das funções $z \mapsto (x,p)_z, (z,p)_z$, obtemos que os conjuntos $A:=\{z \in \overline{xy} \mid (x,p)_z>\delta\}$ e $B:=\{z \in \overline{xy} \mid (y,p)_z>\delta\}$ são abertos disjuntos por \eqref{eq:lemaripsgromov2}. Portanto  existe $z \in \overline{xy}$ tal que $(x,p)_z ,(y,p)_z\leq \delta$ donde $$|z|_p-(x,y)_p = (x,p)_z +(y,p)_z\leq 2\delta.$$
Como $|z|_p \geq d(p,\overline{xy})$, concluímos que  $d(p,\overline{xy})\leq (x, y)_p + 2\delta$.
\end{proof}

\begin{lemma}\label{lemma:ripsgromov2} Se um espaço métrico geodésico $X$ é $\delta$-hiperbólico no sentido de Rips, então ele é $3\delta$-hiperbólico no sentido de Gromov.
\end{lemma}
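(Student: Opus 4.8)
O plano é reduzir a desigualdade de Gromov a uma afirmação sobre distâncias a geodésicas, usando o Lema~\ref{lemma:ripsgromov1} como ponte. Fixados $x,y,z,p \in X$, queremos mostrar que $(x,y)_p \geq \min\{(x,z)_p,(y,z)_p\} - 3\delta$. Pelo Lema~\ref{lemma:ripsgromov1}, aplicado ao par $x,y$ e a uma geodésica fixada $\overline{xy}$, temos $(x,y)_p \geq d(p,\overline{xy}) - 2\delta$; e, aplicado aos pares $x,z$ e $y,z$, obtemos $(x,z)_p \leq d(p,\overline{xz})$ e $(y,z)_p \leq d(p,\overline{yz})$. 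Logo $\min\{(x,z)_p,(y,z)_p\} \leq \min\{d(p,\overline{xz}),d(p,\overline{yz})\}$, e basta então provar a desigualdade geométrica
$$d(p,\overline{xy}) \geq \min\{d(p,\overline{xz}),d(p,\overline{yz})\} - \delta.$$

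O passo central é justamente essa desigualdade, e é onde entra a hipótese de Rips. Primeiro formo o triângulo geodésico $\Delta(x,y,z)$ com lados $\overline{xy}$, $\overline{xz}$ e $\overline{yz}$. Como $X$ é $\delta$-hiperbólico por Rips, o lado $\overline{xy}$ está contido em $\mathcal{N}_\delta(\overline{xz}\cup\overline{yz})$. Assim, para cada ponto $w\in\overline{xy}$ existe $w'\in\overline{xz}\cup\overline{yz}$ com $d(w,w')\leq\delta$; pela desigualdade triangular, $d(p,w) \geq d(p,w') - \delta \geq \min\{d(p,\overline{xz}),d(p,\overline{yz})\} - \delta$, pois $w'$ pertence a $\overline{xz}$ ou a $\overline{yz}$. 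Tomando o ínfimo sobre $w\in\overline{xy}$ resulta a desigualdade geométrica desejada.

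Por fim, encadeio as estimativas:
$$(x,y)_p \geq d(p,\overline{xy}) - 2\delta \geq \min\{d(p,\overline{xz}),d(p,\overline{yz})\} - 3\delta \geq \min\{(x,z)_p,(y,z)_p\} - 3\delta,$$
o que conclui a prova. O principal cuidado na execução está em aplicar o Lema~\ref{lemma:ripsgromov1} na direção correta em cada um dos três pares (uma cota inferior para o par $\{x,y\}$ e cotas superiores para os pares contendo $z$) e em usar a magreza do triângulo pontualmente ao longo do lado $\overline{xy}$ antes de passar ao ínfimo; o restante é mera combinação de desigualdades.
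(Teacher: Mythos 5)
Sua prova está correta e segue essencialmente o mesmo caminho da prova do livro: ambas usam o Lema~\ref{lemma:ripsgromov1} nas duas direções (cota inferior $(x,y)_p \geq d(p,\overline{xy}) - 2\delta$ para o par $\{x,y\}$ e cotas superiores $(x,z)_p \leq d(p,\overline{xz})$, $(y,z)_p \leq d(p,\overline{yz})$ para os pares contendo $z$) e a magreza do triângulo $\Delta(x,y,z)$ para comparar $d(p,\overline{xy})$ com as distâncias aos outros dois lados, com a mesma perda total $2\delta + \delta = 3\delta$. A única diferença é de apresentação: o livro aplica a magreza apenas no ponto $m \in \overline{xy}$ mais próximo de $p$, com um ``sem perda de generalidade'' sobre em qual dos outros lados cai o ponto próximo, enquanto você argumenta pontualmente ao longo de todo o lado $\overline{xy}$ e passa ao ínfimo, o que dispensa tanto a escolha do ponto minimizante quanto a análise de casos.
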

\begin{proof}
Vamos mostrar que os vértices de um triângulo $\delta$-magro $\Delta(x,y,z)$ satisfazem $(x,y)_p\geq \min\{(y,z)_p,(x,z)_p\}-3\delta$, qualquer que seja $p\in X$. 
De fato, seja $m \in \overline{xy}$ o ponto que realiza distância entre $p$ e a geodésica $\overline{xy}.$ Como $\Delta(x,y,z)$ é $\delta$-magro existe $n \in \overline{yz}\cup \overline{xz}$ tal que $d(m,n)\leq \delta$. 
Suponhamos, sem perda de generalidade, que $n \in \overline{yz}$. Daí, pelo Lema 
\ref{lemma:ripsgromov1}, usando que $n \in \overline{yz}$ e a desigualdade triangular, obtemos
\begin{eqnarray*}
    (y,z)_p &\leq& d(p,\overline{yz})\\
    &\leq& d(p,n)\\
    &\leq& d(p,m)+d(m,n) \\
    &=& d(p,\overline{xy})+d(n,m)\\
    &\leq& (x,y)_p+3\delta.
\end{eqnarray*}
\end{proof}

\begin{lemma}\label{lemma:gromovrips}
Se um espaço métrico geodésico $X$ é $\delta$-hiperbólico no sentido de Gromov, então ele é  $4\delta$-hiperbólico no sentido de Rips.
\end{lemma}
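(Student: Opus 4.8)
The plan is to show that every geodesic triangle $\Delta(x,y,z)$ in $X$ is $4\delta$-thin. By the symmetry of the three vertices it suffices to fix a point $w$ on one side, say $w\in\overline{xy}$, and prove that $d(w,\overline{xz}\cup\overline{yz})\leq 4\delta$. The starting point is the observation that, since $w$ lies on a geodesic from $x$ to $y$, one has $d(x,w)+d(w,y)=d(x,y)$, and hence the Gromov product $(x,y)_w=0$. Applying the Gromov four-point inequality with basepoint $w$ to the triple $x,y,z$ then gives $0=(x,y)_w\geq \min\{(x,z)_w,(y,z)_w\}-\delta$, so at least one of $(x,z)_w$ and $(y,z)_w$ is at most $\delta$. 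Without loss of generality I would assume $(x,z)_w\leq\delta$; the other alternative is handled by interchanging the roles of $x$ and $y$.

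Next I would locate a comparison point on $\overline{xz}$. Write $s=d(x,w)$ and suppose first that $s\leq d(x,z)$; let $u$ be the point of $\overline{xz}$ with $d(x,u)=s$. Since both $w$ and $u$ lie at distance $s$ from $x$, the definition of the Gromov product yields the exact identity $(w,u)_x=s-\tfrac{1}{2}d(w,u)$, so it is enough to bound $(w,u)_x$ from below. I would compute $(u,z)_x=s$ exactly (using $u\in\overline{xz}$, so $d(u,z)=d(x,z)-s$), and derive $(w,z)_x\geq s-\delta$ by unfolding the hypothesis $(x,z)_w\leq\delta$ into the inequality $d(w,z)\leq d(x,z)-s+2\delta$. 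A second application of the four-point inequality, this time with basepoint $x$ to the triple $w,u,z$, then gives
$$(w,u)_x\geq \min\{(w,z)_x,(u,z)_x\}-\delta\geq (s-\delta)-\delta=s-2\delta,$$
and substituting into the identity above gives $d(w,u)\leq 4\delta$, as desired.

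It remains to treat the degenerate case $s>d(x,z)$ and to record the symmetry. When $s>d(x,z)$ there is no comparison point at distance $s$ on $\overline{xz}$, but here the hypothesis $(x,z)_w\leq\delta$ already forces $d(w,z)\leq 2\delta-(s-d(x,z))\leq 2\delta$, so taking $u=z\in\overline{xz}$ gives $d(w,\overline{xz})\leq 2\delta\leq 4\delta$. The alternative $(y,z)_w\leq\delta$ produces $d(w,\overline{yz})\leq 4\delta$ by the verbatim argument with $x$ replaced by $y$, and repeating the whole discussion for points on $\overline{yz}$ and $\overline{xz}$ completes the proof that $X$ is $4\delta$-hyperbolic in the sense of Rips.

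I expect the main obstacle to be the two-stage use of the four-point condition: it must be invoked once at the basepoint $w$ to decide which of the other two sides is relevant, and a second time at the basepoint $x$ to control the comparison point $u$, and one has to keep the case distinctions ($s\lessgtr d(x,z)$, and which of the two Gromov products is small) straight so that every estimate genuinely collapses to the single constant $4\delta$. The individual steps are routine manipulations of the Gromov product, so beyond this bookkeeping no serious difficulty is anticipated.
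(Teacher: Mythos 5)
Your proposal is correct and follows essentially the same route as the paper's proof: both use the four-point inequality once to decide which of the two remaining sides the point $w$ is relevant to, and then a second time, combined with a comparison point at matching distance along that side, to obtain the bound $4\delta$ (including the same implicit key claim that $(x,z)_w\leq\delta$ forces $d(w,\overline{xz})\leq 4\delta$). The only differences are bookkeeping choices of basepoint — you exploit $(x,y)_w=0$ at $w$ for the first application and base the second at the vertex $x$, which lets you avoid a case split, whereas the paper bases the first application at the opposite vertex and the second at the point on the side — so the skeleton, constants, and degenerate-case handling are the same.
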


\begin{proof}
Vamos primeiro mostrar que se $\alpha$ é uma geodésica ligando $x$ a $y$ e $p$ um ponto em $X$ tal que \begin{equation}\label{eq:4delta}
|x|_p+|y|_p\leq d(x,y)+2 \delta,
\end{equation}
 então $d(p,\alpha)\leq 4 \delta $.
 
 Suponhamos inicialmente que $|x|_p \leq |y|_p.$ Se $d(p,x) \geq d(x,y)$ então $d(p,\alpha)\leq d(p,x)\leq 4\delta$ e o resultado segue. Portanto, vamos também supor que $d(p,x) < d(x,y)$. Seja $z \in \alpha$ tal que $d(x,z) = d(x,p).$ Como $X$ é $\delta$-hiperbólico no sentido de Gromov, $$(x,y)_p\geq \min \{(x,z)_p,(y,z)_p\}-\delta.$$ Se $(x,y)_p \geq (x,z)_p-\delta$. Então, $$d(y,p)-d(x,y)\geq d(z,p)-d(x,z)-2\delta,$$  e portanto 
$$\arraycolsep=0pt\def\arraystretch{1.2}
\begin{array}{rcl}
d(z,p)\*&\;\leq\;&\*  d(y,p)+d(x,z)-d(x,y)+2\delta \\
\*&\;=\;&\* d(y,p)+d(x,p)-d(x,y)+2\delta   \\
\*&\;=\;&\*2(x,y)_p +2\delta   \\
\*&\;\leq\;&\* 4\delta,
\end{array}$$
 onde na última desigualdade foi usada a equação  \eqref{eq:4delta}. Agora, se $(x,y)_p \geq (y,z)_p-\delta$, então $$d(x,p)-d(x,y)\geq d(z,p)-d(y,z)-2\delta.$$  
 Logo, $d(z,p)\leq 2\delta +d(x,p)-d(x,y) + d(y,z) = 2\delta +d(x,z)-d(x,y) + d(y,z) = 2\delta .$

Considere agora um triângulo geodésico $\Delta(x, y, p)$ em $X$ e seja $z \in\overline{xy}$. Nosso objetivo é mostrar que $z$ pertence a $\mathcal{N}_{4\delta}(\overline{px}\cup \overline{py})$. Temos que 
$(x, y)-p\geq \min ((x,z)_p, (y, z)_p) -\delta$. Supondo que $(x, y)_p\geq (x, z)_p - \delta$, obtemos
$d(y, p)\geq d(y, z) + d (z, p) -2\delta$. 
Portanto, pela discussão inicial, obtemos $z \in \mathcal{N}_{4\delta} (\overline{xp}\cup \overline{yp})$ e, assim, o triângulo $\Delta(x, y, z)$ é $4\delta$-magro.
\end{proof}

\begin{corollary}
    Em espaços métricos geodésicos, as noções de hiperbolicidade de Gromov e de Rips são equivalentes.
\end{corollary}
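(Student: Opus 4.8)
The statement to prove is the corollary asserting that in geodesic metric spaces, the Gromov and Rips notions of hyperbolicity are equivalent. This is exactly the content of the two preceding lemmas combined.

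The plan is to simply assemble the implications already established. First I would recall that Lemma~\ref{lemma:ripsgromov2} gives one direction: if a geodesic metric space $X$ is $\delta$-hyperbolic in the sense of Rips, then it is $3\delta$-hyperbolic in the sense of Gromov. In particular, if $X$ is Rips-hyperbolic (i.e.\ $\delta$-hyperbolic for some finite $\delta$), then it is Gromov-hyperbolic. Conversely, Lemma~\ref{lemma:gromovrips} provides the other direction: if $X$ is $\delta$-hyperbolic in the sense of Gromov, then it is $4\delta$-hyperbolic in the sense of Rips, so Gromov-hyperbolicity implies Rips-hyperbolicity.

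Putting these two together, a geodesic metric space $X$ is hyperbolic in the sense of Gromov (for some finite constant) if and only if it is hyperbolic in the sense of Rips (for some finite constant), which is precisely the asserted equivalence. I would emphasize that the equivalence is not an equality of the optimal constants but only an equivalence of the two notions up to a bounded multiplicative change of constant: a Rips $\delta$ upgrades to a Gromov $3\delta$, and a Gromov $\delta$ upgrades to a Rips $4\delta$. The hypothesis that $X$ is geodesic is essential, since it is used in both lemmas (to speak of geodesic triangles in the Rips condition and to select geodesic segments and comparison points in the proofs).

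There is no real obstacle here, as the substantive work was already carried out in Lemmas~\ref{lemma:ripsgromov1}, \ref{lemma:ripsgromov2}, and \ref{lemma:gromovrips}; the only care needed is to phrase the conclusion in terms of the existence of a finite hyperbolicity constant rather than a fixed one, so that the chain $\text{Rips } \delta \Rightarrow \text{Gromov } 3\delta$ and $\text{Gromov }\delta \Rightarrow \text{Rips }4\delta$ correctly yields a logical equivalence of the two qualitative properties.

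\begin{proof}
Seja $X$ um espaço métrico geodésico. Se $X$ é $\delta$-hiperbólico no sentido de Rips para algum $\delta \geq 0$, então pelo Lema~\ref{lemma:ripsgromov2} ele é $3\delta$-hiperbólico no sentido de Gromov. Reciprocamente, se $X$ é $\delta$-hiperbólico no sentido de Gromov, então pelo Lema~\ref{lemma:gromovrips} ele é $4\delta$-hiperbólico no sentido de Rips. Portanto, $X$ é hiperbólico no sentido de Gromov, para alguma constante finita, se e somente se é hiperbólico no sentido de Rips, para alguma constante finita. As duas noções de hiperbolicidade são, assim, equivalentes em espaços métricos geodésicos.
\end{proof}
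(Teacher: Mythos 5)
Sua demonstração está correta e segue exatamente o caminho do texto: o corolário é consequência imediata dos Lemas~\ref{lemma:ripsgromov2} e \ref{lemma:gromovrips}, bastando combiná-los como você fez, com a observação pertinente de que as constantes mudam ($\delta \mapsto 3\delta$ e $\delta \mapsto 4\delta$) mas as noções qualitativas coincidem.
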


\begin{example}[A hiperbolicidade de Gromov não é um invariante por quasi-isometrias]

Por outro lado, vejamos o que ocorre em $X$. Seja $p = (0,0)\in X$ e, para cada $t>0$, sejam $x=(2t,2t)$, $y=(-2t,2t)$ e $z = (t,t)$. Notemos que 
$$\min\{(y,z)_p,(x,y)_p\}-(x,z)_p \geq c t,$$ onde $c$ é uma constante positiva. De fato,
\begin{eqnarray*}
 2(x,z)_p &=& d(x,p)+d(z,p)-d(x,z) = 2\sqrt{2}t\\
 2(x,y)_p &=& d(x,p)+d(y,p)-d(x,y) = 4\sqrt{2}t-4t\\
 2(y,z)_p &=& d(y,p)+d(z,p)-d(y,z) = 3\sqrt{2}t-\sqrt{10} t.
\end{eqnarray*}
Portanto, a quantidade $\min\{(y,z)_p,(x,y)_p\}-(x,z)_p $ é ilimitada à medida que $t \to \infty$  e assim, $X$ não é $\delta$-hiperbólico para nenhum valor de $\delta < \infty$. Em particular, $X$ é QI ao espaço Gromov–hiperbólico $\R$, mas não é Gromov–hiperbólico.
\end{example}

\begin{exercise}
    Mostre que a hiperbolicidade de Gromov é invariante por $(1,L)$-quasi-isometrias.
\end{exercise}

Provaremos mais tarde que, no contexto dos espaços geodésicos, a hiperbolicidade é um $\QI$-invariante. Esse resultado seguirá como consequência do Lema de Morse.

\begin{definition} \label{def:tripode}
    Uma \textit{trípode} \index{trípode} $\Tilde{T}$ é um grafo métrico que, como grafo, é isomorfo ao grafo $T$ que possui 4 vértices $v_0, v_1, v_2, v_3$, onde dois  vértices são conectados por uma  única aresta se, e somente se, um desses vértices é $v_0$ e o outro é  diferente de $v_0$. O vértice $v_0$ é o centro da ``estrela'' e as arestas são chamadas de pernas de $\tilde{T}$. 
\end{definition}

\begin{figure}[!ht]
    \centering
    \begin{tikzpicture} \coordinate[label=left:$v_1$] (x) at (0,0); 
    \coordinate[label=right:$v_2$] (y) at (3.5,-1); 
    \coordinate[label=above:$v_0$] (m) at (2,0); 
    \coordinate[label=right:$v_3$] (z) at (3,1); 
    \draw (x)--(m);
    \draw (y)--(m); 
    \draw (z)--(m);
    \draw[fill] (x) circle [radius=0.04]; \draw[fill] (y) circle [radius=0.04]; \draw[fill] (z) circle [radius=0.04];
    \draw[fill] (m) circle [radius=0.04];
    \end{tikzpicture}
    \caption{Trípode.}
    \label{fig:my_label}
\end{figure}

Por abuso de notação, veremos uma trípode $\tilde{T}$ como um  triângulo geodésico neste grafo, cujos vértices são os pontos extremos $v_1,v_2, v_3$ de  $\tilde{T}$. Portanto, usaremos a notação $ \Tilde{T} = T_{\alpha_1, \alpha_2, \alpha_3} =
T(v_1, v_2, v_3)$. Da mesma forma, os comprimentos dos lados da trípode são os comprimentos dos lados do triângulo correspondente.

\begin{exercise}
    Sejam $\alpha_1,\alpha_2, \alpha_3 \in \R$ satisfazendo as desigualdades triangulares $\alpha_i\leq \alpha_j+\alpha_k$. Mostre que existe uma única trípode $T_{\alpha_1,\alpha_2,\alpha_3}$ com arestas de tamanho $\alpha_i$, como na Figura \ref{fig:tripode}, a menos de isometria.
\end{exercise}


 Seja $\Delta =\Delta(v_1,v_2,v_3) \subset X$ um triângulo geodésico em um espaço métrico $X$, com lados de comprimento $\alpha_i, i=1,2,3$. Então existe uma única aplicação, a menos de pós-composição com isometrias,  $$\kappa : \Delta \to T_{\alpha_1,\alpha_2,\alpha_3},$$
isométrica nas arestas, chamada o \textit{colapso} \index{colapso}  de $\Delta$.

\begin{figure}[!ht]
    \centering
    \begin{tikzpicture} \coordinate[label=left:$v_1$] (x) at (0,0); 
    \coordinate[label=right:$v_2$] (y) at (3.5,-1); 
    \coordinate[label=above:$v_3$] (z) at (3,1); 
    \draw (x)--(y) ;
    \draw (1.6, 0.8) node {$\alpha_1$};
    \draw (y)--(z); 
    \draw (1.6, -0.7) node {$\alpha_2$};
    \draw (z)--(x);
    \draw (3.6,0) node {$\alpha_3$}; 
    \draw[fill] (x) circle [radius=0.04]; \draw[fill] (y) circle [radius=0.04]; \draw[fill] (z) circle [radius=0.04];
    \draw[->] (4.4,0)--(5.3,0);
    \draw (4.9,0.2) node {$\kappa$}; 
    \end{tikzpicture} \quad
    \begin{tikzpicture} \coordinate[label=left:$v_1$] (x) at (0,0); 
    \coordinate[label=right:$v_2$] (y) at (3.5,-1); 
    \coordinate (m) at (2,0); 
    \coordinate[label=right:$v_3$] (z) at (3,1); 
    \draw (x)--(m) ;
    \draw (1.6, 0.5) node {$\alpha_1$};
    \draw (y)--(m); 
    \draw (1.6, -0.5) node {$\alpha_2$};
    \draw (z)--(m);
    \draw (2.8,0) node {$\alpha_3$}; 
    \draw[dotted, <->] (0,0.2) .. controls (1.8,0.2) and (1.8,0.2) .. (2.8,1.1);
    \draw[dotted, <->] (0,-0.2) .. controls (1.8,-0.2) and (1.8,-0.2) .. (3.4,-1.1);
    \draw[dotted, <->] (3.2,0.9) .. controls (2.1,0) and (2.1,0) .. (3.6,-0.9);
    \draw[fill] (x) circle [radius=0.04]; \draw[fill] (y) circle [radius=0.04]; \draw[fill] (z) circle [radius=0.04];
    \draw[fill] (m) circle [radius=0.04];
    \end{tikzpicture}
    \caption{Colapso.}
    \label{fig:tripode}
\end{figure}
 
\begin{exercise}
 Mostre que a aplicação de colapso preserva os produtos de Gromov $(x_i,x_j)_{x_k}.$
\end{exercise}

 
 O \textit{centroide}\index{centroide} de $\tilde{T}$ é o ponto de interseção $o \in \tilde{T}$ das três arestas. Note que o produto  de Gromov satisfaz 
 $$(v_i,v_j)_{v_k}= d(v_k,\overline{v_iv_j}) = d (v_k,o).$$ 
 Tomando as pré-imagens de $o \in \tilde{T}$ pelo mapa $\kappa$ , obtemos três pontos $x_{ij} \in \overline{v_iv_j}$, chamados de pontos centrais do triângulo $\Delta$. Note que:
$$d(v_i, x_{ij}) = (v_j , v_k)_{v_i}.$$

\subsection{Exemplos de espaços hiperbólicos}

Listaremos nesta seção alguns exemplos clássicos de espaços hiperbólicos por Rips. 

\begin{example}
O exemplo básico é o espaço $\mathbb{H}^n$, o qual é sempre $\delta$-hiperbólico, para $\delta = \mathrm{arccosh}(\sqrt{2})$. 

Para verificar esse fato, notemos primeiramente que é suficiente considerar o caso $n=2$, já que qualquer triângulo hiperbólico $\Delta$ em $\mathbb{H}^n$ está contido em algum $2$-subespaço totalmente geodésico de  $\mathbb{H}^n$, o qual é isométrico a $\hip$. 
  

Todo triângulo $\Delta$ em $\hip$ tem pelo menos um lado que não é uma geodésica vertical. Podemos estender esse lado indefinidamente, como na Figura \ref{fig:extensãotriangulo}, de modo que $\Delta$ esteja contido em um triângulo ideal $\Delta_{\infty}$.

\begin{figure}[!ht]
\centering
\begin{tikzpicture}[scale=1.2]
		\draw[dotted, ->] (-2,0) -- (2.3,0);
		\draw[dotted, ->] (0,0) -- (0,2.1) ;
  
		\draw(-1,2) -- (-1,0) arc (180:0:1cm) --(1,0) --(1,2);
  \draw[thick, fill=lightgray, opacity=0.4] (-0.5,0.87) arc (120:60:1cm)--(0.5,0.87) arc (30:68.5:2cm)  -- (-0.5,0.87);
\end{tikzpicture}
\caption{Todo triângulo está contido em um triângulo ideal. }
\label{fig:extensãotriangulo}
\end{figure}
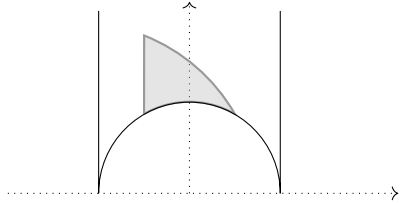

Se mostrarmos que todo triângulo ideal $\Delta_{\infty}$ é $\delta$-magro, então cada triângulo $\Delta$ em $\mathbb{H}^n$ será também $\delta$-magro. Isso se deve ao seguinte fato: sejam $\sigma_i, i=1,2,3$ os lados de $\Delta$, sendo $\sigma_1$ o lado que estendemos acima. Se chamarmos de $ \tau_1, \tau_2, \tau_3$ os lados de $\Delta_{\infty}$, onde $\tau_1$ é a extensão de $\sigma_1$, então para cada  $p \in \sigma_1 \subset \tau_1$, temos $$d(p, \sigma_2 \cup \sigma_3) \leq d(p, \tau_2 \cup \tau_3),$$ já que todo segmento geodésico de $p$ a $\tau_j$ intersecta $\sigma_j$, para $j=2,3$.

Uma propriedade importante em $\mathbb{H}^2$ é que todos os triângulos ideais são isométricos. Assim, podemos supor que  $\Delta_{\infty}$ é o triângulo ideal com vértices $a_1= \infty, a_2= -1 \mbox{ e } a_3=1$. Parametrizando as  geodésicas $\tau_j = a_{j-1}a_{j+1}$, com índices $j=1,2,3$ tomados módulo 3, podemos verificar, usando o Exercício \ref{exerc:triangles}, que para cada $j$, a função $f_j(t) = d(\tau_j(t), \tau_{j-1})$ é monotonicamente crescente. Portanto, o supremo 
$\displaystyle \sup_t d(\tau_1(t), \tau_2 \cup \tau_3 )$
é atingido no ponto médio $m=\tau_{1}(t_0) = i$, e, usando a fórmula \eqref{distformulaH}, obtemos
$$\displaystyle \sup_t d(\tau_1(t), \tau_2 \cup \tau_3 ) = d(i, -1+i\sqrt{2}) = \mathrm{arccosh}(\sqrt{2}).$$
\end{example}




Para o próximo exemplo, recordamos a noção de espaço modelo e de triângulo de comparação.

\begin{definition}
Dado $\kappa \in \R$, chamamos de \textit{espaço modelo de curvatura $\kappa$}\index{espaço modelo} uma variedade Riemanniana completa e simplesmente conexa $M_{\kappa}$, com métrica $d_{\kappa}$ de curvatura constante igual a $\kappa$. Dado um triângulo geodésico $\Delta = \Delta(p,q,r)$ em um espaço métrico $(M,d)$, um \textit{triângulo de comparação}\index{triângulo de comparação} de $\Delta$ em um espaço modelo $(M_{\kappa}, d_{\kappa})$ é um triângulo $\Delta(\bar{p},\bar{q},\bar{r})$ em $M_{\kappa}$ tal que $\{\bar{p},\bar{q},\bar{r}\}$ é uma cópia isométrica de $\{p,q,r\}$.
\end{definition}

Um triângulo de comparação, quando existe, é único a menos de isometrias. A existência é sempre garantida para $\kappa \leq 0$. Para $\kappa > 0$, isso pode ser garantido pela condição adicional 
$$d(p,q) + d(q,r) + d(r,p) \le \frac{2\pi}{\sqrt{\kappa}}.$$

\begin{example}
Qualquer variedade Riemanniana completa, simplesmente conexa de curvatura estritamente negativa é um exemplo de espaço $\delta$-hiperbólico. Isso decorre dos teoremas de comparação em geometria riemanniana (veja \cite[Seção~I.3]{ballmann-lectures}):
\begin{thm}
\label{thm:Toponogov}
Seja $M$ uma variedade Riemanniana completa, simplesmente conexa, com curvatura seccional $\leq\kappa$. Denotaremos por $M^2_{\kappa}$ o espaço modelo de curvatura $\kappa$ e dimensão $2$. Dado um triângulo geodésico $\Delta = \Delta(p,q,r)$ em $M$ (com perímetro $\leq 2\pi/\sqrt{\kappa}$, caso tenhamos $\kappa>0$), seu triângulo de comparação $\Bar{\Delta}$ em $M^2_{\kappa}$ cumpre a propriedade de comparação de triângulos: 
para todos os pontos $x$, $y$ nas arestas de $\Delta$ e os pontos correspondentes $\bar{x}$, $\bar{y}$ nas arestas de $\Bar{\Delta}$ temos $d(x,y) \leq d_{\kappa}(\bar{x}, \bar{y})$.
\end{thm}
\end{example}

Este teorema dá origem à importante definição a seguir. O nome ``$\CAT(\kappa)$'' foi sugerido por Gromov e deriva de Cartan, Aleksandrov e Toponogov, cujos trabalhos desempenharam um papel fundamental nesta área:

\begin{definition}
Um espaço métrico $(X,d)$ é dito $\CAT(\kappa)$ \index{espaços $\CAT(\kappa)$} se 
\begin{itemize}
    \item quaisquer dois pontos $x,y \in X$ podem ser ligados por uma geodésica de tamanho $d(x,y)$;
    \item dado qualquer triângulo geodésico $\Delta(p,q,r)$ (com perímetro $\leq 2\pi/\sqrt{\kappa}$ no caso em que $\kappa>0$) em $X$, se $\Delta(\bar{p},\bar{q},\bar{r}) $ é um triângulo de comparação de $\Delta(p,q,r)$ em $M^2_{\kappa}$, então para todos os pontos $x$, $y$ nas arestas de $\Delta$ e os pontos correspondentes $\bar{x}$, $\bar{y}$ nas arestas de $\Bar{\Delta}$ temos $d(x,y) \leq d_{\kappa}(\bar{x}, \bar{y})$.
\end{itemize}
\end{definition}

Essa definição nos permite estender o exemplo anterior a espaços métricos geodésicos gerais, por exemplo, aos grafos ou complexos simpliciais.

\begin{example}
Qualquer espaço  $\CAT(\kappa)$, com $\kappa<0$, é hiperbólico no sentido de Rips, já que todo triângulo geodésico em $X$ é no máximo tão espesso quanto o triângulo de comparação correspondente no espaço modelo simplesmente conexo de dimensão 2 e curvatura constante $\kappa$,  que é o plano hiperbólico $\mathbb{H}^2_\kappa$.
\end{example}
\begin{exercise}
   Se $\kappa< 0$ então $M^2_\kappa = \mathbb{H}^2 _{\kappa}$ é o plano hiperbólico real $\hip$ com métrica reescalada por um fator $ \frac{1}{\sqrt{|\kappa|}}$.  Prove que, nesse caso, $$\delta_{M^2_{\kappa}} = \frac{1}{\sqrt{|\kappa|}}\mathrm{arccosh}(\sqrt{2}).$$
\end{exercise}

\begin{exercise}
Mostre que, se $X$ é um espaço $\CAT(\kappa)$, então ele é um espaço $\CAT(\kappa')$ para todo $\kappa' > \kappa$.
\end{exercise}

Na teoria geométrica de grupos, os espaços $\CAT(\kappa)$ nos permitem definir \textit{grupos $\CAT(\kappa)$}:

\begin{definition}
Um grupo $G$ é dito $\CAT(\kappa)$ se ele age geometricamente em um espaço métrico $\CAT(\kappa)$.\index{grupo $\CAT(\kappa)$}
\end{definition}

Tendo em vista esta definição, duas classes de grupos são de interesse especial: os grupos $\CAT(0)$ e os grupos $\CAT(-1)$. Um problema que ainda está em aberto é o seguinte: construir um grupo  Gromov-hiperbólico (definido no próximo capítulo) que não é $\CAT(-1)$.

Nas últimas décadas, a teoria dos espaços $\CAT(\kappa)$ tornou-se uma grande área, com muitos resultados e aplicações importantes. Remetemos aos livros \cite{burago2022course} e \cite{bridson2013metric} para uma introdução a essa teoria.

\subsection{Geometria dos espaços hiperbólicos}

Nesta seção, listamos algumas propriedades simples, porém bastante úteis, de espaços geodésicos $\delta$-hiperbólicos. Ao final, falaremos do Lema de Morse, que afirma que quasi-geodésicas em um espaço $\delta$-hiperbólico estão uniformemente próximas de geodésicas.

Ao longo da seção, assumiremos que $X$ é um espaço métrico, geodésico e $\delta$-hiperbólico.

\begin{lemma}[Propriedade de bígonos]
\label{lemma:bigons}
Todo par de geodésicas $\overline{xy}$,  $\overline{xz}$ em $X$ com $d(y,z)\leq D$ estão a uma distância de Hausdorff no máximo $D+\delta.$ Em particular, se $\alpha$ e $\beta$ são duas geodésicas ligando dois pontos $x,y \in X$ então $d_{Haus}(\alpha, \beta)\leq \delta.$
\end{lemma}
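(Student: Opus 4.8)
The plan is to apply the Rips thin-triangle condition directly to a geodesic triangle built from the two given segments. First I would form the geodesic triangle $\Delta(x,y,z)$ whose sides are the given geodesics $\overline{xy}$ and $\overline{xz}$ together with some geodesic segment $\overline{yz}$ (which exists because $X$ is geodesic); note that $\overline{yz}$ has length $d(y,z)\le D$, so every one of its points lies within $D$ of each of its endpoints $y$ and $z$.

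Next, to estimate how far $\overline{xy}$ lies from $\overline{xz}$, I would take an arbitrary point $p\in\overline{xy}$ and invoke the $\delta$-hyperbolicity of $X$ in the sense of Rips: there is a point $q\in\overline{xz}\cup\overline{yz}$ with $d(p,q)\le\delta$. If $q\in\overline{xz}$, we immediately get $d(p,\overline{xz})\le\delta$. The only other possibility is $q\in\overline{yz}$, and this is the one place that needs a small observation: since $q$ lies on a segment of length at most $D$ ending at $z$, we have $d(q,z)\le d(y,z)\le D$, and because $z$ is an endpoint of $\overline{xz}$ we conclude $d(q,\overline{xz})\le D$. Combining by the triangle inequality gives $d(p,\overline{xz})\le d(p,q)+d(q,\overline{xz})\le\delta+D$. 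Hence $\overline{xy}\subset\mathcal{N}_{D+\delta}(\overline{xz})$.

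By the symmetric argument — applying the thin-triangle condition to the side $\overline{xz}$ and using that $y$ is an endpoint of $\overline{xy}$ — every point of $\overline{xz}$ lies within $D+\delta$ of $\overline{xy}$, i.e. $\overline{xz}\subset\mathcal{N}_{D+\delta}(\overline{xy})$. Putting the two inclusions together yields $d_{Haus}(\overline{xy},\overline{xz})\le D+\delta$, which proves the first assertion.

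For the bigon statement, I would take two geodesics $\alpha,\beta$ joining the same pair of points $x,y$ and apply the first part with $z=y$, so that $D=d(y,z)=0$; the degenerate third side $\overline{yy}$ collapses to the single point $y\in\alpha\cap\beta$, and the bound becomes $d_{Haus}(\alpha,\beta)\le\delta$. I do not expect any genuine obstacle here: the entire content is the trick of routing the nearby point through the shared endpoint $z$ (resp.\ $y$) when the thinness condition places it on the auxiliary side rather than on the target geodesic, after which everything is just the triangle inequality.
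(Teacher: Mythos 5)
Your proof is correct and follows essentially the same route as the paper's: apply the Rips thin-triangle condition to $\Delta(x,y,z)$ and, when the nearby point falls on the auxiliary side $\overline{yz}$, route through the endpoint $z\in\overline{xz}$ via the triangle inequality to get the bound $D+\delta$. The only (harmless) difference is that you spell out the symmetric inclusion and the degenerate-triangle deduction for the bigon case, which the paper leaves implicit.
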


\begin{proof}
Para todo $p\in \overline{xy}$, vale $d(p,\overline{xz})\leq \delta$ ou $d(p,\overline{yz})\leq \delta $, já que $\Delta(x,y,z)$ é $\delta$-magro. No primeiro caso, é direto concluir que $d(p,\overline{xz}) \leq D+\delta$. No segundo caso, se $m\in \overline{yz}$ é tal que $d(p,\overline{yz})= d(p,m)$, então pela desigualdade triangular vale que  $d(p,\overline{xz})\leq  d(p,m)+ d(m,\overline{xz}) \leq  \delta + d(y,z) \leq \delta+ D$.  Portanto, $d_{Hauss}(\overline{xy},\overline{xz})\leq D+\delta.$
\end{proof}

\begin{lemma}[Propriedade de companheiros de viagem]
\label{lemma:companheirosviagem}
Sejam $\alpha(t)$ e $\beta(t)$ geodésicas parametrizadas por comprimento de arco em $X$, tais que $\alpha(0)=\beta(0)=p$ e $d(\alpha(t_0), \beta(t_0))\leq D$ para algum $t_0\geq 0$. Então, para todo $t \in [0,t_0],$ $$d(\alpha(t),\beta(t))\leq 2(D+\delta).$$
\end{lemma}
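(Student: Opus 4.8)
The plan is to deduce the fellow-traveler bound from the bigon property (Lemma~\ref{lemma:bigons}), which already controls how far apart two geodesics issuing from a common point can stray. First I would restrict attention to the segments $\alpha|_{[0,t_0]}$ and $\beta|_{[0,t_0]}$: these are geodesics joining $p=\alpha(0)=\beta(0)$ to $\alpha(t_0)$ and to $\beta(t_0)$ respectively, and by hypothesis their far endpoints satisfy $d(\alpha(t_0),\beta(t_0))\le D$. Lemma~\ref{lemma:bigons} then yields $d_{Haus}(\alpha|_{[0,t_0]},\beta|_{[0,t_0]})\le D+\delta$. In particular, for the fixed parameter $t\in[0,t_0]$ there exists some $s\in[0,t_0]$ with $d(\alpha(t),\beta(s))\le D+\delta$.

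The remaining issue is that the Hausdorff estimate only produces a point $\beta(s)$ near $\alpha(t)$ at some possibly different parameter $s$, whereas the statement compares $\alpha(t)$ with $\beta(t)$. To bridge this, I would exploit that both curves are parametrized by arc length from the same basepoint $p$, so that $d(p,\alpha(t))=t$ and $d(p,\beta(s))=s$. The reverse triangle inequality then gives
\[
|t-s|=\bigl|d(p,\alpha(t))-d(p,\beta(s))\bigr|\le d(\alpha(t),\beta(s))\le D+\delta.
\]
Since $\beta$ is a geodesic parametrized by arc length, $d(\beta(s),\beta(t))=|s-t|\le D+\delta$, and a final triangle inequality produces
\[
d(\alpha(t),\beta(t))\le d(\alpha(t),\beta(s))+d(\beta(s),\beta(t))\le 2(D+\delta),
\]
which is the desired estimate.

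The argument is short once the bigon property is available, so there is no serious computational obstacle; the one point that genuinely requires care is precisely the passage from Hausdorff closeness to same-parameter closeness. This is exactly where the arc-length normalization is indispensable: without it the nearest point of $\beta$ to $\alpha(t)$ could sit at a wildly different parameter and the conclusion would fail. I would therefore state explicitly that $s\in[0,t_0]$ (so that $\beta(s)$ really lies on the relevant segment) and that the equalities $d(p,\alpha(t))=t$ and $d(p,\beta(s))=s$ hold because geodesics are isometric embeddings of intervals, which is what converts the two applications of the triangle inequality into the clean bound $2(D+\delta)$.
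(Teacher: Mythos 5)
Your proposal is correct and follows essentially the same route as the paper's proof: apply the bigon property (Lema~\ref{lemma:bigons}) to get some $s\in[0,t_0]$ with $d(\alpha(t),\beta(s))\leq D+\delta$, use the arc-length parametrization and the triangle inequality to bound $|t-s|\leq D+\delta$, and conclude with one more triangle inequality. The point you flag as delicate — converting Hausdorff closeness into same-parameter closeness — is exactly the step the paper handles in the same way.
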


\begin{proof}
Pelo Lema \ref{lemma:bigons}, para cada $t \in [0,t_0]$ existe $s\in [0,t_0]$ tal que $$d(\alpha(t),\beta(s))\leq D+ \delta .$$ Aplicando a desigualdade triangular, obtemos $$s=d(\beta(s),p)\leq d(\alpha(t),\beta(s))+d(\alpha(t),p),$$
$$t=d(\alpha(t),p)\leq d(\alpha(t),\beta(s))+d(\beta(s),p).$$ Logo, $$|t-s|\leq d(\alpha(t),\beta(s))\leq D+ \delta.$$ Portanto,
\begin{eqnarray*}
    d(\alpha(t),\beta(t))&\leq& d(\alpha(t),\beta(s))+d(\beta(s),\beta(t))\\
    &=& d(\alpha(t),\beta(s)) + |t-s|\\
    &\leq& 2(D+\delta).
\end{eqnarray*}
\end{proof}

A noção de triângulos magros se generaliza naturalmente para o conceito de \textit{polígonos magros}. Um  $n$-ágono geodésico $P$ em um  espaço métrico $X$ é a concatenação de segmentos geodésicos $\sigma_i$, $i = 1, \ldots, n$, que conectam pontos $p_i$, $ i = 1, \ldots, n$, na ordem cíclica natural. Os pontos $p_i$ são os \textit{vértices} de $P$ e as geodésicas $\sigma_i$
são os \textit{lados} ou \textit{arestas} de $P$. Um polígono $P$ é dito $\delta$-\textit{magro}\index{polígono magro} se todo lado de $P$ estiver  contido em uma  $\delta$-vizinhança da união dos outros lados.

\begin{exercise}
Mostre que, se $X$ é um espaço métrico,
geodésico e $\delta-$hiperbólico, então todo $n$-ágono em $X$ é $\delta(n-2)$-magro. 
\textit{Dica:} Faça uma triangulação de um  $n$-ágono $P$ por $n-3$ diagonais partindo de um mesmo vértice. Em seguida, use indutivamente que triângulos são $\delta$-magros.
\end{exercise}

\begin{lemma}[Polígonos magros]
\label{lemma:poligonosmagros}
Para $n\geq 2$, cada $(n+1)$-ágono geodésico em $X$ é $\delta_n$-magro com $\delta_n = \delta \log_2(n)$.
\end{lemma}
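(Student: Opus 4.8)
A ideia central é substituir a triangulação em leque do exercício anterior (que produz apenas a cota linear $\delta(n-2)$) por uma subdivisão binária \emph{balanceada}, cuja profundidade é logarítmica no número de lados. Primeiro eu reduziria a afirmação a um enunciado sobre um único lado: fixado um lado do $(n+1)$-ágono, com extremos $A$ e $B$, ele é uma geodésica $g=\overline{AB}$, enquanto os demais $n$ lados formam um caminho geodésico $\gamma$ (concatenação de $n$ segmentos geodésicos) ligando $A$ a $B$ pelo outro percurso. Dizer que esse lado está contido na $\delta_n$-vizinhança da união dos outros equivale a dizer que todo ponto de $g$ dista no máximo $\delta_n$ de $\gamma$. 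Como qualquer um dos $n+1$ lados pode desempenhar o papel de $g$, basta estabelecer o seguinte enunciado central, por indução no número $k$ de segmentos do caminho.

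\emph{Enunciado central:} se $\gamma=\overline{u_0u_1}\cup\cdots\cup\overline{u_{k-1}u_k}$ é um caminho geodésico de $u_0=A$ a $u_k=B$ e $g=\overline{AB}$, então todo ponto de $g$ dista no máximo $\delta\lceil\log_2 k\rceil$ de $\gamma$. O caso $k=1$ é a propriedade de bígonos (Lema~\ref{lemma:bigons}), que dá distância $\le\delta$; o caso $k=2$ é exatamente a $\delta$-magreza do triângulo $\Delta(u_0,u_1,u_2)$, e $\delta\lceil\log_2 2\rceil=\delta$. Para o passo indutivo com $k\ge 3$, eu escolheria o vértice médio $u_j$ com $j=\lfloor k/2\rfloor$ e traçaria as diagonais geodésicas $\overline{Au_j}$ e $\overline{u_jB}$, formando o triângulo $\Delta(A,u_j,B)$ que tem $g$ por um dos lados. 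Dado $x\in g$, a magreza desse triângulo fornece $y\in\overline{Au_j}\cup\overline{u_jB}$ com $d(x,y)\le\delta$. Se $y\in\overline{Au_j}$, aplico a hipótese de indução ao subcaminho inicial de $j$ segmentos, fechado pela diagonal $\overline{Au_j}$; se $y\in\overline{u_jB}$, ao subcaminho final de $k-j$ segmentos. Como $\max(j,k-j)=\lceil k/2\rceil$, obtenho em qualquer caso $d(y,\gamma)\le\delta\lceil\log_2\lceil k/2\rceil\rceil=\delta\bigl(\lceil\log_2 k\rceil-1\bigr)$, e somando $d(x,y)\le\delta$ resulta $d(x,\gamma)\le\delta\lceil\log_2 k\rceil$. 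Tomando $g$ como cada lado e $k=n$, concluo que o $(n+1)$-ágono é $\delta\lceil\log_2 n\rceil$-magro. Equivalentemente, em termos geométricos: um ponto de um lado alcança a união dos demais em $\lceil\log_2 n\rceil$ saltos de comprimento $\delta$, um por nível da árvore binária balanceada de diagonais com $n$ folhas.

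O ponto delicado — e a razão exata de trocarmos o leque pela subdivisão balanceada — é garantir que a constante acumule apenas \emph{um} fator $\delta$ por nível da árvore, de modo que o número total de aplicações da magreza de triângulos seja a profundidade $\lceil\log_2 n\rceil$, e não $n-2$. Por isso é essencial que o vértice de corte $u_j$ fique o mais próximo possível do meio, mantendo equilibrados os dois subcaminhos. Resta uma pequena verificação aritmética — a identidade $\lceil\log_2\lceil k/2\rceil\rceil=\lceil\log_2 k\rceil-1$ para $k\ge 2$ — e o cuidado de que subcaminhos de um único segmento sejam tratados pelo Lema~\ref{lemma:bigons} em vez da fórmula logarítmica; essa é a única parte técnica do argumento. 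A igualdade exata $\delta_n=\delta\log_2 n$ enunciada vale quando $n$ é potência de $2$, e o método fornece em geral a cota $\delta\lceil\log_2 n\rceil$, que a implica.
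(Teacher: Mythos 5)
Sua prova está correta e segue essencialmente o mesmo caminho da prova do livro: cortar no vértice médio, usar a $\delta$-magreza do triângulo formado pelo lado fixado e pelas duas diagonais, e recursar nas duas metades, acumulando um $\delta$ por nível de uma subdivisão binária balanceada de profundidade logarítmica. A ressalva que você mesmo aponta — o método fornece $\delta\lceil\log_2 n\rceil$ em vez de $\delta\log_2 n$ exato (e note que a implicação vai na direção oposta à que você escreve: a cota com teto é a mais fraca) — está igualmente presente na prova do livro, cujo truque de paridade (acrescentar um vértice quando o número de lados é par) produz exatamente a mesma recursão $g(n)=\delta+g(\lceil n/2\rceil)$.
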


\begin{proof}

Mostraremos a estimativa por indução. Defina $m=n+1$, o número de lados do polígono. Para $m = 3 = 2^1+1$, não há o que demonstrar já que o espaço é $\delta$-hiperbólico. 

Suponha que $k\geq 1$ e que a estimativa vale para todo polígono com $m \leq (2^{k} + 1)$ lados. Considere  um polígono  geodésico $P$ com $m \leq 2^{k+1}+1$ lados e fixe uma de suas arestas, digamos $\overline{xy}$. Se $m$ for par, então ainda teremos $m+1 \leq 2^{k+1}+1$. Desse modo, ao adicionar um vértice no interior de alguma aresta diferente de $\overline{xy}$, poderemos reduzir o problema a resolver o caso em que $m$ é ímpar.

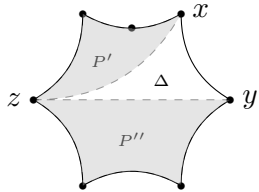
\begin{figure}[!ht]
\centering
\begin{tikzpicture}[scale=1.3]
    
    \draw (1,0) to [bend left] (0.5,0.87) to [bend left] (-0.5,0.87) to [bend left] (-1,0) to [bend left] (-0.5,-0.87) to [bend left] (0.5,-0.87) to [bend left] (1,0);

 \draw (1,0) node {\tiny{$\bullet$}};
 \draw (-1,0) node {\tiny{$\bullet$}};
 \draw (-0.5,0.87) node {\tiny{$\bullet$}};
 \draw (-0.5,-0.87) node {\tiny{$\bullet$}};
 \draw (0.5,-0.87) node {\tiny{$\bullet$}};
 \draw (0.5,0.87)node {\tiny{$\bullet$}};
 \draw (0,0.72)node {\tiny{$\bullet$}};
 \draw (-1.2,0)node {$z$};
 \draw (0.7,0.9)node {$x$}; 
 \draw (1.2,0)node {$y$}; 
\draw (0,-0.4)node {\tiny{$P''$}}; 
\draw (-0.3,0.4)node {\tiny{$P'$}}; 
\draw (0.3,0.2)node {\tiny{$\Delta$}};

\draw[fill=lightgray,dashed, opacity=0.4] (1,0) -- (-1,0) to [bend left]  (-0.5,-0.87) to [bend left] (0.5,-0.87) to [bend left] (1,0);

\draw [fill=lightgray,dashed, opacity=0.4] (-1,0) to [bend right] (0.5,0.87) to [bend left] (-0.5,0.87) to [bend left] (-1,0);
 
\end{tikzpicture}
\caption{Polígonos em espaços hiperbólicos são magros. }
\label{fig:poligonosmagros}
\end{figure}

 Subdividimos $P$ em três polígonos menores, introduzindo as diagonais $\overline{xz}$ e $\overline{yz}$, onde  $z$ é um vértice de $P$ tal que o número de arestas entre $x$ e $z$ seja o mesmo que  entre $y$ e $z$, a saber, $\frac{m-1}{2}$. Os polígonos resultantes dessa divisão são dois $\left(\frac{m-1}{2} + 1\right)$-ágonos  $P'$ e  $P''$ e um triângulo  $\Delta =\Delta(x,y,z)$, como na Figura \ref{fig:poligonosmagros}.

Por hipótese de indução, os polígonos $P'$ e $ P''$ são $\delta\log_2(\frac{m-1}{2})$-magros, enquanto o triângulo $\Delta$ é $\delta$-magro. Assim, $\overline{xy}$ está contida numa $\delta \log_2(\frac{m-1}{2})+\delta = \delta( \log_2(\frac{m-1}{2}) + \log_2(2)) = \delta \log_2(m-1)$-vizinhança da união dos outros lados de $P$, como queríamos demonstrar.    
\end{proof}

\begin{lemma}[Triângulos parecem trípodes]
\label{lemma:triangulosparecemtripodes}
Seja $\Delta = \Delta(a_1,a_2,a_3)$ um triângulo geodésico em $X$ com pontos centrais $a_{ij}\in \overline{a_ia_j}$. Então, para $i, j, k \in \{1,2,3\}$, valem: \begin{eqnarray*}
    d_{Haus}(\overline{a_ia_{ij}},\overline{a_ia_{ik}})&\leq & 7 \delta, \\
    d(a_{ij},a_{jk})&\leq& 6 \delta.
\end{eqnarray*} 
Além disso, dadas parametrizações 
$\alpha_{ji}, \alpha_{jk} : [0, t_j] \to  X$  por comprimento de arco, onde $ t_j = d(a_j , a_{ij}) = d(a_j , a_{jk})$, dos  segmentos geodésicos $\overline{a_ja_{ji}}$ e $\overline{a_ja_{jk}}$, respectivamente, vale:
$$d(\alpha_{ji}(t), \alpha_{jk}(t)) \leq 14\delta
, \mbox{ para todo } t \in [0, t_j ].$$
\end{lemma}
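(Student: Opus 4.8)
The plan is to reduce all three displayed inequalities to a single estimate: that the two central points lying on the two sides issuing from a common vertex are at distance at most $6\delta$. Once that endpoint estimate is in hand, the Hausdorff bound and the fellow–traveller bound follow immediately from the bigon property (Lemma~\ref{lemma:bigons}) and the fellow–traveller property (Lemma~\ref{lemma:companheirosviagem}), both already proved.

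First I would fix notation. From the construction of the central points we have $a_{ij}\in\overline{a_ia_j}$ with $d(a_i,a_{ij})=(a_j,a_k)_{a_i}$. Writing $\ell_i:=(a_j,a_k)_{a_i}$, a one–line computation with the Gromov product gives $\ell_i+\ell_j=d(a_i,a_j)$, so $a_{ij}$ is exactly the point of $\overline{a_ia_j}$ with $d(a_i,a_{ij})=\ell_i$ and $d(a_j,a_{ij})=\ell_j$. In particular the two central points $a_{ij}$ and $a_{jk}$ adjacent to the vertex $a_j$ both lie at distance exactly $\ell_j$ from $a_j$; this symmetric description is the structural fact driving the whole argument.

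The heart of the proof, and the main obstacle, is the endpoint estimate $d(a_{ij},a_{jk})\le 6\delta$. Relabelling $A=a_i$, $B=a_j$, $C=a_k$, $p=a_{ij}$, $q=a_{jk}$, both $p$ and $q$ sit at distance $\ell_B$ from $B$. I would argue via the Rips thinness dichotomy applied to $p\in\overline{AB}$ and to $q\in\overline{BC}$. If $p$ is $\delta$–close to a point $q'\in\overline{BC}$, then $d(B,q')\in[\ell_B-\delta,\ell_B+\delta]$, so comparing along $\overline{BC}$ gives $d(q,q')\le\delta$ and hence $d(p,q)\le 2\delta$; the case where $q$ is $\delta$–close to $\overline{AB}$ is symmetric. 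The genuinely delicate case is when $p$ is $\delta$–close to some $r_1\in\overline{AC}$ and $q$ is $\delta$–close to some $r_2\in\overline{AC}$. Here I would use $d(A,r_1)\in[\ell_A-\delta,\ell_A+\delta]$, and, from $d(C,q)=\ell_C$ and $d(A,C)=\ell_A+\ell_C$, also $d(A,r_2)\in[\ell_A-\delta,\ell_A+\delta]$; since $r_1,r_2$ lie on the geodesic $\overline{AC}$ this forces $d(r_1,r_2)\le 2\delta$, and the triangle inequality yields $d(p,q)\le\delta+2\delta+\delta=4\delta$, which in particular gives the claimed bound $6\delta$. Pinning down this last case — guaranteeing that the two ``far–side'' projections land near the same point of $\overline{AC}$ — is the only nontrivial point, and it is where the parametrization of the central points by the Gromov products $\ell_i$ is essential.

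Finally I would harvest the two remaining inequalities. Applying the endpoint estimate at the vertex $a_i$ gives $d(a_{ij},a_{ik})\le 6\delta$; since $\overline{a_ia_{ij}}$ and $\overline{a_ia_{ik}}$ are geodesics issuing from the common point $a_i$ with endpoints at distance $\le 6\delta$, Lemma~\ref{lemma:bigons} immediately gives $d_{Haus}(\overline{a_ia_{ij}},\overline{a_ia_{ik}})\le 6\delta+\delta=7\delta$. For the last assertion, the geodesics $\alpha_{ji}$ and $\alpha_{jk}$ both start at $a_j$ and satisfy $d(\alpha_{ji}(t_j),\alpha_{jk}(t_j))=d(a_{ij},a_{jk})\le 6\delta$, so Lemma~\ref{lemma:companheirosviagem} with $D=6\delta$ gives $d(\alpha_{ji}(t),\alpha_{jk}(t))\le 2(6\delta+\delta)=14\delta$ for every $t\in[0,t_j]$, which is exactly the claim.
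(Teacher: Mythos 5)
Your proof is correct, constants included, but it reaches the key endpoint estimate by a genuinely different route from the paper. The paper covers the side $\overline{a_ia_j}$ by the closed sets $\overline{\mathcal{N}}_\delta(\overline{a_ia_k})$ and $\overline{\mathcal{N}}_\delta(\overline{a_ja_k})$ and uses connectedness to produce a single point $p\in\overline{a_ia_j}$ that is $\delta$-close to \emph{both} other sides; it then shows, by Gromov-product computations, that this $p$ lies within $2\delta$ of $a_{ij}$ and that its companion $p'\in\overline{a_ia_k}$ lies within $3\delta$ of $a_{ik}$, whence $d(a_{ij},a_{ik})\le 6\delta$. You instead apply thinness directly to the central points themselves and split into cases according to where their $\delta$-companions land: when the companion falls on the side adjacent to the common vertex, comparing distances from that vertex along the geodesic gives $2\delta$; in the remaining case both companions fall on the far side $\overline{AC}$ and are pinned, via the exact distances $\ell_A$ and $\ell_C$ of the central points to the vertices, to within $2\delta$ of each other, giving $4\delta$. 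Your argument is more elementary --- no connectedness step, and nothing beyond the definition of the Gromov products --- and it in fact yields a sharper bound ($4\delta$ rather than $6\delta$) for the central-point estimate; the paper's argument, in exchange, produces along the way a point that is $\delta$-close to all three sides of the triangle, which is exactly the germ of the ``centroide'' notion it exploits later in the chapter on hyperbolic groups. From the endpoint estimate onward the two proofs coincide: the bigon property (Lema~\ref{lemma:bigons}) gives the $7\delta$ Hausdorff bound, and the fellow-traveller property (Lema~\ref{lemma:companheirosviagem}) with $D=6\delta$ gives the $14\delta$ bound.
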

\begin{proof} A geodésica $\overline{a_ia_j}$ pode ser coberta pela união dos fechados $\overline{\mathcal{N}}_\delta(\overline{a_ia_k})$ e  $\overline{\mathcal{N}}_\delta(\overline{a_ja_k})$. Portanto, pela conexidade de $\overline{a_ia_j}$, deve existir um ponto $p$ nesse segmento com distância até $\overline{a_ia_k}$ e $\overline{a_ja_k}$ limitada por $\delta$. Sejam $p'\in \overline{a_ia_k}$ e $p''\in \overline{a_ja_k}$ pontos com distância menor ou igual que $\delta$ de $p$. Assim, $d(p',p'') \leq 2\delta$. Temos
\begin{eqnarray*}
    (a_j , a_k)_{a_i} &=& \frac{1}{2}
[d(a_j,a_i)+ d(a_k,a_i)- d(a_k,a_j)] \\ &=&\frac{1}{2}
[d(a_i, p) + d(p, a_j) + d(a_i, p') + \\& &\quad d(p',a_k)- d(a_j, p'') - d(p'', a_k)] \\
&\leq &\frac{1}{2}
[d(a_i, p) + d(p,p'') + d(a_i, p') + d(p',p'')]\\
&\leq &\frac{1}{2}
[d(a_i, p) + d(p,p'') + d(a_i,p) +d(p, p') + d(p',p'')]\\
&\leq &\frac{1}{2}
[2d(a_i, p) +\delta  + \delta +2\delta]\\ 
&=& d(a_i, p) +2\delta.
\end{eqnarray*}

Por outro lado, 
\begin{eqnarray*}
   d(a_i, p)- (a_j , a_k)_{a_i} &=& \frac{1}{2}
[2d(a_i,p) + d(a_k,a_j) - d(a_j,a_i) - d(a_k,a_i)] \\ &=&\frac{1}{2}
[2d(a_i, p) + d(a_k, p'')+d(p'',a_j) - d(a_j,p) \\& &\quad  -d(p,a_i)- d(a_k, p')- d(p',a_i) ] \\ 
&\leq &  2\delta.
\end{eqnarray*}

Logo,
$$d(p,a_{ij})= |d(a_i,p)-d(a_i, a_{ij})| = | d(a_i, p) - (a_j , a_k)_{a_i}| \leq 2\delta.$$

Analogamente, $d(a_{ik}, p')\leq 3\delta$, donde segue que $d(a_{ij} , a_{ik}) \leq 6\delta$.
Usando o Lema \ref{lemma:bigons}, concluímos que $d_{Haus}(\overline{a_ia_{ij}},\overline{a_ia_{ik}}) \leq 7 \delta$.
Para a última parte, aplicamos o Lema \ref{lemma:companheirosviagem}. 
\end{proof}

\begin{corollary}
A aplicação de colapso $\kappa : \Delta \to T$, onde $\Delta$ é um triângulo e $T$ é uma trípode, como na Figura \ref{fig:tripode}, é uma quasi-isometria uniforme.
\end{corollary}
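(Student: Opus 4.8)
The plan is to show that $\kappa$ is a surjective $(1,C)$-quasi-isometric embedding with $C$ depending only on $\delta$. Surjectivity is immediate, since $\kappa$ maps the edges of $\Delta$ isometrically onto the legs of $T$ and the legs cover $T$; hence the image is $0$-dense, and Exercise~\ref{imagemrdensa} will then promote the embedding to a quasi-isometry. So everything reduces to a single uniform estimate $|d_X(p,q)-d_T(\kappa(p),\kappa(q))|\le C(\delta)$ for all $p,q\in\Delta$, where $d_X$ is the metric induced from $X$ and $d_T$ the intrinsic tripod metric. Throughout I would use the structure recorded in Lemma~\ref{lemma:triangulosparecemtripodes}: writing $o$ for the centroid and $a_{ij}=\kappa^{-1}(o)$ for the three central points, these are pairwise within $6\delta$, and along the two edges issuing from a common vertex the arclength parametrizations stay $14\delta$-close. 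I would also note the elementary identities $d_X(a_i,a_{ij})=t_i:=d_T(v_i,o)$ and $d_X(a_i,a_j)=t_i+t_j$, which hold because $\kappa$ is an isometry on each edge.

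First I would record that for $p$ on the $a_i$-half $\overline{a_ia_{ij}}$ of its edge, $\kappa(p)$ is the point of leg $i$ at distance $d_X(a_i,p)$ from $v_i$, so $h(p):=d_T(\kappa(p),o)=d_X(p,a_{ij})$. The argument then splits by where $\kappa(p),\kappa(q)$ land. If $p,q$ lie on the \emph{same} edge, $\kappa$ is an isometry there (the two legs meeting at $o$ form one geodesic of $T$), so $d_X=d_T$ exactly. If $p,q$ lie on edges sharing a vertex and $\kappa(p),\kappa(q)$ land on the \emph{same} leg, the traveling-companion bound of Lemma~\ref{lemma:triangulosparecemtripodes} gives $d_T(\kappa(p),\kappa(q))\le d_X(p,q)\le d_T(\kappa(p),\kappa(q))+14\delta$.

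The decisive case is when $\kappa(p),\kappa(q)$ land on \emph{distinct} legs, so $d_T(\kappa(p),\kappa(q))=h(p)+h(q)$. The upper bound is free: running the triangle inequality through the two associated central points, which are $6\delta$-close, yields $d_X(p,q)\le h(p)+h(q)+6\delta$. The hard direction is the matching lower bound $d_X(p,q)\ge h(p)+h(q)-C$, i.e. that a geodesic from $p$ to $q$ must essentially pass through the central region. I would treat it by position relative to the shared vertex $a_s$: if one of the points lies on the leg adjacent to $a_s$, a single reverse triangle inequality based at $a_s$, together with $d_X(a_i,a_j)=t_i+t_j$, gives $d_X(p,q)\ge h(p)+h(q)$ with \emph{no} loss; if both points lie on the halves away from $a_s$, then their legs are the two non-shared vertices, and I would first move each point, via the companion estimate, to a point $14\delta$-close on the \emph{third} edge, where the two replacements fall on opposite sides of that edge's central point and are therefore exactly $h(p)+h(q)$ apart, losing only $2\cdot 14\delta$. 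Collecting all cases yields $|d_X(p,q)-d_T(\kappa(p),\kappa(q))|\le 28\delta$, so $\kappa$ is a $(1,28\delta)$-quasi-isometry with constants depending only on $\delta$. The main obstacle is precisely this last lower bound, since it is where hyperbolicity is used: in a general geodesic space the center of a triangle need not be a coarse cut point, and then $h(p)+h(q)$ could grossly overestimate $d_X(p,q)$; the remaining work is the purely combinatorial bookkeeping of which legs are involved.
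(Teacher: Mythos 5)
Your proof is correct, and it is essentially the paper's argument: surjectivity plus a two-sided comparison of $d_X$ and $d_T$ extracted from Lema~\ref{lemma:triangulosparecemtripodes}; the paper compresses this into ``$\kappa$ é sobrejetivo e $1$-Lipschitz'' together with $d_X(x,y)-14\delta\leq d_T(\kappa(x),\kappa(y))$, obtaining a $(1,14\delta)$-quasi-isometry where your bookkeeping yields $(1,28\delta)$.

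One remark of yours, however, is backwards, and correcting it both removes your $28\delta$ loss and realigns your proof with the paper's. The inequality $d_X(p,q)\geq h(p)+h(q)=d_T(\kappa(p),\kappa(q))$ is \emph{not} where hyperbolicity enters: it holds exactly, in every geodesic space, by the triangle inequality alone. In your sub-case where $p,q$ lie on the halves away from the shared vertex $a_s$, with legs $i$ and $j$, the identity $d_X(a_i,a_j)=t_i+t_j$ that you yourself recorded gives
$$t_i+t_j=d_X(a_i,a_j)\leq d_X(a_i,p)+d_X(p,q)+d_X(q,a_j),$$
hence $d_X(p,q)\geq\bigl(t_i-d_X(a_i,p)\bigr)+\bigl(t_j-d_X(a_j,q)\bigr)=h(p)+h(q)$, with no loss and no appeal to the companion estimate; this exact inequality (valid in all your cases) is precisely the paper's claim that $\kappa$ is $1$-Lipschitz. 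The direction that genuinely needs hyperbolicity is the one you call ``free'', namely $d_X(p,q)\leq d_T(\kappa(p),\kappa(q))+C$: your own proof of it invokes the $6\delta$ and $14\delta$ bounds of the Lemma, and it is this direction that fails in a general geodesic space. For a large equilateral triangle in $\R^2$, points just past two distinct central points have $d_T(\kappa(p),\kappa(q))$ arbitrarily small while $d_X(p,q)$ is comparable to the side length; so $h(p)+h(q)$ can never \emph{over}estimate $d_X(p,q)$ (it is always a lower bound), but in non-hyperbolic spaces it can grossly \emph{under}estimate it. With this corrected, your argument collapses to the paper's: $\kappa$ is surjective, exactly $1$-Lipschitz, and distance-decreasing by at most $14\delta$.
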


\begin{proof}
    O mapa $\kappa$ é sobrejetivo e $1$-Lipschitz. Por outro lado, pela parte final do Lema \ref{lemma:triangulosparecemtripodes}, obtemos $$d(x, y) - 14\delta \leq d(\kappa(x), \kappa(y)),$$ para todos $x,y\in T$. Portanto, $\kappa$ é uma $(1,14\delta)$-quasi-isometria.
\end{proof}

\begin{exercise}
Mostre que, com a notação do Lema \ref{lemma:triangulosparecemtripodes}, vale $|d(a_k,a_{ij})-d(a_k,\overline{a_{i}a_j})|\leq 8 \delta$.
\end{exercise}

Seja $\gamma$ uma geodésica em $X$. Para um ponto $x \in X$,  podemos tentar definir a projeção de $x$ sobre $\gamma$,  $p =\pi_{\gamma}(x)$ como algum ponto de $\gamma$ mais próximo de $x$. Em geral, essas projeções   não estão bem-definidas. No entanto, em espaços geodésicos hiperbólicos, o seguinte lema mostra  que essa noção é  ``quasi''-bem definida. 

\begin{lemma}[A projeção é quasi bem-definida] Seja $\gamma$ uma geodésica em $X$. Se, dado um ponto $x \in X$, escolhemos $p =\pi_{\gamma}(x)$ como algum ponto de $\gamma$ mais próximo de $x$ então dado  $p' \in \gamma$ tal que $d(x,p')< d(x,p)+R$ temos 
$$d(p,p')\leq 2(R+2\delta).$$ 
Em particular, se $d(p,x) = d(p',x)$ então $$d(p,p')\leq 4 \delta.$$
\end{lemma}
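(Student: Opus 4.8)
O plano é considerar o triângulo geodésico $\Delta(x,p,p')$ e explorar o fato de que o lado $\overline{pp'}$ é um subsegmento da própria geodésica $\gamma$. Seja $m$ o ponto médio de $\overline{pp'}$; como $m \in \gamma$ e $p = \pi_\gamma(x)$ realiza a distância mínima de $x$ a $\gamma$, vale $d(x,m) \geq d(x,p)$. Essa desigualdade de minimalidade, combinada com a hipótese $d(x,p') < d(x,p) + R$, será o motor da estimativa.

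Primeiro, eu usaria que $X$ é $\delta$-hiperbólico por Rips: o triângulo $\Delta(x,p,p')$ é $\delta$-magro, de modo que o ponto $m \in \overline{pp'}$ está a distância no máximo $\delta$ de algum ponto $q$ pertencente a $\overline{xp} \cup \overline{xp'}$. Separarei então em dois casos, conforme $q$ esteja num lado ou no outro.

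Se $q \in \overline{xp}$, então $d(x,q) = d(x,p) - d(q,p)$ e $d(x,m) \leq d(x,q) + \delta$; juntando com $d(x,m) \geq d(x,p)$ obtenho $d(q,p) \leq \delta$, logo $d(m,p) \leq d(m,q) + d(q,p) \leq 2\delta$. Se $q \in \overline{xp'}$, então $d(x,m) \leq d(x,q) + \delta = d(x,p') - d(q,p') + \delta$; usando $d(x,p') < d(x,p) + R \leq d(x,m) + R$ obtenho $d(q,p') \leq R + \delta$, e portanto $d(m,p') \leq R + 2\delta$. Como $m$ é o ponto médio, $d(p,p') = 2\,d(m,p) = 2\,d(m,p')$, e em qualquer dos casos isso fornece $d(p,p') \leq 2(R+2\delta)$.

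A afirmação final segue de imediato: se $d(p,x) = d(p',x)$, a hipótese $d(x,p') < d(x,p) + R$ vale para todo $R > 0$, de modo que $d(p,p') \leq 2(R + 2\delta)$ para todo $R>0$; fazendo $R \to 0$, concluo $d(p,p') \leq 4\delta$. O principal cuidado está na análise de casos — em particular, em aplicar corretamente a minimalidade $d(x,m) \geq d(x,p)$ no caso $q \in \overline{xp'}$, onde ela se combina com a hipótese sobre $R$; já o caso $q \in \overline{xp}$ é o que força a estimativa mais fina $d(m,p) \leq 2\delta$, responsável pelo limite $4\delta$.
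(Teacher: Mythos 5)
Sua prova está correta, mas segue um caminho genuinamente diferente do texto. Você aplica a $\delta$-magreza ao \emph{ponto médio} $m$ de $\overline{pp'}\subset\gamma$ e divide em casos conforme o ponto $q$ com $d(m,q)\le\delta$ caia em $\overline{xp}$ ou em $\overline{xp'}$: no primeiro caso a minimalidade $d(x,m)\ge d(x,p)$ força $d(q,p)\le\delta$ e daí $d(m,p)\le 2\delta$; no segundo, a hipótese $d(x,p')<d(x,p)+R$ combinada com a mesma minimalidade força $d(q,p')\le R+\delta$ e daí $d(m,p')\le R+2\delta$. Como $m$ é o ponto médio, qualquer uma das duas metades controla $d(p,p')$, e ambos os casos dão $d(p,p')\le 2(R+2\delta)$ — essa simetria é o ganho do seu argumento, que dispensa a divisão preliminar do texto. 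A prova do livro, em vez disso, trata primeiro o caso $d(x,p')\le R+\delta$ pela desigualdade triangular, e no caso restante escolhe um ponto $q'\in\overline{xp'}$ a distância exatamente $R+\delta$ de $p'$, usa a magreza para obter $q$ com $d(q,q')\le\delta$ em um dos outros lados, e mostra que $q\in\overline{pp'}\subset\gamma$ contradiria a minimalidade de $p$ (pois daria $d(x,q)<d(x,p)$); logo $q\in\overline{xp}$, e as estimativas $d(p,q)\le R+2\delta$ e $d(p',q)\le R+2\delta$ somam-se para fechar. Em resumo: o texto localiza o argumento perto de $p'$ e extrai uma contradição explícita da minimalidade, enquanto você a usa apenas como desigualdade; as constantes finais coincidem, e sua passagem ao limite $R\to 0$ para obter $4\delta$ no caso $d(p,x)=d(p',x)$ é legítima (note que a hipótese só é satisfazível com $R>0$, o que também garante $4\delta\le 2(R+2\delta)$ no seu primeiro caso).
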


\begin{proof}
Considere $\alpha, \alpha'$ geodésicas conectando $x$ a $p$ e $p'$, respectivamente.  Se $d(x,p') \leq R+\delta$, então, pela desigualdade triangular e usando que $p$ é o ponto de $\gamma$ mais próximo de $x$, teremos $d(p,p')\leq d(p,x)+d(x,p')\leq 2d(x,p')\leq 2 (\delta + R)\leq 2(R+2\delta)$. Por outro lado, se $d(x,p') > R+\delta$, então existe $q'\in \overline{xp'}$ com $d(p',q')=\delta + R.$ Agora, usando que o triângulo geodésico $\Delta(p,p',x)$ é $\delta$-magro, existe algum ponto $q\in \overline{pp'}\cup \overline{px}$ tal que $d(q,q')\leq \delta$. Se $q \in \gamma$ e $q\neq p$, obtemos uma contradição com o fato que $p$ é ponto mais próximo de $x$. Com efeito, 
\begin{eqnarray*}
    d(x,q)&\leq& d(q,q')+d(q',x)\\ 
    &= & d(q,q') +d(x,p')-d(p',q')\\ 
    &\leq & \delta +d(x,p')-d(p',q')\\
    &=& d(x,p')-R\\
    &<&d(x,p).
\end{eqnarray*}
Portanto, $q\in \overline{xp}$, e assim obtemos 
\begin{eqnarray*}
    d(x,p')-(R+ \delta)&=&d(x,q')\\
    &\leq & d(x,q)+d(q,q') \\
    &\leq & d(x,p)-d(p,q)+\delta.
\end{eqnarray*}
Desse modo, 
\begin{equation*}
    d(p,q)\leq d(x,p)-d(x,p')+R+2\delta \leq R+2\delta.
\end{equation*} 
Além disso, $d(p',q)\leq R+2\delta$, e portanto $d(p,p')\leq 2(R+2\delta)$.
\end{proof}

\begin{lemma}[Divergência exponencial de geodésicas]
Se $\overline{xy}$ é uma geodésica em $X$ de comprimento $2r$ e $m$ é o seu ponto médio, então cada caminho ligando
$x$ a $y$ fora da bola $B (m, r)$ tem comprimento pelo menos $2^{\frac{r-1}{\delta}}.$
\end{lemma}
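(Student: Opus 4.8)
The plan is to run a dyadic bisection of the avoiding path against the midpoint $m$, using $\delta$-thinness of triangles at every stage to control how far a point that ``tracks'' $m$ can be pushed before it lands essentially on the path. Parametrize the path $c$ proportionally to arc length on $[0,1]$, write $L=\ell(c)$, and set $a_0=x=c(0)$, $b_0=y=c(1)$. Since $m$ lies on the geodesic $\overline{a_0b_0}=\overline{xy}$, put $q_0=m$, so that $d(m,q_0)=0$ and the subarc of $c$ joining $a_0$ to $b_0$ has length $L$. The two invariants I will maintain are: $q_k$ lies on a geodesic joining two points of $c$ whose separating subarc has length $L/2^{k}$, and $d(m,q_k)\le k\delta$.

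For the inductive step, suppose I have $q_k$ on a geodesic $\overline{a_kb_k}$ with $a_k=c(s_k)$, $b_k=c(t_k)$ and separating subarc of length $L/2^k$, and $d(m,q_k)\le k\delta$. Let $w=c\bigl((s_k+t_k)/2\bigr)$ be the arc-length midpoint of that subarc and form the geodesic triangle $\Delta(a_k,w,b_k)$. Because $X$ is $\delta$-hyperbolic in the sense of Rips, this triangle is $\delta$-thin, so $q_k\in\overline{a_kb_k}$ lies within $\delta$ of $\overline{a_kw}\cup\overline{wb_k}$. Choose $q_{k+1}$ on that union with $d(q_k,q_{k+1})\le\delta$ and let $\overline{a_{k+1}b_{k+1}}$ be whichever of $\overline{a_kw},\overline{wb_k}$ contains it. Then $d(m,q_{k+1})\le(k+1)\delta$, and $a_{k+1},b_{k+1}$ are again points of $c$ whose separating subarc has length $L/2^{k+1}$, closing the induction.

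To extract the bound, observe that since $q_k$ sits on a geodesic joining two points of $c$ of length at most $L/2^k$, its distance to the nearer endpoint, hence to the image of $c$, is at most $L/2^{k+1}$. As $c$ avoids $B(m,r)$ we have $d(m,c)\ge r$, so for every $k\ge 0$,
\[
r \;\le\; d(m,c) \;\le\; d(m,q_k)+d(q_k,c) \;\le\; k\delta+\frac{L}{2^{k+1}}.
\]
Now I optimize the free index: taking $k$ to be the least integer with $2^{k+1}\ge L$ forces $L/2^{k+1}\le 1$ while $2^{k}<L$, i.e.\ $k<\log_2 L$, so $r<\delta\log_2 L+1$, which rearranges to $L>2^{(r-1)/\delta}$; this covers the nondegenerate regime $L\ge 2$, where the estimate has content, and for $L<2$ one falls back on $\ell(c)\ge d(x,y)=2r$ against the then-small right-hand side $2^{(r-1)/\delta}$. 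The main obstacle is precisely the bookkeeping of this descent: recognizing that iterated thinness against the successive arc-length midpoints is exactly a binary subdivision of $c$, and verifying that both invariants—an additive $\delta$-cost per level and the halving of the subarc length—survive each step, so that the single inequality above with its optimal $k$ produces the exponential constant $2^{(r-1)/\delta}$.
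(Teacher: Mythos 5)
Your proof is correct and takes essentially the same approach as the paper's: both rest on a dyadic subdivision of the avoiding path combined with $\delta$-thinness of triangles, producing the key inequality $r \le k\delta + 1$ with $k \approx \log_2 L$ — the paper merely packages the bisection as its thin-polygons lemma (Lema~\ref{lemma:poligonosmagros}), while you inline that same induction by tracking the point $q_k$. One small remark: your fallback for $L<2$ is not airtight, since $2r \ge 2^{\frac{r-1}{\delta}}$ can fail for small $r$, but this edge case is equally implicit in the paper's proof (which tacitly needs $\ell(\gamma)\ge 1$ so that $k=\lfloor \log_2 \ell(\gamma)\rfloor \ge 0$), so it does not distinguish your argument from theirs.
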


\begin{proof}
Seja $\gamma$ um tal caminho de comprimento $l.$ Divida $\gamma$ em $2^k$ arcos de comprimentos $\leq \dfrac{l}{2^k}$, onde $k =  \lfloor \log_2 l \rfloor$ é escolhido de modo que $\dfrac{l}{2^k}\leq 2$. 

Sejam $x_0 = x, x_1,\ldots , x_{2^k} = y$ os pontos consecutivos em $\gamma$ que são extremos dos novos segmentos obtidos após a subdivisão. Pelo Lema \ref{lemma:poligonosmagros}, aplicado ao polígono geodésico com esses vértices (que possui portanto $2^k+1$ lados) tem-se que $$m \in \mathcal{N}_{k \delta}\displaystyle\left( \bigcup_{i=0}^{n-1}\overline{x_ix_{i+1}}\right).$$

\begin{figure}[H]
     \centering
 \begin{tikzpicture}
\draw [gray,thick,opacity=0.7] plot [smooth] coordinates {(0,0) (1,2) (2,2) (3,2) (4,0)};

\draw[thick] (0,0) to [bend left = 5] (4,0);
\draw (0,-0.2) node{$x$};
\draw (4,-0.2) node{$y$};
\draw[gray, thick] (3.4,1.6) node{$\gamma$};

\draw (0.45,1) node{\tiny{$\bullet$}};
\draw (1,2) node{\tiny{$\bullet$}};
\draw (2,2) node{\tiny{$\bullet$}};
\draw (3.6,0.95) node{\tiny{$\bullet$}};
\draw (2.8,2.1) node{\tiny{$\bullet$}};
\draw (1.7,0.1) node{\tiny{$\bullet$}};

\draw[gray] (1.7,1.9) node{\tiny{$\bullet$}};
\draw[gray] (1.7,1.7) node{\tiny{$p$}};

\draw (1.7,-0.1) node{\tiny{$m$}};
\draw (0.4,1.25) node{\tiny{$x_1$}};
\draw (1,2.2) node{\tiny{$x_2$}};
\draw (2,2.2) node{\tiny{$x_3$}};
\draw (4.1,1) node{\tiny{$x_{2^k-1}$}};
\draw (2.8,2.3) node{\tiny{$x_4$}};

\draw (3.15,1.7) node{\tiny{$\ddots$}};

\draw[thick] (0,0) to [bend right =30] (0.45,1);
\draw[thick] (0.45,1) to [bend right =30] (1,2);
\draw[thick] (1,2) to [bend right =30] (2,2);
\draw[thick] (2,2) to [bend right =30] (2.8,2.1); 
\draw[thick] (3.6,0.95) to [bend right=30] (4,0);
\end{tikzpicture}
         \caption{Divergência exponencial}
     \label{fig:divexp}   
\end{figure}

Seja $p$ o ponto da curva geodésica por partes $\sigma = \displaystyle \bigcup_{i=0}^{n-1}\overline{x_ix_{i+1}} $  mais próximo de $m$. Pelo que vimos acima, $d(p,m) \leq k\delta$. Como $d (x_j, m)\geq r$ para cada $j=0,\ldots,n$ e existe algum índice $i_0$ para o qual $d (x_{i_0}, p)\leq 1$, obtemos:  $$r \leq d(x_{i_0},m) \leq d (x_{i_0}, p) + d (p, m) \leq 1+ k \delta. $$
 Portanto, 
 $r-1\leq\delta  \log_2 (l) $ e concluímos finalmente que  $l\geq 2^{ \frac{r-1}{\delta}}$.
 \end{proof}

Apresentamos a seguir uma sequência de lemas que serão úteis na demonstração do Lema de Morse, ou Teorema de estabilidade de geodésicas, que é o principal objetivo desta seção. A demonstração desses lemas é deixada como exercício ao leitor, e sugerimos que sejam utilizados os lemas anteriores.

\begin{lemma}
\label{lemma:12delta}
Seja $S(a,r)$ uma esfera em $X$. Tome $x,y\in S(a,r)$, tais que $d(x,y) = 2r$. Então cada caminho ligando $x$ a $y$ fora da bola $\overline{B (a, r)}$ tem comprimento pelo menos $2^{\frac{r-1}{\delta}-6}-12\delta.$
\end{lemma}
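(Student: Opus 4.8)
O plano é reduzir o enunciado ao lema de divergência exponencial de geodésicas provado logo acima. A observação central é que as hipóteses \emph{obrigam} o centro $a$ a ser o ponto médio de uma geodésica ligando $x$ a $y$. Com efeito, de $x,y\in S(a,r)$ vem $d(a,x)=d(a,y)=r$, e a hipótese $d(x,y)=2r$ transforma a desigualdade triangular $d(x,y)\le d(a,x)+d(a,y)=2r$ numa igualdade.

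Primeiro eu observaria que, num espaço geodésico, essa igualdade implica que a concatenação de uma geodésica $\overline{xa}$ com uma geodésica $\overline{ay}$ é um caminho de comprimento $d(x,a)+d(a,y)=2r=d(x,y)$, logo minimizante; portanto ela é uma geodésica $\overline{xy}$ de comprimento $2r$, da qual $a$ é exatamente o ponto médio (por distar $r$ de cada extremo). Bastaria então aplicar a divergência exponencial de geodésicas a essa geodésica, com ponto médio $m=a$: todo caminho ligando $x$ a $y$ que evita a bola \emph{aberta} $B(a,r)$ tem comprimento ao menos $2^{(r-1)/\delta}$. Ora, um caminho contido no complementar da bola \emph{fechada} $\overline{B(a,r)}$ está, em particular, fora de $B(a,r)$; assim seu comprimento é pelo menos $2^{(r-1)/\delta}$. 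Como $\delta\ge 0$, vale
$$2^{(r-1)/\delta}\ \ge\ 2^{(r-1)/\delta-6}\ \ge\ 2^{(r-1)/\delta-6}-12\delta,$$
o que já estabelece a cota do enunciado, e com bastante folga nas constantes.

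Essa folga explica por que o enunciado pode se contentar com a versão mais fraca: ela sobrevive mesmo a variantes que não constroem uma geodésica passando exatamente por $a$ --- e é de uma tal variante \emph{robusta} que surgem naturalmente os números $6$ e $12\delta$. Nela, fixa-se uma geodésica qualquer $\overline{xy}$ de comprimento $2r$, com ponto médio $m$, e estima-se $d(a,m)$: como $(x,y)_a=\tfrac12\big(d(a,x)+d(a,y)-d(x,y)\big)=0$, o Lema~\ref{lemma:ripsgromov1} fornece $d(a,\overline{xy})\le 2\delta$ e, usando que os pontos envolvidos estão sobre a geodésica, obtém-se $d(a,m)=O(\delta)$. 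Um caminho fora de $\overline{B(a,r)}$ permanece então fora de $B(m,r-O(\delta))$, e aplica-se a divergência exponencial à subgeodésica de $\overline{xy}$ encurtada de $O(\delta)$ em cada ponta, o que substitui $r$ por $r-O(\delta)$ no expoente $(r-1)/\delta$ (rebaixando-o por uma constante) e introduz um termo aditivo em $\delta$. A única dificuldade real nessa forma é a \emph{contabilidade das pontas}: o caminho dado liga $x$ e $y$, ao passo que o lema anterior fala dos extremos da subgeodésica encurtada, de modo que é preciso completar o caminho até esses extremos por segmentos curtos e controlar o custo dessa complementação --- é exatamente daí que brota a constante aditiva $-12\delta$. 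Prefiro, porém, apresentar a redução limpa do segundo parágrafo, que evita esse cômputo e ainda assim entrega uma cota mais forte do que a pedida.
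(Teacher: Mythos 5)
Sua prova está correta e segue exatamente a rota que o próprio texto indica: o livro deixa este lema como exercício, sugerindo o uso dos lemas anteriores, e sua redução --- observar que $d(x,y)=2r$ junto com $d(a,x)=d(a,y)=r$ força a concatenação de $\overline{xa}$ com $\overline{ay}$ a ser uma geodésica de $x$ a $y$ com ponto médio $a$, e então aplicar o lema de divergência exponencial de geodésicas --- é precisamente o argumento pretendido. Note que sua versão limpa entrega inclusive a cota mais forte $2^{(r-1)/\delta}$, da qual a cota enunciada segue de imediato, já que
$$2^{(r-1)/\delta}\ \ge\ 2^{(r-1)/\delta-6}\ \ge\ 2^{(r-1)/\delta-6}-12\delta \quad\text{para }\delta\ge 0.$$
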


\begin{lemma}
\label{lemma:14delta}
Seja $S(a, r_1+r_2)$ uma esfera em $X$. Tome dois pontos $x, y \in S (a, r_1 + r_2)$ de modo que existam duas geodésicas $\overline{xa}$ e $\overline{ya}$ cruzando a esfera $S (a, r_1)$ em dois pontos $x_0, y_0$ a uma distância de pelo menos $7\delta$.  Então cada caminho ligando $x$ a $y$ fora da bola $\overline{B (a, r_1+r_2)}$ tem comprimento pelo menos $2^{\frac{r_2-1}{\delta}-7}-14\delta.$
\end{lemma}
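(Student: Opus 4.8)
A ideia é reduzir a afirmação ao Lema~\ref{lemma:12delta}, aplicado a uma esfera centrada no ponto médio de uma geodésica $\overline{xy}$ e com raio da ordem de $r_2$. Basta estimar o comprimento de um caminho $\gamma$ ligando $x$ a $y$ que evita $\overline{B(a,r_1+r_2)}$. Escrevo $\rho=\tfrac12 d(x,y)$ e denoto por $c$ o ponto médio de $\overline{xy}$. O papel da hipótese sobre $x_0,y_0$ será garantir que $\rho$ é, a menos de $O(\delta)$, pelo menos $r_2$; uma vez obtido isso, o resto é uma aplicação quase direta do lema anterior.

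A etapa central, e o que espero ser o principal obstáculo, é a afirmação geométrica $d(x,y)\ge 2r_2-O(\delta)$, equivalente a $(x,y)_a\le r_1+O(\delta)$. Para prová-la considero o triângulo $\Delta(x,y,a)$ e seus pontos centrais $a_{xa}\in\overline{xa}$ e $a_{ya}\in\overline{ya}$, que distam $(x,y)_a$ de $a$ e satisfazem $d(a_{xa},a_{ya})\le 6\delta$ pelo Lema~\ref{lemma:triangulosparecemtripodes}. Os pontos $x_0,y_0$ estão a distância $r_1$ de $a$ sobre $\overline{xa}$ e $\overline{ya}$, isto é, a distância $r_2$ de $x$ e de $y$. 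Se fosse $d(x,y)<2r_2$, então $x_0$ e $y_0$ estariam nas ``pernas'' que saem de $a$ (o trecho entre $a$ e os pontos centrais); usando a monotonicidade grosseira da divergência de geodésicas que partem de $a$ (Lema~\ref{lemma:companheirosviagem}) junto com $d(a_{xa},a_{ya})\le 6\delta$, a separação $d(x_0,y_0)$ ficaria controlada por cerca de $7\delta$. A hipótese $d(x_0,y_0)\ge 7\delta$ é exatamente o que exclui essa possibilidade, forçando $x_0,y_0$ a estarem, a menos de $O(\delta)$, além dos pontos centrais, donde $(x,y)_a\le r_1+O(\delta)$. O ponto delicado será a calibração das constantes: as estimativas de companheiros de viagem disponíveis perdem um fator $2$, de modo que precisarei de um controle um pouco mais fino da divergência ao longo das pernas para extrair precisamente o limiar $7\delta$.

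Em seguida verifico que $\gamma$ evita a bola $B(c,\rho-O(\delta))$. De fato, como $c\in\overline{xy}$, o Lema~\ref{lemma:ripsgromov1} dá $d(a,\overline{xy})\le (x,y)_a+2\delta$, e no caso isósceles $d(a,x)=d(a,y)=r_1+r_2$ o ponto médio $c$ coincide, a menos de $O(\delta)$, com o ponto de $\overline{xy}$ mais próximo de $a$, logo $d(a,c)\le (x,y)_a+O(\delta)$. Assim, para todo $z\in\gamma$ vale $d(c,z)\ge d(a,z)-d(a,c)\ge (r_1+r_2)-\big((x,y)_a+O(\delta)\big)=\rho-O(\delta)$, já que $d(a,z)\ge r_1+r_2$ e $\rho=(r_1+r_2)-(x,y)_a$.

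Finalmente, aplico o Lema~\ref{lemma:12delta} à esfera $S(c,\rho')$ com $\rho'=\rho-O(\delta)$: tomo os pontos antipodais $x',y'\in\overline{xy}$ a distância $\rho'$ de $c$, de modo que $d(x',y')=2\rho'$, e o caminho $\gamma'=\overline{x'x}\cup\gamma\cup\overline{yy'}$, que liga $x'$ a $y'$ e evita $\overline{B(c,\rho')}$ (os dois segmentos radiais jazem em $\overline{xy}$ fora de $B(c,\rho')$, e $\gamma$ evita essa bola pelo passo anterior). O Lema~\ref{lemma:12delta} fornece $\ell(\gamma')\ge 2^{(\rho'-1)/\delta-6}-12\delta$; como $\ell(\gamma)=\ell(\gamma')-O(\delta)$ e $\rho\ge r_2-O(\delta)$ pela afirmação central, reorganizando as constantes aditivas obtenho $\ell(\gamma)\ge 2^{(r_2-1)/\delta-7}-14\delta$, como desejado. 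Espero que toda a dificuldade genuína se concentre na desigualdade $d(x,y)\ge 2r_2-O(\delta)$ e no acerto fino das constantes; os demais passos reduzem-se a aplicações da desigualdade triangular, dos produtos de Gromov e do lema da esfera já estabelecido.
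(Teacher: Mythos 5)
O livro deixa este lema como exercício (junto com o Lema~\ref{lemma:12delta}), de modo que avalio sua proposta por si só. O esqueleto que você montou é o correto (ponto médio $c$ de $\overline{xy}$, afirmação central $d(x,y)\ge 2r_2-O(\delta)$, o caminho evita uma bola centrada em $c$, lema de esfera/divergência), mas há duas lacunas genuínas. A primeira você mesmo reconhece e não resolve: a afirmação central. Com as ferramentas que você cita (Lema~\ref{lemma:companheirosviagem} e a parte final do Lema~\ref{lemma:triangulosparecemtripodes}, partindo de $d(a_{xa},a_{ya})\le 6\delta$) obtém-se apenas que $r_1\le (x,y)_a$ implicaria $d(x_0,y_0)\le 14\delta$, o que é perfeitamente compatível com a hipótese $d(x_0,y_0)\ge 7\delta$: nenhuma contradição surge, e a prova trava exatamente no ponto que você chamou de crucial. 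O refinamento necessário existe e usa só ferramentas do texto: parametrizando $\overline{ax}$ e $\overline{ay}$ por comprimento de arco a partir de $a$ (de modo que $x_0=\alpha(r_1)$ e $y_0=\beta(r_1)$), para $t\le (x,y)_a-\delta$ o ponto $\alpha(t)$ dista mais de $\delta$ de $\overline{xy}$, pois $d(a,\overline{xy})\ge (x,y)_a$ pelo Lema~\ref{lemma:ripsgromov1}; pela magreza de $\Delta(a,x,y)$, ele está então a distância $\le\delta$ de algum $q'\in\overline{ay}$, e como $|d(a,q')-t|\le\delta$ segue $d(\alpha(t),\beta(t))\le 2\delta$. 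Sendo $2\delta<7\delta$, a hipótese força $(x,y)_a<r_1+\delta$ e, portanto, $\rho=\tfrac12 d(x,y)>r_2-\delta$. Sem esse argumento (ou outro equivalente), a prova não está completa.

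A segunda lacuna é quantitativa e não se corrige ``reorganizando constantes aditivas'': a rota pelo Lema~\ref{lemma:12delta} não pode dar o enunciado. Usando $d(a,c)\le (x,y)_a+6\delta$ (pontos centrais, Lema~\ref{lemma:triangulosparecemtripodes}), o caminho $\gamma$ evita $B(c,\rho')$ com $\rho'=\rho-6\delta>r_2-7\delta$; o Lema~\ref{lemma:12delta} aplicado a $S(c,\rho')$ fornece $\ell(\gamma')\ge 2^{(\rho'-1)/\delta-6}-12\delta$, ou seja, $\ell(\gamma)\ge 2^{(r_2-1)/\delta-13}-24\delta$. O expoente sai $6$ unidades pior que o enunciado --- um fator multiplicativo $2^{-6}$, não um erro aditivo --- porque o Lema~\ref{lemma:12delta} já embute uma perda de $6$ no expoente. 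O conserto é aplicar diretamente o lema da divergência exponencial de geodésicas (enunciado logo antes dos Lemas~\ref{lemma:12delta} e~\ref{lemma:14delta}) ao segmento $\overline{x'y'}\subset\overline{xy}$, de comprimento $2\rho'$ e ponto médio $c$: como $\gamma'=\overline{x'x}\cup\gamma\cup\overline{yy'}$ evita $B(c,\rho')$, vale $\ell(\gamma')\ge 2^{(\rho'-1)/\delta}$, donde $\ell(\gamma)\ge 2^{(r_2-1)/\delta-7}-12\delta\ge 2^{(r_2-1)/\delta-7}-14\delta$. Com esses dois ajustes, sua estratégia fecha com as constantes exatas do enunciado.
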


\begin{exercise}
    Demonstre os Lemas \ref{lemma:12delta} e \ref{lemma:14delta}.
\end{exercise}

Finalmente, chegamos ao enunciado do Lema de Morse.

\begin{thm}[Lema de Morse]
\label{thm:Morse}
Considere constantes $\lambda \geq 1$ e $\delta, \varepsilon\geq 0 $. Sejam $X$ um espaço $\delta$-hiperbólico, $c$ uma $(\lambda,\varepsilon)$-quasi-geodésica e $\gamma$ uma geodésica ligando os pontos finais de $c$. Então a distância de Hausdorff entre $c$ e $\gamma$ é limitada por uma constante $R=R(\lambda,\varepsilon,\delta)$.
\end{thm}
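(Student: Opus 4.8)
A estratégia é a padrão para o Lema de Morse: reduzir primeiro ao caso de uma quasi-geodésica contínua e, em seguida, estabelecer as duas inclusões de Hausdorff $\gamma\subset\mathcal{N}_R(c)$ e $c\subset\mathcal{N}_R(\gamma)$ separadamente, tendo como ferramenta central o Lema de divergência exponencial de geodésicas provado acima. Primeiro eu faria a redução ao caso contínuo (\emph{taming}): dada a $(\lambda,\varepsilon)$-quasi-geodésica $c:[a,b]\to X$, substituo-a pelo caminho $c'$ obtido ligando por segmentos geodésicos as imagens $c(n)$ nos inteiros $n\in[a,b]$ e nos extremos. Verifica-se que $c'$ ainda é uma $(\lambda',\varepsilon')$-quasi-geodésica, está a distância de Hausdorff limitada de $c$, e satisfaz a propriedade de comprimento controlado: existem constantes $k_1,k_2$ (dependendo de $\lambda,\varepsilon$) tais que o comprimento de $c'$ entre dois parâmetros $s,t$ é no máximo $k_1\,d(c'(s),c'(t))+k_2$. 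Como o enunciado depende apenas da existência de $R$, basta provar o teorema para $c'$.

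O coração da prova é a inclusão $\gamma\subset\mathcal{N}_{R_0}(\mathrm{im}\,c')$. Seja $p\in\gamma$ um ponto que realiza o máximo $D:=d(p,\mathrm{im}\,c')$ (esse máximo existe pois $\gamma$ é compacta), e tomo $y_-,y_+\in\gamma$ a distância $2D$ de $p$ em cada lado (tratando à parte o caso em que $\gamma$ é curta demais, no qual se usam os extremos de $\gamma$, que pertencem a $\mathrm{im}\,c'$). Como $D$ é o máximo, existem $x_-,x_+\in\mathrm{im}\,c'$ com $d(y_\pm,x_\pm)\le D$. Considero então o caminho de $y_-'$ a $y_+'$, onde $y_\pm'$ estão a distância $D$ de $p$, obtido concatenando os segmentos $[y_-',y_-]$, $[y_-,x_-]$, o subarco $c'|_{[x_-,x_+]}$, e $[x_+,y_+]$, $[y_+,y_+']$. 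O ponto essencial é que, sendo $D=d(p,\mathrm{im}\,c')$ a distância de $p$ ao \emph{conjunto} $\mathrm{im}\,c'$, todo ponto de $\mathrm{im}\,c'$ está a distância $\ge D$ de $p$; os segmentos de ligação e os trechos de $\gamma$ também ficam a distância $\ge D$. Logo todo o caminho está fora de $B(p,D)$. Seu comprimento é linear em $D$, pois $d(x_-,x_+)\le 6D$ e a propriedade de comprimento de $c'$ dá comprimento do subarco $\le 6k_1 D+k_2$. Por outro lado, $[y_-',y_+']\subset\gamma$ é uma geodésica de comprimento $2D$ com ponto médio $p$, de modo que o Lema de divergência exponencial fornece $2^{(D-1)/\delta}\le (4+6k_1)D+k_2$. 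Como o lado esquerdo cresce exponencialmente e o direito linearmente, conclui-se $D\le R_0(\lambda,\varepsilon,\delta)$.

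Para a inclusão recíproca $\mathrm{im}\,c'\subset\mathcal{N}_R(\gamma)$, já dispondo de $\gamma\subset\mathcal{N}_{R_0}(c')$, eu tomaria $z=c'(t)$ e um subintervalo maximal $[t_1,t_2]\ni t$ cuja imagem evita $\mathcal{N}_{R_0}(\gamma)$ (se não existir, $z$ já está perto de $\gamma$). Nos extremos, $c'(t_1)$ e $c'(t_2)$ projetam-se em pontos $p_1,p_2\in\gamma$ a distância $\le R_0$; usando o Lema de que a projeção ao ponto mais próximo é quasi bem-definida, junto com a inclusão já provada, mostra-se que o segmento $[p_1,p_2]\subset\gamma$ tem comprimento limitado, donde $d(c'(t_1),c'(t_2))$ é limitado e, pela propriedade de comprimento, a excursão $c'|_{[t_1,t_2]}$ tem comprimento limitado; assim $z$ fica a distância limitada de $\gamma$. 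Combinando as duas inclusões obtém-se $d_{Haus}(c',\gamma)\le R$, e daí $d_{Haus}(c,\gamma)\le R$. O principal obstáculo é exatamente esta segunda inclusão: o argumento limpo via divergência exponencial usado na primeira parte \emph{não} se transpõe diretamente, porque os pontos de uma excursão de $c'$ não ficam automaticamente fora de uma bola centrada num ponto fixo de $\gamma$ — é preciso invocar o controle grosseiro da projeção ao ponto mais próximo para limitar o comprimento das excursões, e é aí que reside a dificuldade técnica central.
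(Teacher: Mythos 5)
Sua proposta está correta e segue essencialmente o mesmo caminho da prova do texto: a mesma redução ao caso contínuo (Lema~\ref{lemma:auxiliarMorse}), a mesma construção do caminho concatenado fora da bola $B(p,D)$ e o mesmo uso do Lema de divergência exponencial para obter $2^{(D-1)/\delta}\leq (4+6k_1)D+k_2$ e concluir $D\leq D_0(\lambda,\varepsilon,\delta)$. A única diferença é que, para a inclusão recíproca $\mathrm{im}\,c'\subset\mathcal{N}_R(\gamma)$, o texto apenas enuncia a constante $R'=D_0(1+K_1)+\tfrac{K_2}{2}$ (cujo argumento implícito é de conexidade: os pontos de $\gamma$ próximos de $c'|_{[a,t]}$ e de $c'|_{[t,b]}$ cobrem $\gamma$, logo se intersectam, e a propriedade de comprimento controlado limita a excursão), enquanto você esboça o mesmo tipo de argumento via excursões maximais — esboço completável, embora a limitação de $d(p_1,p_2)$ venha dessa conexidade, e não propriamente do lema de projeção que você invoca.
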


\begin{remark}
    O resultado não é verdadeiro de maneira geral para espaços $\mathrm{CAT}(0)$, não valendo por exemplo em $\R^2$.  
\end{remark}

\begin{corollary}
\label{cor:ripsQI}
A hiperbolicidade de Rips é um invariante por quasi-isometrias.
\end{corollary}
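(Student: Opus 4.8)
The plan is to deduce the corollary directly from the Morse Lemma (Theorem~\ref{thm:Morse}) by a standard transport-of-triangles argument. Let $f : X \to Y$ be a $(\lambda,\varepsilon)$-quasi-isometry between geodesic spaces, with quasi-inverse $\overline{f}: Y \to X$ in the sense of Definition~\ref{qi2}, where $\overline{f}$ is a $(\lambda',\varepsilon')$-quasi-isometric embedding and $d_Y(f\circ\overline{f}(y),y)\le C$ for all $y\in Y$. Assume $X$ is $\delta$-hiperbólico por Rips. I want to produce a single constant $\delta'$, depending only on $\lambda,\varepsilon,\lambda',\varepsilon',\delta,C$, such that every geodesic triangle in $Y$ is $\delta'$-magro; this is exactly $\delta'$-hiperbolicidade of $Y$, and by symmetry of the quasi-isometry relation the same argument then shows each of the two spaces is hyperbolic whenever the other is.

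First I would fix a geodesic triangle $\Delta(a,b,c)$ in $Y$ with geodesic sides $[a,b],[b,c],[c,a]$, and transport it to $X$ by applying $\overline{f}$. Since each side is an isometric image of an interval and $\overline{f}$ is a $(\lambda',\varepsilon')$-quasi-isometric embedding, each image $\overline{f}([a,b])$, $\overline{f}([b,c])$, $\overline{f}([c,a])$ is a $(\lambda',\varepsilon')$-quasi-geodesic in $X$. I would then choose honest geodesics $\gamma_{ab},\gamma_{bc},\gamma_{ca}$ in $X$ joining the points $\overline{f}(a),\overline{f}(b),\overline{f}(c)$ pairwise. By the Morse Lemma there is $R=R(\lambda',\varepsilon',\delta)$ with $d_{Haus}(\overline{f}([a,b]),\gamma_{ab})\le R$, and similarly for the other two sides. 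Now I exploit that the honest triangle $\Delta(\overline{f}(a),\overline{f}(b),\overline{f}(c))$ is $\delta$-magro: given $p\in[a,b]$, the point $\overline{f}(p)$ lies within $R$ of some $q\in\gamma_{ab}$; thinness gives $q'\in\gamma_{bc}\cup\gamma_{ca}$ with $d_X(q,q')\le\delta$; and a second application of Morse places $q'$ within $R$ of $\overline{f}([b,c]\cup[c,a])$. Hence there is $p'\in[b,c]\cup[c,a]$ with $d_X(\overline{f}(p),\overline{f}(p'))\le 2R+\delta$.

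Finally I push this estimate back to $Y$: applying $f$ (a $(\lambda,\varepsilon)$-Lipschitz-coarse map) and using $d_Y(f\circ\overline{f}(\cdot),\cdot)\le C$ twice together with the triangle inequality, I obtain
$$d_Y(p,p') \le 2C + \lambda(2R+\delta) + \varepsilon =: \delta',$$
so $p$ lies in the $\delta'$-vizinhança of the union of the other two sides, proving $Y$ is $\delta'$-hiperbólico. The argument is essentially bookkeeping once Theorem~\ref{thm:Morse} is in hand; the only point requiring attention is \emph{uniformity} — that $\lambda',\varepsilon',R$ and hence $\delta'$ depend solely on the quasi-isometry data and $\delta$, never on the chosen triangle. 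The genuinely hard content, namely that quasi-geodesics shadow geodesics in a hyperbolic space, is precisely what the Morse Lemma supplies, so no further obstacle remains.
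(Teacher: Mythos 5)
Your proof is correct and follows essentially the same route as the paper: transport the geodesic triangle into the hyperbolic space by a quasi-isometric embedding, compare its quasi-geodesic sides with genuine geodesics via the Morse Lemma (Theorem~\ref{thm:Morse}), use $\delta$-thinness of the comparison triangle, and pull the resulting uniform estimate back. The only cosmetic difference is that the paper maps the triangle over by a single quasi-isometry $f\colon X'\to X$ and pulls the bound back through the lower inequality of the quasi-isometric embedding, whereas you push forward with $\overline{f}$ and return via $f$ together with the quasi-inverse bound $d_Y(f\circ\overline{f}(y),y)\le C$ --- same argument, same dependence of the final constant on the quasi-isometry data and $\delta$.
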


\begin{proof} De fato, sejam $X \qi X'$, onde $X$ é $\delta$-hiperbólico. Dados pontos $a,b,c \in X'$ e uma $(L,A)$-quasi-isometria $f:X'\to X$, como $X$ é $\delta$-hiperbólico, o triângulo $\Delta(f(a), f(b), f(c))$ é $\delta$-magro e, pelo Lema de Morse, obtemos que $f(\Delta(a,b,c))$ é um triângulo quasi-geodésico $(2R+\delta)$-magro. Portanto, $\Delta(a,b,c)$ é $(L(2R+\delta)+A)$-magro e concluímos com isso que $X'$ é $(L(2R+\delta)+A)$-hiperbólico por Rips. 
\end{proof}

Para provar o Lema de Morse, usaremos o seguinte resultado:

\begin{lemma}
\label{lemma:auxiliarMorse}
    Sejam $X$ um espaço métrico geodésico e $c:[a,b]\to X$ uma $(\lambda,\varepsilon)$-quasi-geodésica. Então existe  $c':[a,b]\to X$, uma $(\lambda',\varepsilon')$-quasi-geodésica, com as seguintes propriedades:
\begin{enumerate}[(1)]
    \item $c'(a) = c(a)$, $c'(b) = c(b)$;
    \item $\varepsilon ' = 2(\lambda + \varepsilon)$;
    \item $\ell(c'\vert_{[t,t']}) \leq K_1(d(c'(t), c'(t')))+K_2$, para quaisquer $t,t' \in [a,b]$, onde $K_1 = \lambda(\lambda + \varepsilon)$ e $K_2 = (\lambda \varepsilon' +3)(\lambda + \varepsilon)$;
    \item $d_{Haus}(e,e') \leq \lambda + \varepsilon$, onde $e, e'$ são as imagens de $c$ e $c'$, respectivamente.
\end{enumerate}
    
\end{lemma}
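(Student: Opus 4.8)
The plan is to construct $c'$ from $c$ by a standard "taming" procedure that replaces the possibly wild quasi-geodesic with a continuous, piecewise-geodesic quasi-geodesic joining the same endpoints. First I would restrict attention to the integer parameter values: let $a = t_0 < t_1 < \cdots < t_N = b$ be a subdivision of $[a,b]$ by points spaced one unit apart (with the last interval possibly shorter), and set $c'$ on each subinterval $[t_i, t_{i+1}]$ to be a geodesic segment joining $c(t_i)$ to $c(t_{i+1})$, parametrized proportionally to arc length. Condition $(1)$ is then immediate since $c'(a) = c(t_0) = c(a)$ and $c'(b) = c(t_N) = c(b)$.

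The heart of the argument is verifying the quasi-geodesic estimate and properties $(2)$, $(3)$. For any two parameters $s, s'$ lying in the same or adjacent subintervals, $d(c(s), c'(s))$ is controlled because consecutive sample points $c(t_i), c(t_{i+1})$ are within $\lambda + \varepsilon$ of each other (the quasi-geodesic inequality on an interval of length one), and each geodesic segment of $c'$ therefore has length at most $\lambda + \varepsilon$; this gives the Hausdorff bound in $(4)$ directly, since every point of $e'$ lies on such a short segment emanating from a point $c(t_i) \in e$, and conversely every $c(s) \in e$ is within $\lambda + \varepsilon$ of the nearest sample point. For the length estimate $(3)$, I would bound $\ell(c'|_{[t,t']})$ by summing the lengths of the geodesic pieces it crosses: the number of full unit subintervals between $t$ and $t'$ is at most $\lambda\, d(c(t), c(t')) + \lambda\varepsilon + 1$-ish after applying the lower quasi-geodesic bound $d(c(t),c(t')) \geq \lambda^{-1}|t - t'| - \varepsilon$, and each piece contributes at most $\lambda + \varepsilon$ to the length. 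Carefully collecting these constants, together with the endpoint corrections, yields the stated $K_1 = \lambda(\lambda+\varepsilon)$ and $K_2 = (\lambda\varepsilon' + 3)(\lambda + \varepsilon)$ with $\varepsilon' = 2(\lambda+\varepsilon)$.

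To confirm that $c'$ is genuinely a $(\lambda', \varepsilon')$-quasi-geodesic, I would combine the Hausdorff closeness from $(4)$ with the original quasi-geodesic inequalities for $c$: for the upper bound, $d(c'(t), c'(t')) \leq d(c(t), c(t')) + 2(\lambda+\varepsilon) \leq \lambda|t-t'| + \varepsilon + 2(\lambda+\varepsilon)$, and for the lower bound $d(c'(t), c'(t')) \geq d(c(t), c(t')) - 2(\lambda+\varepsilon) \geq \lambda^{-1}|t-t'| - \varepsilon - 2(\lambda+\varepsilon)$. Absorbing the additive terms into $\varepsilon' = 2(\lambda + \varepsilon)$ (possibly at the cost of slightly enlarging $\lambda'$) gives the desired inequalities.

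The main obstacle I anticipate is purely bookkeeping: tracking the constants through the length estimate $(3)$ so that they land on exactly the prescribed values $K_1$ and $K_2$, rather than merely on \emph{some} explicit constants depending on $\lambda, \varepsilon, \delta$. In particular the factor of $3$ inside $K_2$ and the precise form of $\varepsilon'$ force a careful accounting of the endpoint subintervals (which may have length less than one) and of the rounding involved in counting how many sample points lie between $t$ and $t'$. I would handle this by treating the partial subintervals at the ends separately and noting that $\delta$-hyperbolicity plays no role in this lemma at all --- it is a statement about geodesic metric spaces --- so the only inputs are the triangle inequality and the two quasi-geodesic inequalities for $c$.
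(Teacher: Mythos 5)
Your construction is exactly the one the paper prescribes in Exercise~\ref{exer:auxiliarMorse} (unit-spaced sample points joined by geodesics), your verifications of (1) and (4) are correct, and you are right that $\delta$-hyperbolicity plays no role. The gap is in how you establish (2) and (3) with the \emph{stated} constants, and it is not mere bookkeeping. Your key inequality $d(c'(t),c'(t'))\leq d(c(t),c(t'))+2(\lambda+\varepsilon)$ presupposes the pointwise bound $d(c(s),c'(s))\leq \lambda+\varepsilon$; but your own argument only yields $d(c(s),c'(s))\leq 2(\lambda+\varepsilon)$ (each of $c(s)$, $c'(s)$ is within $\lambda+\varepsilon$ of the sample point $c(t_i)$), so the same-parameter comparison costs $4(\lambda+\varepsilon)$, not $2(\lambda+\varepsilon)$. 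Your proposed repair --- absorbing the excess into a larger $\lambda'$ --- does produce \emph{some} quasi-geodesic statement, but it is incompatible with (3): the bound with $K_1=\lambda(\lambda+\varepsilon)$ comes from counting the at most $|t-t'|+2$ geodesic pieces met (each of length $\leq\lambda+\varepsilon$) and then bounding $|t-t'|\leq\lambda\bigl(d(c'(t),c'(t'))+\varepsilon'\bigr)$, i.e.\ it needs the lower quasi-geodesic inequality \emph{for $c'$} with slope exactly $1/\lambda$ and additive constant exactly $\varepsilon'$. Your route gives either slope $1/\lambda$ with additive $\varepsilon+4(\lambda+\varepsilon)>\varepsilon'$, or additive $\varepsilon'$ with slope $1/\lambda'$; in either case $K_1$ or $K_2$ comes out strictly larger than stated. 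Note also that you derive the length bound in terms of $d(c(t),c(t'))$, whereas (3) is stated in terms of $d(c'(t),c'(t'))$, and this conversion costs yet another multiple of the pointwise error that you never account for.

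The missing idea, which makes all the constants land exactly, is to compare $t$ not with itself but with the nearest subdivision point $[t]$, which satisfies $|t-[t]|\leq \tfrac{1}{2}$ and, crucially, $c'([t])=c([t])$. Since $c'$ is affinely parametrized on each piece, $d(c'(t),c'([t]))\leq \tfrac{1}{2}\,d(c(t_i),c(t_{i+1}))\leq \tfrac{1}{2}(\lambda+\varepsilon)$, whence
\[
d(c'(t),c'(t'))\ \leq\ d(c([t]),c([t']))+(\lambda+\varepsilon)\ \leq\ \lambda\bigl(|t-t'|+1\bigr)+\varepsilon+(\lambda+\varepsilon)\ =\ \lambda|t-t'|+2(\lambda+\varepsilon),
\]
and symmetrically $d(c'(t),c'(t'))\geq \lambda^{-1}|t-t'|-2(\lambda+\varepsilon)$ (using $\lambda^{-1}\leq\lambda$). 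This gives (2) with the \emph{same} multiplicative constant $\lambda$, and then (3) follows from the segment count together with $|t-t'|\leq\lambda\bigl(d(c'(t),c'(t'))+\varepsilon'\bigr)$, yielding $K_1=\lambda(\lambda+\varepsilon)$ and an additive constant $(\lambda\varepsilon'+2)(\lambda+\varepsilon)\leq K_2$. So the stated values of $\varepsilon'$, $K_1$, $K_2$ are reachable, but only through this nearest-point comparison, not through the same-parameter comparison you set up.
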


\begin{exercise} 
\label{exer:auxiliarMorse}
    O objetivo desse exercício é provar o Lema \ref{lemma:auxiliarMorse}. Definiremos $c'$ da seguinte forma: considere o conjunto $\Sigma:=\{a,b\}\cup (\Z \cap (a,b))$. Para cada $t \in \Sigma$, ponha $c'(t) = c(t)$. Para cada subintervalo $(t_i,t_{i+1}) \subset [a,b]$ com extremos em $\Sigma$, escreva uma reparametrização $\gamma_i(s)$ da geodésica que liga $c(t)$ a $c(t')$ nesse intervalo, e defina $c'(s)$ como $\gamma_i(s)$.
Prove que a quasi-geodésica $c'$ assim definida satisfaz as propriedades $(1)$ a $(4)$.

\begin{figure}[H]
     \centering
 \begin{tikzpicture}
\draw [gray,thick,opacity=0.7] plot [smooth] coordinates {(0,0) (1,1) (2,-1) (3,0.5)(4,-0.8) (5,1) (6,-0.6)(7,0)};

\node[scale=0.7,gray] at  (0,-0.2) {$c(a)$};
\node[scale=0.7,gray] at (7.2,-0.2) {$c(b)$};
\node[scale=0.7, gray] at  (3,0.7) {$c(t)$};

\node[scale=0.7] at  (1,0.1) {$c'$};

\node[scale=0.5,gray] at (1.65,0.35) {$c(t_1)$};
\node[scale=0.5,gray] at (2.8,-0.1) {$c(t_2)$};
\node[scale=0.5,gray] at (5.9,0.15) {$c(t_{n-1})$};

 \draw (0,0) node{\tiny{$\bullet$}};
 \draw (7,0) node{\tiny{$\bullet$}};
 \draw (1.4,0.2) node{\tiny{$\bullet$}};
 \draw (2.63,0) node{\tiny{$\bullet$}};
\draw (4.45,0) node{\tiny{$\bullet$}};
\draw (5.62,0) node{\tiny{$\bullet$}};
\draw (3.4,0) node{\tiny{$\bullet$}};

\draw (3.95,0) node{\tiny{$\ldots$}};

\draw (0,0) to [bend left =20] (1.4,0.2);
\draw (1.4,0.2) to [bend left =10] (2.63,0);
\draw (2.63,0) to [bend left =30] (3.4,0);
\draw (4.45,0) to [bend left =15](5.62,0); 
\draw  (5.62,0) to [bend right=20] (7,0);
\end{tikzpicture}
         \caption{Exercício \ref{exer:auxiliarMorse}}
     \label{fig:auxMorse}   
\end{figure}

\end{exercise}

\textit{Demonstração do Teorema \ref{thm:Morse}.}
Substitua $c$ por uma quasi-geodésica $c'$ com as propriedades do Lema \ref{lemma:auxiliarMorse}. Sejam $p=c(a)$ e $q=c(b)$.

Defina $D= \sup\{d(x,e') \mid x \in \overline{pq}\}$ e tome um ponto $x_0 \in \overline{pq}$ com $d(x,e')= D$. Escolha $y \in \overline{px_0} \subset \overline{pq}$ tal que $d(y,x_0) = 2D$ ou, caso $d(p,x_0) \leq 2D$, tome $y=p$. De maneira análoga, escolha um ponto $z \in \overline{x_0q}$.

Tome pontos $y',z' \in e':=\mbox{imagem}(c')$ tais que $d(y,y') \leq D$ e $d(z,z') \leq D$. O caminho $\sigma$ formado pela concatenação da geodésica $\overline{yy'}$ com a parte de $c'$ entre $y'$ e $z'$ e finalmente com a geodésica $\overline{z'z}$ está fora da bola $B(x_0,D)$. Usando o Lema \ref{lemma:auxiliarMorse} e o Lema \ref{lemma:poligonosmagros}, obtemos:
\begin{eqnarray*}
    \ell (\sigma) &\leq& 6DK_1 + K_2 +2D;\\
    d(x_0, \mbox{imagem}(\sigma)) = D &\leq & \delta |\log_2(\ell(\sigma))|+1.
\end{eqnarray*}
Assim, 
$$D-1 \leq  \delta\log_2(6DK_1 + K_2 +2D),$$
e portanto $D \leq D_0 = D_0(\lambda, \varepsilon, \delta)$, pois funções lineares crescem mais rápido que $\log_2$.

Por fim, $\mbox{imagem}(c')\subset \mathcal{N}_{R'}(\overline{pq})$, com $R' = D_0(1+K_1) + \frac{K_2}{2}$ e podemos tomar $R = R' + \lambda + \varepsilon$, usando o Lema~\ref{lemma:auxiliarMorse}.
\qed

\begin{remark}
    A primeira versão desse teorema foi provada por Morse em \cite{morse1924fundamental}, para o seguinte contexto: considere uma superfície compacta $S$ munida de duas métricas Riemannianas $g_1, g_2$ de curvatura negativa. Levantando essas métricas ao recobrimento universal de $S$, obtém-se que cada geodésica com respeito ao levantamento $\tilde{g}_1$ de
    $g_1$ é uma  quasi-geodésica (uniforme) com respeito ao levantamento $\tilde{g}_2$ de $g_2$. Morse provou que todas as geodésicas com respeito a $\tilde{g}_1$ são uniformemente próximas das geodésicas com respeito a $\tilde{g}_2$, desde que seus pontos extremos sejam os mesmos. Mais tarde, Busemann mostrou em \cite{busemann1966extremals} uma versão desse lema no caso de $\hipn$, onde a métrica $g_2$ não precisava ser Riemanniana. Uma versão em termos de quasi-geodesicas é devida a Mostow (\cite{mostow1973strong}), para o caso de  espaços simétricos de curvatura negativa. A primeira prova para o caso de espaços métricos geodésicos $\delta$-hiperbólicos em geral foi feita por Gromov  em \cite{gromov1987hyperbolic}. Em \cite{kd}, são apresentadas duas demonstrações do Lema de Morse, uma delas usando o conceito de ultrafiltros e outra parecida com a versão apresentada nesta seção, na qual são tornadas explícitas estimativas para os valores de $R$ e $R'$, em termos de $\lambda, \varepsilon$ e $\delta$. 
\end{remark}

\subsection{Quasi-convexidade}
Em um espaço métrico $(X,d)$, diremos que um ponto $b$ está entre $a$ e $c$ se $d(a, b)+d(b, c) = d(a, c)$. Denotamos por $I(a, c)$ o subconjunto de pontos de $X$ que estão entre $a$ e $c$, e chamamos esse conjunto de \textit{intervalo} entre $a$ e $c$. Um subconjunto $Y\subset X$ é dito \textit{convexo} \index{conjunto convexo} se, para todos  $a, b\in Y $, o intervalo $I(a, b)$ está contido em  $Y$.
Observamos que a noção de convexidade métrica não é equivalente à noção usual de convexidade em espaços euclidianos ou variedades riemannianas.

O \textit{fecho convexo}\index{fecho convexo} de um subconjunto $Y\subset X$ é a interseção de todos os subconjuntos convexos de $X$ que contém $Y$. Existem exemplos de espaços $\delta$-hiperbólicos que são o fecho convexo de um número finito de seus pontos. Logo, a noção de fecho convexo particularmente não é muito útil no caso de espaços métricos geodésicos hiperbólicos. Nesse contexto, essa noção é substituída pela noção de \textit{quasi-convexidade}\index{quasi-convexidade}.

\begin{definition}
Seja $X$ um espaço métrico geodésico e $Y\subset X$. Então o \textit{fecho quasi-convexo} \index{fecho quasi-convexo} $H (Y)$ de $Y$ em $X$ é a união de todas as geodésicas $\overline{y_1y_2}\subset X$, com pontos extremos $y_1, y_2$ contidos em $Y$.
Consequentemente, um subconjunto $Y$ de $X$ é chamado $R$-\textit{quasi-convexo} se $H (Y) \subset \overline{\mathcal{N}_R (Y)}$. Um subconjunto $Y\subset X$ é chamado \textit{quasi-convexo} \index{conjunto quasi-convexo} se for $R$-quasi-convexo para algum $R <\infty$.
\end{definition}

\begin{exercise}
    Seja $X$ um espaço métrico $\delta$-hiperbólico e geodésico. Mostre que:
    \begin{enumerate}[(a)]
        \item qualquer bola $B(x,R) \subset X$ é $\delta$-quasi-convexa;
        \item se $Y_i \subset X$ são subconjuntos $R_i$-quasi-convexos, para $i=1,2$, e $Y_1\cap Y_2 \neq \emptyset$, então $Y_1 \cup Y_2$ é $(R_1+R_2+\delta)$-quasi-convexo (observe a diferença com respeito à convexidade --- a união de dois subconjuntos convexos não precisa ser convexa);
        \item qualquer que seja o subconjunto $Y\subset X$,  seu fecho quasi-convexo $H(Y)$ é $2\delta$-quasi-convexo.
    \end{enumerate}
\end{exercise}

\begin{example}
    Um exemplo importante de conjuntos que não são quasi-convexos em $\hipn$ são as horoesferas. O fecho quasi-convexo de uma horoesfera $S$ é a horobola limitada por $S$.
\end{example}

\begin{thm}[Bowditch]
Sejam $X, Y$ espaços métricos geodésicos, onde $X$ é $\delta$-hiperbólico.  Então, para cada mergulho quasi-isométrico $f: Y\to X$, a imagem $f (Y)$ é quasi-convexa em $X$.
\end{thm}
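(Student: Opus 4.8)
The plan is to deduce the statement directly from the Morse Lemma (Theorem~\ref{thm:Morse}), exploiting the fact that a quasi-isometric embedding carries geodesics to quasi-geodesics. First I would unwind the definition of quasi-convexity: by definition, $f(Y)$ is $R$-quasi-convex as soon as its quasi-convex hull $H(f(Y))$ --- the union of all geodesics $\overline{p\,q}\subset X$ with endpoints $p,q\in f(Y)$ --- is contained in $\overline{\mathcal{N}_R(f(Y))}$. Since every point of $f(Y)$ has the form $f(y)$ for some $y\in Y$, it suffices to produce a single constant $R=R(L,C,\delta)$, independent of the chosen pair, such that every geodesic segment $\gamma=\overline{f(y_1)\,f(y_2)}$ lies in the $R$-neighborhood of $f(Y)$.

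Fix $y_1,y_2\in Y$ and let $(L,C)$ be the constants of the quasi-isometric embedding $f$. Because $Y$ is geodesic, there is a geodesic $\alpha:[a,b]\to Y$ with $\alpha(a)=y_1$ and $\alpha(b)=y_2$; being an isometry of $[a,b]$ into $Y$, it satisfies $d_Y(\alpha(s),\alpha(t))=|s-t|$. Composing, I would check that $c:=f\circ\alpha:[a,b]\to X$ is an $(L,C)$-quasi-geodesic: inserting this identity into the two defining inequalities of a quasi-isometric embedding gives
$$\frac{1}{L}\,|s-t|-C\;\le\;d_X\bigl(c(s),c(t)\bigr)\;\le\;L\,|s-t|+C$$
for all $s,t\in[a,b]$. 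The endpoints of $c$ are $c(a)=f(y_1)$ and $c(b)=f(y_2)$, exactly the endpoints of $\gamma$.

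Now I would invoke the Morse Lemma. Since $X$ is $\delta$-hyperbolic and $\gamma$ is a geodesic joining the endpoints of the $(L,C)$-quasi-geodesic $c$, Theorem~\ref{thm:Morse} yields a constant $R=R(L,C,\delta)$ with $d_{Haus}(c,\gamma)\le R$. In particular $\gamma\subset\overline{\mathcal{N}_R(\mathrm{im}(c))}$, and since $\mathrm{im}(c)=f\bigl(\alpha([a,b])\bigr)\subset f(Y)$, we obtain $\gamma\subset\overline{\mathcal{N}_R(f(Y))}$. The decisive feature is that the constant produced by Morse depends only on $L$, $C$ and $\delta$, and not on the pair $(y_1,y_2)$; letting the pair vary we conclude $H(f(Y))\subset\overline{\mathcal{N}_R(f(Y))}$, i.e. $f(Y)$ is $R$-quasi-convex.

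The one delicate point --- and essentially the only place where care is needed --- is that $f$ need not be continuous, so $c=f\circ\alpha$ is a priori only a coarse-Lipschitz quasi-geodesic rather than a genuine path. This is precisely the situation handled in the proof of the Morse Lemma through Lemma~\ref{lemma:auxiliarMorse}, which replaces any such $c$ by a continuous, piecewise-geodesic $(\lambda',\varepsilon')$-quasi-geodesic $c'$ with the same endpoints and with $d_{Haus}(c,c')\le L+C$; applying Morse to $c'$ and absorbing the extra $L+C$ into $R$ dissolves the difficulty. Thus no real obstacle remains: once the hull is rewritten in terms of geodesics between image points, the theorem is a one-line consequence of the stability of quasi-geodesics in $\delta$-hyperbolic spaces.
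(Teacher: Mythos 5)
Your proposal is correct and follows essentially the same route as the paper's proof: push a geodesic $\overline{y_1y_2}$ in $Y$ forward to an $(L,C)$-quasi-geodesic in $X$ with endpoints $f(y_1),f(y_2)$, apply the Morse Lemma (Theorem~\ref{thm:Morse}) to bound its Hausdorff distance from the geodesic $\overline{f(y_1)f(y_2)}$ by a constant $R=R(L,C,\delta)$ independent of the pair, and conclude $R$-quasi-convexity of $f(Y)$. The extra care you take with the possible discontinuity of $f$ (via Lemma~\ref{lemma:auxiliarMorse}) is a sound refinement of a point the paper passes over silently, but it does not change the argument.
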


\begin{proof}
Sejam $y_1,y_2 \in Y$ e considere uma geodésica $\alpha = \overline{y_1y_2}$. Dada uma $(L,A)$-quasi-isometria $f:Y \to X$, a curva $\beta = f(\alpha)$ é uma $(L,A)$-quasi-geodésica em $X$. Pelo Lema de Morse, 
$$d_{Haus}(\beta, \beta^*) \leq R=R(L,A, \delta),$$
onde $\beta^* = \overline{f(y_1)f(y_2)}$ é geodésica em $X$. Assim, $\beta^* \subset \mathcal{N}_R(f(Y))$ e portanto $f(Y)$ é quasi-convexo.
\end{proof}

A seguir, mostraremos uma ``recíproca'' para o teorema anterior, também devida a Bowditch. 

\begin{definition}
Um espaço métrico  $(X,d)$ é dito \textit{grosseiramente conectado}\index{espaço grosseiramente conectado} se existe um número real $c$ tal que, para cada par $x, y \in X$, existe uma sequência finita de pontos $x_i\in X$, onde $x_0=x, x_n=y$ e $d (x_i, x_{i + 1}) \leq c$ para cada $i=0,\ldots,n$.
\end{definition}

Dado um espaço métrico $(Y, d_Y)$, denote por $d_{Y, C}$ uma nova métrica em $Y$ dada pelo ínfimo dos comprimentos de caminhos $c$-grosseiros que conectam pontos de $Y$.

\begin{thm}
Suponha que $Y \subset X$ seja  $c$-grosseiramente conectado e que $Y$ seja quasi-convexo em $X$. Então o mapa identidade $f: (Y, d_{Y, C})\to (X, d_X)$ é um mergulho quasi-isométrico para todo $C\geq 2c + 1$. 
\end{thm}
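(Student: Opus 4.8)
The plan is to verify the two inequalities defining a quasi-isometric embedding separately. Since $f$ is the identity on the underlying set $Y$, I must exhibit constants $L\geq 1$ and $A\geq 0$ such that
\[
\tfrac{1}{L}\,d_{Y,C}(x,y)-A\;\leq\;d_X(x,y)\;\leq\;L\,d_{Y,C}(x,y)+A
\]
for all $x,y\in Y$. At the outset I would fix a single constant $c$ playing the role of coarse-connectedness constant and quasi-convexity constant at once; this is harmless, since we may take $c$ to be the larger of the two given constants of $Y$ (indeed a quasi-convex subset of a geodesic space is automatically coarsely connected). Thus I may assume $Y$ is both $c$-coarsely connected and $c$-quasi-convex, that is $H(Y)\subset\overline{\mathcal{N}_c(Y)}$.

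The upper estimate is immediate. For any $C$-coarse chain $x=x_0,x_1,\dots,x_n=y$ in $Y$ the triangle inequality in $X$ gives $d_X(x,y)\leq\sum_i d_X(x_i,x_{i+1})$, and passing to the infimum yields $d_X(x,y)\leq d_{Y,C}(x,y)$. Hence the right-hand inequality holds with $L=1$, $A=0$, and $f$ is automatically $1$-Lipschitz; no hyperbolicity is used here.

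The heart of the argument is the lower estimate, where quasi-convexity and the threshold $C\geq 2c+1$ enter. Given $x,y\in Y$, write $D=d_X(x,y)$ and take a geodesic $\gamma\colon[0,D]\to X$ from $x$ to $y$ parametrized by arc length. Sampling at integer times, I set $p_i=\gamma(\min(i,D))$ for $i=0,\dots,n$, where $n=\lceil D\rceil$, so that $p_0=x$, $p_n=y$, $d_X(p_i,p_{i+1})\leq 1$ and $n\leq D+1$. Each $p_i$ lies on the geodesic $\overline{xy}\subset H(Y)\subset\overline{\mathcal{N}_c(Y)}$, so I may choose $q_i\in Y$ with $d_X(p_i,q_i)\leq c$, taking $q_0=x$ and $q_n=y$. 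Then
\[
d_X(q_i,q_{i+1})\;\leq\;d_X(q_i,p_i)+d_X(p_i,p_{i+1})+d_X(p_{i+1},q_{i+1})\;\leq\;2c+1\;\leq\;C,
\]
which is precisely where the hypothesis $C\geq 2c+1$ is invoked: the chain $q_0,q_1,\dots,q_n$ is a legitimate $C$-coarse path in $Y$. Its length is at most $n(2c+1)\leq(D+1)(2c+1)$, whence $d_{Y,C}(x,y)\leq(2c+1)\,d_X(x,y)+(2c+1)$. Rearranging gives $\tfrac{1}{2c+1}\,d_{Y,C}(x,y)-1\leq d_X(x,y)$, so together with the upper estimate $f$ is a $(2c+1,\,1)$-quasi-isometric embedding.

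The main obstacle is not analytic depth but the coordination of constants: one must ensure that the same $c$ governs both the distance $d_X(p_i,q_i)\leq c$ coming from quasi-convexity and the threshold $C\geq 2c+1$, which is exactly why I unify the two constants before starting. A routine point to confirm is that $d_{Y,C}$ is finite, so that it is a genuine metric with nonempty infimum set; this follows from $c$-coarse connectedness together with $C\geq c$, since every $c$-coarse path is \emph{a fortiori} a $C$-coarse path. It is worth noting that $\delta$-hyperbolicity of $X$ is never actually needed in this direction — only that $X$ is geodesic — so the statement is best read as the natural converse to the preceding theorem of Bowditch, in which hyperbolicity does play an essential role.
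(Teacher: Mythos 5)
Your proof is correct and takes essentially the same route as the paper's: the upper bound is the triangle inequality, and the lower bound comes from subdividing a geodesic $\overline{xy}$ into unit segments, projecting the subdivision points into $Y$ via quasi-convexity, and observing that consecutive projections lie within $2c+1\leq C$ of each other, which yields $d_{Y,C}(x,y)\leq (2c+1)\,d_X(x,y)+(2c+1)$. Your explicit unification of the coarse-connectedness and quasi-convexity constants is the same (and in fact necessary) reading that the paper makes tacitly when it writes $z_i\in\mathcal{N}_c(Y)$ in its own argument.
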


\begin{proof}
    Seja $C>0$ um número real tal que o fecho quasi-convexo de $Y$, $H(Y)$, esteja contido em $\mathcal{N}_C(Y)$. Se $d_Y(y,y') \leq C$, então também vale $d_X(y,y') \leq C$.

    Tome $y,y'$ pontos de $Y$ e $\gamma$ uma geodésica em $X$ de comprimento $L$ que conecta $y$ a $y'$. Subdividindo $\gamma$ em $n=\lfloor L\rfloor$ segmentos de comprimento $1$ e mais um segmento de comprimento $L-n$, obtemos pontos $z_0 = y, z_1, z_2, \ldots z_{n+1}=y' $.

    Como cada $z_i \in\mathcal{N}_c(Y)$, devem existir pontos $y_i \in Y$, com $d_X(y_i, z_i)$ tais que $d_X(y_i, z_i) \leq c$, onde $y_0=z_0$ e $y_{n+1} = z_{n+1}$.

    Então $d_X(z_1, z_{i+1}) \leq 2c+1 $. Logo, concluímos que 
    $$d_{Y,c}(y,y') \leq C(n+1) \leq Cd_x(y, y') +C.$$
\end{proof}


A noção de quasi-convexidade tem várias aplicações no estudo de grupos finitamente gerados. Como um primeiro exemplo, temos o seguinte resultado:

\begin{thm} \label{thm:quasiconv}
Suponha que $G$ é um grupo finitamente gerado e que  $H$ é um subgrupo  $R$-quasi-convexo de $G$. Então $H$ é finitamente gerado e a inclusão $H \hookrightarrow G $ é um mapa bi-Lipschitz.
\end{thm}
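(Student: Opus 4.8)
The plan is to work inside the Cayley graph $X = \cay(G,S)$, which by the discussion of Chapter~\ref{cap3} is a proper geodesic metric space on which $G$ acts isometrically on the left, and to identify $H$ with the corresponding set of vertices. The $R$-quasi-convexity hypothesis means precisely that every geodesic $\overline{h_1h_2}$ with $h_1,h_2 \in H$ lies in $\overline{\mathcal{N}_R(H)}$, so each of its points is at distance at most $R$ from some element of $H$. The natural candidate for a finite generating set of $H$ is
$$T := \{\, h \in H \mid |h|_S \leq 2R+1 \,\},$$
which is finite because closed balls in the proper space $X$ contain only finitely many vertices.

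First I would show that $H = \langle T\rangle$, and the very same computation will yield the nontrivial half of the quasi-isometric estimate. Fix $h \in H$, put $m = |h|_S = d_S(1,h)$, and let $g_0 = 1, g_1, \ldots, g_m = h$ be the consecutive vertices of a geodesic from $1$ to $h$, so that $d_S(g_i, g_{i+1}) = 1$. Since $1, h \in H$, this geodesic lies in $\overline{\mathcal{N}_R(H)}$, so for each $i$ there is $h_i \in H$ with $d_S(g_i, h_i) \leq R$; I take $h_0 = 1$ and $h_m = h$. The triangle inequality gives
$$d_S(h_i, h_{i+1}) \leq d_S(h_i,g_i) + d_S(g_i,g_{i+1}) + d_S(g_{i+1},h_{i+1}) \leq 2R+1,$$
so each $s_i := h_i^{-1}h_{i+1}$ lies in $H$ and satisfies $|s_i|_S \leq 2R+1$, i.e. $s_i \in T$. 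Since $h = h_0^{-1}h_m = s_0 s_1 \cdots s_{m-1}$, we conclude $h \in \langle T\rangle$; hence $H = \langle T\rangle$ is finitely generated.

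Finally I would compare the word metric $d_T$ on $H$ with the restriction of $d_S$. The decomposition $h^{-1}h' = s_0\cdots s_{m-1}$ above, applied by left-invariance to $h^{-1}h' \in H$ with $m = d_S(h,h')$, writes $h^{-1}h'$ as a product of $m$ elements of $T$, giving $d_T(h,h') \leq d_S(h,h')$; conversely, every $t \in T$ has $|t|_S \leq 2R+1$, so $d_S(h,h') \leq (2R+1)\,d_T(h,h')$. Together these yield
$$\frac{1}{2R+1}\,d_S(h,h') \leq d_T(h,h') \leq d_S(h,h'),$$
so the inclusion $(H,d_T) \hookrightarrow (G,d_S)$ is a $(2R+1,0)$-quasi-isometric embedding (in fact bi-Lipschitz onto its image). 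The main point to get right is the geodesic-decomposition step: one must invoke quasi-convexity to stay within $R$ of $H$ along the \emph{entire} geodesic and then bound the jumps $h_i^{-1}h_{i+1}$ uniformly; everything else is bookkeeping with left-invariance of the word metrics. Observe that the inequality $d_T \leq d_S$ is exactly the output of that step, so the finite generation and the quasi-isometric embedding are really established by a single argument.
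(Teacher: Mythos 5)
Your proposal is correct and takes essentially the same approach as the paper: your generating set $T$ is exactly the paper's $S'$ (elements of $H$ with $|h|_S \leq 2R+1$), and your projection of the geodesic vertices $g_i$ to nearby points $h_i \in H$ is precisely the paper's ``detour'' argument. The resulting inequalities $\frac{1}{2R+1}\,d_S \leq d_T \leq d_S$ coincide with those in the paper's proof, so nothing further is needed.
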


\begin{proof}
Seja $S$ um conjunto finito de geradores de $G$ e denote por  $S'$ o conjunto dos elementos de $H$ que distam no máximo $2R+1$ da identidade $e$ em $\cay(G, S)$.

Suponha que $\rho$ é um caminho geodésico  em $\cay(G, S)$ ligando $1$ a um  elemento $h$ de $H$. Vamos inserir desvios no caminho $\rho$ da seguinte forma: ao chegar a cada vértice, viaje por uma geodésica para um elemento  de $H$ mais próximo e depois volte. Pela quasi-convexidade de $H$, cada desvio tem comprimento no máximo $R$. Então esse novo caminho é a concatenação de, no máximo, $ {d_S(1,h)}$ caminhos, onde cada um dos quais viaja entre elementos de $H$ com comprimento $\leq 2R+1$. Isto é, escrevemos $h$ em termos de elementos de $S'$, já que dois elementos consecutivos $h_i, h_{i+1}$ de $H$ no novo caminho distam de, no máximo, 2R+1 e portanto $ h_i^{-1} h_{i+1}\in S'$. 

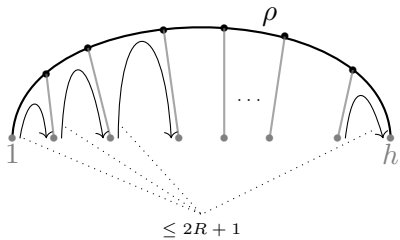
\begin{figure}[H]
     \centering
\begin{tikzpicture}
\draw[thick] (0,0) to [bend left = 88] (5,0);
\draw[gray] (0,-0.2) node{\tiny{$1=h_0$}};
\draw[gray] (5,-0.2) node{\tiny{$h = h_n$}};
\draw[gray] (2.2,-0.2) node{\tiny{$h_i$}};
\draw[gray] (2.8,-0.2) node{\tiny{$h_{i+1}$}};
\draw[gray] (0,0) node{\tiny{$\bullet$}};
\draw[gray] (5,0) node{\tiny{$\bullet$}};

\draw[thick] (3.4,1.6) node{$\rho$};

\draw (0.45,0.85) node{\tiny{$\bullet$}};
\draw (1,1.19) node{\tiny{$\bullet$}};
\draw (2,1.43) node{\tiny{$\bullet$}};
\draw (2.8,1.45) node{\tiny{$\bullet$}};
\draw (3.6,1.34) node{\tiny{$\bullet$}};
\draw (4.5,0.9) node{\tiny{$\bullet$}};

\draw[thick, gray, opacity=0.7] (0.45,0.85) to (0.55,0);
\draw[thick, gray, opacity=0.7] (1,1.19) to (1.3,0);
\draw[thick, gray, opacity=0.7] (2,1.43) to (2.2,0);
\draw[thick, gray, opacity=0.7] (2.8,1.45) to (2.8,0); 
\draw[thick, gray, opacity=0.7] (3.6,1.34) to (3.4,0);
\draw[thick, gray, opacity=0.7](4.5,0.9) to (4.3,0);

\draw[gray] (0.55,0) node{\tiny{$\bullet$}};
\draw[gray] (1.3,0) node{\tiny{$\bullet$}};
\draw[gray] (2.2,0) node{\tiny{$\bullet$}};
\draw[gray] (2.8,0) node{\tiny{$\bullet$}};
\draw[gray] (3.4,0) node{\tiny{$\bullet$}};
\draw[gray] (4.3,0) node{\tiny{$\bullet$}};

\draw[->] (0.1,0) .. controls (0.2,0.6) and (0.4,0.6) .. (0.45,0);
\draw[->] (0.65,0) .. controls (0.7,1.2) and (1,1.2) .. (1.2,0);
\draw[->] (1.4,0) .. controls (1.4,1.7) and (2,1.7) .. (2.1,0);
\draw (3.15,0.5) node{\tiny{$\cdots$}};
\draw[->] (4.4,0) .. controls (4.5,0.8) and (4.7,0.7) .. (4.9,0);

\draw[dotted] (0.15,0) to (2.5,-1); 
\draw[dotted] (0.7,0.15) to (2.5,-1);
\draw[dotted](1.45,0.15) to (2.5,-1);
\draw[dotted](4.75,0.1) to (2.5,-1);
\draw (2.5,-1.2) node{\tiny{$\leq 2R +1$}};
\end{tikzpicture}
         \caption{Teorema \ref{thm:quasiconv} -- pontos em cinza são pontos de $H$.}
     \label{fig:quasiconv}   
\end{figure}

Isso implica que $S'$ gera $H$, já que podemos escrever $$h=(h_0^{-1}h_1)(h_1^{-1}h_2)\cdots(h_{n-1}^{-1}h_n),$$ e, além disso, a inclusão $H \hookrightarrow G$ é bi-Lipschitz, pois:
$$\displaystyle \frac{1}{2R+1} d_S(1,h)  \leq  d_{S'}(1,h)  \leq d_S(1,h).$$
A desigualdade da direita decorre do fato de que cada aresta em $\rho$ produz um gerador de $S'$, enquanto a desigualdade da esquerda segue ao aplicarmos sucessivas vezes a desigualdade triangular à distância $d_S(1,h)$, quando escrevemos $h$ em termos dos geradores de $S'$, além do fato de que cada gerador de $S'$ tem tamanho no máximo $2R+1$ na métrica $d_S$. 
\end{proof}

\section{Fronteiras ideais}
A todo espaço métrico Gromov-hiperbólico $X$ pode-se associar sua \textit{fronteira ideal}  $\partial_{\infty}X$. Este é um invariante por quasi-isometrias em espaços métricos próprios, geodésicos e hiperbólicos, o qual tem se mostrado uma ferramenta bastante útil, especialmente para o estudo de grupos hiperbólicos, assunto que será discutido no próximo capítulo. A fronteira ideal tem, em geral, uma estrutura rica (topológica, dinâmica, métrica, quasi-conforme, e algébrica) com vasta aplicabilidade. Nesta seção, apresentaremos três definições de fronteira ideal de um espaço métrico $X$, as quais são equivalentes se $X$ for geodésico, próprio e $\delta$-hiperbólico.

\subsection{Fronteiras definidas a partir de raios geodésicos} 
Se $X$ é um espaço métrico geodésico, dois raios geodésicos $\rho_1, \rho_2: [0, \infty) \to X$ em $X$ são ditos {\it assintóticos}\index{raios assintóticos} se suas imagens estão a uma distância de Hausdorff finita ou, equivalentemente, se
$$\sup_{t\in [0,\infty)} d(\rho_1(t),\rho_2(t))< \infty.$$

Podemos assim definir uma relação de equivalência no conjunto de todos os raios geodésicos em $X$, dizendo que dois raios são equivalentes se, e somente se, eles forem assintóticos. Denotamos por $\rho(\infty)$ a classe de equivalência de um raio $\rho$, isto é, o conjunto de todos os raios geodésicos em $X$ que são assintóticos a $\rho$.
\begin{definition}
A \textit{fronteira ideal} \index{fronteira ideal} de um espaço métrico geodésico $X$ é a coleção $\partial_{\infty}(X)$ de todas as classes de equivalência de raios geodésicos via a relação de equivalência definida acima. 
\end{definition} 

\begin{example}
    A fronteira ideal da reta $\R$ com a métrica usual consiste em dois pontos, assim como a fronteira ideal do grafo escada infinita da Figura \ref{fig:escada}, cujas arestas têm comprimento igual a um.
\end{example}

\begin{figure}[!ht]
    \centering
    \begin{tikzpicture}[scale=0.8]

\draw[thick] (0,1) -- (10.4,1);
\draw[thick] (0,0) -- (10.4,0);

\foreach \x in {1,2.2,3.4,4.6,5.8,7,8.2,9.4}
{
    \draw[thick] (\x,0) -- (\x,1);
}

\end{tikzpicture}
    \caption{Grafo escada infinita.}
    \label{fig:escada}
\end{figure}

Um segundo modelo para a fronteira ideal é o conjunto das classes de equivalência pela mesma relação de equivalência acima, definida no subconjunto dos raios geodésicos em $X$ que começam em um mesmo ponto base:

\begin{definition}
Dado um ponto de base $p\in X$, a \textit{fronteira ideal com base $p$} denotada por $\partial_{\infty}^p(X)$ é a coleção de todas as classes de equivalência de raios geodésicos $\rho:[0,\infty) \to X$ com $\rho(0)=p$ via a relação de equivalência acima. 
\end{definition}

Os lemas a seguir mostram que, assumindo que $X$ é um espaço próprio e hiperbólico, então as duas definições são equivalentes. Nesse caso, denotaremos também  $\partial_{\infty}^p(X)$ por $\partial_{\infty}(X)$, já que esse conjunto não dependerá do ponto base.

\begin{lemma}
\label{lemma:raiospontobase}
Seja $X$ um espaço métrico próprio, geodésico e $\delta$-hiperbólico. Para cada $x \in X$ e cada $\xi \in \partial_{\infty}X$, existe um raio geodésico $\rho:[0,\infty) \to X$ tal que $\rho(0)=x$ e $\rho(\infty) = \xi$.
\end{lemma}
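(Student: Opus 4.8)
The statement (Lemma~\ref{lemma:raiospontobase}) asserts that in a proper, geodesic, $\delta$-hyperbolic space $X$, every ideal point $\xi \in \partial_\infty X$ can be represented by a geodesic ray based at any prescribed point $x$. Since $\xi$ is given as an equivalence class of asymptotic rays, we start with \emph{some} representative ray $\rho_0:[0,\infty) \to X$ with $\rho_0(\infty) = \xi$, but $\rho_0$ need not begin at $x$. The plan is to build the desired ray based at $x$ as a limit of a sequence of geodesic segments $[x, \rho_0(n)]$ as $n \to \infty$, using properness (via Arzelà--Ascoli) to extract a convergent subsequence, and then using hyperbolicity to verify that the limiting ray is genuinely asymptotic to $\rho_0$.

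\textbf{Step 1: Construct candidate segments.}

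For each $n \in \N$, since $X$ is geodesic, choose a geodesic segment $\sigma_n : [0, \ell_n] \to X$ parametrized by arc length, with $\sigma_n(0) = x$ and $\sigma_n(\ell_n) = \rho_0(n)$, where $\ell_n = d(x, \rho_0(n))$. Because $\rho_0$ is a geodesic ray, $\ell_n \to \infty$ as $n \to \infty$ (indeed $\ell_n \geq d(\rho_0(0), \rho_0(n)) - d(x,\rho_0(0)) = n - d(x,\rho_0(0))$).

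\textbf{Step 2: Extract a limiting ray via Arzelà--Ascoli.}

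First extend each $\sigma_n$ to all of $[0,\infty)$ by setting it constant equal to $\rho_0(n)$ for $t \geq \ell_n$; each $\sigma_n$ is $1$-Lipschitz, hence the family is equicontinuous, and for each fixed $t$ the points $\sigma_n(t)$ lie in the closed ball $\overline{B}(x,t)$, which is compact by properness of $X$. By the Arzelà--Ascoli theorem applied on each compact interval $[0,T]$ and a diagonal argument over $T \in \N$, there is a subsequence $(\sigma_{n_k})$ converging uniformly on compact sets to a map $\rho:[0,\infty) \to X$. The limit $\rho$ is $1$-Lipschitz, satisfies $\rho(0) = x$, and is a geodesic: for fixed $s < t$, eventually $\ell_{n_k} > t$, so $d(\sigma_{n_k}(s), \sigma_{n_k}(t)) = t - s$, and passing to the limit gives $d(\rho(s),\rho(t)) = t-s$. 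Thus $\rho$ is a geodesic ray based at $x$.

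\textbf{Step 3: Show $\rho$ is asymptotic to $\rho_0$ — the main point.}

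This is where $\delta$-hyperbolicity enters and constitutes the crux. The idea is to compare the segments $\sigma_n = [x, \rho_0(n)]$ with the ray $\rho_0$ itself using the thin-triangle condition. Consider the geodesic triangle with vertices $x$, $\rho_0(0)$, and $\rho_0(n)$, whose sides are $\sigma_n$, the segment $[x,\rho_0(0)]$, and the portion of $\rho_0$ from $\rho_0(0)$ to $\rho_0(n)$. By $\delta$-thinness (Rips), any point of $\rho_0|_{[0,n]}$ lies within $\delta$ of $\sigma_n \cup [x,\rho_0(0)]$; since $[x,\rho_0(0)]$ is a fixed bounded segment, the points of $\rho_0$ far out lie within $\delta$ of $\sigma_n$. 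Combined with the companion-property (Lemma~\ref{lemma:companheirosviagem}) or the thin-triangle estimate applied to the two rays $\rho$ and $\rho_0$ issuing near the same region, one shows that $\rho(t)$ stays within a bounded distance (depending only on $\delta$ and $d(x,\rho_0(0))$) of $\rho_0$ for all $t$. Concretely, I would fix $t$, pick $n_k$ large so that $\sigma_{n_k}$ agrees with $\rho$ up to $t$ within any prescribed tolerance, locate the point of $\rho_0$ near $\sigma_{n_k}(t)$, and track that the Hausdorff distance between the images of $\rho$ and $\rho_0$ is uniformly bounded. The hardest part is controlling this bound \emph{uniformly in $t$}: one must ensure the nearest point of $\rho_0$ to $\sigma_{n_k}(t)$ advances along $\rho_0$ roughly proportionally to $t$, which follows from the thin-triangle inequality together with the fact that both are unit-speed geodesics and thus the synchronization of parameters is controlled (as in the companion-property lemma). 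Once $\sup_t d(\rho(t), \rho_0(\cdot)) < \infty$ is established, $\rho$ and $\rho_0$ are asymptotic by definition, so $\rho(\infty) = \xi$, completing the proof.
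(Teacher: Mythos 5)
Your proposal is correct and follows essentially the same route as the paper's proof: choose a representative ray $\rho_0$, connect $x$ to $\rho_0(n)$ by geodesic segments, extract a limit ray via Arzelà--Ascoli using properness, and use the thin triangle $\Delta(x,\rho_0(0),\rho_0(n))$ to control the distance to $\rho_0$. The uniformity in $t$ you worry about in Step 3 is in fact automatic: the thin-triangle argument bounds the Hausdorff distance between each segment $\sigma_n$ and $\rho_0|_{[0,n]}$ by $\delta + d(x,\rho_0(0))$, a constant independent of $n$ and of the position along the segment, which is exactly the constant the paper uses, so no parameter-synchronization argument is needed.
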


\begin{proof}
Seja $\rho'$ um raio geodésico com $\rho'(\infty) = \xi$. Considere a sequência de segmentos geodésicos $\rho_n : [0,d(x,\rho'(n))]\to X$, conectando $x$ a $\rho'(n)$. A $\delta$-hiperbolicidade de $X$ implica que a imagem de $\rho_n$ está a uma distância de Hausdorff de no máximo $\delta+d(x,\rho'(0))$
de $\rho'|_{[0,n]}$. Pelo teorema de Arzelà--Ascoli, passando a uma subsequência se necessário, $\rho_n$ converge a um raio $\rho,$ tal que $\rho(0) = x$. Como cada $\rho$ está a uma distância de Hausdorff de no máximo $\delta+d(x,\rho'(0))$ de $\rho'|_{[0,n]}$, segue que $\rho$ está a uma distância de Hausdorff de no máximo $\delta+d(x,\rho'(0))$ de $\rho'$. Em particular, $\rho$ e $\rho'$ são assintóticos.
\end{proof}

O lema a seguir é uma generalização da propriedade de ``companheiros de viagem'' das geodésicas (Lema \ref{lemma:companheirosviagem}).

\begin{lemma}[Raios assintóticos estão uniformemente próximos]
\label{lemma:raiosassintóticospróximos}
Sejam $\rho_1$ e $\rho_2$  raios geodésicos assintóticos em um espaço métrico próprio, geodésico e $\delta$-hiperbólico $X$, tais que  $\rho_1(0)=\rho_2(0)=x$. Então, para cada $t>0$, $$d(\rho_1(t),\rho_2(t))\leq 2 \delta.$$
\end{lemma}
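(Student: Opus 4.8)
O plano é explorar primeiro o fato de que os raios, sendo assintóticos, permanecem a distância uniformemente limitada --- digamos $D := \sup_{t\geq 0} d(\rho_1(t),\rho_2(t)) < \infty$ --- e depois refinar essa cota global para a cota pontual $2\delta$ por meio de triângulos cujo terceiro lado é empurrado para o infinito. Observo de início que a propriedade de companheiros de viagem (Lema~\ref{lemma:companheirosviagem}) fornece apenas a cota $2(D+\delta)$, que é insuficiente; a ideia para obter exatamente $2\delta$ é não comparar $\rho_1$ e $\rho_2$ diretamente, mas sim cada ponto $\rho_1(t)$ com o segmento inicial de $\rho_2$.

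Fixado $t>0$, escolheria $T$ grande, digamos $T > t + D + \delta$, e consideraria o triângulo geodésico $\Delta(x,\rho_1(T),\rho_2(T))$. Seus lados $\overline{x\rho_1(T)}$ e $\overline{x\rho_2(T)}$ são exatamente $\rho_1|_{[0,T]}$ e $\rho_2|_{[0,T]}$ (pois $\rho_1,\rho_2$ são geodésicas partindo de $x$), e o terceiro lado é uma geodésica $\overline{\rho_1(T)\rho_2(T)}$ de comprimento $\leq D$. Como $X$ é $\delta$-hiperbólico, esse triângulo é $\delta$-magro, logo o ponto $\rho_1(t)$ pertence à $\delta$-vizinhança de $\rho_2|_{[0,T]}\cup \overline{\rho_1(T)\rho_2(T)}$.

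O passo central --- e o principal obstáculo técnico --- é descartar a possibilidade de $\rho_1(t)$ estar $\delta$-próximo do terceiro lado. Para isso usaria que todo ponto $w\in \overline{\rho_1(T)\rho_2(T)}$ satisfaz $d(x,w)\geq d(x,\rho_1(T)) - d(\rho_1(T),w)\geq T - D$, de modo que $d(\rho_1(t),w)\geq d(x,w)-d(x,\rho_1(t))\geq (T-D)-t > \delta$ pela escolha de $T$. Concluiria então que $\rho_1(t)$ está a distância $\leq \delta$ de $\rho_2|_{[0,T]}$, ou seja, existe $s\geq 0$ com $d(\rho_1(t),\rho_2(s))\leq \delta$.

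Por fim, usaria que ambos os raios começam em $x$ e estão parametrizados por comprimento de arco, donde $|t-s| = |d(x,\rho_1(t))-d(x,\rho_2(s))|\leq d(\rho_1(t),\rho_2(s))\leq \delta$. A desigualdade triangular então fornece
$$d(\rho_1(t),\rho_2(t))\leq d(\rho_1(t),\rho_2(s)) + d(\rho_2(s),\rho_2(t)) \leq \delta + |s-t| \leq 2\delta,$$
como desejado. Note que $T$ é apenas auxiliar e desaparece da cota final, que é genuinamente pontual e uniforme em $t$.
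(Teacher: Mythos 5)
Sua prova está correta e segue essencialmente o mesmo caminho da prova do texto: ambas consideram o triângulo geodésico com vértice em $x$ e dois pontos distantes sobre os raios, usam a $\delta$-magreza para aproximar $\rho_1(t)$ de um ponto $\rho_2(s)$ (descartando o terceiro lado, que é curto e está longe de $x$), e concluem com $|t-s|\leq \delta$ mais a desigualdade triangular. A única diferença é cosmética: você toma os dois pontos distantes no mesmo parâmetro $T$ e explicita a escolha $T>t+D+\delta$ e a estimativa que exclui o terceiro lado, enquanto o texto usa a distância de Hausdorff e um ponto $\rho_2(t_1)$ emparelhado a $\rho_1(t_0)$ com ``$t_0\gg t$''.
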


\begin{proof}
Dado $t \in [0,\infty)$ escolha $t_0 \gg t$. 
Uma vez que os raios são assintóticos, deve existir $C>0$ tal que $d_{Haus}(\rho_1,\rho_2) \leq C$ e, portanto, existe $t_1>0$ com $d(\rho_1(t_0),\rho_2(t_1))\leq C$. 

Sendo $X$ um espaço $\delta$-hiperbólico, o triângulo $\Delta(x,\rho_1(t_0),\rho_2(t_1))$ é $\delta$-magro, donde concluímos que deve existir um ponto $m$ em um dos segmentos geodésicos $\overline{x\rho_2(t_1)}$ ou $\overline{\rho_1(t_0)\rho_2(t_1)}$ com $d(\rho_1(t),m) \leq \delta$. Como o comprimento de $\overline{\rho_1(t_0)\rho_2(t_1)}$ é menor ou igual a $C$ e $t_0$ é muito maior que $t$, o ponto $m$ deve pertencer ao segmento $\overline{x\rho_2(t_1)}$. Portanto, existe $t'>0$ tal que $d(\rho_1(t),\rho_2(t'))\leq \delta$. 

Pela desigualdade triangular, vale $|t-t'|\leq \delta$, e assim obtemos $d(\rho_1(t),\rho_2(t))\leq 2 \delta$.
\end{proof}

\subsection{Fronteira sequencial de \texorpdfstring{$X$}{X}} 

Um terceiro modelo para a fronteira ideal de $X$ é obtido através da noção de  convergência no infinito de sequências de $X$, definida usando o produto de Gromov. Fixado um ponto base $p \in X$ em um espaço métrico hiperbólico $(X,d)$, dizemos que uma sequência $(x_n)_{n\geq 1}$ de pontos de $X$ \textit{converge para o infinito} \index{sequência que converge para infinito} se 
$$\lim_{i,j \to \infty} (x_i,x_j )_p =\infty.$$

Note que, para quaisquer
$x_i, x_j, p, p' \in X$,
$$
\left| (x_i,x_j)_p - (x_i,x_j)_{p'} \right|
\leq d(p,p').
$$
Assim, a definição acima é independente da escolha de $p \in X$.

Fixado o ponto base $p$ acima, diremos que duas sequências $(x_n)$ e $(y_n)$ que convergem para o infinito são equivalentes, escrevendo $(x_n) \sim (y_n)$, se 
$$\lim_{i,j \to \infty} (x_i,y_j )_p =\infty.$$

A relação $\sim$ é uma relação de equivalência no conjunto de sequências que convergem para o infinito. Vamos denotar por $[(x_n)]$ a classe de equivalência de uma sequência $(x_n)$ nesse conjunto.

\begin{exercise}
    Verifique que a noção de sequências equivalentes via $\sim$ também não depende da escolha do ponto base $p$.
\end{exercise}

\begin{exercise}
    Mostre que  $\sim$ não é uma relação de equivalência no conjunto de sequências que convergem para infinito em $X=\R^2$.
\end{exercise}

\begin{example} Suponha que $X$ seja o grafo métrico representado pela Figura \ref{fig:exsequentboud}, onde os comprimentos das arestas são todos iguais a um. Então,
\[
(x_i,x_j)_p=(x_i',x_j')_p=\min\{i,j\}
\quad \text{e} \quad
(x_i,x_i')_p=i.
\]
Logo as sequências $(x_i)$ e $(x_i')$ convergem para o infinito e são equivalentes.
\end{example}

\begin{figure}[!ht]
\centering
\begin{tikzpicture}[scale=0.9]

\draw[->] (0,0) -- (6,0);

\filldraw (0,0) circle (1pt);
\node[below] at (0,0) {\small{$p$}};

\foreach \x/\lab in {
1/\small{$x_1$},2/\small{$x_2$},3/\small{$x_3$},4/\small{$x_4$},5/\small{$x_5$}
}{
    \filldraw (\x,0) circle (1pt);
    \node[below] at (\x,0) {\lab};

    \draw (\x,0) -- (\x,1);

    \filldraw (\x,1) circle (1pt);
}

\node[above] at (1,1) {\small{$x_1'$}};
\node[above] at (2,1) {\small{$x_2'$}};
\node[above] at (3,1) {\small{$x_3'$}};
\node[above] at (4,1) {\small{$x_4'$}};
\node[above] at (5,1) {\small{$x_5'$}};

\node at (6,0.5) {$\cdots$};

\end{tikzpicture}
\caption{Sequências equivalentes.}
\label{fig:exsequentboud}
\end{figure}

\begin{definition}
    Seja $(X,d)$ um espaço métrico $\delta$-hiperbólico. A \textit{fronteira sequencial}\index{fronteira sequencial} de $X$ é o conjunto $\partial_s(X)$ das classes de equivalência de sequências que convergem ao infinito pela relação $\sim$ acima. 
\end{definition}

\begin{proposition}
    Seja $(X,d)$ um espaço métrico geodésico próprio $\delta$-hiperbólico e $p\in X$ um ponto-base. Então existem bijeções naturais entre os conjuntos $\partial_{\infty}(X)$, $\partial_{\infty}^p(X)$ e $\partial_{s}(X)$.
\end{proposition}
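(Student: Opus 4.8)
O plano é construir separadamente as duas bijeções $\partial_{\infty}^p(X)\cong\partial_{\infty}(X)$ e $\partial_{\infty}^p(X)\cong\partial_s(X)$ e depois compô-las. A primeira é a mais direta: considero a aplicação $\partial_{\infty}^p(X)\to\partial_{\infty}(X)$ induzida pela inclusão, uma vez que todo raio que parte de $p$ é, em particular, um raio geodésico. Ela está bem definida e é injetiva porque a relação de equivalência em $\partial_{\infty}^p(X)$ é exatamente a restrição da relação assintótica aos raios baseados em $p$; a sobrejetividade segue diretamente do Lema~\ref{lemma:raiospontobase}, que garante que toda classe $\xi\in\partial_{\infty}X$ admite um representante começando em $p$. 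Isso reduz o problema a exibir uma bijeção entre $\partial_{\infty}^p(X)$ e $\partial_s(X)$.

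Em seguida, defino $\Psi:\partial_{\infty}^p(X)\to\partial_s(X)$ por $\Psi([\rho])=[(\rho(n))_{n}]$, onde $\rho$ é parametrizado por comprimento de arco com $\rho(0)=p$. Para ver que $\Psi$ está bem definida, calculo o produto de Gromov ao longo do raio: como $d(p,\rho(i))=i$ e $d(\rho(i),\rho(j))=|i-j|$, obtenho $(\rho(i),\rho(j))_p=\min(i,j)$, logo $\liminf_{i,j}(\rho(i),\rho(j))_p=\infty$ e a sequência $(\rho(n))$ converge para infinito. A independência do representante vem do Lema~\ref{lemma:raiosassintóticospróximos}: dois raios assintóticos baseados em $p$ satisfazem $d(\rho_1(t),\rho_2(t))\le 2\delta$, de onde $(\rho_1(i),\rho_2(j))_p$ também diverge, e portanto as sequências associadas são equivalentes.

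A injetividade de $\Psi$ será uma aplicação iterada da condição de hiperbolicidade de Gromov. Dados raios $\rho_1,\rho_2$ baseados em $p$ com $[(\rho_1(n))]=[(\rho_2(n))]$, para cada $t$ fixo e $i,j\ge t$ aplico duas vezes a desigualdade $(x,z)_p\ge\min\{(x,y)_p,(y,z)_p\}-\delta$ à cadeia $\rho_1(t),\rho_1(i),\rho_2(j),\rho_2(t)$, usando $(\rho_1(t),\rho_1(i))_p=(\rho_2(t),\rho_2(j))_p=t$, para obter
$$(\rho_1(t),\rho_2(t))_p\ge\min\{t,(\rho_1(i),\rho_2(j))_p\}-2\delta.$$
Como $\liminf_{i,j}(\rho_1(i),\rho_2(j))_p=\infty$, fazendo $i,j\to\infty$ concluo $(\rho_1(t),\rho_2(t))_p\ge t-2\delta$, isto é, $d(\rho_1(t),\rho_2(t))\le 4\delta$ para todo $t$; portanto os raios são assintóticos e $\Psi$ é injetiva.

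Resta a sobrejetividade, que espero ser o passo mais delicado, pois é onde a hipótese de $X$ ser próprio entra de modo essencial. Dada uma sequência $(x_n)$ que converge para infinito, observo primeiro que $(x_i,x_j)_p\le\min\{d(p,x_i),d(p,x_j)\}$ força $d(p,x_n)\to\infty$. Tomo os segmentos geodésicos $\gamma_n=\overline{px_n}$ e aplico o teorema de Arzela--Ascoli (legítimo pela propriedade de $X$) para extrair uma subsequência que converge uniformemente sobre compactos a um raio geodésico $\rho$ com $\rho(0)=p$. O ponto técnico é verificar que $[(\rho(n))]=[(x_n)]$: para isso estimo $(\rho(i),x_{n_k})_p$ comparando $\rho(i)$ com o ponto de $\gamma_{n_k}$ a distância $i$ de $p$, cujo produto de Gromov com $x_{n_k}$ vale exatamente $i$, e controlo o erro pela convergência uniforme; combinando essa estimativa com a convergência de $(x_n)$ para infinito, novamente através da desigualdade dos quatro pontos, obtenho $\liminf_{i,j}(\rho(i),x_j)_p=\infty$. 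A principal dificuldade será organizar de forma limpa a contabilidade dos dois índices no $\liminf$, escolhendo $i$ grande em função do limiar desejado e $k$ grande em função de $i$. Por fim, como as três aplicações construídas são canônicas e a independência do ponto base de $\partial_s(X)$ já foi observada, as bijeções resultantes são naturais, como afirmado.
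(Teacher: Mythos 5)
Sua prova está correta e segue essencialmente o mesmo caminho do texto: a inclusão $\partial_{\infty}^p(X)\hookrightarrow\partial_{\infty}(X)$ com sobrejetividade via o Lema~\ref{lemma:raiospontobase}, a aplicação $[\rho]\mapsto[(\rho(n))]$ com boa definição controlada pela distância entre raios assintóticos, e a sobrejetividade por Arzela--Ascoli aplicada aos segmentos $\overline{px_n}$. A única diferença é que você vai além do texto em dois pontos: fornece a prova da injetividade via a desigualdade dos quatro pontos (que o livro deixa como exercício) e explicita a verificação de que $[(\rho(n))]=[(x_n)]$ na sobrejetividade, passo que o texto apenas afirma; ambos os argumentos estão corretos e completáveis tal como esboçados.
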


\begin{proof}
Se $\rho$ é um raio geodésico em $X$ começando em $p$, então $\rho(\infty)$ pertence a ambos $\partial_{\infty}(X)$ e $\partial_{\infty}^p(X)$. A inclusão é portanto um mapa injetivo natural de $\partial_{\infty}^p(X)$ para $\partial_{\infty}(X)$. Pelos Lemas \ref{lemma:raiospontobase} e \ref{lemma:raiosassintóticospróximos}, esse mapa é também sobrejetivo.

 Agora, se $\rho$ é um raio geodésico em $X$, o que assumimos ser parametrizado por comprimento de arco, então a sequência $\rho(n)$ converge para infinito. Escrevendo $\iota(\rho(\infty)):=[(\rho(n))_{n\geq1}]$, obtemos um mapa natural $\iota: \partial_{\infty}X \to \partial_sX$. Para verificar que esse mapa está bem-definido, dados dois raios geodésicos parametrizados por comprimento de arco $\rho, \gamma$, com $\rho(\infty) = \gamma(\infty)$, mostremos que $[(\rho(i))_{i\geq1}] =[(\gamma(j))_{j\geq1}]$. Tomamos como ponto base, por exemplo, $p=\rho(0)$ e aplicamos a desigualdade triangular ao produto de Gromov das sequências $x_i = \rho(i)$ e $y_j=\gamma(j)$:

 \begin{eqnarray*}
     (x_i,y_j)_p &=& \dfrac{1}{2}\left( d(p, \rho(i))+  d(p, \gamma(j)) - d(\rho(i),\gamma(j))\right)\\
     &\geq & \dfrac{1}{2}\left( i-  d(p, \gamma(0))+j - d(\rho(i), \rho(j)) - d(\rho(j), \gamma(j))\right)\\
     &\geq & \dfrac{1}{2}\left( i+j - |i-j| - 2C\right),
 \end{eqnarray*}
onde $C = \sup_{t\geq 0} d(\rho(t),\gamma(t))$. Desse modo,  $[(x_i)] =[(y_j)]$, como queríamos demonstrar. Deixamos como exercício verificar que $\iota$ é um mapa injetivo.

Por outro lado, seja $(x_i)_{i\geq 1}$ uma sequência de $X$ que converge ao infinito, para um ponto $\xi = [(x_i)] \in \partial_sX$. Para cada $i\geq 1$, escolhemos um segmento
geodésico $[x_0, x_i]$ de um ponto base  $x_0$ até $x_i$. Novamente usando o teorema de Arzelà--Ascoli, mostramos como no Lema \ref{lemma:raiospontobase} que existe uma subsequência $(y_i)_{i\geq 1}$ de $(x_i)_{i\geq 1}$
tal que os segmentos $[x_0, y_i]$ convergem a um raio $\rho$. Temos portanto que o mapa $\iota$ é sobrejetivo, pois $\iota(\rho(\infty)) = [(x_i)]$. 
\end{proof}

\begin{definition}
Identificamos a partir de agora os três modelos descritos acima como um único conjunto, que chamaremos de \textit{bordo} ou \textit{fronteira} \index{fronteira ideal} de um espaço geodésico próprio $\delta$-hiperbólico $X$, e denotaremos apenas por $\partial X$.
\end{definition}

Cada modelo tem suas vantagens. Por exemplo, para mostrar que esse conjunto é compacto -- após inserir uma topologia em $\partial X$, de modo que os mapas acima sejam homeomorfismos -- usaremos o modelo $\partial_{\infty}(X)$. O modelo sequencial tem a vantagem de fazer sentido mesmo que $X$ não seja geodésico ou próprio. Por fim, existem outras maneiras equivalentes de definir a fronteira ideal de um espaço métrico. Alguns exemplos podem ser encontrados em \cite{ghys2013groupes} e nas referências lá mencionadas.

\begin{exercise}
A fronteira de um espaço hiperbólico $X$ é claramente vazia se $X$ possui diâmetro finito. Mostre que a recíproca vale se $X$ for geodésico e próprio. 
\end{exercise}
Ainda a respeito do exercício acima, no caso de espaços não-próprios, observe o que acontece com o espaço $0$-hiperbólico não localmente compacto
obtido ao munir da métrica canônica de árvores a união sobre $n \in \N$ dos segmentos da forma $[0, ne^{i\frac{\pi}{n}} ] $ no plano complexo.

\subsection{Topologias na fronteira ideal} 
Nesta seção, a menos de menção contrária, $X = (X,d)$ denotará um espaço métrico próprio, geodésico e $\delta$-hiperbólico. Nosso objetivo é munir $\partial X$ de uma topologia natural que torne este conjunto um espaço compacto.  Considere o espaço $R([0,\infty),X)$  dos raios geodésicos parametrizados por comprimento de arco em $X$, visto como subespaço de $C([0,\infty),X)$, munido da topologia compacto-aberta ou, equivalentemente, a topologia da convergência uniforme sobre compactos. Podemos portanto trabalhar com a topologia quociente, que será denotada por $\tau$, em $\partial X$. 

\begin{lemma}
\begin{enumerate}[(1)]
\item A fronteira ideal $\partial X$, munida da topologia $\tau$, é compacta.
\item Para todos $\xi\neq \eta \in \partial X$, existe uma geodésica completa em $X$ que é assintótica a ambos $\xi$ e $\eta.$
\end{enumerate}
\end{lemma}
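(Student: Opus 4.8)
For part (1), the plan is to work with the basepoint model of the boundary. Fix $x \in X$ and let $\mathcal{R}_x$ denote the set of unit-speed geodesic rays $\rho : [0,\infty) \to X$ with $\rho(0) = x$, regarded as a subspace of $C([0,\infty), X)$ with the compact-open topology. First I would check that $\mathcal{R}_x$ is compact: every such ray is $1$-Lipschitz (so the family is equicontinuous) and $\rho(t) \in \overline{B}(x,t)$, which is compact because $X$ is proper; hence by the Arzelà--Ascoli theorem $\mathcal{R}_x$ is relatively compact, and it is closed since a limit, uniform on compact sets, of unit-speed geodesics issuing from $x$ is again such a geodesic. The quotient map defining $\tau$ restricts to a continuous surjection $\mathcal{R}_x \to \partial X$, continuous as a composition of the inclusion with the quotient map, and surjective precisely by Lemma \ref{lemma:raiospontobase}, which guarantees that every boundary point is represented by a ray from $x$. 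Since $\partial X$ is then the continuous image of the compact space $\mathcal{R}_x$, it is compact.

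For part (2), fix a basepoint $x$ and, using Lemma \ref{lemma:raiospontobase}, choose unit-speed rays $\rho_\xi$ and $\rho_\eta$ from $x$ representing $\xi$ and $\eta$. Set $b_n = \rho_\xi(n)$, $a_n = \rho_\eta(n)$, and let $\gamma_n$ be a geodesic segment joining $a_n$ to $b_n$. The key point is that, because $\xi \neq \eta$, these segments cannot escape to infinity: I would argue that the Gromov product $(a_n, b_n)_x$ does not tend to $\infty$, for otherwise the Gromov inequality applied to the triples $\rho_\xi(i), \rho_\xi(n), \rho_\eta(n)$ would force $(\rho_\xi(i), \rho_\eta(j))_x \to \infty$ for all $i,j$, i.e. $\xi = \eta$. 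Along a subsequence we thus have $(a_n, b_n)_x \le M$, and Lemma \ref{lemma:ripsgromov1} gives $d(x, \gamma_n) \le M + 2\delta =: C$, so each $\gamma_n$ meets $\overline{B}(x, C)$ at some point $p_n$.

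Now reparametrize each $\gamma_n$ by arc length so that $\gamma_n(0) = p_n$, with domain $[-s_n, t_n]$, $\gamma_n(-s_n) = a_n$, $\gamma_n(t_n) = b_n$; since $a_n, b_n$ leave every compact set while $p_n$ stays in $\overline{B}(x,C)$, we have $s_n, t_n \to \infty$. By properness the $p_n$ subconverge to some $p$, and on each fixed interval $[-T, T]$ the segments $\gamma_n$ lie inside the compact ball $\overline{B}(x, C + T)$; a diagonal application of Arzelà--Ascoli then yields a subsequence converging uniformly on compacts to a map $\gamma : \mathbb{R} \to X$, which is an isometric embedding because the relation $d(\gamma_n(u), \gamma_n(v)) = |u - v|$ passes to the limit, so $\gamma$ is a complete geodesic. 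To identify its ends, I would apply the bigon estimate (Lemma \ref{lemma:bigons}) to the pair $[x, b_n]$ and $[p_n, b_n] = \gamma_n|_{[0, t_n]}$: since $d(x, p_n) \le C$, these geodesics stay within Hausdorff distance $C + \delta$, whence $\gamma_n|_{[0,t_n]} \subset \mathcal{N}_{C+\delta}(\rho_\xi)$. Passing to the limit, $\gamma|_{[0,\infty)}$ lies in a bounded neighborhood of $\rho_\xi$ and hence, after an arc-length reparametrization, is asymptotic to it, giving $\gamma(+\infty) = \xi$; the backward ray is handled identically with $\rho_\eta$, giving $\gamma(-\infty) = \eta$.

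The main obstacle I anticipate is the quantitative handling of the hypothesis $\xi \neq \eta$: converting ``distinctness at infinity'', which is a statement about a double limit of Gromov products, into the concrete subsequential bound $(a_n, b_n)_x \le M$. This is exactly what prevents the connecting geodesics from drifting off to infinity and makes the Arzelà--Ascoli compactness argument run; everything else is a fairly standard limiting argument resting on properness, thinness of triangles, and the comparison between the Gromov product and the distance to a geodesic.
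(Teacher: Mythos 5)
Your proof is correct, and its skeleton is the same as the paper's: in part (1) both arguments apply Arzelà--Ascoli to the space of unit-speed rays issuing from a basepoint (equicontinuity from the Lipschitz bound, pointwise precompactness from properness) and push compactness through the quotient map, and in part (2) both take segments $\gamma_n$ joining points marching out along the two rays, show these segments meet a fixed compact set, and extract a complete limit geodesic by a diagonal Arzelà--Ascoli argument. The one place you genuinely diverge is the localization step in part (2). The paper uses thinness of the triangles $\Delta(x_n,x'_n,p)$ to force $\gamma_n$ to meet $K(\delta)=\{z \mid d(z,\rho)\le\delta,\ d(z,\rho')\le\delta\}$, and then asserts \emph{without proof} that $\xi\neq\eta$ makes $K(\delta)$ compact. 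You instead bound the Gromov product $(a_n,b_n)_x$ along a subsequence — if $(a_n,b_n)_x\to\infty$, two applications of the $\delta$-inequality, together with $(\rho_\xi(i),\rho_\xi(n))_x=\min(i,n)$, give $\liminf_{i,j}(\rho_\xi(i),\rho_\eta(j))_x=\infty$, i.e.\ $\xi=\eta$ in the sequential model — and then invoke Lema~\ref{lemma:ripsgromov1} to get $d(x,\gamma_n)\le M+2\delta$. The two mechanisms encode the same geometric fact, but yours actually supplies the justification the paper leaves implicit, at the (legitimate) cost of leaning on the previously established identification of the ray boundary with the sequential boundary. Your identification of the limit's two ends via the bigon property (Lema~\ref{lemma:bigons}) is likewise more explicit than the paper's ``por construção'' remark; note only that you may take the geodesic $[x,b_n]$ to be $\rho_\xi|_{[0,n]}$ itself, which removes the extra $\delta$ from your constant.
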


\begin{proof}
\begin{enumerate}[(1)]
\item Pelo teorema de Arzelà--Ascoli, o espaço dos raios geodésicos partindo de $p\in X$ é compacto. Com isso, $\partial X$ será compacto quando munido da topologia quociente.
\item Dados $\xi \neq \eta \in \partial X$, considere raios geodésicos $\rho, \rho'$ tais que  $\rho(0) = \rho'(0) = p \in X$, $\rho(\infty) = \xi$ e $\rho'(\infty) = \eta$. 

Como $\xi \neq \eta$, obtemos, para cada $R>0$, que o conjunto 
$$K(R):=\{x \in X \mid d(x,\rho)\leq R,\ d(x,\rho')\leq R\}$$ 
é compacto. Considere as sequências $x_n = \rho(n)$ e $x'_n= \rho'(n)$ em $X$. Para cada $n\in\N$, o triângulo $\Delta(x_n,x'_n,p)$ é $\delta$-magro, e, portanto, $\gamma_n \cap K(\delta) \neq \emptyset$, onde $\gamma_n = \overline{x_nx'_n}$.

Novamente, usamos o teorema de Arzelà--Ascoli para concluir que a sequência de segmentos geodésicos $\gamma_n$ converge para uma geodésica completa $\gamma$ em $X$ que, por construção, está contida em $\mathcal{N}_{\delta}(\rho \cup \rho')$. A geodésica $\gamma$ é assintótica a $\xi$ e a $\eta$, como queríamos.
\end{enumerate}
\end{proof}

\begin{exercise}
    A fronteira de uma árvore $n$-valente, para $n \geq 3$, é homeomorfa ao conjunto de Cantor.
\end{exercise}

\subsection*{Topologia da sombra em $X \cup \partial X$} 

Podemos estender a topologia $\tau$ definida em $ \partial X $ a uma  topologia na união $\Bar{X} = X \cup \partial X$. A ideia é que $\Bar{X}$ seja uma compactificação natural de $X$ pela união com sua fronteira ideal (compare com o espaço de fins considerado na Seção~\ref{sec:esp de fins}).

Dados um ponto base $x\in X$ e um número $k\geq 3\delta$, podemos definir uma nova topologia $\tau_{x,k}$ em $\Bar{X}$, onde a base de vizinhanças de um ponto $a\in X$ consiste das bolas abertas $B(a,r)$, enquanto a base de vizinhanças de um ponto $ \xi = \rho(\infty)$, onde $\rho(0)=x$, é dada pelos conjuntos $\mathcal{U}_y(\xi) = \mathcal{U}_{k,x,y}(\xi)$, onde $y=\rho(t)$, com $t\in [0,\infty)$, definidos por:
\begin{eqnarray*}
\mathcal{U}_y(\xi)=\{z \in  \Bar{X} &\mid  \mbox{ para cada segmento ou raio geodésico }
\overline{xz}, \\
 &\overline{xz} \cap B(y, k) \neq 0\}.
\end{eqnarray*}

\begin{exercise}
Fixados um ponto base $x\in X$ e um número $k\geq 3\delta$, mostre que $\mathcal{B}_{x,k}:=\{\mathcal{U}_y(\xi) \mid \xi = \rho(\infty) \in \partial X,\ y = \rho(t),\ t\geq 0\} \cup \{B(a,r) \mid a\in X,\ r>0\}$ 
forma uma base para uma topologia em $\Bar{X}$, a qual denotaremos por $\tau_{x,k}$.
\end{exercise} 

\begin{definition}
A topologia $\tau_{x,k}$ em $\Bar{X}$ é chamada \textit{topologia da sombra}. \index{topologia da sombra}   
\end{definition}

\begin{figure}[H]
\label{fig:shadowtopology}
\centering
\begin{tikzpicture}[scale=1]
    \draw[->] (-2,0) -- (3.2,0) node [right] {$\xi$};
    \draw(2.18,1.75) arc (60:-60:2cm) node [below] {$\partial X$};	
    \draw (-2,0) -- (1.6,0.6) node [right] {$z$};
     \draw[opacity=0.7,line width=2pt, lightgray] (2.758,1.265) arc (38:-40:2cm);

    \draw[lightgray,] (-2,0) -- (2.73,1.245) ;
    \draw[lightgray] (-2,0) -- (2.725,-1.245) ;
    

    \node[circle,fill=black,inner sep=0pt,minimum size=3pt]  at (1.6,0.6) {};
    \node[circle,fill=black,inner sep=0pt,minimum size=3pt,label=left:{$x$}]  at (-2,0) {};
    \node[circle,fill=black,inner sep=0pt,minimum size=3pt,label=below:{$y$}]  at (0.4,0) {};
    \draw[fill=gray, opacity=0.2] (1,0) arc (0:360:0.6cm);
    \end{tikzpicture}
\caption{Topologia da sombra em  $\Bar{X}$. }
\end{figure}
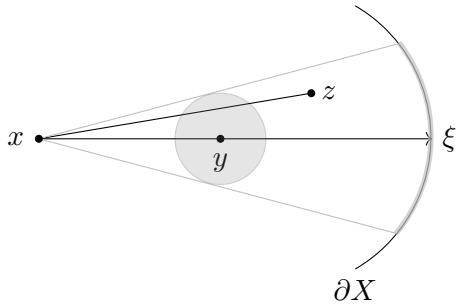

\begin{exercise}
Se $X=\mathbb{H}^n$, então $\partial X$ é homeomorfo a $\mathbb{S}^{n-1}.$
\end{exercise}

\begin{proposition}
Com respeito à topologia $\tau_{x,k}$ definida acima temos:
\begin{enumerate}[(1)]
    \item  $X$ é um aberto denso em $\Bar{X}$;
    \item $\Bar{X}$ é um espaço primeiro contável;
    \item $\Bar{X}$ é Hausdorff.
\end{enumerate}

\end{proposition}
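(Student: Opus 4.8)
A proposição afirma três propriedades da topologia da sombra $\tau_{x,k}$ em $\Bar{X}=X\cup\partial X$: que $X$ é aberto e denso, que $\Bar{X}$ é primeiro contável, e que $\Bar{X}$ é Hausdorff. A estratégia é abordar cada item separadamente, explorando a descrição explícita da base $\mathcal{B}_{x,k}$ dada no exercício anterior, que consiste das bolas abertas $B(a,r)$ em torno de pontos de $X$ e das ``sombras'' $\mathcal{U}_y(\xi)$ em torno de pontos da fronteira. O ingrediente geométrico central, ao longo de toda a prova, será o fato de que $X$ é próprio, geodésico e $\delta$-hiperbólico, o que nos dá acesso aos lemas da seção anterior (especialmente o Lema de proximidade de raios assintóticos, Lema~\ref{lemma:raiosassintóticospróximos}, e a propriedade de bígonos, Lema~\ref{lemma:bigons}).

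\textbf{Item (1): $X$ aberto e denso.} Primeiro, para ver que $X$ é aberto em $\Bar{X}$, basta notar que cada ponto $a\in X$ possui as bolas $B(a,r)$ como base de vizinhanças, e essas bolas estão inteiramente contidas em $X$. Portanto $X$ é vizinhança de cada um de seus pontos, logo aberto. Para a densidade, preciso mostrar que cada ponto $\xi\in\partial X$ está no fecho de $X$, ou seja, que toda vizinhança básica $\mathcal{U}_y(\xi)$ contém pontos de $X$. Isso é imediato: tomando $\xi=\rho(\infty)$ com $\rho(0)=x$ e $y=\rho(t)$, a própria sequência de pontos $\rho(s)$ para $s$ grande satisfaz que o segmento $\overline{x\rho(s)}$ (que é a restrição de $\rho$) passa por $B(y,k)$, e $\rho(s)\in X$. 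Logo $\mathcal{U}_y(\xi)\cap X\neq\emptyset$, e como $X$ é aberto e denso, conclui-se que $\Bar{X}$ é uma compactificação de $X$.

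\textbf{Item (2): primeiro contável.} Para pontos de $X$, a coleção enumerável $\{B(a,1/n)\mid n\in\N\}$ é uma base local. Para $\xi\in\partial X$, fixando um raio $\rho$ representando $\xi$ com $\rho(0)=x$, proponho mostrar que $\{\mathcal{U}_{\rho(n)}(\xi)\mid n\in\N\}$ é uma base enumerável de vizinhanças. O ponto a verificar é que essas sombras encolhem: dado qualquer $\mathcal{U}_y(\xi)$ com $y=\rho(t)$, existe $n>t$ tal que $\mathcal{U}_{\rho(n)}(\xi)\subset\mathcal{U}_y(\xi)$. Isso segue porque qualquer geodésica de $x$ a um ponto $z$ cuja continuação passe por $B(\rho(n),k)$ também deve passar por $B(\rho(t),k)$, graças à $\delta$-hiperbolicidade e à convexidade aproximada das vizinhanças tubulares de geodésicas (Lema~\ref{lemma:bigons}). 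Esta é a verificação técnica mais envolvente deste item.

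\textbf{Item (3): Hausdorff, o principal obstáculo.} O caso em que ambos os pontos estão em $X$ é trivial (pontos distintos de um espaço métrico têm bolas disjuntas), e o caso misto (um ponto em $X$, outro em $\partial X$) segue porque uma bola pequena $B(a,r)$ e uma sombra suficientemente distante $\mathcal{U}_y(\xi)$ podem ser separadas tomando $y$ longe o bastante ao longo do raio. O caso genuinamente difícil é $\xi\neq\eta$ ambos em $\partial X$. Aqui a ideia é usar que $\xi$ e $\eta$ são representados por raios $\rho,\sigma$ com $\rho(0)=\sigma(0)=x$ que \emph{divergem}: como não são assintóticos, pelo Lema~\ref{lemma:raiosassintóticospróximos} (contrapositiva) a distância $d(\rho(t),\sigma(t))$ cresce sem limite. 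O plano é escolher $s,t$ grandes o suficiente de modo que as bolas $B(\rho(t),k)$ e $B(\sigma(s),k)$ estejam a distância maior que algum limiar controlado por $\delta$, e então argumentar que nenhuma geodésica partindo de $x$ pode entrar em ambas — o que forçaria um ``quadrilátero fino'' geodésico a violar a estimativa de $\delta$-hiperbolicidade. Disso segue $\mathcal{U}_{\rho(t)}(\xi)\cap\mathcal{U}_{\sigma(s)}(\eta)=\emptyset$. A dificuldade reside em tornar rigorosa a afirmação de que um ponto $z$ cuja geodésica de $x$ passe por ambas as bolas distantes não pode existir; isso requer combinar a propriedade de companheiros de viagem com a divergência das geodésicas, controlando cuidadosamente como as constantes $k$ e $\delta$ interagem com as distâncias escolhidas.
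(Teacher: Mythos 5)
Seus itens (1) e (2) estão corretos e seguem essencialmente o mesmo caminho do texto: a densidade vem de que os próprios pontos $\rho(n)$ do raio pertencem a toda vizinhança básica de $\xi$, e a base enumerável $\{\mathcal{U}_{\rho(n)}(\xi)\}_{n\in\N}$ funciona porque as sombras encolhem ao longo do raio, via triângulos $\delta$-magros (observe que é a hipótese $k\geq 3\delta$ que garante que a constante $2\delta$ produzida pela comparação de geodésicas cabe dentro do raio $k$ das bolas).

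No item (3), porém, há uma lacuna genuína: o critério de separação que você propõe --- escolher $s,t$ de modo que as bolas $B(\rho(t),k)$ e $B(\sigma(s),k)$ estejam a distância maior que um limiar dependendo de $\delta$ --- não é suficiente, e a implicação ``bolas distantes $\Rightarrow$ nenhuma geodésica partindo de $x$ entra em ambas'' é falsa quando $s$ e $t$ são escolhidos de forma independente. Contraexemplo: seja $X$ uma árvore e sejam $\rho,\sigma$ raios partindo de $x$ que coincidem em $[0,T]$ e se separam depois, com $\xi=\rho(\infty)\neq\eta=\sigma(\infty)$. Tomando $t=T/2$ e $s=2T$, tem-se $d(\rho(t),\sigma(s))=3T/2$, arbitrariamente grande; mas o próprio raio $\sigma$ é uma geodésica partindo de $x$ que atravessa ambas as bolas (pois $\sigma(t)=\rho(t)$), e de fato $\eta\in\mathcal{U}_{\rho(t)}(\xi)\cap\mathcal{U}_{\sigma(s)}(\eta)$, de modo que as sombras não são disjuntas. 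O ponto geométrico é que a distância entre as duas bolas não controla a configuração; o que é preciso é que cada ponto escolhido esteja longe do \emph{outro raio}. É exatamente isso que o texto faz: como $\rho$ e $\sigma$ não são assintóticos, existem $y_1=\rho(t_1)$ e $y_2=\sigma(t_2)$ com $\dist(y_1,\sigma)>k+\delta$ e $\dist(y_2,\rho)>k+\delta$; se existisse $u\in\mathcal{U}_{y_1}(\xi)\cap\mathcal{U}_{y_2}(\eta)$, uma geodésica $\overline{xu}$ encontraria $B(y_2,k)$ em um ponto $z$ e também $B(y_1,k)$, e o triângulo $\delta$-magro $\Delta(x,y_2,z)$ (cujo lado $\overline{y_2z}$ tem comprimento $\leq k$) forçaria $y_1$ a distar no máximo $k+\delta$ do raio $\sigma$, contradição. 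Alternativamente, seu esboço pode ser consertado exigindo divergência com o \emph{mesmo} parâmetro: existe $t$ com $d(\rho(t),\sigma(t))>4k+2\delta$ (pois os raios não são assintóticos), e aí o argumento de companheiros de viagem que você menciona de fato impede que uma geodésica saindo de $x$ visite $B(\rho(t),k)$ e $B(\sigma(t),k)$.
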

\begin{proof}
\begin{enumerate}[(1)]
    \item  O fato de que $ X$ é aberto segue diretamente da definição da topologia. A densidade de $X$ segue do fato que, para cada  $\xi = \rho(\infty) \in \partial X$, a sequência $(\rho(n))_{n\in\N}$ converge para $\xi$.
    \item Pontos de $X$ claramente possuem base enumerável de vizinhanças. Para os pontos $\xi = \rho(\infty) \in \partial X$, os conjuntos $U_{\rho(n)}$ com $n \in N$ formam uma base  de vizinhanças de $\xi$, já que para todo $t > 0$, $U_{\rho(t_0)} \subset U_{\rho(t)}$ sempre que $t_0 \leq t + k +\delta$. Esse fato segue da $\delta$-hiperbolicidade de $X$.
    \item  Vamos verificar que qualquer dois pontos distintos em $\Bar{X}$ possuem vizinhanças disjuntas. O caso de pontos em $X$ é direto, e o caso de um ponto em $X$ e outro em $\partial X$ é deixado como  exercício ao leitor. Seja $\xi_1 = \rho_1(\infty)$ e $\xi_2 = \rho_2(\infty) \in \partial X$ dois pontos distintos. Como os raios geodésicos $\rho_1$ e $\rho_2$ não são assintóticos, devem existir pontos $y_i = \rho_i(t_i)$ tais que $\dist(y_i,\rho_j) > k + \delta$ para $ i\neq j \in {1, 2}$.
    Afirmamos que as vizinhanças básicas $U_{y_1} $ e $U_{y_2}$ são disjuntas. Caso contrário, supondo que exista $u \in U_{y_1}\cap U_{y_2}$, deveria existir $z \in \overline{xu} \cap B(y_2, k)$ e uma geodésica $\overline{xz}$ a qual intersecta $B(y_1, k)$. Como o triângulo
    $\Delta(x, y_2, z)$ é $\delta$-magro, segue que o ponto $z$ está a uma distância menor do que $\delta$ do segmento $\overline{xy_2}$. No entanto, isso contradiz a hipótese de que a  distância mínima de $y_1$ a $\rho_2$ é maior do que $k + \delta$. Trocando os papéis dos pontos $y_1$ e $ y_2$,  concluímos que  $U_{y_1} \cap U_{y_2} = \emptyset$.
\end{enumerate}
\end{proof}

Chamaremos de  $G_x([0,\infty),X)$ o conjunto das geodésicas finitas ou infinitas em $X$ que partem do ponto $x$ (aqui estamos considerando todas as geodésicas $\gamma$ definidas no intervalo $[0,\infty)$, pondo $\gamma(t)=y$, para todo $t\geq t_0$ se $\gamma$ for uma geodésica finita em $X$ ligando $x$ a $y$). Munido da topologia compacto-aberta, $G_x([0,\infty),X)$ é compacto e Hausdorff. Podemos definir um mapa natural 
$$\varepsilon : G_x([0,\infty),X) \to \Bar{X},$$ 
dado por $\varepsilon(\overline{xy}) =y$ com $y \in \Bar{X}$.

\begin{exercise}
    Mostre que o mapa $\varepsilon$ definido acima é contínuo, se $G_x([0,\infty),X)$ está munido da topologia compacto-aberta e $\Bar{X}$ está munido da topologia $\tau_{x,k}$ para algum $k\geq 3\delta$.
\end{exercise}

Da continuidade da função $\varepsilon$, segue que $(\Bar{X}, \tau_{x,k})$ é também compacto e, como $X$ é um aberto denso em $\Bar{X}$, concluímos que com essa topologia, $\Bar{X}$ é uma compactificação de $X$.

É possível mostrar que a topologia da sombra $\tau_{x,k}$ em $\Bar{X}$ não depende do ponto base $x \in X$, nem do valor de $k\geq 3\delta$ e também é independente da escolha do raio geodésico $\rho$ usado na definição da base de vizinhanças de um ponto $\xi \in \partial X$. Além disso, quando olhamos a topologia induzida por $\tau_{x,k}$ na fronteira $\partial X$, ela coincide com a topologia quociente $\tau$ discutida no início da seção:

\begin{proposition}
Dados $x\in X$ e $k \geq 3\delta$, as topologias $\tau$ e $\tau_{x,k}$ coincidem em $\partial X$.
\end{proposition}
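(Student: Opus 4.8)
The plan is to compare the two topologies by exhibiting the identity $\mathrm{id}\colon(\partial X,\tau)\to(\partial X,\tau_{x,k})$ as a continuous bijection from a compact space to a Hausdorff space, and then to invoke the standard fact that such a map is automatically a homeomorphism. Both topological hypotheses are already on hand: compactness of $(\partial X,\tau)$ was established in the preceding lemma, and $(\partial X,\tau_{x,k})$ is Hausdorff because $\bar X$ is Hausdorff in the shadow topology and a subspace of a Hausdorff space is Hausdorff. Thus the entire problem reduces to proving continuity of the identity in a single direction, namely that every $\tau_{x,k}$-open subset of $\partial X$ is $\tau$-open.

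Since $\tau$ is by definition the quotient topology induced by the endpoint map $q\colon R([0,\infty),X)\to\partial X$, $\rho\mapsto\rho(\infty)$, on the space of arc-length geodesic rays with the compact-open topology, continuity of $\mathrm{id}$ is equivalent to continuity of $q$ when regarded as a map into $(\partial X,\tau_{x,k})$. As both spaces are first countable, I would verify this sequentially: assuming $\rho_n\to\rho$ uniformly on compact sets, I must show that $\xi_n:=\rho_n(\infty)$ converges to $\xi:=\rho(\infty)$ in the shadow topology. Using the already recorded independence of $\tau_{x,k}$ from the base point, I would compute shadow neighbourhoods of $\xi$ with respect to $p:=\rho(0)$, so that $\rho$ itself is a ray from $p$ to $\xi$ and a basic neighbourhood of $\xi$ takes the form $\mathcal{U}_y(\xi)\cap\partial X$ with $y=\rho(T)$ and $T$ large.

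The geometric heart is to show that for large $n$ every geodesic ray from $p$ to $\xi_n$ meets $B(y,k)$, i.e.\ $\xi_n\in\mathcal{U}_y(\xi)$. Uniform convergence on $[0,T]$ first gives $\rho_n(T)\to y$ and $\rho_n(0)\to p$, within some $\varepsilon_n\to 0$. Fixing a geodesic ray $\sigma_n$ from $p$ to $\xi_n$, I would compare it with $\rho_n$ through the thin triangles $\Delta(p,\rho_n(0),\rho_n(N))$ as $N\to\infty$: $\delta$-thinness shows $\rho_n([0,T])$ lies in the $(\delta+\varepsilon_n)$-neighbourhood of the segments $\overline{p\,\rho_n(N)}$, and passing to an Arzela--Ascoli limit of these segments (a ray from $p$ to $\xi_n$, hence within $2\delta$ of $\sigma_n$ by Lemma~\ref{lemma:raiosassintóticospróximos}) places a point of $\sigma_n$ within $3\delta+2\varepsilon_n$ of $y$. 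Because the shadow base uses $k\ge 3\delta$ and $\varepsilon_n\to 0$, for large $n$ that point lies in $B(y,k)$; applying the same estimate to an arbitrary ray from $p$ to $\xi_n$, all of which are $2\delta$-close to $\sigma_n$, yields that every such ray meets $B(y,k)$, so $\xi_n\in\mathcal{U}_y(\xi)$ eventually.

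The main obstacle is exactly this last estimate, where the moving base points $\rho_n(0)$ and the demand in the definition of $\mathcal{U}_y(\xi)$ that \emph{all} geodesics from the base point reach $B(y,k)$ must be reconciled with the fixed constant $k\ge 3\delta$; this is where the fellow-traveller control of asymptotic rays and the thinness of triangles carry the argument, and where choosing $T$ large makes $y$ a genuinely deep point so that competing geodesics are forced through $B(y,k)$. Once continuity of $q$ into $\tau_{x,k}$ is secured, the compact-to-Hausdorff principle finishes the proof: $\mathrm{id}$ is a continuous bijection, hence closed, hence a homeomorphism, so $\tau$ and $\tau_{x,k}$ coincide on $\partial X$.
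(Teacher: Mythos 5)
Your overall strategy is sound and genuinely different from the paper's. The paper proves the two inclusions separately by a sequential argument on rays based at $x$: if endpoints stay outside a shadow neighbourhood of $\xi$ then the compact-open limit ray cannot end at $\xi$, and conversely. You instead prove only one direction of continuity and let the compact-to-Hausdorff principle finish, using exactly the two facts already on record (compactness of $(\partial X,\tau)$ and Hausdorffness of $\bar{X}$ in the shadow topology); this halves the geometric work, and the direction you do prove is the analogue of the paper's first half. One caveat: your reduction to the base point $p=\rho(0)$ invokes the independence of $\tau_{x,k}$ from the base point, which the paper only asserts (``É possível mostrar que\ldots'') and never proves; the paper's own argument sidesteps this by keeping every ray based at $x$, so that its comparison is between two rays with a \emph{common} base point and costs only $2\delta$.

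The genuine gap is in your final estimate, and it is precisely the price of the moving base points. Your chain --- $\delta+\varepsilon_n$ from thinness of $\Delta(p,\rho_n(0),\rho_n(N))$, plus $2\delta$ from the lemma on asymptotic rays with a common base point --- places a point of $\sigma_n$ only within $3\delta+2\varepsilon_n$ of $y$, and you then conclude that since $k\ge 3\delta$ and $\varepsilon_n\to 0$ this point eventually lies in $B(y,k)$. That implication is false in the boundary case $k=3\delta$, which the proposition explicitly allows: $3\delta+2\varepsilon_n>3\delta=k$ for every $n$, so membership in the \emph{open} ball never follows, and all the slack left by the $2\delta$-lemma has been consumed by the extra thinness step. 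The gap is local and repairable: run the same argument at a deeper reference point $y'=\rho(T')$ with $T'\ge T+5\delta+1$, obtaining $s$ with $d(\sigma_n(s),y')\le 3\delta+2\varepsilon_n$; then apply $\delta$-thinness to the triangle with vertices $p$, $\sigma_n(s)$, $y'$, whose sides are $\sigma_n([0,s])$, a geodesic $\overline{\sigma_n(s)\,y'}$ of length at most $3\delta+2\varepsilon_n$, and $\rho([0,T'])$. Since $d\bigl(\rho(T),\overline{\sigma_n(s)\,y'}\bigr)\ge (T'-T)-(3\delta+2\varepsilon_n)>\delta$ for large $n$, the point $y=\rho(T)$ must be within $\delta$ of $\sigma_n([0,s])$, and $\delta<3\delta\le k$ closes the argument for every admissible $k$. (Alternatively one could prove the statement for $k>3\delta$ and appeal to the independence of the shadow topology on $k$, but that independence is also only asserted in the paper, so the extra triangle is the self-contained fix.)
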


\begin{proof}
Tome $\xi \in \partial X$ e fixe um raio geodésico $\rho$ tal que $\rho(0)=x$ e  $\xi = \rho(\infty)$. 
Seja $\{\rho_j\}$ uma sequência de raios geodésicos infinitos com $\rho_j(0)=x$ e tal que $ \rho_j (\infty) \notin \mathcal{U}_{y}(\xi)$, para algum $y = \rho(n)$.
Se $\gamma = \lim_{j\to \infty} (\rho_j)$, então $\gamma \notin \mathcal{U}_{y}(\xi)$ e assim, pelo Lema \ref{lemma:raiosassintóticospróximos}, segue que $\gamma(\infty) \neq \rho(\infty)$.

Reciprocamente, se para qualquer $y = \rho(n)$ e todo $j$ suficientemente grande tem-se  $\rho_j \in \mathcal{U}_{y}(\xi)$, então a sequência $(\rho_j)$ converge a um raio $\rho' \in \mathcal{U}_{k,n,y} $, para todos $k,n, y = \rho(n)$, e portanto $\gamma(\infty) = \rho(\infty)$.
\end{proof}

A topologia definida acima funciona bem em espaços métricos geodésicos. Gromov estendeu esta ideia para espaços métricos próprios e $\delta$-hiperbólicos arbitrários, a partir da noção de fronteira sequencial.
Em \cite[Capítulo 7]{ghys2013groupes}, Ghys e de la Harpe mostram que essa topologia na fronteira de $X$ é metrizável, e portanto $(\partial X, \tau)$ pode também ser pensado do ponto de vista de espaço métrico.

O teorema a seguir retrata a importância de fronteiras ideais como um invariante por quasi-isometrias no contexto de espaços hiperbólicos. 

\begin{thm}
Sejam $X, X'$ espaços geodésicos, próprios e hiperbólicos. Se $f: X \to X'$ é um mergulho quasi-isométrico, então o mapa $\rho(\infty)\to (f\circ \rho )(\infty)$ define um mergulho topológico $f_{\partial}: \partial_{\infty}X\to  \partial_{\infty}X'$. Se $f$ for uma quasi-isometria, então $f_{\partial}$ será um homeomorfismo.
\end{thm}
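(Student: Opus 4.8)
O plano é estabelecer, nesta ordem: (i) a boa definição de $f_{\partial}$; (ii) uma comparação bilateral dos produtos de Gromov sob $f$, que é o coração técnico; (iii) injetividade e continuidade, de onde o mergulho topológico segue por compacidade; e (iv) o caso de quasi-isometria. Ao longo, $\delta$ e $\delta'$ denotam as constantes de hiperbolicidade de $X$ e $X'$, e $f$ é um $(\lambda,\varepsilon)$-mergulho quasi-isométrico.

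\textbf{Boa definição.} Dado um raio geodésico $\rho:[0,\infty)\to X$, a composição $f\circ\rho$ é um raio $(\lambda,\varepsilon)$-quasi-geodésico em $X'$. Como o Lema de Morse (Teorema~\ref{thm:Morse}) está enunciado para segmentos, aplico-o a cada restrição $f\circ\rho|_{[0,n]}$, obtendo geodésicas $\gamma_n$ de $f(\rho(0))$ a $f(\rho(n))$ com $d_{Haus}(\gamma_n,\,f\circ\rho|_{[0,n]})\leq R=R(\lambda,\varepsilon,\delta')$. Usando que $X'$ é próprio, pelo teorema de Arzela--Ascoli uma subsequência dos $\gamma_n$ converge uniformemente sobre compactos a um raio geodésico $\gamma$, o qual fica a distância de Hausdorff $\leq R$ de $f\circ\rho$. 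Defino $f_{\partial}(\rho(\infty)):=\gamma(\infty)$. Resta verificar a independência das escolhas: dois raios geodésicos ambos a distância de Hausdorff finita de $f\circ\rho$ estão a distância finita entre si, logo são assintóticos e definem o mesmo ponto de $\partial_{\infty}X'$; e se $\rho,\rho'$ são assintóticos em $X$, como $f$ é grosseiramente Lipschitz, $f\circ\rho$ e $f\circ\rho'$ ficam a distância de Hausdorff finita, de modo que $f_{\partial}$ depende apenas de $\rho(\infty)$. Este passo é rotineiro, dependendo essencialmente de $X'$ ser próprio.

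\textbf{Comparação dos produtos de Gromov.} A ferramenta central é a estimativa
\[
\tfrac{1}{\lambda}(x,y)_p - C \;\leq\; (f(x),f(y))_{f(p)} \;\leq\; \lambda\,(x,y)_p + C, \qquad \forall\, x,y,p\in X,
\]
para algum $C=C(\lambda,\varepsilon,\delta,\delta')$. Para prová-la, uso o Lema~\ref{lemma:ripsgromov1}, que identifica $(x,y)_p$ com $d(p,\overline{xy})$ a menos de $2\delta$, e o mesmo em $X'$; combino isso com o fato de que $f(\overline{xy})$ é uma $(\lambda,\varepsilon)$-quasi-geodésica, logo está a distância de Hausdorff $\leq R$ de uma geodésica $\overline{f(x)f(y)}$ pelo Lema de Morse, e com as desigualdades $\tfrac{1}{\lambda}d_X-\varepsilon \leq d_{X'}(f(\cdot),f(\cdot)) \leq \lambda d_X+\varepsilon$ aplicadas ao ponto de $\overline{xy}$ mais próximo de $p$ e, no sentido inverso, a um ponto qualquer de $\overline{xy}$. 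O ponto delicado é justamente montar esta desigualdade bilateral de forma limpa e, em seguida, verificar que ela sobrevive à passagem ao limite: estendendo os produtos de Gromov à fronteira via $\liminf_{i,j}$ sobre sequências representantes, a desigualdade passa a pares de pontos de $\partial_{\infty}X$ a menos de ajustar a constante aditiva, o que basta pois só importará a divergência a $\infty$.

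\textbf{Mergulho topológico.} A topologia da sombra em $\partial_{\infty}X$ é descrita por $\xi_n\to\xi$ se e só se $(\xi_n,\xi)_p\to\infty$. Da cota superior segue que, se $f_{\partial}(\xi)=f_{\partial}(\eta)$, então $(\xi,\eta)_p=\infty$, isto é, $\xi=\eta$; logo $f_{\partial}$ é injetiva. Da cota inferior obtenho $(f_{\partial}\xi_n,f_{\partial}\xi)_{f(p)}\geq \tfrac{1}{\lambda}(\xi_n,\xi)_p-C\to\infty$ sempre que $\xi_n\to\xi$, donde $f_{\partial}\xi_n\to f_{\partial}\xi$; como $\partial_{\infty}X$ é primeiro contável, isso garante a continuidade de $f_{\partial}$. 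Sendo $\partial_{\infty}X$ compacto e $\partial_{\infty}X'$ Hausdorff, toda injeção contínua é um homeomorfismo sobre sua imagem, ou seja, $f_{\partial}$ é um mergulho topológico. Para o caso de quasi-isometria, tomo uma quasi-inversa $\overline{f}$, que também é mergulho quasi-isométrico, e considero $\overline{f}_{\partial}$. Como $\overline{f}\circ f$ está a distância uniformemente limitada da identidade de $X$, os raios $\overline{f}\circ f\circ\rho$ e $\rho$ são assintóticos (Lema~\ref{lemma:raiosassintóticospróximos} controla o erro), logo $\overline{f}_{\partial}\circ f_{\partial}=\mathrm{id}_{\partial_{\infty}X}$; simetricamente $f_{\partial}\circ\overline{f}_{\partial}=\mathrm{id}_{\partial_{\infty}X'}$. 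Assim $f_{\partial}$ é uma bijeção contínua com inversa contínua, isto é, um homeomorfismo.
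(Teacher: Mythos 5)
Sua prova está correta e é, na verdade, mais completa que a do texto; o que muda é o motor técnico. O livro argumenta a injetividade diretamente --- $f\circ\rho_1$ e $f\circ\rho_2$ são raios quasi-geodésicos assintóticos se, e somente se, $\rho_1$ e $\rho_2$ o são, o que segue das desigualdades do mergulho quasi-isométrico --- e deixa a continuidade como exercício, invocando apenas ``a definição da topologia''; tampouco explicita o sentido de $(f\circ\rho)(\infty)$ (uma quasi-geodésica não é um raio geodésico), ponto que você resolve limpo com o Lema de Morse (Teorema~\ref{thm:Morse}) mais Arzela--Ascoli. Você, em vez disso, canaliza injetividade \emph{e} continuidade por uma única ferramenta, a estimativa bilateral dos produtos de Gromov
$$\tfrac{1}{\lambda}(x,y)_p - C \;\leq\; (f(x),f(y))_{f(p)} \;\leq\; \lambda\,(x,y)_p + C,$$
estendida à fronteira via os itens (2), (5) e (6) da Observação~\ref{rem:gromovprodbordo}, e fecha o mergulho topológico com o argumento de compacidade (injeção contínua de um compacto em um Hausdorff é homeomorfismo sobre a imagem) --- passo que o livro usa implicitamente ao afirmar ``mergulho topológico'', mas nunca menciona. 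O que cada abordagem compra: a do livro é mais curta e puramente qualitativa, dispensando qualquer estimativa; a sua efetivamente demonstra a continuidade (suprindo o exercício deixado ao leitor), dá controle quantitativo sobre a distorção dos produtos de Gromov e torna explícito onde entram a compacidade de $\partial_{\infty}X$ e a propriedade de Hausdorff de $\partial_{\infty}X'$. O tratamento do caso de quasi-isometria, via quasi-inversa e $\overline{f}_{\partial}\circ f_{\partial} = \mathrm{id}_{\partial_{\infty}X}$, é essencialmente idêntico nas duas provas.
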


\begin{proof}
    Suponhamos que $f: X \to X'$ é um $(\lambda, \varepsilon)$-mergulho quasi-isométrico. Dado $p\in X$, denote por $p'$ o ponto $f(p) \in X'$.

    Se $\rho_1, \rho_2:[0,\infty) \to X$ são raios geodésicos com $\rho_i(0)=p$, então cada $f\circ \rho_i$ é uma $(\lambda, \varepsilon)$-quasi-geodésica com $f\circ \rho_i(0) = p'$. Esses raios quasi-geodésicos são assintóticos se, e somente se, $\rho_1$ e $\rho_2$ são assintóticos. Assim, $f_{\partial}$ está bem-definida e é injetiva.
Usando a definição da topologia em $\partial_{\infty}X$, podemos também verificar que $f_{\partial}$ é contínua.

Por fim, se  $f: X \to X'$ é uma quasi-isometria, então ela possui um quasi-inverso $f':X'\to X$, e como $d(f'f(x), x)$ é limitada para $x\in X$, segue que $(f'f)_{\partial} = f'_{\partial}f_{\partial}$ é a função identidade em $\partial_{\infty}X$. Portanto, $f_{\partial}$ é um homeomorfismo.    
\end{proof}

\begin{exercise}
    Verifique que o mapa $f_{\partial}$ definido acima é contínuo.
\end{exercise}

\begin{corollary}Os espaços hiperbólicos reais $\mathbb{H}^n$ e $\mathbb{H}^m$ são quasi-isométricos se, e somente se, $m=n$.
\end{corollary}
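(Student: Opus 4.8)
O plano é transferir o problema, que é sobre quasi-isometrias, para uma questão puramente topológica sobre as fronteiras ideais, aplicando o teorema que acabamos de provar. A implicação $(\Leftarrow)$ é imediata: se $m=n$, a aplicação identidade $\mathrm{id}: \mathbb{H}^n \to \mathbb{H}^n$ é uma quasi-isometria, logo $\mathbb{H}^n \qi \mathbb{H}^m$.

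Para a recíproca $(\Rightarrow)$, suponho dada uma quasi-isometria $f: \mathbb{H}^n \to \mathbb{H}^m$. Como visto na Seção~\ref{sec:gromovhip}, para cada $k \geq 2$ o espaço $\mathbb{H}^k$ é geodésico, próprio e $\delta$-hiperbólico (com $\delta = \mathrm{arccosh}(\sqrt{2})$). Assim, o teorema anterior se aplica e garante que $f$ induz um homeomorfismo $f_{\partial}: \partial_{\infty}\mathbb{H}^n \to \partial_{\infty}\mathbb{H}^m$ entre as fronteiras ideais. Por outro lado, pelo exercício que identifica a fronteira de $\mathbb{H}^k$, sabemos que $\partial_{\infty}\mathbb{H}^n$ é homeomorfo à esfera $\mathbb{S}^{n-1}$ e que $\partial_{\infty}\mathbb{H}^m$ é homeomorfo a $\mathbb{S}^{m-1}$. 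Compondo esses homeomorfismos com $f_{\partial}$, concluo que $\mathbb{S}^{n-1}$ e $\mathbb{S}^{m-1}$ são homeomorfos.

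Resta então o passo principal, que é o único ingrediente não puramente geométrico: mostrar que $\mathbb{S}^{n-1} \cong \mathbb{S}^{m-1}$ força $n = m$. Essa é a invariância topológica da dimensão das esferas, e pretendo invocá-la da topologia algébrica --- por exemplo, via os grupos de homologia reduzida $\tilde{H}_{j}(\mathbb{S}^{k})$, que são triviais exceto em grau $j = k$, onde valem $\mathbb{Z}$. Como um homeomorfismo preserva esses grupos, o grau em que a homologia é não trivial determina $k$, donde $n-1 = m-1$ e portanto $n = m$.

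Antecipo que a dificuldade real da demonstração não está neste último passo, que é clássico, mas sim em dispor da identificação $\partial_{\infty}\mathbb{H}^n \cong \mathbb{S}^{n-1}$ e do fato de que quasi-isometrias entre espaços próprios, geodésicos e hiperbólicos induzem homeomorfismos de fronteiras --- ambos já estabelecidos acima. Uma vez reunidas essas duas ferramentas, a conclusão torna-se puramente formal, e a prova consiste essencialmente em encadeá-las corretamente.
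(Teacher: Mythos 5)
Sua prova está correta e segue essencialmente o mesmo caminho que o livro pretende: o corolário é consequência imediata do teorema anterior (quasi-isometrias entre espaços geodésicos, próprios e hiperbólicos induzem homeomorfismos entre as fronteiras ideais) combinado com a identificação $\partial_{\infty}\mathbb{H}^k \cong \mathbb{S}^{k-1}$ e a invariância topológica da dimensão das esferas. A única adição sua é explicitar o último passo via homologia reduzida, o que é exatamente o ingrediente clássico subentendido no texto.
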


\subsection*{Topologia do cone}
Ainda em $\Bar{X} = X \cup \partial X$, definiremos uma topologia cuja restrição a $X$ coincida com a topologia métrica original. Essa construção não é equivalente à topologia $\tau_{x,k}$ vista na seção anterior: com poucas exceções, $\partial X$ em geral não é compacto na topologia do cone.

\begin{definition}\label{conetop}
Dizemos que uma sequência $x_n \in X$ converge a $\xi = \rho(\infty) \in \partial_{\infty}X$ na \textit{topologia do cone}\index{topologia do cone} se existir uma constante $C>0$ tal que $x_n \in \mathcal{N}_C(\rho)$ e a sequência de segmentos geodésicos $\overline{x_1x_n}$ converge para um raio assintótico a $\rho.$
\end{definition}

\begin{exercise}
    Se uma sequência $(x_i)$ converge para $\xi \in \partial X$ na topologia do cone, então ela também converge para $\xi$ na topologia $\tau_{x,k}$ em $\Bar{X}$.
\end{exercise}

\begin{example}
Considere $X = \mathbb{H}^n$ no modelo do semiespaço superior. Tome $\xi =0 \in \R^{n-1}$ e $\ell$ dada pela geodésica vertical a partir da origem. Então uma sequência $x_n \in X$ converge a $\xi$ na topologia do cone se e somente se todos os pontos $x_n$ pertencem ao cone euclidiano com eixo $\ell$ e a distância euclidiana de $x_n$ a $0$ tende a zero.
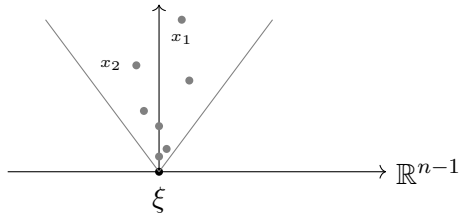
\begin{figure}[!ht]
\label{fig:conetopology}
\centering
\begin{tikzpicture}[scale=1]
		\draw[->] (-2,0) -- (3,0) node [right] {$\R^{n-1}$};
		\draw[->] (0,0) -- (0,2.2);
         \node[circle,fill=black,inner sep=0pt,minimum size=3pt,label=below:{$\xi$}] at (0,0) {};
         \node[circle,fill=gray,inner sep=0pt,minimum size=3pt,label=below:{\tiny{$x_1$}}]  at (0.3,2) {};
         \node[circle,fill=gray,inner sep=0pt,minimum size=3pt,label=left:{\tiny{$x_2$}}]  at (-0.3,1.4) {};
         \node[circle,fill=gray,inner sep=0pt,minimum size=3pt]  at (-0.2,0.8) {};
         \node[circle,fill=gray,inner sep=0pt,minimum size=3pt]  at (0.4,1.2) {};
         \node[circle,fill=gray,inner sep=0pt,minimum size=3pt]  at (0,0.6) {};
         \node[circle,fill=gray,inner sep=0pt,minimum size=3pt]  at (0.1,0.3) {};
         \node[circle,fill=gray,inner sep=0pt,minimum size=3pt]  at (0,0.2) {};
		\draw[gray](0,0) -- (-1.5,2);
		\draw[gray](0,0) -- (1.5,2);
  
\end{tikzpicture}
\caption{Topologia do cone em  $\Bar{\mathbb{H}}^n$. }
\end{figure}
\end{example}

\begin{exercise}
    Suponha que uma sequência $(x_i)$ converge a um ponto $\xi \in \mathbb{H}^n$ ao longo de uma horoesfera centrada em $\xi$. Mostre que a sequência $(x_i)$ não contém subsequências  convergentes na topologia do  cone em  $\Bar{\mathbb{H}}^n$.
\end{exercise}

\begin{exercise}
    Mostre que uma sequência $(x_n)$ em $X$ converge a $\xi \in \partial X$ na topologia de cone se, e somente se, dado $p\in X$ qualquer, $(x_i,x_j)_p\to \infty$, quando $i,j \to \infty$.
\end{exercise}

\subsection{Estendendo o produto de Gromov a \texorpdfstring{$\Bar{X}$}{fronteira}} 

\begin{definition}
Sejam $X$ um espaço $\delta$-hiperbólico e $p \in X$ um  ponto base. Estendemos o produto de Gromov para $\Bar{X} = X\cup \partial X$ por: 
$$(x,y)_p=\sup \liminf_{i,j\to \infty} (x_i,y_j)_p$$
onde o supremo é tomado sobre todas as sequências $(x_n), (y_n)$ em $X$ tais que $\displaystyle\lim_{n\to \infty} x_n=x$ e $\displaystyle\lim_{n\to \infty} y_n =y$. 
\end{definition}

O exemplo a seguir ilustra a necessidade de considerar o limite inferior e o  supremo, em vez de apenas tomar o limite na definição acima.

\begin{example}
Seja $X=\cay(\Z\times \Z_2, S)$ o grafo de Cayley de $\Z\times \Z_2=\langle a,b \mid [a,b], b^2\rangle.$ Note que $X$ é hiperbólico e que $\partial X$ consiste de dois pontos, $\partial X =\{a_{-},a_{+}\}.$ 
Considere as sequências $x_n=ba^{-n}$, $y_n=a^n$ e $z_n=ba^n$ em $X$. Defina uma nova sequência $w_n$, que é  igual a $y_n$ se $n$ é par e igual a $z_n$  se $n$ é ímpar.

Para quaisquer índices $i,j$, temos: $$(x_i,y_j)_1=(1+i)+j-(1+i+j)=0,$$  $$2(x_i,z_j)_1=(1+i)+(1+j)-(i+j)=2.$$ 
Assim, $0=\displaystyle \liminf_{i,j\to \infty}(x_i,y_j)_1$ não é igual a $\displaystyle\liminf_{i,j\to \infty}(x_i,z_j)_1 =1$. Em particular, $\displaystyle \lim_{i,j\to \infty} (x_i,w_j)_1$ não existe.
\end{example}

\begin{remark}
\label{rem:gromovprodbordo}
Seja $X$ um espaço $\delta$-hiperbólico e fixe $p \in X$. 
\begin{enumerate}[(1)]
\item O mapa $(\,,\,)_p$ é contínuo em $X\times X$, pela continuidade da distância. No entanto, pelo exemplo anterior, vemos que ele não é contínuo em $\Bar{X} \times \Bar{X}$. De fato, para $x_n \to a_-, y_n \to a_+$, temos $(x_i,y_j)_1 = 0$ e $(a_-,a_+)_1 \geq 1$.
\item $(x,y)_p=\infty$ se, e somente se, $x=y \in \partial X$.
\item Para quaisquer $x,y \in \Bar{X}$, existem sequências $(x_n)$ e $(y_n)$ em $X$ tais que 
$$\lim_{n\to \infty} x_n =x,\ \lim_{n\to \infty} y_n = y\text{ e }(x,y)_p =\lim_{n\to \infty}(x_n,y_n)_p.$$ 
Se algum dos pontos está em $X$, então podemos tomar a sequência constante. Já no caso em que os pontos estão em  $\partial X$, podemos construí-la da seguinte forma: para cada $N\in\N$, existem sequências $(x_n), (y_n)$ em $X$ tais que $$(x,y)_p \geq \displaystyle \liminf_{i,j \to \infty} (x_i,y_j)_p \geq (x,y)_p -\frac{1}{N}.$$
Logo, existem $r,s \in \N$ tais que 
$$(x,y)_p +\frac{1}{N} \geq (x_r,y_s)_p \geq (x,y)_p -\frac{1}{N}.$$
Escrevendo $\tilde{x}_N =x_r, \tilde{y}_N =y_s$, obtemos $\tilde{x}_N \to x$, $\tilde{y}_N \to y$ e $(\tilde{x}_N,\tilde{y}_N)_p \to (x,y)_p$, quando $N\to\infty$.

\item Para $x,y,z \in \Bar{X}$, temos $(x,y)_p\geq \min\{(x,z)_p,(z,y)_p\}-2\delta$.

\item Para todos os pontos  $x,y \in \partial X$ e sequências $(x_n)$ e $(y_n)$ em $X$ com $\displaystyle\lim_{n\to \infty} x_n =x$, $\displaystyle\lim_{n\to \infty} y_n = y,$ temos $$(x,y)_p-2\delta\leq \displaystyle\liminf_{i,j\to \infty}(x_i,y_j)_p \leq (x,y)_p.$$

\item Se $X$ é próprio e geodésico, e se $(\xi_n)$ é uma sequência em $\partial X$, então $\xi_n\to\xi$ na topologia $\tau_{x,k}$ se, e somente se, $(\xi_n,\xi)_p\to \infty.$  
\end{enumerate}
\end{remark}

\begin{exercise}
    Prove os itens (2), (4), (5) e (6) da Observação~\ref{rem:gromovprodbordo}.
\end{exercise}

\begin{exercise} \label{exerc:propelem9}
    Suponha que $(x_n)$ e $(y_n)$ são sequências em $X$ como acima, onde  $(x_n)$ converge para $\infty$  e $(x_n , y_n)_p \leq K $ para todo $n\in \N$ e alguma constante $K \in \R$. Então existe $N \in \N$ tal que, para  $m,n \geq N $, temos $(x_n, y_m)_p \leq K + \delta$.

\noindent
\textit{Dica}: use que
$K \geq (x_n , y_n)_p \geq \min\{(x_n , y_m)_p,(y_n , y_m)_p\} - \delta$ e que $(y_n , y_m)_p\to \infty$.
\end{exercise}

\begin{exercise}
\label{exerc:gromovbordo}
    Seja $X$ um espaço métrico próprio, geodésico e  \mbox{$\delta$-hiperbó\-li\-co}, com $\delta>0$. Mostre que existe uma constante $k=k(\delta)$ tal que $$|d(p,\im(\gamma))-(\xi,\xi')_p|\leq k,$$ para quaisquer $p \in X$,  $\xi, \xi' \in \partial X$ e para cada  geodésica $\gamma$ em $X$ com $\gamma(\infty)= \xi'$ e $\gamma(-\infty)=\xi$.
\end{exercise}

\subsection{Métrica visual em espaços hiperbólicos}
Nesta subseção, exibiremos uma métrica explícita sobre o bordo do espaço hiperbólico $\mathbb{H}^n$, $n\geq 2$. Se fixamos $p \in \mathbb{H}^n$, então a função 
$$(\xi,\xi') \mapsto \sphericalangle_p(\xi,\xi'),$$
dada pelo ângulo entre as duas geodésicas em  $\mathbb{H}^n$ que ligam $p$ a $\xi$ e $\xi'$, respectivamente, define uma métrica em $\partial \mathbb{H}^n$, tornando-o isométrico a $\mathbb{S}^{n-1}$. Como $\sphericalangle_p$ descreve a geometria de $\partial \mathbb{H}^n$ vista de $p$,  ela é dita uma métrica visual em $\partial\mathbb{H}^n$.

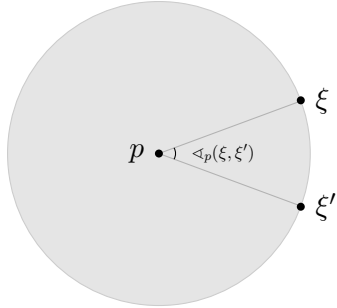
\begin{figure}[!ht]
\label{fig:visualmetric}
\centering
\begin{tikzpicture}[scale=0.5]

    \draw[lightgray,] (-2,0) -- (1.8,1.4) ;
    \draw[lightgray] (-2,0) -- (1.8,-1.4) ;
    
     \draw[fill=gray, opacity=0.2] (2,0) arc (0:360:4cm);

     \draw (-1.6,0.15) arc (30:-30:0.3);

     \node[scale=0.6] at (-0.3,0){$\sphericalangle_p(\xi,\xi')$};

\node[circle,fill=black,inner sep=0pt,minimum size=3pt,label=left:{$p$}]  at (-2,0) {};

\node[circle,fill=black,inner sep=0pt,minimum size=3pt,label=right:{$\xi$}]  at (1.75,1.4) {};

\node[circle,fill=black,inner sep=0pt,minimum size=3pt,label=right:{$\xi'$}]  at (1.75,-1.4) {};
\end{tikzpicture}
\caption{Métrica visual em $\partial \mathbb{H}^n$. }
\end{figure}

É possível mostrar que  $\sphericalangle_p$ satisfaz a seguinte relação:

\begin{equation}
    \label{anglep}
    k_1 e^{-(\xi,\xi')_p} \leq \sphericalangle_p(\xi,\xi')\leq  k_2 e^{-(\xi,\xi')_p},
\end{equation}
para algumas constantes positivas $k_1,k_2$ que não dependem de $p$.

\begin{definition}
    Dado um espaço $\delta$-hiperbólico $X$,  qualquer métrica em $\partial X$ que  satisfaça \eqref{anglep} será chamada de \textit{métrica visual} em $\partial X$.\index{métrica visual} 
\end{definition}

Em espaços $\delta$-hiperbólicos, é sempre possível definir 
uma métrica visual na fronteira $\partial X$, de modo que a topologia induzida por essa métrica em $\partial X$ coincida com a topologia $\tau$. Para mais detalhes sobre métricas visuais, recomendamos a leitura de \cite[Seção~III.H.3]{bridson2013metric}.

\chapter{Grupos hiperbólicos}
\label{cap8}
O estudo de grupos hiperbólicos foi introduzido e desenvolvido por Gromov~\cite{gromov1987hyperbolic}. A inspiração veio de várias áreas da matemática, em particular, da geometria hiperbólica, da topologia de baixa dimensão e da teoria combinatória de grupos. Grupos hiperbólicos fornecem exemplos da aplicabilidade de métodos geométricos em teoria de grupos.

\section{Definição e propriedades básicas}

\begin{definition}
\label{def:grupohiperbolico}
Um grupo finitamente gerado $G$ é dito \textit{hiperbólico}  \index{grupo hiperbólico} se seu grafo de Cayley, com respeito a algum conjunto de geradores $S$, é Rips-hiperbólico.
\end{definition}

Todo grupo finito é trivialmente hiperbólico, já que seu grafo de Cayley com respeito a um conjunto de geradores qualquer terá diâmetro finito. Alguns exemplos menos triviais são os seguintes:

\begin{example}
Em uma árvore, qualquer triângulo é  $0$-magro. Portanto, todo grupo livre é hiperbólico.
\end{example}

\begin{example}
Grupos discretos e cocompactos de isometrias de $\mathbb{H}^2$ são hiperbólicos, pelo teorema de Milnor--Schwarz.
\end{example}




Como a mudança de conjuntos geradores não afeta o tipo de quasi-isometria do grafo de Cayley e a hiperbolicidade de Rips é invariante sob quasi-isometrias (veja Corolário~\ref{cor:ripsQI}), concluímos que um grupo $G$ é hiperbólico se e somente se todos os seus grafos de Cayley forem hiperbólicos. Assim, a hiperbolicidade de um grupo não depende da escolha do conjunto finito de geradores. Além disso, se os grupos $G_1$ e $G_2$ forem quasi-isométricos, então $G_1$ é hiperbólico se e somente se $G_2$ for hiperbólico. Em particular, se $G_1$, $G_2$ forem virtualmente isomorfos, então $G_1$ é hiperbólico se e somente se $G_2$ for hiperbólico. Por exemplo, todos os grupos virtualmente livres são hiperbólicos.

Em vista do Teorema de Milnor--Schwarz, temos que se um grupo $G$ age geometricamente em um espaço métrico Rips-hiperbólico, então $G$ também é hiperbólico.

Nem todo subgrupo de um grupo hiperbólico é hiperbólico. Por exemplo, a construção de Rips na Seção~11.18 de \cite{kd}  mostra que existem  subgrupos finitamente gerados de  grupos hiperbólicos que não são hiperbólicos. Mais ainda, em \cite{brady1999branched}, N.~Brady fornece um exemplo de um subgrupo finitamente apresentado de um grupo hiperbólico que não é hiperbólico. Tendo isso em mente, podemos usar o Teorema~\ref{thm:quasiconv} para provar o seguinte resultado, que  mostra que quasi-convexidade garante a passagem da propriedade de hiperbolicidade a subgrupos.

\begin{thm}
    Subgrupos quasi-convexos de grupos hiperbólicos são hiperbólicos.
\end{thm}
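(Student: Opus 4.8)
A ideia é que, embora a inclusão $H \hookrightarrow G$ em geral não seja uma quasi-isometria (a imagem de $H$ não precisa ser $r$-densa em $G$), o Teorema~\ref{thm:quasiconv} garante que ela é um mergulho quasi-isométrico, e isso basta para transferir a hiperbolicidade no sentido de Rips via o Lema de Morse. Primeiro, eu aplicaria o Teorema~\ref{thm:quasiconv} para concluir que $H$ é finitamente gerado, digamos por um conjunto finito $S'$, e que a inclusão $\iota:(H,d_{S'}) \to (G,d_S)$ é um $(\lambda,\varepsilon)$-mergulho quasi-isométrico. Estendo $\iota$ a um mergulho quasi-isométrico $\bar\iota: \cay(H,S') \to \cay(G,S)$ entre espaços geodésicos, enviando cada vértice $h$ em $\iota(h)$ e cada aresta a um segmento geodésico em $\cay(G,S)$ ligando as imagens de seus extremos; os parâmetros do mergulho pioram apenas por constantes controladas. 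Como $\cay(G,S)$ é $\delta$-hiperbólico por Rips (pois $G$ é hiperbólico), resta mostrar que $\cay(H,S')$ é Rips-hiperbólico, e então $H$ será hiperbólico pela Definição~\ref{def:grupohiperbolico}.

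O passo central é empurrar triângulos geodésicos de $\cay(H,S')$ para $\cay(G,S)$ e usar a estabilidade das quasi-geodésicas. Dado um triângulo geodésico $\Delta = \Delta(x,y,z)$ em $\cay(H,S')$, sua imagem $\bar\iota(\Delta)$ é um triângulo cujas arestas são $(\lambda,\varepsilon)$-quasi-geodésicas em $\cay(G,S)$. Seja $\Delta'$ o triângulo geodésico em $\cay(G,S)$ com os mesmos vértices $\bar\iota(x),\bar\iota(y),\bar\iota(z)$. Pelo Lema de Morse (Teorema~\ref{thm:Morse}), cada aresta quasi-geodésica de $\bar\iota(\Delta)$ está a uma distância de Hausdorff no máximo $R_0 = R_0(\lambda,\varepsilon,\delta)$ da aresta geodésica correspondente de $\Delta'$. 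Como $\Delta'$ é $\delta$-magro, um argumento direto de desigualdade triangular (passando de um ponto da quasi-geodésica para a geodésica próxima, aplicando a magreza de $\Delta'$, e voltando às quasi-geodésicas) mostra que $\bar\iota(\Delta)$ é $(\delta + 2R_0)$-magro.

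Por fim, eu faria o \emph{pullback} dessa magreza usando a desigualdade inferior do mergulho quasi-isométrico. Dado um ponto $p$ numa aresta de $\Delta$, seu transladado $\bar\iota(p)$ está a distância $\leq \delta+2R_0$ de um ponto $\bar\iota(w)$ sobre uma das outras duas arestas imagens; a desigualdade $\frac{1}{\lambda}d_{S'}(p,w)-\varepsilon \leq d_S(\bar\iota(p),\bar\iota(w))$ fornece então $d_{S'}(p,w) \leq \lambda(\delta+2R_0+\varepsilon)$. Logo, cada aresta de $\Delta$ está contida numa $\delta''$-vizinhança da união das outras duas, com $\delta'' = \lambda(\delta + 2R_0 + \varepsilon)$ dependendo apenas de $\lambda,\varepsilon,\delta$. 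Portanto todo triângulo geodésico em $\cay(H,S')$ é $\delta''$-magro, isto é, $\cay(H,S')$ é Rips-hiperbólico, e $H$ é hiperbólico.

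Espero que o principal obstáculo técnico seja o cuidado com as constantes e, sobretudo, garantir que os pontos fornecidos pelo Lema de Morse na geodésica auxiliar $\Delta'$ sejam corretamente relacionados de volta a pontos sobre as quasi-geodésicas imagens $\bar\iota([y,z])$ e $\bar\iota([z,x])$ — é essa correspondência que viabiliza o pullback via a cota inferior do mergulho. Um ponto menor a verificar é a extensão de $\iota$ do conjunto de vértices para todo o grafo de Cayley, preservando a propriedade de mergulho quasi-isométrico; isso é rotineiro, pois as arestas têm comprimento $1$ e qualquer ponto dista no máximo $\tfrac{1}{2}$ de um vértice.
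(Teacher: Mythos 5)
Sua proposta está correta e segue essencialmente o mesmo caminho da prova do livro: quasi-convexidade garante (via o Teorema~\ref{thm:quasiconv}) que a inclusão é um mergulho quasi-isométrico entre os grafos de Cayley, e a hiperbolicidade de Rips é transferida pelo argumento de estabilidade do Lema de Morse, que é exatamente o conteúdo do Corolário~\ref{cor:ripsQI} invocado implicitamente pelo texto. A única diferença é de apresentação: você explicita o argumento de empurrar triângulos e puxar de volta a magreza (e observa, corretamente, que basta um mergulho quasi-isométrico, não uma quasi-isometria genuína), passos que a prova do livro condensa na desigualdade bi-Lipschitz $d_{S\cup S'}(h_1,h_2) \leq d_{S'}(h_1,h_2) \leq (2R+1)\,d_{S\cup S'}(h_1,h_2)$ seguida da invariância por quasi-isometrias.
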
 
\begin{proof} Seja $G$ um grupo hiperbólico com conjunto finito de geradores $S$.
Suponha que $H$ seja um subgrupo $R$-quasi-convexo de $G$. 
Pelo Teorema~\ref{thm:quasiconv}, $H$ é gerado por um conjunto finito $S'$ e a inclusão $i : H \to G$ é um mapa bi-Lipschitz. Da prova do Teorema~\ref{thm:quasiconv}, para quaisquer $h_1, h_2 \in H$ obtemos:
$$ d_{S'}(h_1, h_2) \leq d_S(h_1, h_2) \leq (2R+1)d_{S'}(h_1, h_2).$$
Sendo assim as métricas $d_{S'}$ e $d_S|_H$ são bi-Lipschitz equivalentes, e
como a hiperbolicidade é invariante por quasi-isometrias,
concluímos que $H$ é hiperbólico.
\end{proof}

\section{Desigualdade isoperimétrica linear e o algoritmo de Dehn}\label{sec:8-isoperim}




Se $G =\langle S\mid R\rangle$ é um grupo finitamente apresentado e $w$ é uma palavra no grupo livre $F(S)$, então toda vez que 
$w = 1$ em $G$, podemos escrever 
$$w = \prod^{n}_{i=1}u_ir^{\pm1}_iu^{-1}_i,$$
para alguns $u_i \in  F(S)$ e $r_i \in R$. 
Dada uma palavra $w$ em $S$ tal que $w=1$ em
$G$. Definimos a \textit{área combinatória} \index{area combinatória@área combinatória} de $w$ por
$$A(w) = \min\{n \in \N \mid w =\prod^{n}_{i=1}u_ir^{\pm1}_iu^{-1}_i\}.$$

Para entendermos melhor essa definição, vejamos um exemplo. Queremos pensar na área combinatória como o número de vezes que precisamos usar informações sobre os relatores para chegar na identidade quando testamos se uma palavra é trivial.

\begin{example}
Seja $\Gamma = \mathrm{BS}(3, 2) := \langle x, y \mid x^{-1}y^3xy^{-2}\rangle$ um grupo de  Baumslag--Solitar.
Tome $$w := xy^{-2}xy^2x^{-1}y^{-1}x^{-1}
.$$ Podemos reescrever $w$ como
$$w = (xy)(y^{-3}xy^2x^{-1})(xy)^{-1}
.$$
Removemos as letras do meio aplicando o relator $r^{-1} =y^2x^{-1}y^{-3}x$ uma vez, mesmo que essa palavra não seja exatamente nosso relator. Reescremos $r^{-1} = (y^2x^{-1})(y^{-3}xy^2x^{-1})(xy^{-2})$,
onde definimos $x_1 := y^2x^{-1}$
para corresponder com nossa definição de área. Assim, obtemos 
$$w = (xy)(y^{-3}xy^2x^{-1})(xy)^{-1} =_{F(S)} (xy)x^{-1}_1r
^{-1}x_1(xy)^{-1} = 1,$$
e portanto a área de $w$ é igual a 1. De outro modo, poderíamos repetidamente tentar reduzir subpalavras e usar o fato de que $A(ww') \leq A(w) + A(w')$.
\end{example}

\begin{definition}
\label{dehnfunction}
Dizemos que uma função $f : \N \to \N$ é uma \textit{função de Dehn} \index{função de Dehn} (ou função isoperimétrica) para $G=\langle S\mid R\rangle$ se, para todo $w \in F(S)$ com $w=1$ em $G$, tem-se $A(w)\leq f(|w|_S)$. Dizemos que $G$ \textit{satisfaz uma desigualdade 
isoperimétrica linear, quadrática, polinomial ou exponencial} se possui uma função de Dehn com a mesma propriedade.\index{desigualdade isoperimétrica}
\end{definition}

Sejam $G$ e $H$  grupos finitamente apresentados quasi-isométricos. Se alguma apresentação  finita de $G$ possui função isoperimétrica  $f(n)$, então qualquer apresentação finita de $H$ possui uma função isoperimétrica equivalente a $f(n)$, no seguinte sentido: dadas duas funções $f,g : \N \to \N$, escrevemos  $f \preceq g$ se existirem constantes $A,B,C \in \R$ tais que $f(n) \leq Ag(Bn) + Cn$. Definimos uma relação de equivalência no conjunto das funções de $\N$ para $\N$ pondo $f \sim g$ se $f \preceq g$ e $g \preceq f$.

Em particular, o fato acima vale quando $G = H$, isto é, quando se trata do mesmo grupo, dado possivelmente por duas apresentações finitas distintas.
Consequentemente, para um grupo  finitamente apresentado, o tipo de crescimento de sua função de Dehn, não depende da escolha de sua apresentação finita. Mais geralmente, se dois grupos finitamente apresentados são quasi-isométricos, então suas funções de Dehn são equivalentes.

\begin{exercise}
Mostre que a função de Dehn $f(n)$ da apresentação $\langle a \mid a^m \rangle$ do grupo cíclico de ordem $m$ é o maior inteiro menor ou igual a $n/m$.    
\end{exercise}

\begin{exercise}
Mostre que qualquer apresentação finita de um grupo finito $G$ tem função de Dehn $f(n) \le Cn$ com $C>0$.
\end{exercise}

\begin{example}
Agora vamos verificar que  $G = \Z^2$ com apresentação $\langle a,b \mid aba^{-1}b^{-1}\rangle$  satisfaz uma desigualdade isoperimétrica quadrática, mas não uma linear. 

Precisamos estabelecer um limite superior quadrático e um limite inferior quadrático para a área de palavras triviais.
\medskip

\noindent\textit{Limite Superior: $f(n) \le C n^2$.}

Seja $w$ qualquer palavra de comprimento $|w| = n$ tal que $w = 1$.

Defina $x$ como o número total de letras $a$ e $a^{-1}$ em $w$, e $y$ como o número total de letras $b$ e $b^{-1}$. Como o comprimento total é $n$, temos $x + y = n$.

Podemos transformar sistematicamente a palavra $w$ movendo todas as letras $a^{\pm 1}$ para o lado esquerdo da palavra e todas as letras $b^{\pm 1}$ para o lado direito usando as relações:
\[ ab = ba, \quad ab^{-1} = b^{-1}a, \quad a^{-1}b = ba^{-1}, \quad a^{-1}b^{-1} = b^{-1}a^{-1} \]

Cada vez que uma letra $a^{\pm 1}$ é trocada com uma letra $b^{\pm 1}$ adjacente, exatamente uma relação é aplicada. No pior cenário possível, cada letra $a^{\pm 1}$ deve cruzar todas as letras $b^{\pm 1}$. O número máximo de trocas necessárias para separar completamente as letras é $x \cdot y$.

Pela desigualdade MA-MG, o produto $x(n-x)$ é maximizado quando $x = y = n/2$:
\[ x \cdot y \le \left(\frac{x+y}{2}\right)^2 = \left(\frac{n}{2}\right)^2 = \frac{1}{4}n^2 \]
\medskip

\noindent\textit{Limite Inferior: $f(n) \ge C' n^2$.}

Para estabelecer o limite inferior, construímos uma família de palavras que requer um número quadrático de aplicações de relatores. Para qualquer inteiro $k \ge 1$, considere a palavra:
\[ w_k = a^k b^k a^{-k} b^{-k}.\]

O comprimento de $w_k$ é $n= |w_k| = 4k$. Logo, $k = n/4$.

Para reduzir $w_k$ à identidade usando a relação $ab = ba$ (e suas variantes para inversos), cada letra $b$ deve comutar por todas as letras $a^{-1}$ para se cancelar com as letras $b^{-1}$, ou equivalentemente, os blocos de $a$ e $b$ devem se cruzar um pelo outro.

Como existem $k$ letras $a$ e $k$ letras $b$, mover todas as $k$ letras $b$ pelas $k$ letras $a$ requer exatamente $k^2$ trocas adjacentes. Cada troca corresponde precisamente a uma aplicação do relator $[a,b]$.

Portanto, a área de $w_k$ é exatamente $k^2$. Substituindo $k = n/4$:
\[ A(w_k) = \left(\frac{n}{4}\right)^2 = \frac{1}{16}n^2.\]

Como existe uma palavra de comprimento $n$ com área de $\frac{1}{16}n^2$, a função de Dehn deve crescer pelo menos quadraticamente:
\[ f(n) \ge \frac{1}{16}n^2.\]

\end{example}

\begin{exercise}
Mostre que, para qualquer apresentação finita de qualquer grupo abeliano finitamente gerado, a função de Dehn é no máximo $Cn^2$.
\end{exercise}

A seguir, mostraremos que desigualdades isoperimétricas lineares caracterizam a hiperbolicidade de grupos. Este resultado se deve a Gromov.
Como corolários, mostraremos que tais grupos
são finitamente apresentados e, na seção seguinte, que o problema da palavra é solúvel para grupos hiperbólicos.

Fixemos agora um grupo hiperbólico $G$,  o seu grafo de Cayley $\Gamma = \cay(G,S)$, com respeito a um conjunto finito de geradores $S$, de modo que todos os triângulos geodésicos sejam $\delta$-magros. Podemos assumir que $\delta \geq 2$ é um número inteiro. 

\begin{thm}[A hiperbolicidade implica numa desigualdade isoperimétrica linear] \label{prop:hiperbolicimpliesdil}
Nas condições estabelecidas acima, para cada palavra $w = 1$ em $G$, vale
$$A(w)\leq K|w|_S.$$
\end{thm}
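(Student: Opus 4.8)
The plan is to show that $G$ admits a \emph{Dehn presentation}, from which the linear bound on the combinatorial area is immediate. Recall that if $w = \alpha u \beta$ as a word in $F(S)$ and $u = v$ in $G$ with $|v|_S < |u|$, then writing $r = uv^{-1}$ we have $w = (\alpha r \alpha^{-1})\, w'$ in $F(S)$, where $w' = \alpha v \beta$ is strictly shorter than $w$. Thus a single replacement of $u$ by a geodesic representative $v$ costs exactly one relator, and $A(w) \le 1 + A(w')$. If such a shortening (a \emph{passo de Dehn}) can always be performed as long as $w$ is non-empty as a word, then iterating it at most $|w|_S$ times — the length drops by at least $1$ at each step — reduces $w$ to the empty word and yields $A(w) \le |w|_S$, i.e.\ the statement with $K = 1$.

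First I would fix the relator set. Let $k := 8\delta + 1$ and let $R$ be the (finite) set of all words of length at most $2k$ that represent $1$ in $G$; enlarging $R$ does not change $G$, and this $R$ will support every Dehn move below. The crucial point is the following geometric lemma: \emph{every non-empty word $w$ with $w = 1$ in $G$ contains a subword $u$ of length at most $k$ that is not geodesic}, in the sense that $\ell_S(\bar u) < |u|$ for the element $\bar u \in G$ it represents. Granting this, let $v$ be a geodesic word for $\bar u$; then $r = uv^{-1}$ is a relator of length $|u| + |v| < 2k$, so $r \in R$, and replacing $u$ by $v$ is a legitimate Dehn move, closing the reduction scheme of the previous paragraph.

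To prove the lemma I would argue by contradiction: suppose every subword of $w$ of length at most $k$ is geodesic. Then the edge-path $c$ traced by $w$ in $\cay(G,S)$, based at $1$, is a \emph{geodésica $k$-local} — every subpath of length $\le k$ is a geodesic segment. The heart of the matter is the local-to-global principle for geodesics in hyperbolic spaces: since $k = 8\delta + 1 > 8\delta$, such a $k$-local geodesic is in fact a quasi-geodésica global whose image lies within Hausdorff distance $2\delta$ of any geodesic joining its endpoints. But $w = 1$ in $G$ means $c$ is a loop, $c(0) = c(n)$, so the geodesic joining the endpoints is a single point and the whole of $c$ would lie in $\overline{B}(c(0),2\delta)$; this is impossible for $n \ge 1$, since a geodesic subpath of length $\min(k,n) \ge 1$ has endpoints at distance $\min(k,n)$, which is already nonzero, and exceeds $4\delta$ once the path has length $\ge k$. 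Hence $w$ must contain a non-geodesic subword of length $\le k$, which is the lemma.

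The main obstacle is precisely this local-to-global step, which is where hyperbolicity is used decisively. I would prove it with the tools already developed: if a $k$-local geodesic $c$ strayed a distance $D$ from a geodesic $\gamma$ joining its endpoints, a suitable subsegment of $c$ would join two points on a sphere of radius $\sim D$ while avoiding the concentric ball of radius $\sim D$, so by the divergência exponencial de geodésicas its length would be at least of order $2^{(D-1)/\delta}$; being locally geodesic, $c$ is locally efficient, which caps that length linearly in $D$ and forces $D \le D_0(\delta)$. Feeding this fellow-traveling bound back — exactly as in the proof of the Lema de Morse (Teorema~\ref{thm:Morse}) — shows $c$ is a $(\lambda,\varepsilon)$-quasi-geodésica with constants depending only on $\delta$. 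With the lemma in hand the reduction of the first paragraph gives $A(w) \le |w|_S$, so $K = 1$ works; as a byproduct, $\langle S \mid R\rangle$ is a presentation of $G$, which is finite, recovering the finite presentability of hyperbolic groups.
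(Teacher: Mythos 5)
Your route is genuinely different from the paper's: the paper proves the bound by direct induction on $|w|_S$, locating the vertex of the loop farthest from $1$ and using two $\delta$-thin triangles either to cut off a loop of length at most $10\delta$ or to split $w$ into a short loop plus a strictly shorter trivial word. You instead build a Dehn presentation out of a ``non-geodesic subword'' lemma. Your reduction scheme itself is correct: granted the lemma, each Dehn move costs one relator of length less than $2k$ and strictly shortens the word, so $A(w)\leq |w|_S$ holds with respect to the presentation whose relators are the trivial words of length less than $2k$ (and hence a linear bound holds for any other presentation of $G$ over $S$, with a constant depending on that presentation).

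The genuine gap is in your proof of the key lemma, i.e.\ of the local-to-global principle for $k$-local geodesics; as sketched, the argument is circular. In the Morse-type scheme you imitate, the \emph{upper} bound on the length of the wandering subsegment comes from the taming property — item (3) of Lema~\ref{lemma:auxiliarMorse}, that the length of any subpath is linearly bounded by the distance between its endpoints — and that property is available there because $c$ is a quasi-geodesic \emph{by hypothesis}. For a $k$-local geodesic no such bound is known a priori: local geodesicity only says that subpaths of length at most $k$ are geodesics, and obtaining a global length-versus-distance bound is precisely the content of the local-to-global theorem you are trying to prove. So exponential divergence gives you a lower bound on the subsegment's length that is exponential in $D$, but nothing caps that length; at best, comparing with the total length of $c$ gives $D \preceq \delta \log_2 \ell(c)$, which is not the uniform constant $D_0(\delta)$ your application needs (a long local-geodesic loop straying logarithmically far is not excluded by this reasoning). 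To close the gap, either prove the local-to-global statement by the standard maximality-plus-thin-quadrilateral argument (see \cite[Cap.~III.H, Teorema~1.13]{bridson2013metric}), or — simpler here — observe that the paper's own computations already yield your lemma with $k=10\delta+1$ and no local-to-global input: at the vertex $w(t)$ farthest from $1$, either all vertices of $w$ lie in $B(1,5\delta)$ (so the whole word, or any subword of length $10\delta+1$, fails to be geodesic), or $d(w(t),w(t\pm 5\delta))<5\delta$, or the two thin triangles of the paper's Case 3 give $d(u_+,u_-)\leq 2\delta$ for $u_{\pm}=w(t\pm(\delta+1))$; in every case $w$ contains a non-geodesic subword of uniformly bounded length, which is all your Dehn reduction requires.
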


\begin{proof}

    Seja $K = \max\{A(w) \mid |w|_S \leq 10\delta\}$. Mostraremos por indução em $|w|_S$ que $A(w) \leq K|w|_S$ para toda palavra $w \in F(S)$ com $w = 1$ em $G$. Se $|w|_S \leq 10\delta$ isso é óbvio. Suponha que a afirmação é verdadeira para $|w|_S \leq n$, com $n > 10\delta$. Seja $w$ uma palavra com $w=1$ em $G$, tal que  $|w|_S = n + 1$. Denotaremos também por $w = w(t)$ o laço em $\Gamma$, parametrizado por comprimento de arco, que representa a palavra $w$. Podemos considerar três casos:

\textbf{Caso 1:} Suponha que, para todos os vértices $w(i)$ de $w$, nós temos $d(w(i), 1) < 5\delta$. Podemos escolher um caminho mais curto $p$ ligando $1$ a $w(5\delta)$. 
Então $w = w_1p^{-1}pw_2$ e, pela hipótese de indução, obtemos
$$A(w) \leq A(w_1p^{-1}) + A(pw_2) \leq K + Kn = K(n + 1) = K|w|_S.$$
\begin{figure}[H]
     \centering
 \begin{tikzpicture}
\draw [gray,thick,opacity=0.7] plot [smooth cycle] coordinates {(0,0) (1,1) (3,1) (3,0) (2,-0.5)};
\draw [thick] (3.3,0)node{$w_2$};
\draw [thick] (0.25,0.8)node{$w_1$};
 \draw (0,0)node{$\bullet$};
 \draw (-0.2,0)node{$1$};
 \draw (3,1)node{$\bullet$};
\draw (3.6,1)node{\small{$w(5\delta)$}};
\draw[dotted] (0,0)--(3,1);
\draw (1.8,0.3)node{$p$};
\end{tikzpicture}
         \caption{Caso 1}
     \label{fig:caso1}   
\end{figure}

\textbf{Caso 2:} Caso contrário, existe algum vértice $w(i)$ em $w$ com $d(w(i), 1) \geq 5\delta$. Seja $w(t)$ o vértice de $w$ mais distante de $1$. 

Se $d(w(t), w(t - 5\delta)) < 5\delta$ ou $d(w(t),(w(t + 5\delta)) < 5\delta$ podemos concluir o resultado de maneira análoga.
De fato, se $\gamma$ é uma geodésica de $w(t - 5\delta)$ até  $w(t)$ (ou de $w(t)$ até $w(t + 5\delta)$), considere os laços $w' = w|_{[t-5\delta,t]}\gamma$ (ou $w|_{[t,t+5\delta]}\gamma$) e $w''= w|_{[0,t-5\delta]}\gamma w|_{[t,n]}$ (ou
$w|_{[1,t]}\gamma w|_{[t+5\delta,n]}$). Temos assim 
$A(w) = A(w') + A(w'')$, onde $A(w'') \leq Kn$ por hipótese de indução e  $A(w') \leq K$ pela definição de $K$.

\begin{figure}[H]
     \centering
 \begin{tikzpicture}
\draw [gray,thick,opacity=0.7] plot [smooth cycle] coordinates {(0,0) (1,1) (2,1) (3,1) (4,0) (3,-1) (1.5,-1)};
\draw  (2,1)node{$\bullet$};
\draw (2.1,1.3)node{\tiny{$w(t-5\delta)$}};

 \draw (0,0)node{$\bullet$};
 \draw (-0.2,0)node{$1$};
 \draw (4,0)node{$\bullet$};
 \draw (4.5,0)node{\tiny{$w(t)$}};
 \draw[dashed, thick] (2,1) to [bend right=30] (4,0);
\draw (2.55,0.2)node{\tiny{$\gamma$}};
\draw [dotted] plot [smooth cycle] coordinates {(0.1,0) (1,0.9) (1.9,0.9) (2.5,0) (3.8,-0.2) (3,-0.9) (1.5,-0.9)};
\draw (1.5,-0.7)node{\tiny{$w''$}};
\draw [dotted] plot [smooth cycle] coordinates {(2.1,0.9) (2.9,0.9) (3.8,0.1) (2.8,0.4) };
\draw (2.7,0.7)node{\tiny{$w'$}};

\end{tikzpicture}
         \caption{Caso 2}
     \label{fig:caso2}   
\end{figure}

 \textbf{Caso 3:} Se nenhum dos casos acima ocorre, considere os triângulos geodésicos $
 \Delta(1, w(t -5\delta), w(t))$ e  $\Delta(1, w(t), w(t + 5\delta))$, onde novamente $w(t)$ o vértice de $w$ mais distante de $1$. Sabendo que eles são $\delta$-magros, notamos que os pontos $u_{\pm} := w(t \pm (\delta+1))$ estão a uma distância $\leq \delta$ de  $\overline{1  w(t)}$  ou de algum dos segmentos $\overline{1  w(t -5\delta)}$, $\overline{1  w(t +5\delta)}$.

 Se, por exemplo, $d(u_+, w)\leq \delta$, com $w \in \overline{1  w(t +5\delta)}$, obtemos 
\begin{equation}
\begin{split}
   d(1,w(t)) &\leq d(1,w)+d(w,u_+)+d(u_+,w(t)) \\
   &\leq r + \delta + (\delta +1) = r + 2\delta +1,
 \end{split} 
\end{equation}
\begin{equation}
   d(1,w(t+5\delta)) = r+s \geq r + 3\delta-1 > r + 2\delta +1,
\end{equation}
 onde $r = d(1,w)$ e $s=d(w,w(t+5\delta)) \geq  d(w(t+5\delta),u_{-}) - d(u_{-},w) \geq 4\delta-1 -\delta = 3\delta-1$.

Com isso, $d(1,w(t+5\delta))  > d(1,w(t))$, uma contradição com a escolha de $t$.

Assim, devemos ter algum ponto $w \in \overline{1  w(t)}$ com $d(u_{\pm}, w) \leq \delta$ (note que podemos pegar o mesmo ponto $w$ de modo que a desigualdade acima valha para $u_{\pm}$). Logo, $d(u_+, u_{-}) \leq 2\delta$, e portanto
\begin{equation*}
    \ell(w|_{[t-(\delta+1),t+(\delta+1)]} \cup \overline{u_+u_{-}}) \leq 2\delta + 2 + 2\delta \leq 5 \delta.
\end{equation*}
e, escrevendo $w' = w|_{[t-(\delta+1),t+(\delta+1)]} \cup \overline{u_{-}u_+}$, $w'' =  \overline{1u_{-}} \cup  \overline{u_+u_{-}} \cup  \overline{u_+1}$, obtemos $A(w) = A(w') +A(w'')$,  onde novamente $A(w'') \leq Kn$, por hipótese de indução, e  $A(w') \leq K$, pela definição de $K$.
 
\begin{figure}[H]
     \centering
 \begin{tikzpicture}
\draw [gray,thick,opacity=0.7] plot [smooth cycle] coordinates {(-1,0) (0,1) (2,1) (3,1) (4,0) (3,-1) (2,-1) (0,-1)};
\draw  (1.4,1.05)node{$\bullet$};
\draw (1.1,1.3)node{\tiny{$w(t-5\delta)$}};
\draw  (1.4,-1.05)node{$\bullet$};
\draw (1.1,-1.3)node{\tiny{$w(t+5\delta)$}};

\draw  (3,1)node{$\bullet$};
\draw (3,1.3)node{\tiny{$u_-$}};
\draw  (3,-1)node{$\bullet$};
\draw (3,-1.3)node{\tiny{$u_+$}};
\draw (-1,0)node{$\bullet$};
\draw (-1.2,0)node{$1$};
\draw (4,0)node{$\bullet$};
\draw (4.5,0)node{\tiny{$w(t)$}};
\draw[thick,dashed] (3,1) to [bend right=10] (3,-1);

\draw [dotted] plot [smooth cycle] coordinates {(-0.9,0) (0,0.9) (2,0.9) (2.8,0.9) (2.8,0) (2.8,-0.9) (2,-0.9)  (0,-0.9)};
\draw (0,0.7)node{$w''$};
\draw [dotted] plot [smooth cycle] coordinates {(3.1,0.8) (3.8,0) (3.1,-0.8) (3,0.1) };
\draw (3.2,0.5)node{$w'$};

\end{tikzpicture}
         \caption{Caso 3}
     \label{fig:caso3}   
\end{figure}

\end{proof}


\begin{corollary}
\label{cor:hiperbolicisfinpres}
Todo grupo hiperbólico é finitamente apresentado.
\end{corollary}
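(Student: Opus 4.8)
The plan is to exhibit an explicit finite presentation by re-reading the proof of Theorem~\ref{prop:hiperbolicimpliesdil}. Fix a finite generating set $S$ of the hyperbolic group $G$ and an integer $\delta\geq 2$ such that every geodesic triangle in $\cay(G,S)$ is $\delta$-magro. Since $G$ is finitely generated, Proposição~\ref{quocientelivre} lets me write $G\cong F(S)/\ker(\pi_S)$; the task is to show that $\ker(\pi_S)$ is normally generated by a \emph{finite} set of relators.

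First I would set $R_0:=\{w\in F(S)\mid |w|_S\leq 10\delta \text{ e } w=1 \text{ em } G\}$. Because $S$ is finite there are only finitely many reduced words of length at most $10\delta$, so $R_0$ is finite. The inclusion $\langle\langle R_0\rangle\rangle\subseteq\ker(\pi_S)$ is immediate, since every element of $R_0$ is trivial in $G$. The heart of the matter is the reverse inclusion $\ker(\pi_S)\subseteq\langle\langle R_0\rangle\rangle$, which is exactly the assertion $G\cong\langle S\mid R_0\rangle$.

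For this I would rerun the induction on $|w|_S$ from the proof of Theorem~\ref{prop:hiperbolicimpliesdil}, now computing areas with respect to the presentation $\langle S\mid R_0\rangle$. The key observation is that every ``short'' loop produced in the three cases of that argument --- namely $w_1p^{-1}$ no Caso~1, the loop $w'$ built from a geodésica $\gamma$ no Caso~2, and $w'$ no Caso~3 --- has length at most $10\delta$ and bounds, hence lies in $R_0$ and has área $1$. This makes the base case $|w|_S\leq 10\delta$ trivial: such a $w$ is itself an element of $R_0$. The inductive step then decomposes $w$, as an element of $F(S)$, into a product of conjugates of one short loop (in $\langle\langle R_0\rangle\rangle$ by the base case) and one loop $w''$ with $|w''|_S<|w|_S$ (in $\langle\langle R_0\rangle\rangle$ by the induction hypothesis), whence $w\in\langle\langle R_0\rangle\rangle$. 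This shows that every palavra $w$ with $w=1$ em $G$ is a consequence of $R_0$, i.e.\ $\ker(\pi_S)=\langle\langle R_0\rangle\rangle$; since $R_0$ is finite, $G=\langle S\mid R_0\rangle$ is finitamente apresentado.

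The main obstacle will be the bookkeeping that converts the \emph{geometric} splitting of the loop $w$ into an honest \emph{algebraic} identity in $F(S)$, expressing $w$ as a product of conjugates $u_iw'^{\pm1}u_i^{-1}$ of the two shorter loops. This is precisely the filling computation already implicit in the inequalities $A(w)\leq A(w')+A(w'')$ appearing in the theorem's proof, so I would either quote those decompositions directly or write out the conjugating words $u_i$ (the subpaths of $w$ joining $1$ to the relevant vertices) explicitly. Everything else reduces to the length counts that guarantee each piece is either short (length $\leq 10\delta$) or strictly shorter than $w$, which are identical to the estimates established in Theorem~\ref{prop:hiperbolicimpliesdil}.
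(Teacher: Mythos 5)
Your proposal is correct and follows essentially the same route as the paper: the paper's proof is the one-line assertion that $\langle S\mid R\rangle$ with $R=\{w\in F(S)\mid |w|_S\leq 10\delta,\ w=1\}$ is a finite presentation, the justification being precisely the induction from Theorem~\ref{prop:hiperbolicimpliesdil} that you rerun. Your write-up merely makes explicit (areas computed relative to $R_0$, so every short trivial loop has área $1$) what the paper leaves implicit, which also cleanly dispels the apparent circularity in the definition of the constant $K$ there.
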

\begin{proof}
Se $G$ é um  grupo hiperbólico  gerado por um conjunto finito $S$, então $\langle S\mid R\rangle$ é 
 uma apresentação finita para $G$, onde $R
 = \{w \in F(S) \mid |w|_S \leq 10\delta, w = 1\}$.
\end{proof}

Daremos agora uma interpretação geométrica para a área combinatória de uma palavra. 

\begin{definition} Seja $D$ qualquer 2-complexo simplicial  finito, planar, orientado, conexo e simplesmente conexo. Diremos que $D$ é um \textit{diagrama} sobre um alfabeto $S$ se toda aresta $e$ de $D$ puder ser associada a um rótulo $\phi(e) \in S$ tal que $\phi(e^{-1}) = (\phi(e))^{-1}$, ou seja, a aresta com a orientação oposta está associada ao inverso do rótulo.
\end{definition}

Desse modo, um caminho  (simplicial) através das arestas de um diagrama é rotulado por uma palavra em $S$ e um caminho que não retrocede sobre si mesmo é rotulado por uma palavra reduzida em $S$ via $\phi$.

\begin{definition}
Um \textit{diagrama de van Kampen} \index{diagrama de van Kampen} sobre um grupo $G = \langle S\mid R\rangle$ é um  diagrama $D$ sobre $S$ tal que, para toda face $f$ de $D$, o rótulo da fronteira de $f$ é dado por alguma  relação $r^{\pm 1}$ com $r \in R$. A \textit{área} de tal diagrama é o número de suas faces.
\end{definition}

Por conveniência, chamamos a palavra que rotula a fronteira da componente ilimitada de $\mathbb{R}^2 \backslash D$ de fronteira de $D$.

\begin{example}
    Considere o grupo $\Z^2$ com apresentação $\Z^2 = \langle a,b\mid [a,b]\rangle$.
    Na figura da esquerda a seguir, temos um diagrama sobre $\{a,b\}$, o qual não é um diagrama de van Kampen, pois a única $2$-célula tem como fronteira a palavra $b^2a^{-1}b^{-2}a$, que não é da forma $r^{\pm 1}$, com $r \in R$.

    \begin{figure}[H]
\begin{center}
  \begin{tikzpicture}[scale=0.9]
 \draw (0,0)node{$\bullet$};
  \draw (1,1)node{$\bullet$};
  \draw (2,2)node{$\bullet$};
  \draw (1,0)node{$\bullet$};
  \draw (2,1)node{$\bullet$};
  \draw (3,2)node{$\bullet$};  
  
 \draw (0.5,-0.2)node{$a$};
 \draw (0.4,0.7)node{$b$};
 \draw (1.2,1.6)node{$b$};
 \draw (2.5,2.2)node{$a$};
 \draw (1.75,0.5)node{$b$};
 \draw (2.6,1.3)node{$b$}; 
  
\draw[->]  (0,0) --  (0.5,0.5);
\draw[->]  (0.5,0.5) --  (1.5,1.5);
\draw  (1.5,1.5) --  (2,2);
\draw[->]  (0,0) --  (0.5,0); 
\draw  (0.5,0) -- (1,0);
\draw[->]  (1,0) --  (1.5,0.5);
\draw[->]  (1.5,0.5) --  (2.5,1.5);
\draw  (2.5,1.5) --  (3,2);
\draw[->]  (2,2) --  (2.5,2); 
\draw  (2.5,2) -- (3,2);
 \end{tikzpicture}
 \hspace{1cm}
   \begin{tikzpicture}[scale=0.9]
 \draw (0,0)node{$\bullet$};
  \draw (1,1)node{$\bullet$};
  \draw (2,2)node{$\bullet$};
  \draw (1,0)node{$\bullet$};
  \draw (2,1)node{$\bullet$};
  \draw (3,2)node{$\bullet$};  
  
 \draw (0.5,-0.2)node{$a$};
 \draw (0.4,0.7)node{$b$};
 \draw (1.2,1.6)node{$b$};
 \draw (2.5,2.2)node{$a$};
 \draw (1.75,0.5)node{$b$};
 \draw (2.6,1.3)node{$b$}; 

\draw (1.3,0.8)node{$a$};
  
\draw[->]  (0,0) --  (0.5,0.5);
\draw[->]  (0.5,0.5) --  (1.5,1.5);
\draw  (1.5,1.5) --  (2,2);
\draw[->]  (0,0) --  (0.5,0); 
\draw  (0.5,0) -- (1,0);
\draw[->]  (1,0) --  (1.5,0.5);
\draw[->]  (1.5,0.5) --  (2.5,1.5);
\draw  (2.5,1.5) --  (3,2);
\draw[->]  (2,2) --  (2.5,2); 
\draw  (2.5,2) -- (3,2);

\draw[->]  (1,1) --  (1.5,1); 
\draw  (1.5,1) -- (2,1);
 \end{tikzpicture}
 \caption{Diagramas sobre $\{a,b\}$}
 \label{fig:diagramasZ2} 
\end{center}
\end{figure}

  Já a figura da direita representa um diagrama de van Kampen, pois todas as fronteiras de $2$-células representam relatores. Neste caso, a palavra da fronteira do diagrama é $b^2a^{-1}b^{-2}a$ e sua área  é 2.
\end{example}


Um importante resultado na teoria combinatória de grupos é a seguinte proposição,  também chamada de Lema de van Kampen\index{Lema de van Kampen}:

\begin{proposition}
Sejam $G = \langle S\mid R\rangle$  um grupo finitamente apresentado e $w$ uma palavra em $ F(S)$. Então $w = 1$ em  $G$ se, e somente se, existe um diagrama de van Kampen $D$ sobre $G$ com fronteira rotulada por $w$.
\end{proposition}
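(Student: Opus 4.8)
The plan is to prove the two implications separately: the converse ($\Leftarrow$) by induction on the number of faces, and the forward direction ($\Rightarrow$) by an explicit construction followed by a reduction step.

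For the converse, suppose $D$ is a van Kampen diagram whose boundary is labeled by $w$, and induct on the area $A(D)$, i.e. the number of $2$-faces of $D$. If $A(D)=0$ then $D$ has no $2$-cells and, being connected and simply connected, its $1$-skeleton is a tree; the label read around the boundary of a tree freely reduces to the empty word, so $w=1$ already in $F(S)$ and a fortiori in $G$. For the inductive step I would select a $2$-cell $f$ meeting the unbounded region, join the base vertex $O\in\partial D$ to a vertex of $\partial f$ by an edge-path labeled $u$, and peel off $f$. This exhibits $w$ in $F(S)$ as $w = u\,r^{\pm 1}\,u^{-1}\,w'$, where $r\in R$ labels $\partial f$ and $w'$ is the boundary label of the diagram $D'=D\setminus f$, which has one fewer face. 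By the inductive hypothesis $w'\in\langle\langle R\rangle\rangle$, hence $w\in\langle\langle R\rangle\rangle$, i.e. $w=1$ in $G=F(S)/\langle\langle R\rangle\rangle$. Some care is needed to handle the degenerate case in which removing $f$ disconnects $D$, but this only splits the induction into smaller diagrams.

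For the forward direction, suppose $w=1$ in $G$. Since $G\cong F(S)/\langle\langle R\rangle\rangle$, this means $w\in\langle\langle R\rangle\rangle=\ker(\pi_S)$, so (exactly as in the definition of the combinatorial area) there exist $u_i\in F(S)$ and $r_i\in R$ with
$$ w = \prod_{i=1}^{n} u_i\,r_i^{\pm 1}\,u_i^{-1} \quad \text{in } F(S). $$
I would first build a preliminary ``bouquet'' diagram $D_0$: fix a base vertex $O$ and, for each $i$, attach a lollipop consisting of a stem (an edge-path labeled by $u_i$) whose far endpoint is a basepoint of a single $2$-cell with boundary reading $r_i^{\pm 1}$; then wedge all $n$ lollipops at $O$ in the cyclic order $1,\dots,n$. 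The complex $D_0$ is finite, planar, connected and simply connected (it deformation-retracts onto the wedge of its stems), and each face is bounded by a relator, so $D_0$ is a van Kampen diagram over $G$ whose boundary label is the \emph{literal} product $u_1 r_1^{\pm 1} u_1^{-1}\cdots u_n r_n^{\pm 1} u_n^{-1}$.

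The remaining step, which is where the real work lies, is to convert $D_0$ into a diagram whose boundary reads exactly $w$ rather than this unreduced product. The two words coincide as elements of $F(S)$, so they differ by a finite sequence of free reductions; I would realize each reduction geometrically by a \emph{folding} move, identifying two consecutive boundary edges carrying inverse labels $s,s^{-1}$. The main obstacle is precisely the bookkeeping of these folds: one must verify that a folding move sends a planar simply connected diagram over $G$ to another such diagram, that the procedure terminates (the number of boundary edges strictly decreases at each fold), and that it correctly handles the degenerate configurations — spurs, cut vertices, and pairs of edges both lying on the boundary of a single face — that can occur. Once no further fold is possible, the boundary label is the freely reduced form of $\prod_i u_i r_i^{\pm 1} u_i^{-1}$; attaching trivial spurs if $w$ itself is not reduced then yields a van Kampen diagram with boundary exactly $w$, completing the proof.
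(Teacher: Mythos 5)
Your proof is correct and follows essentially the same route as the paper: the converse is the same induction on the number of faces (peel off a face whose boundary meets $\partial D$, factor $w = (u f_1 f_2 u^{-1})(u f_2^{-1} v)$, and apply the inductive hypothesis to the smaller diagram), and the forward direction begins with the same ``lollipop/bouquet'' diagram built from a decomposition $w = \prod_i u_i r_i^{\pm 1} u_i^{-1}$.

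The one place you go beyond the paper is worth noting: the paper stops after constructing the bouquet and simply declares its boundary to be $w$, whereas in fact that boundary reads the \emph{unreduced} product $u_1 r_1^{\pm 1} u_1^{-1}\cdots u_n r_n^{\pm 1} u_n^{-1}$, which only agrees with $w$ after free reduction. Your folding step --- realizing each free reduction by identifying adjacent boundary edges with inverse labels, checking planarity, simple connectivity and termination --- is precisely what is needed to close this gap, and it is the standard way to do so (as in Lyndon--Schupp). So your argument is not a different proof but a more complete version of the paper's; if anything, you should treat the folding verification as the part requiring full detail, since the bouquet construction and the induction are routine.
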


\begin{proof}
    Se $w=1$ em $G$, então $w$ pode ser escrita na forma $w= u_1r_1u_1^{-1} u_2r_2u_2^{-1} \ldots u_nr_nu_n^{-1}$, onde $r_i \in R$.
Vamos construir um diagrama de van Kampen com fronteira $w$ da seguinte forma:
 o diagrama consistirá de $n$ $2$-células, associadas aos relatores $r_i$ da escrita de $w$, cujas fronteiras estão subdivididas em $\ell_S(r_i)$ $1$-células, unidas com $n$ segmentos, anexados a cada uma das $2$-células em um de seus vértices e subdivididos em $\ell_S(u_i)$ arestas. Esses segmentos partem todos de um mesmo ponto base. 
\begin{figure}
\begin{center}
 \begin{tikzpicture}
    \draw[ 
        decoration={markings, mark=at position 0.125 with {\arrow{>}}},
        postaction={decorate}
        ]
        (3.5,0.5) circle (0.5);

    \draw[ 
        decoration={markings, mark=at position 0.5 with {\arrow{>}}},
        postaction={decorate}
        ]
        (0.5,0) -- (3,0.5);

    \fill (0.5,0) circle (0.05); 
    \draw (1.7,0)node{$u_2$};
    \draw (4.2,0.5)node{$r_2$};

    \draw[ 
        decoration={markings, mark=at position 0.125 with {\arrow{>}}},
        postaction={decorate}
        ]
        (3,2) circle (0.5);

    \draw[ 
        decoration={markings, mark=at position 0.5 with {\arrow{>}}},
        postaction={decorate}
        ]
        (0.5,0) -- (2.5,2);

    \draw (1.7,0.8)node{$u_1$};
    \draw (3.7,2)node{$r_1$};

    \draw[ 
        decoration={markings, mark=at position 0.125 with {\arrow{>}}},
        postaction={decorate}
        ]
        (3,-2) circle (0.5);

    \draw[ 
        decoration={markings, mark=at position 0.5 with {\arrow{>}}},
        postaction={decorate}
        ]
        (0.5,0) -- (2.5,-2);

    \draw (1.5,-1.25)node{$u_n$};
    \draw (3.8,-2)node{$r_n$};
    \draw (3.3,-0.6)node{$\vdots$};
\end{tikzpicture}
 \end{center}
 \caption{Diagrama de Van Kampen associado a $w$}
\end{figure}
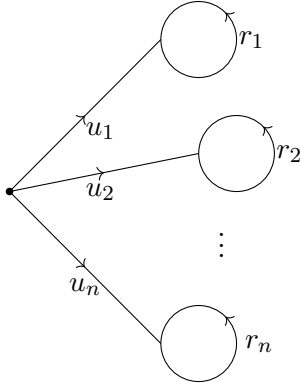


Reciprocamente, suponha que $w$ descreve a fronteira de um  diagrama de  van Kampen $D$. Afirmamos que  $w = 1$ em $G$. A prova é feita por indução sobre a área $k$ de $D$. Para $k = 1$ é óbvio.
Assuma que a afirmação é verdadeira se a área do diagrama é igual a  $k$, e tome um tal diagrama $D$ com $k + 1$ faces. Então  existe uma face $F$ contendo uma aresta $f_1$ em $\partial D$.
Logo, para algumas palavras $u$ e $v$ em $\partial D$, tem-se $w = uf_1v = uf_1f_2f^{-1}_2v$, onde $\partial F$ é rotulada por $f_1f_2$. Mas esta é igual a $(uf_1f_2u^{-1})(uf^{-1}_2v)$ e, por indução, como $D\backslash F$ possui somente $k$ faces, obtemos que $uf^{-1}_2 v = 1$. Mas também $uf_1f_2u^{-1} = 1$ já que $f_1f_2 \in R$. Portanto, $w = 1$ em $G$.   
\end{proof}

\begin{definition}
    Um \textit{diagrama de van Kampen minimal} para uma palavra $w$ é um  diagrama de van Kampen para $w$ com o menor número possível de faces.
\end{definition}

Usaremos diagramas de van Kampen para provar a recíproca do Teorema~\ref{prop:hiperbolicimpliesdil}.

\begin{thm} \label{thm:dilimplieshiperbolic}
    Se um grupo finitamente apresentado $G$ satisfaz uma desigualdade isoperimétrica  linear, então $G$ é hiperbólico.
\end{thm}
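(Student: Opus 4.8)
O plano é estabelecer que o grafo de Cayley $\Gamma = \cay(G,S)$ é Rips-hiperbólico, isto é, exibir $\delta \geq 0$ tal que todo triângulo geodésico em $\Gamma$ seja $\delta$-magro. Como a hiperbolicidade de Rips independe do conjunto de geradores e é invariante por quasi-isometrias (Corolário~\ref{cor:ripsQI}), isso bastará, pela Definição~\ref{def:grupohiperbolico}. A ponte entre a hipótese algébrica e a geometria será o Lema de van Kampen: a desigualdade $A(w) \leq K|w|_S$ significa, geometricamente, que toda palavra $w$ com $w=1$ em $G$ é o bordo de um diagrama de van Kampen $D$ cujo número de faces é no máximo $K|w|_S$.

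Primeiro fixaria um triângulo geodésico $\Delta = \Delta(x_1,x_2,x_3)$, com lados geodésicos $\sigma_1,\sigma_2,\sigma_3$, e leria seu bordo como uma palavra $w$ com $|w|_S = \ell(\sigma_1)+\ell(\sigma_2)+\ell(\sigma_3)$; como $w=1$ em $G$, tomaria um diagrama de van Kampen \emph{minimal} $D$ preenchendo $\Delta$, que pela hipótese satisfaz $A(w) \leq K|w|_S$, ou seja, tem área no máximo linear no perímetro. A estratégia é extrair desse controle linear da área uma cota \emph{uniforme} para a magreza, argumentando por contradição: se $\Gamma$ não fosse hiperbólico, para cada $n$ existiria um triângulo geodésico com um ponto $p$ sobre um lado a distância maior que $n$ da união dos outros dois lados.

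O passo decisivo — e o principal obstáculo — é transformar essa ``gordura'' num excesso de área que contradiga a linearidade. A ideia é olhar, dentro do diagrama minimal, as camadas concêntricas de faces em torno de $p$ situadas entre $\sigma_1$ e os demais lados, e usar a minimalidade de $D$ para controlar sua estrutura interna (por exemplo, que caminhos no $1$-esqueleto de $D$ ligando lados distintos não possam encurtar os lados, que são geodésicos de $\Gamma$). O coração técnico é uma estimativa combinatória que force o número de faces a crescer ao menos quadraticamente em $n$ quando $p$ está profundamente afastado — essencialmente o conteúdo do teorema de Gromov. Essa estimativa de preenchimento, que deve ser conduzida por uma recursão usando a própria desigualdade isoperimétrica linear aplicada a sub-laços cada vez menores, é a parte delicada da demonstração, pois o controle ingênuo da área de um disco planar não basta: é precisamente a condição de que os três lados sejam geodésicos que impede o diagrama de ser simultaneamente ``redondo'' e de área linear.

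Uma vez estabelecida tal estimativa, a contradição com $A(w) \leq K|w|_S$ produz uma constante $\delta = \delta(K,S,R)$, dependente apenas de $K$, do tamanho dos relatores e de $|S|$, tal que todo triângulo geodésico de $\Gamma$ é $\delta$-magro. Logo $\Gamma$ é Rips-hiperbólico e, pela Definição~\ref{def:grupohiperbolico}, $G$ é hiperbólico. Juntamente com o Teorema~\ref{prop:hiperbolicimpliesdil}, isso completa a caracterização da hiperbolicidade de grupos finitamente apresentados pela desigualdade isoperimétrica linear.
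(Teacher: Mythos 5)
Sua estratégia geral coincide com a do texto: argumentar por contradição, ler o bordo de um triângulo geodésico ``gordo'' como uma palavra trivial, tomar um diagrama de van Kampen minimal e extrair um excesso de área de camadas de faces, usando a geodesicidade dos lados para impedir atalhos. Porém, há uma lacuna genuína: o passo que você mesmo chama de ``coração técnico'' --- a estimativa combinatória que força o crescimento da área --- é apenas enunciado, não demonstrado. Tal como está, a proposta delega exatamente o conteúdo do teorema a uma ``recursão'' não especificada; nada no que você escreveu mostra como a minimalidade do diagrama e a geodesicidade dos lados produzem, de fato, camadas com área controlada por baixo.

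Além disso, a forma da estimativa que você pede (área crescendo pelo menos quadraticamente na profundidade $n$ do ponto $p$) não fornece, por si só, a contradição desejada: a desigualdade isoperimétrica linear limita a área por $K|w|_S$, e o perímetro $|w|_S$ do triângulo não é controlado por $n$ --- ele pode ser arbitrariamente maior. É preciso localizar a estimativa num subdiagrama cujo perímetro seja comparável à escala da ``gordura''. A prova do texto faz isso truncando o triângulo num hexágono $H$ cujos vértices distam $\frac{c}{10}$ dos vértices do triângulo, de modo que $\ell(\partial D) < 6\ell'$, onde $\ell'$ é o comprimento do maior lado truncado; em seguida, em vez de crescimento quadrático, basta uma contagem linear: cada ``estrela'' de faces ao longo do lado geodésico $L$ tem área pelo menos $\ell'/\rho$ (onde $\rho$ é o comprimento máximo de um relator), e sua fronteira interna ainda tem comprimento pelo menos $\ell' - 2\rho$, pois um atalho mais curto contradiria a geodesicidade de $L$. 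Iterando apenas $12K$ vezes (com $c$ grande o suficiente para que $\ell' - 12K\rho \geq \ell'/2$), a área acumulada excede $6K\ell' > A(D)$, e esta é a contradição. Esses dois ingredientes --- a localização no hexágono e a cota inferior por camada, com perda de apenas $2\rho$ por iteração --- são exatamente o que falta no seu argumento.
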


\begin{proof}
Seja $\langle S\mid R\rangle$ uma apresentação finita para $G$.
Suponha, por contradição, que para todo  $w \in G$ com $w = 1$ vale $A(w) \leq K\ell(w)$ e que $G$ não é  hiperbólico.
Então os triângulos em $\cay(G,S)$ não são $\delta$-magros, qualquer que seja $\delta \in \R$. Assim, para todo $c > 0$ existe um triângulo geodésico no grafo de Cayley de $G$ o qual contém um ponto cuja distância à união dos outros dois lados é pelo menos $c$. Escolha um tal triângulo $\Delta = \Delta(x, y, z)$ e considere um hexágono $H \subset \Delta $ cujos vértices são os  seis pontos de $\Delta$ que estão a uma distância igual a $\frac{c}{10}$ de $x,y$ e
$z$.

\begin{figure}
\begin{center}
  \begin{tikzpicture}[scale=0.9]
 \draw[font = {\tiny}] (0,0)node{$\bullet$};
  \draw [font = {\tiny}](2,3)node{$\bullet$};
  \draw [font = {\tiny}](4,0)node{$\bullet$};  
  \draw [font = {\tiny}](1.6,0)node{$\bullet$};   
  
 \draw (-0.3,0)node{$x$};
 \draw (2,3.3)node{$z$};
 \draw (4.3,0)node{$y$};
 \draw[font = {\tiny}] (1.6,-0.15)node{$p$};
 \draw[font = {\tiny}] (2.5,2.8)node{$\frac{c}{10}$}; 
 \draw[font = {\tiny}] (0.35,-0.3)node{$\frac{c}{10}$}; 
 \draw[font = {\tiny}] (3.65,-0.3)node{$\frac{c}{10}$};  
 \draw (0.6,1.5)node{$\ell_2$};
 \draw (3.3,1.5)node{$\ell_3$};
 \draw (2.3,-0.3)node{$\ell_1$};
 
\draw [dashed] (0,0) --  (2,3);
\draw [dashed] (0,0) --  (4,0);
\draw [dashed] (2,3) --  (4,0);

\draw [thick, dashed] (0.7,0.35) --  (3.3,0.35);
 \draw[font = {\tiny}] (2,0.5)node{$star(L)$};
\draw[thick] (1.6,2.4) to [bend right=30] (2.4,2.4);
\draw[thick] (0.7,0) to [bend right=30] (0.4,0.6);
\draw[thick] (3.3,0) to [bend left=30] (3.6, 0.6);

\draw[thick] (0.4,0.6) -- (1.6,2.4);
\draw[thick] (2.4,2.4) -- (3.6, 0.6);
\draw[thick] (0.7,0)  -- (3.3,0);
\fill[pattern=north east lines] (0.7,0) rectangle (3.3,0.34);
 \end{tikzpicture}
 \end{center}
 \caption{Prova do Teorema \ref{thm:dilimplieshiperbolic}}
    \label{fig:thmdilimplieshyp}
\end{figure}

Suponha que  $p$ é o ponto no segmento $\overline{xy}$ para o qual
$$d(p, \overline{xz} \cup \overline{yz}) = \max_{q\in\overline{xy}}
\{d(q,\overline{xz} \cup \overline{yz})\}.$$
Seja $D$ um diagrama de van Kampen minimal para $H$. Suponha que  $\ell_1,\ell_2$ e $\ell_3$ são os comprimentos dos lados de $H$
que foram truncados de $\Delta$ e seja $\ell' = \max\{\ell_1, \ell_2, \ell_3\}$, o qual é realizado por um lado  $L$ de $H$ (vamos supor sem perda de generalidade que $\ell(L) = \ell_1$).

Assim,  $\ell(\partial D) < 6\ell '$ e, pela desigualdade isoperimétrica linear, vale $A(D) < 6K\ell '$.  Seja $\rho = \displaystyle \max_{r \in R}\{\ell (r)\}$, obtemos 
$$\mbox{área}(\mathrm{star}(L)) \geq \dfrac{\ell '}{\rho},$$
onde $\mathrm{star}(L)$ é a união de todas faces de $D$ que cruzam $L$. 

Além disso, se $L_1 = \partial (\mathrm{star}(L)) \backslash \partial H$, temos $\ell(L_1) \geq \ell' -2\rho$, pois caso contrário $L$ não seria uma geodésica.

Repetindo esse processo $12K$ vezes, obtemos:
\begin{equation*}
\begin{split}
    \mbox{área}(\mathrm{star}^{12K}(L)) &>\dfrac{\ell'}{\rho}+\dfrac{\ell'-2\rho}{\rho} + \ldots + \dfrac{\ell'-12k\rho}{\rho}\\
&\geq  12k \rho \dfrac{\ell'-12k\rho}{\rho}\\
&= 12k \ell'-12k\rho\\
& > 6k\ell',
\end{split}
\end{equation*} 
e isso contradiz a desigualdade isoperimétrica. Observe que podemos escolher $c$ suficientemente grande, de modo que $\ell'-12k\rho\geq \frac{\ell'}{2}$, já que $\ell'>\frac{c}{10}$.
\end{proof}

Tanto o Teorema~\ref{prop:hiperbolicimpliesdil} quanto sua recíproca podem ser provados para espaços métricos geodésicos quaisquer,  utilizando a noção de $\varepsilon$-preenchimentos de laços em espaços métricos, cuja descrição completa pode ser vista em \cite{bridson2013metric}. A ideia consiste em definir uma noção de área combinatória de laços para estes espaços, que também pode ser aplicada ao grafo de Cayley de grupos.

Seja $\mathbb{D}^2$ o disco de centro na origem e raio $1$ em $\R^2$ e $\partial \mathbb{D}^2= \mathbb{S}^1$. Uma triangulação de $\mathbb{D}^2$ é um homeomorfismo $P:\mathbb{D}^2 \to T$, onde $T$ é um complexo celular onde cada 2-célula é um triângulo. 
 Podemos induzir uma estrutura celular em $\mathbb{D}^2$ via $P$. 

\vspace{0.5cm}
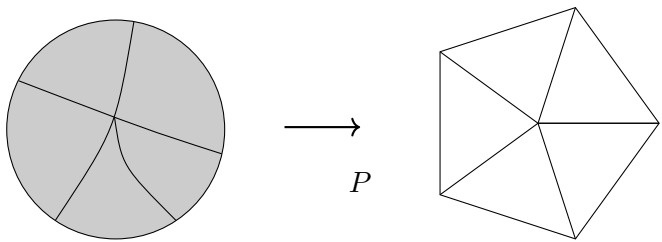
\begin{figure}[!ht]
\begin{center}
\begin{tikzpicture}[scale=0.8]
    \filldraw[fill=gray!40] (0,0) circle (1.8cm);
  \draw (-1.6,0.8) .. controls (0.5,0) and (0.5,0) .. (1.75,-0.4);
  \draw (-1,-1.5) .. controls (0,0) and (0,0) .. (0.3,1.77);
  \draw (1,-1.5) .. controls (0.1,-0.6) and (0.1,-0.6) .. (-0.02,0.2);
\end{tikzpicture}\qquad\tikz[baseline=-\baselineskip]\draw[thick,->] node[above]{$P$}(-1,1) -- ++ (1,0) ;\qquad
\begin{tikzpicture}[scale=0.8]
    \foreach \aa in {0,1,2,...,5}{
    \draw (0,0)coordinate(O) -- ({\aa*360/5}:2)coordinate(P\aa);
}

\draw 
(P0) 
\foreach \aa in  {1,2,...,5}{--
(P\aa)
}
-- cycle;
\end{tikzpicture}        
\end{center}
\caption{Triangulação de $\mathbb{D}^2$}
    \label{fig:triangulaçao}
\end{figure}

Começamos definindo a noção de \textit{curvas retificáveis} \index{curvas retificáveis}. Em um espaço métrico $(X,d)$ qualquer, podemos definir uma noção de comprimento de uma curva contínua $c:[a,b]\to X$ por 
$$\ell(c)
=
\sup_{a=t_0\leq t_1\leq \cdots \leq t_n=b}
\sum_{i=0}^{n-1}
d\bigl(c(t_i),c(t_{i+1})\bigr),$$
onde o supremo é tomado sobre todas as partições $a=t_0\leq t_1\leq \cdots \leq t_n=b$ do intervalo $[a,b]$, sem qualquer restrição sobre o número de pontos da partição. O comprimento de $c$ é um número real não negativo ou infinito. Dizemos que a curva $c$ é \textit{retificável} se o seu comprimento, conforme a definição acima, é finito.

Tome $X$ um espaço métrico e $\gamma: \mathbb{S}^1\to X$ um laço retificável em $X$. Um \textit{$\varepsilon$-preenchimento} \index{$\varepsilon$-preenchimento} de $\gamma$ é um par $(P,\varphi)$, onde $P$ é uma triangulação de $\mathbb{D}^2$ e  $\varphi: \mathbb{D}^2 \to X$ é uma aplicação tal que $\varphi|_{\mathbb{S}^1}=\gamma$ e a imagem por $\varphi$ de cada 2-célula tem diâmetro menor que ou igual a $\varepsilon$. Denotamos por $|\varphi|$ a área do preenchimento $\varphi$, dada por $ |\varphi| =  \# \{2\mbox{-células em }$P$\}$. Em geral, a função $\varphi$ pode não ser contínua.

\begin{definition}
 Dado um laço retificável $\gamma: \mathbb{S}^1\to X$, definimos a \textit{área$_{\varepsilon}$ de $\gamma$} por $$\mbox{área}_{\varepsilon}(\gamma):=\min\{|\varphi| \mid (P,\varphi) \mbox{ é um } \varepsilon\mbox{-preenchimento de } 
  \gamma\}.$$ Se não existe um $\varepsilon$-preenchimento de $\gamma$, escrevemos área$_{\varepsilon}(\gamma) = \infty$.
\end{definition}

Se existir $\varepsilon>0$ tal que cada laço retificável em $X$ tem um \mbox{$\varepsilon$-preenchimento}, então também chamaremos o numero  área$_{\varepsilon}(\gamma)$ de \textit{área combinatorial}\index{area combinatorial@área combinatorial} de $\gamma$, e a denotaremos a partir de agora também por  $A(\gamma)$.

Caso exista $\varepsilon>0$ tal que, para cada laço retificável  $\gamma: \mathbb{S}^1\to X$, $A(\gamma) \leq f(\ell(\gamma))$ onde $f$ é uma função real linear (quadrática, exponencial, etc.) dizemos que $X$ satisfaz uma \textit{desigualdade isoperimétrica grosseira linear}
\index{desigualdade isoperimétrica grosseira} (quadrática, exponencial, etc.).

\begin{remark}
\begin{enumerate}
    \item Se $\varepsilon' > \varepsilon$, então área$_{\varepsilon'}(c) \leq$ área$_{\varepsilon}(c)$ para qualquer laço retificável $c$ em um espaço métrico $X$.
\item Caso $X$ e $Y$  sejam espaços métricos quasi-isométricos de modo que existam $\varepsilon > 0$ e uma função $f: \mathbb{R^+} \to \mathbb{R^+}$ tais que todo laço em $X$ possui um $\varepsilon$-preenchi\-men\-to e, além disso,  área$_{\varepsilon}(c) \preceq f (\ell(c))$ para todo laço retificável $c$ em $X$, então  $Y$ admite um limite isoperimétrico $f'\preceq f $.
\end{enumerate}
\end{remark}

\begin{exercise}
    Dado um espaço métrico $\CAT(k)$ $X$, prove que, para todo $\varepsilon' < \varepsilon <  \frac{2\pi}{\sqrt{k}}$, existe uma constante $C = C(\varepsilon, \varepsilon', k)$ tal que área$_{\varepsilon'}(c) \leq C\,$área$_{\varepsilon}(c)$ para qualquer curva retificável $c$ em $X$.
\end{exercise}

O teorema a seguir generaliza o Teorema~\ref{thm:dilimplieshiperbolic} para espaços métricos que satisfaçam alguma desigualdade isoperimétrica linear.

\begin{thm}[Teorema 2.9 em \cite{bridson2013metric}]
Se $X$ é um espaço métrico geodésico, suponha que existam constantes $K,\varepsilon >0$ tais que área$_{\varepsilon}(\gamma)\leq Kl(\gamma)+K$ para cada laço geodésico por partes. Então, $X$ é $\delta$-hiperbólico, onde $\delta$ é uma constante que depende de $K$ e $\varepsilon$.
\end{thm}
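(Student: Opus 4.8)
The final theorem asserts: if $X$ is a geodesic metric space and there exist constants $K,\varepsilon>0$ such that $\text{área}_\varepsilon(\gamma)\le K\,l(\gamma)+K$ for every piecewise-geodesic loop, then $X$ is $\delta$-hyperbolic for some $\delta=\delta(K,\varepsilon)$. This is the coarse-metric generalization of Theorem~\ref{thm:dilimplieshiperbolic}, and my plan is to mimic the proof of that theorem as closely as possible, replacing van Kampen diagrams by $\varepsilon$-fillings and the combinatorial area $A(w)$ by $\text{área}_\varepsilon(\gamma)$.

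**Overall strategy.** I would argue by contradiction. Suppose $X$ is not $\delta$-hyperbolic for any $\delta$. Then for every $c>0$ there is a geodesic triangle $\Delta=\Delta(x,y,z)$ containing a point $p\in\overline{xy}$ whose distance to $\overline{xz}\cup\overline{yz}$ is at least $c$. Exactly as in Theorem~\ref{thm:dilimplieshiperbolic}, I would truncate $\Delta$ to form a geodesic hexagon $H$ by cutting at the six points lying at distance $c/10$ from the vertices $x,y,z$. The perimeter of $H$ is bounded by a constant multiple of $\ell'=\max\{\ell_1,\ell_2,\ell_3\}$, where the $\ell_i$ are the lengths of the three long sides of $H$, so the hypothesis gives $\text{área}_\varepsilon(\partial H)\le 6K\ell'+K$. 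On the other hand, using the analogue of the "$\mathrm{star}$" argument — iterating the construction of nested piecewise-geodesic loops $L,L_1,L_2,\dots$ that peel off layers of $\varepsilon$-cells crossing the longest side — I would produce a lower bound on $\text{área}_\varepsilon(H)$ that grows like $c\,\ell'$ (through roughly $12K$ iterations, each contributing area proportional to $\ell'/\varepsilon$, with the side lengths decreasing by at most a fixed constant per step). For $c$ large enough this lower bound exceeds the isoperimetric upper bound, yielding a contradiction and forcing $X$ to be $\delta$-hyperbolic.

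**Adapting the combinatorial machinery.** The main technical adjustment is that in the discrete proof one uses that each face of a minimal van Kampen diagram has boundary a relator of length at most $\rho=\max_{r\in R}\ell(r)$; here the replacement is that each $2$-cell of an $\varepsilon$-filling has image of diameter at most $\varepsilon$, so $\varepsilon$ plays the role of $\rho$. I would take a minimal $\varepsilon$-filling $(P,\varphi)$ of $\partial H$ and define $\mathrm{star}(L)$ as the union of all $2$-cells of $P$ whose $\varphi$-image meets a $\varepsilon$-neighborhood of the longest side $L$. The estimate $\text{área}(\mathrm{star}(L))\gtrsim \ell'/\varepsilon$ follows because $L$ has length $\ell'$ and must be covered by cells of diameter $\le\varepsilon$; the estimate $\ell(L_1)\ge \ell'-2\varepsilon$ (where $L_1$ is the "inner" boundary of $\mathrm{star}(L)$) follows from the fact that $L$ is a geodesic, hence shorter than any competing path, forcing the next layer to stay long. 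These are precisely the coarse analogues of the two facts used in Theorem~\ref{thm:dilimplieshiperbolic}.

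**Where the difficulty lies.** The main obstacle is making the layer-peeling lower bound fully rigorous in the continuous setting, because $\varphi$ need not be continuous and the layers $L_j$ are piecewise-geodesic loops rather than combinatorial paths in a simplicial complex. I would need to verify that each $L_j$ is again a piecewise-geodesic loop (so the hypothesis applies to it and to $H$), that its length decreases by at most $2\varepsilon$ per iteration, and that after $12K$ iterations the side length is still at least $\ell'/2$ — which is guaranteed by choosing $c$ (hence $\ell'>c/10$) large relative to $K\varepsilon$. Once these geometric bookkeeping lemmas are in place, summing the telescoping lower bounds
\[
\text{área}_\varepsilon(\mathrm{star}^{12K}(L))\;\gtrsim\;\sum_{j=0}^{12K}\frac{\ell'-2j\varepsilon}{\varepsilon}\;\gtrsim\;12K\,\frac{\ell'/2}{\varepsilon}\cdot\varepsilon\;=\;6K\ell',
\]
contradicts $\text{área}_\varepsilon(\partial H)\le 6K\ell'+K$ for $\ell'$ large, completing the proof. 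I expect this continuous bookkeeping, rather than any conceptual novelty, to be the genuinely delicate part.
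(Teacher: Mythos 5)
The paper itself offers no proof of this theorem: it is stated as a citation of \cite{bridson2013metric}, so the only internal point of comparison is the proof of Teorema~\ref{thm:dilimplieshiperbolic}, which is exactly what you transplant. Your skeleton (hexagon truncation, linear upper bound for a minimal $\varepsilon$-filling of $\partial H$, layer-peeling lower bound) is therefore the expected approach; the problems lie in the places where the transplant has to be more than a change of notation.

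First, two repairable slips. (i) You say you must verify that each inner layer $L_j$ is ``again a piecewise-geodesic loop (so the hypothesis applies to it and to $H$)''. This is both unnecessary and unachievable: the isoperimetric hypothesis is invoked exactly once, for the honest piecewise-geodesic loop $\partial H$; the layers live inside one fixed minimal filling and, since $\varphi$ may be discontinuous, their ``boundaries'' are only $\varepsilon$-coarse chains of cells, never geodesic loops. Since the hypothesis you are given covers \emph{only} piecewise-geodesic loops, any variant of the argument that tries to re-fill the $L_j$ is blocked; everything must be phrased as cell counting inside the single fixed filling. (ii) The arithmetic of your final display is wrong: each layer contributes $\gtrsim \ell'/\varepsilon$ cells, so $12K$ layers give $\gtrsim 6K\ell'/\varepsilon$, and the final multiplication by $\varepsilon$ that converts this into $6K\ell'$ is unjustified. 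For $\varepsilon \geq 1$ no contradiction results, and you cannot shrink $\varepsilon$, because the monotonicity recorded in the paper goes the other way (área$_{\varepsilon'}\leq$ área$_{\varepsilon}$ only for $\varepsilon' \geq \varepsilon$). The fix --- peel roughly $12K\varepsilon$ layers and require $c \gg K\varepsilon^2$ --- is routine, but it has to be made.

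The genuine gap is the one you inherit, silently, from the model proof: the estimate $\ell(L_j)\geq \ell' - 2j\varepsilon$ presupposes that the inner boundary of the $j$-th star joins two points whose images lie near the two \emph{endpoints} of $L$, and nothing guarantees this. The stars collide with $\partial H\setminus L$: the short sides of $H$ meet $L$ at its endpoints, and the other two long sides can be $\varepsilon$-close to $L$ everywhere outside $B(p,c)$ --- the triangle is fat only at $p$, and near its corners it may be tree-like; with your metric definition of $\mathrm{star}(L)$ such cells are even automatically included. Consequently the endpoints of $L_j$ are merely points of $\partial H$ whose images are $O(j\varepsilon)$-close to \emph{some} point of $L$; in bad configurations both are close to the same point of $L$, and the length lower bound --- hence the area lower bound --- evaporates. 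The known repair, which is the actual content of the proof in \cite{bridson2013metric}, is to localize at the fat point: using $d(p,\partial H\setminus L)\geq 7c/10$ (a consequence of the $c/10$ truncation), one builds a secondary piecewise-geodesic loop of perimeter $O(c)$ around $p$ (nearest-point connectors from the two points of $L$ at distance about $2c$ from $p$ to the opposite sides), applies the linear inequality to \emph{that} loop, and obtains the contradiction from an annulus count centered at $p$, with a separate case analysis for when the connecting geodesic itself approaches $p$. Without this localization the peeling argument --- yours, and equally the sketch in Teorema~\ref{thm:dilimplieshiperbolic} --- does not close.
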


\begin{remark}
    Gromov provou que é suficiente supor desigualdade isoperimétrica subquadrática, para garantir uma desigualdade isoperimétrica linear, e portanto hiperbolicidade.
\end{remark} 

\subsection{O algoritmo de Dehn}\index{algoritmo de Dehn}
Seja $G$ um grupo finitamente apresentado. Dizemos que uma apresentação finita $\langle S \mid R\rangle$ é uma \textit{apresentação de Dehn} \index{apresentação de Dehn} se
\begin{enumerate}
    \item[(i)] ela é uma apresentação simétrica, isto é, $R$ contém todas as palavras obtidas a partir de um relator ao invertê-lo ou  fazer uma permutação cíclica nas suas letras;
    \item[(ii)] para cada palavra reduzida $w$ não trivial representando a identidade em $G$, vale que  $\omega$ contém mais que metade de um relator $r \in R$.
\end{enumerate}
 Vejamos o que isto significa. Se $G =\langle S \mid R\rangle $ é uma apresentação de Dehn de $G$ e $w=1$ é uma palavra reduzida em $S$, isto implica  que existe um relator $r \in R$  que se escreve como um produto $r = r_1r_2$, tal que $|r_1|_S > |r_2|_S$ e $w=w_1r_1w_2$, para $w_i \in G$.
Nestas condições, uma palavra $w \in G$ é dita \textit{Dehn reduzida} se ela não contém mais do que a metade de qualquer relator $r \in R$.

\begin{exercise}
    Mostre que $\Z^2$ não tem apresentação de Dehn.
\end{exercise}

\begin{lemma} \label{lemma:dehnpres}
Se um grupo $G$ possui uma apresentação de Denh, então o problema da palavra é solúvel para $G$.
\end{lemma}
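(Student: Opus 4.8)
If a group $G$ has a Dehn presentation, then the word problem is solvable for $G$.

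The plan is to exhibit an explicit algorithm---the so-called Dehn algorithm---that decides, on input a word $w$ in the generators $S$, whether $w = 1$ in $G$, and to prove that it always halts with the correct answer. Fix a Dehn presentation $\langle S \mid R\rangle$ of $G$, with $R = \{u_1 v_1^{-1}, \ldots, u_n v_n^{-1}\}$ where $|u_i|_S > |v_i|_S$ for each $i$. The algorithm proceeds as follows. Given $w$, first freely reduce it. Then search $w$ for an occurrence of some $u_i$ (equivalently, search for any subword that constitutes more than half of a relator); this search is finite since $R$ is finite and each relator has bounded length. If such a $u_i$ is found, replace that occurrence of $u_i$ by $v_i$, obtaining a word $w'$ with $w' = w$ in $G$ (because $u_i v_i^{-1} \in \langle\langle R\rangle\rangle$), and then freely reduce $w'$. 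Repeat. If at some stage no such subword is found, the algorithm halts and declares $w \ne 1$ unless $w$ is the empty word, in which case it declares $w = 1$.

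First I would verify correctness of each step. Each replacement $u_i \mapsto v_i$ preserves the element of $G$ represented, since we are multiplying by a conjugate-free relator; free reduction likewise preserves the element. Hence the word represented by $G$ never changes throughout the process, and so if the algorithm outputs ``$w=1$'' via reaching the empty word, this is genuinely correct. Conversely, if the algorithm halts with a nonempty Dehn-reduced word (one containing more than half of no relator), then by the defining property of a Dehn presentation this word cannot represent the identity---for if it did represent $1$ in $G$ and were nonempty, the Dehn condition would force it to contain more than half of some relator, a contradiction. This is exactly the point where the hypothesis of being a Dehn presentation is used.

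The main obstacle, and the heart of the proof, is termination: I must show the algorithm halts after finitely many steps. Here I would use that each productive step strictly decreases the length. When we replace an occurrence of $u_i$ by $v_i$ we have $|v_i|_S < |u_i|_S$, so the word length drops by at least $|u_i|_S - |v_i|_S \geq 1$; subsequent free reduction can only decrease the length further, never increase it. Thus the length of the current word is a strictly decreasing sequence of non-negative integers, which must terminate after at most $|w|_S$ replacement steps. Each individual step---free reduction and the search for a relator subword---is itself a finite computation, bounded in terms of $|w|_S$, $|S|$, and $\max_{r \in R}|r|_S$. Therefore the whole procedure terminates, giving a genuine decision algorithm for the word problem. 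I would close by remarking that this shows the word problem is not merely solvable but solvable in quadratic time, since the number of replacement steps is $O(|w|_S)$ and each step costs $O(|w|_S)$ work; however the qualitative solvability asserted in the lemma follows from termination alone.
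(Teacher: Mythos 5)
Your proof is correct and follows essentially the same approach as the paper: both exhibit Dehn's algorithm (replace any occurrence of more than half a relator by the shorter complementary piece), use strict length decrease for termination, and invoke the defining property of a Dehn presentation to certify that a nonempty Dehn-reduced word is nontrivial. Your write-up is somewhat more detailed on correctness and adds a complexity remark, but these are elaborations, not a different argument.
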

\begin{proof}
Seja $\langle S \mid R\rangle$ uma apresentação de Dehn para o grupo $G$. Tome $w \in F(S)$ uma palavra qualquer. Então ou $w$ não contém mais da metade de algum relator (caso no qual sabemos que $w$ deve ser diferente da identidade) ou ela possui, isto é, $w = ar_1b$ onde existe uma palavra $r_2$ tal que $r_1r_2 \in R$ e $|r_1|_S > |r_2|_S$. Assim, $w = ar^{-1}_2b$ em $G$, a qual é uma palavra de comprimento estritamente menor que a inicial. Repetimos o processo até que, eventualmente, atingimos a palavra vazia, caso no qual $w$ é trivial em $G$, ou reduzimos $w$  a uma palavra que não contenha mais da metade de um relator, caso no qual $w$ não é trivial em $G$. Este algoritmo, conhecido como \textit{algoritmo de Dehn}, \index{algoritmo de Dehn} termina após um número finito de passos, tornando possível solucionar o problema da palavra em $G$.
\end{proof}

A partir do Lema \ref{lemma:dehnpres}, podemos verificar que grupos hiperbólicos possuem o problema da palavra solúvel, visto que a apresentação dada no Corolário~\ref{cor:hiperbolicisfinpres} é uma apresentação de Dehn. Mais ainda, temos o seguinte:

\begin{thm}
\label{thm:hipsseDehn}
Um grupo $G$ é hiperbólico se, e somente se, tem uma apresentação de Dehn.
\end{thm}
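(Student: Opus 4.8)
O plano é provar as duas implicações separadamente, usando como ponte a equivalência entre hiperbolicidade e a desigualdade isoperimétrica linear estabelecida nos Teoremas~\ref{prop:hiperbolicimpliesdil} e~\ref{thm:dilimplieshiperbolic}.

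\textbf{A implicação fácil ($\Leftarrow$).} Primeiro suponho que $\langle S\mid R\rangle$ seja uma apresentação de Dehn; em particular $G$ é finitamente apresentado. A ideia é mostrar diretamente que tal apresentação fornece uma desigualdade isoperimétrica linear. Dada $w$ com $w=1$ em $G$, o algoritmo de Dehn (Lema~\ref{lemma:dehnpres}) substitui, a cada etapa, uma subpalavra $u$ que é mais da metade de um relator $r=uv^{-1}\in R$ (com $|v|_S<|u|_S$) pela palavra mais curta $v$; como $v^{-1}u$ é um conjugado de $r^{\pm1}$, cada etapa corresponde a multiplicar por um único conjugado de relator e reduz $|w|_S$ em pelo menos $1$. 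Logo o processo para em no máximo $|w|_S$ etapas, atingindo a palavra vazia, donde $A(w)\le |w|_S$. Pelo Teorema~\ref{thm:dilimplieshiperbolic}, $G$ é hiperbólico.

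\textbf{A implicação difícil ($\Rightarrow$).} Agora suponho $G$ hiperbólico e fixo $\Gamma=\cay(G,S)$ $\delta$-hiperbólico com $\delta\ge 2$ inteiro. A proposta é exibir explicitamente uma apresentação de Dehn, a saber, aquela do Corolário~\ref{cor:hiperbolicisfinpres}, com $R=\{w\in F(S)\mid |w|_S\le 10\delta,\ w=1 \text{ em } G\}$. Resta verificar que todo $w\neq 1$ com $w=1$ em $G$ contém mais da metade de algum $r\in R$, isto é, possui uma subpalavra $u$ que coincide em $G$ com uma palavra estritamente mais curta $u'$ satisfazendo $|u|_S+|u'|_S\le 10\delta$. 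Pretendo obter essa subpalavra reaproveitando a análise por casos da prova do Teorema~\ref{prop:hiperbolicimpliesdil}: parametrizo o laço $\gamma$ de $w$ por comprimento de arco e considero o vértice $\gamma(t)$ mais distante de $1$, de distância $R$. Se $|w|_S\le 10\delta$, a própria palavra $w$ é um relator de $R$ do qual $w$ é (toda) mais da metade. Se $R\le\delta$, todo o laço está em $\overline B(1,\delta)$, e a subpalavra de comprimento $2\delta+1$ iniciada em $\gamma(0)$ tem extremos a distância $\le 2\delta$, fornecendo o atalho desejado. No caso restante $R\ge\delta+1$, tem-se $\delta+1\le t\le |w|_S-(\delta+1)$, e os pontos $u_\pm=\gamma(t\pm(\delta+1))$ satisfazem $d(u_-,u_+)\le 2\delta$ --- exatamente como no Caso~3 da prova do Teorema~\ref{prop:hiperbolicimpliesdil}, via a $\delta$-magreza dos triângulos ancorados em $\gamma(t\pm5\delta)$ e a maximalidade de $R$; a subpalavra de comprimento $2\delta+2$ entre $u_-$ e $u_+$ é então mais da metade do relator obtido fechando-a com uma geodésica de comprimento $\le 2\delta$.

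O ponto mais delicado será a localização da subpalavra no caso $R\ge\delta+1$: precisa-se garantir que os índices $t\pm5\delta$ usados para ancorar os triângulos finos estejam de fato disponíveis, o que exige separar cuidadosamente os subcasos em que $\gamma(t)$ está próximo das extremidades do laço e controlar todas as constantes para assegurar que o relator produzido tenha comprimento $\le 10\delta$ e que $u$ seja \emph{estritamente} mais da metade dele. Uma vez estabelecida essa afirmação geométrica, a apresentação do Corolário~\ref{cor:hiperbolicisfinpres} será de Dehn, completando a prova; observo ainda que, conceitualmente, essa afirmação equivale ao princípio local-global de que geodésicas locais longas num espaço $\delta$-hiperbólico são quasi-geodésicas e, portanto, não podem fechar-se em laços não triviais.
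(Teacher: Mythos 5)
Sua estratégia é a mesma do texto nas duas direções: $(\Leftarrow)$ via algoritmo de Dehn, desigualdade isoperimétrica linear e Teorema~\ref{thm:dilimplieshiperbolic}; $(\Rightarrow)$ verificando que a apresentação do Corolário~\ref{cor:hiperbolicisfinpres}, com $R=\{w \in F(S) \mid |w|_S\le 10\delta,\ w=1\}$, é de Dehn, reaproveitando a análise de casos do Teorema~\ref{prop:hiperbolicimpliesdil}. A implicação $(\Leftarrow)$ está correta e completa; trabalhar diretamente com a área combinatória, obtendo $A(w)\le |w|_S$ (cada substituição do algoritmo corresponde a multiplicar por um único conjugado de relator e encurta a palavra), é até mais direto do que a redação do livro, que passa pela área grosseira $\mbox{área}_{\rho}(\gamma)$. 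Também é um mérito seu exigir que a subpalavra seja \emph{estritamente} mais da metade do relator, como pede a definição de apresentação de Dehn.

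O problema está na divisão de casos de $(\Rightarrow)$, que, tal como proposta, não cobre todos os laços. Primeiro, o argumento do Caso~3 do Teorema~\ref{prop:hiperbolicimpliesdil} só pode ser executado se as âncoras $\gamma(t\pm 5\delta)$ existirem, isto é, se $t\ge 5\delta$ e $|w|_S-t\ge 5\delta$; isso decorre de $R\ge 5\delta$, mas não de $R\ge\delta+1$. Na faixa $\delta+1\le R<5\delta$ nenhum dos seus dois ramos se aplica: o laço não está contido em $\overline{B}(1,\delta)$ e as âncoras podem não estar disponíveis. Segundo, mesmo quando as âncoras existem, a $\delta$-magreza dos triângulos $\Delta(1,\gamma(t),\gamma(t\pm 5\delta))$ só controla $u_\pm$ se esses pontos estiverem sobre um \emph{lado} dos triângulos, ou seja, se os arcos $w|_{[t-5\delta,t]}$ e $w|_{[t,t+5\delta]}$ forem geodésicos; seu esboço omite por completo o análogo do Caso~2 (arcos laterais não geodésicos), que é exatamente a situação em que essa hipótese falha. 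A correção é pivotar em $5\delta$: se $R<5\delta$ e $|w|_S>10\delta$, a subpalavra de comprimento $R+1\le 5\delta$ iniciada em $\gamma(0)$ tem extremos a distância $\le R$, produzindo um relator de comprimento $\le 2R+1\le 10\delta-1$ do qual ela é estritamente mais da metade; se $R\ge 5\delta$, ou algum arco lateral de comprimento $5\delta$ não é geodésico (e então ele próprio admite atalho de comprimento $\le 5\delta-1$, dando relator de comprimento $\le 10\delta-1$), ou ambos são geodésicos e aí sim o seu argumento com $u_\pm$ fornece $d(u_-,u_+)\le 2\delta$ e um relator de comprimento $\le 4\delta+2\le 10\delta$. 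Você sinalizou a disponibilidade dos índices como ``ponto delicado'', mas a divisão em $R\le\delta$ versus $R\ge\delta+1$, como está escrita, deixa uma faixa inteira de valores de $R$ sem argumento válido e ignora o caso não geodésico.
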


\begin{proof}
 Seja $G$ um  grupo hiperbólico  gerado por um conjunto finito $S$ e considere $\langle S\mid R\rangle$, onde $R
 = \{w \in F(S) \mid |w|_S \leq 10\delta, w = 1\}$. Para provar que esta é uma apresentação de Dehn, note que um argumento semelhante ao da prova do Teorema~\ref{prop:hiperbolicimpliesdil} mostra que toda palavra trivial em $G$ possui ao menos metade de um relator. 

 Reciprocamente, se o grupo possui apresentação de Dehn, podemos usar o algoritmo de Dehn para provar que $G$ satisfaz uma desigualdade isoperimétrica linear, e em seguida aplicamos o Teorema \ref{thm:dilimplieshiperbolic}.
 
 De fato, se $X= \cay(G,S)$ e $\gamma$ é um laço em $X$ de comprimento $n$, correspondendo a uma palavra $w=1$ em $G$, e tomamos $\rho$ como sendo o comprimento máximo de uma palavra $v \in R$, então podemos usar indução em $n$ e substituições como no algoritmo de Dehn para mostrar que 
$$\mbox{área}_\rho(\gamma) \leq \rho \ell(\gamma).$$
\end{proof}

Este algoritmo foi inicialmente usado por Dehn no início do século XX para resolver o problema da palavra em grupos fundamentais de superfícies de Riemann (veja \cite{dehn2012papers} para uma coleção dos trabalhos de Dehn). Mais tarde, Cannon estendeu em  \cite{cannon1984combinatorial}  os resultados de Dehn para grupos de variedades compactas de curvatura negativa. Os Teoremas \ref{thm:dilimplieshiperbolic} e \ref{thm:hipsseDehn} são de Gromov (1987). Também é possível mostrar que os problemas da conjugação e do isomorfismo são solucionáveis em grupos hiperbólicos (veja, por exemplo, \cite{BuckleyHolt2013}, \cite{Sela1995} e \cite{Dahmani2011}).

\section{Fronteiras ideias de grupos hiperbólicos}

 A \textit{fronteira ideal}\index{fronteira ideal} de um grupo $G$, denotada por $\partial_{\infty}G$, é definida como a fronteira ideal do espaço métrico $\cay(G,S)$, com respeito a algum conjunto $S$ de geradores de $G$. Para diferentes conjuntos de geradores, as fronteiras correspondentes são homeomorfas,  dado que  os grafos de Cayley são quasi-isométricos.

 \begin{definition}
Seja $G < \mathrm{Homeo}(Z)$ um grupo de homeomorfismos de um espaço topológico compacto Hausdorff $Z$. O grupo $G$ é dito \textit{grupo de convergência}\index{grupo de convergência} se ele age propriamente descontinuamente  em $\mathrm{Trip}(Z)$, onde $\mathrm{Trip}(Z)$ é o conjunto de  triplas de elementos distintos de $Z$, munido da topologia natural de subespaço de $Z^3$. O grupo de convergência  $G$ é dito \textit{uniforme} se $\mathrm{Trip}(Z)/G$ é
 compacto.
 \end{definition}

Para a próxima proposição, usaremos algumas definições e resultados auxiliares. Dizemos que um ponto $ p$ em um espaço métrico próprio $\delta$--hiperbólico $X$ é um  \textit{$R$-centroide} de um triângulo $T$ em $\overline{X}$ se $p$ estiver a uma distância menor ou igual a $R$
dos três lados $\tau_1, \tau_2, \tau_3$ de $T$. Além disso, diremos que $p$ é um \textit{
centroide}\index{centroide} de $T$ se
$d(p, \tau_i) \leq 5 \delta$, para $i = 1, 2, 3$.

Definimos então a correspondência
$$C : \mathrm{Trip}(\partial_{\infty}X) \to  X,$$
que mapeia cada tripla de pontos distintos em $\partial_{\infty}X$ no conjunto de triângulos
$T$ que eles geram, e em seguida no conjunto de centroides desses triângulos ideais.

\begin{exercise}
    Mostre que a distância entre quaisquer dois $R$-centroides de um triângulo $T$  em um espaço métrico próprio $X$ é de, no máximo, $4R + 32\delta$. Com isso, conclua que, para toda tripla de pontos $\xi \in \mathrm{Trip}(\partial_{\infty}X)$, vale que 
$$\mathrm{diam}(C(\xi)) \leq 60\delta.$$
\end{exercise}

\begin{proposition}[\cite{Tuk94}] Seja $G$ um grupo $\delta$-hiperbólico gerado por um conjunto $S$, tal que $\partial_{\infty}G$ consiste de pelo menos três pontos. Então $G$ age em $\partial_{\infty}G$ como grupo  de convergência uniforme.
\end{proposition}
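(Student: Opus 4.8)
The plan is to transport the geometric (isometric, properly discontinuous, cocompact) action of $G$ on $X := \cay(G,S)$ to the action on $Z := \partial_{\infty}G = \partial_{\infty}X$ by means of the centroid correspondence $C : \mathrm{Trip}(Z) \to X$. Recall that $X$ is proper, geodesic and $\delta$-hiperbólico, that $G$ acts geometrically on $X$, and that each $g\in G$, acting by isometries, induces a homeomorphism of $Z$, while $Z$ itself is compacto e Hausdorff. Since an isometry $g$ sends ideal triangles to ideal triangles and preserves the distance inequalities defining a centroid, $C$ is $G$-equivariant: $C(g\xi)=g\,C(\xi)$ for every $g\in G$ and every $\xi$. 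The hypothesis that $Z$ has at least three points guarantees, via the boundary trichotomy (Teorema~\ref{thm:fins grupos}), that $Z$ is infinite and that $\mathrm{Trip}(Z)\neq\emptyset$, so the correspondence is genuinely defined. The key quantitative input is a two-sided comparison: fixing a base point $x_0\in X$, for a triple $\xi=(\xi_1,\xi_2,\xi_3)$ with sides $\tau_k=\overline{\xi_i\xi_j}$ and a centroid $p\in C(\xi)$, combining $d(p,\tau_k)\le 5\delta$ with Exercise~\ref{exerc:gromovbordo}, namely $|d(x_0,\im\tau_k)-(\xi_i,\xi_j)_{x_0}|\le k$, gives $|\,d(x_0,p)-(\xi_i,\xi_j)_{x_0}\,|\le k+5\delta$ for each pair; by the visual estimate \eqref{anglep}, $(\xi_i,\xi_j)_{x_0}$ bounded above is equivalent to $\sphericalangle_{x_0}(\xi_i,\xi_j)$ bounded below. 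Thus the centroids of a family of triples stay in a bounded region of $X$ exactly when the boundary points of each triple are uniformly separated in the visual metric.

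For proper discontinuity I would reduce to $X$. Let $K\subset\mathrm{Trip}(Z)$ be compact; being bounded away from the diagonals of $Z^3$, its triples are uniformly $\varepsilon$-separated, so by the comparison there is $R=R(\varepsilon,\delta)$ with $C(\xi)\subset\overline{B}(x_0,R)$ for all $\xi\in K$. If $gK\cap K\neq\emptyset$, pick $\xi\in K$ with $g\xi\in K$ and a centroid $p\in C(\xi)$; then $gp\in gC(\xi)=C(g\xi)\subset\overline{B}(x_0,R)$ while $p\in\overline{B}(x_0,R)$, whence $g\overline{B}(x_0,R)\cap\overline{B}(x_0,R)\neq\emptyset$. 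As the $G$-action on $X$ is propriamente descontínua and $\overline{B}(x_0,R)$ is compact ($X$ is próprio), only finitely many $g$ qualify, so $\{g: gK\cap K\neq\emptyset\}$ is finite and $G$ is a grupo de convergência on $Z$.

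For uniformity I would exhibit a single compact $K_0\subset\mathrm{Trip}(Z)$ with $G\cdot K_0=\mathrm{Trip}(Z)$. Let $D$ be a cocompactness constant, so $G\cdot\overline{B}(x_0,D)=X$. Given $\xi\in\mathrm{Trip}(Z)$, choose a centroid $p\in C(\xi)$; by cocompactness there is $g\in G$ with $g^{-1}p\in\overline{B}(x_0,D)$, and by equivariance $g^{-1}p\in C(g^{-1}\xi)$. By the comparison, a triple with a centroid within $D$ of $x_0$ has its boundary points pairwise at visual distance at least some $\varepsilon_0=\varepsilon_0(D,\delta)>0$, so $g^{-1}\xi\in K_0:=\{\eta\in\mathrm{Trip}(Z): \sphericalangle_{x_0}(\eta_i,\eta_j)\ge\varepsilon_0\ \text{ for } i\neq j\}$. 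This $K_0$ is closed in $Z^3$ and bounded away from the diagonals, hence compact, and $G\cdot K_0=\mathrm{Trip}(Z)$; therefore $\mathrm{Trip}(Z)/G$ is compacto and the convergence action is uniforme.

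The main obstacle I anticipate is the geometric input underlying $C$: one must know that every ideal triangle in a proper $\delta$-hiperbólico space actually possesses a centroid, an approximate incenter within $5\delta$ of all three sides. Establishing this requires extending the thin-triangles and ``triângulos parecem trípodes'' analysis (Lema~\ref{lemma:triangulosparecemtripodes}) to triangles with ideal vertices, together with an Arzelà--Ascoli limiting argument to realize the connecting bi-infinite geodesics $\tau_k$ and their approximate incenter. A secondary point is to verify that the topology used on $Z$ to define $\mathrm{Trip}(Z)$ agrees with the visual metric, so that ``compacto'' genuinely translates into ``uniformly separated''; this is precisely the content of \eqref{anglep} and the fact that the visual metric induces the topology $\tau$ on $\partial X$.
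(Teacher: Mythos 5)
Your architecture for the convergence-group half is the same as the paper's: the centroid correspondence $C:\mathrm{Trip}(\partial_{\infty}G)\to \cay(G,S)$, its $G$-equivariance, boundedness of $C$ on compact sets of triples, and the reduction of proper discontinuity on $\mathrm{Trip}(\partial_{\infty}G)$ to proper discontinuity of $G\curvearrowright\cay(G,S)$ (the paper phrases this as a contradiction with a sequence $g_n$, you phrase it with a fixed ball $\overline{B}(x_0,R)$; these are the same argument). Where you genuinely go beyond the paper is the uniformity: the paper's proof stops after proper discontinuity and never verifies that $\mathrm{Trip}(\partial_{\infty}G)/G$ is compact, whereas you exhibit an explicit compact set $K_0$ with $G\cdot K_0=\mathrm{Trip}(\partial_{\infty}G)$ via cocompactness of the action on the Cayley graph. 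That part of your argument is correct, because it only uses the inequality $(\xi_i,\xi_j)_{x_0}\le d(x_0,p)+5\delta+k$, which does follow from the facts you cite.

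There is, however, a genuine error in your quantitative comparison. From $d(p,\im(\tau_k))\le 5\delta$ and $|d(x_0,\im(\tau_k))-(\xi_i,\xi_j)_{x_0}|\le k$ you may conclude $d(x_0,\im(\tau_k))\le d(x_0,p)+5\delta$, hence $(\xi_i,\xi_j)_{x_0}\le d(x_0,p)+5\delta+k$; but you may \emph{not} conclude $d(x_0,p)\le d(x_0,\im(\tau_k))+5\delta$, because the point of $\tau_k$ nearest to $x_0$ need not be anywhere near $p$. So the claimed two-sided bound $|d(x_0,p)-(\xi_i,\xi_j)_{x_0}|\le k+5\delta$ \emph{for each pair} is false. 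Concrete counterexample in a tree ($\delta=0$): take the ideal tripod on $\xi_1,\xi_2,\xi_3$ with center $c$ (which is the unique centroid) and put $x_0$ on the leg toward $\xi_1$ at distance $t$ from $c$; then $x_0$ lies on $\overline{\xi_1\xi_2}$, so $(\xi_1,\xi_2)_{x_0}=0$, while $d(x_0,c)=t$ is arbitrarily large. This invalidates, as written, exactly the direction you need for proper discontinuity: ``uniformly separated triples $\Rightarrow$ centroids in a bounded set''.

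The repair is the max-version of the estimate, $d(x_0,p)\le \max_{i\ne j}(\xi_i,\xi_j)_{x_0}+C(\delta)$, which is true and provable with the paper's tools: since $p$ is $5\delta$-close to all three sides, $(\xi_i,\xi_j)_p\le 5\delta+k$ for every pair; by the hyperbolic inequality based at $p$, at most one of the three quantities $(\xi_i,x_0)_p$ can exceed $7\delta+k$; for the remaining pair $\{i,j\}$, the identity $(\xi_i,p)_{x_0}=d(x_0,p)-(\xi_i,x_0)_p$ (exact for finite points, valid up to $2\delta$ for ideal ones) together with $(\xi_i,\xi_j)_{x_0}\ge\min\{(\xi_i,p)_{x_0},(p,\xi_j)_{x_0}\}-2\delta$ yields $(\xi_i,\xi_j)_{x_0}\ge d(x_0,p)-C(\delta)$. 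With this substitution your proof of proper discontinuity closes, modulo the existence of $5\delta$-centroids of ideal triangles, which you correctly flagged as the remaining geometric input and which the paper also leaves implicit.
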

\begin{proof}
 Dados três pontos distintos $\xi, \eta, \rho \in \partial_{\infty}G$, considere o triângulo geodésico $\Delta = \Delta(\xi,\eta, \rho) \subset \cay(G,S)$. O mapa $C$ definido acima é $G$-equivariante e, além disso, a imagem de qualquer compacto $K$ em $\mathrm{Trip}(\partial_{\infty}G)$ por $C$ é limitada.

Dado um subconjunto compacto $K \subset \mathrm{Trip}(Z)$, suponha que o conjunto 
$$G_K := \{ g\in G \mid gK \cap K \neq \emptyset \}$$ seja infinito. Logo, deve existir uma sequência $\xi_n\in K$ e uma sequência infinita $g_n\in G$, com $g_0 = 1 $ e $ g_n(\xi_n)\in K$ para todo $n \in \N$.
 O diâmetro do conjunto $$
E = \displaystyle\bigcup_n
C(g_n(\xi_n))  \subset \cay(G,S) $$
é limitado, e cada $g_n$ envia algum ponto $p_n \in E$ em um elemento de $E$. Isso contradiz o fato de que $G$ age propriamente descontinuamente em $\cay(G,S)$.

Com isso, concluímos que a ação de  $G$ em $\mathrm{Trip}(\partial_\infty G)$ também é propriamente discontínua.
\end{proof}

\begin{proposition}\label{prop:fronteiragrupohip}
Seja $G$ um grupo hiperbólico. 
\begin{itemize}
    \item[(i)] Se $G$ é finito, então $\partial_{\infty}G$ é vazia;
    \item[(ii)]  Se $\partial_{\infty}G$ tem precisamente dois elementos, então $G$ contém um subgrupo cíclico infinito de índice finito;
    \item[(iii)] Em qualquer outro caso, $\partial_{\infty}G$  é infinito não-enumerável.
\end{itemize}

Nos casos (i) e (ii), diremos que o grupo é \textit{elementar}.\index{grupo elementar}
\end{proposition}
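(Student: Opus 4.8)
\textit{Plano de demonstração.} O item (i) é imediato: se $G$ é finito, seu grafo de Cayley $X=\cay(G,S)$ tem diâmetro finito, logo não contém raios geodésicos próprios e $\partial_\infty G=\emptyset$ (cf.\ o exercício segundo o qual a fronteira de um espaço geodésico próprio de diâmetro finito é vazia). Reciprocamente, se $G$ é infinito, o Exercício~\ref{exerc:geodesicabiinfinita} fornece uma geodésica bi-infinita em $X$, cujos dois extremos são pontos distintos de $\partial_\infty G$; assim o caso $\card(\partial_\infty G)=1$ nunca ocorre, e a tricotomia $\card(\partial_\infty G)\in\{0,2,\geq 3\}$ dos itens (i)--(iii) é exaustiva.

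Para (ii), suponha $\partial_\infty G=\{\xi_+,\xi_-\}$ e fixe uma geodésica bi-infinita $\ell$ ligando $\xi_+$ a $\xi_-$ (que existe pelo lema que conecta dois pontos distintos do bordo por uma geodésica completa). O passo central é mostrar que $X\subset \mathcal{N}_R(\ell)$ para algum $R<\infty$, e aqui uso a cocompacidade da ação. Se existissem $x_n\in X$ com $d(x_n,\ell)\to\infty$, por cocompacidade haveria $g_n\in G$ com $g_nx_n$ num compacto fixo $K$. Como $G$ age em $\partial_\infty G$ por homeomorfismos (as translações à esquerda são isometrias de $X$) e só há dois pontos no bordo, cada $g_n$ os permuta; logo $g_n\ell$ ainda liga $\xi_+$ a $\xi_-$ e, pela propriedade de bígonos (Lema~\ref{lemma:bigons}), está a distância de Hausdorff $\leq\delta$ de $\ell$. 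Daí $d(x_n,\ell)=d(g_nx_n,g_n\ell)\leq \sup_{y\in K}d(y,\ell)+\delta$, uma cota finita, contradizendo $d(x_n,\ell)\to\infty$. Obtida a estimativa, a inclusão $\ell\hookrightarrow X$ é um mergulho isométrico de imagem $R$-densa, portanto uma quasi-isometria (Exercício~\ref{imagemrdensa}); assim $G$ é quasi-isométrico a $\ell\cong\R$. Como $\R$ tem dois fins, a invariância do número de fins (Lema~\ref{lemafins}) dá $\card(e(G))=2$, e o Teorema~\ref{thm:fins grupos}(3) conclui que $G$ é virtualmente cíclico infinito, isto é, contém um subgrupo cíclico infinito de índice finito.

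Para (iii), resta mostrar que $\card(\partial_\infty G)\geq 3$ implica $\partial_\infty G$ não-enumerável. Pelo lema que afirma a compacidade de $\partial X$ e pela proposição que estabelece que $\Bar{X}$ é Hausdorff, $\partial_\infty G$ é compacto, Hausdorff e não vazio; basta então verificar que é \emph{perfeito} (sem pontos isolados), pois todo espaço compacto Hausdorff perfeito não vazio é não-enumerável. A estratégia é exibir dinâmica norte--sul: a partir de dois pontos distintos do bordo e da geodésica $\ell$ entre eles, a cocompacidade permite construir --- pela mesma técnica da construção do elemento de ordem infinita no Teorema~\ref{thm:fins grupos}(3) --- um elemento $g\in G$ de ordem infinita cujo eixo é $\ell$ a menos de distância limitada. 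Tal $g$ fixa $\xi_\pm$ e age com dinâmica norte--sul em $\partial_\infty G$: $g^n\zeta\to\xi_+$ e $g^{-n}\zeta\to\xi_-$ para todo $\zeta$ fora do ponto repulsor correspondente, onde a convergência no bordo é controlada pelo produto de Gromov via \eqref{anglep}, i.e.\ $\zeta_n\to\xi$ se, e somente se, $(\zeta_n,\xi)_p\to\infty$. Tomando um terceiro ponto $\zeta_0$ (que existe pois $\card\geq 3$), a órbita $\{g^n\zeta_0\}$ acumula em $\xi_+$ e $\xi_-$, mostrando que estes não são isolados. Por fim, como $G$ age, pela proposição precedente, como grupo de convergência uniforme sobre $\partial_\infty G$, a ação é minimal; logo o conjunto dos pontos fixos atratores de elementos de ordem infinita é denso, e todo ponto do bordo é limite de pontos não isolados, donde $\partial_\infty G$ é perfeito.

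O principal obstáculo está em (iii): produzir, a partir de "pontos cada vez mais à frente ao longo de $\ell$" dados pela cocompacidade, um genuíno elemento hiperbólico de eixo $\ell$, e estabelecer a minimalidade da ação de $G$ sobre $\partial_\infty G$. Ambos os fatos podem ser extraídos da propriedade de grupo de convergência uniforme da proposição anterior, ou reprovados diretamente com as estimativas de produto de Gromov desta seção.
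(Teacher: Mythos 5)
Seus itens (i) e (ii) estão corretos. O item (i) coincide com a prova do livro. Já em (ii) você toma uma rota genuinamente diferente: o livro divide a prova em dois casos, conforme $d_{Haus}(\gamma, X)$ seja finita ou infinita, e no segundo caso usa um argumento de limite (Arzela--Ascoli) para produzir um terceiro ponto no bordo e obter contradição; você, em vez disso, prova diretamente que $X\subset\mathcal{N}_R(\ell)$, combinando a cocompacidade com o fato de que cada $g\in G$ permuta os dois pontos do bordo, de modo que $g\ell$ e $\ell$ são geodésicas bi-infinitas com os mesmos extremos ideais e, portanto, uniformemente próximas. Esse caminho funciona e é talvez mais transparente, com uma ressalva: o Lema~\ref{lemma:bigons} trata de bígonos com extremos em $X$; o fato de que duas geodésicas bi-infinitas com os mesmos extremos ideais estão a distância de Hausdorff limitada por uma constante dependendo só de $\delta$ é verdadeiro, mas exige um argumento à parte (por exemplo, via Lema~\ref{lemma:raiosassintóticospróximos} e Arzela--Ascoli), e a constante não é $\delta$ em si.

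O item (iii) é onde há uma lacuna genuína. O livro segue um caminho completamente diferente e curto: pelo Teorema~\ref{thm:finsfronteira} existe uma sobrejeção contínua e $G$-equivariante $f:\partial_{\infty}G\to e(G)$ com fibras conexas; se $\partial_{\infty}G$ fosse finito, sendo Hausdorff suas partes conexas seriam pontos, logo $f$ seria bijeção e $\card(\partial_{\infty}G)=\card(e(G))\in\{0,1,2\}$ pelo Teorema~\ref{thm:fins grupos}, excluindo cardinalidades finitas $\geq 3$ (o mesmo argumento, junto com a observação de que um espaço compacto Hausdorff conexo e enumerável é um ponto, exclui também o caso enumerável). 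Sua estratégia --- compacto Hausdorff perfeito é não-enumerável --- é viável em princípio, mas os dois pilares em que ela se apoia não estão estabelecidos. Primeiro, a existência de um elemento de ordem infinita: a alegação de que ``a mesma técnica'' do Teorema~\ref{thm:fins grupos}(3) o produz (e ainda com eixo a distância limitada de $\ell$) não se sustenta, pois aquela construção depende de uma bola que separa dois fins, algo que não existe quando $G$ tem um único fim; no livro, a existência de elementos de ordem infinita em grupos hiperbólicos infinitos aparece apenas como o Exercício~\ref{exer:elemordeminfinita}, um fato não trivial que você não pode dar por provado. Segundo, a minimalidade da ação de $G$ em $\partial_{\infty}G$: deduzi-la da propriedade de grupo de convergência uniforme é, em si, um teorema da teoria de grupos de convergência que o livro não demonstra (a prova da proposição anterior estabelece apenas a descontinuidade própria na tripla, não a uniformidade), e a dinâmica norte--sul que você invoca é exatamente o conteúdo da Proposição~\ref{prop:UV}, provada somente em seção posterior. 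Como você mesmo reconhece no parágrafo final, esses são os obstáculos centrais; sem preenchê-los, seu item (iii) é um plano de prova, não uma prova.
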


\begin{proof}
Seja $X$ um grafo de Cayley de $G$, com respeito a algum conjunto de geradores. Se $G$ for  finito, então $X$ é limitado, e portanto $\partial_{\infty}G = \emptyset$, provando (i). 

Assuma agora que $G$ é infinito. Pelo Exercício \ref{exerc:geodesicabiinfinita}, $X$ contém
uma geodésica completa bi-infinita $\gamma$, e assim $\partial_{\infty}G$ possui pelo menos dois pontos distintos, os pontos-limite de $\gamma$. 


Vamos provar (ii) em dois casos. Suponhamos que $\partial_{\infty}G$ possui exatamente dois elementos. Se $d_{Haus}(\gamma
,X) < \infty$, então $X$ é quasi-isométrico a $\R$ e portanto $e(G)$ possui dois elementos. Assim, $G$ é virtualmente cíclico, pela parte 3 do Teorema \ref{thm:fins grupos}.

Caso $d_{Haus}(\gamma
,X) = \infty$, deve existir uma sequência de vértices $x_n \in X$ satisfazendo $\displaystyle \lim_{n\to\infty} d(x_n,\gamma) =\infty$. Para cada $n \in \N$, seja $y_n\in \gamma$ o vértice de $\gamma$ mais próximo de $x_n$ e $g_n \in G$ um elemento tal que $g_n(y_n) = e \in G$. Aplicando $g_n$ à união de geodésicas $\overline{x_ny_n} \cup \gamma$ e tomando o limite com $n\to \infty$, obtemos uma  geodésica completa $\beta \subset X $ (o limite
de uma subsequência $g_n(\gamma)$) e um raio  geodésico $\rho $ intersectando $\gamma$ em $e$, tais que para todo
$x \in \rho$, $e$ é um ponto mais próximo de $x$ em $\gamma$. Portanto, $\rho(\infty)$ é um ponto diferente de $\gamma(\pm \infty)$, e $\partial_{\infty}G$ contém pelo menos três pontos distintos, uma contradição.

A prova de (iii) segue do Teorema \ref{thm:finsfronteira} a seguir. De fato, se $\partial_{\infty}G$ for  finito, então por esse resultado, obtemos $\partial_{\infty}G = e(G)$. Mas sabemos que o espaço dos fins de um grupo é ou um conjunto finito com 0,1 ou 2 pontos, ou possui um contínuo de pontos, o que conclui a demonstração.
\end{proof}

\begin{thm}\label{thm:finsfronteira}
Seja $G$ um grupo hiperbólico. Então existe uma aplicação $f :\partial_{\infty}G \to e(G)$ contínua, $G$-equivariante, sobrejetiva e tal que  as fibras $f^{-1}(\xi)$, com $\xi \in e(G)$, são conexas.
\end{thm}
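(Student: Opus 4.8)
O plano é tomar a aplicação geométrica óbvia: dado $\xi \in \partial_{\infty}G$, represento-o por um raio geodésico $\rho$ partindo da identidade $1$ em $X=\cay(G,S)$, com $\rho(\infty)=\xi$; como $X$ é próprio e geodésico, $\rho$ é um raio próprio e portanto determina um fim $e(\rho)\in e(G)$, e defino $f(\xi):=e(\rho)$. Para ver que $f$ está bem-definida, uso o Lema~\ref{lemma:raiosassintóticospróximos}: dois raios assintóticos partindo de $1$ permanecem $2\delta$-próximos, logo, para cada compacto $K$, eles estão a partir de certo instante na mesma componente conexa por caminhos de $K^c$, determinando o mesmo fim; a independência do raio escolhido segue do Lema~\ref{lemma:raiospontobase}. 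A $G$-equivariância é imediata: para $g\in G$, a isometria $h\mapsto gh$ leva $\rho$ em $g\rho$, com $(g\rho)(\infty)=g\xi$, e como toda isometria é uma quasi-isometria, ela induz sobre $e(G)$ a ação de $G$, de modo que $f(g\xi)=g\,f(\xi)$.

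Em seguida trataria continuidade e sobrejetividade, cuja observação-chave é que, ao longo de um raio ou segmento geodésico partindo de $1$ e parametrizado por comprimento de arco, a distância a $1$ é estritamente crescente; assim, uma vez que tal geodésica deixa a bola $\overline{B}(1,R)$, ela permanece para sempre numa única componente de $\overline{B}(1,R)^c$. Para a sobrejetividade, dado um fim representado por uma cadeia decrescente de componentes ilimitadas $C_1\supset C_2\supset\cdots$ (com $C_m$ componente de $\overline{B}(1,m)^c$), escolho $x_m\in C_m$ com $d(1,x_m)\to\infty$ e aplico o teorema de Arzela--Ascoli aos segmentos $[1,x_m]$, obtendo no limite um raio $\rho$; pela monotonicidade acima, a cauda de $\rho$ além do raio $R$ está em $C_R$ para todo $R$, logo $f(\rho(\infty))$ é o fim dado. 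Para a continuidade, se $\xi_n\to\xi$ então, a menos de subsequência, raios associados convergem uniformemente em compactos para um raio de $\xi$, de modo que eles coincidem até raios cada vez maiores e entram na mesma componente de $\overline{B}(1,R)^c$; isto é exatamente a condição de vizinhança básica em $e(G)$, visto como limite inverso dos $\pi_0^u(\overline{B}(1,R)^c)$.

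A parte difícil é a conexidade das fibras, e eu a atacaria em duas etapas. Primeiro, como $e(G)$ é totalmente desconexo (Teorema~\ref{thm:fins grupos}) e $f$ é contínua, a imagem por $f$ de qualquer componente conexa de $\partial_{\infty}G$ é um único ponto; logo cada componente está contida numa única fibra, e toda fibra é uma união de componentes conexas. Resta então mostrar que dois pontos $\xi,\eta$ de uma mesma fibra $F=f^{-1}(\omega)$ jazem na mesma componente conexa de $\partial_{\infty}G$; como este espaço é compacto e metrizável (via métrica visual~\eqref{anglep}), componentes e quasicomponentes coincidem, e basta produzir, para todo $\varepsilon>0$, uma $\varepsilon$-cadeia em $\partial_{\infty}G$ ligando $\xi$ a $\eta$. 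Construiria tal cadeia assim: sendo $f(\xi)=f(\eta)=\omega$, para cada nível $L$ os raios $\rho_\xi,\rho_\eta$ têm cauda na mesma componente $C_L$ de $\overline{B}(1,L)^c$; tomo pontos profundos $\rho_\xi(T),\rho_\eta(T)\in C_L$ e um caminho $\gamma\subset C_L$ ligando-os (possível pois $C_L$ é conexa por caminhos), subdividindo-o em $z_0,\dots,z_N$ com $d(z_i,z_{i+1})\le 1$ e $d(1,z_i)\ge L$. A cada $z_i$ associo o extremo $\zeta_i$ de um raio geodésico que passa por $z_i$, com $\zeta_0=\xi$ e $\zeta_N=\eta$; como esse raio, além do raio $L$, permanece ligado a $z_i\in C_L$ dentro de $\overline{B}(1,L)^c$, sua cauda está em $C_L$. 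Pelo Lema~\ref{lemma:companheirosviagem}, raios que passam por $z_i$ e $z_{i+1}$ permanecem uniformemente próximos até raio $\approx L$, donde $(\zeta_i,\zeta_{i+1})_1\ge L-O(\delta)$ e, por~\eqref{anglep}, $d_{\mathrm{vis}}(\zeta_i,\zeta_{i+1})\le C_0\,e^{-(L-O(\delta))}$; escolhendo $L$ com $C_0\,e^{-(L-O(\delta))}<\varepsilon$ obtenho a $\varepsilon$-cadeia desejada.

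O principal obstáculo técnico reside justamente nesta última construção: garantir a existência de um raio geodésico passando por cada $z_i$ (que obteria estendendo $[1,z_i]$ a um raio, via properidade e um argumento à König, ou substituindo $z_i$ por um ponto $O(\delta)$-próximo em $C_L$) e, sobretudo, controlar o produto de Gromov $(\zeta_i,\zeta_{i+1})_1$ em termos de $L$ por meio das estimativas de triângulos finos --- pontos em que a hiperbolicidade e a métrica visual entram de maneira essencial. O restante (boa definição, equivariância, continuidade e sobrejetividade) é rotineiro a partir dos lemas já estabelecidos, e a conclusão final (fibra $=$ componente conexa) segue combinando a inclusão ``componente $\subseteq$ fibra'' com a igualdade de quasicomponentes recém-obtida.
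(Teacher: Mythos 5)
A primeira parte da sua proposta (definição de $f$ via raios geodésicos, boa definição pelos Lemas~\ref{lemma:raiospontobase} e \ref{lemma:raiosassintóticospróximos}, equivariância, continuidade e sobrejetividade via Arzela--Ascoli e a monotonicidade $d(1,\rho(t))=t$) está correta e coincide essencialmente com o texto, que aliás deixa vários desses pontos como exercício. Na conexidade das fibras, porém, você toma um caminho genuinamente diferente: o texto argumenta por contradição, supondo uma partição da fibra em fechados $F_1$, $F_2$, tomando abertos disjuntos $U_j\supset F_j$, considerando as ``sombras'' $\hat{U}_j$ (uniões dos raios partindo de $w$ e terminando em $U_j$) e produzindo, como limite de segmentos $[w,x_N]$ com $x_N\notin \hat{U}_1\cup\hat{U}_2$, um raio que define o mesmo fim mas cujo ponto ideal não está em $F_1\cup F_2$. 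Você, em vez disso, usa que $e(G)$ é totalmente desconexo para obter ``componente $\subseteq$ fibra'' e tenta a recíproca via $\varepsilon$-cadeias na métrica visual, usando que em compactos metrizáveis componentes e quasicomponentes coincidem. A estratégia é viável e, completada, daria uma prova mais quantitativa que a do texto.

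O problema é que o ``principal obstáculo técnico'' que você mesmo aponta é um gap genuíno, e as saídas que você sugere não o resolvem. Você precisa, para cada $z_i$ no caminho dentro de $C_L$, de um ponto $\zeta_i\in\partial_{\infty}G$ cujo raio a partir de $1$ passe a uma distância \emph{uniformemente limitada} $D$ de $z_i$; com isso a desigualdade triangular dá diretamente $(\zeta_i,\zeta_{i+1})_1\geq L-2D-O(1)$ (nem é preciso o Lema~\ref{lemma:companheirosviagem}). Mas: (i) estender $[1,z_i]$ a um raio geodésico é impossível em geral --- segmentos geodésicos não se estendem a raios (``becos sem saída''), e um argumento à König só funcionaria se $z_i$ já estivesse sobre geodésicas de $1$ a pontos arbitrariamente distantes, que é exatamente o que se quer provar; (ii) não há razão a priori para existir um ponto $O(\delta)$-próximo de $z_i$ sobre um raio partindo de $1$. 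De fato a propriedade \emph{falha} para espaços geodésicos próprios $\delta$-hiperbólicos em geral: numa árvore formada por uma reta com ramos finitos arbitrariamente longos pendurados, todo raio geodésico partindo de um ponto-base da reta permanece na reta (entrar num ramo finito obrigaria a retroceder), logo as pontas dos ramos ficam arbitrariamente longe de todos os raios. Ou seja, sua cadeia não pode ser construída usando apenas hiperbolicidade e properness: é indispensável usar a estrutura de grupo, que sua prova, tal como escrita, não usa nesse passo.

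Com a homogeneidade o gap é consertável, mas requer um argumento adicional que está faltando: como $G$ é infinito e hiperbólico, $\cay(G,S)$ contém uma geodésica bi-infinita $\gamma$ (Exercício~\ref{exerc:geodesicabiinfinita}); para cada vértice $g$, o transladado $g\gamma$ é uma geodésica bi-infinita com $d(g,g\gamma)=d(1,\gamma)=:D_0$; e a finura (na versão para vértices ideais, com constante $O(\delta)$) do triângulo de vértices $1$, $g\gamma(+\infty)$, $g\gamma(-\infty)$ --- cujos lados são dois raios partindo de $1$ e a própria $g\gamma$ --- coloca $g$ a distância $\leq D_0+O(\delta)$ de um raio partindo de $1$. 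Inserindo essa constante $D=D_0+O(\delta)$ na sua estimativa do produto de Gromov, a $\varepsilon$-cadeia fecha e sua prova fica completa. Sem algo dessa natureza, o passo central da sua demonstração de conexidade permanece em aberto.
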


\begin{proof}
Seja $\gamma : \R_{+} \to G$ um raio geodésico. Como esse raio foge de todo compacto de $G$, ele define um fim $\xi_{\gamma}$ de $G$. Deixamos como exercício verificar que esse fim depende apenas do ponto $\gamma(\infty)$ do bordo de $G$ definido por $\gamma$ e que a aplicação $f :\partial_{\infty}G \to e(G)$ definida por $f(\gamma(\infty)) = \xi_{\gamma}$ é contínua. Seja $(x_n)_{n\geq 1}$ uma sequência de pontos de $G$ definindo um fim $\xi$ de $G$. Se $a$ é um limite de uma subsequência de $(x_n)$ em $\partial_{\infty}G$, verificamos que $f(a) = \xi$. Assim a aplicação $f$ é sobrejetiva. Como $e(G)$ é totalmente desconexo, as componentes conexas de $\partial_{\infty}G$
estão contidas nas  fibras de $f$.

Resta verificar que as fibras de $f$ são conexas. Suponhamos
que existe uma partição de uma fibra $f^{-1} (\xi)$ em dois fechados não vazios
$F_1$ e $F_2$. Escolhemos dois abertos disjuntos $U_1$ e $U_2$ de $\partial_{\infty}G$ contendo respectivamente $F_1$ e $F_2$. Fixamos um ponto base $w \in G$,
e denotamos por $\hat{U}_j$ a união dos raios partindo de $w$ e terminando em algum ponto de $U_j$ , $j = 1,2$. Se $B_R$ denota a bola de centro $w$ e  raio $R$, onde $R$ é suficientemente grande, verificamos que $\hat{U}_1 \backslash B_R$ e $\hat{U}_2 \backslash B_R$ são
disjuntos e tem distância estritamente positiva um do outro. Seja $\gamma_1$
(respectivamente, $\gamma_2$) um raio saindo de $w$ e terminando em um ponto de $F_1$ (resp., $F_2$). Por hipótese, as sequências $(\gamma_1(n))_{n\geq 1}$ e $(\gamma_2(n))_{n\geq 1}$
definem o mesmo fim. Para todo $N\geq 1$, existem então um inteiro $n_N \geq 1$ e um caminho $\gamma_N : [0,1]\to G$ conectando $\gamma_1(n_N) \in \hat{U}_1$ e
$\gamma_2(n_N) \in \hat{U}_2$  que evita a bola $B_N$. 
 Como $\hat{U}_1 \backslash B_N$ e $\hat{U}_2 \backslash B_N$ tem distância estritamente positiva para $N$ suficiente grande, existe $t_N \in [0,1]$ tal que o ponto $x_N = \gamma_N(t_N)$ não pertence nem a $\hat{U}_1$ nem a $\hat{U}_2$. 
 
 Escolhemos então um segmento geodésico $[w, x_N]$ de $w$ a $x_N$. Como os comprimentos desses segmentos tendem a infinito, deve existir uma subsequência de  $([w,x_N])_{N\geq 1 }$ que
converge para um raio $\gamma_0$ partindo de $w$. É claro que esse raio $\gamma_0$ define o mesmo fim $\xi$ que $\gamma_1$ e $\gamma_2$, mas ele não pode estar contido nem em $F_1$ nem em $F_2$. Assim, obtemos um absurdo, donde resulta que $f^{-1}(\xi)$ é conexa.

\end{proof}

\section{Alternativa de Tits para grupos hiperbólicos}\label{sec8-TitsAlt}

Um resultado fundamental na teoria dos grupos provado por J.~Tits em 1972 é o seguinte teorema.


\begin{thm}[\cite{Tits72}]
 Seja $G$ um grupo linear finitamente gerado. Então ou $G$ é virtualmente solúvel ou $G$ contém um subgrupo isomorfo a um grupo livre não abeliano.
\end{thm}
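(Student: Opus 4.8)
O plano é, supondo que $G$ não seja virtualmente solúvel, exibir dois elementos de $G$ que joguem pingue-pongue (Lema~\ref{pingponglemma}) sobre um espaço projetivo adequado, produzindo assim um subgrupo livre de posto $2$; a ferramenta geométrica será exatamente o lema do pingue-pongue, e toda a dificuldade estará em fabricar uma ação com boa dinâmica na qual aplicá-lo. Primeiro, reduziria o problema a um contexto algébrico mais rígido. Como $G$ é finitamente gerado, posso substituir o corpo base $K$ por um subcorpo finitamente gerado sobre seu corpo primo. Considerando o fecho de Zariski $\overline{G}\leq \mathrm{GL}(n,\overline{K})$ e sua componente identidade $\overline{G}^{\circ}$, que tem índice finito, a hipótese de que $G$ não é virtualmente solúvel equivale a $\overline{G}^{\circ}$ não ser solúvel. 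Quocientando pelo radical solúvel, posso supor que a imagem de $G$ é Zariski-densa em um grupo algébrico semisimples conexo não trivial, e trabalhar com uma representação linear irredutível $\rho$ conveniente desse grupo.

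Segundo, o passo aritmético central: construir um corpo local $k$, completamento de $K$ com respeito a algum valor absoluto, e um elemento $\gamma\in G$ que seja \emph{proximal} para $\rho$ sobre $k$, isto é, cuja imagem $\rho(\gamma)$ admita um único autovalor de módulo máximo, com multiplicidade $1$. Isto exige encontrar um valor absoluto de $K$ para o qual algum elemento de $G$ tenha autovalores de módulos distintos; a existência de tal valor absoluto segue da teoria de pesos em grupos semisimples (escolhendo $\rho$ de maior peso adequado) combinada com o fato de que, se todos os autovalores de todos os elementos tivessem sempre o mesmo módulo em todos os valores absolutos, o grupo seria de tipo limitado, forçando solubilidade virtual.

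Terceiro, a dinâmica norte--sul e o pingue-pongue. Um elemento proximal $\gamma$ age em $\mathbb{P}(V)$ com um ponto fixo atrator $x^{+}$ e um hiperplano repulsor $H^{-}$, de modo que potências altas $\gamma^{N}$ levam todo compacto disjunto de $H^{-}$ para dentro de uma vizinhança arbitrariamente pequena de $x^{+}$. Usando a densidade de Zariski, encontraria um segundo elemento proximal (por exemplo, um conjugado $g\gamma g^{-1}$) cujos dados atrator e repulsor estejam em posição geral relativamente aos de $\gamma$. Tomando $A$ e $B$ como vizinhanças disjuntas adequadas dos pontos atratores e substituindo os dois elementos por potências suficientemente altas, verificam-se as hipóteses do Lema~\ref{pingponglemma}; como toda palavra reduzida não trivial nesses dois elementos tem imagem não trivial por $\rho$, eles geram de fato um subgrupo livre de $G$.

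A principal dificuldade é o segundo passo: garantir a existência do elemento proximal sobre um corpo local conveniente. Este é o coração da demonstração de Tits e concentra toda a maquinaria de grupos algébricos --- teoria de pesos, representações de maior peso e lugares de corpos finitamente gerados. Em comparação, os passos de redução e de aplicação do pingue-pongue são rotineiros.
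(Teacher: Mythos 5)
Primeiro, um esclarecimento: o livro \emph{não} demonstra este teorema --- ele é citado diretamente de \cite{Tits72}, e o texto apenas observa que as provas de resultados desse tipo usam o Lema~\ref{pingponglemma} e suas variações (a Seção~\ref{sec8-TitsAlt} prova a alternativa de Tits para grupos \emph{hiperbólicos}, que é outro enunciado). Portanto não há, no texto, prova com a qual comparar a sua proposta; o que você escreveu é um esboço da estratégia original de Tits (fecho de Zariski, redução ao caso semisimples, elementos proximais sobre corpos locais, pingue-pongue em espaço projetivo), e essa estratégia está corretamente delineada.

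Dito isso, como demonstração o seu texto tem lacunas genuínas. A mais séria é a que você mesmo aponta: o segundo passo --- a existência de um valor absoluto de $K$ e de um elemento proximal de $G$ na representação escolhida --- é apenas enunciado, e o argumento oferecido (``se todos os autovalores tivessem o mesmo módulo em todos os valores absolutos, o grupo seria de tipo limitado, forçando solubilidade virtual'') é exatamente o conteúdo técnico central do artigo de Tits, que ocupa a maior parte da prova original: é preciso mostrar que um grupo que não é virtualmente solúvel possui um elemento com autovalor que não é raiz da unidade, e em seguida construir um lugar (arquimediano ou não) do corpo finitamente gerado no qual esse autovalor tem módulo diferente de $1$; nada disso é rotineiro, e sem esse passo não há prova. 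Há ainda uma lacuna técnica no terceiro passo: o Lema~\ref{pingponglemma}, tal como enunciado no texto, exige $g^{n}(A)\subset B$ para todo $n\in\Z\setminus\{0\}$, isto é, controle também das potências \emph{negativas}; para isso não basta que $\gamma$ seja proximal --- é preciso que $\gamma^{-1}$ também o seja (biproximalidade), o que em geral requer trocar de representação (potências exteriores ou a dual) e é um ponto tratado explicitamente por Tits. Por fim, na redução inicial convém registrar por que um subgrupo livre na imagem semisimples se levanta a $G$: toda sobrejeção de grupos sobre um grupo livre cinde, logo a pré-imagem contém de fato uma cópia de $F_2$; sem essa observação, a passagem ao quociente pelo radical solúvel fica incompleta.
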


Relembramos que $G$ é {\it solúvel} se existe cadeia finita de subgrupos $\{G_i\}_{i=0}^n$ de $G$ tal que $1=G_0 \unlhd G_1 \unlhd G_2 \unlhd \ldots\unlhd G_n=G$ e $G_{i+1}/G_{i}$ é abeliano. O grupo $G$ é chamado {\it virtualmente solúvel} se ele possui um subgrupo solúvel de índice finito.

Dizemos que uma classe de grupos satisfaz alguma \textit{Alternativa de Tits}\index{Alternativa de Tits} se cada um de seus elementos satisfizer uma dicotomia do tipo ``ou ele contém um grupo livre não abeliano ou ele possui subgrupo solúvel de índice finito''. Conjectura-se que a Alternativa de Tits é comum entre grupos ``de curvatura não positiva''. No entanto, esse fato está demonstrado para algumas poucas classes 
particulares de grupos. Algumas dessas classes são: grupos Gromov-hiperbólicos \cite{gromov1987hyperbolic},
grupos fundamentais de algumas variedades compactas de curvatura não positiva \cite{ballmann-lectures} e grupos cocompactos agindo isometricamente e propriamente descontinuamente em complexos bidimensionais $\CAT(0)$ \cite{ballmann1999groups}. 
Esta seção é dedicada à demonstração da Alternativa de Tits para o caso de grupos hiperbólicos.

Observamos que, se $G$ age em $X$ por isometrias, essa ação induz uma ação no conjunto das sequências em $X$,
$$g((x_n)) := (g(x_n)).$$
Sendo a ação de $G$ em $X$ isométrica, sempre que $x_n \to \xi \in \partial X$, temos que $(g(x_n))$ converge em $\partial X$ e, se $(x_n), (y_n)$ são sequências com $(x_n,y_n)_p \to \infty$, temos $(g(x_n),g(y_n))_p \to \infty$. 

Assim, podemos definir uma ação isométrica no bordo por $g(\xi) = \displaystyle\lim_{n\to \infty}g(x_n) \in \partial X$, para $x_n \to \xi \in \partial X$.
Nesta seção consideraremos a ação de um grupo hiperbólico $G$ em $X=\cay(G,S)$ com fronteira $\partial X = \partial G$.

\begin{proposition}\label{prop:UV}
Sejam $U,V$ vizinhanças de $a^{+}, a^{-}\in \partial G$, respectivamente, os quais são fixos por um elemento $g \in G$ de ordem infinita.
Então existe $N \in \N$ tal que, para todo $m \geq N$, temos $g^{m}(\partial G \setminus V) \subset U$
e $g^{-m}(\partial G \setminus U) \subset V$.
\end{proposition}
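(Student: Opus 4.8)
The plan is to recognize the statement as the \emph{north--south dynamics} of the loxodromic element $g$ on $\partial G$ and to reduce it to a single convergence assertion. First I would note that the two conclusions are interchanged by replacing $g$ with $g^{-1}$ (which swaps the fixed points $a^{+}\leftrightarrow a^{-}$ and the neighborhoods $U\leftrightarrow V$), so it suffices to prove $g^{m}(\partial G\setminus V)\subset U$ for all large $m$. Since the ideal boundary $\partial G$ is compact and $V$ is open, $K:=\partial G\setminus V$ is compact, and because $V$ is a neighborhood of $a^{-}$ we have $a^{-}\notin K$. As $g$ has infinite order in the hyperbolic group $G$, the map $n\mapsto g^{n}p$ (with base point $p=e$) is a quasi-geodesic, so by the Lema de Morse (Teorema~\ref{thm:Morse}) it fellow-travels a bi-infinite geodesic whose endpoints are the two fixed points of $g$; relabelling if necessary we may assume $g^{m}p\to a^{+}$ and $g^{-m}p\to a^{-}$ in $\overline{G}=G\cup\partial G$.

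The whole statement then follows from the claim: for every sequence $\xi_j\in K$ and every $m_j\to\infty$ one has $g^{m_j}\xi_j\to a^{+}$. Indeed, if $g^{m}K\not\subset U$ for arbitrarily large $m$, I could choose $m_j\to\infty$ and $\xi_j\in K$ with $g^{m_j}\xi_j\in\partial G\setminus U$; passing to subsequences, compactness of $K$ gives $\xi_j\to\xi_\infty\in K$ (hence $\xi_\infty\neq a^{-}$), while the claim forces $g^{m_j}\xi_j\to a^{+}\in U$, contradicting membership in the closed set $\partial G\setminus U$. To prove the claim I would use the characterization of convergence $\eta_n\to\eta\iff(\eta_n,\eta)_p\to\infty$ (Observação~\ref{rem:gromovprodbordo}) together with the estimate $(\zeta,\zeta')_p\ge d(p,\operatorname{im}\gamma)-k$ valid for any geodesic $\gamma$ joining $\zeta$ to $\zeta'$ (Exercício~\ref{exerc:gromovbordo}). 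For $\xi_j\neq a^{+}$ fix a bi-infinite geodesic $[a^{+},\xi_j]$; since $g$ acts by isometries fixing $a^{+}$, the image $g^{m_j}[a^{+},\xi_j]$ is a geodesic from $a^{+}$ to $g^{m_j}\xi_j$, and therefore
\[
(g^{m_j}\xi_j,a^{+})_p\ \ge\ d\bigl(p,\,g^{m_j}[a^{+},\xi_j]\bigr)-k\ =\ d\bigl(g^{-m_j}p,\,[a^{+},\xi_j]\bigr)-k .
\]
Thus it is enough to show $d\bigl(g^{-m_j}p,[a^{+},\xi_j]\bigr)\to\infty$, after which $(g^{m_j}\xi_j,a^{+})_p\to\infty$ and hence $g^{m_j}\xi_j\to a^{+}$.

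The hard part is exactly this divergence estimate, and I would prove it by compactness and contradiction. Suppose, along a subsequence, $d(g^{-m_j}p,[a^{+},\xi_j])\le M$, and pick $z_j\in[a^{+},\xi_j]$ with $d(g^{-m_j}p,z_j)\le M$. Because the Gromov product changes by at most $d(z_j,g^{-m_j}p)\le M$ when one endpoint is moved, and $(g^{-m_j}p,a^{-})_p\to\infty$, we get $(z_j,a^{-})_p\to\infty$, that is $z_j\to a^{-}$ in $\overline{G}$. On the other hand the endpoints $a^{+}$ and $\xi_j\to\xi_\infty$ converge to distinct limits, so by Arzel\`a--Ascoli together with the stability of geodesics (the bigon/fellow-traveling estimates, Lema~\ref{lemma:bigons}, and Teorema~\ref{thm:Morse}) the geodesics $[a^{+},\xi_j]$, normalized by their nearest point to $p$, converge uniformly on compact sets to $[a^{+},\xi_\infty]$. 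A sequence $z_j$ lying on these geodesics and escaping every bounded set must then converge to one of the endpoints $a^{+}$ or $\xi_\infty$ of the limit geodesic; since $a^{-}\notin\{a^{+},\xi_\infty\}$, this contradicts $z_j\to a^{-}$. This establishes the divergence, hence the claim, and with the reductions above yields the proposition. The two points to write carefully are the uniform convergence of the geodesics $[a^{+},\xi_j]$ (where compactness of $K$ and $a^{-}\notin K$ enter) and the bookkeeping of the additive constants $\delta$ and $k$ in the Gromov-product inequalities.
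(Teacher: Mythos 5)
Your proof is correct in outline, but it follows a genuinely different route from the paper's. The paper works entirely with Gromov products of sequences in the group: it fixes $x \in \partial G \setminus V$, encodes the condition $x \neq a^-$ as a bound $\liminf_{n_1,n_2}(x_{n_1},g^{-n_2})_e \leq K$, upgrades this to a uniform bound $(x_n,g^{-m})_e \leq K+\delta$ via Exercício~\ref{exerc:propelem9}, and then concludes with a short chain of Gromov-product identities showing $(g^m(x_n),g^n)_e \geq (g^n,g^m)_e + (x_n,g^{n-m})_e - (K+\delta) \to \infty$, which places $g^m x$ in $U$. That argument never invokes geodesics, properness, Arzelà--Ascoli or the Lema de Morse; its cost is that the constant $K$ depends on the chosen point $x$, so the uniformity of $N$ over all of $\partial G\setminus V$ is left implicit. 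Your argument, by contrast, is the topological-dynamical one: symmetry reduction to a single inclusion, compactness of $\partial G\setminus V$ to convert the statement into a sequential convergence claim (which makes the uniformity of $N$ explicit and automatic), realization of boundary points by bi-infinite geodesics, equivariance to reduce to the divergence $d(g^{-m_j}p,[a^+,\xi_j])\to\infty$, and a limit-geodesic contradiction. What you pay for this is heavier machinery and the two delicate steps you yourself flag: one should also note that $\partial G\setminus V$ need not be closed if $V$ is merely a neighborhood (work with $\partial G\setminus\mathrm{int}(V)$, which suffices), and the assertion that points escaping along geodesics $[a^+,\xi_j]$ converging uniformly on compacts must tend to an endpoint of the limit geodesic requires the Gromov-product bookkeeping you mention, though it is standard. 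Both proofs are sound; yours is longer but more robust on uniformity, the paper's is shorter and more elementary.
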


\begin{proof} Seja $X=\cay(G,S)$ um grafo de Cayley de $G$.
Como $g$ tem ordem infinita, podemos associar a esse elemento duas sequências $(g^n)_{n\in \N}$ e $(g^{-n})_{n\in\N}$. Suponha que $g^n \to a^+$ e $g^{-n} \to a^-$ em $\partial G$. Então $g(a^+)= a^+ $ e $g(a^-)= a^- $ e, como a imagem de $\{g^n \mid n\in \Z\}$ é uma quasi-geodésica em $X$, temos $a^+\neq a^-$.

Tome  um ponto  $x \in \partial G \setminus V$. Então existe uma sequência $\{x_n\}$ convergindo a $x$ tal que $\displaystyle\liminf_{n_1,n_2\to \infty}(x_{n_1} , g^{-n_2})_e\leq K$, para algum $K > 0$ (caso contrário,  $\displaystyle\liminf_{n_1,n_2\to \infty}(x_{n_1} , g^{-n_2})_e= \infty$, o que implicaria em $(x,a^-)_e = \infty$ e portanto $x=a^- \in V$, uma contradição). Queremos encontrar um inteiro $m$ tal que $g^m(x) \in U$. É suficiente encontrar $m$ tal que $\displaystyle\liminf_{n\to \infty}(g^m(x_n),g^n)_e \geq M$ onde $M$ tem a seguinte propriedade:
$$(\xi,a^+)_e\geq M \Longrightarrow \xi \in U.$$ Pelo Exercício \ref{exerc:propelem9} existe $N > 0$ tal que, para quaisquer $m,n \geq N$, vale $(x_n, g^{-m})_e \leq  K + \delta$.
Assim, 
$$\dfrac{1}{2}\left(d(x_n, e) + d(g^{-m}, e) - d(x_n, g^{-m})\right)\leq K + \delta,$$
o que, usando que $d(g^m(x_n), e) = d(x_n, g^{-m})$,  implica 
$$\dfrac{1}{2}d(g^m(x_n), e)\geq \dfrac{1}{2}\left(d(x_n, e) + d(g^{-m}, e)\right) - (K + \delta).$$
Portanto, obtemos
\begin{align*}
    (g^m(x_n) , g^n)_e & \geq  \dfrac{1}{2}\Big(d(x_n, e) + d(g^{-m}, e) + d(g^n, e)\\
    &-d(x_n, g^{n-m})\Big) - (K + \delta) \\
    & = [(x_n , g^{n-m})_e - (K + \delta)] + (g^n, g^m)_e,
\end{align*}
onde o termo entre colchetes é limitado e $(g^n, g^m)_e \to \infty$. Tomando $m$ suficientemente grande, obtemos $(g^m(x_n), g^n)_e \geq M$, como queríamos.

\begin{figure}[H]
     \centering
 \begin{tikzpicture}[scale=0.9]

 \draw (0,0) circle (2cm);
\filldraw [fill=gray!10] plot [thin, smooth cycle] coordinates {(0.7,0) (1,0.6) (1.75,0.9) (2,0) (1.75,-0.9) (1,-0.6)};
\filldraw [fill=gray!10] plot [thin, smooth cycle] coordinates {(-0.7,0) (-1,0.6) (-1.75,0.9) (-2,0) (-1.75,-0.9) (-1,-0.6)};

\draw[line width=0.5mm, lightgray, opacity=0.7] (1.77,0.9) arc (27:333:2);

\draw[->, line width=0.4mm, lightgray, opacity=0.8] 
(0,1.9) to [bend left = 30] (-1.2,0.4);

 \draw (0,1)node{$g^m$};
 \draw (-1.4,-0.4)node{\tiny{$U$}};
 \draw (1.5,-0.4)node{\tiny{$V$}};
 \draw (-2.3,0)node{\tiny{$a^+$}};
 \draw (2.3,0)node{\tiny{$a^-$}};
 \node [draw, shape = circle, fill = black, minimum size = 0.05cm, inner sep=0pt] at (0,0){};
\node [draw, shape = circle, fill = black, minimum size = 0.05cm, inner sep=0pt] at (2,0){}; 
\node [draw, shape = circle, fill = black, minimum size = 0.05cm, inner sep=0pt] at (-2,0){};

\draw [->] plot [smooth] coordinates {(2,0) (1.3,0.15) (0.7,-0.2) (0,0) };
\draw  plot [smooth] coordinates {(0,0) (-0.7,0.2) (-1.3,-0.15) (-2,0)};

\draw (-0.1,-0.15)node{\tiny{$e$}};
 
\end{tikzpicture}
         \caption{Proposição \ref{prop:UV}.}
     \label{fig:propUV}   
\end{figure}

\end{proof}

\begin{proposition}\label{prop:stab(a)}
Considere um grupo hiperbólico $G$ e um elemento de ordem infinita $g \in G$. Seja $a \in \partial G $ um ponto fixado por $g$. Então, sob a ação de $G$ em $\partial G$, o estabilizador $\mathrm{Stab}_G(a)$ é uma extensão finita de $\langle g \rangle$, ou seja, é virtualmente cíclico.
\end{proposition}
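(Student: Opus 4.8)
A ideia central é reduzir o estudo de $\mathrm{Stab}_G(a)$ ao quasi-eixo de $g$. Lembramos que, como $g$ tem ordem infinita num grupo hiperbólico, a órbita $n\mapsto g^n$ é uma quasi-geodésica em $X=\cay(G,S)$ (fato já usado na Proposição~\ref{prop:UV}); ela determina dois pontos distintos $a^+=\lim_{n\to\infty}g^n$ e $a^-=\lim_{n\to\infty}g^{-n}$ em $\partial G$, ambos fixos por $g$. Aplicando a Proposição~\ref{prop:UV} (dinâmica norte--sul): se $\xi$ é fixo por $g$ e $\xi\neq a^-$, tomando $V$ uma vizinhança de $a^-$ que não contém $\xi$ obtemos $\xi=g^m\xi\to a^+$, logo $\xi=a^+$. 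Assim $\mathrm{Fix}_{\partial G}(g)=\{a^+,a^-\}$ e, portanto, $a\in\{a^+,a^-\}$; trocando $g$ por $g^{-1}$ (que não altera nem $\langle g\rangle$ nem $\mathrm{Stab}_G(a)$) podemos supor $a=a^+$. Escreva $H=\mathrm{Stab}_G(a^+)$, de modo que $\langle g\rangle\leq H$.

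O passo principal é mostrar que \emph{todo} $h\in H$ fixa também $a^-$, isto é, que dois elementos loxodrômicos de $G$ que compartilham um ponto fixo no bordo compartilham ambos. Dado $h\in H$, o conjugado $g_h=hgh^{-1}$ é loxodrômico com $\mathrm{Fix}_{\partial G}(g_h)=h\{a^+,a^-\}=\{a^+,\,ha^-\}$, ainda atraindo em $a^+$ e com o mesmo comprimento de translação $\tau=\tau_g>0$. Suponha, por absurdo, que $b:=ha^-\neq a^-$ (note que $b\neq a^+$, pois $h$ é bijeção e $ha^+=a^+$). Sejam $\gamma,\gamma'$ os quasi-eixos de $g$ e de $g_h$, que partilham a extremidade $a^+$; seus raios rumo a $a^+$ são assintóticos, logo viajam uniformemente próximos pelas Proposições~\ref{lemma:raiosassintóticospróximos} e~\ref{lemma:companheirosviagem}. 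Como $g^ne$ e $g_h^ne$ jazem nesses raios a parâmetro de comprimento $\approx n\tau$, a propriedade de companheiros de viagem dá $d(g^ne,g_h^ne)\le \mathrm{const}$, independente de $n$. Por invariância à esquerda, $d\big(e,\,(g^{-n}g_h^{\,n})e\big)=d(g^ne,g_h^ne)$ é uniformemente limitado; e os elementos $g^{-n}g_h^{\,n}$ são dois a dois distintos (se $g^{-n}g_h^{\,n}=g^{-m}g_h^{\,m}$ com $n\neq m$, então $g^{\,m-n}=g_h^{\,m-n}$, forçando $a^-=b$). Isto produz infinitos elementos distintos de $G$ deslocando $e$ por uma quantidade limitada, contradizendo a descontinuidade própria da ação (bolas de $\cay(G,S)$ são finitas). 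Logo $ha^-=a^-$ e $H\leq\mathrm{Stab}_G(a^-)$, ou seja, $H$ fixa ambos os pontos.

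Com isso, dado $h\in H$, a imagem $h\gamma$ é uma quasi-geodésica com as \emph{mesmas} extremidades $a^{\pm}$ que $\gamma$; pelo Lema de Morse (Teorema~\ref{thm:Morse}) existe $R=R(\lambda,\varepsilon,\delta)$ com $h\gamma\subset\mathcal{N}_R(\gamma)$. Em particular $h=hg^0\in\{hg^n\}\subset\mathcal{N}_{R_0}(h\gamma)\subset\mathcal{N}_{R_0+R}(\gamma)$, e como $\{g^n\}$ é $R_0$-denso em $\gamma$ concluímos que existe $R''$ tal que todo $h\in H$ satisfaz $d_S(h,g^{n})\le R''$ para algum $n\in\Z$. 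Então $g^{-n}h$ pertence à bola finita $F=\overline{B}(e,R'')\cap G$, donde $H\subset\langle g\rangle\,F$ e $[H:\langle g\rangle]\le|F|<\infty$. Portanto $H$ contém $\langle g\rangle\cong\Z$ com índice finito, isto é, $\mathrm{Stab}_G(a)$ é virtualmente cíclico, como queríamos. (Alternativamente, a última etapa pode ser fechada observando que $H$ age propriamente e coboundedmente sobre $\mathcal{N}_{R''}(\gamma)\qi\R$, de modo que $H$ tem dois fins e é virtualmente cíclico pelo Teorema~\ref{thm:fins grupos}.)

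O ponto delicado — e que considero o principal obstáculo — é o segundo parágrafo: garantir rigorosamente que as estimativas ``$\approx n\tau$'' de parâmetro ao longo dos dois quasi-eixos casam a menos de erro aditivo uniforme, para que $d(g^ne,g_h^ne)$ seja de fato limitado \emph{independentemente} de $n$. Isso requer combinar o Lema de Morse (para substituir os quasi-eixos por geodésicas genuínas) com o controle uniforme do deslocamento de $g$ e $g_h$ ao longo delas, além de cuidar da orientação dos pontos atratores/repulsores ao conjugar por $h$.
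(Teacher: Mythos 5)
Sua prova está correta, mas segue um caminho genuinamente diferente do adotado no texto. Você primeiro usa a dinâmica norte--sul (Proposição~\ref{prop:UV}) para identificar $\mathrm{Fix}_{\partial G}(g)=\{a^+,a^-\}$, depois prova o fato estrutural de que todo $h\in\mathrm{Stab}_G(a^+)$ fixa também $a^-$ (via o conjugado $hgh^{-1}$ e a descontinuidade própria da ação), e só então conclui: como $h$ preserva os \emph{dois} extremos do quasi-eixo $\gamma=\{g^n\}$, a imagem $h\gamma$ é uma quasi-geodésica com os mesmos pontos ideais, e o Lema de Morse coloca o próprio $h=h\cdot e$ a distância uniformemente limitada de $\gamma$, donde $H\subset\langle g\rangle F$ com $F$ finito e a contagem de classes laterais à direita $\langle g\rangle h=\langle g\rangle f$, $f\in F$, encerra o argumento. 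O livro, em vez disso, trabalha só com o ponto fixo $a$: toma um representante $c$ de comprimento mínimo em cada classe de $\langle g\rangle$, usa que $cg^n\to a$ (logo $(cg^n,g^n)_e\to\infty$) e aplica o lema dos trípodes (Lema~\ref{lemma:triangulosparecemtripodes}) mais a quasi-geodesicidade de $\{g^m\}$ para obter $k,m$ com $d(g^k,cg^m)\le 2M+\delta$. Note que essa desigualdade limita diretamente o comprimento de $g^{-k}cg^m$, que pertence à classe \emph{dupla} $\langle g\rangle c\langle g\rangle$, e não à classe $c\langle g\rangle$; é exatamente aqui que a sua estratégia compra algo: ao saber de antemão que $h$ fixa ambos os pontos do bordo, você aproxima $h$ do eixo (e não apenas um elemento da forma $g^{-k}hg^m$), o que torna a etapa final de contagem limpa e sem ambiguidade sobre qual tipo de classe está sendo contada.

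Quanto ao ponto que você destaca como delicado, ele é na verdade rotina e não exige falar em comprimento de translação. Os dois raios quasi-geodésicos $n\mapsto g^n$ e $n\mapsto g_h^n=hg^nh^{-1}$ partem ambos de $e$ e convergem a $a^+$; pelo Lema de Morse (Teorema~\ref{thm:Morse}, estendido a raios por Arzela--Ascoli, como na prova do Lema~\ref{lemma:raiospontobase}), cada um está a distância de Hausdorff $\le R$ de um raio geodésico saindo de $e$ e assintótico a $a^+$, e esses dois raios geodésicos satisfazem $d(\rho_1(t),\rho_2(t))\le 2\delta$ pelo Lema~\ref{lemma:raiosassintóticospróximos}. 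Como
$$d(e,g_h^n)=d(h^{-1},g^nh^{-1})\in\bigl[\,d(e,g^n)-2|h|_S,\ d(e,g^n)+2|h|_S\,\bigr],$$
os parâmetros dos pontos correspondentes sobre $\rho_1$ e $\rho_2$ diferem por no máximo $2R+2|h|_S$, e portanto $d(g^n,g_h^n)\le 4R+2\delta+2|h|_S$ para todo $n$ --- limitado independentemente de $n$; a dependência em $h$ é inofensiva, pois $h$ está fixado ao longo do argumento.
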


\begin{proof}
Dada uma classe lateral em $\mathrm{Stab}_G(a) / \langle g \rangle$, tome um representante de classe $c$ de menor comprimento, com respeito a um conjunto de geradores $S$. Mostraremos que $|c|_S$ é limitado.
Seja $a = \lim(g^n) \in \partial G$. Então $\lim(c g_n) = a$ e  $\lim(cg_n,g_n
)_e = \infty$. Se $c_{1,n}
, c_{2,n}$ e $c_{3,n}$ são os pontos centrais dos lados do triângulo $\Delta(e, cg^n
, g^n)$, temos que $\displaystyle \lim_{n\to\infty}(d(e, c_{1,n}
)) = \infty$ e $d(c_{1,n}
, c_{2,n})\leq \delta$ para todo $n$.

\begin{figure}[H]
     \centering
 \begin{tikzpicture}

 \draw (-0.2,-0.2)node{\small{$e$}};
 \draw (3.2,1.2)node{\small{$cg^n$}};
 \draw (3.25,-1)node{\small{$g^n$}};
  \draw (1.9,0.7)node{\tiny{$c_{2,n}$}};
 \draw (1.9,-0.7)node{\tiny{$c_{1,n}$}};
 \draw (2.2,0)node{\tiny{$\leq\delta$}};
 \draw[color=gray] (0.8,-0.4)node{\tiny{$\to\infty$}};
 
 \node [draw, shape = circle, fill = black, minimum size = 0.05cm, inner sep=0pt] at (0,0){};
\node [draw, shape = circle, fill = black, minimum size = 0.05cm, inner sep=0pt] at (3,1){}; 
\node [draw, shape = circle, fill = black, minimum size = 0.05cm, inner sep=0pt] at (3,-1){};
\node [draw, shape = circle, fill = black, minimum size = 0.05cm, inner sep=0pt] at (2,0.52){}; 
\node [draw, shape = circle, fill = black, minimum size = 0.05cm, inner sep=0pt] at (2,-0.52){};

\draw(2,0.52) to [bend right = 10] (2,-0.52);
\draw (0,0) to [bend right = 10] (3,1);
\draw (0,0) to [bend left = 10] (3,-1);
\draw[line width=0.5mm, color=gray, opacity=0.3] (0,0) to [bend left = 8] (2,-0.5);
\draw (3,1) to [bend right = 10] (3,-1);

\end{tikzpicture}
         \caption{Proposição \ref{prop:stab(a)}.}
     \label{fig:propstab(a)}   
\end{figure}

Como a imagem de $\{g^m\mid m \in \Z\}$ é uma quasi-geodésica no grafo de Cayley de $G$, existe $M > 0$ tal que, para todo $n$ sufi\-ci\-en\-temente grande, existem inteiros $k, m$ tais que $d(c_{1,n}, g^k) \leq M$ e $d(c_{2,n}, cg^m) \leq M$. Assim, $d(e, cg^{m-k})\ = d(g^k, cg^m)\leq 2M + \delta$. 
Mas, para $n\neq 0$, temos $|cg^n|_S \geq |c|_S$. Caso contrário, $c_1 :=cg^n$ seria um representante da mesma classe $c\langle g \rangle$ com comprimento menor que o de $c$, o que seria uma contradição. Assim, $|c|_S \leq 2M + \delta$.
\end{proof}

\begin{corollary}
Um grupo hiperbólico $G$ é elementar se, e somente se,  ele é finito ou virtualmente cíclico.
\end{corollary}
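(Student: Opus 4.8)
O plano é deduzir o resultado diretamente da Proposição~\ref{prop:fronteiragrupohip}, que classifica os grupos hiperbólicos pela cardinalidade de sua fronteira ideal, combinada com a invariância quasi-isométrica de $\partial_\infty$. O ponto de partida é observar que, por definição, $G$ é elementar precisamente quando se verifica o caso (i) daquela proposição (isto é, $G$ finito) ou o caso (ii) (isto é, $\card(\partial_\infty G) = 2$).

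Para a implicação direta, suponho $G$ elementar. Se $G$ recai no caso (i), então $G$ é finito e nada há a provar. Se $G$ recai no caso (ii), a parte (ii) da Proposição~\ref{prop:fronteiragrupohip} garante que $G$ contém um subgrupo cíclico infinito de índice finito, ou seja, $G$ é virtualmente cíclico. Em qualquer dos casos, $G$ é finito ou virtualmente cíclico, como desejado.

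Para a recíproca, suponho $G$ finito ou virtualmente cíclico. Se $G$ é finito, então $\partial_\infty G = \emptyset$ pela parte (i) da proposição, e $G$ é elementar por definição. Resta tratar o caso em que $G$ é infinito e virtualmente cíclico: sendo $C \leq G$ um subgrupo cíclico de índice finito e $G$ infinito, o subgrupo $C$ é infinito, logo $C \cong \Z$ e $G$ é virtualmente $\Z$. O passo central — que espero ser o único a exigir algum trabalho — é mostrar que nesse caso $\card(\partial_\infty G) = 2$. Para isso, usaria que $G$ é quasi-isométrico a $\Z$ (Corolário~\ref{Mil}), e que $\Z$ é quasi-isométrico a $\R$ por meio da inclusão $\Z \hookrightarrow \R$; como $\R$ é $0$-hiperbólico, próprio e geodésico, sua fronteira ideal consiste exatamente dos dois fins correspondentes a $+\infty$ e $-\infty$, de modo que $\card(\partial_\infty \R) = 2$. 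Aplicando em seguida o teorema segundo o qual toda quasi-isometria entre espaços geodésicos, próprios e hiperbólicos induz um homeomorfismo das respectivas fronteiras ideais, concluiria que $\card(\partial_\infty G) = 2$. Assim, $G$ recai no caso (ii) e é, portanto, elementar, o que encerra a prova.
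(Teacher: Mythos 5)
Sua prova está correta, mas segue um caminho genuinamente diferente do texto. O livro demonstra as duas implicações com a dinâmica da ação no bordo, via a Proposição~\ref{prop:stab(a)}: na ida, um grupo elementar infinito fixa (a menos de índice finito) um ponto $a \in \partial G$, de modo que $G = \mathrm{Stab}_G(a)$ é virtualmente cíclico; na volta, o estabilizador de $\lim_n g^n$ é uma extensão finita de $\langle g \rangle$. Você, em vez disso, lê a equivalência diretamente da classificação da Proposição~\ref{prop:fronteiragrupohip} combinada com a invariância quasi-isométrica da fronteira ideal (Corolário~\ref{Mil} mais o teorema do Capítulo 7 sobre homeomorfismos induzidos em $\partial_\infty$). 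Cada abordagem tem sua vantagem: a do livro permanece dentro do arcabouço dinâmico da seção da Alternativa de Tits e reaproveita exatamente a proposição que será necessária para o Teorema~\ref{thm:titshiperbolico}, sem invocar a maquinaria de quasi-isometrias de fronteiras; a sua torna a ida imediata (a Proposição~\ref{prop:fronteiragrupohip}(ii) já afirma a conclusão) e fornece uma volta mais explícita e completa, pois determina de fato que $\card(\partial_\infty G) = 2$ — ponto que a prova do livro, tal como escrita, deixa implícito ao parar na afirmação de que o estabilizador é virtualmente cíclico. Dois detalhes rotineiros que você deve explicitar ao redigir a versão final: a cadeia de quasi-isometrias deve passar pelos grafos de Cayley (que são geodésicos e próprios, hipóteses do teorema de homeomorfismo de fronteiras), e a afirmação de que $\partial_\infty \R$ tem exatamente dois pontos, embora elementar (raios geodésicos em $\R$ são assintóticos se, e somente se, apontam para o mesmo infinito), não está enunciada no texto e merece uma linha de verificação.
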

\begin{proof}
    Se $G$ é elementar, então $G$ é finito ou a ação de $G$ tem um ponto fixo $a \in \partial G$. Assim, $\mathrm{Stab}_G(a) = G$ e, portanto,  $G$ é virtualmente cíclico. 

    Por outro lado, se $G$ é infinito e virtualmente cíclico, então existe $g \in G$ tal que $\langle g \rangle$ tem índice finito em $G$, onde $g$ tem ordem infinita. Assim, pela Proposição \ref{prop:stab(a)}, $\mathrm{Stab}_G(\displaystyle\lim_ng^n) $ é uma extensão finita de $\langle g \rangle$. 
\end{proof}

\begin{thm} \label{thm:titshiperbolico}
Seja $G$ um grupo hiperbólico e tome $g_1$, $g_2$ elementos de ordem infinita de $G$. Então
 $\langle g_1, g_2\rangle$ é virtualmente cíclico ou existe um inteiro $k > 0$ tal que $\langle g^k_1, g^k_2\rangle$ é um grupo livre de posto 2.
\end{thm}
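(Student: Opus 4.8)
The plan is to study the dynamics of $g_1$ and $g_2$ on the ideal boundary $\partial G$ and split into two cases according to whether their fixed-point sets meet. Since $g_1$ and $g_2$ have infinite order, each determines a quasi-geodesic $n \mapsto g_i^n$ in $X = \cay(G,S)$, hence two distinct boundary points $a_i^+ = \lim_{n\to\infty} g_i^n$ and $a_i^- = \lim_{n\to\infty} g_i^{-n}$, both fixed by $g_i$ under the action of $G$ on $\partial G$; these facts are exactly those set up at the beginning of this section and in the proof of Proposition~\ref{prop:UV}. Write $F(g_i) = \{a_i^+, a_i^-\}$, a set of two distinct points.

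First I would dispose of the case $F(g_1) \cap F(g_2) \neq \emptyset$. If $g_1$ and $g_2$ share a fixed point $a \in \partial G$, then $\langle g_1, g_2\rangle \leq \mathrm{Stab}_G(a)$. Since $a$ is fixed by the infinite-order element $g_1$, Proposition~\ref{prop:stab(a)} shows that $\mathrm{Stab}_G(a)$ is virtually cyclic, and any subgroup of a virtually cyclic group is again virtually cyclic (intersect with a finite-index cyclic subgroup, which meets the subgroup in a finite-index cyclic group). Hence $\langle g_1, g_2\rangle$ is virtually cyclic, and the first alternative holds.

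The interesting case is $F(g_1) \cap F(g_2) = \emptyset$, where the four points $a_1^+, a_1^-, a_2^+, a_2^-$ are pairwise distinct, and here I would run a ping-pong argument on $\partial G$. Using that $\partial G$ is compact Hausdorff, choose pairwise disjoint open neighborhoods $U_1^+, U_1^-, U_2^+, U_2^-$ of the four points and set $A_i = U_i^+ \cup U_i^-$, so that $A_1 \cap A_2 = \emptyset$. By Proposition~\ref{prop:UV} applied to $g_1$ with the neighborhoods $U_1^+, U_1^-$, there is $N_1$ with $g_1^m(\partial G \setminus U_1^-) \subset U_1^+$ and $g_1^{-m}(\partial G \setminus U_1^+) \subset U_1^-$ for all $m \geq N_1$, and similarly an $N_2$ for $g_2$. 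Take $k \geq \max\{N_1, N_2\}$ and put $h_i = g_i^k$. For any $n \neq 0$ one has $h_1^n = g_1^{kn}$ with $|kn| \geq N_1$; since $A_2$ is disjoint from both $U_1^+$ and $U_1^-$, the inclusions above give $h_1^n(A_2) \subset A_1$, and symmetrically $h_2^n(A_1) \subset A_2$ for all $n \neq 0$.

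Finally I would invoke the ping-pong lemma (Lemma~\ref{pingponglemma}) with $A = A_2$, $B = A_1$, $g = h_1$, $h = h_2$: the inclusions just established are precisely its hypotheses, and $A \not\subset B$ since the sets are disjoint and nonempty. It follows that $h_1, h_2$ generate a free subgroup of rank $2$ of $\Bij(\partial G)$. Because the action $G \to \Bij(\partial G)$ is a homomorphism, every nonempty reduced word in $h_1, h_2$ being a nontrivial bijection forces the corresponding word in $g_1^k, g_2^k$ to be nontrivial in $G$; thus $\langle g_1^k, g_2^k\rangle$ is free of rank $2$. The main technical point to get right is the passage from ``all sufficiently large positive powers'' in Proposition~\ref{prop:UV} to ``all nonzero powers'' of $h_i$, which succeeds precisely because replacing $g_i$ by the fixed power $g_i^k$ with $k$ large guarantees that every nonzero power $h_i^n$ is still a large enough power of $g_i$ for the dynamical inclusions to apply.
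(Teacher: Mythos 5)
Your proposal is correct and follows essentially the same route as the paper's own proof: the same case split on whether the fixed-point sets $\{a_1^\pm\}$ and $\{a_2^\pm\}$ meet, with Proposition~\ref{prop:stab(a)} handling the virtually cyclic case and Proposition~\ref{prop:UV} combined with the ping-pong lemma (Lemma~\ref{pingponglemma}) producing the free subgroup $\langle g_1^k, g_2^k\rangle$ in the disjoint case. Your write-up is in fact slightly more careful than the paper's in two spots the paper glosses over: the explicit verification that replacing $g_i$ by $g_i^k$ turns the ``all sufficiently large powers'' conclusion of Proposition~\ref{prop:UV} into the ``all nonzero powers'' hypothesis of ping-pong, and the observation that freeness of the image in $\Bij(\partial G)$ pulls back to freeness of $\langle g_1^k, g_2^k\rangle$ in $G$ itself.
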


\begin{proof}
Sejam $X$ o grafo de Cayley de $G$ com respeito a $S$, $a^+, a^-$ os pontos fixos de $g_1$ e $b^+, b^-$ os pontos fixos de $g_2$. Se $\{a^+, a^-\} \cap \{b^+, b^-\} \neq \emptyset$, então $\langle g^k_1, g^k_2\rangle$ é virtualmente cíclico pela Proposição \ref{prop:stab(a)}. Caso contrário, podemos encontrar  vizinhanças abertas $a^+ \in U_1, a^- \in V_1,b^+ \in U_2$ e
$b^{-} \in V_2$, onde $U_1, U_2, V_1 $ e $V_2$ são mutuamente disjuntas. Então existe um inteiro $k$ com $$g^k_1(U_1 \cup U_2 \cup V_2) \subset U_1, \,\, g^k_2(U_2 \cup U_1 \cup V_1) \subset U_2,$$ $$g
^{-k}_1(V_1 \cup U_2 \cup V_2) \subset V_1,\,\, g^{-k}_2(V_2 \cup U_1 \cup V_1) \subset V_2.$$
Logo, para todos $m,n\neq 0$, temos $$(g_1^k)^m( U_2 \cup V_2) \subset U_1 \cup V_1, \,\, (g_2^k)^n( U_1 \cup V_1) \subset U_2 \cup V_2.$$
Pelo Lema do Pingue-Pongue, concluímos que $\langle g^k_1, g^k_2\rangle$ é livre de posto 2.
\end{proof}

\begin{exercise}
\label{exer:elemordeminfinita}
    Se $\Gamma_1$ é um subgrupo de um grupo hiperbólico $\Gamma$,  onde todo elemento de $\Gamma_1$ tem ordem finita, então $\Gamma_1$ é um grupo finito.
\end{exercise}

\begin{corollary}[Alternativa de Tits para grupos hiperbólicos] 
\label{cor:altTitsHip}
Todo subgrupo $H$ de um grupo hiperbólico $G$ contém $F_2$ ou é virtualmente cíclico.
\end{corollary}

\noindent 
\textit{Ideia da Demonstração:}   A prova segue diretamente do Teorema~\ref{thm:titshiperbolico} no caso em que $H$ é gerado por dois elementos de ordem infinita e é trivial no caso em que $H$ é finito.

    Caso $H$ seja um grupo infinito qualquer, daremos uma ideia da prova, seguindo \cite[Capítulo~8]{ghys2013groupes}.  Segue do Exercício~\ref{exer:elemordeminfinita} que $H$ possui um  elemento $h$ de ordem infinita, e portanto existem pelo menos dois pontos distintos $a^+$ e $a^-$ em $\partial H$, os quais são fixados por $h$. 
    
    Caso $\mathrm{Stab}_H(a^+) \neq \mathrm{Stab}_H(a^-)$, deve existir   $\gamma \in \mathrm{Stab}_H(a^-)$ tal que $\gamma(a^+)\neq a^+$. Mas assim podemos mostrar que $h$ e $\gamma h \gamma^{-1}$ geram um subgrupo livre de posto 2. 

    Por outro lado, se  $\mathrm{Stab}_H(a^+) = \mathrm{Stab}_H(a^-)$, dado que esse grupo contém $\langle h \rangle$ como subgrupo de índice finito, se esses estabilizadores forem iguais a $H$, então $H$ é virtualmente cíclico. O mesmo ocorre se $H$  preserva o par $\{a^+, a^-\}$, pois podemos considerar o subgrupo $ H \cap \,\mathrm{Stab}_H(a^+)$, de índice no máximo 2 em $H$. Caso contrário, existe um elemento $\gamma$ de $H$ tal que $\gamma(a^+),\gamma(a^-) \notin \{a^+, a^-\}$. O elemento $h' = \gamma h \gamma^{-1}$ tem, portanto, como pontos fixos os dois pontos $\gamma(a^+)$ e $\gamma(a^-)$,  distintos de $a^+, a^-$. Novamente, uma aplicação do Lema do Pingue-Pongue mostra que,  para $n$ suficientemente grande, $h^n$ e $(h')^n$ geram um grupo livre de posto $2$ em $H$.
    \qed

Um caso particular do resultado anterior é o dos centralizadores de elementos de ordem infinita.

\begin{corollary}
Sejam $G$ um grupo hiperbólico e  $g\in G$ um elemento de ordem infinita. Então o centralizador \index{centralizador de um grupo}
$C_G(g)=\{h\in G: hg=gh\}$ de $g$ em $G$ é virtualmente cíclico.
\end{corollary}

\begin{proof}
Como $C_G(g)$ é um subgrupo de $G$, pelo Corolário~\ref{cor:altTitsHip}, basta mostrar que $C_G(g)$ não contém um subgrupo livre de posto $2$.

Suponha, por absurdo, que $F_2\leq C_G(g)$. Então cada elemento de $F_2$
comuta com $g$, de modo que
$$\langle g,F_2\rangle\cong \langle g\rangle\times F_2.$$
Em particular, $\langle g,F_2\rangle$ contém um subgrupo isomorfo a $\mathbb Z^2$, o que é impossível pelo Teorema~\ref{thm:titshiperbolico}, pois G é hiperbólico. Portanto, $C_G(g)$ não contém um subgrupo livre de posto $2$, e o resultado segue.
\end{proof}


\section{Grupos relativamente hiperbólicos}

Vamos considerar um exemplo ilustrativo. Seja $G = \PSL(2, \mathcal{O}_3)$, onde $\mathcal{O}_3$ é o anel de inteiros do corpo quadrático imaginário  $\Q(\sqrt{-3})$. O grupo $G$ é um subgrupo discreto de $\PSL(2, \C)$ que age de forma propriamente descontínua no espaço hiperbólico tridimensional. De fato, o grupo $G$ desempenha um papel importante na teoria das $3$-variedades hiperbólicas, a partir do trabalho de Riley, que mostrou que ele possui um subgrupo livre de torção de índice $12$, isomorfo ao grupo fundamental do complemento do nó oito em $\mathbb{S}^3$ \cite{Riley75}. Notamos que $G$ não é um subgrupo cocompacto de $\PSL(2, \C)$. Portanto, não podemos aplicar o Teorema de Milnor--Schwarz à sua ação em $\mathbb{H}^3$. Além disso, $G$ possui um subgrupo isomorfo a $\Z^2$ gerado por elementos parabólicos
$$\begin{pmatrix}
1 & 1 \\
0 & 1
\end{pmatrix}
\quad \text{e} \quad
\begin{pmatrix}
1 & \sqrt{-3} \\
0 & 1
\end{pmatrix},$$
correspondente à cúspide do espaço quociente $\mathbb{H}^3/G$.
Portanto, $G$ não é um grupo hiperbólico (pelo Teorema~\ref{thm:titshiperbolico}). Um argumento semelhante se aplica aos outros grupos de Bianchi $\PSL(2, \mathcal{O}_d)$, onde $\mathcal{O}_d$ é o anel de inteiros do corpo quadrático imaginário  $\Q(\sqrt{-d})$, e aos reticulados não cocompactos em espaços hiperbólicos de dimensões maiores. Por outro lado, esses grupos compartilham várias propriedades com os grupos hiperbólicos que estudamos anteriormente neste capítulo. Por esse motivo, é desejável ter uma noção mais geral de hiperbolicidade.

Grupos relativamente hiperbólicos foram introduzidos por Gromov no mesmo artigo \cite{gromov1987hyperbolic} que grupos hiperbólicos. Enquanto grupos hiperbólicos são modelados em reticulados cocompactos em espaços simétricos de curvatura negativa, grupos relativamente hiperbólicos são modelados em reticulados não cocompactos em espaços de curvatura negativa e, mais geralmente, grupos fundamentais de variedades Riemannianas completas de volume finito e curvatura estritamente negativa. Hoje existem várias definições de hiperbolicidade relativa que se aplicam igualmente bem aos exemplos indicados acima, mas que não são equivalentes em geral.

A ideia de Gromov foi elaborada por B.~Bowditch no artigo  \cite{Bow12}, que estava disponível 
antes da publicação como um preprint da Universidade de Southampton e tem sido influente para a pesquisa nesta área.

\begin{definition}[\cite{Bow12}]
Um grupo finitamente gerado $G$ é \textit{hiperbólico em relação a uma coleção de subgrupos}\index{grupo relativamente hiperbólico} finitamente gerados $H_1$,\ldots, $H_m$ se ele admite uma ação isométrica propriamente descontínua em um espaço hiperbólico próprio $X$, de modo que a ação induzida de $G$ na fronteira $\partial X$ satisfaz as seguintes condições:
\begin{enumerate}[(1)]
\item O grupo $G$ age em $\partial X$ como um grupo de convergência geometricamente finito;
\item Os subgrupos parabólicos maximais de $G$ são precisamente os subgrupos de $G$ conjugados a $H_1$, \ldots, $H_m$.
\end{enumerate}
\end{definition}

Lembramos que um grupo G de homeomorfismos de um compacto metrizável $M$ age em $M$ como um \textit{grupo de convergência}\index{grupo de convergência} se a ação induzida no espaço de triplas distintas de elementos de $M$ for propriamente descontínua. Além disso, um subgrupo $H \leq G$ é \textit{parabólico} \index{subgrupo parabólico} se for infinito, fixar algum ponto de $M$ e não contiver elementos $g \in G$ de ordem infinita tais que $\textrm{card}(\textrm{Fix}(g)) = 2$. Neste último caso, o ponto fixo de $H$ é único e é chamado de \textit{ponto parabólico}\index{ponto parabólico}. Um ponto parabólico $x$ é dito \textit{limitado} se $(M \setminus \{x\})/\textrm{Stab}_G(x)$ for compacto. Um ponto $x \in M$ é chamado de \textit{ponto limite cônico}\index{ponto limite cônico} se existirem uma sequência ${g_i}$ em $G$ e dois pontos distintos $a, b \in M$ tais que $g_i x$ converge para $a$ e $g_i y$ converge para $b$, para qualquer $y \in M \setminus \{x\}$. Finalmente, um grupo de convergência $G$ é chamado \textit{geometricamente finito}\index{grupo geometricamente finito} se cada ponto de $M$ for um ponto limite cônico ou um ponto parabólico limitado.

A abordagem de Bowditch à hiperbolicidade relativa foi mais desenvolvida por A.~Yaman em \cite{Yam04}.

Uma outra definição de hiperbolicidade relativa foi sugerida por B.~Fard em \cite{Farb98}. Sejam $G$ um grupo, gerado por um conjunto finito $S$, e $\{H_1, H_2, \ldots, H_m\}$ uma coleção de subgrupos de $G$. Começamos com o grafo de Cayley $\cay(G, S)$ e formamos um novo grafo da seguinte forma: para cada classe lateral à esquerda $gH_i$ de $H_i$ em $G$, adicione um vértice $v(gH_i)$ a $\cay(G, S)$ e  uma aresta $e(gh)$, de comprimento $1/2$, ligando cada elemento $gh$ de $gH_i$ ao vértice $v(gH_i)$. Este novo grafo é chamado de \textit{grafo de Cayley conado}\index{grafo de Cayley conado} de $G$ relativo a $\{H_1, H_2, \ldots, H_m\}$ e é denotado por $\widehat\cay(G, S)$ e munido da métrica de caminhos. Note que $\widehat\cay(G, S)$ não é um espaço métrico próprio, pois bolas fechadas não são necessariamente compactas.

\begin{definition}[\cite{Farb98}]
O grupo $G$ é \textit{hiperbólico relativamente}\index{grupo relativamente hiperbólico} a $\{H_1, H_2, \ldots, H_m\}$ se o grafo de Cayley conado $\widehat\cay(G, S)$ de $G$ relativo a $\{H_1, H_2, \ldots, H_m\}$ for um espaço métrico hiperbólico.
\end{definition}

Em geral, a definição de Farb não é equivalente à de Gromov e Bowditch, mas elas se tornam equivalentes se assumirmos uma propriedade extra chamada penetração de coset limitada. Nos referimos a \cite{Osin06} para os detalhes.

Em \cite{Osin06}, D.~Osin obteve uma caracterização da hiperbolicidade relativa em termos de desigualdades isoperimétricas e adotou técnicas baseadas em diagramas de van Kampen para o estudo de propriedades algébricas e algorítmicas de grupos relativamente hiperbólicos.

Concluímos esta seção com alguns exemplos de grupos relativamente hiperbólicos. 

\begin{examples}
\begin{enumerate}[(1)]
    \item Os grupos hiperbólicos são claramente relativamente hiperbólicos em relação ao subgrupo trivial.
    \item Grupos fundamentais de variedades completas, de volume finito e curvatura estritamente negativa; os subgrupos $H_i$ são os grupos fundamentais de suas cúspides (\cite{Bow12, Farb98}).
    \item Um grupo hiperbólico $G$ é  hiperbólico relativamente a um conjunto de subgrupos $\{H_1, \ldots, H_n\}$ se cada $H_i$ é quasi-convexo e quase malnormal em $G$ (veja \cite{Farb98}). Lembramos que um subgrupo $H \leq G$ é chamado \textit{malnormal} se, para cada $g \in G\setminus H$,  a interseção $|gHg^{-1} \cap H|$ é trivial, enquanto $H$ é dito
    \textit{quase malnormal}\index{subgrupo quase malnormal} se, para cada $g \in G\setminus H$,  a interseção $|gHg^{-1} \cap H|$ é finita. 
    \item Grupos totalmente residualmente livres, conhecidos como grupos limites de Sela; eles têm como subgrupos periféricos $H_i$ uma lista finita de subgrupos abelianos não cíclicos máximos \cite{Dah03}.
\end{enumerate}
\end{examples}

Grupos relativamente hiperbólicos constituem uma área de pesquisa atualmente ativa. Vários resultados provados inicialmente para grupos hiperbólicos podem ser generalizados para o caso relativamente hiperbólico. Nos referimos aos artigos citados acima e trabalhos subsequentes para os resultados e aplicações.

\chapter{Crescimento de grupos e os teoremas de Wolf, Milnor e Gromov}
\label{cap:crescimento}
 \section{A função de crescimento}

\begin{definition} \label{unifdisc}
Um espaço métrico $X$ é dito \textit{uniformemente discreto} \index{espaço uniformemente discreto} se  
 existe uma função  $\varphi : \R_+\to \N$ tal que cada bola fechada $\overline{B(x,r)}\subset X$ contém no máximo $\varphi(r)$ pontos.
\end{definition}

\begin{definition}
Sejam $X$ um espaço métrico uniformemente discreto e $p \in X$ um ponto base. Definimos a \textit{função de crescimento} de $X$  \index{função de crescimento} (com base em $p$) por $$\rho_{X,p}(r)=\card (\overline{B(p,r)}).$$
\end{definition}

 Vamos relembrar quando duas funções reais são ditas assintoticamente iguais.\index{funções assintoticamente iguais} Sejam $f,g: X\subset \R \to \R$. Escrevemos $f \preceq g$ se existir uma constante $K > 0$ tal que $$f(x)\leq Kg(Kx+K)+K, \mbox{ para todo } x\in [0,\infty).$$ Se $f\preceq g$ e $g\preceq f$ então escrevemos  $f\asymp g$ e dizemos que $f$ e $g$ são \textit{assintoticamente iguais}. A relação $\asymp$ é uma  relação de equivalência no conjunto de funções em $X$ (observe que essa noção de equivalência é um pouco mais fina que equivalência usada na Seção~\ref{sec:8-isoperim}.) 

\begin{lemma}[A classe de crescimento é invariante por quasi-isometrias]\label{lemQI} Se $f: X \to Y$ é uma quasi-isometria entre espaços métricos $(X,d_X)$ e $(Y,d_Y)$ uniformemente discretos, com $f(x)=y$ para certos pontos $x \in X$ e $y \in Y$, então $\rho_{X,x}\asymp\rho_{Y,y}.$
\end{lemma}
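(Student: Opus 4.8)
O plano é estabelecer separadamente as duas relações $\rho_{X,x}\preceq \rho_{Y,y}$ e $\rho_{Y,y}\preceq \rho_{X,x}$, explorando a simetria do argumento: a primeira virá diretamente de $f$ e a segunda de uma quasi-inversa $\overline{f}$ de $f$. Fixo então $f$ como uma $(L,C)$-quasi-isometria, com $C$-quasi-inversa $\overline{f}$. Toda a prova repousa sobre duas observações elementares. A primeira é que $f$ leva bolas em bolas: se $d_X(x,a)\leq r$, então pela desigualdade superior $d_Y(y,f(a))=d_Y(f(x),f(a))\leq L\,d_X(x,a)+C\leq Lr+C$, donde $f(\overline{B(x,r)})\subseteq \overline{B(y,Lr+C)}$. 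A segunda, e mais importante, é que as fibras de $f$ têm cardinalidade uniformemente limitada: se $f(a)=f(b)$, a desigualdade inferior fornece $0\geq \frac{1}{L}d_X(a,b)-C$, ou seja, $d_X(a,b)\leq LC$; logo cada fibra $f^{-1}(q)$ está contida numa bola de raio $LC$ em $X$ e, pela hipótese de que $X$ é uniformemente discreto, contém no máximo $N:=\varphi_X(LC)$ pontos.

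Combinando as duas observações por uma contagem simples de fibras, obtenho
$$\rho_{X,x}(r)=\card\big(\overline{B(x,r)}\big)\leq N\cdot \card\big(f(\overline{B(x,r)})\big)\leq N\cdot \rho_{Y,y}(Lr+C),$$
onde na primeira desigualdade uso que cada ponto da imagem tem no máximo $N$ pré-imagens em $\overline{B(x,r)}$, e na segunda uso que $f(\overline{B(x,r)})\subseteq \overline{B(y,Lr+C)}$. Tomando $K\geq \max\{N,L,C\}$, isso é exatamente $\rho_{X,x}\preceq \rho_{Y,y}$.

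Para a relação recíproca, aplico o mesmo raciocínio à quasi-isometria $\overline{f}:Y\to X$ com ponto base $y$, obtendo $\rho_{Y,y}(r)\leq N'\cdot \rho_{X,\overline{f}(y)}(L'r+C')$ para constantes apropriadas. A única sutileza é que $\overline{f}(y)$ não é necessariamente igual a $x$. Contudo, como $\overline{f}$ é $C$-quasi-inversa de $f$, vale $d_X(\overline{f}(y),x)=d_X(\overline{f}(f(x)),x)\leq C$, e da inclusão $\overline{B(\overline{f}(y),s)}\subseteq \overline{B(x,s+C)}$ segue a monotonicidade $\rho_{X,\overline{f}(y)}(s)\leq \rho_{X,x}(s+C)$. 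Substituindo, concluo $\rho_{Y,y}\preceq \rho_{X,x}$, e portanto $\rho_{X,x}\asymp \rho_{Y,y}$.

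O passo que considero o verdadeiro coração da prova é a limitação uniforme das fibras: é aí que a hipótese de discretude uniforme é indispensável, pois sem ela uma quasi-isometria poderia colapsar infinitos pontos sobre um único ponto e a primeira desigualdade da contagem falharia. Os demais passos são manipulações rotineiras das desigualdades que definem quasi-isometrias e das constantes envolvidas; o único cuidado adicional é o deslocamento do ponto base na direção recíproca, resolvido pela monotonicidade acima.
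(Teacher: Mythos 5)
Sua prova está correta e segue essencialmente o mesmo caminho da prova do livro: inclusão de bolas via a desigualdade superior, multiplicidade uniformemente limitada das fibras via a discretude uniforme (com a mesma cota $d_X(a,b)\leq LC$), contagem, e o argumento simétrico com a quasi-inversa. A única diferença é de contabilidade das constantes — o texto absorve os deslocamentos dos pontos base numa constante $D=\max\{d_Y(f(x),y),d_X(\overline{f}(y),x)\}$ desde o início, enquanto você trata o deslocamento $\overline{f}(y)\neq x$ por monotonicidade ao final; ambas as escolhas são equivalentes.
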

\begin{proof}
Considere  $f : X \to Y$ e $\overline{f}: Y\to X$,  mergulhos $(L,C)$-quasi-isométricos e quasi-inversos um do outro. Temos  $$L^{-1}d_X(x,x')-C \leq d_Y(f(x),f(x'))\leq L d_X(x,x')+C$$ e $$L^{-1}d_Y(y,y')-C \leq d_X(\overline{f}(y),\overline{f}(y'))\leq L d_Y(y,y')+C.$$ Seja $D =\max \{d_Y(f(x),y),d_X(\overline{f}(y),x)\}$. Então, para cada $r > 0$, obtemos $$f(\overline{B(x,r)})\subset \overline{B(y,Lr+D+C)},\quad \overline{f}(\overline{B(y,r)})\subset \overline{B(x,Lr+D+C)}.$$ 

Com efeito, dado $x' \in \overline{B(x,r)}$, segue da desigualdade triangular que  
\begin{equation*}
\begin{split}
d(f(x'),y)&\leq  d(f(x'),f(x))+d(f(x),y)\\
&\leq Ld(x,x')+C+D\\ 
&\leq LrC+D.
\end{split}
\end{equation*}

Note que, se $f(x')=f(x)$, então $d(x,x')\leq LC$. De fato, $$L^{-1}d(x,x')-C\leq 0=d(f(x),f(x')).$$ O mesmo vale para $\overline{f}.$ Como os espaços $X$ e $Y$ são uniformemente discretos, considerando os mapas $\varphi_X$ e $\varphi_Y$ na Definição~\ref{unifdisc}, obtemos que ambos os  mapas $f$ e $\overline{f}$ tem multiplicidade  menor ou igual a  $m= \max \{\varphi_X(LC),\varphi_Y(LC)\}.$ Segue que 
$$\card (\overline{B(x,r)})\leq m\cdot \card (f(\overline{B(x,r)}))\leq m\cdot \card (\overline{B(y, Lr+C+D)}),$$ 
que implica $\rho_{X,x}(r)\leq \rho_{Y,y}(Lr+(D+C))$, ou equivalentemente, $\rho_{X,x}\preceq\rho_{Y,y}$. Analogamente, podemos mostrar que  $\rho_{Y,y}\preceq\rho_{X,x}$ e portanto, $\rho_{X,x}\asymp\rho_{Y,y}.$ 
\end{proof}

\begin{corollary}
Tem-se $\rho_{X,x}\asymp\rho_{X,x'},$ para todos $x ,x ' \in X$. Com isso, podemos escrever $\rho_{X} := \rho_{X,x}$, qualquer que seja $x\in X$.
\end{corollary}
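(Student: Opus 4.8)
The plan is to deduce both assertions from a single basepoint-change estimate that rests on the triangle inequality alone; invoking the quasi-isometry invariance of Lemma~\ref{lemQI} is in fact overkill here, since the identity $\mathrm{id}\colon X\to X$ is an isometry and hence a $(1,0)$-quasi-isometry. I would nonetheless keep Lemma~\ref{lemQI} in mind as an alternative route: re-running its proof with $f=\overline{f}=\mathrm{id}$, $L=1$, $C=0$, and taking $x'$ (rather than $\mathrm{id}(x)=x$) as the basepoint of the target copy of $X$, the only modification is that the constant $D$ becomes $d_X(x,x')$, and the argument goes through verbatim. The direct argument below, however, is shorter and self-contained.

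First I would fix $x,x'\in X$ and record the nesting of balls coming from the triangle inequality: for every $r\ge 0$,
\[
\overline{B(x',r)}\subseteq \overline{B\bigl(x,\,r+d_X(x,x')\bigr)},
\]
since $d_X(x,z)\le d_X(x,x')+d_X(x',z)\le d_X(x,x')+r$ whenever $z\in\overline{B(x',r)}$. Passing to cardinalities and using that $\rho_{X,x}$ is nondecreasing (again because balls are nested), this gives $\rho_{X,x'}(r)\le \rho_{X,x}\bigl(r+d_X(x,x')\bigr)$.

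Next I would fit this into the relation $\preceq$. Setting $K:=\max\{1,d_X(x,x')\}$, one has $r+d_X(x,x')\le Kr+K$, so monotonicity yields
\[
\rho_{X,x'}(r)\le \rho_{X,x}(Kr+K)\le K\,\rho_{X,x}(Kr+K)+K,
\]
which is precisely $\rho_{X,x'}\preceq\rho_{X,x}$. Interchanging the roles of $x$ and $x'$ gives the reverse inequality, and therefore $\rho_{X,x}\asymp\rho_{X,x'}$, which is the first assertion. The second assertion is then immediate: since $\asymp$ is an equivalence relation and all the functions $\rho_{X,x}$, $x\in X$, lie in one and the same $\asymp$-class, that class does not depend on the chosen basepoint, so the notation $\rho_X:=\rho_{X,x}$ is unambiguous.

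I do not expect any genuine obstacle here; the single point requiring a little care is matching the additive shift $d_X(x,x')$ to the specific shape $Kr+K$ demanded by the definition of $\preceq$, which is exactly what forces the use of the monotonicity of $\rho_{X,x}$. Everything else is a formal consequence of the triangle inequality and of $\asymp$ being an equivalence relation.
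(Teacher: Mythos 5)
Your proof is correct, and it takes a more elementary route than the one the paper intends. The paper states this corollary with no proof, presenting it as an immediate consequence of Lemma~\ref{lemQI}; the subtlety you correctly flag is that the lemma, as stated, requires a quasi-isometry $f$ with $f(x)=x'$, which the identity map does not provide when $x\neq x'$. The paper's implicit fix is the one you describe: the constant $D=\max\{d_Y(f(x),y),d_X(\overline{f}(y),x)\}$ appearing in the lemma's proof already absorbs a basepoint mismatch, so running that proof with $f=\overline{f}=\mathrm{id}$ and $D=d_X(x,x')$ gives the result. (Alternatively, one could apply the lemma verbatim to the bijection swapping $x$ and $x'$ and fixing everything else, which is a genuine $(1,2d_X(x,x'))$-quasi-isometry carrying $x$ to $x'$.) Your primary argument instead bypasses the lemma entirely: the inclusion $\overline{B(x',r)}\subseteq\overline{B(x,r+d_X(x,x'))}$, monotonicity of the growth function, and the choice $K=\max\{1,d_X(x,x')\}$ to match the shape $Kr+K$ in the definition of $\preceq$ give $\rho_{X,x'}\preceq\rho_{X,x}$, and symmetry finishes. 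What this buys is a self-contained proof that needs neither uniform discreteness of $X$ nor the multiplicity bookkeeping in the lemma's proof; what the lemma-based route buys is economy, since the lemma has already been proved and the corollary then requires no new estimate at all.
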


Se $G$ é  um grupo  finitamente gerado por um conjunto $S$, equipado com a métrica das palavras, usaremos as notações $\rho_{G,S}$ ou $\rho_S$ para a função de crescimento de $G$.

\begin{example}
\label{example:crescZ2}
    Vamos calcular a função de crescimento do grupo $\Z^2$, equipado com os geradores $e_1=(1,0)$ e $e_2=(0,1)$. Tomando como ponto base $x=(0,0)$, temos:

$$\overline{B((0,0),r)} =\{(x,y)\in \mathbb{Z}^2
 \mid |x|+|y|\leq r\}.$$

 Para cada inteiro $k\geq 1$, existem exatamente $4k$ pontos satisfazendo
$
|x|+|y|=k.
$ De fato, para cada ponto no primeiro quadrante, obtemos 3 outros pontos nos demais quadrantes, fazendo variar os sinais. Com isto, basta contar os pontos com $x,y>0$, além dos 4 pontos que estão nos eixos coordenados. Agora, se $x,y>0$ são inteiros tais que $x+y = k$, temos então $k-1$ escolhas possíveis para o par $(x,y)$. Portanto, obtemos $4+4(k-1) = 4k$ pontos no total.

Deste modo,
$$
\begin{aligned}
\rho_{\mathbb Z^2}(r)
&=\card\bigl(\overline{B(p,r)}\bigr)\\
&=1+\sum_{k=1}^{\lfloor r\rfloor}4k\\
&=1+2\lfloor r\rfloor\bigl(\lfloor r\rfloor+1\bigr).
\end{aligned}
$$
Com isto, podemos concluir que $
\rho_{\mathbb Z^2}(r)\sim 2r^2,
$
mostrando que $\mathbb Z^2$ possui crescimento polinomial de grau $2$.
\end{example}

\begin{exercise}\label{exer9.5}
\begin{enumerate}
\item Mais geralmente, com relação ao Exemplo \ref{example:crescZ2}, se $G = \Z^{k}$, mostre que $\rho_S \asymp x^k$ para qualquer conjunto finito de geradores  $S$. Conclua que $\Z^m$ é quasi-isométrico a $\Z^n$ se, e somente se, $m=n$.
\item Prove que, para $n\geq 2$ o grupo $\mathrm{SL}(n,\Z)$ tem crescimento exponencial. 

\item Se $G=F_n$ é o grupo livre de posto $n \geq 2$ e $S$ é um conjunto com $n$ geradores para $G$, mostre que $\rho_S(x) = 1+(q^x-1)\dfrac{q+1}{q-1}$, $q=2n-1$.
\end{enumerate}
\end{exercise}

\begin{proposition}
\label{prop:crescimentogrupos} Seja $G$ um grupo finitamente gerado.
\begin{enumerate}[(1)]
\item Se $S$ e $S'$ são dois conjuntos finitos de geradores de $G$, então $\rho_S \asymp \rho_{S'}$.
\item Se $G$ é infinito, então a restrição $\rho_S\big|_\N$ é estritamente crescente.
\item A função de crescimento é sub-multiplicativa: $$\rho_S(r+t)\leq \rho_S(r)\rho_S(t).$$
\item A função de crescimento satisfaz $\rho_S(r) \preceq 2^r$.
\end{enumerate}
\end{proposition}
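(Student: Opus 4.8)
O plano é tratar os quatro itens separadamente, notando que nenhum deles requer mais do que propriedades elementares da métrica das palavras: a subaditividade $|gh|_S\leq |g|_S+|h|_S$ e o fato de que todo prefixo de uma palavra geodésica é ele próprio geodésico. Começaria pelos três itens mais diretos, deixando por último o item (2), que é o único que exige algum cuidado.

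Para o item (1), basta aplicar o Lema~\ref{lemQI} à aplicação identidade $\mathrm{id}:(G,d_S)\to(G,d_{S'})$. Pelo Exercício~\ref{metricadaspalavras}, as métricas $d_S$ e $d_{S'}$ são bi-Lipschitz equivalentes, logo a identidade é uma quasi-isometria; como ambos os espaços são uniformemente discretos (toda bola de raio $r$ contém no máximo o número de palavras de comprimento $\leq r$ no respectivo alfabeto), o Lema~\ref{lemQI} fornece imediatamente $\rho_S\asymp\rho_{S'}$. Para o item (3), a ideia é exibir uma sobrejeção $\overline{B(e,r)}\times\overline{B(e,t)}\to\overline{B(e,r+t)}$. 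Dado $g$ com $|g|_S\leq r+t$, escreveria uma palavra geodésica $g=s_1\cdots s_k$, com $k=|g|_S$, e a cortaria após a $\min(k,r)$-ésima letra, pondo $g_1=s_1\cdots s_{\min(k,r)}$ e $g_2=g_1^{-1}g$. Como prefixos de geodésicas são geodésicos, tem-se $|g_1|_S\leq r$, e as letras restantes garantem $|g_2|_S\leq t$. Assim $(g_1,g_2)\mapsto g_1g_2$ é sobrejetiva, donde $\rho_S(r+t)\leq \rho_S(r)\,\rho_S(t)$.

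Para o item (4), limitaria $\rho_S(r)$ pelo número de palavras de comprimento no máximo $r$ no alfabeto $S$. Escrevendo $d=|S|$, esse número é $\sum_{k=0}^{r}d^{k}\leq d^{\,r+1}$ no caso $d\geq 2$; os casos $d\leq 1$ correspondem a grupos triviais ou cíclicos finitos, nos quais $\rho_S$ é limitada e a estimativa é imediata. Resta então verificar que $d^{\,r+1}\preceq 2^{r}$: tomando $K\geq \log_2 d+1$ na definição de $\preceq$, obtém-se $2^{Kr+K}=2^{K(r+1)}\geq 2^{(\log_2 d)(r+1)}=d^{\,r+1}$, e portanto $\rho_S(r)\leq d^{\,r+1}\leq K\cdot 2^{Kr+K}+K$, isto é, $\rho_S\preceq 2^{r}$.

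O item (2) é o que exige um argumento um pouco mais delicado. A afirmação que eu provaria é que, para $G$ infinito, cada esfera $E_n=\{g\in G\mid |g|_S=n\}$ é não vazia; disso segue $\rho_S(n+1)=\rho_S(n)+\card(E_{n+1})>\rho_S(n)$. Como $G$ é uniformemente discreto, toda bola fechada é finita; logo, se existisse $N$ com $|g|_S\leq N$ para todo $g$, teríamos $G=\overline{B(e,N)}$ finito, uma contradição. Assim existem elementos de comprimento arbitrariamente grande. Fixado $n$, tomaria $g$ com $|g|_S>n$ e uma palavra geodésica $g=s_1\cdots s_k$. O ponto no qual eu seria explícito é a afirmação de que o prefixo $s_1\cdots s_n$ tem comprimento exatamente $n$: de fato $|s_1\cdots s_n|_S\leq n$ por subaditividade, e a decomposição $g=(s_1\cdots s_n)(s_{n+1}\cdots s_k)$ dá $k\leq |s_1\cdots s_n|_S+(k-n)$, donde $|s_1\cdots s_n|_S\geq n$. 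Isso mostra que $E_n\neq\emptyset$ e conclui a monotonicidade estrita de $\rho_S\big|_\N$.
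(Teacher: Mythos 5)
Sua demonstração está correta e segue essencialmente o mesmo caminho da prova do livro: o item (1) via o Lema~\ref{lemQI}, o item (2) tomando um ponto a distância exata $n+1$ ao longo de uma geodésica (você usa prefixos de palavras geodésicas, o livro usa a geodésica no grafo de Cayley, o que é o mesmo argumento), o item (3) decompondo cada elemento da bola de raio $r+t$ como produto de elementos de bolas de raios $r$ e $t$ (equivalente à cobertura $\overline{B(1,r+t)}\subset\bigcup_{y\in \overline{B(1,t)}}\overline{B(y,r)}$ usada no livro), e o item (4) comparando com a contagem de palavras no grupo livre sobre $S$. As únicas diferenças são de nível de detalhe — você explicita verificações (como a desigualdade $d^{\,r+1}\preceq 2^r$ e o fato de prefixos de geodésicas terem comprimento exato) que o livro deixa implícitas.
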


\begin{proof}
\begin{enumerate}[(1)]
\item Segue imediatamente do Lema~\ref{lemQI} e do fato  que $(G,d_S)$ e $(G,d_{S'})$ são quasi-isométricos.
\item Sejam $n < m \in \N$. Sendo $G$ infinito, deve existir  $g \in G$ tal que $d=d_S(1,g)\geq m$. Considere uma geodésica   $\gamma:[0,d]\to \cay(G,S)$ ligando $1$ e $g$. O vértice $\gamma(n+1)$ pertence a $\overline{B(1,m)}\setminus \overline{B(1,n)}$. Portanto, $\rho_S(m)\geq \rho_S(n+1)> \rho_S(n)$.
\item Segue imediatamente do fato de que $$\overline{B(1,r+t)}\subset\bigcup_{y\in \overline{B(1,t)}}\overline{B(y,r)}.$$
\item Decorre da existência do epimorfismo $\pi_S: F(S) \to G$.
\end{enumerate}
\end{proof}

A seguir, relembramos o Lema de Fekete, para sequências subaditivas de números reais.

\begin{lemma}[Lema de Fekete]\index{Lema de Fekete}
    Se $(a_n)$ é uma sequência de números reais não negativos tais que $a_{i+j} \leq a_ i + a_j$ para  $i, j \geq 1$, então  existe $\lim_{n \to \infty} \frac{a_n}{n}$ e ele é igual a $\inf  \{\frac{a_n}{n}\}$.
\end{lemma}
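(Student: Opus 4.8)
The plan is to set $L := \inf_{n\geq 1} a_n/n$ and prove that $\lim_{n\to\infty} a_n/n = L$. First I would observe that $L$ is a well-defined real number: since $a_n\geq 0$ for all $n$, the set $\{a_n/n \mid n\geq 1\}$ is bounded below by $0$, so its infimum exists and satisfies $0\leq L <\infty$. One inequality is immediate from the definition of infimum, namely $a_n/n \geq L$ for every $n$, which already gives $\liminf_{n\to\infty} a_n/n \geq L$. The whole content of the lemma is therefore the reverse estimate $\limsup_{n\to\infty} a_n/n \leq L$, and the strategy is to extract from the infimum a single ``good'' index $m$ and then use subadditivity to control $a_n$ for all large $n$ in terms of $a_m$.

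The key step is the following. Fix $\varepsilon>0$. By definition of $L$ as an infimum, I can choose $m\geq 1$ with $a_m/m < L+\varepsilon$. Given an arbitrary $n\geq 1$, I would write the Euclidean division $n = qm + r$ with $q\geq 0$ and $0\leq r < m$. Applying the subadditive hypothesis $q$ times (and setting $a_0 := 0$ to absorb the case $r=0$ cleanly) yields
$$a_n = a_{qm+r} \leq q\,a_m + a_r.$$
Dividing by $n$ gives the bound
$$\frac{a_n}{n} \leq \frac{qm}{n}\cdot\frac{a_m}{m} + \frac{a_r}{n}.$$
Here I would denote $M := \max\{a_0, a_1, \ldots, a_{m-1}\}$, so that $a_r \leq M$ uniformly in $n$, which is what makes the remainder term harmless.

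Finally, with $m$ (hence $\varepsilon$ and $M$) held fixed, I would let $n\to\infty$. Then $q\to\infty$, the factor $qm/n$ tends to $1$ (since $qm = n-r$ and $0\leq r<m$), and $a_r/n \leq M/n \to 0$. Taking the $\limsup$ in the displayed inequality gives $\limsup_{n\to\infty} a_n/n \leq a_m/m < L+\varepsilon$. As $\varepsilon>0$ was arbitrary, I conclude $\limsup_{n\to\infty} a_n/n \leq L$, and combining with the trivial lower bound $\liminf_{n\to\infty} a_n/n \geq L$ shows that the limit exists and equals $L = \inf_{n\geq 1} a_n/n$. I expect the main (minor) obstacle to be purely bookkeeping: justifying the iterated application of subadditivity to get $a_{qm+r}\leq q\,a_m + a_r$, handling the boundary case $r=0$ via the convention $a_0=0$, and being careful that $\varepsilon$ and $m$ are frozen before sending $n\to\infty$ so that $M$ and $q$ are treated correctly in the limit.
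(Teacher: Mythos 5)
Your proof is correct and complete: setting $L=\inf_n a_n/n$, the trivial bound $\liminf_n a_n/n\geq L$, the choice of a good index $m$ with $a_m/m<L+\varepsilon$, the Euclidean division $n=qm+r$ with the convention $a_0=0$, the inductively iterated subadditivity $a_n\leq q\,a_m+a_r$, and the uniform bound $a_r\leq M=\max\{a_0,\dots,a_{m-1}\}$ are exactly the ingredients needed, and you freeze $\varepsilon$ and $m$ before letting $n\to\infty$, so the quantifiers are in the right order. Note that the paper states the Lema de Fekete without proof, as a known auxiliary result, so there is no internal argument to compare against; yours is the standard classical proof. One micro-simplification: since $a_m\geq 0$ and $qm\leq n$, you do not even need $qm/n\to 1$; the inequality $\frac{qm}{n}\cdot\frac{a_m}{m}\leq\frac{a_m}{m}$ already suffices to conclude $\limsup_n a_n/n\leq a_m/m<L+\varepsilon$.
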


A propriedade (3) na proposição acima garante que a função $\log \rho_S$ é subaditiva. Portanto, pelo Lema de Fekete, existe o limite $$\lim_{n\to \infty}\dfrac{\log \rho_S(n)}{n}.$$

Definimos assim a \textit{constante de crescimento} de um grupo $G$ finitamente gerado pelo conjunto $S$: $$\gamma_S = \lim_{n\to \infty}(\rho_S(n))^{\frac{1}{n}}.$$ 

A propriedade (2) implica que $\rho_S(n)\geq n$. Portanto, $\gamma_S \geq 1$ se $G$ for infinito. 

\begin{definition} Se $\gamma_S > 1$,  dizemos que $G$ tem \textit{crescimento exponencial}\index{crescimento exponencial}. Nesse caso, o número $\gamma_S$ é chamado de \textit{ordem de crescimento exponencial} ou \textit{entropia} de $(G, S)$.
Caso $\gamma_S = 1$, dizemos que $G$ tem \textit{crescimento sub\-exponencial}.
\end{definition}

Observamos que a propriedade (1) na Proposição \ref{prop:crescimentogrupos} implica que a definição acima não depende da escolha de geradores. Por exemplo, pelo Exercício~\ref{exer9.5}.4 temos que o grupo livre não abeliano $F_n$ tem crescimento exponencial de ordem $2n-1$.

\begin{proposition}
\begin{enumerate}
\item[(1)] Se $H \leq G$ são finitamente gerados, então $\rho_H \preceq \rho_G$;
\item[(2)] Se $H$ é um subgrupo de índice finito de $G$ então $\rho_H \asymp \rho_G$;
\item[(3)] Se $N$ é um subgrupo normal de $G$ então $\rho_{G/N}\preceq\rho_G$;
\item[(4)] Se $N$ é um subgrupo normal finito de $G$ então $\rho_{G/N}\asymp\rho_G$.
\end{enumerate}\label{prop:cresc}
\end{proposition}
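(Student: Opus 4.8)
Queremos estabelecer as quatro relações de crescimento, duas delas ($\preceq$) a partir de mergulhos naturais e duas delas ($\asymp$) a partir de quasi-isometrias já conhecidas.

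O plano é tratar os itens na ordem $(3), (1), (4), (2)$, começando pelos casos mais diretos. Para o item $(3)$, fixo um conjunto finito $S$ de geradores de $G$ e considero a projeção canônica $\pi : G \to G/N$, que pelo Exercício~\ref{exer9.5} (ou pela Proposição~\ref{prop:fggroups}) envia $S$ num conjunto gerador $\Bar{S}=\pi(S)$ de $G/N$. O ponto-chave é que $\pi$ não aumenta a métrica das palavras: se $g = s_1\cdots s_r$ com $s_i\in S$, então $\pi(g) = \pi(s_1)\cdots \pi(s_r)$, de modo que $|\pi(g)|_{\Bar{S}} \leq |g|_S$. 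Logo $\pi(\overline{B(1,r)}) \subset \overline{B(1N,r)}$, e como $\pi$ é sobrejetiva, cada ponto de $\overline{B(1N,r)}$ tem pré-imagem em $\overline{B(1,r)}$, donde $\rho_{G/N,\Bar{S}}(r) \leq \rho_{G,S}(r)$. Isso dá $\rho_{G/N}\preceq \rho_G$.

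Para o item $(1)$, seja $T$ um conjunto finito de geradores de $H$ e estenda $T$ a um conjunto finito $S\supset T$ de geradores de $G$. Com essa escolha, qualquer palavra em $T$ que represente $h\in H$ é também uma palavra em $S$, logo $|h|_S \leq |h|_T$ para todo $h\in H$, o que mostra $\overline{B_H(1,r)}\cap H \subset \overline{B_G(1,r)}$ como conjuntos; assim $\rho_{H,T}(r) \leq \rho_{G,S}(r)$, e a independência da escolha de geradores (Proposição~\ref{prop:crescimentogrupos}(1)) encerra o caso, fornecendo $\rho_H\preceq\rho_G$. Os itens $(2)$ e $(4)$ são imediatos a partir do Corolário~\ref{Mil}, combinado com o Lema~\ref{lemQI}: no item $(2)$, $H$ de índice finito é quasi-isométrico a $G$ (parte 1 do Corolário~\ref{Mil}), e como ambos são uniformemente discretos, o Lema~\ref{lemQI} dá $\rho_H\asymp\rho_G$; no item $(4)$, $N$ normal e finito implica, pela parte 2 do Corolário~\ref{Mil}, que $G$ e $G/N$ são quasi-isométricos, donde novamente $\rho_{G/N}\asymp\rho_G$ pelo Lema~\ref{lemQI}.

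O principal cuidado — mais do que um obstáculo profundo — está em garantir que o Lema~\ref{lemQI} se aplique nos itens $(2)$ e $(4)$, isto é, verificar que os grupos em questão, munidos da métrica das palavras, são de fato uniformemente discretos; isso segue de serem finitamente gerados, pois a bola $\overline{B(1,r)}$ contém no máximo $\sum_{j=0}^{\lfloor r\rfloor}|S|^j$ elementos, fornecendo a função $\varphi$ da Definição~\ref{unifdisc}. Para o item $(1)$ o único ponto delicado é a compatibilidade de geradores: a estimativa $|h|_S\leq |h|_T$ só vale porque escolhemos $S\supset T$, e é a invariância da classe de crescimento sob mudança de geradores (Proposição~\ref{prop:crescimentogrupos}(1)) que nos permite fazer tal escolha sem perda de generalidade. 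Observo ainda que nos itens $(1)$ e $(3)$ obtemos apenas $\preceq$, e não $\asymp$, refletindo o fato de que subgrupos de índice infinito ou quocientes por subgrupos infinitos podem genuinamente ter crescimento estritamente menor.
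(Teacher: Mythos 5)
Your proof is correct, and items (1) and (2) follow the paper's own argument exactly: extend a finite generating set of $H$ to one of $G$ and use the resulting inclusion of balls, then quote the quasi-isometry between $H$ and $G$ (Corolário~\ref{Mil}) together with Lema~\ref{lemQI}. Where you genuinely diverge is in how (3) and (4) are distributed between the two available tools. For (3) you give the direct counting argument: $\pi:G\to G/N$ sends words in $S$ to words in $\Bar{S}=\pi(S)$ of no greater length, and every word in $\Bar{S}$ lifts letter by letter, so $\pi$ maps $\overline{B(1,r)}$ \emph{onto} $\overline{B(1N,r)}$ and $\rho_{G/N}\preceq\rho_G$ follows. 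The paper instead dispatches (3) together with (2) by citing Lema~\ref{lemQI} and Milnor--Schwarz, which is puzzling as written, since $G$ and $G/N$ need not be quasi-isometric when $N$ is infinite; your ball-projection argument is exactly what item (3) requires, and it is in fact the argument the paper reserves for item (4). Conversely, for (4) you invoke the quasi-isometry between $G$ and $G/N$ (Corolário~\ref{Mil}, item 2) plus Lema~\ref{lemQI}, whereas the paper stays elementary: $\pi$ maps balls onto balls and, since each fiber is a coset of the finite group $N$, one also gets $\rho_G(r)\leq |N|\,\rho_{G/N}(r)$, yielding $\asymp$ with no QI machinery. Both routes are valid; yours leans on results already established in Chapter~4 (and your check that word metrics are uniformly discrete, needed to apply Lema~\ref{lemQI}, is a point the paper leaves implicit), while the paper's direct count for (4) is more self-contained and makes the role of $|N|<\infty$ explicit. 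One small wording issue in your (3): surjectivity of $\pi$ alone does not place a preimage of each point of $\overline{B(1N,r)}$ inside $\overline{B(1,r)}$; what you need, and implicitly use, is precisely the lifting of short words, so it is worth saying that rather than "como $\pi$ é sobrejetiva".
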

\begin{proof}
(1) Seja $X$ um conjunto finito que gera $H$, e suponha que $ S\supset X$ gera $G$. Então $\cay(H,X)$ é um subgrafo de $\cay(G,S)$ e 
$$\dist_X(1,h) \geq \dist_S(1,h)\quad \forall h\in H.$$
Em particular, temos $\overline{B(1,r)}_{\cay(H,X)} \subseteq \overline{B(1,r)}_{\cay(G,S)}$ e o resultado segue.

(3) Seja $S$ um conjunto finito de geradores de $G$ tal que $S^{-1} = S$ e $1\not\in S$. Então o conjunto $\overline{S} := \{ sN \mid  s\in S,\ s\not\in N\}$ gera $G/N$. 
A projeção $\pi: G \to G/N$ mapeia a bola $\overline{B(1,r)}$ de $\cay(G,S)$ na bola $\overline{B(1,r)}$ de $\cay(G/N, \overline{S})$, o que fornece o resultado desejado.

Os itens (2) e (4) decorrem imediatamente do Lema~\ref{lemQI} e do Teorema de Milnor--Schwarz.

\end{proof}

Agora vamos considerar algumas propriedades específicas de grupos com  \textit{crescimento polinomial},\index{crescimento polinomial} i.e. quando existem constantes $a,b \in \R$ tais que $\rho_{G,S}(x) \leq ax^b$ para todo $x$. Se $G$ tem crescimento polinomial, seu \textit{grau} de crescimento é deﬁnido por 
\begin{eqnarray*}
    d(G) &=& \inf \{ b \mid \text{existe $a$ tal que } \rho_{G,S}(x) \leq a x^b \}\\
    & =& \limsup_{x\to\infty} \frac{\log \rho_{G,S}(x)}{\log x}.
\end{eqnarray*}

\begin{proposition}\label{prop:cresc_pol}
Seja $G$ um grupo  com  crescimento polinomial. 
\begin{enumerate}[(1)]
    \item Se um grupo $H$ satisfaz $\rho_H \asymp \rho_G$, então $d(H)= d(G)$. Em particular, o grau de crescimento não depende da escolha dos geradores.
    \item Se $H \leq G$ tem índice infinito e é finitamente gerado, então $d(H) \leq d(G) - 1$. 
    \item Se $N\triangleleft G$ for infinito e finitamente gerado, então $d(G/N ) \leq d(G) - 1$.
\end{enumerate}
\end{proposition}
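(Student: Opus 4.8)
O plano é tratar o item (1) como consequência direta da definição de grau de crescimento junto com a relação $\asymp$, e lidar com os itens (2) e (3) por meio de um argumento comum de empacotamento de bolas, que força o crescimento de $G$ a superar o do quociente ou subgrupo relevante em uma unidade de grau polinomial.

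Para o item (1), começaria de $d(G)=\limsup_{x\to\infty}\frac{\log\rho_{G,S}(x)}{\log x}$ e usaria que $\rho_H\asymp\rho_G$ se desdobra em $\rho_H(x)\le K\rho_G(Kx+K)+K$ e simetricamente. Fixado $\varepsilon>0$, a definição de $\limsup$ garante que $\rho_G(y)\le y^{d(G)+\varepsilon}$ para todo $y$ suficientemente grande; substituindo $y=Kx+K$ e tomando $\log$ (onde a parcela aditiva $+K$ e o fator $K$ contribuem apenas com $O(1)$ e com $\frac{\log(Kx+K)}{\log x}\to 1$), obtém-se $\limsup_x\frac{\log\rho_H(x)}{\log x}\le d(G)+\varepsilon$. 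Fazendo $\varepsilon\to 0$ resulta $d(H)\le d(G)$, e a desigualdade recíproca segue por simetria. A afirmação ``em particular'' é então imediata: dois conjuntos finitos de geradores dão métricas das palavras quasi-isométricas (Exercício~\ref{metricadaspalavras}), logo $\rho_S\asymp\rho_{S'}$ pelo Lema~\ref{lemQI}, e portanto $d(G)$ não depende de $S$.

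Para os itens (2) e (3), o mecanismo comum é o seguinte. Ampliando $S$ se necessário (o que é legítimo pelo item (1)), escolho um conjunto simétrico de geradores $S$ de $G$ contendo um conjunto de geradores do subgrupo que fornecerá as bolas ``fibra'': em (2) a fibra é $\overline{B_H(1,r)}\subset\cay(H,X)$ com $X\subseteq S$, e em (3) é $\overline{B_N(1,r)}\subset\cay(N,Y)$ com $Y\subseteq S$. Em ambos os casos, se eu exibir $M(r)$ classes laterais distintas (da forma $g_iH$ em (2), $t_jN$ em (3)), cada uma com um representante de comprimento $\le r$ em $S$, então as bolas-fibra transladadas $g_i\overline{B_H(1,r)}$ (resp. $t_j\overline{B_N(1,r)}$) são duas a duas disjuntas — pois estão em classes distintas —, cada uma tem cardinalidade $\rho_H(r)$ (resp. $\rho_N(r)$), e cada uma está contida em $\overline{B_G(1,2r)}$, já que $d_S(1,\cdot)\le d_X(1,\cdot)$ em $H$ (resp. $\le d_Y$ em $N$). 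Isso fornece $\rho_G(2r)\ge M(r)\,\rho_H(r)$ (resp. $\rho_G(2r)\ge M(r)\,\rho_N(r)$). O cálculo dos graus então repete o do item (1): de $\rho_G(2r)\le(2r)^{d(G)+\varepsilon}$ para $r$ grande e da cota inferior, tomando logaritmos e dividindo por $\log r$, extrairia $d(H)\le d(G)-1$ (resp. $d(G/N)\le d(G)-1$), desde que um dos dois fatores do lado direito já seja de tamanho linear em $r$.

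O ponto central — e que espero ser o principal obstáculo — é produzir esse fator linear extra. No item (3) é fácil: $N$ é infinito e finitamente gerado, logo pela Proposição~\ref{prop:crescimentogrupos}(2) vale $\rho_N(r)\ge r+1$, e tomo $M(r)=\rho_{G/N}(r)$ classes (levantando palavras geodésicas de $\overline{B_{G/N}(1,r)}$ letra a letra, obtenho representantes de comprimento $\le r$). No item (2) o fator linear precisa vir do número de classes: mostraria $M(r)\ge r+1$ passando ao grafo de classes de Schreier $\Sigma(G,S,H)$. Esse grafo é conexo (pois $S$ gera $G$), localmente finito (grau $\le |S|$) e infinito (pois $[G:H]=\infty$); em qualquer grafo com essas propriedades, toda bola de raio $r$ em torno de um vértice contém ao menos $r+1$ vértices, pois uma bola finita nunca pode ser todo um grafo conexo infinito, de modo que cada esfera de raio $k$ centrada em $H$ é não vazia. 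Como ter distância $\le r$ do vértice base $H$ nesse grafo equivale exatamente a que a classe $gH$ possua um representante de comprimento $\le r$ em $S$, isso fornece $r+1$ classes admissíveis e $\rho_G(2r)\ge (r+1)\rho_H(r)$. Concluiria verificando com cuidado as manipulações de $\limsup$ (tratando os deslocamentos $+C$, $Kx+K$ e $2r$, que apenas perturbam razões de logaritmos por $o(1)$), de modo que o grau caia exatamente por $1$.
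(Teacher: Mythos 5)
Sua proposta está correta e segue essencialmente o mesmo caminho da prova do livro: nos três itens a desigualdade central é a mesma, $\rho_G(2r)\geq r\,\rho_H(r)$ no item (2) e $\rho_G(2r)\geq \rho_{G/N}(r)\,\rho_N(r)$ com $\rho_N(r)\geq r$ no item (3), obtida contando classes laterais com representantes de comprimento $\leq r$ e transladando a bola do subgrupo, seguida da mesma manipulação de $\limsup$. A única diferença é de empacotamento no item (2): onde você invoca o grafo de Schreier conexo, localmente finito e infinito para garantir $r+1$ classes a distância $\leq r$ do vértice base, o texto argumenta diretamente que uma união finita de classes laterais não pode ser fechada sob multiplicação à direita pelos geradores — que é exatamente a demonstração de que as esferas desse grafo são não vazias, ou seja, o mesmo argumento em outra roupagem.
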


\begin{proof}
A prova de (1) segue imediatamente da definição da equivalência assintótica e dos resultados anteriores nesta seção.

Seja $H \leq G$ tal que $|G : H| = \infty$. Considere $X = \{x_1,\ldots, x_m\}$ um conjunto de geradores de $G$, que assumimos conter também um conjunto de geradores de $H$. Denote por $Hu_1,\ldots, Hu_n$ algumas classes laterais distintas de $H$. O conjunto $K := Hu_1 \cup \ldots \cup Hu_n$ não é fechado sob multiplicação à direita pelos geradores de $G$ e seus inversos, pois caso contrário, teríamos $K = G$ e $H$ teria índice finito. Assim, um dos elementos $u_i x_j^{\pm 1}$ representa uma nova classe lateral. Começando com $u_1 = x_1$, este argumento mostra que, para cada $n\in \N$, podemos encontrar $n$ classes distintas de $H$ que são representadas por elementos de comprimento no máximo $n$, então podemos supor que $l(u_i) \leq i$. Se $z_1, \ldots, z_k$ são elementos de comprimento no máximo $n$ de $H$, então os elementos $z_i u_j$ são todos diferentes entre si. Portanto, $\rho_G(2n) \geq n \rho_H(n)$ e, se o crescimento for polinomial, então $d(G) \geq d(H) + 1$.

Agora seja $N$ um subgrupo normal, infinito  e finitamente gerado de $G$. Tome um conjunto finito $X$  de geradores de $G$, contendo um conjunto $Y$ de geradores de $N$. Temos
$$\rho_{G,X}(2n) \geq \rho_{G/N,X/N}(n) \rho_{N,Y}(n) \geq n \rho_{G/N}(n),$$ 
que implica que $d(G) \geq d(G/N) + 1$.
\end{proof}
Em seu livro, \cite[Capítulos~14 e 16]{Mann} discute exemplos e resultados referentes ao crescimento de produtos amalgamados e extensões HNN de grupos. No Capítulo~14, são apresentadas fórmulas explícitas para as funções de crescimento de produtos amalgamados, produtos livres e extensões HNN, em termos das funções de crescimento dos grupos envolvidos. No Capítulo~16, são discutidos resultados a respeito do crescimento de grupos que podem ser expressos a partir de tais construções.

\section{Grupos solúveis, nilpotentes e policíclicos}
\label{sec:sol-nilp-pol}

Nesta seção, trabalharemos com importantes classes de grupos, co\-mo grupos solúveis, nilpotentes e policíclicos. Lembramos agora suas definições e algumas propriedades básicas. Mais detalhes podem ser encontrados em livros sobre teoria de grupos, como  \cite{lang}.

\begin{definition}
Um grupo é dito \textit{solúvel} \index{grupo solúvel} se existem finitos subgrupos 
$G = G_0 \unrhd \cdots \unrhd G_{n-1} \unrhd G_n = \{e\}$, tais que $G_{i}/G_{i+1}$ é abeliano, para cada $i=0,\ldots, n-1$.
\end{definition}

Alternativamente, podemos dizer que $G$ é solúvel se sua \textit{série derivada}\index{serie derivada@série derivada} $G\unrhd G^{(1)}\unrhd G^{(2)} \unrhd \cdots$, onde para cada $i\geq 1$, o grupo $G^{(i+1)}$ é o comutador de $G^{(i)}$, eventualmente chega ao grupo trivial $\{e\}$. Lembramos que o comutador de um grupo $G$ é o subgrupo $[G,G]$, gerado pelo conjunto $\{[g,h] = ghg^{-1}h^{-1} \mid g,h \in G\}$. A condição $G^{(1)} = \{e\}$ significa que o grupo $g$ é abeliano. No caso em que  $G^{(2)} = \{e\}$ o grupo é chamado \textit{metabeliano}\index{grupo metabeliano}.

Subgrupos e  quocientes de grupos solúveis são solúveis, bem como extensões de um grupo solúvel por um outro grupo solúvel.

\begin{definition}
Dado um grupo $G$, definimos indutivamente, para todo  $n\in \N$, subgrupos $C^n(G)$ por $C^0(G) := G$ e, para cada $n$, $C^{n+1}(G) :=[C^n(G), G]$.
A sequência 
$$C^1(G)\unrhd C^2(G) \unrhd \cdots \unrhd C^n(G) \unrhd \cdots $$ é chamada \textit{série central inferior}\index{serie central inferior@série central inferior} de $G$. Dizemos que $G$ é  \textit{nilpotente} de classe $n$ se existe $n\in \N$ tal que $C^n(G)$ é trivial. \index{grupo nilpotente}
\end{definition}

Seja $S$ um conjunto de geradores para $G$. Um
comutador iterado à esquerda com comprimento $n\geq 2$ em $S$ é um comutador do tipo $[[\ldots [x_1, x_2]\ldots, x_{n-1}], x_n]$, onde $x_1, \ldots , x_n$ estão em $S$. 

\begin{exercise}\label{exerc:fgcommutators}
    Se $G$ é um grupo nilpotente finitamente gerado por um conjunto $S$, então para todo $i\geq 2$, o subgrupo $C^iG $ é gerado pelos comutadores iterados à esquerda em elementos de $S$ de comprimento $k\geq i$, juntamente com seus inversos. Note que como $G$ é nilpotente, basta tomar comutadores de comprimento $k \leq n$, onde $n$ é a classe de nilpotência de $G$. Portanto, cada $C^iG$ é finitamente gerado.
\end{exercise}

Podemos também definir grupos nilpotentes em termos de sua \textit{série central superior}:\index{serie central superior@série central superior} 
$$Z_0(G) = \{e\} \unlhd  Z_1(G) \unlhd  \cdots \unlhd Z_i(G) \unlhd  Z_{i+1}(G) \unlhd \cdots, $$
onde os subgrupos normais $Z_i(G)\triangleleft G$ são também definidos indutivamente, da seguinte forma: se $Z_i(G)$ está definido, e $\pi_i: G \to G/Z_i(G)$ é o mapa quociente, então 
$$Z_{i+1}(G) := \pi_i^{-1} (Z(G/Z_i(G))).$$
Estes grupos são normais em $G$ pelo fato de serem a imagem inversa pelo mapa quociente de um subgrupo normal do quociente em questão.

\begin{proposition} \label{prop:nilpotente}
Um grupo $G$ é \textit{nilpotente} se, e somente se, sua série central superior é finita.
\end{proposition}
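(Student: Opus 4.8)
O plano é mostrar que as duas séries têm o mesmo comprimento, estabelecendo inclusões cruzadas entre seus termos por meio de duas induções que percorrem os índices em sentidos opostos. A ferramenta central são duas identidades elementares: de um lado, a própria definição $C^{k+1}(G) = [C^k(G),G]$; de outro, da definição $Z_{i+1}(G)/Z_i(G) = Z(G/Z_i(G))$ deduz-se que um elemento $x \in G$ pertence a $Z_{i+1}(G)$ se, e somente se, $[x,g] \in Z_i(G)$ para todo $g \in G$, e em particular que $[Z_{i+1}(G), G] \subseteq Z_i(G)$. Primeiro eu verificaria com cuidado essa equivalência, pois é ela que conecta a condição ``ser central no quociente'' à linguagem de comutadores usada na série inferior.

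Para a ida (nilpotente $\Rightarrow$ série central superior finita), suponho $C^n(G) = \{e\}$ para algum $n$ e provo por indução em $i$, com $0 \le i \le n$, a inclusão $C^{n-i}(G) \subseteq Z_i(G)$. O caso base $i=0$ é exatamente a hipótese $C^n(G) = \{e\} = Z_0(G)$. No passo indutivo, assumindo $C^{n-i}(G) \subseteq Z_i(G)$, observo que $[C^{n-i-1}(G), G] = C^{n-i}(G) \subseteq Z_i(G)$; logo a imagem de $C^{n-i-1}(G)$ em $G/Z_i(G)$ comuta com todos os elementos, ou seja, está contida em $Z(G/Z_i(G)) = Z_{i+1}(G)/Z_i(G)$, donde $C^{n-i-1}(G) \subseteq Z_{i+1}(G)$. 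Tomando $i=n$ obtenho $G = C^0(G) \subseteq Z_n(G)$, isto é, $Z_n(G) = G$, e a série central superior atinge $G$ em no máximo $n$ passos.

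Para a volta (série central superior finita $\Rightarrow$ nilpotente), suponho $Z_m(G) = G$ para algum $m$ e provo por indução em $i$, com $0 \le i \le m$, a inclusão dual $C^i(G) \subseteq Z_{m-i}(G)$. O caso base $i=0$ é $C^0(G) = G = Z_m(G)$. No passo indutivo, de $C^i(G) \subseteq Z_{m-i}(G)$ obtenho $C^{i+1}(G) = [C^i(G), G] \subseteq [Z_{m-i}(G), G] \subseteq Z_{m-i-1}(G)$, usando a relação $[Z_{j+1}(G),G]\subseteq Z_j(G)$. Fazendo $i=m$ resulta $C^m(G) \subseteq Z_0(G) = \{e\}$, logo $C^m(G) = \{e\}$ e $G$ é nilpotente.

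Não espero obstáculos profundos: o ponto delicado é puramente o controle dos índices nas duas induções (que avançam em direções contrárias) e a tradução correta, em ambos os sentidos, entre a condição ``$Z_{i+1}/Z_i$ ser o centro de $G/Z_i$'' e a condição de comutadores ``$[Z_{i+1},G]\subseteq Z_i$''. Uma vez fixada essa equivalência, as duas induções tornam-se mecânicas; como subproduto, o argumento mostra ainda que as classes de nilpotência determinadas pela série inferior e pela série superior coincidem.
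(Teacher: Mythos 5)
Sua demonstração está correta e completa: a equivalência $x \in Z_{i+1}(G) \Leftrightarrow [x,g]\in Z_i(G)\ \forall g\in G$ está bem justificada, e as duas induções cruzadas ($C^{n-i}(G)\subseteq Z_i(G)$ e $C^i(G)\subseteq Z_{m-i}(G)$) têm o controle de índices exato, inclusive no uso de $[Z_{j+1}(G),G]\subseteq Z_j(G)$ apenas quando $j+1\geq 1$. O livro não apresenta demonstração para esta proposição — ela é deixada como exercício logo em seguida — e o seu argumento é precisamente a solução canônica esperada, com o bônus correto de que as classes de nilpotência medidas pelas duas séries coincidem.
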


\begin{exercise}
    Demonstre a Proposição \ref{prop:nilpotente}.
\end{exercise}

Subgrupos, grupos quocientes e produtos diretos de grupos nilpotentes são nilpotentes, mas uma extensão de um grupo nilpotente por outro não precisa ser nilpotente. Este último fato pode ser visto considerando o menor grupo não abeliano, o grupo simétrico sobre três letras $S_3$. O mesmo exemplo mostra que os grupos solúveis não precisam ser nilpotentes, embora fique claro pela definição que os grupos nilpotentes são solúveis.

\begin{example}
    \begin{enumerate}
        \item Todo grupo abeliano é nilpotente, já que o seu comutador é trivial.
    \item O grupo de Heisenberg \index{grupo de Heisenberg discreto}
    $$H = \langle x, y, z \mid [x, z] = 1,\, [y, z] = 1,\, [x, y] = z\rangle$$ 
    é nilpotente. De fato, por conta das relações em $H$, temos $C^1(H) = [H,H]\cong \langle z\rangle_H$ e  $C^2(H) \cong [H, \langle z\rangle_H] = \{e\}$.
    \end{enumerate}
\end{example}
Observamos que grupos virtualmente nilpotentes não precisam ser nilpotentes ou solúveis.
Por exemplo, todo grupo finito é virtualmente nilpotente, mas nem todo grupo finito é nilpotente. Um exemplo é o caso dos  grupos alternados $A_n$, os quais são simples e não abelianos quando $n \geq 5$ e, portanto, não são solúveis \cite[Teorema I.5.5]{lang}.

\begin{exercise}
    Mostre que o produto semidireto
$\Z^2 \rtimes_{\alpha} \Z$, onde $\alpha: \Z \to  \mathrm{Aut}(\Z^2)$ é dado pela ação da matriz $ \left(\begin{array}{cc}
   1 & 1\\
   1 & 2 
   \end{array}\right)$
em $\Z^2 $, é um grupo solúvel que não é  virtualmente nilpotente.
\end{exercise}

\begin{example}
Se $G$ é um grupo finitamente gerado e nilpotente, seu cone assintótico é  homeomorfo a $\R^m$, onde $m$ é a soma  dos postos da  série central inferior de $ G$ (veja \cite[Seção~5]{point1995groups}).
\end{example}

A seguir definiremos uma classe de grupos solúveis que generaliza a classe dos grupos nilpotentes finitamente gerados, os grupos policíclicos. Esses grupos admitem uma série subnormal cujos quocientes são cíclicos, propriedade que lhes confere uma estrutura algébrica suficientemente rígida para permitir diversas demonstrações por indução.

\begin{definition}
Um grupo $G$ é dito \textit{policíclico} \index{grupo policíclico} se existir uma série subnormal descendente 
\begin{equation}
   G = N_0 \rhd N_1\rhd \cdots \rhd N_n = \{e\},
   \label{eq:poli}
\end{equation}
tal que $ N_i/N_{i+1}$ é cíclico para todo $i = 0, \ldots, n-1$.
\end{definition}
Uma série como acima é chamada \textit{série cíclica},\index{serie cíclica@série cíclica} e seu comprimento é o número de subgrupos não triviais na série. Esse número é menor ou igual a $n$. O \textit{comprimento} $\ell(G)$ de um grupo policíclico é o menor comprimento de uma série cíclica de $G$.
Se, além disso, $N_i/N_{i+1}$ é cíclico infinito para todo $ i > 0$, então o grupo $G$ é chamado \textit{poli-$C_{\infty}$} \index{grupo poli-$C_{\infty}$}
e a série é chamada \textit{série}-$C_{\infty}$.
Declaramos que o grupo trivial é  poli-$C_{\infty}$.

\begin{proposition}
\label{prop:polic}
  \begin{enumerate}[(1)]
      \item  Um grupo policíclico tem a propriedade de \textit{geração limitada}:  se $G$ é um grupo com série cíclica  \eqref{eq:poli} de comprimento $n$  e $t_i$  é tal que $t_iN_{i+1}$ é um gerador de $N_i/N_{i+1}$, então todo  $g \in  G$ pode ser escrito como  $g = t_1^{k_1} \ldots t^{k_n}_n $, para  $k_1, \ldots, k_n \in \Z$.\index{grupo limitadamente gerado}
\item Um grupo de torção policíclico é finito.
\item  Todo subgrupo de um grupo policíclico é policíclico e, portanto, finitamente gerado.
\item Se $N$ é um   subgrupo normal em um grupo policíclico $G$, então $G/N$ é policíclico.
\item  Se $N \triangleleft G$ e ambos $N$ e $G/N$ são policíclicos, então $G$ é policíclico.
\item As propriedades (3) e (5) valem com ``poli-$C_{\infty}$'' no lugar de ``policíclico'', mas a propriedade (4) não.
  \end{enumerate} 
\end{proposition}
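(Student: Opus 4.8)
O plano é tratar os seis itens por indução no comprimento da série cíclica, reduzindo tudo a dois fatos elementares: um subgrupo de um grupo cíclico é cíclico, e uma imagem homomórfica de um grupo cíclico é cíclica.

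Primeiro, para os itens (1) e (2) procederia por indução em $n$. Para (1), dado $g \in G = N_0$, a imagem de $g$ em $N_0/N_1$ é uma potência $t_1^{k_1}N_1$ do gerador, logo $t_1^{-k_1}g \in N_1$; aplicando a hipótese de indução à série $N_1 \rhd \cdots \rhd N_n = \{e\}$ de $N_1$, escrevo $t_1^{-k_1}g = t_2^{k_2}\cdots t_n^{k_n}$, donde segue a fórmula. Em particular, $G = \langle t_1, \ldots, t_n\rangle$ é finitamente gerado. Para (2), noto que, sendo $G$ de torção, o grupo cíclico $N_0/N_1$ é de torção, portanto finito; como $N_1$ é de torção e admite a série cíclica restante, a hipótese de indução dá $|N_1| < \infty$, e assim $|G| = |N_0/N_1|\,|N_1| < \infty$.

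Para (3), (4) e (5) usaria as construções padrão de séries. Em (3), dado $H \leq G$, ponho $H_i = H \cap N_i$; como $N_{i+1} \triangleleft N_i$, tem-se $H_{i+1} = H_i \cap N_{i+1} \triangleleft H_i$, e o segundo teorema do isomorfismo fornece $H_i/H_{i+1} \cong H_iN_{i+1}/N_{i+1} \leq N_i/N_{i+1}$, que é cíclico; logo $H_i/H_{i+1}$ é cíclico e, após eliminar eventuais repetições, obtenho uma série cíclica para $H$, que é então finitamente gerado por (1). Em (4), projeto a série via $\pi : G \to G/N$, tomando $\overline{N}_i = \pi(N_i)$: cada $\overline{N}_{i+1} \triangleleft \overline{N}_i$ e $\overline{N}_i/\overline{N}_{i+1}$ é imagem homomórfica de $N_i/N_{i+1}$, logo cíclico. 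Em (5), concateno as séries: elevo a série cíclica de $G/N$ a $G$ pondo $L_j = \pi^{-1}(\overline{L}_j)$, obtendo $G = L_0 \rhd \cdots \rhd L_m = N$ com quocientes $L_j/L_{j+1}\cong \overline{L}_j/\overline{L}_{j+1}$ cíclicos, e a emendo com uma série cíclica de $N$, produzindo uma série cíclica de $G$.

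Finalmente, para (6) observaria que as demonstrações de (3) e (5) preservam a condição ``$C_\infty$'': em (3), cada $H_i/H_{i+1}$ mergulha em $N_i/N_{i+1} \cong \Z$, e um subgrupo não trivial de $\Z$ é infinito cíclico, de modo que a série reduzida de $H$ tem todos os quocientes infinitos cíclicos; em (5), a série concatenada herda quocientes infinitos cíclicos de ambas as partes. Já (4) falha porque um quociente de um grupo cíclico infinito pode ser finito: tomando $G = \Z$ (que é poli-$C_\infty$) e $N = m\Z$ com $m > 1$, o quociente $G/N \cong \Z/m\Z$ é finito não trivial e portanto não admite série com quocientes infinitos cíclicos. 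O principal cuidado, mais do que uma dificuldade propriamente dita, está em justificar a passagem de ``cíclico'' para ``infinito cíclico'' no item (6), isto é, em notar que a operação de subgrupo não cria torção em $\Z$ mas a de quociente sim, exatamente o ponto que distingue (3) e (5) de (4).
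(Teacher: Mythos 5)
Sua prova está correta. Observe que o livro não apresenta demonstração própria para esta proposição — ela é deixada como exercício logo após o enunciado —, de modo que o seu argumento (indução no comprimento da série para (1) e (2), interseção $H_i = H\cap N_i$ com o segundo teorema do isomorfismo para (3), projeção da série para (4), levantamento e concatenação para (5), e a observação de que subgrupos de $\Z$ não criam torção mas quocientes sim para (6), com o contraexemplo $\Z/m\Z$) é exatamente a solução padrão que o exercício pretende, sem lacunas.
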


\begin{exercise}
    Prove a Proposição \ref{prop:polic}.
\end{exercise}

\begin{thm}
    Todo grupo nilpotente finitamente gerado é policíclico. 
\end{thm}

\begin{proof}
   A prova pode ser feita por indução na classe de nilpotência $n$ de $G$. Se $n=1$, então $G$ é abeliano. Logo, $G$ é policíclico, devido ao Teorema Fundamental dos grupos abelianos finitamente gerados.

   Assumindo o resultado verdadeiro para grupos finitamente gerados e  nilpotentes de classe $n$, e tomando $G$ finitamente gerado e nilpotente de classe $n+1$, observe que o grupo $C^1G$ é finitamente gerado, pelo Exercício \ref{exerc:fgcommutators}, e nilpotente de classe $n$. Pela hipótese de indução, $C^1G$ é policíclico. 

   Sendo a abelianização $G/C^1G$ um grupo abeliano finitamente gerado, obtemos que $G/C^1G$ é policíclico. Pelo item $(5)$ da Proposição \ref{prop:polic}, concluímos que $G$ é policíclico.
\end{proof}

Uma observação a respeito do resultado anterior é que a condição de que o grupo seja finitamente gerado é essencial. Por exemplo, o grupo $\displaystyle \bigoplus_{n=1}^{\infty} \mathbb{Z}$ é abeliano (portanto, nilpotente), mas não é policíclico. 

Todo grupo policíclico é solúvel. De fato, isto segue imediatamente por indução sobre o comprimento cíclico de um grupo policíclico $G$ e da propriedade que a extensão de um grupo solúvel por um outro grupo solúvel é solúvel. Existe também uma relação na direção oposta, válida para a subclasse de grupos solúveis noetherianos, da qual tratamos a seguir.

\begin{definition}\label{def:noetherian}
Um grupo é dito \textit{noetheriano}\index{grupo noetheriano}, ou satisfaz a \textit{condição máxima}, se para cada sequência crescente de subgrupos
\begin{equation}\label{eq:noeth}
    H_1 \leq H_2 \leq \cdots \leq H_n \leq  \cdots
\end{equation}
existe $N$ tal que $H_n = H_N$ para todo $n \geq N$.    
\end{definition}

\begin{proposition}\label{prop:noet1}
Um grupo $G$ é noetheriano se, e somente se, todo subgrupo de $G$ é finitamente gerado.    
\end{proposition}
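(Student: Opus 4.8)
O plano é provar as duas implicações separadamente, sendo que ambas são argumentos padrão envolvendo cadeias ascendentes de subgrupos. Começo observando que a afirmação relaciona uma condição sobre cadeias (a condição máxima, ou condição de cadeia ascendente) com uma propriedade de finitude de todos os subgrupos. A direção mais instrutiva é mostrar que a condição máxima implica que todo subgrupo é finitamente gerado, pois exibe explicitamente um conjunto gerador finito; a recíproca é mais curta mas usa de forma essencial o fato de que cada subgrupo individual da cadeia é finitamente gerado.

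Primeiro provaria a implicação direta: suponha que $G$ é noetheriano e seja $H \leq G$ um subgrupo qualquer. Afirmo que $H$ é finitamente gerado. Argumento por contradição: se $H$ não fosse finitamente gerado, construiria indutivamente uma cadeia estritamente crescente de subgrupos de $H$ (e portanto de $G$). Escolho $h_1 \in H$ qualquer e ponho $H_1 = \langle h_1 \rangle$. Tendo definido $H_n = \langle h_1, \ldots, h_n\rangle$, como $H$ não é finitamente gerado tem-se $H_n \neq H$, de modo que existe $h_{n+1} \in H \setminus H_n$; defino $H_{n+1} = \langle h_1, \ldots, h_{n+1}\rangle$. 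Por construção $H_n \lneq H_{n+1}$ para todo $n$, obtendo assim uma cadeia
$$ H_1 \lneq H_2 \lneq \cdots \lneq H_n \lneq \cdots $$
que nunca estabiliza, contradizendo a hipótese de que $G$ satisfaz a condição máxima. Logo $H$ é finitamente gerado.

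Em seguida provaria a recíproca: suponha que todo subgrupo de $G$ é finitamente gerado e seja dada uma cadeia ascendente como em \eqref{eq:noeth}. A ideia-chave é tomar a união $H = \bigcup_{n\geq 1} H_n$; como a cadeia é totalmente ordenada por inclusão, $H$ é um subgrupo de $G$ (a verificação de fechamento para produto e inversão usa que dois elementos quaisquer pertencem a algum $H_m$ comum, por estarem em termos da cadeia encadeados). Por hipótese, $H$ é finitamente gerado, digamos $H = \langle g_1, \ldots, g_k\rangle$. Cada gerador $g_i$ pertence a algum $H_{n_i}$, e tomando $N = \max\{n_1, \ldots, n_k\}$ todos os $g_i$ estão em $H_N$, donde $H = \langle g_1, \ldots, g_k\rangle \leq H_N$. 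Como sempre $H_N \leq H$, concluo $H_N = H$ e portanto $H_n = H_N$ para todo $n \geq N$, que é exatamente a condição máxima.

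O ponto que exige mais cuidado — o principal obstáculo técnico, ainda que modesto — é a construção da cadeia estritamente crescente na primeira implicação: é preciso garantir que a inclusão $H_n \lneq H_{n+1}$ seja de fato estrita, o que vem da escolha $h_{n+1} \notin H_n$, e que tal escolha seja sempre possível, o que é garantido precisamente pela negação da hipótese ``$H$ finitamente gerado''. Na recíproca, o único cuidado é verificar que a união de uma cadeia ascendente é subgrupo, o que depende crucialmente de a cadeia ser totalmente ordenada; isto é rotineiro mas merece menção explícita.
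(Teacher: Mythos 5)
Sua demonstração está correta e segue essencialmente o mesmo caminho da prova do livro: na ida, a construção indutiva de uma cadeia estritamente crescente $H_1 \lneq H_2 \lneq \cdots$ escolhendo $h_{n+1} \in H \setminus H_n$ para contradizer a condição máxima; na volta, tomar a união $H = \bigcup_{n\geq 1} H_n$ da cadeia, que é finitamente gerada por hipótese, e observar que os geradores caem em algum $H_N$. Os detalhes que você explicita (a estritude das inclusões e o fato de a união de uma cadeia ser subgrupo) são exatamente os pontos implícitos no argumento do texto.
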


\begin{proof}
Suponha que $G$ seja um grupo noetheriano e seja $H \leq G$ um subgrupo que não é finitamente gerado. Escolha $h_1 \in H \setminus \{1\}$ e seja $H_1 = \langle h_1 \rangle$. Indutivamente, suponha que
$$  H_1 \leq H_2 \leq \cdots \leq H_n $$
é uma sequência estritamente crescente de subgrupos finitamente gerados de $H$. Escolha $h_{n+1} \in H \setminus H_n$ e defina $H_{n+1} := \langle H_n, h_{n+1} \rangle$. Temos, portanto, uma sequência infinita estritamente crescente de subgrupos de $G$, contradizendo a suposição de que $G$ é noetheriano.

Por outro lado, suponha que todos os subgrupos de $G$ sejam finitamente gerados e considere uma sequência crescente de subgrupos como em \eqref{eq:noeth}. Então $H = \displaystyle\bigcup_{n\geq 1} H_n$ é um subgrupo, portanto gerado por um conjunto finito $S$. Existe $N$ tal que $S \subseteq H_N$, portanto $H_N = H = H_n$ para todo $n \geq N$.    
\end{proof}

\begin{proposition}\label{prop:noet2}
Um grupo solúvel é policíclico se, e somente se, for noetheriano.    
\end{proposition}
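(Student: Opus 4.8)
The plan is to prove both implications, observing that the forward direction holds for arbitrary groups while solvability is essential only for the converse. The two workhorses are the characterization of Noetherian groups in Proposition~\ref{prop:noet1} (a group is Noetherian if and only if every one of its subgroups is finitely generated) together with the closure properties of polycyclic groups collected in Proposition~\ref{prop:polic}.

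For the implication ``policíclico $\Rightarrow$ noetheriano'' I would argue directly. If $G$ is polycyclic, then by part~(3) of Proposition~\ref{prop:polic} every subgroup $H \leq G$ is again polycyclic, and by the bounded generation property of part~(1) every polycyclic group is finitely generated: a cyclic series of length $n$ with generators $t_1,\dots,t_n$ of the successive quotients yields $g = t_1^{k_1}\cdots t_n^{k_n}$, so $\{t_1,\dots,t_n\}$ generates. Hence every subgroup of $G$ is finitely generated, and Proposition~\ref{prop:noet1} gives that $G$ is Noetherian. This half uses neither solvability nor any hypothesis beyond polycyclicity.

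For the converse ``solúvel e noetheriano $\Rightarrow$ policíclico'' I would induct on the derived length $n$ of $G$, that is, the length of the derived series $G = G^{(0)} \unrhd G^{(1)} \unrhd \cdots \unrhd G^{(n)} = \{e\}$. First I record a preliminary fact used repeatedly: every subgroup of a Noetherian group is itself Noetherian, since its subgroups are subgroups of $G$ and hence finitely generated by Proposition~\ref{prop:noet1}. In the base case $n \le 1$ the group $G$ is abelian and finitely generated (apply Proposition~\ref{prop:noet1} to $H = G$); a finitely generated abelian group $A = \langle a_1,\dots,a_k\rangle$ is polycyclic through the series $A \rhd \langle a_2,\dots,a_k\rangle \rhd \cdots \rhd \langle a_k\rangle \rhd \{e\}$, whose successive quotients are cyclic precisely because abelianness forces each quotient to be generated by the image of a single $a_i$. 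For the inductive step I would set $G' = [G,G] = G^{(1)}$: this subgroup is normal in $G$, is Noetherian by the preliminary fact, and is solvable of derived length $n-1$, so the inductive hypothesis makes $G'$ polycyclic. The quotient $G/G'$ is abelian and a quotient of the finitely generated group $G$, hence finitely generated abelian, hence polycyclic by the base case. Finally $G$ is an extension of the polycyclic group $G'$ by the polycyclic group $G/G'$, so part~(5) of Proposition~\ref{prop:polic} gives that $G$ is polycyclic.

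The essential ingredients — and where I expect the care to be needed — are the passage from the Noetherian condition to ``all subgroups finitely generated'' and the extension-closure property Proposition~\ref{prop:polic}(5); once these are in hand the induction is purely formal. It is worth emphasizing why solvability cannot be dropped in the converse: the derived series terminating at $\{e\}$ is exactly what lets the induction bottom out in finitely generated abelian groups, and without it a Noetherian group need not admit any cyclic series at all.
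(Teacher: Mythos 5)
Your proof is correct and follows essentially the same route as the paper's: the forward direction via Proposição~\ref{prop:polic}(3) and Proposição~\ref{prop:noet1}, and the converse by induction on the derived length, applying the inductive hypothesis to $G'$, treating $G/G'$ as a finitely generated abelian (hence polycyclic) group, and concluding with the extension property Proposição~\ref{prop:polic}(5). The only differences are harmless elaborations (the explicit cyclic series for finitely generated abelian groups and the remark that subgroups of noetherian groups are noetherian), which the paper leaves implicit.
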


\begin{proof}
A ida segue imediatamente das Proposições ~\ref{prop:polic}(3) e ~\ref{prop:noet1}. 

Seja $G$ um grupo noetheriano e solúvel. Provamos por indução no comprimento derivado $k$ que $G$ é policíclico. Para $k = 1$ o grupo é abeliano e, como, por hipótese, $G$ é finitamente gerado, o grupo é policíclico. Suponha que a afirmação seja verdadeira para $k$ e considere um grupo solúvel $G$ de comprimento derivado $k + 1$. O subgrupo comutador $G' \leq G$ também é noetheriano, e ele também é solúvel de comprimento derivado $k$. Portanto, pela hipótese de indução, $G'$ é policíclico. A abelianização $G_{ab} = G/G'$ é finitamente gerada (porque $G$ é, por hipótese), portanto é policíclica. Segue-se que $G$ é policíclico pela Proposição~\ref{prop:polic}(5).
\end{proof}

\begin{remark}
Existem grupos noetherianos que não são virtualmente policíclicos, por exemplo os \textit{monstros de Tarski}, que são grupos $G$ finitamente gerados, tais que todo subgrupo próprio de $G$ é cíclico (veja \cite{Ol91}).
\end{remark}

Um exemplo instrutivo de grupo solúvel é o \textit{grupo acendedor de lâmpadas}\index{grupo acendedor de lâmpadas}. Este grupo é o produto entrelaçado $G = \Z_2 \wr \Z$ no sentido da Definição~\ref{def:wr prod}.


Os produtos entrelaçados de grupos finitamente gerados também são finitamente gerados, portanto, o grupo acendedor de lâmpadas é finitamente gerado. Por outro lado, ele possui as seguintes propriedades:
\begin{enumerate}[(1)]
\item Nem todos os subgrupos de $G$ são finitamente gerados. Por exemplo, o subgrupo $\displaystyle\bigoplus_{n\in\Z} \Z_2$ de $G$ não é finitamente gerado.

\item O grupo $G$ não é virtualmente livre de torção: Para qualquer subgrupo de índice finito $H \le G$, a interseção $H \bigcap \displaystyle\bigoplus_{n\in\Z} \Z_2$ tem índice finito em $\displaystyle\bigoplus_{n\in\Z} \Z_2$; em particular, esta intersecção é infinita e contém elementos de ordem~$2$.

Observe que tanto (1) como (2) implicam que o grupo acendedor de lâmpadas não é policíclico.

\item O subgrupo comutador $G'$ do grupo acendedor de lâmpadas coincide com o seguinte subgrupo de $\displaystyle\bigoplus_{n\in\Z} \Z_2$:
$$ C = \{ f: \Z \to \Z_2 \mid \mathrm{Supp}(f) \text{ tem cardinalidade par}\}, $$
onde $\mathrm{Supp}(f) =  \{n\in\Z \mid f(n) = 1 \}$.
\end{enumerate}

Em particular, $G'$ não é finitamente gerado e o grupo $G$ é metabeliano (já que $G'$ é abeliano).

\begin{exercise}
Verifique a descrição do subgrupo comutador de $G$ dada em (3).  
\end{exercise}

\begin{exercise}
Prove que o grupo  acendedor de lâmpadas tem crescimento exponencial.
\end{exercise}

\section{A conjectura de Milnor e o teorema de Wolf}

A direção principal para o desenvolvimento da teoria do crescimento de grupos foi definida pela seguinte conjectura (ou problema) de J.~Milnor e J.~Wolf \cite{Mil68b, Wolf68}:

\begin{conjecture}[Milnor, 1968]\label{conj-Milnor}
O crescimento de um grupo finitamente gerado é polinomial (i.e. $\rho_S(x)\preceq x^d$ para qualquer conjunto de geradores $S$) ou exponencial (i.e. $\gamma_S > 1$ para todo $S$).
\end{conjecture}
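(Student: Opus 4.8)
O plano é interpretar a dicotomia como a afirmação de que \emph{não existe crescimento intermediário}, isto é, de que crescimento subexponencial ($\gamma_S=1$) já força crescimento polinomial. Primeiro observo que as duas alternativas são mutuamente exclusivas: pelo Lema de Fekete aplicado a $\log\rho_S$, vale $\rho_S(n)\geq \gamma_S^n$ para todo $n$, de modo que se $\gamma_S>1$ o crescimento supera qualquer polinômio e o grupo não pode ser de crescimento polinomial. Pela Proposição~\ref{prop:crescimentogrupos}, tanto ``ser polinomial'' quanto ``$\gamma_S>1$'' independem do conjunto de geradores, de forma que basta fixar um $S$ conveniente. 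O conteúdo real da afirmação é, portanto, excluir a faixa entre polinomial e exponencial.

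A estratégia natural é reduzir o problema a classes estruturadas de grupos, apoiando-se nos teoremas que serão demonstrados neste capítulo. Primeiro eu estabeleceria o lado polinomial: grupos virtualmente nilpotentes têm crescimento polinomial, com grau exato fornecido pela fórmula de Bass--Guivarch em termos dos postos da série central inferior. Em seguida, o teorema de Milnor--Wolf trataria o caso solúvel, mostrando que um grupo solúvel finitamente gerado ou é virtualmente nilpotente (logo polinomial) ou tem crescimento exponencial. Para grupos lineares, a Alternativa de Tits~\cite{Tits72} encerraria o argumento: ou $G$ é virtualmente solúvel, recaindo no caso anterior, ou $G$ contém um subgrupo livre não abeliano; como $F_2$ tem crescimento exponencial (Exercício~\ref{exer9.5}) e $\rho_{F_2}\preceq\rho_G$ pela Proposição~\ref{prop:cresc}, segue que $\gamma_S>1$. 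O ingrediente mais profundo seria o teorema de Gromov — crescimento polinomial implica virtualmente nilpotente — que, combinado com Bass--Guivarch, caracterizaria por completo o regime polinomial como ``$G$ virtualmente nilpotente''.

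A parte difícil, e na verdade o obstáculo fatal, é o passo final: provar que crescimento subexponencial implica polinomial para \emph{todo} grupo finitamente gerado, sem hipótese de linearidade ou de solubilidade. Todos os teoremas acima dão controle estrutural apenas \emph{depois} de sabermos que o crescimento é polinomial (Gromov), ou \emph{sob} a hipótese de solubilidade ou linearidade; nenhum deles exclui, a priori, a existência de um grupo cujo crescimento supere qualquer polinômio mas permaneça subexponencial. Espero que seja exatamente aqui que qualquer tentativa de prova se quebre, pois a conjectura de Milnor é \emph{falsa}: a construção de Grigorchuk, apresentada no Capítulo~\ref{cap:crescimento-interm}, fornece grupos de crescimento intermediário, ocupando precisamente a lacuna que a conjectura afirma ser vazia. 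Assim, o que de fato se consegue demonstrar não é a dicotomia completa, mas suas versões condicionais (Wolf, Milnor, Bass--Guivarch) junto à caracterização de Gromov do caso polinomial; a dicotomia global não vale.
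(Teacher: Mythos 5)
Sua leitura está correta e coincide exatamente com o tratamento do próprio texto: o enunciado é uma conjectura, não um teorema, e ela é de fato falsa — o livro a refuta no Capítulo~\ref{cap:crescimento-interm} com a construção de Grigorchuk de grupos de crescimento intermediário, precisamente o contraexemplo que você aponta. Os resultados parciais que você organiza (Teoremas~\ref{thm-Wolf}, \ref{thm:BG}, \ref{thm-Milnor} e \ref{thm:Gromov}, além da alternativa de Tits para o caso linear) são exatamente o que o livro demonstra como as únicas partes válidas da dicotomia, de modo que não há lacuna alguma no seu argumento: a "prova" termina onde deve terminar, na constatação de que a dicotomia global não vale.
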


O primeiro resultado positivo em relação à conjectura foi o Teorema de Wolf \cite{Wolf68}:

\begin{thm}[Wolf, 1968]\label{thm-Wolf}
Um grupo policíclico é virtualmente nilpotente ou tem crescimento exponencial. 
\end{thm}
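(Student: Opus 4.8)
A estratégia é analisar a estrutura de um grupo policíclico $G$ através da sua série cíclica e reduzir o problema à ação de elementos por automorfismos em subgrupos abelianos livres. A dicotomia crescimento polinomial (de fato, virtual nilpotência) versus crescimento exponencial emergirá do comportamento dos autovalores desses automorfismos. O passo inicial é reduzir ao caso poli-$C_\infty$: como $G$ é policíclico, pela Proposição~\ref{prop:polic} ele possui uma série cíclica; um argumento de indução sobre o comprimento $\ell(G)$ permite passar a um subgrupo de índice finito que é poli-$C_\infty$, e pelo Corolário~\ref{Mil} e pela Proposição~\ref{prop:cresc} tanto o crescimento quanto a virtual nilpotência são invariantes por passagem a subgrupos de índice finito. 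Portanto basta provar o teorema para grupos poli-$C_\infty$.

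**O núcleo indutivo.**
Primeiro eu trataria o caso base metabeliano-abeliano: se $G = A \rtimes_\varphi \Z$ com $A \cong \Z^n$ e $\varphi$ dado por uma matriz $M \in \GL(n,\Z)$, então a chave é distinguir dois regimes segundo os autovalores de $M$. Se todos os autovalores de $M$ têm módulo $1$ (equivalentemente, são raízes da unidade, já que $M$ é inteira e portanto seus autovalores são inteiros algébricos cujos conjugados também têm módulo $1$, pelo teorema de Kronecker), então alguma potência $M^k$ é unipotente, e $\Z^n \rtimes_{M^k} \Z$ é nilpotente; daí $G$ é virtualmente nilpotente. Caso contrário, $M$ possui um autovalor $\lambda$ com $|\lambda| > 1$, e eu exibiria crescimento exponencial estimando o número de elementos distintos da forma $\sum_{i} \pm M^i v$ para um vetor apropriado $v$: os produtos de comprimento $n$ na métrica das palavras geram ao menos $c\,|\lambda|^n$ pontos distintos em $A$, o que força $\gamma_S > 1$ pela definição de constante de crescimento. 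Em seguida, a indução geral sobre $\ell(G)$ procede assim: dado $G$ poli-$C_\infty$ com $N \triangleleft G$, $N$ poli-$C_\infty$ e $G/N \cong \Z$ (escolhendo o topo da série), a hipótese de indução aplica-se a $N$. Se $N$ tem crescimento exponencial, então por Proposição~\ref{prop:cresc}(1) $G$ também tem; se $N$ é virtualmente nilpotente, analisa-se a ação do gerador de $\Z$ sobre a abelianização de um subgrupo nilpotente de índice finito de $N$, recaindo no caso base via autovalores.

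**O principal obstáculo.**
A parte mais delicada será o passo de indução quando $N$ é virtualmente nilpotente mas não abeliano: é preciso garantir que a ação de $\Z$ sobre $N$ ou produz unipotência em todos os quocientes da série central (dando nilpotência virtual de $G$) ou produz um autovalor de módulo $>1$ em algum desses quocientes abelianos livres (dando crescimento exponencial). A dificuldade técnica reside em controlar simultaneamente a ação induzida em cada fator $C^i(N)/C^{i+1}(N)$ e em verificar que autovalores de módulo $1$ em todos os fatores de fato forçam nilpotência global — isto requer o teorema de Kronecker sobre inteiros algébricos de módulo $1$ e um argumento de que a ausência de autovalores de módulo $>1$ implica crescimento subexponencial. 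Uma vez estabelecido que crescimento subexponencial força virtual nilpotência (pela estrutura dos automorfismos), a dicotomia fica completa, pois qualquer regime intermediário entre polinomial e exponencial é excluído: o crescimento de um grupo virtualmente nilpotente é polinomial (por Bass--Guivarch, anunciado no prefácio como parte do Capítulo~\ref{cap:crescimento}), e o complementar exibe explicitamente taxa exponencial.
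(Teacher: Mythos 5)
Sua proposta está correta em linhas gerais e segue essencialmente o mesmo caminho da prova do texto (Lemas~\ref{lema9-2}, \ref{lema9-3} e \ref{lema9-4}, Proposições~\ref{prop9-1} e \ref{prop9-2}): redução ao caso poli-$C_{\infty}$ por passagem a um subgrupo de índice finito, indução no comprimento da série, e a dicotomia de autovalores da ação induzida nos quocientes da série central inferior --- todos os autovalores de módulo $1$ forçam raízes da unidade, unipotência de uma potência e nilpotência virtual, enquanto um autovalor de módulo maior que $1$ dá crescimento exponencial via somas binárias. Os dois detalhes que sua redação deixa implícitos são tratados no texto exatamente como seria necessário: o subgrupo nilpotente de índice finito $H \leq N$ é tomado maximal, logo invariante por todos os automorfismos de $N$ (sem isso o produto semidireto $H \rtimes_{\psi} \Z$ nem sequer está definido, e não se poderia concluir que ele tem índice finito em $G$), e substitui-se o automorfismo por uma potência de modo que o autovalor tenha módulo $\geq 2$, pois para $1 < |\lambda| < 2$ as somas binárias podem colidir (por exemplo, para $\lambda$ igual à razão áurea vale $\lambda^2 = \lambda + 1$) e a contagem de ``pelo menos $c\,|\lambda|^n$ pontos distintos'' exigiria justificativa adicional; com módulo $\geq 2$ a injetividade do Lema~\ref{lema9-4} resolve a questão.
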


Para grupos virtualmente nilpotentes, o crescimento foi calculado explicitamente de forma independente por H.~Bass \cite{Bass72} e Y.~Guivarc'h \cite{Gui70, Gui73}:

\begin{thm}[Bass--Guivarc'h]
\label{thm:BG}
Seja $G$ um grupo finitamente gerado nilpotente de classe $k$. Para qualquer $i\geq 1$, seja $m_i \in \N$ tal que 
$$C^iG/C^{i+1}G \cong \Z^{m_i} \times F_i,$$ 
onde $C^iG = [G, C^{i-1}G]$, $C^1G = G$ e $F_i$  é finito abeliano. 

Nesse caso, $\rho_G(n) \asymp n^d$ com $d = \displaystyle\sum_{i=1}^k im_i$.
\end{thm}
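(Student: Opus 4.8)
The plan is to compute the growth function of a finitely generated nilpotent group $G$ of class $k$ by counting elements according to their expression in terms of a carefully chosen generating set adapted to the lower central series. The key idea is that in a nilpotent group, every element admits a normal form as a product of powers of generators coming from successive quotients $C^iG/C^{i+1}G$, and that the contribution of each such generator to the word length is governed by the ``weight'' $i$ of the layer it lives in. The factor $\Z^{m_i}$ contributes $m_i$ generators of weight $i$, and an element using a power $t^{e}$ of a weight-$i$ generator costs roughly $|e|^{1/i}$ in word length rather than $|e|$, because commutators of shorter generators can produce $t$ efficiently. This is precisely what produces the exponent $d = \sum_{i=1}^k i\, m_i$.

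First I would reduce to the torsion-free case: since $G$ is virtually nilpotent (indeed nilpotent), and by Proposition~\ref{prop:cresc} the growth type is invariant under passing to finite-index subgroups and finite quotients, I may assume each $F_i$ is trivial and work with a Mal'cev-type basis. Concretely, I would fix generators $t_{i,1},\ldots,t_{i,m_i}$ whose images generate $C^iG/C^{i+1}G \cong \Z^{m_i}$, and assign weight $i$ to $t_{i,j}$. Using the bounded-generation property (compare Proposition~\ref{prop:polic}(1)) together with the commutator identities $[C^iG, C^jG]\subseteq C^{i+j}G$, every $g\in G$ can be written in the normal form
$$g = \prod_{i=1}^{k}\prod_{j=1}^{m_i} t_{i,j}^{\,e_{i,j}},$$
and the commutation rules let me control how the exponents transform under multiplication. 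The heart of the matter is the pair of inequalities bounding word length: on one hand $|g|_S \preceq \sum_{i,j} |e_{i,j}|^{1/i}$, and on the other hand an element of word length $\le n$ has $|e_{i,j}| \preceq n^i$. Granting these, the count of elements in the ball of radius $n$ is comparable to the number of integer tuples $(e_{i,j})$ with $|e_{i,j}|\le C n^i$, which is $\asymp \prod_{i=1}^k (n^i)^{m_i} = n^{\sum_i i\,m_i}$, giving $\rho_G(n)\asymp n^d$.

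The hard part will be establishing the upper bound $|e_{i,j}|\preceq n^{i}$, i.e.\ that an element reachable by a word of length $n$ cannot have a weight-$i$ coordinate larger than order $n^i$, and the matching lower bound showing that the generator $t_{i,j}$ can in fact be reached with word length $\asymp |e|^{1/i}$ from a power $|e|$. The lower bound on efficiency is a genuinely geometric input: it relies on the fact that iterated commutators of $i$ generators of weight~$1$ land in $C^iG$ and, by repeated ``doubling'' or Hall--Witt-type telescoping of commutators, a single weight-$i$ generator raised to the power $p^i$ can be produced by a word of length $O(p)$. I would isolate this as a separate lemma proved by induction on the nilpotency class $k$, treating the abelian base case $k=1$ (where $\rho\asymp n^{m_1}$ is immediate from Exercise~\ref{exer9.5}) directly, and then passing to the central quotient $G/C^kG$, whose growth exponent is $\sum_{i=1}^{k-1} i\,m_i$ by induction, while the central layer $C^kG\cong\Z^{m_k}$ contributes the remaining $k\,m_k$. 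Combining the exact sequence with Proposition~\ref{prop:cresc_pol}, which relates the degrees of $G$, the normal subgroup $C^kG$, and the quotient, should then assemble the two halves into the stated formula $d=\sum_{i=1}^k i\,m_i$.
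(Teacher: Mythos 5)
Your plan is correct in substance and its skeleton coincides with the paper's proof: both arguments go by induction on the nilpotency class, both isolate the central layer $H=C^kG$, and both hinge on the same distortion estimate (that an element of the $i$-th layer of intrinsic size $p^i$ has word length $O(p)$ in $G$ --- in the paper this appears as $\dist_G(1,g)\preceq \dist_H(1,g)^{1/k}$ for $g\in H$, and is likewise only sketched there, being left as an exercise). The packaging differs, though. You count lattice points in a weighted Mal'cev normal form, estimating all coordinates $|e_{i,j}|\preceq n^i$ simultaneously; this is Bass's original style and yields the stronger coordinate-wise description of balls. The paper never needs a normal form, nor distortion estimates below the top layer: it counts cosets of $H$ meeting the ball of radius $n$ (at most $\asymp n^{d_1}$ of them, by induction applied to $G/H$) and, within a fixed coset, elements of $H$ of $G$-length at most $2n$ (at most $\asymp n^{km_k}$, by the distortion estimate), and it gets the matching lower bound from the disjoint union $\bigcup_{i} g_i\left(B_G(1,n/2)\cap H\right)$. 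The paper's route is leaner (one distortion estimate instead of $k$, and no uniqueness of coordinates required); yours gives finer geometric information about every layer.

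Two details in your reduction need repair. First, quotienting by the finite torsion subgroup does \emph{not} make all $F_i$ trivial: torsion-free finitely generated nilpotent groups can have torsion in their lower central quotients, e.g. $G=\langle x,y,z \mid [x,y]=z^2,\ z \text{ central}\rangle$ is torsion-free of class $2$ but $G/[G,G]\cong \Z^2\times \Z_2$. This is harmless, since finite factors $F_i$ only contribute bounded multiplicative constants to the count, but the correct move is to carry them along, not to assume them away. Second, Proposição~\ref{prop:cresc_pol} cannot ``assemble'' the equality $d=d_1+km_k$: it provides only one-sided inequalities of the form $d(G/N)\le d(G)-1$, with a loss. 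The assembly has to come --- and in your plan implicitly does come --- from the two-sided product decomposition of the ball, exactly as in your lattice-point count or in the paper's coset argument; the citation of that proposition is not load-bearing and should be dropped.
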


Nosso próximo objetivo é fornecer as provas dos Teoremas~\ref{thm-Wolf} e \ref{thm:BG}, o que exigirá alguns resultados auxiliares. Começamos estudando os automorfismos de grupos abelianos.

\begin{lemma}\label{lema9-1}
Seja $v = (v_1, \ldots, v_n)\in \Z^n$ tal que $\mathrm{mdc}(v_1, \ldots, v_n) = 1$. Então $\Z^n/\langle v \rangle$ é um grupo livre abeliano de posto $n-1$. Além disso, existe uma base $\{ v, y_1, \ldots, y_{n-1}\}$ de $\Z^n$ tal que $\{ y_1 + \langle v \rangle, \ldots, y_{n-1} + \langle v \rangle\}$ é uma base de $\Z^n/\langle v \rangle$.
\end{lemma}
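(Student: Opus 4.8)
The plan is to reduce both assertions of the lemma to a single structural fact: a \emph{primitive} vector $v$ (one whose coordinates are coprime) can be completed to a $\Z$-basis of $\Z^n$. Once I have a basis $\{v, y_1, \ldots, y_{n-1}\}$ of $\Z^n$ containing $v$ as its first member, both claims follow simultaneously. Indeed, such a basis gives an internal direct sum $\Z^n = \Z v \oplus \Z y_1 \oplus \cdots \oplus \Z y_{n-1}$, so quotienting by $\langle v \rangle = \Z v$ kills exactly the first summand and leaves $\Z^n/\langle v \rangle \cong \Z y_1 \oplus \cdots \oplus \Z y_{n-1}$, a free abelian group of rank $n-1$ whose basis is given by the cosets $y_i + \langle v \rangle$. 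Thus the entire content of the lemma is concentrated in the completion step.

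To carry out the completion I would show that there exists $M \in \GL(n,\Z)$ with $M v = e_1$, the first standard basis vector. The idea is to run the Euclidean algorithm on the coordinates of $v$ using only integer row operations — adding an integer multiple of one coordinate to another, swapping two coordinates, and changing a sign — each of which is left multiplication by an element of $\GL(n,\Z)$. Because $\mdc{v_1,\ldots,v_n}=1$, this process transforms $v$ into $(1,0,\ldots,0)^{T}=e_1$, and composing the corresponding elementary matrices produces the desired $M$. (Equivalently, this is the assertion that $\GL(n,\Z)$ acts transitively on primitive vectors; a clean induction on $n$ via Bezout also works, extracting $d=\mdc{v_2,\ldots,v_n}$ and using $\mdc{v_1,d}=1$.) Setting $y_i := M^{-1} e_{i+1}$ for $i=1,\ldots,n-1$, the columns of $M^{-1}$ form a basis of $\Z^n$ whose first member is $M^{-1}e_1 = v$, which is exactly the required basis.

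With the basis in hand I would finish by making the decomposition explicit and invoking the universal property of the quotient: the homomorphism $\Z^n \to \Z^{n-1}$ determined by $v \mapsto 0$ and $y_i \mapsto e_i$ is surjective with kernel precisely $\langle v \rangle$, so by Teorema~\ref{1th} we obtain $\Z^n/\langle v \rangle \cong \Z^{n-1}$ and simultaneously identify the images of the $y_i$ as a basis of the quotient.

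The main obstacle is the completion step itself: proving that primitivity is not merely necessary but \emph{sufficient} for $v$ to belong to a $\Z$-basis. Over a field this is trivial, but over $\Z$ the coprimality hypothesis is genuinely needed — for instance $2e_1$ cannot be completed, since the first column would force the determinant to be divisible by $2$. The argument rests on the fact that integer row reduction realizes the Euclidean algorithm inside $\GL(n,\Z)$, and I expect the only real care required is to phrase the reduction cleanly and to confirm that the accumulated transformation is invertible over $\Z$, which is automatic because every elementary operation used is.
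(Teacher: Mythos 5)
The paper gives no proof of this lemma: it is stated and immediately followed by an exercise asking the reader to supply one, so there is no argument of the authors to compare yours against. Your proof is correct and complete, and it is the standard route — surely the intended one. The reduction of both claims to completing the primitive vector $v$ to a $\Z$-basis is sound; the completion itself, via elementary integer row operations realizing the Euclidean algorithm (each lying in $\GL(n,\Z)$, so the accumulated matrix $M$ with $Mv = e_1$ is invertible over $\Z$ and the columns of $M^{-1}$ form the desired basis $\{v, y_1, \ldots, y_{n-1}\}$), is the classical proof that $\GL(n,\Z)$ acts transitively on primitive vectors; and the final step, whether phrased as the direct-sum decomposition $\Z^n = \Z v \oplus \Z y_1 \oplus \cdots \oplus \Z y_{n-1}$ or via Teorema~\ref{1th} applied to the homomorphism sending $v \mapsto 0$ and $y_i \mapsto e_i$, correctly yields both that $\Z^n/\langle v \rangle \cong \Z^{n-1}$ and that the cosets $y_i + \langle v \rangle$ form a basis of the quotient. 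The only detail worth spelling out in a written version is the termination of the coordinate reduction (for instance, each round strictly decreases the minimum nonzero absolute value among the coordinates while preserving their gcd), but that is routine.
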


\begin{exercise}
    Prove o Lema \ref{lema9-1}.
\end{exercise}

\begin{lemma}\label{lema9-2}
Suponha que $M \in \GL(n, \Z)$ tem todos os autovalores iguais a $1$. Então existe uma cadeia finita de subgrupos 
$$ \{ 1 \} = \Lambda_0 \subset \Lambda_1 \subset \ldots \ \subset \Lambda_{n-1} \subset \Lambda_n = \Z^n$$
tal que $\Lambda_i \cong \Z^i$, $\Lambda_{i+1}/\Lambda_i \cong \Z$, $M(\Lambda_i) = \Lambda_i$ e $M$ age trivialmente em $\Lambda_{i+1}/\Lambda_i$ para todo $i \ge 0$.
\end{lemma}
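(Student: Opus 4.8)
The plan is to argue by induction on $n$, constructing the flag one step at a time: peel off an $M$-fixed primitive vector, descend to the quotient $\Z^n/\langle v\rangle$, apply the inductive hypothesis there, and pull the resulting chain back. The base case $n=1$ is immediate, since the only element of $\GL(1,\Z)$ with eigenvalue $1$ is the identity $(1)$, for which the chain $\{0\}=\Lambda_0\subset\Lambda_1=\Z$ works.

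First I would produce a primitive vector $v\in\Z^n$ with $Mv=v$. Since every eigenvalue of $M$ equals $1$, every eigenvalue of $M-I$ is $0$, so $M-I$ is nilpotent over $\Q$; as $n\geq 1$, this forces $\ker(M-I)\neq 0$, giving a nonzero rational fixed vector. Clearing denominators and dividing by the gcd of the entries yields a primitive $v\in\Z^n$ with $Mv=v$. I set $\Lambda_1:=\langle v\rangle$; then $\Lambda_1\cong\Z$, $M(\Lambda_1)=\Lambda_1$, and $M$ acts trivially on $\Lambda_1/\Lambda_0=\Lambda_1$.

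Next I would pass to the quotient. Because $v$ is primitive, Lemma~\ref{lema9-1} gives $\Z^n/\Lambda_1\cong\Z^{n-1}$, and the projection $\pi:\Z^n\to\Z^n/\Lambda_1$ satisfies $\pi\circ M=\bar M\circ\pi$ for an induced automorphism $\bar M\in\GL(n-1,\Z)$. Completing $v$ to a basis $\{v,y_1,\dots,y_{n-1}\}$ as in Lemma~\ref{lema9-1} puts $M$ in block form $\left(\begin{smallmatrix}1&*\\0&\bar M\end{smallmatrix}\right)$, whence $\det(xI-M)=(x-1)\det(xI-\bar M)$; thus every eigenvalue of $\bar M$ is again $1$. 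By the inductive hypothesis there is a chain $\{0\}=\bar\Lambda_0\subset\bar\Lambda_1\subset\cdots\subset\bar\Lambda_{n-1}=\Z^{n-1}$ that is $\bar M$-invariant, with successive quotients $\cong\Z$ on which $\bar M$ acts trivially. I would then pull it back, defining $\Lambda_{j}:=\pi^{-1}(\bar\Lambda_{j-1})$ for $j=1,\dots,n$; this is consistent with $\Lambda_1=\langle v\rangle$ and gives $\Lambda_n=\Z^n$. Each $\Lambda_j$ is $M$-invariant since $\bar\Lambda_{j-1}$ is $\bar M$-invariant, and by the third isomorphism theorem $\Lambda_{j+1}/\Lambda_j\cong\bar\Lambda_j/\bar\Lambda_{j-1}\cong\Z$ with $M$ acting through $\bar M$, hence trivially. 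Since every successive quotient is free abelian, each extension $0\to\Lambda_{j}\to\Lambda_{j+1}\to\Z\to0$ splits, so inductively $\Lambda_j\cong\Z^j$.

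The only genuinely substantive step is the existence of the primitive fixed vector, and even that reduces to the elementary observation that $M-I$ is nilpotent. The main thing to be careful about is the bookkeeping: verifying that the pulled-back subgroups have the right ranks, that $M$ really acts \emph{trivially} (and not merely unipotently) on each rank-one quotient, and that the induced map $\bar M$ retains all eigenvalues equal to $1$. I expect no serious obstacle beyond this routine verification.
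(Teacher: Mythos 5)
Your proof is correct and follows essentially the same route as the paper's: find a primitive $M$-fixed vector $v$, use Lemma~\ref{lema9-1} to identify $\Z^n/\langle v\rangle\cong\Z^{n-1}$ with the induced automorphism still having all eigenvalues equal to $1$, and induct. You simply make explicit the bookkeeping (nilpotence of $M-I$, the block-triangular factorization of the characteristic polynomial, and the pull-back of the chain) that the paper's terse ``conclui-se a prova por indução'' leaves to the reader.
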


\begin{proof}
Como $M$ tem autovalor $1$, existe um vetor $v$ in $\Z^n$, $v =  (v_1, \ldots, v_n)$ tal que  $\mathrm{mdc}(v_1, \ldots, v_n) = 1$ e $Mv = v$. Então $M$ induz um automorfismo de $\Z^n/\langle v \rangle \cong \Z^{n-1}$ e a sua matriz tem todos os autovalores iguais a $1$ (basta escrever a matriz na base $\{ v, y_1, \ldots, y_{n-1}\}$ do Lema~\ref{lema9-1}). Portanto, existe $w + \langle v \rangle \in \Z^n/\langle v \rangle $ com $$M(w + \langle v \rangle) = w + \langle v \rangle.$$ 
Conclui-se a prova por indução.
\end{proof}

\begin{lemma}\label{lema9-3}
Seja $M \in \GL(n, \Z)$ tal que o valor absoluto de todos os seus autovalores complexos é igual a $1$. Então todos os autovalores de $M$ são raízes da unidade. 
\end{lemma}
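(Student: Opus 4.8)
The plan is to recognize this as the classical theorem of Kronecker: an algebraic integer all of whose Galois conjugates have absolute value $1$ must be a root of unity. First I would reduce the statement to this number-theoretic fact. Since $M \in \GL(n,\Z)$, its characteristic polynomial $p(x) = \det(xI - M)$ is monic of degree $n$ with integer coefficients, so each eigenvalue $\lambda$ of $M$ is an algebraic integer. Moreover, the conjugates of $\lambda$ over $\Q$ are roots of the minimal polynomial of $\lambda$, which divides $p$; hence every conjugate of $\lambda$ is itself an eigenvalue of $M$ and, by hypothesis, has absolute value $1$. Thus each eigenvalue satisfies the hypotheses of Kronecker's theorem.

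To prove Kronecker's statement for a fixed eigenvalue $\lambda$ of degree $d \leq n$ over $\Q$, I would exploit the integrality of symmetric functions together with the modulus-$1$ bound. For each $k \geq 1$ the power $\lambda^k$ is again an algebraic integer, and since any embedding $\sigma$ satisfies $\sigma(\lambda^k) = \sigma(\lambda)^k$, the conjugates of $\lambda^k$ are among the $k$-th powers of the conjugates of $\lambda$; in particular $\lambda^k$ has at most $d$ conjugates, all of absolute value $1$. Writing the minimal polynomial of $\lambda^k$ as $x^m - c_1 x^{m-1} + \cdots \pm c_m$ with $m \leq d \leq n$, each coefficient $c_j$ is, up to sign, an elementary symmetric function of the conjugates of $\lambda^k$, so $|c_j| \leq \binom{n}{j}$. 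Since the $c_j$ are integers, only finitely many polynomials can arise in this way as $k$ varies.

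The final step is a pigeonhole argument. As the minimal polynomials of $\lambda, \lambda^2, \lambda^3, \ldots$ take only finitely many values, the algebraic numbers $\lambda^k$ themselves take only finitely many values. Hence there exist exponents $a < b$ with $\lambda^a = \lambda^b$; since $|\lambda| = 1$ forces $\lambda \neq 0$, we may cancel to obtain $\lambda^{b-a} = 1$, so $\lambda$ is a root of unity. Applying this to every eigenvalue completes the proof.

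I expect the main conceptual obstacle to be the uniform bound on the coefficients of the minimal polynomials of the powers $\lambda^k$: the argument hinges on combining the a priori bound $|c_j| \leq \binom{n}{j}$ coming from $|\lambda^k| = 1$ with the integrality of the $c_j$, which is precisely what makes the family finite. Everything else --- algebraic integrality of the eigenvalues and stability of the modulus-$1$ condition under passing to conjugates and to powers --- is routine.
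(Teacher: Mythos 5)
Your proof is correct, but it follows a genuinely different route from the one in the text. What you prove is the classical theorem of Kronecker: any algebraic integer all of whose conjugates lie on the unit circle is a root of unity. You reduce the lemma to this by noting that each eigenvalue is an algebraic integer whose conjugates are again eigenvalues of $M$, and then run the standard finiteness argument — the minimal polynomials of the powers $\lambda^k$ are monic, have degree at most $n$, and have integer coefficients bounded by $\binom{n}{j}$, so only finitely many occur; pigeonhole then gives $\lambda^a=\lambda^b$ and hence $\lambda^{b-a}=1$. The text instead argues at the level of the matrix: the vectors $v_k=(\lambda_1^k,\ldots,\lambda_n^k)$ live in the compact torus $(\mathbb{S}^1)^n$, so some subsequence of differences of exponents $m_i$ satisfies $\lambda_j^{m_i}\to 1$ for every $j$ simultaneously; since $\tr(M^{m_i})=\sum_j \lambda_j^{m_i}$ is an integer converging to $n$, it must equal $n$ for large $i$, forcing $\re(\lambda_j^{m_i})=1$ and hence $\lambda_j^{m_i}=1$ for all $j$. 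The two arguments share the same engine — integrality plus a boundedness/pigeonhole step — but apply it to different objects: you pigeonhole integer coefficient vectors of minimal polynomials, the text pigeonholes (via compactness) the powers themselves and then uses integrality of the trace. Your version requires a little algebraic number theory (algebraic integers, conjugates, integrality of the minimal polynomial) but yields the stronger, more general Kronecker statement; the text's version stays entirely within linear algebra and elementary analysis, and has the small bonus of producing a single exponent that simultaneously sends all eigenvalues to $1$.
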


\begin{proof}
Sejam $\lambda_1, \ldots, \lambda_n$ os autovalores de $M$, contados com multiplicidade. Note que  $$\displaystyle \tr(M^k) = \sum_{i = 1}^n \lambda_i^k.$$

Como $M \in \GL(n, \Z)$, $\tr(M^k) \in \Z$ para todo $k \in \Z$. Os vetores $v_k = (\lambda_1^k, \ldots, \lambda_n^k)$ pertencem ao grupo $(\mathbb{S}^1)^n =: G$, que é um grupo de Lie compacto. Por isso existe uma subsequência $(v_{k_i})$ convergente, e então 
$$ \lim_{i \to \infty}(v_{k_i}v_{k_{i-1}}^{-1}) = 1 \text{ em } G.$$
Sejam $m_i = k_{i+1} - k_i$. Temos $\displaystyle\lim_{i \to \infty} \lambda_j^{m_i} = 1$ para $j = 1,\ldots,n$, o que implica que $s_i = \displaystyle\sum_{j = 1}^n \lambda_j^{m_i} \to n$ quando $i \to \infty$.

Sabendo que  $\tr(M^{m_i})\in \Z$, concluímos que $s_i$ estabiliza para $i$ suficiente grande. Isto é, para todo $i\ge i_0$, temos $s_i$ constante, e então
$$\sum_{j = 1}^n \re(\lambda_j^{m_i}) = n,\ i \geq i_0.$$
Isso implica que  $\re(\lambda_j^{m_i}) = 1$ para todo $j$ e $i\geq i_0$. Portanto,
$\lambda_j^{m_i} = 1$ e os $\lambda_j$ são raízes da unidade.
\end{proof}

\begin{lemma}\label{lema9-4}
Suponha que $M \in \GL(n, \Z)$ tenha um autovalor $\lambda$ tal que $|\lambda| \geq 2$. Então existe um vetor $v \in \Z^n$ tal que a função $F: \displaystyle \oplus_{i \in \N} \Z_2 \to \Z^n$, definida em sequências $(s_i)_{i \in \N}$, com $s_i = 0$ ou $1$ e $s_i = 0$ para $i$ suficiente grande, por
$$ (s_i) \to s_0v + s_1Mv + s_2 M^2 v + \ldots$$
é injetiva. 
\end{lemma}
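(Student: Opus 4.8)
A ideia é transformar a injetividade de $F$ numa afirmação sobre o não anulamento de combinações com coeficientes em $\{-1,0,1\}$ dos vetores $M^i v$, e depois escolher $v$ de modo a usar a presença de um autovalor ``grande''. Primeiro eu observaria que, como o domínio $\bigoplus_{i\in\N}\Z_2$ consiste de sequências $0$--$1$ finitamente suportadas, dizer que $F(s)=F(s')$ com $s\neq s'$ equivale, tomando $\varepsilon_i := s_i - s_i' \in \{-1,0,1\}$, a dizer que existe uma sequência finitamente suportada e não nula $(\varepsilon_i)$ com $\sum_i \varepsilon_i M^i v = 0$. Logo, basta exibir $v\in\Z^n$ tal que
$$\sum_i \varepsilon_i M^i v \neq 0 \quad\text{para toda sequência finitamente suportada não nula } (\varepsilon_i),\ \varepsilon_i\in\{-1,0,1\}.$$

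Em seguida eu usaria um funcional linear associado a $\lambda$. Como $\lambda$ é autovalor de $M$, ele também é autovalor de $M^T$, e portanto existe um funcional linear não nulo $\phi:\C^n\to\C$ com $\phi\circ M = \lambda\,\phi$; daí $\phi(M^i v)=\lambda^i\phi(v)$ para todo $i$. Aplicando $\phi$ à combinação acima, obtém-se $\phi\bigl(\sum_i \varepsilon_i M^i v\bigr) = \phi(v)\sum_i \varepsilon_i \lambda^i$. Como $\Z^n$ gera $\C^n$ sobre $\C$ e $\phi\neq 0$, algum vetor da base canônica $e_j$ satisfaz $\phi(e_j)\neq 0$; tomo então $v=e_j\in\Z^n$, garantindo $\phi(v)\neq 0$. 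Com isso, o problema fica reduzido a mostrar que $\sum_i \varepsilon_i\lambda^i \neq 0$ para toda escolha não trivial de sinais.

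O passo principal, e a parte mais delicada, é exatamente essa estimativa de crescimento geométrico (análoga à unicidade de expansões numa base de módulo $\geq 2$), onde a hipótese $|\lambda|\geq 2$ é essencial. Seja $m$ o maior índice com $\varepsilon_m\neq 0$; então o termo dominante tem módulo $|\lambda|^m$, enquanto os demais são controlados pela desigualdade triangular por $\sum_{i<m}|\varepsilon_i|\,|\lambda|^i \leq \sum_{i=0}^{m-1}|\lambda|^i = \frac{|\lambda|^m-1}{|\lambda|-1} \leq |\lambda|^m-1$, usando $|\lambda|-1\geq 1$. Portanto
$$\Bigl|\sum_i \varepsilon_i\lambda^i\Bigr| \geq |\lambda|^m - (|\lambda|^m-1) = 1 > 0.$$
Segue que $\phi\bigl(\sum_i\varepsilon_i M^i v\bigr)=\phi(v)\sum_i\varepsilon_i\lambda^i\neq 0$, logo $\sum_i\varepsilon_i M^i v\neq 0$ em $\C^n$ e, em particular, em $\Z^n$. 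Voltando à reformulação inicial, isso mostra que $F$ é injetiva. Não antecipo dificuldades técnicas sérias; o único cuidado é justificar a existência do autovetor à esquerda $\phi$ (passando a $M^T$) e a escolha de um $v\in\Z^n$ fora de $\ker\phi$, o que é imediato pois $\ker\phi$ é subespaço próprio de $\C^n$ e não pode conter toda a base canônica.
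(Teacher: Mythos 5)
Your proof is correct and follows essentially the same route as the paper's: a left eigenfunctional $f$ with $f\circ M=\lambda f$ obtained from $M^{T}$, the choice of $v\in\Z^n\setminus\ker(f)$, and the geometric-series bound $\sum_{i<m}|\lambda|^i=\frac{|\lambda|^m-1}{|\lambda|-1}\leq|\lambda|^m-1$ that exploits $|\lambda|\geq 2$. If anything, your reduction is slightly more careful than the paper's, which phrases non-injectivity as a vanishing $\{0,1\}$-combination (as if $F$ were a homomorphism), whereas you correctly pass to differences with coefficients in $\{-1,0,1\}$; the estimate is unaffected by this change.
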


\begin{proof}    
Considere a aplicação $\phi: \Z^n \to \Z^n$ dada por $\phi(v) = Mv$. O dual $\phi^*$ tem matriz $M^{T}$ em base canônica dual, e então também tem $\lambda$ como autovalor. Portanto, existe uma forma linear $f: \C^n \to \C$ tal que $\phi^*(f) = f\circ \phi = \lambda f$.

Seja $v \in \Z^n \setminus \ker(f)$. 
Suponha que a aplicação $F$, definida a partir de $v$, não é injetiva, i.e. que existe $(t_i)_{i \in \N}$, $t_i \in \{0,1\}$, tal que
$$ t_0v + t_1Mv + t_2 M^2 v + \ldots = 0$$
e $t_i = 0$ para $i > N$.

Temos $M^Nv = r_0v + r_1Mv + \ldots + r_{N-1}M^{N-1}v$, com $r_i \in \{-1,0\}$. Aplicando $ f$ a ambos os lados desta fórmula, obtemos
$$\lambda^N f(v) = (r_0 +r_1\lambda + \ldots + r_{N-1}\lambda^{N-1})f(v);$$
$$ |\lambda|^N \le \sum_{i=0}^{N-1} |\lambda|^i = \frac{|\lambda|^N - 1}{|\lambda| - 1} \le |\lambda|^N -1,$$
uma contradição, o que mostra que $F$ é injetiva.
\end{proof}

Voltamos agora ao estudo dos grupos policíclicos e nilpotentes.

\begin{proposition}\label{prop9-1}
Sejam $G$ um grupo finitamente gerado nilpotente e $\phi: G\to G$ um automorfismo. Então o produto semidireto $S = G \rtimes_{\varphi}\Z$ (que é policíclico) é
\begin{enumerate}[(1)]
    \item virtualmente nilpotente; ou
    \item tem crescimento exponencial. 
\end{enumerate}
\end{proposition}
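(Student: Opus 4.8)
A estratégia é uma dicotomia baseada no comportamento espectral de $\phi$ sobre as camadas da série central inferior de $G$. Como essa série $G = C^1G \supseteq C^2 G \supseteq \cdots \supseteq C^{k+1}G = \{e\}$ é característica, $\phi$ a preserva e induz, para cada $i$, um automorfismo da camada $C^iG/C^{i+1}G$; sobre a parte livre $\Z^{m_i}$ desse grupo abeliano finitamente gerado, tal automorfismo é dado por uma matriz $M_i \in \GL(m_i,\Z)$. (Observo antes que $S$ é de fato policíclico: $G$ é policíclico por ser nilpotente finitamente gerado e, como $G \triangleleft S$ com $S/G \cong \Z$, ambos policíclicos, a Proposição~\ref{prop:polic} garante que $S$ o é.) Separo então dois casos: \textbf{(A)} todos os autovalores de todas as $M_i$ têm valor absoluto $1$; \textbf{(B)} alguma $M_i$ possui um autovalor $\lambda$ com $|\lambda|>1$.

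No caso (B), como $|\lambda^k| = |\lambda|^k \to \infty$, escolho $k$ com $|\lambda^k|\geq 2$ e aplico o Lema~\ref{lema9-4} à matriz $M_i^k$: obtenho $v \in \Z^{m_i}$ tal que as somas $\sum_j s_j M_i^{kj} v$, com $s_j \in \{0,1\}$ e suporte finito, são todas distintas. Tomo $a \in C^iG$ cuja imagem na parte livre de $C^iG/C^{i+1}G$ seja $v$. Escrevendo $t$ para o gerador de $\Z$ em $S$ (de modo que a conjugação por $t$ realiza $\phi$), considero os $2^{N+1}$ elementos $w(s) = \prod_{j=0}^N \phi^{kj}(a)^{s_j}$. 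Telescopando as potências de $t$, cada $w(s)$ se escreve como $a^{s_0}t^k a^{s_1}t^k \cdots t^k a^{s_N} t^{-kN}$, palavra de comprimento $\leq (2k+1)N+1$, linear em $N$. Como o homomorfismo $C^iG \to \Z^{m_i}$ envia $w(s)$ em $\sum_j s_j M_i^{kj} v$, esses elementos são dois a dois distintos em $S$. Logo $\rho_S\big((2k+1)N+1\big)\geq 2^{N+1}$, e $S$ tem crescimento exponencial.

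No caso (A), pelo Lema~\ref{lema9-3} todos os autovalores das $M_i$ são raízes da unidade, de modo que existe um expoente comum $k$ para o qual cada $M_i^k$ é unipotente. Ponho $\psi = \phi^k$ e $S' = G \rtimes_\psi \Z$, subgrupo de índice $k$ em $S$, e mostro que $S'$ é virtualmente nilpotente. Substituindo $G$ por $\bar G = G/T(G)$, onde $T(G)$ é o subgrupo de torção (finito e característico, logo $\psi$-invariante), reduzo ao caso livre de torção. Para $\bar G$, o Lema~\ref{lema9-2} aplicado a cada camada fornece uma filtração $\psi$-invariante de $\Z^{m_i}$ com ação trivial nos quocientes; interpolando-as com a série central inferior, construo uma série $\{e\} = \bar G_0 \subseteq \cdots \subseteq \bar G_N = \bar G$ de subgrupos normais, $\bar\psi$-invariantes, com cada $\bar G_{j+1}/\bar G_j \cong \Z$ central em $\bar G/\bar G_j$ e com $\bar\psi$ agindo trivialmente nesse fator. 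Então, em $\bar S = \bar G \rtimes_{\bar\psi}\Z$, cada $\bar G_{j+1}/\bar G_j$ é central em $\bar S/\bar G_j$ (comuta com $\bar G$ e com $t$) e, como $\bar S/\bar G \cong \Z$ é abeliano, essa série é central, provando que $\bar S$ é nilpotente. Por fim, volto de $\bar S = S'/T(G)$ a $S'$: o centralizador $C = C_{S'}(T(G))$ tem índice finito (pois $S'/C$ mergulha em $\mathrm{Aut}(T(G))$, finito), o subgrupo $C\cap T(G)$ é central em $C$, e $C/(C\cap T(G))$ é isomorfo a um subgrupo de $\bar S$, logo nilpotente; como extensão central de grupo nilpotente por subgrupo central finito é nilpotente, $C$ é nilpotente de índice finito em $S'$, donde $S$ é virtualmente nilpotente.

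O principal obstáculo concentra-se no caso (A): a construção cuidadosa da série central $\bar\psi$-invariante de $\bar G$ — garantindo simultaneamente que ela refine a série central inferior e que $\bar\psi$ atue trivialmente em cada fator — e a passagem final de $\bar S$ ao grupo $S'$, controlando a torção via o centralizador $C$. O caso (B) é essencialmente a estimativa de comprimento de palavra acima, cuja única sutileza é reduzir ao regime $|\lambda|\geq 2$ exigido pelo Lema~\ref{lema9-4} passando a uma potência de $\phi$.
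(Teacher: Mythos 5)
Seu caso (B) está correto e segue essencialmente o mesmo caminho do Caso 2 da prova do livro: passar a uma potência de $\phi$ para garantir um autovalor de módulo $\geq 2$, aplicar o Lema~\ref{lema9-4}, e contar $2^{N+1}$ elementos distintos de comprimento linear em $N$ (sua contabilização via o homomorfismo $C^iG \to \Z^{m_i}$ e o telescópio com $t^k$ está correta). O problema está no caso (A).

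A lacuna é a seguinte: você supõe implicitamente que, ao substituir $G$ por $\bar G = G/T(G)$, as camadas da série central inferior de $\bar G$ ficam livres de torção, de modo que a interpolação das filtrações do Lema~\ref{lema9-2} produza uma série com \emph{todos} os quocientes $\cong \Z$ e ação trivial de $\bar\psi$. Isso é falso: um grupo nilpotente finitamente gerado e livre de torção pode ter torção nos quocientes da sua série central inferior. Por exemplo, $G = \langle x,y,z \mid [x,z]=[y,z]=1,\ [x,y]=z^2\rangle$ mergulha no grupo de Heisenberg real (envie $z$ na matriz unitriangular com entrada superior direita $1/2$), logo é livre de torção, mas $G/[G,G] \cong \Z^2\times\Z_2$, pois $[G,G]=\langle z^2\rangle$. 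Consequentemente, sua série interpolada contém necessariamente quocientes finitos isomorfos às partes de torção $\bar F_i$ das camadas, e sobre esses quocientes $\bar\psi$ induz os automorfismos $\psi_i$ — que nada na sua escolha de $k$ (feita apenas para tornar as matrizes $M_i^k$ unipotentes) obriga a serem triviais. Sem ação trivial nesses fatores finitos, a série não é central em $\bar S$, e a nilpotência de $\bar S$ fica sem prova. O ingrediente que falta — e é exatamente o que a prova do livro faz — é aumentar a potência: como cada $F_i$ é finito, $\mathrm{Aut}(F_i)$ é finito, e pode-se escolher $N$ tal que, simultaneamente, cada $M_i^N$ seja unipotente e cada $\psi_i^N = \mathrm{id}$. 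Com essa escolha, refina-se a série central inferior do próprio $G$ (sem quocientar por $T(G)$) em uma série $\phi^N$-invariante com quocientes cíclicos (possivelmente finitos) e ação trivial, e o seu cálculo de centralidade mostra diretamente que $G \rtimes_{\phi^N}\Z$ é nilpotente; todo o seu desvio por $\bar S$ e pelo centralizador $C_{S'}(T(G))$ torna-se desnecessário (embora esse argumento final de descida esteja, em si, correto).
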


\begin{proof} O automorfismo $\phi$ preserva os subgrupos 
$$C^iG = [G, C^{i-1}G],\ C^1G = G,$$ 
que formam a série central inferior, então $\phi$ induz automorfismos $\phi_i$ de grupos abelianos $B_i = C^iG / C^{i+1}G \cong F_i\times\Z^{m_i}$, onde
\begin{quote}
    $\phi_i$ induz automorfismos $\psi_i$ de $\mathrm{Tor}(B_i) = F_i$ e $\overline{\phi}_i$ de $B_i/\mathrm{Tor}(B_i) \cong \Z^{m_i}$, onde $\overline{\phi}_i$ é representado por alguma matriz $M_i\in \GL(m_i, \Z)$. 
\end{quote}

Temos dois casos a considerar:

\medskip

\noindent
\textbf{1.} Todas as matrizes $M_i$ tem todos autovalores de módulo $1$. Neste caso, e como $M_i\in \GL(m_i,\Z)$, pelo Lema~\ref{lema9-3} todos os autovalores são raízes da unidade. 

Então existe $N \in \N$ tal que a matriz $M_i$ correspondente a $(\bar{\phi}_i)^N$ tem todos os autovalores iguais a $1$ e, além disso, as  aplicações correspondentes $\psi_i: F_i \to F_i$ são todas iguais à identidade.

Podemos substituir $S$ por 
$G \rtimes_{\varphi}(N\Z) \cong G \rtimes_{\varphi^N}\Z,$
de índice finito em $S$ e que satisfaz as condições do parágrafo anterior.

Pelo Lema~\ref{lema9-2}, aplicado a cada $\overline{\phi}_i$, temos uma extensão da série central inferior:
$$\{1\} = H_n \subset H_{n-1}\subset \ldots \subset H_1\subset H_0 = G,$$
tal que $H_i/H_{i+1}$ é cíclico, $\phi$ preserva cada $H_i$ e induz em $H_i/H_{i+i}$ uma ação trivial.  

Seja $\Z = \langle t \rangle$. Temos, para todo $g \in G$, $tgt^{-1} = \phi(g)$. Como $\phi$ age trivialmente em $H_i/H_{i+1}$, 
$$t^k (hH_{i+1})t^{-k} = hH_{i+1},\ \forall h \in H_i\setminus H_{i+1}.$$
Ou, equivalentemente, $[h,t^k] \in H_{i+1}$ para cada $h \in H_i\setminus H_{i+1}$ e cada $k \in \Z$. 

Considere a sequência exata 
$$ 1 \to G \to S \to \Z \to 1,$$
onde $C^2S = [S,S] \to 1$ em $\Z$. Assim, 
$$C^2S \leq G.$$

Vamos mostrar que, para cada $i\geq 0$, $[H_i, S] \leq H_{i+1}$. Para isto, sejam $h \in H_i$ e $s\in S$. Note que $s = gt^k$ com $g\in G$ e $k \in \Z$. Podemos escrever $$[h, gt^k] = [h,g]\cdot [g, [h,t^k]] \cdot [h, t^k].$$

    Note que $[h, t^k] \in H_{i+1}$. Como a série $\{ H_i \}$ é uma extensão de $\{ C^iG \}$, deve existir $r \geq 1$ tal que $C^rG \geq H_i \geq H_{i+1} \geq C^{r+1}G$.
    Assim, se $h \in H_i$, então $[h,g] \in C^{r+1}G \leq H_{i+1}$.

    Similarmente, $[h, t^k] \in H_{i+1} \leq C^rG$ implica que
    $$[h, [h, t^k]] \in C^{r+1}G \leq H_{i+1.}$$

Por indução, obtemos que $C^{i+2}S \leq H_i$ para todo $i\geq 1$. Em particular, $C^{n+2}S = \{1\}$ e então o grupo $S$ é nilpotente. 

\medskip

\noindent
\textbf{2.} Agora assuma que existe $i$ tal que $M_i$ tem um autovalor com  valor absoluto maior que $1$. 

Podemos supor que este autovalor tem valor absoluto $\geq 2$ (caso contrário, podemos substituir $\phi$ por $\phi^N$, com $N$ suficiente grande, e $S$ por $G \rtimes_{\varphi^N}\Z$, que tem índice finito em $S$). 

Aplicando o Lema~\ref{lema9-4} a $\Z^{m_i}$ e $M_i$, existe $g \in C^iG$ tal que $g \to v \in \Z^{m_i}$ e as sequências $(s_j) \in \displaystyle\oplus_{j \geq 0} \Z_2$ definem elementos $s_0v + s_1Mv + \ldots + s_nM^nv$ \textit{distintos} em $\Z^{m_i}$. Então suas pré-imagens por $p: C^iG \to B_i/\mathrm{Tor}(B_i)$, dadas por $g^{s_0}(tgt^{-1})^{s_1}\ldots (t^n g t^{-n})^{s_n}$ também são distintas.

Considere os elementos que correspondem a sequências $(s_j)$ com $s_j = 0$ para $j \geq k+1$. Temos $2^{k+1}$ tais elementos, e  seus comprimentos no grafo de Cayley de $S$ são $\leq 3k+1$ (se $t$ e $g$ são geradores). Então o grupo $S$ tem crescimento exponencial. 
\end{proof}

\begin{proposition}\label{prop9-2}
Todo grupo policíclico contém um subgrupo normal de índice finito que é poli-$C_{\infty}$.
\end{proposition}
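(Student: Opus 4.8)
O plano é proceder por indução no comprimento $n$ de uma série cíclica $G = N_0 \rhd N_1 \rhd \cdots \rhd N_n = \{e\}$ de $G$. No caso base $n=0$ o grupo é trivial, que é poli-$C_{\infty}$ por convenção, e tomo $H = G$; mais geralmente, se todos os fatores forem finitos então $G$ é um grupo de torção policíclico, logo finito pela Proposição~\ref{prop:polic}(2), e o subgrupo trivial resolve o problema. Para o passo indutivo, observo que $M := N_1$ é normal em $G$ (pois $N_1 \triangleleft N_0 = G$), é policíclico com série cíclica de comprimento $n-1$, e $G/M \cong N_0/N_1$ é cíclico. A ideia é aplicar a hipótese de indução a $M$, torná-la \emph{compatível} com a ação de $G$ por conjugação e, em seguida, reduzir o problema ao caso de um grupo (finito)-por-cíclico.

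Primeiro, pela hipótese de indução, $M$ contém um subgrupo poli-$C_{\infty}$ $K_0$ de índice finito $d = [M:K_0]$. Como $M$ é finitamente gerado (Proposição~\ref{prop:polic}(3)), a Proposição~\ref{prop:fin_many_subgrps} garante que $M$ possui apenas finitos subgrupos de índice $d$; denoto por $K$ a interseção de todos eles. Então $K$ é característico em $M$ (todo automorfismo permuta os subgrupos de índice $d$), tem índice finito, e está contido em $K_0$. Logo $K$ é poli-$C_{\infty}$, por ser subgrupo de um grupo poli-$C_{\infty}$ (Proposição~\ref{prop:polic}(6), versão da propriedade de subgrupos). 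Sendo $K$ característico em $M$ e $M$ normal em $G$, concluo que $K \triangleleft G$.

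Passo então ao quociente $\overline{G} := G/K$, no qual $F := M/K$ é um subgrupo normal finito e $\overline{G}/F \cong G/M$ é cíclico. Se $G/M$ for finito, $\overline{G}$ é finito e o próprio $K$ é a resposta. Se $G/M \cong \Z$, considero o centralizador $C := C_{\overline{G}}(F)$. Como $\mathrm{Aut}(F)$ é finito, o homomorfismo de conjugação $\overline{G} \to \mathrm{Aut}(F)$ tem núcleo $C$ de índice finito, e $C \triangleleft \overline{G}$. Vale $C \cap F = Z(F) \subseteq Z(C)$ e $C/Z(F) \hookrightarrow \overline{G}/F \cong \Z$; como $C$ é infinito (tem índice finito em $\overline{G}$ infinito), obtenho $C/Z(F) \cong \Z$. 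Por ser $Z(F)$ central em $C$ com quociente cíclico, $C$ é abeliano, finitamente gerado, e $C \cong \Z \oplus T$ com $T = \mathrm{Tor}(C) = Z(F)$ finito. Tomando $n = |T|$, o subgrupo $nC = \{\,nc : c \in C\,\} \cong \Z$ é livre de torção (logo poli-$C_{\infty}$), característico em $C$ e de índice finito; como $C \triangleleft \overline{G}$, segue $nC \triangleleft \overline{G}$.

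Finalmente, seja $H$ a pré-imagem de $nC$ pela projeção $G \to \overline{G}$. Então $K \leq H \triangleleft G$, com $[G:H] = [\overline{G} : nC] < \infty$, $K$ poli-$C_{\infty}$ e $H/K \cong nC$ poli-$C_{\infty}$; pela Proposição~\ref{prop:polic}(6) (fecho sob extensões), $H$ é poli-$C_{\infty}$, completando a indução. Espero que o principal obstáculo seja de natureza estrutural, e não computacional: como a classe poli-$C_{\infty}$ \emph{não} é fechada por quocientes, não basta trabalhar módulo um subgrupo conveniente e puxar de volta, sendo preciso garantir a normalidade em $G$ a cada etapa. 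Isso me obriga a substituir o subgrupo fornecido pela indução por um subgrupo característico (usando a finitude do número de subgrupos de um índice fixado) e a analisar com cuidado o caso (finito)-por-$\Z$, em que a abelianidade do centralizador $C$ é o ponto delicado que permite extrair o fator cíclico infinito livre de torção.
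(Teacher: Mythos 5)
Sua prova está correta e segue essencialmente o mesmo caminho da prova do texto: indução no comprimento da série cíclica, substituição do subgrupo dado pela hipótese de indução por um subgrupo característico de índice finito em $N_1$ (via a finitude do número de subgrupos de um índice fixado), redução ao caso em que $G/N_1$ é cíclico infinito e uso da finitude de $\mathrm{Aut}(F)$ para extrair um subgrupo cíclico infinito normal de índice finito em $G/K$, que é então puxado de volta usando o fechamento de poli-$C_{\infty}$ por extensões. A única variação é no passo final: o texto cinde a sequência $1 \to F \to G/H_1 \to \Z \to 1$ e toma uma potência $x^r$ do elemento de cisão que age trivialmente em $F$ (que resulta central), enquanto você trabalha com o centralizador $C_{\overline{G}}(F)$ inteiro, mostra que ele é abeliano e extrai sua parte livre de torção pela estrutura dos grupos abelianos finitamente gerados --- ambos os argumentos exploram exatamente o mesmo fato e têm o mesmo efeito.
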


\begin{proof} 
Seja $G$ um grupo policíclico. Usamos indução no comprimento $n$ da série descendente subnormal mais curta de $G$. Para $n = 1$, o grupo $G$ é cíclico e a afirmação é obviamente verdadeira. Suponha que seja verdade para $n$ e considere um grupo policíclico $G$ tendo uma série subnormal

$$ G = N_0 \triangleright N_1 \triangleright \ldots \triangleright N_n \triangleright N_{n+1} = \{e\},$$
tal que $ N_i/N_{i+1}$ é cíclico para todo $i\geq 0$.

A hipótese indutiva diz que $N_1$ contém um subgrupo normal $H$ de índice finito que é poli-$C_{\infty}$. A Proposição~\ref{prop:fin_many_subgrps} implica que $H$ tem um subgrupo de índice finito $H_1$ que é normal em $G$. Um subgrupo de um grupo poli-$C_{\infty}$ é também poli-$C_{\infty}$, então $H_1$ é poli-$C_{\infty}$.

Se $G/N_1$ é finito, então $H_1$ é o subgrupo procurado.

Suponha que $G/N_1$ seja cíclico infinito. Então o grupo $K = G/H_1$ contém o subgrupo normal finito $F = N_1/H_1$, e $K/F$ é isomorfo a $\Z$. Uma sequência exata curta $1 \to F \to K \to \Z \to 1$ sempre cinde, então $K$ é um produto semidireto de $F$ com um subgrupo cíclico infinito $\langle x \rangle$. A conjugação por $x$ define um automorfismo de $F$, e como $\mathrm{Aut}(F)$ é finito, existe $r$ tal que a conjugação por $x^r$ é a identidade em $F$. Portanto, $F\langle x^r \rangle$ é um subgrupo de índice finito em $K$ e é um produto direto de $F$ e $\langle x^r \rangle$. Concluímos que $\langle x^r \rangle$ é um subgrupo normal de índice finito de $K$. Temos que $\langle x^r \rangle = G_1/H_1$, onde $G_1$ é um subgrupo normal de índice finito em $G$, e $G_1$ é poli-$C_{\infty}$ já que $H_1$ é poli-$C_{\infty}$.
\end{proof}

\begin{corollary}
    \begin{enumerate}[(a)]
        \item Um grupo poli-$C_{\infty}$ não tem torção. 
        \item Um grupo policíclico é virtualmente sem torção. 
    \end{enumerate}
\end{corollary}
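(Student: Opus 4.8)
The plan is to prove (a) by induction on the length of a $C_\infty$-series and then to deduce (b) immediately from Proposition~\ref{prop9-2} together with (a). The guiding principle for (a) is the elementary fact that an extension of a torsion-free group by a torsion-free group is again torsion-free.

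First I would settle (a). Let $G$ be poli-$C_\infty$, so there is a subnormal series
$$G = N_0 \triangleright N_1 \triangleright \cdots \triangleright N_n = \{e\}$$
in which every quotient $N_i/N_{i+1}$ is infinite cyclic. I argue by induction on $n$, the case $n=0$ being trivial. For the inductive step, note that $N_1 \triangleright N_2 \triangleright \cdots \triangleright N_n = \{e\}$ presents $N_1$ as a poli-$C_\infty$ group whose series is strictly shorter, so by the inductive hypothesis $N_1$ is torsion-free. Now take any $g \in G$ of finite order $k \geq 1$, and let $\bar g$ denote its image under the projection $G \to G/N_1 \cong \Z$. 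Then $k\bar g = 0$ in $\Z$, and since $\Z$ is torsion-free we get $\bar g = 0$, i.e. $g \in N_1$. As $N_1$ is torsion-free, $g = e$. Hence $G$ is torsion-free, which completes the induction.

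For (b), let $G$ be policíclico. By Proposition~\ref{prop9-2}, $G$ contains a normal subgroup $H$ of finite index that is poli-$C_\infty$. By part (a), $H$ is torsion-free, so $G$ possesses a finite-index torsion-free subgroup; that is, $G$ is virtualmente sem torção.

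The hard part here is essentially nonexistent, since the structural work has already been carried out in Proposition~\ref{prop9-2}: once a finite-index poli-$C_\infty$ subgroup is in hand, (b) is immediate from (a). The only point requiring genuine (though routine) care is the induction in (a), where I must verify that descending to $N_1$ keeps us inside the poli-$C_\infty$ class with a strictly shorter $C_\infty$-series, so that the induction is well-founded; the rest is the standard torsion-free-by-torsion-free extension argument.
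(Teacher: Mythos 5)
Your proof is correct and follows essentially the same route as the paper: induction on the length of the $C_\infty$-series for (a), projecting a finite-order element to $G/N_1 \cong \Z$ to force it into $N_1$, and then deducing (b) from Proposição~\ref{prop9-2} combined with (a). The only (immaterial) difference is that you start the induction at $n=0$ rather than $n=1$.
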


\begin{proof}
    É suficiente provar a parte (a). Vamos usar indução sobre o comprimento minimal de uma série $(N_i)$ na definição de grupos poli-$C_{\infty}$. Se $n =1$, $G = \Z$ não tem torção. Se supomos que foi provado para todo grupo poli-$C_{\infty}$ com comprimento $\leq n$, seja $g \in G$ de ordem finita. Então $g \to \overline{g} \in G/N_1 \cong \Z$ é a identidade,  portanto $g\in N_1$ e a hipótese de indução se aplica.
\end{proof}

Finalmente, podemos provar o teorema de Wolf: Um grupo policíclico é virtualmente
nilpotente ou tem crescimentos exponencial. A prova abaixo se deve a Bass \cite{Bass72}, o argumento original de Wolf em \cite{Wolf68} utilizou a teoria dos grupos de Lie.

\begin{proof}
Pela Proposição~\ref{prop9-2}, é suficiente considerar os grupos poli-$C_{\infty}$. Temos 
$$ G = N_0 \triangleright N_1 \triangleright \ldots \triangleright N_n \triangleright N_{n+1} = \{1\}, \text{ com } N_i/N_{i+1} \cong \Z.$$

Vamos usar indução em $n$. 
Se $n = 0$, temos $G \cong \Z$, portanto $G$ é nilpotente.

Assuma que o teorema está provado para $n$ e a série de $G$ tem comprimento $n+1$. Pela hipótese de indução, $N_1$ é virtualmente nilpotente ou tem crescimento exponencial.  Se $N_1$ tem crescimento exponencial, o grupo $G \geq N_1$ também terá crescimento exponencial. No caso em que $N_1$ é virtualmente nilpotente, temos 
$$ G = N_1 \rtimes_{\psi}\Z,\ \psi: \Z\to \mathrm{Aut}(N_1)$$
e $N_1 \geq H$ -- um subgrupo nilpotente maximal de índice finito.

Podemos ver que $H$ é invariante por todos os automorfismos de $N_1$, então $\psi(1)(H) = H$. 
Portanto, $H \rtimes_{\psi}\Z \leq G$  tem índice finito. 

Pela Proposição~\ref{prop9-1}, o grupo $H \rtimes_{\psi}\Z$ é virtualmente nilpotente ou tem crescimento exponencial. Isso completa a prova.
\end{proof}

\begin{example} \label{exe:z2z}
O grupo $G = \Z^2 \rtimes_{\varphi}\Z$, com $\varphi = \left(\begin{array}{cc}
2 & 1 \\ 
1 & 1 \end{array}\right)$ é policíclico com crescimento exponencial.
\end{example}

\begin{exercise}
Verifique as afirmações do Exemplo \ref{exe:z2z}.
\end{exercise}

\medskip

\noindent
\textit{Prova do teorema de Bass--Guivarc'h:}
Seja $G$ um grupo finitamente gerado nilpotente de classe $k$. A prova é feita por indução sobre $k$. 

Se $k = 1$, o grupo $G$ é abeliano e a fórmula para o crescimento segue diretamente.
Suponha então que a afirmação é verdadeira para grupos nilpotentes de classe $k-1$, e que $G$ seja nilpotente de classe $k \geq 2$.
Assim, $C^kG$ é um grupo abeliano não trivial e $C^{k+1}G = 1$. 

Seja $d_1 = d - km_k$.
Se $C^kG$ for finito, então $m_k = 0$ e a hipótese de indução se aplica para o grupo $\Tilde{G} = G/C^kG$. Mas $G$ e $\Tilde{G}$ tem funções de crescimento equivalentes, e o resultado segue.

Agora, assumimos que $H:=C^kG$ é infinito, i.e. $m_k \geq 1$.

\medskip

\noindent\textit{Limite inferior:}
Escolhemos geradores para $G$ e $G/H$ de modo que $B_G(1,r)$ seja mapeada sobrejetivamente 
em $B_{G/H}(1,r)$ pela projeção natural. Novamente, a hipótese de indução se aplica ao grupo $\Tilde{G} =G/H$, então existem  $N = \mathrm{card}(B_{G/H}(1,n))\geq \lambda_1 n^{d_1}$ elementos em $G$ de comprimento $\leq n/2$ que não são congruentes módulo $H$. Denote por $g_1,\ldots,g_N \in B_G(1,n)$ os elementos da pré-imagem de $B_{G/H}(1,n)$.

Também podemos mostrar que em $H$ existem pelo menos $ \lambda_2n^{km_k}$ elementos com distância até $1$ em $G$ de no máximo $\frac{n}{2}$. 
Deixamos como exercício a verificação desse fato, sugerindo que o leitor mostre que se $G$ é um grupo finitamente gerado nilpotente de classe $k$ e $H$ é o último termo não trivial na sua série central inferior, então a restrição da função distância de $G$ a $H$ satisfaz
$\dist_G(1, g)\preceq  \dist_H(1, g)^{\frac{1}{k}}$,  para $g \in H$. Logo, existe $\mu \in \R$ tal que $B_G(1, \frac{n}{2}) \cap H \supset B_H(1, \mu \big(\frac{n}{2}\big)^k)$ e, assim, $\card (B_G(1, \frac{n}{2}) \cap H) \geq \lambda_2n^{km_k}$.

 Afirmamos que  $B_G(1, n)$ contém o conjunto $\displaystyle \bigcup_{i=1}^N
g_i\big(B_G(1,\frac{n}{2}\big) \cap H)$, cuja cardinalidade é de no mínimo $N \lambda_2\big(\frac{n}{2}\big)^{km_k}\geq \lambda_1\lambda_2 \big(\frac{n}{2}\big)^{d_1+km_k}$.
De fato, se $x \in B_G(1,\frac{n}{2})\cap H$, então temos $xH=H$ e, com isso, $\dist_{G/H}(xH,g_iH) = \dist_{G/H}(H, g_iH) \leq n$. Assim, 
$$\dist_G(1,g_ix) = \dist_G(g_i^{-1},x)  \leq \dist_G(g_i^{-1},1) + \dist_G(1,x) \leq n,$$ 
ou seja, $g_ix \in B_G(1, n)$. Logo, existe uma constante $\lambda_3$ tal que, para cada $n$, há no mínimo $\lambda_3 n^d$ elementos em $G$ de comprimento $\leq n$. 

\medskip

\noindent\textit{Limite superior:}
Lembramos que se  $G = \langle S \rangle$ e $g\in G$ é tal que $d_S(1,g) \leq n$, então $d(gH, H)\leq n$ em $G/H$. Por hipótese de indução, existem no máximo  $ \lambda_1'n^{d_1}$ classes laterais distintas: 
$$g_1H, g_2H, \ldots , g_NH \quad (N \leq \lambda _1'n^{d_1}),$$
onde podemos supor que todo $g_i$ cumpre $\dist_G(1,g_i) \leq n$.

Assuma que $g \in g_jH$. Então $g = g_jh$ e 
$$\dist_G(1,h) \leq \dist_G(1,g) + \dist_G(1,g_j) \leq 2n.$$
Também é possível mostrar, usando a mesma propriedade mencionada acima a respeito do último subgrupo não trivial da série central inferior de $G$, que há no máximo $\lambda_2'n^{km_k}$  tais $h$'s  distintos.
Então temos no máximo  $ \lambda_1'\lambda_2'n^d$ possibilidades distintas para $g$.

Com isso, a prova do teorema está concluída.
\qed

\begin{exercise}
Considere o grupo de Heisenberg discreto \index{grupo de Heisenberg discreto}
$$ H_3 = \left\{ U_{kln} = 
\left(\begin{matrix} 
1 & k & n \\
0 & 1 & l \\
0 & 0 & 1
\end{matrix}\right),\, 
k, l, n \in \mathbb{Z} \right\}, $$
com o conjunto de geradores $S = \{u^{\pm 1}, v^{\pm 1}, z^{\pm 1}\}$, onde $u = \mathrm{I} + \mathrm{E}_{12}$, \newline $v = \mathrm{I} + \mathrm{E}_{23}$, $z = \mathrm{I} + \mathrm{E}_{13}$ e $\mathrm{E}_{ij}$ é a matriz cuja única entrada não nula vale $1$, e está na posição $(i,j)$.

\begin{enumerate}
\item[(1)] Mostre que $U_{kln} = u^k v^l z^{n-kl}$ (isso implica em particular que qualquer elemento de $H_3$ pode ser escrito \textit{de maneira única} como $u^k v^l z^m$, onde $k, l, m \in \mathbb{Z}$);
\item[(2)] Prove que  $[u^k, v^l] = z^{kl}$ e deduza que $|z^m| \le 6\sqrt{|m|}$ e que  $|u^kv^lz^m| \le |k| + |l| + 6\sqrt{|m|}$ (onde $|g| = \mathrm{dist}_S(1, g)$);
\item[(3)] Prove que $|u^kv^lz^m| \le r$ implica $|k|+|l| \le r$ e $|m|\le r^2$;
\item[(4)] Deduza que $|u^kv^lz^m| \ge \frac12(|k| + |l| + \sqrt{|m|})$;
\item[(5)] Finalmente, mostre que existem constantes $c_2 > c_1 >0$ tais que, para cada $n \ge 1$, 
$$ c_1n^4 \le \rho_S(n) \le c_2n^4.$$
\end{enumerate}
\end{exercise}

\section{Teorema de Milnor}

O próximo resultado fundamental em direção à Conjectura~\ref{conj-Milnor} é devido ao próprio Milnor \cite{Mil68a}. 

\begin{thm}[Milnor, 1968]\label{thm-Milnor}
Um grupo finitamente gerado solúvel é policíclico ou tem crescimento exponencial.
\end{thm}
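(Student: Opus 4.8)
O plano é proceder por indução sobre o comprimento derivado $d$ de $G$. Se $d\le 1$, o grupo é abeliano e finitamente gerado, logo policíclico, e nada há a fazer. Suponho o resultado para grupos solúveis de comprimento derivado $d-1$ e tomo $G$ solúvel de comprimento $d\ge 2$ com crescimento subexponencial. Seja $A:=G^{(d-1)}$ o último termo não trivial da série derivada: ele é abeliano e característico (em particular normal) em $G$. O quociente $Q:=G/A$ é solúvel de comprimento derivado $d-1$, finitamente gerado, e tem crescimento subexponencial pela Proposição~\ref{prop:cresc}(3); pela hipótese de indução, $Q$ é policíclico. A meta será mostrar que $A$ é finitamente gerado como grupo abeliano: feito isso, como $A$ e $Q$ são ambos policíclicos, a Proposição~\ref{prop:polic}(5) garante que $G$ é policíclico.

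Observo primeiro que, sendo $Q$ policíclico, ele é finitamente apresentado (iterando a proposição segundo a qual uma extensão de um grupo finitamente apresentado por outro finitamente apresentado é finitamente apresentada, partindo de grupos cíclicos). Disso decorre que $A$ é o fecho normal em $G$ de um subconjunto finito; como $A$ é abeliano, a conjugação de $G$ em $A$ fatora por $Q$, e portanto $A$ é um $\Z Q$-módulo finitamente gerado. Resta então provar a afirmação central: um tal módulo, imerso como subgrupo normal $A\triangleleft G$ com $G/A=Q$ num grupo finitamente gerado $G$ de crescimento subexponencial, é necessariamente finitamente gerado como grupo abeliano. Provo isso por indução no comprimento $m$ de uma série cíclica $Q=Q_0\rhd Q_1\rhd\cdots\rhd Q_m=\{1\}$ de $Q$, usando em cada passo o caso cíclico aplicado ao subquociente apropriado, de modo a reduzir o problema ao caso em que $G/A$ é cíclico.

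O coração do argumento é o caso cíclico. Se $G/A$ é finito, então $A$ tem índice finito em $G$ e é finitamente gerado pela proposição sobre subgrupos de índice finito de grupos finitamente gerados. Se $G/A=\langle tA\rangle$ é cíclico infinito, escrevo $\phi$ para a conjugação por $t$ em $A$ e tomo $a_1,\dots,a_k$ tais que $A$ é o fecho normal de $\{a_1,\dots,a_k\}$, de modo que $A$ é gerado, como grupo abeliano, por $\{\phi^{n}(a_j)\}_{n\in\Z,\,j}$. Para cada $j$ e cada $L$, considero os $2^{L+1}$ elementos $\prod_{i=0}^{L}\phi^{i}(a_j)^{\varepsilon_i}$ com $\varepsilon_i\in\{0,1\}$; a identidade $a_j^{\varepsilon_0}\,t\,a_j^{\varepsilon_1}\,t\cdots t\,a_j^{\varepsilon_L}=\big(\prod_{i=0}^{L}\phi^{i}(a_j)^{\varepsilon_i}\big)\,t^{L}$ mostra que cada um desses elementos, multiplicado por $t^{L}$, admite uma palavra de comprimento $\le 2L+1$ no alfabeto $\{t,a_j\}$. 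Se todos os $\prod_i\phi^{i}(a_j)^{\varepsilon_i}$ fossem distintos, obteríamos $2^{L+1}$ elementos distintos na bola de raio $2L+1$, contradizendo o crescimento subexponencial. Logo, para $L$ grande, vale uma relação não trivial $\sum_i\delta_i\,\phi^{i}(a_j)=0$ com $\delta_i\in\{-1,0,1\}$; como os coeficientes de maior e de menor grau valem $\pm1$ e $\phi$ é um automorfismo, todos os $\phi^{n}(a_j)$ se exprimem como combinações $\Z$-lineares de uma quantidade finita deles. Assim o submódulo gerado por $a_j$ é finitamente gerado como grupo abeliano e, somando sobre $j$, o mesmo vale para $A$. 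Esta construção é o análogo, sem supor de início que $A\cong\Z^r$, do mecanismo do Lema~\ref{lema9-4}.

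A parte mais delicada estará na indução ao longo da série cíclica de $Q$: ao subir de $Q_{j+1}$ para $Q_j$ é preciso garantir que os grupos intermediários permaneçam finitamente gerados e que a finitude de geração como módulo se transfira para os anéis de grupo menores, de sorte que a aplicação repetida do caso cíclico produza palavras curtas com estimativas de comprimento uniformes a cada estágio. É exatamente aí que a hipótese de crescimento subexponencial deve ser reinvocada em cada passo, pois finitude de geração como $\Z Q$-módulo \emph{não} basta (o próprio $\Z Q$ é um contraexemplo). Superados esses pontos de contabilidade, a combinação do caso cíclico com a indução sobre o comprimento derivado e a Proposição~\ref{prop:polic}(5) encerra a prova; alternativamente, pode-se reempacotar a conclusão mostrando que $G$ satisfaz a condição máxima e invocando a Proposição~\ref{prop:noet2}.
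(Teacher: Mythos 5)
Sua estratégia global coincide com a do texto: indução no comprimento derivado, com base abeliana, redução ao problema de mostrar que o núcleo abeliano $A=G^{(d-1)}$ é finitamente gerado quando o quociente policíclico $Q=G/A$ o domina, e uso da Afirmação sobre geração normal finita (via $Q$ finitamente apresentado). Além disso, o seu caso cíclico — os $2^{L+1}$ produtos $\prod_{i=0}^{L}\phi^{i}(a_j)^{\varepsilon_i}$ contra o crescimento subexponencial, seguidos da resolução da relação com coeficientes extremos $\pm 1$ — é exatamente o mecanismo do Lema~\ref{lem-mil1} do texto, e está correto.

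O problema é que a redução do caso policíclico ao caso cíclico, que você mesmo chama de ``a parte mais delicada'', não é feita, e ela \emph{não} sai por aplicação direta do seu caso cíclico. Se você desce a série cíclica de $Q$ pelas pré-imagens $G=G_0\rhd G_1\rhd\cdots\rhd G_m=A$, os núcleos intermediários $G_1,\ldots,G_{m-1}$ não são abelianos, e a sua demonstração do caso cíclico usa a comutatividade do núcleo duas vezes de modo essencial: para reduzir os conjugados $g\,a_j\,g^{-1}$ às potências $\phi^{n}(a_j)$ (a conjugação interna por elementos de $A$ morre porque $A$ é abeliano) e para escrever a relação aditivamente e resolvê-la. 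Há duas maneiras de fechar o buraco. A primeira é a do texto: provar a versão \emph{não abeliana} do caso cíclico (Lema~\ref{lem-mil1} e Corolário~\ref{cor:mil1}: se $G$ é finitamente gerado de crescimento subexponencial e $G/N$ é cíclico, então $N$ é finitamente gerado — a relação obtida por contagem é tratada como equação de palavras, sem comutatividade, e conjugação iterada propaga a expressão); com isso desce-se a série de pré-imagens, pois cada $G_{i+1}$ é finitamente gerado pelo corolário e tem crescimento subexponencial por ser subgrupo finitamente gerado de $G$, sem nenhuma consideração de módulos (o texto, em vez de descer a série, usa a geração limitada de $Q$ e itera o Lema~\ref{lem-mil1} uma vez por gerador, como no Lema~\ref{lem-mil2}; dá no mesmo). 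A segunda é completar a sua descida de anéis de grupo $\Z Q\supset \Z Q_1\supset\cdots\supset\Z$, mantendo $A$ abeliano como módulo e usando que a conjugação pelo levantamento de cada gerador cíclico é semilinear: ela funciona, e a ``contabilidade'' que o preocupa é inócua, pois em cada estágio os geradores do módulo são finitos elementos \emph{fixos} de $G$, de comprimento fixo, e a contagem só precisa de $2^{L+1}$ elementos distintos numa bola de raio $C(2L+1)$ de $G$, o que o crescimento subexponencial de $G$ proíbe. Mas, como está escrito, o passo central da prova ficou enunciado e não demonstrado.
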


Também precisaremos demonstrar alguns lemas para chegar à prova desse resultado.

\begin{lemma}\label{lem-mil1}
Se um grupo $G$ é finitamente gerado e tem crescimento subexponencial, então para todos $\alpha,g \in G$, o conjunto $$\{g^k\alpha g^{-k} =: \alpha_k \mid k\in\Z \}$$ gera um subgrupo finitamente gerado. 
\end{lemma}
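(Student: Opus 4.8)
The plan is to argue by contraposition: I would show that if the subgroup $H = \langle \alpha_k \mid k \in \Z\rangle$ generated by all the conjugates $\alpha_k = g^k \alpha g^{-k}$ fails to be finitely generated, then $G$ must have exponential growth, contradicting the hypothesis that its growth is subexponential (i.e. $\gamma_S = 1$). Since conjugation by $g$ shifts the index, $g\alpha_k g^{-1} = \alpha_{k+1}$, it is natural to split $H$ into the two one-sided subgroups $H^{+} = \langle \alpha_k \mid k \geq 0\rangle$ and $H^{-} = \langle \alpha_k \mid k \leq 0\rangle$. Replacing $g$ by $g^{-1}$ interchanges the roles of $H^{+}$ and $H^{-}$, so it suffices to prove that $H^{+}$ is finitely generated; once both $H^{+}$ and $H^{-}$ are finitely generated, $H = \langle H^{+}, H^{-}\rangle$ is generated by the union of their finite generating sets, hence finitely generated.

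First I would set up the ascending chain $H_n = \langle \alpha_0, \alpha_1, \ldots, \alpha_n\rangle$, whose union is $H^{+}$. A short propagation argument shows that this chain stabilizes as soon as a single inclusion $\alpha_{N+1} \in H_N$ holds: conjugating by $g$ sends $H_N$ to $\langle \alpha_1, \ldots, \alpha_{N+1}\rangle$, and if $\alpha_{N+1} \in H_N$ this subgroup is contained in $H_N$, forcing $\alpha_{N+2} \in H_N$, and inductively $\alpha_m \in H_N$ for all $m \geq 0$. Consequently, if $H^{+}$ is \emph{not} finitely generated, then $\alpha_{n} \notin H_{n-1}$ for every $n \geq 1$.

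The heart of the proof is a length-versus-counting estimate. For each $n$ and each choice of exponents $(e_0, \ldots, e_{n-1}) \in \{0,1\}^n$, consider the product $w(e) = \alpha_0^{e_0}\alpha_1^{e_1}\cdots\alpha_{n-1}^{e_{n-1}}$. Telescoping the conjugates gives $w(e) = \alpha^{e_0} g \alpha^{e_1} g \cdots g \alpha^{e_{n-1}} g^{-(n-1)}$, so $|w(e)|_S \leq C n$ for a constant $C$ depending only on $|g|_S$ and $|\alpha|_S$; thus all $2^n$ products lie in the ball $\overline{B(1,Cn)}$. I would then show these $2^n$ elements are pairwise distinct: if $w(e) = w(f)$ with $e \neq f$, take the largest index $j$ with $e_j \neq f_j$, cancel the common tail, and deduce (since $e_j, f_j \in \{0,1\}$) that $\alpha_j \in \langle \alpha_0, \ldots, \alpha_{j-1}\rangle = H_{j-1}$, contradicting $\alpha_j \notin H_{j-1}$. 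Hence $\card(\overline{B(1,Cn)}) \geq 2^n$, which forces $\gamma_S \geq 2^{1/C} > 1$, i.e. exponential growth.

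The main obstacle is precisely this distinctness step: it is what converts the purely algebraic failure of finite generation into the $2^n$ geometrically short, genuinely distinct elements needed to contradict subexponential growth. Everything else — the telescoping length bound, the propagation lemma for the chain $H_n$, and the final recombination of $H^{+}$ and $H^{-}$ into $H$ — is routine once the distinctness is established, and the independence of the exponential-versus-subexponential dichotomy from the choice of $S$ (as recorded in the properties of $\rho_S$ and $\gamma_S$) lets me work with a single convenient finite generating set containing $g$ and $\alpha$.
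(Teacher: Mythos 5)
Your proof is correct and is essentially the paper's own argument: both count the $2^n$ binary products of consecutive conjugates inside a ball of radius linear in $n$, use subexponential growth to force a coincidence among them (you phrase this contrapositively), extract from that coincidence a relation $\alpha_j \in \langle \alpha_0,\ldots,\alpha_{j-1}\rangle$ by cancelling the common part, and propagate by conjugation by $g$ to conclude finite generation. The only differences are organizational: the paper writes the products as words $g\alpha^{s_0}g\alpha^{s_1}\cdots g\alpha^{s_m}$ in the generators $g$ and $g\alpha$ (your telescoped form of $w(e)$) and disposes of the negative indices with a one-line remark, where you make the split into $H^{+}$ and $H^{-}$ explicit.
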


\begin{remark}
\begin{enumerate}[(1)]
    \item Não é verdadeiro para $G$ com crescimento exponencial. Considere, por exemplo, 
    $G = F_2$.
    \item O mesmo se aplica para um conjunto finito $\{\alpha_1,\alpha_2,\ldots, \alpha_n\}$ no lugar de $\alpha$.
\end{enumerate}\label{remark-mil1}
\end{remark}

\begin{proof}
Dado $m \in \N$, considere o mapa
$$f_m: \prod_{i=0}^m \Z_2 \to G, \quad (s_i) \to g\alpha^{s_0}g\alpha^{s_1}\ldots g\alpha^{s_m}.$$
Se $f_m$ for injetivo para todo $m$, teremos $2^{m+1}$ produtos correspondentes a elementos na bola $B_{m+1}(1)$ em $G$ (assumindo que $g$ e $g\alpha$ são geradores). Isso implicaria numa contradição com a hipótese de crescimento subexponencial.

Então existem $m$ e $(s_i) \neq (t_i)$ em $\displaystyle\prod_{i=1}^m \Z_2$ tais que 
\begin{equation}\label{eq9-lem-mil1} 
g\alpha^{s_0}g\alpha^{s_1}\ldots g\alpha^{s_m} = g\alpha^{t_0}g\alpha^{t_1}\ldots g\alpha^{t_m}.
\end{equation}
Vamos assumir que o número $m$ é o menor possível com esta propriedade (em particular, $s_0 \neq t_0$ e $s_m \neq t_m$).

Observe que podemos escrever
$$ g\alpha^{s_0}g\alpha^{s_1}\ldots g\alpha^{s_m} = \alpha_1^{s_0}\alpha_2^{s_1}\ldots \alpha_{m+1}^{s_m} g^{m+1}, \text{ com } \alpha_i = g^i\alpha g^{-i}.$$
Então \eqref{eq9-lem-mil1} torna-se
$$
\alpha_1^{s_0}\alpha_2^{s_1}\ldots \alpha_{m+1}^{s_m} g^{m+1}  = \alpha_1^{t_0}\alpha_2^{t_1}\ldots \alpha_{m+1}^{t_m} g^{m+1}.
$$
Sendo $s_m\neq t_m$, temos $s_m-t_m = \pm 1$. Isto  nos dá
$$
\alpha_{m+1}^{\pm 1} = \alpha_m^{-s_{m-1}}\ldots \alpha_2^{-s_1}\alpha_1^{t_0-s_0}\alpha_2^{t_1}\ldots \alpha_m^{t_{m-1}}.
$$
Conjugando por $g$, obtemos
$$
\alpha_{m+2}^{\pm 1} = \alpha_{m+1}^{-s_{m-1}}\ldots \alpha_3^{-s_1}\alpha_2^{t_0-s_0}\alpha_3^{t_1}\ldots \alpha_{m+1}^{t_{m-1}}.
$$
Por indução, concluímos que, para $n\geq m+1$, $\alpha_n$ é um produto de $\alpha_1, \ldots, \alpha_m$ e o mesmo se aplica para os inversos, com potências negativas. 
\end{proof}

\begin{corollary}\label{cor:mil1}
Seja $G$ um grupo finitamente gerado com crescimento subexponencial e seja $N \triangleleft G$ tal que $G/N$ é cíclico. Então $N$ também é finitamente gerado.     
\end{corollary}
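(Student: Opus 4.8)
O plano é dividir a demonstração em dois casos, conforme a cardinalidade do quociente cíclico $G/N$.

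Primeiro, se $G/N$ for cíclico finito, então $N$ tem índice finito em $G$, e o resultado segue imediatamente da Proposição sobre subgrupos de índice finito de grupos finitamente gerados (todo subgrupo de índice finito de um grupo finitamente gerado é finitamente gerado). Observo que, nesse caso, nem sequer é necessária a hipótese de crescimento subexponencial.

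O caso interessante é quando $G/N \cong \Z$. Seja $\pi : G \to \Z$ o epimorfismo com $\ker(\pi) = N$, e escolha $g \in G$ com $\pi(g) = 1$. Como $G = \langle x_1, \ldots, x_n\rangle$ é finitamente gerado, pondo $m_i = \pi(x_i)$ e $a_i = x_i g^{-m_i}$, obtemos $a_i \in N$ e $G = \langle g, a_1, \ldots, a_n\rangle$. A etapa chave, de natureza puramente combinatória, será mostrar que $N$ é gerado pelo conjunto
$$\{\, g^k a_i g^{-k} \mid k \in \Z,\ 1 \leq i \leq n\,\}.$$
Para tanto, dado $w \in N$, escreve-se $w$ como palavra nos geradores $g^{\pm 1}, a_i^{\pm 1}$ e, inserindo fatores $g^{-s}g^{s}$ de modo conveniente, reescreve-se $w$ como um produto de conjugados $g^{s} a_i^{\pm 1} g^{-s}$ multiplicado por alguma potência $g^{t}$; como $w \in N = \ker(\pi)$, temos necessariamente $t = 0$, e a afirmação segue.

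Finalmente, aplico o Lema~\ref{lem-mil1}, mais precisamente a sua versão para um conjunto finito $\{a_1, \ldots, a_n\}$ no lugar de um único elemento, conforme a Observação~\ref{remark-mil1}: como $G$ tem crescimento subexponencial, o conjunto $\{g^k a_i g^{-k}\}$ gera um subgrupo finitamente gerado. Pela etapa anterior, esse subgrupo é exatamente $N$, donde $N$ é finitamente gerado. O principal obstáculo é justamente a etapa de reescrita que identifica $N$ com o subgrupo gerado pelos conjugados dos $a_i$ por potências de $g$; uma vez estabelecida essa identificação, a finitude da geração é consequência direta do Lema~\ref{lem-mil1}.
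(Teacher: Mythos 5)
Sua demonstração está correta e segue essencialmente o mesmo caminho da prova do texto: redução ao caso $G/N \cong \Z$, ajuste dos geradores de $G$ para obter elementos $a_i \in N$, identificação de $N$ com o subgrupo gerado pelos conjugados $g^k a_i g^{-k}$, e aplicação do Lema~\ref{lem-mil1} junto com a Observação~\ref{remark-mil1}(2). A única variação é o modo de justificar essa identificação: você usa uma reescrita direta (inserindo fatores $g^{-s}g^{s}$ e observando que a potência residual de $g$ é anulada por $w \in \ker(\pi)$), enquanto o texto argumenta via fecho normal dos $y_i$'s — ambas as justificativas são válidas e elementares.
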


\begin{proof}
Como todos os subgrupos de índice finito de $G$ são finitamente gerados, podemos assumir que $G/N$ é cíclico infinito. Seja $xN$ um gerador de  $G/N$. Dados quaisquer geradores $\{x_1, \ldots, x_d\}$ de $G$, podemos escrevê-los na forma $x_i = x^{e_i}y_i$, onde $e_i\in \Z$ e $y_i \in N$, e então $x, y_1, \ldots, y_d$ gera $G$. O subgrupo $N$ contém o fecho normal, digamos $K$, dos $y_i$'s. Mas $G/K$ é gerado pelas imagens dos geradores de $G$, portanto por $xK$. Assim, $G/K$ é cíclico infinito, e $G/N$ é um grupo de fatores cíclicos infinito dele, o que só é possível se $K = N$. Agora denote por $K_i$ subgrupo gerado por todos os conjugados $x^{-n} y_i x^n$. Então $N \geq \langle K_1, \ldots, K_d \rangle$, e o último subgrupo contém $y_1, \ldots,y_d$ e é invariante sob conjugação por todos os geradores de $G$, portanto é igual a $N$. O corolário agora segue do Lema \ref{lem-mil1} e da Observação \ref{remark-mil1}(2).    
\end{proof}

\begin{lemma}\label{lem-mil2}
Considere uma sequência exata curta
$$1 \to A \to G \xrightarrow{\varphi} H \to 1,$$
onde  $A$ é um grupo abeliano e $G$ é um grupo finitamente gerado. Se o grupo $H$ é policíclico, então $G$ tem crescimento exponencial ou é policíclico.
\end{lemma}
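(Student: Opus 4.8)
The plan is to establish the dichotomy by proving its contrapositive form: if $G$ does \emph{not} have exponential growth, then it must be polycyclic. So I would assume from the outset that $G$ has subexponential growth and aim to manufacture a subnormal series for $G$ with cyclic quotients.

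First I would exploit the polycyclicity of $H$. Fix a subnormal series $H = H_0 \rhd H_1 \rhd \cdots \rhd H_n = \{e\}$ with each $H_i/H_{i+1}$ cyclic, and pull it back along $\varphi$, setting $G_i = \varphi^{-1}(H_i)$. Since $\varphi$ is surjective with kernel $A$, this yields a subnormal chain
$$G = G_0 \rhd G_1 \rhd \cdots \rhd G_n = A,$$
in which $G_{i+1}\triangleleft G_i$ (because $H_{i+1}\triangleleft H_i$ and preimages of normal subgroups are normal in the preimage) and $G_i/G_{i+1}\cong H_i/H_{i+1}$ is cyclic. This last isomorphism comes from composing $\varphi|_{G_i}$ with the projection $H_i\to H_i/H_{i+1}$: the composite is surjective with kernel exactly $G_{i+1}$.

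The heart of the argument is a finite descent along this chain using Corollary~\ref{cor:mil1}. Starting from $G_0=G$, which is finitely generated and of subexponential growth, I would apply the corollary to the pair $G_0\rhd G_1$ (cyclic quotient) to conclude that $G_1$ is finitely generated; by Proposition~\ref{prop:cresc}(1) a finitely generated subgroup of a subexponential-growth group again has subexponential growth, so $G_1$ meets the hypotheses of the corollary, and the very same step applies to $G_1\rhd G_2$, and so on. After $n$ applications I obtain that $A=G_n$ is finitely generated.

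Finally, since $A$ is abelian and finitely generated it is itself polycyclic; splicing a cyclic series of $A$ onto the chain above produces a subnormal series of $G$ with cyclic quotients, whence $G$ is polycyclic, as desired. The step I expect to carry the real weight is the repeated invocation of Corollary~\ref{cor:mil1}: everything hinges on the fact that subexponential growth forces the kernel of a surjection onto a cyclic group to be finitely generated, together with the permanence of subexponential growth under passage to finitely generated subgroups (Proposition~\ref{prop:cresc}(1)); the pullback and the final splicing are purely formal. The subtle point to watch is that the corollary must be applied \emph{inside each $G_i$} rather than inside $G$, which is legitimate precisely because each $G_i$ is shown to be finitely generated and of subexponential growth \emph{before} the corollary is invoked at that stage.
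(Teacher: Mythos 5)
Your proof is correct, and it reaches the crucial point --- the finite generation of $A$ --- by a genuinely different route than the text. The text also pulls back the cyclic series of $H$ (the subgroups $Q_i = \varphi^{-1}(H_i)$), but only passively, to assemble the final polycyclic series of $G$ once $A$ is already known to be finitely generated; the finite generation itself is proved there via the bounded generation of polycyclic groups (Proposição~\ref{prop:polic}(1)), via Afirmação~\ref{claim9.1} (normal generation of the kernel, which implicitly invokes the finite presentability of $H$), and finally via an iterated application of Lema~\ref{lem-mil1} to conjugates written in the normal form $g_1^{m_1}\cdots g_q^{m_q}\,a_j\,(g_1^{m_1}\cdots g_q^{m_q})^{-1}$. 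You instead make the pulled-back chain do all the work, descending it one step at a time with Corolário~\ref{cor:mil1}, with Proposição~\ref{prop:cresc}(1) guaranteeing that each newly obtained finitely generated subgroup still has subexponential growth, so the corollary can be applied again inside it. Your version is shorter, needs neither Afirmação~\ref{claim9.1} nor bounded generation, and uses the abelianness of $A$ only in the very last step; note that the text's reduction of arbitrary conjugates to the normal form above silently uses abelianness too (to absorb conjugation by elements of $A$ into the middle factor), so your descent in fact establishes the slightly more general statement that the kernel of any surjection from a finitely generated group of subexponential growth onto a polycyclic group is finitely generated, abelian or not. What the text's longer route buys is a rehearsal of exactly the mechanism --- Lema~\ref{lem-mil1} applied to towers of conjugates --- that is reused in the proof of Milnor's theorem itself; both arguments, however, rest on the same crux, namely that subexponential growth forces kernels of maps onto cyclic groups to be finitely generated.
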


\begin{proof}
Vamos assumir que $G$ tem crescimento subexponencial e mostrar que ele deve ser policíclico. Para isso, é suficiente mostrar que $A$ é finitamente gerado. De fato, nesse caso, o grupo abeliano $A$ é policíclico e seguindo as hipóteses do lema, temos:
\begin{align*}
G/A &\cong H = H_0 \geq H_1 \geq \ldots \geq H_n = \{ H \};\\
A &= A_0 \geq A_1 \geq \ldots \geq A_k = \{1\} \text{ com } A_i/A_{i+1} \text{ cíclico.} 
\end{align*}
Sejam $Q_i := \varphi^{-1}(H_i)$, $i = 1, \ldots, n$. Temos
\begin{align*}
G & \geq Q_1 \geq \ldots \geq Q_n = A = A_0 \geq A_1 \geq \ldots A_k = \{1\}, 
\end{align*}
que é uma série policíclica de $G$. Vamos provar que $A$ é finitamente gerado. 

Como $H$ é policíclico, ele é \textit{limitadamente gerado}, isto é:
existem $h_1,  h_2,  \ldots, h_q \in H$ tais que, para todo $h\in H$, temos $h = h_1^{m_1}h_2^{m_2}\ldots h_q^{m_q}$ com $m_i\in \Z$.

Sejam $g_i$ as pré-imagens de $h_i$ por $\varphi$. Então, para todo $g\in G$, temos
\begin{equation}\label{eq9-lem-mil2}
    g = g_1^{m_1}\ldots g_q^{m_q}a, \text{ com } m_i\in \Z,\ a\in A.
\end{equation}
\begin{claim}\label{claim9.1}
    Seja $1 \to N \to K \xrightarrow{\pi} Q \to 1$ uma sequência exata de grupos, onde $K$ é finitamente gerado e $Q$ é finitamente apresentado. Então $N$ é normalmente gerado por alguns $n_1$, $n_2, \ldots, n_k$ (i.e. cada elemento de $N$ é um produto de conjugados de $n_i$).
\end{claim}

\noindent
\textit{Prova da Afirmação 1.}\ \ 
Seja $S$ um conjunto finito que gera $K$. Temos que $\overline{S} = \pi(S)$ gera $Q$. O grupo $Q$ é finitamente apresentado, então tome
$$ Q = \langle \overline{S} \mid r_1(\overline{S}), \ldots, r_k(\overline{S}) \rangle$$ uma apresentação finita, 
e sejam $n_i := r_i(S) \in K$. É fácil verificar que os elementos $n_i$ geram normalmente o subgrupo $N$. 
\qed

\medskip

Em nosso caso, a Afirmação \ref{claim9.1} implica que existem $a_1, \ldots, a_k \in A$ tais que cada $a\in A$ é um produto de conjugados de $a_j$ por elementos de $G$.

Por \eqref{eq9-lem-mil2}, os conjugados de $a_j$ têm a forma
\begin{equation}\label{eq9-lem-mil3}
    g_1^{m_1}\ldots g_q^{m_q} a_j (g_1^{m_1}\ldots g_q^{m_q})^{-1}.
\end{equation}

Pelo Lema~\ref{lem-mil1}, o grupo $A_q = \langle g_q^m a_j g_q^{-m} \mid m\in \Z,\ j = 1, \ldots,k \rangle$ é finitamente gerado por, digamos, um conjunto $S_q$ (ver também Observação \ref{remark-mil1} (2)). Assim, temos
$$ g_{q-1}^n g_q^m a_j g_q^{-m} q_{q-1}^{-n} = g_{q-1}^n s q_{q-1}^{-n} \text{ com } n\in Z,\ s\in S.$$
Outra vez usando Lema~\ref{lem-mil1} obtemos que o grupo gerado por 
$$\{ g_{q-1}^n s q_{q-1}^{-n} \mid n\in\Z,\ s\in S\}$$
é finitamente gerado. Então, por indução, o grupo gerado por todos os elementos de forma \eqref{eq9-lem-mil3} é finitamente gerado, e portanto $A$ é finitamente gerado.

Isso conclui a prova da Afirmação~1 e do Lema~\ref{lem-mil2}.
\end{proof}

\medskip

\noindent
\textit{Prova do teorema de Milnor.}
Vamos usar indução no comprimento derivado do grupo $G$. Se o comprimento é igual a $1$, o grupo $G$ é finitamente gerado e abeliano, então ele é policíclico.

Suponha que o teorema vale para os grupos de comprimento $\leq d$ e seja $d+1$ o comprimento derivado do grupo $G$. Note que $H = G/G^{(d)}$ é finitamente gerado e solúvel, de comprimento derivado $d$. Logo, pela hipótese de indução  $H$ é policíclico ou tem crescimento exponencial.

Se $H$ tem crescimento exponencial, então $G$ também tem crescimento exponencial (cf. Proposição~\ref{prop:cresc}(3)).
   
Caso contrário, $H$ é policíclico, e temos 
$$ 1 \to G^{(d)} \to G \to H \to 1,$$
onde $G^{(d)}$ é abeliano pois $G$ tem comprimento derivado $d+1$. Então podemos aplicar o Lema~\ref{lem-mil2} para concluir a prova.

\qed

\begin{corollary}\label{cor:Milnor-Wolf}
Seja $G$ um grupo solúvel finitamente gerado. Então o crescimento de $G$ é exponencial ou polinomial, e o último ocorre se, e somente se, $G$ for virtualmente nilpotente.    
\end{corollary}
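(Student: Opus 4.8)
A estratégia é combinar os três teoremas principais já estabelecidos nesta seção --- os teoremas de Milnor (\ref{thm-Milnor}), de Wolf (\ref{thm-Wolf}) e de Bass--Guivarc'h (\ref{thm:BG}) --- com a invariância da classe de crescimento por quasi-isometrias. Primeiro eu estabeleceria a dicotomia central: dado $G$ solúvel finitamente gerado, o teorema de Milnor garante que $G$ é policíclico ou tem crescimento exponencial. No segundo caso não há nada a fazer. No primeiro caso, aplico o teorema de Wolf ao grupo policíclico $G$, concluindo que $G$ é virtualmente nilpotente ou tem crescimento exponencial. Reunindo as duas afirmações, obtenho que todo grupo solúvel finitamente gerado ou tem crescimento exponencial ou é virtualmente nilpotente.

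Em seguida eu trataria a implicação ``virtualmente nilpotente $\Rightarrow$ crescimento polinomial''. Se $G$ é virtualmente nilpotente, ele contém um subgrupo nilpotente $H$ de índice finito. Pelo teorema de Bass--Guivarc'h, $H$ tem crescimento polinomial, com $\rho_H(n) \asymp n^d$ e $d = \sum_{i=1}^{k} i m_i$. Pela Proposição~\ref{prop:cresc}(2), subgrupos de índice finito satisfazem $\rho_H \asymp \rho_G$, e pela Proposição~\ref{prop:cresc_pol}(1) o grau de crescimento é preservado sob $\asymp$; logo $G$ também tem crescimento polinomial (de grau $d$). Combinando isso com a dicotomia do parágrafo anterior, concluo que o crescimento de $G$ é exponencial ou polinomial.

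Por fim, para a equivalência ``crescimento polinomial $\iff$ virtualmente nilpotente'', a implicação $(\Leftarrow)$ é exatamente o que foi verificado acima. Para $(\Rightarrow)$, eu observaria que crescimento polinomial e exponencial são mutuamente excludentes: se $\rho_S(n)\leq a n^b$ para todo $n$, então $\gamma_S = \lim_{n\to\infty} (\rho_S(n))^{1/n} = 1$, isto é, o crescimento é subexponencial. Portanto um grupo de crescimento polinomial não pode ter crescimento exponencial, e pela dicotomia estabelecida no primeiro parágrafo ele deve ser virtualmente nilpotente.

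Não há, a rigor, um obstáculo sério: a prova é essencialmente a montagem dos teoremas anteriores. O passo que exige mais atenção é puramente de contabilidade --- garantir que os invariantes de crescimento se comportam bem ao passar a subgrupos de índice finito e sob a equivalência assintótica $\asymp$, o que já está codificado nas Proposições~\ref{prop:cresc} e~\ref{prop:cresc_pol}. A única sutileza conceitual é assegurar que as duas alternativas (polinomial e exponencial) são de fato disjuntas, o que decorre diretamente da definição de $\gamma_S$ via o Lema de Fekete.
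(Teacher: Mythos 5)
Sua prova está correta e segue essencialmente o mesmo caminho que o texto tem em mente: o corolário é, de fato, a montagem dos teoremas de Milnor (Teorema~\ref{thm-Milnor}), de Wolf (Teorema~\ref{thm-Wolf}) e de Bass--Guivarc'h (Teorema~\ref{thm:BG}), junto com a invariância do tipo de crescimento por passagem a subgrupos de índice finito (Proposição~\ref{prop:cresc}). A observação final sobre a disjunção das alternativas, via $\gamma_S = 1$ para crescimento polinomial, fecha corretamente a equivalência.
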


\section{Grupos lineares}

Nesta seção, consideramos o crescimento de grupos lineares e alguns outros resultados relacionados, que serão necessários para provar o teorema de Gromov, na seção seguinte.

Relembramos que um \textit{grupo linear} é um grupo isomorfo a um subgrupo do grupo linear geral $\GL(n, F)$, para algum número natural $n$ e algum corpo $F$. O resultado básico resolvendo o problema de crescimento para grupos lineares é a \textit{alternativa de Tits} \cite{Tits72} (cf. Seção~\ref{sec8-TitsAlt}):

\begin{thm}[Tits, 1972]
Seja $G$ um grupo linear finitamente gerado. Então ou $G$ é virtualmente solúvel ou $G$ contém um subgrupo livre não abeliano.
\end{thm}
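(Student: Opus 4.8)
O plano é deduzir a dicotomia do lema do pingue-pongue (Lema~\ref{pingponglemma}): supondo que $G \leq \GL(n,F)$ não seja virtualmente solúvel, eu construiria dois elementos de $G$ cujas potências altas satisfazem as hipóteses daquele lema, gerando assim um subgrupo livre de posto $2$. A arena natural para o jogo de pingue-pongue não será $F^n$ em si, mas um espaço projetivo $\mathbb{P}(V)$ sobre um corpo local $k$ adequado, no qual certos elementos agem com dinâmica ``norte-sul'', isto é, de modo \emph{proximal}: possuem um ponto atrator $x^{+}$ e um hiperplano repulsor $H^{-}$ tais que potências altas contraem tudo fora de uma vizinhança de $H^{-}$ para dentro de uma vizinhança arbitrariamente pequena de $x^{+}$.

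Primeiro, eu reduziria ao caso essencial. Como $F$ pode ser substituído pelo subcorpo gerado pelas entradas de um conjunto finito de geradores de $G$, podemos supor que $F$ é um corpo finitamente gerado. Passando ao fecho de Zariski $\overline{G}$ e usando a decomposição de Levi da componente conexa, a hipótese de que $G$ não é virtualmente solúvel garante que a parte semissimples de $\overline{G}$ é não trivial; daí obtém-se, após passar a um subgrupo de índice finito (ou a um subquociente apropriado da representação), uma ação de $G$ sobre algum $V = k^{n}$ que é fortemente irredutível e cuja imagem não é relativamente compacta.

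O passo central — e o principal obstáculo — é a construção do elemento proximal. A não compacidade da imagem, combinada com a irredutibilidade forte, implica que existe um elemento $g \in G$ com um autovalor $\lambda$ cujo valor absoluto, relativo a algum valor absoluto (arquimediano ou não) do corpo finitamente gerado $k$, é estritamente maior que o dos demais autovalores. A existência de tal valor absoluto decorre da aritmética dos corpos finitamente gerados: se todos os autovalores de todos os elementos tivessem valor absoluto $1$ em toda parte, um argumento no espírito do Lema~\ref{lema9-3} forçaria $G$ a ser virtualmente solúvel, contrariando a hipótese. Esse elemento $g$ age de forma proximal em $\mathbb{P}(V)$.

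Por fim, eu usaria a irredutibilidade forte para produzir, por conjugação, um segundo elemento proximal $g' = h g h^{-1}$ cujos dados dinâmicos $(x'^{+}, H'^{-})$ estão em posição geral com relação a $(x^{+}, H^{-})$, ou seja, $x^{+} \notin H'^{-}$, $x'^{+} \notin H^{-}$, e assim por diante. Escolhendo vizinhanças dois a dois disjuntas dos pontos atratores e das regiões repulsoras e tomando $N$ suficientemente grande, os elementos $g^{N}$ e $(g')^{N}$ satisfazem as inclusões exigidas pelo lema do pingue-pongue generalizado (Lema~\ref{pingponglemmagen}), donde $\langle g^{N}, (g')^{N}\rangle \cong F_{2}$. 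O obstáculo mais sério de toda a argumentação é justamente obter simultaneamente o corpo local correto, a representação fortemente irredutível e o elemento proximal, pois isso exige entrelaçar a estrutura dos grupos algébricos lineares com a teoria de valores absolutos de corpos finitamente gerados.
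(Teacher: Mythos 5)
Antes de mais nada, note que o livro \emph{não demonstra} este teorema: tanto na Seção~\ref{sec8-TitsAlt} quanto no capítulo sobre crescimento, a alternativa de Tits é apenas enunciada, com citação a \cite{Tits72}, e usada como caixa-preta — o próprio texto observa que as provas conhecidas usam o Lema~\ref{pingponglemma} e suas variações, remetendo a Breuillard--Gelander. Portanto não existe prova do livro com a qual comparar o seu texto; ele deve ser avaliado por si só.

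Quanto ao seu esboço: ele reproduz corretamente o esqueleto da prova original de Tits (e das exposições modernas) — redução a um corpo finitamente gerado, fecho de Zariski e parte semissimples, escolha de um valor absoluto em que a imagem é ilimitada, elemento proximal em $\mathbb{P}(V)$ e pingue-pongue com dois proximais em posição geral via o Lema~\ref{pingponglemmagen}. Mas, tal como está escrito, é um programa, não uma prova: os passos decisivos são exatamente os que você enuncia sem demonstrar. Em particular: (i) a existência de um valor absoluto de um corpo finitamente gerado no qual algum autovalor tem módulo maior que $1$ não decorre de um argumento ``no espírito do Lema~\ref{lema9-3}'' sem trabalho substancial — aquele lema trata de matrizes inteiras, cujos autovalores são inteiros algébricos, enquanto aqui os autovalores podem ser números algébricos não inteiros ou transcendentes, e é precisamente para cobrir esses casos que Tits desenvolve a maquinaria de valorizações e corpos locais; além disso, falta o elo entre ``$G$ não é virtualmente solúvel'' e ``existe elemento com autovalor que não é raiz da unidade'', que usa a teoria de estrutura de grupos algébricos; (ii) mesmo com um autovalor dominante, a proximalidade em $\mathbb{P}(V)$ exige que esse autovalor seja \emph{simples}, o que em geral só se obtém após passar a uma potência exterior ou a um fator de composição adequado da representação; (iii) a posição geral dos dados dinâmicos $(x^{+}, H^{-})$ e $(x'^{+}, H'^{-})$ precisa ser extraída da irredutibilidade forte, que por sua vez requer a passagem a um subgrupo de índice finito. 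Esses pontos constituem o conteúdo duro do teorema; um esboço que os assume está, essencialmente, assumindo o resultado. Como roteiro da prova de Tits, porém, o seu plano é fiel e bem orientado.
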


\begin{corollary}\label{cor-TitsAlt}
O crescimento de um grupo linear finitamente gerado é exponencial ou polinomial, e é polinomial se, e somente se, o grupo for virtualmente nilpotente.    
\end{corollary}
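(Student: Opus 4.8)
The growth of a finitely generated linear group is exponential or polynomial, and it is polynomial if and only if the group is virtually nilpotent.

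Let me prove this Corollary to the Tits alternative.

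O plano é aplicar a dicotomia da alternativa de Tits enunciada acima e, em cada ramo, reduzir o problema a resultados já estabelecidos. Seja $G$ um grupo linear finitamente gerado. Pela alternativa de Tits, ou $G$ contém um subgrupo livre não abeliano, ou $G$ é virtualmente solúvel; tratarei os dois casos separadamente e depois amarrarei a equivalência entre crescimento polinomial e nilpotência virtual.

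Primeiro eu trataria o caso em que $G$ contém uma cópia de $F_2$. Pelo Exercício~\ref{exer9.5}(4), o grupo $F_2$ tem crescimento exponencial, e pela Proposição~\ref{prop:cresc}(1) a inclusão $F_2 \leq G$ fornece $\rho_{F_2} \preceq \rho_G$; logo $G$ tem crescimento pelo menos exponencial. Combinando com a cota universal $\rho_S \preceq 2^r$ da Proposição~\ref{prop:crescimentogrupos}(4), concluo que nesse caso o crescimento de $G$ é exatamente exponencial.

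Em seguida, no caso virtualmente solúvel, tomo um subgrupo solúvel $H \leq G$ de índice finito. Tal $H$ é finitamente gerado, por ser de índice finito em um grupo finitamente gerado, e pela Proposição~\ref{prop:cresc}(2) satisfaz $\rho_H \asymp \rho_G$. Aplicando o Corolário~\ref{cor:Milnor-Wolf} ao grupo solúvel finitamente gerado $H$, obtenho que $H$ — e portanto $G$ — tem crescimento exponencial ou polinomial, com o crescimento polinomial ocorrendo se, e somente se, $H$ for virtualmente nilpotente.

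Resta amarrar a equivalência ``crescimento polinomial $\Leftrightarrow$ virtualmente nilpotente'' para o próprio $G$. Se $G$ tem crescimento polinomial, o ramo que contém $F_2$ fica automaticamente excluído (lá o crescimento é exponencial), de modo que $G$ é virtualmente solúvel via algum $H$ como acima, agora com $\rho_H$ polinomial; pelo Corolário~\ref{cor:Milnor-Wolf}, $H$ possui um subgrupo nilpotente $N$ de índice finito, e como $|G:N| = |G:H|\cdot|H:N| < \infty$, o grupo $G$ é virtualmente nilpotente. Reciprocamente, se $G$ é virtualmente nilpotente, algum subgrupo nilpotente $N \leq G$ de índice finito tem crescimento polinomial pelo teorema de Bass--Guivarc'h (Teorema~\ref{thm:BG}), e a Proposição~\ref{prop:cresc}(2) dá $\rho_G \asymp \rho_N$, garantindo crescimento polinomial para $G$. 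A demonstração é quase inteiramente contabilidade de índices e invocação de resultados anteriores; o único ponto que exige atenção é constatar que os dois ramos da alternativa de Tits são mutuamente exclusivos no regime polinomial, fato que segue precisamente da exponencialidade provada no caso que contém $F_2$.
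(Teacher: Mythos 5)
Sua demonstração está correta e segue essencialmente o mesmo caminho da prova do livro: aplicar a alternativa de Tits, concluir crescimento exponencial no ramo que contém $F_2$, e invocar o Corolário~\ref{cor:Milnor-Wolf} (Milnor--Wolf) no ramo virtualmente solúvel. Você apenas explicita detalhes que o texto deixa implícitos — a comparação $\rho_{F_2}\preceq\rho_G$, a contabilidade de índices e o uso do Teorema~\ref{thm:BG} (Bass--Guivarc'h) para a recíproca ``virtualmente nilpotente $\Rightarrow$ crescimento polinomial'' — o que torna o argumento completo, mas não diferente em essência.
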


De fato, se o grupo contém um subgrupo livre não abeliano, seu crescimento é exponencial. No outro caso, basta aplicar os teoremas de Milnor e Wolf.

Shalom deu uma prova do Corolário~\ref{cor-TitsAlt} independente da alternativa de Tits para o caso de grupos lineares de característica zero \cite{Shalom98}. Este caso é suficiente para a demonstração do teorema de Gromov descrito a seguir. Outros resultados eficazes sobre a alternativa de Tits foram obtidos mais recentemente por Breuillard, Gelander, Eskin, Mozes, Oh, e outros (veja \cite{BreuGel08} e as referências nele contidas). As provas desses resultados usam o Lema do pingue-pongue (Lema ~\ref{pingponglemma}) e suas variações.

Em seguida, recordamos um importante teorema clássico.

\begin{thm}[Jordan, 1878]\label{thm:Jordan} 
Se $G$ é um subgrupo finito do grupo $\GL(n, \C)$, então $G$ tem um subgrupo abeliano de índice limitado apenas em termos de $n$.
\end{thm}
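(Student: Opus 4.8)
The plan is to follow the classical route via Weyl's unitary trick together with a contraction estimate for commutators near the identity. First I would replace $G$ by a conjugate lying in the unitary group: averaging any Hermitian inner product over the finite group $G$ produces a $G$-invariant inner product, so $G$ is conjugate to a finite subgroup of $\mathrm{U}(n)$. I equip $\mathrm{U}(n)$ with the operator-norm metric $\|A-B\|$ and record the key estimate that for unitary $A,B$ one has $\|ABA^{-1}B^{-1}-I\|\le 2\,\|A-I\|\,\|B-I\|$; this follows from writing $AB-BA=(A-I)(B-I)-(B-I)(A-I)$ and using that $A^{-1},B^{-1}$ are isometries. Fixing a small constant $\varepsilon\le \tfrac12$, I set $V_\varepsilon=\{g:\|g-I\|<\varepsilon\}$ and let $H=\langle G\cap V_\varepsilon\rangle$.

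The index bound is the easy half. If $g_1,\dots,g_r$ are representatives of distinct cosets of $H$, then $g_i^{-1}g_j\notin H$, hence $g_i^{-1}g_j\notin V_\varepsilon$, and unitarity gives $\|g_i-g_j\|=\|I-g_i^{-1}g_j\|\ge\varepsilon$. Thus the coset representatives form an $\varepsilon$-separated subset of the compact group $\mathrm{U}(n)\subset\C^{n^2}$, and a volume/packing argument bounds their number by a constant $N(\varepsilon,n)$ depending only on $n$. Hence $[G:H]\le N(\varepsilon,n)$.

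The heart of the matter is proving that $H$ is abelian (not merely nilpotent), and I would do this by induction on $n$, the case $n=1$ being trivial since $\mathrm{U}(1)$ is abelian. If $G$ acts reducibly on $\C^n$, unitarity splits $\C^n$ into two proper invariant orthogonal subspaces, embedding $G$ into $\mathrm{U}(n_1)\times\mathrm{U}(n_2)$ with $n_i<n$; intersecting the pullbacks of the abelian subgroups supplied by induction on each factor yields an abelian subgroup of bounded index. If $G$ acts irreducibly, I use the neighborhood $V_\varepsilon$: pick $A\neq I$ in $G\cap V_\varepsilon$ of minimal distance to $I$ (if no such $A$ exists, $G$ itself is $\varepsilon$-separated, hence of bounded order, and we are done). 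For any $B\in G\cap V_\varepsilon$ the contraction estimate gives $\|[A,B]-I\|<\|A-I\|$, so by minimality $[A,B]=I$; thus $A$ is central in $H$, and every element of $H$ preserves the eigenspaces of $A$. When $A$ is non-scalar this decomposition is nontrivial, so $H$ is reducible and induction applies after bounding $[G:H]$. When $A$ is scalar I instead look at a non-scalar element $B\in G\cap V_\varepsilon$ of minimal distance among non-scalars (if none exists, $G\cap V_\varepsilon$ consists of scalars and $H$ is abelian outright): minimality forces every $[B,h]$ to be scalar, which shows $H$ permutes the $\ge 2$ eigenspaces of $B$, and passing to the finite-index kernel of this permutation action again produces a reducible subgroup to which induction applies.

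The main obstacle I anticipate is precisely this last step: upgrading the commutator-contraction phenomenon from ``$H$ is nilpotent'' to ``$H$ is abelian,'' and in particular the bookkeeping of the scalar-matrix case, where the minimal central element carries no spectral information and one must instead exploit a minimal non-scalar element to split the representation. Assembling the various bounded indices ($N(\varepsilon,n)$, the factorials from permuting eigenspaces, and the inductive constants $J(n_i)$) into a single function $J(n)$ of $n$ alone is then routine, and if a \emph{normal} abelian subgroup is wanted one passes to the normal core, whose index is still bounded in terms of $n$.
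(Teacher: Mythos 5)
The paper itself does not prove this theorem: it only states it and refers the reader to Breuillard's article \cite{Breu23}, so there is no in-text argument to compare yours against. On its own merits, your proof is the classical unitarization-plus-commutator-contraction argument, and it is essentially correct. The key estimate $\|ABA^{-1}B^{-1}-I\|\le 2\,\|A-I\|\,\|B-I\|$ is verified correctly; the packing bound for $\varepsilon$-separated sets in the compact group $\mathrm{U}(n)\subset\C^{n^2}$ gives $[G:H]\le N(n)$ with $\varepsilon$ a fixed constant; and the minimality arguments do force $[A,B]=I$ (respectively $[B,h]$ scalar), since the commutator lands back in $G\cap V_\varepsilon$ at strictly smaller distance from $I$. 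The one step you assert without justification and that deserves a line is the passage from ``$[B,h]=\lambda I$'' to ``$h$ permutes the eigenspaces of $B$'': from $Bh=\lambda hB$ one gets, for $Bv=\beta v$, that $B(hv)=\lambda\beta\,(hv)$, so $h$ maps the $\beta$-eigenspace onto the $\lambda\beta$-eigenspace, and $\lambda$ permutes the spectrum of $B$ because conjugation preserves it; this is exactly what the induction needs. With that observation the assembly of constants ($N(n)$, the factor at most $n!$ from the permutation action, and the inductive bounds $J(n_i)$) depends only on $n$, as claimed. Two harmless redundancies worth noting: the case split on irreducibility of $G$ is unnecessary, since the eigenspace argument applies regardless; and the minimal element $A$ can be dispensed with entirely by working directly with a minimal \emph{non-scalar} element of $G\cap V_\varepsilon$, treating the all-scalar case (where $H$ is automatically abelian) separately.
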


É notável o fato de que Jordan tenha provado esse teorema em 1878, quando a teoria dos grupos ainda estava em sua infância. Referimo-nos a um recente artigo de E.~Breuillard \cite{Breu23} para a prova do teorema.

\medskip 

A ponte entre esses resultados e o crescimento de grupos gerais, em particular o crescimento polinomial, foi construída por Gromov e se baseia no seguinte resultado fundamental:

\begin{thm}[Gleason--Montgomery--Zippin: Solução do quinto problema de Hilbert \cite{MZ55}]\label{sec9 thm-Montgomery-Zippin}
Seja $X$ um espaço métrico de dimensão finita (de Hausdorff), localmente compacto, localmente conexo, conexo e homogêneo. Então o grupo de isometrias $\Isom(X)$ tem uma estrutura de grupo de Lie com finitas componentes conexas.
\end{thm}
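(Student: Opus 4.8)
O plano é reduzir a afirmação geométrica sobre $X$ à teoria estrutural de grupos localmente compactos, aplicada ao grupo $G=\Isom(X)$ munido da topologia compacto-aberta. Primeiro, observo que, sendo $X$ conexo e localmente compacto, o teorema de van Dantzig--van der Waerden garante que $G$ é um grupo topológico localmente compacto, que a ação $G\curvearrowright X$ é própria e que os estabilizadores $G_x$ são compactos. Como $X$ é homogêneo por hipótese, essa ação é transitiva; fixado $x\in X$, obtemos um homeomorfismo $X\cong G/G_x$ com $G_x$ compacto. Dessa forma, todas as hipóteses topológicas sobre $X$ (dimensão finita, conexidade local, conexidade) podem ser transportadas para propriedades do espaço homogêneo $G/G_x$ e, por fim, do próprio $G$.

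O passo central é invocar a maquinaria de Gleason--Yamabe (a essência da solução do quinto problema de Hilbert, \cite{MZ55}): todo grupo localmente compacto $G$ possui um subgrupo aberto $G'$ que é limite projetivo de grupos de Lie, isto é, para cada vizinhança $U$ da identidade em $G'$ existe um subgrupo compacto normal $K\subset U$ tal que $G'/K$ é um grupo de Lie. O objetivo passa a ser mostrar que, sob as hipóteses do enunciado, esses subgrupos compactos ``pequenos'' $K$ podem ser tomados triviais, ou seja, que $G$ não possui subgrupos pequenos (propriedade NSS). Uma vez estabelecido que $G$ não tem subgrupos pequenos, o critério de Gleason--Montgomery--Zippin para grupos localmente compactos implica diretamente que $G$ é um grupo de Lie.

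A eliminação dos subgrupos pequenos é onde entram de modo essencial a dimensão finita e a conexidade local de $X$, e é também o principal obstáculo da prova. A ideia é que, se algum $K$ não trivial persistisse, ele agiria sobre $X$ (ou sobre um quociente conveniente $G/K$) de uma forma incompatível com as hipóteses: os argumentos de teoria da dimensão de Montgomery--Zippin mostram que um grupo compacto agindo efetiva e não trivialmente sobre um espaço conexo, localmente conexo e de dimensão finita não pode conter subgrupos compactos arbitrariamente pequenos. Todo o conteúdo técnico pesado --- a análise de subgrupos a um parâmetro, o lema de Gleason sobre a estrutura local de grupos NSS, o uso de Peter--Weyl para os fatores compactos e as estimativas dimensionais --- concentra-se aqui, e não admite uma demonstração curta; por isso o resultado figura como um teorema citado, culminância de um longo programa.

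Finalmente, resta a afirmação sobre o número finito de componentes conexas, que é uma consequência limpa dos passos anteriores. Uma vez sabido que $G$ é um grupo de Lie, seja $G^0$ sua componente da identidade, que é um subgrupo aberto. As órbitas de $G^0$ em $X$ são abertas; como $X$ é conexo, $G^0$ age transitivamente, donde $G=G^0\cdot G_x$. Assim $G/G^0\cong G_x/(G_x\cap G^0)$, e como $G_x$ é compacto e $G_x\cap G^0$ é aberto em $G_x$, esse quociente é finito. Logo $G=\Isom(X)$ é um grupo de Lie com apenas finitas componentes conexas, concluindo o argumento.
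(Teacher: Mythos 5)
Antes de tudo, note que o livro \emph{não demonstra} este teorema: ele é enunciado como resultado fundamental citado de \cite{MZ55}, culminação do programa de Gleason, Montgomery--Zippin e Yamabe sobre o quinto problema de Hilbert. Não há, portanto, prova no texto com a qual comparar a sua; a avaliação precisa ser feita pelos méritos próprios da proposta.

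Quanto à proposta: a arquitetura que você descreve é o caminho padrão e está correta, e as reduções periféricas estão completas --- o teorema de van Dantzig--van der Waerden para obter que $\Isom(X)$ é localmente compacto, age propriamente e tem estabilizadores compactos; a homogeneidade para identificar $X\cong G/G_x$; e, ao final, o argumento de finitude das componentes ($G^0$ é aberto, suas órbitas são abertas, a conexidade de $X$ força $G^0$ a agir transitivamente, e $G/G^0\cong G_x/(G_x\cap G^0)$ é compacto e discreto, logo finito), que está limpo e correto. O problema está no passo central: a afirmação de que um grupo compacto agindo efetivamente em um espaço conexo, localmente conexo e de dimensão finita não pode conter subgrupos compactos arbitrariamente pequenos \emph{não} é consequência formal dos ingredientes que você montou (Yamabe, propriedade NSS, critério de Gleason--Montgomery--Zippin); ela é, essencialmente, o próprio conteúdo duro do teorema, demonstrado via o teorema de Newman sobre ações periódicas e a teoria de grupos compactos de transformações desenvolvida em \cite{MZ55}. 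Você sinaliza isso honestamente (``não admite uma demonstração curta''), mas isso significa que o que você escreveu é um roteiro fiel da demonstração, e não uma demonstração: o núcleo técnico fica deferido exatamente à mesma referência que o livro cita como caixa-preta. Como roteiro, é correto e até mais informativo do que o tratamento dado no texto; como prova autocontida, tem essa lacuna central.
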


Os leitores podem não estar familiarizados com todos os termos deste teorema, então revisaremos algumas das definições, embora não daremos a definição formal de grupos de Lie. Basta lembrar primeiro que um grupo topológico é um grupo no qual uma topologia é definida de tal forma que as operações do grupo, ou seja, multiplicação e inversão, são contínuas. Grosso modo, um grupo de Lie é um grupo topológico no qual o espaço topológico subjacente é uma variedade, ou seja, cada ponto tem uma vizinhança aberta homeomorfa a um espaço euclidiano e as operações do grupo são não somente contínuas, mas também diferenciáveis. 

Uma propriedade importante dos grupos de Lie com um número finito de componentes conexas é a seguente (veja~\cite[Capítulo~6]{Mann}).

\begin{proposition}\label{sec9 prop-LieGrps}
Seja $L$ um grupo de Lie com um número finito de componentes conexas. Então:
\begin{enumerate}[(1)]
    \item O grupo $L$ tem um subgrupo abeliano normal $Z$ (o centro da componente conexa da identidade), tal que $L/Z$ é isomorfo a um subgrupo de $\GL(k,\C)$, para algum $k$.
    \item Para cada número natural $n$, existe uma vizinhança aberta da identidade em $L$ que não contém nenhum elemento não trivial de ordem finita menor que $n$.
\end{enumerate}
\end{proposition}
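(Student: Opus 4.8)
\textit{Proposta de demonstração.} Denotemos por $L^0$ a componente conexa da identidade de $L$. Como cada componente conexa de um grupo de Lie é aberta e, por hipótese, há apenas um número finito delas, $L^0$ é um subgrupo aberto, característico e de índice finito em $L$. Para o item (1), o plano é tomar $Z := Z(L^0)$, o centro de $L^0$. Sendo o centro, $Z$ é característico em $L^0$; como $L^0$ é característico em $L$, segue que $Z$ é característico --- em particular abeliano e normal --- em $L$. Resta exibir um mergulho de $L/Z$ em algum $\GL(k,\C)$.

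A ferramenta central é a representação adjunta. Seja $\mathfrak{l}$ a álgebra de Lie de $L$, vista como espaço vetorial real de dimensão $d$, e considere $\mathrm{Ad}\colon L \to \GL(\mathfrak{l})\cong \GL(d,\R)\subset \GL(d,\C)$. Usarei o fato clássico da teoria de Lie de que o núcleo da representação adjunta de um grupo conexo é o seu centro; aplicado a $L^0$, isto fornece $\ker(\mathrm{Ad}\vert_{L^0}) = Z(L^0) = Z$, de modo que $\mathrm{Ad}$ desce a uma representação \emph{fiel} $\bar\rho\colon L^0/Z \hookrightarrow \GL(d,\C)$. Escrevo então $G := L/Z$ e $H := L^0/Z$, um subgrupo normal de índice finito $[L:L^0]$ de $G$ munido de uma representação complexa fiel de dimensão finita. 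Para passar de $H$ a $G$, induzo: a representação induzida $\mathrm{Ind}_H^G \bar\rho$ tem dimensão finita $[L:L^0]\cdot d$ e seu núcleo é a interseção $\bigcap_{g\in G} g(\ker\bar\rho)g^{-1}$ (o maior subgrupo normal de $G$ contido em $\ker\bar\rho$), que é trivial pois $\ker\bar\rho=\{1\}$. Logo $\mathrm{Ind}_H^G\bar\rho$ é fiel e produz o mergulho desejado $L/Z \hookrightarrow \GL(k,\C)$ com $k=[L:L^0]\,d$, o que prova (1).

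Para o item (2), a observação decisiva é que, sendo $L^0$ aberto, toda vizinhança suficientemente pequena da identidade está contida em $L^0$; assim basta trabalhar na componente conexa e usar a aplicação exponencial. Fixo uma norma em $\mathfrak{l}$ e, pelo teorema da função inversa, escolho $s_0>0$ tal que $\exp$ seja um difeomorfismo da bola $B_{s_0}=\{X\in\mathfrak{l}\mid \|X\|<s_0\}$ sobre uma vizinhança aberta de $e$. Ponho $U := \exp(B_{s_0/n})$. Se $g\in U$ satisfaz $g\neq e$ e tem ordem finita $m<n$, escrevo $g=\exp(X)$ com $X\in B_{s_0/n}$, $X\neq 0$; como $t\mapsto\exp(tX)$ é um subgrupo a um parâmetro, vale $g^m=\exp(mX)$, e $\|mX\|=m\|X\|<(n-1)s_0/n<s_0$, donde $mX\in B_{s_0}$. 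De $g^m=e=\exp(0)$ e da injetividade de $\exp$ em $B_{s_0}$ obtém-se $mX=0$, logo $X=0$, uma contradição. Portanto $U$ não contém nenhum elemento não trivial de ordem finita menor que $n$.

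Os passos mais técnicos são de caráter clássico, e não propriamente da teoria geométrica de grupos: a identidade $\ker(\mathrm{Ad}\vert_{L^0})=Z(L^0)$ (teoria de Lie) e a fidelidade da representação induzida de uma representação fiel de um subgrupo de índice finito (teoria de representações). O ponto que considero o principal obstáculo é precisamente a redução de $G$ a $H$ por indução, pois é nele que se usa de modo essencial a finitude do número de componentes de $L$; estabelecida essa ferramenta, o restante do argumento é montagem direta.
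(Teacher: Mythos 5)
O livro não apresenta demonstração desta proposição: ela é enunciada com referência a \cite[Capítulo~6]{Mann}, de modo que não há prova no texto com a qual comparar — sua proposta preenche essa lacuna, e está correta. No item (1), a escolha $Z = Z(L^0)$, o fato clássico $\ker(\mathrm{Ad}\vert_{L^0}) = Z(L^0)$ e a fidelidade da representação induzida a partir de um subgrupo de índice finito se articulam corretamente (o núcleo da induzida é, de fato, o núcleo normal $\bigcap_{g} g(\ker\bar\rho)g^{-1}$, trivial aqui). Uma única ressalva: a afirmação de que $L^0$ é característico em $L$ é delicada se ``característico'' se refere a automorfismos abstratos, que em grupos de Lie podem ser descontínuos e, a priori, não preservam a componente da identidade; mas ela é dispensável, pois para a normalidade de $Z$ em $L$ basta notar que as conjugações são automorfismos contínuos, logo preservam $L^0$ e portanto $Z(L^0)$. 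Registro ainda uma alternativa que evita a indução: a soma direta de $\mathrm{Ad}\colon L \to \GL(d,\C)$ com a representação regular do grupo finito $L/L^0$ (de ordem $r=[L:L^0]$), composta com a projeção $L \to L/L^0$, tem núcleo $C_L(L^0)\cap L^0 = Z(L^0)$, fornecendo diretamente um mergulho de $L/Z$ em $\GL(d+r,\C)$; as duas rotas são essencialmente equivalentes em custo. O item (2) é o argumento padrão de ausência de subgrupos pequenos via aplicação exponencial e está correto, inclusive na estimativa $m\|X\| < s_0$ que permite usar a injetividade de $\exp$ na bola $B_{s_0}$.
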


Um espaço métrico $X$ é dito \textit{homogêneo} se, para quaisquer dois pontos $x,y \in X$, existe uma isometria de $X$ mapeando $x$ em $y$.\index{espaço métrico homogêneo}

Agora definimos dimensão finita e dimensão de Hausdorff. Um espaço topológico tem dimensão $0$ se cada ponto possui uma vizinhança aberta com fronteira vazia. Ele tem dimensão no máximo $n$, se cada ponto tem uma vizinhança aberta com fronteira de dimensão no máximo $n - 1$. A dimensão é igual a $n$, se é no máximo $n$, mas não é no máximo $n - 1$.
É fácil ver que o espaço euclidiano $\R^n$ tem dimensão no máximo $n$, mas não é tão fácil provar que a dimensão é exatamente $n$. Se considerarmos nesse espaço o conjunto de todos os pontos com coordenadas irracionais, este é um espaço $0$-dimensional. Nos referimos a \cite{HW41} para mais detalhes sobre a dimensão topológica. \index{dimensão topológica}

A dimensão de Hausdorff é um conceito métrico, e não topológico. Lembre-se de que a medida de um subconjunto do espaço euclidiano é determinada observando as coberturas desse subconjunto por pequenas bolas e somando os volumes dessas bolas. O volume de uma bola, por sua vez, é proporcional a uma potência de seu diâmetro, e o expoente dessa potência é a dimensão do espaço. Se usarmos o expoente errado, digamos que tomamos um expoente maior que a dimensão, todos os subconjuntos terão medida 0, enquanto se usarmos um expoente menor que a dimensão, a maioria dos subconjuntos terá medida infinita. A ideia da dimensão de Hausdorff é inverter essas observações: primeiro definimos medidas para todos os expoentes e use o desaparecimento dessas medidas para detectar o expoente que deve ser considerado como a dimensão do espaço. A definição precisa é a seguinte.

\begin{definition}
Seja $X$ um espaço métrico. A \textit{dimensão de Hausdorff} \index{{dimensão de Hausdorff}} de $X$ é o ínfimo do conjunto dos números $s$, tais que para cada $\epsilon > 0$, $X$ pode ser coberto por uma coleção finita ou contável $\{ A_i \}$ de subconjuntos tal que $A_i$ tem diâmetro $d_i$ e $\sum d_i^s < \epsilon$. Em outras palavras, a dimensão de Hausdorff é o ínfimo dos números $s$ tais que a ``medida $s$-dimensional'' de $X$ é $0$.
\end{definition}

Ao contrário da dimensão topológica, a dimensão de Hausdorff não precisa ser um número inteiro e, de fato, qualquer número real não negativo pode ocorrer como a dimensão de Hausdorff de algum espaço. A conexão entre as duas dimensões é dada por:

\begin{thm}
A dimensão de um espaço métrico $X$ é o ínﬁmo das dimensões de Hausdorff dos espaços métricos homeomorfos a $X$.
\end{thm}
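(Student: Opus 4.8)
O plano é separar a igualdade em duas desigualdades. Como a dimensão topológica $\dim X$ (a dimensão indutiva definida no texto) é um invariante topológico, tem-se $\dim Y = \dim X$ para todo $Y$ homeomorfo a $X$, de modo que o ínfimo do enunciado faz sentido. Escrevendo $n = \dim X$, basta então mostrar que (a) $n \leq \dim_H Y$ para \emph{todo} espaço $Y \cong X$, o que dá $n \leq \inf_Y \dim_H Y$; e (b) para cada $\varepsilon > 0$ existe um espaço métrico $Y \cong X$ com $\dim_H Y \leq n + \varepsilon$, o que dá $\inf_Y \dim_H Y \leq n$. Trabalharei no contexto de espaços métricos separáveis, onde a dimensão indutiva coincide com a dimensão de recobrimento; se $n = \infty$, a parte (a) já encerra o argumento.

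Para a desigualdade (a) (o teorema de Szpilrajn), a ideia é provar que $\dim Z \leq \dim_H Z$ para qualquer espaço métrico separável $Z$, por indução no inteiro $n$, na forma: se $\mathcal{H}^n(Z) = 0$ então $\mathrm{ind}(Z) \leq n-1$. O caso $n=0$ é imediato, pois $\mathcal{H}^0(Z) = 0$ equivale a $Z = \emptyset$. No passo indutivo, fixado $x \in Z$, considero a função $1$-Lipschitz $f = \dist(x, \cdot)$ e aplico a desigualdade de Eilenberg
\[
\int_0^\infty \mathcal{H}^{n-1}\big(f^{-1}(\rho)\big)\, d\rho \leq C\, \mathcal{H}^n(Z) = 0,
\]
que garante $\mathcal{H}^{n-1}(S(x,\rho)) = 0$ para quase todo $\rho$, onde $S(x,\rho) = f^{-1}(\rho)$. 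Como $\partial B(x,\rho) \subseteq S(x,\rho)$, a hipótese de indução fornece $\mathrm{ind}(\partial B(x,\rho)) \leq n-2$, e tais $\rho$ podem ser escolhidos arbitrariamente pequenos; pela própria definição indutiva de dimensão dada no texto, conclui-se $\mathrm{ind}(Z) \leq n-1$. Tomando $n = \lfloor \dim_H Z \rfloor + 1$ obtém-se $\dim Z \leq \dim_H Z$, e aplicando isso a $Z = Y$ segue $n = \dim Y \leq \dim_H Y$.

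Para a desigualdade (b), a parte de realização, o plano é construir, dentro da classe de homeomorfismo de $X$, uma métrica compatível de pequena dimensão de Hausdorff. Como $\dim X = n$, existem recobrimentos abertos finitos $\mathcal{U}_k$ de $X$ de ordem $\leq n+1$ (cada ponto em no máximo $n+1$ conjuntos) e de malha tendendo a zero, que tomarei encaixados de modo que $\mathcal{U}_{k+1}$ seja um refinamento estrela de $\mathcal{U}_k$. Usando as partições da unidade associadas, obtenho aplicações canônicas $\kappa_k : X \to |N(\mathcal{U}_k)|$ aos nervos geométricos, que são poliedros de dimensão $\leq n$, e mergulho $X$ no produto $\prod_k |N(\mathcal{U}_k)|$, metrizado ponderando o $k$-ésimo fator por um fator geométrico $\lambda^{-k}$. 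A métrica resultante $d'$ é compatível com a topologia de $X$, e uma bola de raio da ordem de $\lambda^{-k}$ fica determinada, a menos de constantes, pela imagem nos primeiros $k$ nervos; como cada nervo é $n$-dimensional e os recobrimentos têm ordem $\leq n+1$, um argumento de contagem de caixas limita o número de bolas de raio $r$ necessárias para cobrir $(X,d')$ por $C\, r^{-(n+\varepsilon)}$, donde $\dim_H(X,d') \leq n + \varepsilon$.

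A parte mais delicada será justamente o passo (b): controlar a métrica $d'$ de modo que bolas de raio $r$ correspondam a uniões de um número uniformemente limitado (em termos de $n$) de elementos do recobrimento na escala apropriada, pois é isso que traduz a restrição combinatória ``ordem $\leq n+1$'' na estimativa de contagem que limita a dimensão de Hausdorff. A desigualdade de Eilenberg e a existência de recobrimentos de ordem $\leq n+1$ (equivalência entre dimensão indutiva e de recobrimento no caso separável) são os únicos ingredientes externos; observo ainda que o ínfimo pode não ser atingido, mas a aproximação por $n+\varepsilon$ já estabelece o enunciado, e o caso não separável reduz-se ao separável restringindo-se às partes relevantes do espaço.
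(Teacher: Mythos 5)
O livro não demonstra este teorema: o texto remete explicitamente a prova ao Capítulo~VII de \cite{HW41}, de modo que sua proposta deve ser comparada com a demonstração clássica ali contida, que tem duas partes: o teorema de Szpilrajn ($\dim Z \leq \dim_H Z$ para todo espaço métrico separável $Z$) para a desigualdade ``$\leq$'', e, para a realização, um mergulho de $X$ em $I^{2n+1}$ combinado com um argumento de categoria de Baire (aproximação de aplicações contínuas por aplicações em poliedros $n$-dimensionais, que têm medida $\mathcal{H}^{n+\varepsilon}$ nula). Sua parte (a) é essencialmente o argumento de Szpilrajn, usando a desigualdade de Eilenberg no lugar do lema de cobertura que \cite{HW41} prova diretamente; essa metade está correta.

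Já a parte (b), como você mesmo antecipa, é onde está o problema, e ele é real. A condição de ordem $\leq n+1$ controla as sobreposições dos recobrimentos $\mathcal{U}_k$, mas não a sua cardinalidade $m_k = |\mathcal{U}_k|$, que é ditada pela entropia métrica da métrica original e pode crescer arbitrariamente rápido; com pesos geométricos fixos $\lambda^{-k}$, o número de bolas de raio $\approx \lambda^{-k}$ em $(X,d')$ é da ordem de $m_k$, e a cota ``número de bolas de raio $r$ $\leq C\,r^{-(n+\varepsilon)}$'' simplesmente não segue. Há ainda um problema métrico anterior: o diâmetro em $d'$ de um elemento $U \in \mathcal{U}_k$ não é da ordem de $\lambda^{-k}$, pois cada coordenada $j<k$ contribui com $O(\lambda^{-j})$ (a imagem de $U$ no $j$-ésimo nervo está apenas contida numa estrela de vértice, de diâmetro $\approx 1$), a menos que a malha de $\mathcal{U}_k$ seja escolhida pequena em função das constantes de Lipschitz dos mapas $\kappa_j$ anteriores. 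As construções que funcionam (por exemplo, a de Pontryagin--Schnirelmann, para a dimensão de caixa inferior de compactos) fazem as duas escolhas de modo adaptativo e intercalado --- pesos $w_k$ com $m_k w_k^{n+\varepsilon} \to 0$ e malhas dependendo dos níveis anteriores --- e estimam apenas as somas de Hausdorff $\sum \mathrm{diam}^{\,n+\varepsilon}$ nas escalas $w_k$, não a contagem de caixas em toda escala $r$, que é o que você afirma. Por fim, sua redução do caso não separável está errada: sem separabilidade o teorema é falso, pois em um espaço não separável existe, para algum $\varepsilon>0$, um subconjunto $\varepsilon$-separado não enumerável, logo nenhuma cobertura enumerável por conjuntos de diâmetro $<\varepsilon$ existe e $\mathcal{H}^s = \infty$ para todo $s$; assim, um espaço discreto não enumerável tem dimensão topológica $0$, mas toda cópia homeomorfa dele tem dimensão de Hausdorff infinita. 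O enunciado deve ser lido, como em \cite{HW41}, sob a hipótese de separabilidade.
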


Assim, a dimensão não excede a dimensão de Hausdorff. Para a prova, veja \cite[Capítulo~VII]{HW41}. Também precisaremos do seguinte resultado:

\begin{proposition}
Se um espaço métrico $X$ pode ser coberto, para cada $\epsilon > 0$, por $k$ bolas de diâmetro $\epsilon$, onde $k$ pode variar com $\epsilon$, mas o produto $k \epsilon^d$ permanece limitado, então a dimensão Hausdorff de $X$ é no máximo $d$.
\end{proposition}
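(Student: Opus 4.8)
O plano é aplicar diretamente a definição de dimensão de Hausdorff dada acima, mostrando que a ``medida $s$-dimensional'' de $X$ se anula para todo expoente $s>d$; como a dimensão de Hausdorff é o ínfimo dos $s$ para os quais essa medida é nula, isso força imediatamente a dimensão a ser no máximo $d$. Primeiro fixarei uma constante $M>0$ tal que $k(\epsilon)\,\epsilon^d\leq M$ para todo $\epsilon>0$ (ou para todo $\epsilon$ suficientemente pequeno), cuja existência é exatamente a hipótese de que o produto $k\epsilon^d$ permanece limitado; aqui $k(\epsilon)$ denota o número de bolas de diâmetro $\epsilon$ que cobrem $X$.

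Em seguida fixarei um expoente arbitrário $s>d$ e estimarei a soma $\sum_i d_i^s$ relativa à cobertura de $X$ por essas $k(\epsilon)$ bolas. Como cada bola tem diâmetro $d_i\leq\epsilon$, obtenho
$$\sum_{i=1}^{k(\epsilon)} d_i^s \;\leq\; k(\epsilon)\,\epsilon^s \;=\; \bigl(k(\epsilon)\,\epsilon^d\bigr)\,\epsilon^{s-d} \;\leq\; M\,\epsilon^{s-d}.$$
Uma vez que $s-d>0$, o fator $\epsilon^{s-d}$ tende a $0$ quando $\epsilon\to 0$. Logo, dado qualquer limiar $\eta>0$, basta escolher $\epsilon$ pequeno o suficiente para que $M\,\epsilon^{s-d}<\eta$, produzindo assim uma cobertura de $X$ por conjuntos de diâmetros $d_i$ com $\sum_i d_i^s<\eta$. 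Isso mostra que a medida $s$-dimensional de $X$ é nula, de modo que $s$ pertence ao conjunto de números cujo ínfimo define a dimensão de Hausdorff, donde a dimensão de Hausdorff de $X$ é $\leq s$.

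Por fim, como essa desigualdade vale para todo $s>d$, concluo que a dimensão de Hausdorff de $X$ é no máximo $d$. Não antecipo obstáculos substanciais: trata-se essencialmente de desembrulhar a definição. O único ponto que exige cuidado é a colisão de notação entre o $\epsilon$ da hipótese (que controla o diâmetro das bolas) e o $\epsilon$ que figura como limiar na definição de dimensão de Hausdorff reproduzida acima --- por isso reservo a letra $\eta$ para esse limiar --- e a observação elementar de que uma bola de diâmetro $\epsilon$ tem, como conjunto, diâmetro $\leq\epsilon$, garantindo a desigualdade $d_i^s\leq\epsilon^s$ usada no passo central.
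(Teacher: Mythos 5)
Sua prova está correta e segue essencialmente o mesmo caminho da prova do livro: fixar $s>d$, majorar $\sum_i d_i^s \leq k(\epsilon)\,\epsilon^s \leq M\,\epsilon^{s-d}$ e fazer $\epsilon\to 0$ para concluir que a medida $s$-dimensional se anula, donde a dimensão de Hausdorff é no máximo $s$ para todo $s>d$. A única diferença é de apresentação: você explicita a soma sobre a cobertura e o passo final de tomar o ínfimo em $s$, que o texto deixa implícitos.
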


\begin{proof}
Suponha que $k\epsilon^d < C$, e seja $s > d$. Então $k\epsilon^s < C\epsilon^{s-d}$, e o lado direito pode ser tornado arbitrariamente pequeno, tomando $\epsilon$ suficiente pequeno. Isso mostra que a dimensão de Hausdorff não é maior que $s$.
\end{proof}

\medskip

Para concluir esta seção, recordamos a definição da topologia que torna o grupo de isometrias de um espaço métrico $X$  um grupo de Lie. Fixe algum ponto base $p \in X$. Para quaisquer dois números  $A,\epsilon$ positivos, seja $O(A, \epsilon)$ o conjunto de todas as isometrias $\sigma$ tais que $d(\sigma x, x) < \epsilon$, para todo $x$ que satisfaça $d(x, p) \leq A$. Os conjuntos $O(A, \epsilon)$ definem uma base de vizinhanças da identidade em $\Isom(X)$.

\section{Teorema de Gromov}
\begin{thm}[Gromov, 1981]\label{thm:Gromov} 
Se $G$ é um grupo finitamente gerado com crescimento polinomial, então $G$ é virtualmente nilpotente.
\end{thm}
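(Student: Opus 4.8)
The plan is to prove the forward implication by induction on the degree of polynomial growth $d = d(G)$, the converse being already contained in the Bass--Guivarc'h theorem (Theorem~\ref{thm:BG}). Since any infinite finitely generated group satisfies $\rho_S(n) \geq n$ (Proposition~\ref{prop:crescimentogrupos}), one has $d(G) \geq 1$ for infinite $G$, so the base case $d < 1$ forces $G$ finite, which is trivially virtually nilpotent. The crucial reduction for the inductive step is the claim that \emph{every infinite finitely generated group of polynomial growth has a finite-index subgroup admitting an epimorphism onto $\Z$.} Granting this, I would finish as follows. Let $G_0 \leq G$ have finite index with $\varphi : G_0 \twoheadrightarrow \Z$ and put $N = \ker \varphi$; note $d(G_0) = d(G)$ by Proposition~\ref{prop:cresc_pol}(1). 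Because $G_0$ has subexponential growth and $G_0/N \cong \Z$ is cyclic, Corollary~\ref{cor:mil1} shows $N$ is finitely generated, and since $N$ has infinite index in $G_0$, Proposition~\ref{prop:cresc_pol}(2) gives $d(N) \leq d(G_0) - 1 < d$. By the induction hypothesis $N$ is virtually nilpotent; choosing a finite-index nilpotent subgroup of $N$ and intersecting its conjugates (there are only finitely many subgroups of a given finite index, by Proposition~\ref{prop:fin_many_subgrps}) yields a nilpotent $M \trianglelefteq G_0$ of finite index. Then $G_0/M$ is finite-by-$\Z$, hence virtually cyclic, and the preimage $G_1 \leq G_0$ of an infinite cyclic finite-index subgroup of $G_0/M$ is polycyclic-by-$\Z$, hence polycyclic (Proposition~\ref{prop:polic}). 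As $G_1$ has polynomial, hence subexponential, growth, Wolf's theorem (Theorem~\ref{thm-Wolf}) forces $G_1$ to be virtually nilpotent, and therefore so are $G_0$ and $G$.

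The geometric heart of the argument, and the main obstacle, is the reduction claim, which is where polynomial growth is genuinely exploited. Following Gromov, I would view $G$ from infinitely far away: rescale the word metric $d_S$ by $1/n$ and pass to the limit, i.e. work with the asymptotic cone $K$ of Chapter~\ref{cap6} (a pointed ultralimit of the spaces $(G, \tfrac1n d_S, e)$). Polynomial growth is exactly the input that guarantees the precompactness making the limit well behaved and, most importantly, that the resulting space is \emph{locally compact} and of \emph{finite Hausdorff dimension}; it is moreover connected, locally connected, complete and homogeneous (cf. Theorem~\ref{thm:AsCone}). These are precisely the hypotheses of the solution to Hilbert's fifth problem, so by Theorem~\ref{sec9 thm-Montgomery-Zippin} the isometry group $L = \Isom(K)$ is a Lie group with finitely many connected components.

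The group $G$ acts on $K$ by isometries (as in Theorem~\ref{thm:gpactiononK}), giving a homomorphism $\phi : G \to L$, and the plan is to manufacture from this action a nontrivial finite-dimensional complex representation of a finite-index subgroup with infinite image. Here I would use the structure of $L$ provided by Proposition~\ref{sec9 prop-LieGrps}: $L$ has a normal abelian subgroup $Z$ with $L/Z \hookrightarrow \GL(k,\C)$, and $L$ has no small finite subgroups. Composing gives $\psi : G \to L \to L/Z \hookrightarrow \GL(k,\C)$. If $\psi(G)$ is infinite, it is a finitely generated linear group of polynomial growth, so by the Tits alternative for linear groups (Corollary~\ref{cor-TitsAlt}, whose proof invokes Jordan's Theorem~\ref{thm:Jordan}) it is virtually nilpotent; being infinite and virtually nilpotent it possesses a finite-index subgroup with infinite abelianization, hence one surjecting onto $\Z$, which pulls back through $\psi$ to the desired subgroup of $G$. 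If instead $\psi(G)$ is finite, then $\phi(G)$ is virtually abelian, and the same conclusion follows unless $\phi(G)$ itself is finite; but $\phi(G)$ finite would make a finite-index subgroup act trivially on $K$, contradicting the non-degeneracy of the cone of the infinite group $G$ (and the trichotomy of Theorem~\ref{thm:gpactiononK}).

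The delicate points, where almost all the work lies, are the two italicized properties of the limit space: deriving local compactness and finiteness of the Hausdorff dimension from the polynomial growth bound, and verifying that the induced $G$-action on $K$ is sufficiently non-degenerate that the Lie-group image cannot collapse to something trivial. Everything downstream — Montgomery--Zippin, Jordan, the Tits alternative, and the extension arithmetic closing the induction — is then an application of results already available, so I expect the construction and analysis of $K$ to be by far the hardest step.
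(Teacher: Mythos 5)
Your architecture is the same as the paper's (Gromov's proof as presented by Mann): asymptotic cone, local compactness and finite Hausdorff dimension extracted from polynomial growth, Montgomery--Zippin, a linear quotient handled by the Tits alternative and Jordan, and an induction on the growth degree closed via Wolf. Your endgame of the induction (intersecting conjugates of the nilpotent subgroup to get $M \trianglelefteq G_0$, then observing $G_1$ is polycyclic and applying Theorem~\ref{thm-Wolf}) is a correct minor variant of the paper's, which instead makes the nilpotent subgroup characteristic via Corollary~\ref{cor:fin_many_subgrps} and closes with Corollary~\ref{cor:Milnor-Wolf}.

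The genuine gap is your treatment of the case where $\phi(G) \subset \Isom(K)$ is finite. You assert that this ``would make a finite-index subgroup act trivially on $K$, contradicting the non-degeneracy of the cone of the infinite group $G$'' --- but there is no such contradiction, and this case actually occurs. The action of $G$ on its asymptotic cone is by limits of rescaled left translations, and any element whose displacement function $D(x,k)$ is bounded acts trivially (Propositions~\ref{prop:D1} and \ref{prop:D2}); in particular, for $G = \Z^n$ \emph{every} element acts trivially on $K \cong \R^n$, so $\phi(G)$ is trivial even though $G$ is infinite of polynomial growth. This is precisely why Theorem~\ref{thm:gpactiononK} is stated as a trichotomy rather than as ``$G/N$ is infinite'': when the kernel $N$ has finite index, one must use either that $N$ is virtually abelian (case (ii), which does yield the desired $\Z$-quotient after passing to a finite-index subgroup), or --- the hard case (iii) --- that $N$ admits homomorphisms into the Lie group $L$ whose images meet any neighborhood of the identity nontrivially.

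Converting case (iii) into a finite-index subgroup surjecting onto $\Z$ is where the paper spends most of Corollaries~\ref{cor:Gr1} and \ref{cor:Gr2}: it needs the no-small-finite-subgroups property of Lie groups (Proposition~\ref{sec9 prop-LieGrps}(2)) to produce quotients $C/\ker(\phi_n)$ of unbounded order, Jordan's Theorem~\ref{thm:Jordan} to find abelian-by-linear pieces $H_n \geq B_n$ of uniformly bounded index, and Proposition~\ref{prop:fin_many_subgrps} to force infinitely many of the $H_n$ (or $B_n$) to coincide, from which an infinite abelianization is extracted. Your proposal has no substitute for any of this; as written, the reduction claim --- which you correctly identify as the heart of the argument --- is unproved exactly in the case that is hardest in Gromov's theorem.
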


Considere um grupo $G$ finitamente gerado de crescimento polinomial. Nós gostaríamos de definir um mapa $f: G \to \Isom(X)$, onde $X$ é um espaço métrico de dimensão finita (de Hausdorff), localmente compacto, localmente conexo, conexo e homogêneo. Isso nos permitiria usar os teoremas de Gleason--Montgomery--Zippin, Tits e Jordan juntamente com os resultados anteriores sobre crescimento de grupos para investigar a estrutura de $G$.  A questão é: qual é o espaço $X$? Podemos tentar considerar $X = \cay(G,S)$, mas $X$ não é homogêneo. Se tomarmos X como os vértices de $\cay(G,S)$ ele é homogêneo mas não é conexo. A ideia de Gromov em \cite{Gromov81} foi definir um espaço limite de $G$ para este fim. Intuitivamente, esse espaço vem de olhar para o grafo de Cayley de um grupo de uma distância infinita. Mais tarde, isso levou à definição do cone assintótico $K$ de $G$.

Nós apresentaremos a prova do teorema de Gromov, seguindo principalmente a exposição de A.~Mann \cite{Mann}.

\medskip

Seja $G$ um grupo finitamente gerado, com crescimento polinomial de grau $d$. Embora a sequência $\rho(n) = \rho_G(n)$ seja polinomialmente limitada, ela ainda pode oscilar descontroladamente. Começamos por escolher uma subsequência com um comportamento razoável, no sentido de procurarmos aqueles valores $\rho(n)$ cuja distância dos termos anteriores não seja muito grande.

\begin{proposition}\label{prop:GromovThm}
    Seja $G$ um grupo com crescimento polinomial de grau $d$. Então existem infinitos $n\in \N$ tais que, para todo $i < n$,
    $$\log_2 \rho(2^n) \leq \log_2 \rho(2^{n-i}) + i(d+1).$$
\end{proposition}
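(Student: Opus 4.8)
The plan is to translate the claim into a statement about where an auxiliary sequence attains its running minimum, and then to exploit that this sequence tends to $-\infty$. First I would set $b_n := \log_2 \rho(2^n)$ and, crucially, $c_n := b_n - (d+1)n$. Performing the change of index $m = n-i$ (so that $i < n$ corresponds to $1 \le m \le n$) and subtracting $(d+1)n$ from both sides of the desired inequality, one checks that
$$b_n \le b_{n-i} + i(d+1) \quad \text{para todo } i<n$$
é equivalente a
$$c_n \le c_m \quad \text{para todo } 1 \le m \le n.$$
Em outras palavras, os índices $n$ que procuramos são precisamente aqueles em que $c_n$ coincide com $\min\{c_1,\dots,c_n\}$, isto é, os índices em que a sequência $(c_k)$ atinge um mínimo corrente. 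Assim, a afirmação se reduz a mostrar que $(c_k)$ possui infinitos mínimos correntes.

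Em seguida, eu usaria o crescimento polinomial para mostrar que $c_n \to -\infty$. Como $G$ tem crescimento polinomial de grau $d$, pela definição de $d$ como ínfimo, para qualquer $d'$ escolhido com $d < d' < d+1$ existe uma constante $a'$ tal que $\rho(x) \le a' x^{d'}$ para todo $x$. Logo $b_n = \log_2 \rho(2^n) \le \log_2 a' + n d'$, e portanto
$$c_n = b_n - (d+1)n \le \log_2 a' - \bigl(d + 1 - d'\bigr)n.$$
Como $d+1-d' > 0$, o lado direito tende a $-\infty$, donde $c_n \to -\infty$.

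Por fim, o argumento de mínimo corrente: seja $\mu_n := \min\{c_1, \dots, c_n\}$, uma sequência não crescente. Como $c_n \to -\infty$, segue que $\mu_n \to -\infty$, e portanto $\mu_n$ deve decrescer estritamente infinitas vezes (caso contrário seria eventualmente constante). Em cada índice $n$ no qual $\mu_n < \mu_{n-1}$, temos necessariamente $c_n = \mu_n \le c_m$ para todo $m \le n$, de modo que tal $n$ satisfaz a desigualdade desejada; e há infinitos desses índices. Isso fornece os infinitos $n$ afirmados na proposição.

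Espero que o argumento seja essencialmente rotineiro; o único ponto que exige cuidado é que o grau $d$ é definido como ínfimo, de modo que não se pode usar $\rho(x) \le a x^d$ diretamente, devendo-se em vez disso fixar um expoente $d'$ estritamente entre $d$ e $d+1$. O ganho do termo $+1$ em $(d+1)$ é exatamente o que deixa espaço para essa escolha e força o decaimento de $(c_n)$.
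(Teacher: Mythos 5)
Your proof is correct and follows essentially the same route as the paper's: both define the auxiliary sequence $c_n = \log_2\rho(2^n) - n(d+1)$, show it tends to $-\infty$ using the polynomial growth bound, and then extract infinitely many indices where the sequence sits at its running minimum (the paper phrases this as ``the first $n$ with $c_n < k$'' for each negative integer $k$, which picks out exactly your strict running-minimum indices). Your extra care with the exponent $d'$ strictly between $d$ and $d+1$ matches the paper's use of $d+\tfrac12$.
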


\begin{proof}
Pela definição de $d$, temos 
$$\log_2 \rho(n) / \log_2(n) < d+1/2,$$ 
para $n$ suficientemente grande. Escrevendo $l(n) = \log_2(\rho(2^n))$, obtemos em particular que $l(n)/n < d + 1/2$, e portanto $l(n) - n(d + 1) < -n/2$. Assim, 
$$\lim_{n\to\infty} (l(n) - n(d + 1)) = -\infty.$$ 
Para cada inteiro negativo $k$, seja $n=n(k)$ o primeiro inteiro tal que $l(n) - n(d + 1) < k$. Então para este $n$ e para $i < n$, temos 
$$l(n) - n(d + 1) < k \leq l(n - i) - (n - i)(d + 1),$$ 
que é equivalente à desigualdade necessária. Como $n(k)$ assume infinitos valores, a proposição está provada.
\end{proof}

Denote por $S$ o conjunto dos inteiros $n$ que satisfazem as desigualdades da proposição anterior, e seja $T = \{2^n \mid n + 1 \in S\}$. Escolheremos um ultrafiltro $\mathcal{F}$ em $\mathbb{N}$, de modo que ele contenha $T$. A razão para esta escolha particular ficará clara durante a prova seguinte.

\begin{thm}[Teorema do Crescimento Regular]\label{thm:RegGrowth}\index{teorema do crescimento regular}
Sejam $G$ um grupo de crescimento polinomial, $\mathcal{F}$ um ultrafiltro escolhido conforme descrito acima e $\epsilon > 0$ suficientemente pequeno. Então o cone assintótico $K$ definido por $(G,e)$ e $\mathcal{F}$ satisfaz as duas propriedades equivalentes a seguir:
\begin{enumerate}[(1)]
    \item Se uma bola fechada de raio $1$ em $K$ contém $k$ pontos distintos, de forma que as bolas fechadas de raio $\epsilon$ ao redor deles são disjuntas, então $k \leq (1/\epsilon)^{2(d+1)}$.
    \item Se uma bola fechada de raio $1$ em $K$ contém $k$ pontos distintos, de modo que as distâncias entre quaisquer dois deles sejam maiores que $2\epsilon$, então $k \leq (1/\epsilon)^{2(d+1)}$.
\end{enumerate}
\end{thm}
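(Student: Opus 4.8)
The plan is to prove the estimate for property (2) and obtain (1) as a formal consequence, after reducing the statement about the asymptotic cone $K$ to a finite counting problem in $G$ that is governed by the growth ratios of Proposition~\ref{prop:GromovThm}.

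First I would settle the equivalence of (1) and (2). By Theorem~\ref{thm:AsCone} the cone $K$ is path-connected and complete; moreover the explicit path $f:[0,1]\to K$ constructed there from a geodesic-word representation of a moderate sequence becomes, after dividing the length estimates $(\beta-\alpha)\ell_S(x_n)\pm 1$ by $n$ and passing to the $\mathcal{F}$-limit, an isometric reparametrization of a segment of length $d_K(e,a)$. Hence $K$ is geodesic and admits midpoints, and by homogeneity this holds between any two points. Consequently two closed balls $\overline{B}(a_i,\epsilon)$ and $\overline{B}(a_j,\epsilon)$ are disjoint if and only if $d_K(a_i,a_j)>2\epsilon$: a common point forces $d_K(a_i,a_j)\le 2\epsilon$ by the triangle inequality, while a midpoint of a geodesic realizing $d_K(a_i,a_j)\le 2\epsilon$ lies in both balls. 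Thus the hypotheses of (1) and (2) coincide and the two statements are literally the same.

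Next I would translate (2) into $G$. Suppose $a_1,\dots,a_k$ lie in the closed unit ball of $K$ and are pairwise at distance $>2\epsilon$, and represent each $a_j$ by a moderate sequence $(x_{j,n})$. Then $\mathcal{F}\lim d(e,x_{j,n})/n\le 1$ and $\mathcal{F}\lim d(x_{i,n},x_{j,n})/n>2\epsilon$. For a fixed small $\eta>0$, each set $\{n:d(e,x_{j,n})\le(1+\eta)n\}$ and each set $\{n:d(x_{i,n},x_{j,n})>2\epsilon n\}$ belongs to $\mathcal{F}$; intersecting these finitely many sets with $T\in\mathcal{F}$ produces an $\mathcal{F}$-large, hence nonempty, set of indices $n=2^m$ with $m+1\in S$ for which, simultaneously, the points $x_{1,n},\dots,x_{k,n}$ all lie in $\overline{B}(e,(1+\eta)n)$ and are pairwise more than $2\epsilon n$ apart. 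Because left translation is an isometry of the word metric, each ball $B(x_{j,n},2^{-(t+1)}n)$ has cardinality $\rho(2^{-(t+1)}n)$, where $t=\lfloor\log_2(1/\epsilon)\rfloor$; the separation $>2\epsilon n>2^{-t}n$ makes these balls pairwise disjoint, and they all sit inside $\overline{B}(e,2n)$ once $\eta,\epsilon$ are small. By monotonicity of $\rho$ this yields, for $n=2^m$, the inequality $k\le\rho(2^{m+1})/\rho(2^{m-t-1})$. Here the choice $\mathcal{F}\supset T$ pays off: since $m+1\in S$, Proposition~\ref{prop:GromovThm} applied with $i=t+2<m+1$ gives $\rho(2^{m+1})\le\rho(2^{m-t-1})\,2^{(t+2)(d+1)}$, whence $k\le 2^{(t+2)(d+1)}=(4\cdot 2^{t})^{d+1}\le(4/\epsilon)^{d+1}\le(1/\epsilon)^{2(d+1)}$ as soon as $\epsilon\le 1/4$.

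The delicate point, and the one I expect to require the most care, is exactly the bookkeeping that forces the growth ratio into the dyadic form $\rho(2^{m+1})/\rho(2^{m-t-1})$ to which Proposition~\ref{prop:GromovThm} applies: the scaling index $n$ of the cone must be a power of $2$ of the type recorded in $T$ (so that $m+1\in S$), the inner radius must be rounded down to a power of $2$ below $\epsilon$, the outer radius must be rounded up to $2n$, and the bound "$d_K(e,a_j)\le 1$" must be relaxed to "$\le(1+\eta)n$". It is precisely the slack between the honest exponent $(t+2)(d+1)$ and the stated exponent $2(d+1)\log_2(1/\epsilon)$, valid for $\epsilon$ small, that simultaneously absorbs all of these roundings. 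Everything else — the $\mathcal{F}$-limit manipulations and the disjoint-translate count via left-invariance — is routine once the reduction to indices in $T$ has been arranged.
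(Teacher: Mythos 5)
Your proposal is correct and follows essentially the same route as the paper's proof: the equivalence of (1) and (2) rests on the same midpoint fact extracted from the path construction of Theorem~\ref{thm:AsCone}, and the main bound is obtained exactly as in the text by using the ultrafilter to select a scale $2^m$ in $T$ at which the representatives are $(1+\eta)2^m$-close to $e$ and more than $2\epsilon\cdot 2^m$-separated, packing disjoint translated balls into $\overline{B}(e,2^{m+1})$, and invoking Proposition~\ref{prop:GromovThm}. The remaining differences are cosmetic bookkeeping: your $t=\lfloor\log_2(1/\epsilon)\rfloor$ is the paper's $i-1$, so your exponent $(t+2)(d+1)$ coincides with the paper's $(i+1)(d+1)$, and your $\eta$-relaxation on an $\mathcal{F}$-large set of indices plays the role of the paper's choice of representatives lying in $\overline{B}(e,(1+\epsilon)n)$ for all $n$.
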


\begin{proof}
Primeiro notamos que as duas propriedades são de fato equivalentes. Suponha que (1) seja válido. Sejam dados $k$ pontos como em (2), então as bolas de raios $\epsilon$ ao redor deles são disjuntas, portanto $k \leq (1/\epsilon)^{2(d+1)}$. Reciprocamente, se vale (2), então para quaisquer $k$ pontos como em (1), suponha que dois deles estejam a uma distância $\delta \leq 2\epsilon$. A prova do Teorema~\ref{thm:AsCone} mostra que existe um caminho contínuo $f(\alpha)$ entre esses dois pontos, tal que $d(f(\alpha), f(\beta)) \leq (\beta - \alpha)\delta$. Então o ponto $f(1/2)$ está nas esferas de raio $\epsilon$ em torno de ambos os pontos, o que é uma contradição. Portanto (2) implica (1).

Por homogeneidade, basta provar a proposição para a bola $B$ de raio $1$ e centro $e$ em $K$. Suponhamos que $k$ pontos $x_1, \ldots , x_k$, como em (2), são dados em $B$. Para cada $r$, suponha que $x_r$ seja representado pela sequência $\{ x(r, n) \}$. Podemos escolher as sequências representativas de modo que cada ponto $x(r, n)$ esteja na bola de raio $(1 + \epsilon)n$ em torno de $e$. Escolha dois índices distintos $r$ e $s$, e seja $N(r, s, \epsilon)$ o conjunto dos $n\in\N$ para os quais $d(x(r, n), x(s, n)) > 2\epsilon n$. A desigualdade $d(x_r , x_s) > 2\epsilon$ significa que $N(r, s, \epsilon) \in \mathcal{F}$, e então também $N(\epsilon) \in \mathcal{F}$, onde $N(\epsilon)$ é a interseção dos conjuntos $N(r, s, \epsilon)$. Seja $i\in \N$ tal que $1/2^i < \epsilon \leq 1/2^{i-1}$, e escolha $n > i$ tal que $2^n \in T \cap N(\epsilon)$. Como $2^n/2^i < 2^n \epsilon$, as bolas de raio $2^{n-i}$ em torno dos pontos $x(r, 2^n )$, para $r = 1, \ldots, k$, são disjuntas, e cada uma delas contém $\rho(2^{n-i})$ pontos. Portanto, obtemos $k\rho(2^{n-i})$ pontos dentro dessas bolas. Um ponto em uma dessas bolas está a uma distância de, no máximo, $(1 + \epsilon)2^n + 2^{n-i}$ de $e$. Então essas bolas estão dentro da bola de raio $2^{n+1}$ ao redor de $e$, e assim obtemos $k\rho(2^{n-i}) \leq \rho(2^{n+1})$. Mas a Proposição~\ref{prop:GromovThm} implica que $\rho(2^{n+1}) \leq 2^{(i+1)(d+1)} \rho(2^{(n+1)-(i+1)})$, e assim $k \leq 2^{(i+1)(d+1)} \leq (1/\epsilon)^{2(d+1)}$.
\end{proof}

\begin{thm}\label{thm:AsConeFinDim}
Se $G$ tem crescimento polinomial, o cone assintótico $K$  definido por $(G,e)$ e $\mathcal{F}$ tem  dimensão finita e é localmente compacto.    
\end{thm}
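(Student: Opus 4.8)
O plano é reduzir ambas as afirmações a uma estimativa de cobrimento para a bola unitária de $K$, obtida diretamente do Teorema do Crescimento Regular (Teorema~\ref{thm:RegGrowth}). Fixado $\epsilon>0$ pequeno, eu consideraria a bola fechada $\overline{B}(e,1)\subset K$ e tomaria nela um subconjunto $2\epsilon$-separado maximal $A$. Pela Proposição~\ref{redeseparada} (aplicada ao espaço métrico $\overline{B}(e,1)$), o conjunto $A$ é $2\epsilon$-denso, de modo que as bolas de raio $2\epsilon$ centradas nos pontos de $A$ cobrem $\overline{B}(e,1)$. Por outro lado, como os pontos de $A$ distam ao menos $2\epsilon$ entre si, as bolas abertas de raio $\epsilon$ em torno deles são disjuntas, e a propriedade (1) do Teorema~\ref{thm:RegGrowth} fornece $\card(A)\leq (1/\epsilon)^{2(d+1)}$. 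Assim, $\overline{B}(e,1)$ é coberta por no máximo $(1/\epsilon)^{2(d+1)}$ conjuntos de diâmetro $\leq 4\epsilon$.

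Para a dimensão finita, escrevendo $\eta=4\epsilon$ obtém-se um cobrimento de $\overline{B}(e,1)$ por $k(\eta)$ conjuntos de diâmetro $\eta$ com $k(\eta)\,\eta^{2(d+1)}\leq 4^{2(d+1)}$ limitado. Pela proposição que estima a dimensão de Hausdorff a partir de tais cobrimentos, concluiríamos que $\dim_H \overline{B}(e,1)\leq 2(d+1)$. Como $K$ é homogêneo (Teorema~\ref{thm:AsCone}), toda bola unitária tem dimensão de Hausdorff $\leq 2(d+1)$. Em seguida eu usaria a compacidade local (estabelecida abaixo) e a conexidade por caminhos de $K$ para garantir que $K$ é $\sigma$-compacto; como cada compacto é coberto por finitas bolas unitárias, sua dimensão de Hausdorff é $\leq 2(d+1)$, e pela estabilidade enumerável da dimensão de Hausdorff segue $\dim_H K\leq 2(d+1)$. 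Finalmente, pelo teorema que afirma que a dimensão topológica de um espaço métrico não excede sua dimensão de Hausdorff, obtém-se $\dim K\leq 2(d+1)<\infty$.

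Para a compacidade local, observo que o cobrimento acima mostra que $\overline{B}(e,1)$ é totalmente limitada. Como $K$ é completo (Teorema~\ref{thm:AsCone}) e $\overline{B}(e,1)$ é fechada, essa bola é completa e totalmente limitada, logo compacta. Pela homogeneidade de $K$, todo ponto possui uma vizinhança compacta, e portanto $K$ é localmente compacto. O ponto central da demonstração — o Teorema do Crescimento Regular — já está disponível, de modo que o principal cuidado restante é de contabilidade: converter corretamente a cota de separação em um número de cobrimento com o expoente $2(d+1)$ exato e encaixá-lo na hipótese do critério de dimensão de Hausdorff, além de justificar os fatos topológicos auxiliares (a $\sigma$-compacidade de um espaço métrico conexo e localmente compacto, a estabilidade enumerável da dimensão de Hausdorff e a desigualdade $\dim\leq\dim_H$). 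A sutileza a vigiar é que a cota do Teorema~\ref{thm:RegGrowth} vale apenas para bolas de raio $1$; por isso a passagem para bolas maiores deve ser feita via compacidade (cobrindo compactos por finitas bolas unitárias), e não por um argumento de reescala, que não foi estabelecido para $K$.
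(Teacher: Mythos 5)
Sua proposta está correta e segue essencialmente o mesmo caminho da demonstração do texto: ambas extraem do Teorema do Crescimento Regular, via um conjunto maximal separado na bola unitária, uma cobertura por no máximo $(1/\epsilon)^{2(d+1)}$ conjuntos de diâmetro comparável a $\epsilon$, deduzem daí que a dimensão de Hausdorff (e portanto a dimensão topológica) da bola é finita, e obtêm a compacidade da bola unitária combinando limitação total com a completude de $K$ — o texto apenas inclui explicitamente o argumento diagonal em vez de citar ``totalmente limitado $+$ completo $\Rightarrow$ compacto'', e passa da bola para $K$ usando que a dimensão topológica é uma noção local, em vez da sua rota via $\sigma$-compacidade e estabilidade enumerável da dimensão de Hausdorff. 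Observe apenas que, com a definição do texto de conjunto $2\epsilon$-separado (distâncias $\geq 2\epsilon$), a aplicação literal dos itens (1) ou (2) do Teorema do Crescimento Regular exige um ajuste trivial: tomar separação estrita, como faz o livro, ou passar ao limite em $\epsilon'\uparrow\epsilon$.
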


\begin{proof}
Escolhamos dentro da bola $B$ o maior número possível, digamos $k$, de pontos tais que a distância entre quaisquer dois deles seja maior do que $2\epsilon$. Então cada ponto de $B$ está a uma distância de no máximo $2\epsilon$ de um desses pontos, e o Teorema~\ref{thm:RegGrowth} mostra que $B$ é coberta por no máximo $(1/\epsilon)^{2(d+1)}$ bolas de raio $2\epsilon$. Isso significa que a dimensão Hausdorff de B é no máximo $2(d + 1)$ e, em particular, implica que a dimensão de $B$ é finita. Como a dimensão é determinada pelo comportamento local, a dimensão de $K$ é finita. Agora seja $\{x_n\}$ qualquer sequência em $B$. Mostraremos que ela possui uma subsequência convergente. Isso mostrará que $B$ é compacto e, portanto, $K$ é localmente compacto. Para tanto, dado $i\in \N$, vamos cobrir $B$ por $k_i$ bolas de raio $2^{-i}$. Então uma subsequência infinita de $\{x_n\}$ está em uma das bolas de raio $1$, uma subsequência infinita dela está em uma das bolas de raio $1/2$, etc. Tomando uma sequência diagonal, encontramos uma subsequência de Cauchy da sequência original. Como $K$ é completo (cf. Teorema \ref{thm:AsCone}), essa subsequência converge.
\end{proof}

\begin{corollary}\label{cor:Gr1}
Seja G um grupo infinito de crescimento polinomial. Então existe um grupo de Lie $L$ com um número finito de componentes e um número natural $k$, tal que $G$ contém um subgrupo normal $C$ de índice finito, para o qual vale uma das seguintes condições:    
\begin{enumerate}[(i)]
    \item O grupo $C$ tem um quociente abeliano infinito;
    \item O grupo $C$ possui um quociente infinito em $\GL(k, \C)$;
    \item Existem homomorfismos $\phi_n : C \to L$, para todos os números naturais $n$, tais que $|C/\ker(\phi_n)| \geq n$.
\end{enumerate}
\end{corollary}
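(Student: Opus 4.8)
The plan is to identify the Lie group $L$ with the isometry group of the asymptotic cone $K$ of $G$ and to translate the trichotomy of Theorem~\ref{thm:gpactiononK} into the three alternatives of the statement. First I would set $L := \Isom(K)$. By Theorems~\ref{thm:AsCone} and~\ref{thm:AsConeFinDim}, the cone $K$ (built from $(G,e)$ and the ultrafilter $\mathcal{F}$ chosen to contain $T$) is connected, locally connected, locally compact, complete, homogeneous, and of finite Hausdorff dimension. These are exactly the hypotheses of the Gleason–Montgomery–Zippin theorem~\ref{sec9 thm-Montgomery-Zippin}, so $L$ is a Lie group with finitely many connected components. The isometric action of $G$ on $K$ gives the homomorphism $\Phi\colon G\to L$ with kernel $N$ studied in Theorem~\ref{thm:gpactiononK}; recall from the proof of that theorem that, outside its alternative~(i), one may assume $|G:N|<\infty$.

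Next I would run through the three cases of Theorem~\ref{thm:gpactiononK}. By Proposition~\ref{sec9 prop-LieGrps}(1), $L$ has an abelian normal subgroup $Z$ (the centre of its identity component) with $L/Z$ isomorphic to a subgroup of $\GL(k,\C)$ for some $k$. In case~(i) the image $\bar{G}=\Phi(G)\cong G/N$ is infinite; set $A=\bar{G}\cap Z$, a normal subgroup of $\bar{G}$. If $[\bar{G}:A]<\infty$, then $A$ is infinite abelian and $C:=\Phi^{-1}(A)$ is a finite-index normal subgroup of $G$ with $C/N\cong A$ an infinite abelian quotient, giving alternative~(i). If $[\bar{G}:A]=\infty$, then $\bar{G}/A\hookrightarrow L/Z\hookrightarrow\GL(k,\C)$ is an infinite quotient of $G$, and $C:=G$ gives alternative~(ii). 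In case~(ii) of Theorem~\ref{thm:gpactiononK}, $N$ is virtually abelian of finite index, so it contains a finite-index abelian subgroup; by Corollary~\ref{cor:fin_many_subgrps} this subgroup contains a subgroup $C$ that is normal and of finite index in $G$. Since $G$ is infinite, $C$ is infinite and abelian, so $C$ is itself an infinite abelian quotient, again alternative~(i).

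Finally, in case~(iii) of Theorem~\ref{thm:gpactiononK}, I would take $C:=N$, which is normal of finite index in $G$. For each $n$, Proposition~\ref{sec9 prop-LieGrps}(2) furnishes a neighbourhood $O_n$ of the identity of $L$ containing no nontrivial element of finite order less than $n$; the hypothesis of case~(iii) provides a homomorphism $\varphi_{O_n}\colon N\to L$ whose image meets $O_n$ in a nontrivial element $g$. By the choice of $O_n$, $g$ has order at least $n$ (or infinite order), so $\langle g\rangle\leq\mathrm{Im}(\varphi_{O_n})$ has at least $n$ elements and $|C/\ker\varphi_{O_n}|\geq n$; setting $\phi_n:=\varphi_{O_n}$ yields alternative~(iii).

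The main obstacle I anticipate is bookkeeping rather than a single deep step: the substantive input, namely the finite dimensionality and local compactness of $K$ that make Theorem~\ref{sec9 thm-Montgomery-Zippin} applicable, is already secured by Theorems~\ref{thm:RegGrowth} and~\ref{thm:AsConeFinDim}. Thus the care required here is in ensuring, at each branch of the case analysis, that the subgroup $C$ produced is simultaneously normal in $G$, of finite index, and infinite, and in using Proposition~\ref{sec9 prop-LieGrps}(2) correctly to convert the statement ``the image meets every neighbourhood of the identity nontrivially'' into quotients of unbounded size.
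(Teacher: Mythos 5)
Your proof is correct and follows essentially the same route as the paper: both identify $L=\Isom(K)$ via Theorems~\ref{thm:AsCone}, \ref{thm:AsConeFinDim} and the Gleason--Montgomery--Zippin theorem, then run the trichotomy of Theorem~\ref{thm:gpactiononK} through Proposição~\ref{sec9 prop-LieGrps} to obtain the three alternatives. Your write-up merely fills in details the paper leaves implicit (the split on $[\bar G:A]$ corresponds to the paper's split on whether $G/Z$ is infinite, and your treatment of cases (ii) and (iii) makes explicit the paper's appeals to Corolário~\ref{cor:fin_many_subgrps} and to Proposição~\ref{sec9 prop-LieGrps}(2)).
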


\begin{proof}
Seja $N$ como no Teorema~\ref{thm:gpactiononK}. Pelos Teoremas~\ref{thm:AsCone} e \ref{thm:AsConeFinDim}, o cone assintótico $K$ satisfaz as hipóteses do Teorema~\ref{sec9 thm-Montgomery-Zippin}. Pela Proposição~\ref{sec9 prop-LieGrps}, $G/N$ tem um subgrupo abeliano normal $Z/N$ tal que $G/Z$ é isomorfo a um subgrupo de $\GL(k, \C)$, para algum $k \in \N$. Se $G/Z$ for infinito, a prova está completa, bem como no caso em que $G/Z$ é finito e $G/N$ é infinito. Assim, assumimos que $G/N$ seja finito. No caso (ii) do Teorema~\ref{thm:gpactiononK}, a prova também termina. Assumimos então que  o caso (iii) desse mesmo teorema se aplica. Para concluir esse caso, aplicamos a Proposição~\ref{sec9 prop-LieGrps}.    
\end{proof}

Agora podemos deduzir o seguinte:

\begin{corollary}\label{cor:Gr2}
Um grupo infinito de crescimento polinomial contém um subgrupo de índice finito que possui uma imagem homomórfica cíclica infinita.
\end{corollary}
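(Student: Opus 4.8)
O plano é aplicar o Corolário~\ref{cor:Gr1} e tratar separadamente os seus três casos, reduzindo cada um deles à tarefa de produzir um subgrupo de índice finito de $G$ que admita um epimorfismo sobre $\Z$. Como o subgrupo $C$ tem índice finito em $G$, todo subgrupo de índice finito de $C$ também terá índice finito em $G$, e uma imagem homomórfica cíclica infinita é precisamente um quociente isomorfo a $\Z$. O ingrediente unificador será a seguinte observação elementar: se um grupo finitamente gerado $H$ admite quocientes abelianos finitos de ordem ilimitada, então sua abelianização $H/[H,H]$ --- que é um grupo abeliano finitamente gerado --- não pode ser finita, pois todo quociente abeliano finito de $H$ fatora por $H/[H,H]$; logo $H/[H,H]$ é infinita e portanto sobrejeta sobre $\Z$.

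Nos casos (i) e (ii) a conclusão é quase imediata. Se vale (i), então $C$ (que é finitamente gerado, por ter índice finito no grupo finitamente gerado $G$) possui um quociente abeliano infinito; sendo este finitamente gerado e infinito, ele sobrejeta sobre $\Z$, e tomamos o próprio $C$. Se vale (ii), então $C$ tem um quociente infinito $Q\leq \GL(k,\C)$, que é finitamente gerado e, por ser quociente de um grupo de crescimento polinomial, tem também crescimento polinomial (Proposição~\ref{prop:cresc}). Pelo Corolário~\ref{cor-TitsAlt}, $Q$ é virtualmente nilpotente; como é infinito, ele contém um subgrupo nilpotente $P$ de índice finito, infinito, cuja abelianização é infinita (um grupo nilpotente finitamente gerado com abelianização finita é finito). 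Assim $P$ sobrejeta sobre $\Z$, e a pré-imagem de $P$ pela projeção $C\to Q$ é o subgrupo de índice finito procurado.

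O caso (iii) concentra a dificuldade principal. Usando a Proposição~\ref{sec9 prop-LieGrps}, fixo o subgrupo abeliano normal $Z\triangleleft L$ com $L/Z\hookrightarrow \GL(k,\C)$ e escrevo $\psi_n=\pi\circ\phi_n:C\to L/Z\leq \GL(k,\C)$, onde $\pi:L\to L/Z$. Se algum $\psi_n$ tiver imagem infinita, recaio no caso (ii) aplicado a $C$. Caso contrário, cada $\psi_n(C)$ é finito de ordem $m_n$, e distingo duas situações conforme $m_n$ seja ou não limitado ao longo de uma subsequência. Quando $m_n\to\infty$, o Teorema de Jordan (Teorema~\ref{thm:Jordan}) fornece em cada $\psi_n(C)$ um subgrupo abeliano $A_n$ de índice limitado por uma constante $j(k)$ que depende apenas de $k$, com $|A_n|\to\infty$; as pré-imagens $\psi_n^{-1}(A_n)$ têm índice em $C$ limitado por $j(k)$, de modo que, pela Proposição~\ref{prop:fin_many_subgrps}, um mesmo subgrupo $H$ de índice finito coincide com $\psi_n^{-1}(A_n)$ para infinitos $n$; para esses $n$, $\psi_n(H)=A_n$ é um quociente abeliano de $H$ de ordem ilimitada. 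Quando $m_n$ é limitado mas $|\phi_n(C)|\geq n\to\infty$, temos $|\phi_n(C)\cap Z|\to\infty$; os subgrupos $\phi_n^{-1}(\phi_n(C)\cap Z)$ têm índice $m_n$ limitado em $C$, e o mesmo argumento com a Proposição~\ref{prop:fin_many_subgrps} fixa um subgrupo $H$ de índice finito que sobrejeta, via $\phi_n$, sobre os grupos abelianos $\phi_n(C)\cap Z$ de ordem ilimitada. Em ambas as situações, a observação unificadora garante que $H/[H,H]$ é infinita, e portanto $H$ sobrejeta sobre $\Z$.

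A parte mais delicada é precisamente a organização do caso (iii): é necessário converter a hipótese puramente quantitativa ``quocientes de ordem $\geq n$ num grupo de Lie fixo'' numa conclusão qualitativa sobre a abelianização de um \emph{único} subgrupo de índice finito. Os dois instrumentos que tornam isso possível são o Teorema de Jordan (que força regularidade abeliana nos quocientes lineares finitos) e a finitude do número de subgrupos de um índice fixo num grupo finitamente gerado (Proposição~\ref{prop:fin_many_subgrps}), que permite substituir a família variável de subgrupos $\psi_n^{-1}(A_n)$ por um subgrupo comum; sem esse passo teríamos apenas quocientes abelianos grandes de subgrupos \emph{distintos}, o que não basta para concluir.
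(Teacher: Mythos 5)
Sua demonstração está correta e segue essencialmente o mesmo caminho da prova do livro: a mesma tricotomia do Corolário~\ref{cor:Gr1}, com os casos (i) e (ii) tratados via Tits/Milnor--Wolf e o caso (iii) resolvido pela combinação do Teorema de Jordan (Teorema~\ref{thm:Jordan}), da Proposição~\ref{prop:fin_many_subgrps} e da observação de que quocientes abelianos de ordem ilimitada forçam abelianização infinita. A única diferença é organizacional: você coloca a dicotomia (ordem de $\psi_n(C)$ limitada ou não) antes de fixar o subgrupo comum $H$, enquanto o texto a faz depois, e você trata explicitamente a redução ao caso (ii) quando alguma imagem $\psi_n(C)$ é infinita, ponto que o texto deixa implícito.
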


\begin{proof}
Seja $G$ infinito de crescimento polinomial, e seja $C$ como no corolário anterior. No caso (ii) do Corolário~\ref{cor:Gr1}, sabemos, pelos teoremas de Tits e Milnor--Wolf, que o grupo quociente linear infinito $G/N$ é virtualmente nilpotente, então $G$ contém um subgrupo de índice finito $H \geq N$ tal que $H/N$ é nilpotente e infinito, e o último termo da série derivada de $H$ que ainda tem índice finito em $G$ é o subgrupo que queremos. O caso (i) é óbvio, então assumimos que (iii) vale. Para cada $n\in \N$ como em (iii), seja $K_n = \ker(\phi_n)$. A Proposição~\ref{sec9 prop-LieGrps} implica que $C$ contém subgrupos normais $B_n$, tais que $B_n/K_n$ é abeliano e $C/B_n$ é isomorfo a um subgrupo de $\GL(k, \C)$ (onde $k$ é independente de $n$). Pelo Teorema~\ref{thm:Jordan} de Jordan, $C$ contém subgrupos $H_n \geq B_n$ tais que $H_n/B_n$ é abeliano e o índice $|C : H_n|$ é limitado. Um grupo finitamente gerado tem apenas um número finito de subgrupos de um dado índice finito. Portanto, para infinitos valores de $n$, os subgrupos $H_n$ coincidem com um mesmo subgrupo, que denotaremos por $H$. Então $H/B_n$ é abeliano para infinitos valores de $n$, e se as ordens de $H/B_n$ tendem ao infinito, então $H/H'$ é infinito, e $H$ é o subgrupo de que precisamos. Por outro lado, se as ordens $H/B_n$ permanecem limitadas, então para infinitos valores de $n$, os subgrupos $B_n$ coincidem com um subgrupo $R$, digamos, e então o mesmo argumento mostra que $R$ é o subgrupo que estamos procurando.    
\end{proof}

Finalmente, derivamos desses resultados o teorema de Gromov.

\begin{proof}[Prova do Teorema~\ref{thm:Gromov}]
Seja $G$ um grupo com crescimento polinomial, digamos de grau $d$. Se $d = 0$, então $G$ é finito e o resultado segue. Suponha agora $d > 0$. Pelo Corolário~\ref{cor:Gr2}, $G$ contém subgrupos $H \triangleright N$ tais que $|G : H|$ é finito e $H/N$ é cíclico infinito. Pelo Corolário~\ref{cor:mil1}, $N$ é finitamente gerado, e pela Proposição~\ref{prop:cresc_pol}(2) o grupo $N$ tem grau de crescimento polinomial $d - 1$ ou menos. Então, por indução, $N$ contém um subgrupo nilpotente $K$ de índice finito, o qual é invariante por todos os automorfismos de $N$ (é um subgrupo característico de $N$) e, portanto, normal em $H$.
Assim, $H/K$ contém o subgrupo normal finito $N/K$, com quociente cíclico infinito. Seja $H/N = \langle xN \rangle$ e escreva $C = \langle K, x \rangle$. Então $H/K = C/K \cdot N/K$, e portanto $|H : C|$ é finito. Como $C/K$ é cíclico infinito, $C$ é solúvel,  logo virtualmente nilpotente, pelo Corolário~\ref{cor:Milnor-Wolf}.    
\end{proof}

\begin{exercise} 
Seja $G$ um grupo  finitamente gerado  sem torção com crescimento polinomial de ordem $n^2$. Mostre que $\Z^2 \leq G$.
\end{exercise} 

\begin{corollary}\label{cor:Gromov}
    Suponha que $G_1$, $G_2$ são grupos quasi-isométricos finitamente gerados e $G_1$ é virtualmente nilpotente. Então $G_2$ é virtualmente nilpotente.
\end{corollary}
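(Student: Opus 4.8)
The plan is to reduce the statement to Gromov's theorem (Theorem~\ref{thm:Gromov}) by transporting the property of having polynomial growth across the quasi-isometry. The guiding observation is that ``virtually nilpotent'' is not visibly a quasi-isometric notion, but ``polynomial growth'' is, so the natural strategy is to pass through growth: establish that $G_1$ has polynomial growth, conclude that $G_2$ does too, and then invoke Gromov's theorem to recover virtual nilpotency of $G_2$.

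First I would show that $G_1$ has polynomial growth. Since $G_1$ is virtually nilpotent, it contains a finite-index subgroup $H$ that is nilpotent and finitely generated (finite generation of $H$ follows because it has finite index in a finitely generated group). By the Bass--Guivarc'h theorem (Theorem~\ref{thm:BG}), $\rho_H(n)\asymp n^d$ with $d=\sum_{i\ge1} i\,m_i$, so $H$ has polynomial growth. By Proposition~\ref{prop:cresc}(2), a finite-index subgroup has growth asymptotically equal to that of the ambient group, hence $\rho_{G_1}\asymp\rho_H$, and therefore $G_1$ itself has polynomial growth.

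Next I would transfer polynomial growth to $G_2$. By hypothesis $G_1\qi G_2$, and Lemma~\ref{lemQI} (together with its corollary that the growth function is well defined up to $\asymp$ independently of basepoint and generating set) gives $\rho_{G_1}\asymp\rho_{G_2}$. Combining this with the previous paragraph yields $\rho_{G_2}\asymp\rho_{G_1}\preceq n^d$, so $G_2$ is a finitely generated group of polynomial growth. Applying Gromov's theorem (Theorem~\ref{thm:Gromov}) directly to $G_2$ concludes that $G_2$ is virtually nilpotent, as desired.

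The point worth flagging is that there is no genuine obstacle remaining at the level of this corollary: all the difficulty is absorbed into Gromov's theorem, whose proof occupies the bulk of this chapter and supplies the hard direction (polynomial growth $\Rightarrow$ virtually nilpotent). The corollary is essentially the clean packaging of the equivalence ``polynomial growth $\Leftrightarrow$ virtually nilpotent'' (cf. Corollary~\ref{cor:Milnor-Wolf} in the linear/solvable settings) with the quasi-isometry invariance of growth, and the only things to be careful about are the routine verifications that finite-index subgroups inherit finite generation and share the growth class.
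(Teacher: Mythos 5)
Your proposal is correct and follows exactly the paper's own argument: the paper deduces the corollary immediately from Gromov's theorem together with the quasi-isometry invariance of polynomial growth, which is precisely your three-step reduction. The only difference is that you spell out the (implicit) first step --- that virtual nilpotency gives polynomial growth via Bass--Guivarc'h and Proposition~\ref{prop:cresc}(2) --- which the paper leaves to the reader.
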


Isso decorre imediatamente do teorema de Gromov, porque o crescimento polinomial é preservado sob quasi-isometrias. 
Shalom deu uma prova do Corolário~\ref{cor:Gromov} independente do teorema de Gromov \cite{Shalom04}. 
Em contraste, sabe-se que a solubilidade virtual não é preservada por quasi-isometrias \cite{Er00}.


\chapter{Crescimento intermediário e os grupos de Grigorchuk}
\label{cap:crescimento-interm}
\section{A definição do grupo de Grigorchuk \(\Gamma\)}

Dizemos que um grupo  finitamente gerado $\Gamma$ tem \textit{crescimento intermediário} \index{crescimento intermediário} se, para algum conjunto de geradores $S$, tivermos $\gamma_S= \displaystyle\lim_{n\to \infty} \rho_S(n)^{1/n} = 1$, mas não existir $d$ tal que $\rho_S(n)\leq n^d$. Em outras palavras, $\Gamma$ tem crescimento sub-exponencial, mas acima do polinomial.

\begin{example}
Seja $\rho(n) = n^{\log(n)}$. Então, $\log \rho(n)= (\log n)^{2}$, donde $\displaystyle\lim_{n\to \infty} \rho(n)^{1/n} =1$, i.e, $\rho$  cresce sub-exponencialmente. Por sua vez, $\rho$ cresce mais que qualquer polinômio. De fato, seja $d \in \N$ qualquer e $\log(n) > d$, então $\rho(n)> n^d$.
\end{example}

Nos anos 1980, R.~Grigorchuk foi o primeiro a mostrar que existem grupos finitamente gerados de crescimento intermediário  (veja \cite{Grigorchuk+2014+705+774}). Nesta seção, vamos descrever o primeiro exemplo apresentado por ele em \cite{Gri80, Gri84}. Curiosamente, um grupo muito semelhante ao de Grigorchuk foi construído, por um método diferente, já por S.~Aleshin em  \cite{aleshin1972finite}, onde Aleshin estava interessado na conjectura de Burnside. O grupo de Aleshin é comensurável com o de Grigorchuk, portanto também é de crescimento intermediário, mas esse fato não foi notado na época.

Para dar uma definição do grupo de Grigorchuk vamos considerar  transformações definidas no intervalo $(0,1)$. Seja $E$ a transformação identidade e $P$ a transformação que permuta as duas metades $\mathrm{I}_1:=(0,1 / 2)$ e $\mathrm{I}_2:=(1 / 2,1)$ , isto é, um ponto $x$ é mapeado para $x + \frac{1}{2}$, se $x<\frac{1}{2}$, ou para $x-\frac{1}{2}$, caso contrário, como na Figura \ref{fig:grigorchuk}. Para que esteja bem-definida, essa transformação na verdade precisa ser definida em $(0,1)\backslash \{\frac{1}{2}\}$.
Para evitar ambiguidades nas definições que seguem podemos remover de $(0,1)$ todos os pontos da forma $\frac{m}{2^n},$ com $n \in \N$, $0 \leq m \leq 2^n$ e chamar de $\mathrm{I}$ o conjunto resultante (mas se necessário, a ação pode ser estendida ao intervalo $[0, 1)$).

Além disso, dado qualquer subintervalo $(a,b) \subset (0,1)$, considerando o conjunto $\mathrm{J} = (a,b) \cap \mathrm{I}$, se necessário, denotaremos por $E_{\mathrm{J}}$ e $P_{\mathrm{J}}$ os mapas identidade e  intercâmbio das duas metades de $\mathrm{J}$, respectivamente. Quando o intervalo $\mathrm{J}$ for claro a partir do contexto, então escreveremos apenas $E$ e $P$ em vez de $E_{\mathrm{J}}$ e $P_{\mathrm{J}}$.

\begin{figure}[ht!]
\centering
\begin{tikzpicture}[scale=1]
		\draw[|-|] (-2,2) -- (0,2);
  \draw[|-|] (0,2)-- (2,2);
  \draw node at (0,3) {$P$};
  \draw[<->] (-0.5,2.3) to [bend left=70] (0.5,2.3);
  \draw node at (-2.4,2) {$a$};
 \draw node at (-2,1.8) {\tiny{$0$}};
    \draw node at (0,1.7) {\tiny{$\frac{1}{2}$}};
    \draw node at (2,1.8) {\tiny{$1$}};

    \draw[|-|] (-2,0) -- (0,0);
    \draw[|-|] (0,0)-- (1,0);
    \draw[|-|] (1,0)-- (1.5,0);
    \draw[|-|] (1.5,0)-- (2,0);
    \draw node at (-1,0.4) {\small{$P$}};
    \draw node at (0.5,0.4) {\small{$P$}};
    \draw node at (1.2,0.4) {\small{$E$}};
    \draw node at (1.8,0.4) {\tiny{$PPE$}};
    \draw node at (-2,-0.2) {\tiny{$0$}};
    \draw node at (0,-0.3) {\tiny{$\frac{1}{2}$}};
    \draw node at (1,-0.3) {\tiny{$\frac{3}{4}$}};
    \draw node at (1.5,-0.3) {\tiny{$\frac{7}{8}$}};
    \draw node at (2,-0.2) {\tiny{$1$}};
    \draw node at (1.8,0.1) {\tiny{$\cdots$}};
    \draw node at (-2.4,0) {$b$};

    \draw[|-|] (-2,-2) -- (0,-2);
    \draw[|-|] (0,-2)-- (1,-2);
    \draw[|-|] (1,-2)-- (1.5,-2);
    \draw[|-|] (1.5,-2)-- (2,-2);
    \draw node at (-1,-1.6) {\small{$P$}};
    \draw node at (0.5,-1.6) {\small{$E$}};
    \draw node at (1.2,-1.6) {\small{$P$}};
    \draw node at (1.8,-1.6) {\tiny{$PEP$}};
    \draw node at (-2,-2.2) {\tiny{$0$}};
    \draw node at (0,-2.3) {\tiny{$\frac{1}{2}$}};
    \draw node at (1,-2.3) {\tiny{$\frac{3}{4}$}};
    \draw node at (1.5,-2.3) {\tiny{$\frac{7}{8}$}};
    \draw node at (2,-2.2) {\tiny{$1$}};
    \draw node at (1.8,-1.9) {\tiny{$\cdots$}};
    \draw node at (-2.4,-2) {$c$};

    \draw[|-|] (-2,-4) -- (0,-4);
    \draw[|-|] (0,-4)-- (1,-4);
    \draw[|-|] (1,-4)-- (1.5,-4);
    \draw[|-|] (1.5,-4)-- (2,-4);
    \draw node at (-1,-3.6) {\small{$E$}};
    \draw node at (0.5,-3.6) {\small{$P$}};
    \draw node at (1.2,-3.6) {\small{$P$}};
    \draw node at (1.8,-3.6) {\tiny{\tiny{$EPP$}}};
    \draw node at (-2,-4.2) {\tiny{$0$}};
    \draw node at (0,-4.3) {\tiny{$\frac{1}{2}$}};
    \draw node at (1,-4.3) {\tiny{$\frac{3}{4}$}};
    \draw node at (1.5,-4.3) {\tiny{$\frac{7}{8}$}};
    \draw node at (1.8,-3.9) {\tiny{$\cdots$}};
    \draw node at (-2.4,-4) {$d$};
\end{tikzpicture}
\caption{Grupo de Grigorchuk }
\label{fig:grigorchuk}
\end{figure}

Definiremos algumas transformações de $\mathrm{I}$ nele mesmo, através de uma sequência de aplicações de $E$ e $P$, definidas em  subintervalos. 

Note que $ I \subset \displaystyle \bigcup_{n\in \N}I_n$, onde $I_n=\left(1-\dfrac{1}{2^{n-1}},1-\dfrac{1}{2^{n}}\right)$.
Consideramos o grupo $\Gamma$ gerado por quatro transformações $a, b, c$ e $d$, onde $a$ é simplesmente o intercâmbio $P$, aplicado ao intervalo completo $I$ e os outros três geradores atuam como segue nos infinitos intervalos $I_n$: $b$ aplica $P$ a cada um dos dois primeiros subintervalos, depois $E$ ao terceiro e repete periodicamente o padrão $PPE$. O gerador $c$ aplica de forma semelhante o padrão periódico $PEP$, e $d$ aplica o padrão $EPP$. O grupo $\Gamma =\langle a,b,c,d \rangle$ é chamado de \textit{primeiro grupo de Grigorchuk}\index{grupo de Grigorchuk}.

O grupo de Grigorchuk não é livre e tem  as seguintes relações: $a^2=b^2=c^2=d^2=1$, $bc=cb=d$, $cd=dc=b$, e $db=bd=c$. Assim, $b,c,d$ geram um subgrupo de $\Gamma$ isomorfo a $C_2 \times C_2$, onde $C_2$ denota o grupo cíclico de ordem $2$. Quaisquer dois dos elementos $b,c,d$ geram o mesmo subgrupo, mas com os três a apresentação de $\Gamma$ é mais simétrica.  
Usando as relações acima podemos escrever qualquer elemento de $\Gamma$ como uma palavra escrita, de forma alternada, por entradas iguais a $a$ e alguma entrada do trio $b, c, d$. Vamos chamar de \textit{forma canônica} dos elementos de $\Gamma$  essa escrita, e assumir que seus elementos são sempre dessa forma:
$$\gamma\in \Gamma,\ \gamma=au_1au_2\ldots \text{ ou } \gamma = u_1au_2a\ldots \text{ com } u_i\in \{ b,c,d \}.$$

Além disso, o grupo de Grigorchuk não é finitamente apresentado (cf. \cite{lysenok1985system}). Mais ainda, todos os grupos  de crescimento intermediário  conhecidos não tem apresentação finita. A existência de grupos de crescimento intermediário com apresentações finitas é um dos principais problemas em aberto remanescentes na teoria de crescimento de grupos.

Voltando ao grupo $\Gamma$, observe que o elemento $a$ permuta os subintervalos $(0,1/2)$ e $(1/2,1)$ enquanto $b,c,d$ os mantém invariantes. Assim todo elemento de $\Gamma$ ou permuta ou mantém invariantes os subintervalos $(0,1/2)$ e $(1/2,1)$. Considere o conjunto $H$ de todos elementos de $\Gamma$ que estabilizam esses subintervalos. Então $H$ é um subgrupo normal de índice $2$ de $\Gamma$, consistindo de todos elementos que podem ser representados por uma palavra com um numero par de entradas iguais a $a$ na sua forma canônica.

\begin{lemma}
O subgrupo $H$ é gerado pelos elementos $b,c,d, aba$, $aca$ e $ ada.$
\end{lemma}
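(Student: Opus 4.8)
The plan is to identify $H$ as the kernel of the natural parity homomorphism and then run the Reidemeister--Schreier rewriting procedure with the obvious index-two transversal $\{1,a\}$. The excerpt already records that $H$ is normal of index $2$, so I would begin by giving $H$ as a kernel in order to have a clean coset bookkeeping. Let $\psi\colon\Gamma\to\Z/2\Z$ be the homomorphism induced by the action of $\Gamma$ on the two-element set $\{(0,1/2),(1/2,1)\}$; since $a=P$ swaps the two halves while $b,c,d$ preserve each of them, $\psi(a)=1$ and $\psi(b)=\psi(c)=\psi(d)=0$. This map is well defined because it comes from an action (so I avoid the fact that $\Gamma$ is not finitely presented), and by definition $H=\ker\psi$, which is exactly the set of elements with an even number of occurrences of $a$ in canonical form.

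Write $K:=\langle b,c,d,aba,aca,ada\rangle$. The inclusion $K\subseteq H$ is immediate: each of the six listed generators fixes the two halves, equivalently has $\psi$-value $0$ (the conjugates $aua$ with $u\in\{b,c,d\}$ contain exactly two letters $a$). The content of the lemma is therefore the reverse inclusion $H\subseteq K$.

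For $H\subseteq K$ I would use the right transversal $T=\{1,a\}$ together with the representative map $\overline{g}$, where $\overline{g}=1$ if $\psi(g)=0$ and $\overline{g}=a$ if $\psi(g)=1$. Given $w\in H$, write $w=s_1s_2\cdots s_n$ as a word in $S=\{a,b,c,d\}$, put $t_0=1$ and define recursively $t_i=\overline{t_{i-1}s_i}\in T$. Then the telescoping identity
$$w=\prod_{i=1}^{n}\bigl(t_{i-1}\,s_i\,t_i^{-1}\bigr)$$
holds, since consecutive factors $t_i^{-1}t_i$ cancel and $t_n=\overline{w}=1$ because $w\in H$. This is exactly the triple-product argument used earlier in the excerpt to show that finite-index subgroups are finitely generated, and it exhibits $w$ as a product of Schreier generators $t\,s\,(\overline{ts})^{-1}$ with $t\in T$ and $s\in S$.

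The last step is to compute these Schreier generators and check each lies in $K$. For $t=1$ one obtains $b,c,d$, together with the trivial element from $s=a$ (namely $a\cdot a^{-1}=1$). For $t=a$, using $a=a^{-1}$ and $a^2=1$, one obtains $aba,aca,ada$ (for $s=b,c,d$), and again the trivial element from $s=a$, since $a\cdot a\cdot(\overline{a^2})^{-1}=a^2=1$. Hence every Schreier generator is either trivial or one of $b,c,d,aba,aca,ada$, so each factor in the telescoping product lies in $K$, giving $w\in K$ and thus $H\subseteq K$. Combined with $K\subseteq H$ this yields $H=K$. I do not expect a real obstacle: the only points needing care are checking that $\psi$ comes from a genuine action (so $H$ really is the parity kernel of index $2$) and the bookkeeping of representatives so that the cancellation $a\cdot a\cdot(\overline{a^2})^{-1}=1$ via $a^2=1$ and the collapse of the telescoping product are applied correctly.
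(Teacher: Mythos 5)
Your proof is correct. The telescoping identity is valid (since $\psi$ is a homomorphism, $t_i=\overline{s_1\cdots s_i}$ by induction, so $t_n=\overline{w}=1$ for $w\in H$), the enumeration of Schreier generators for the transversal $\{1,a\}$ is complete and each one is either trivial or among $b,c,d,aba,aca,ada$ (using $a^2=1$ and $S=S^{-1}$), and the inclusion $K\subseteq H$ is handled correctly via $\psi$.

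The route differs from the paper's in presentation, though not in substance. The paper argues by induction on the length of $\gamma$ written in canonical form $u_0au_1a\cdots au_{2k}$, peeling off the prefix $u_0(au_1a)$ and applying the inductive hypothesis to the shorter tail; this is elementary and self-contained but leans on the canonical form, which in turn uses the relations among $a,b,c,d$. You instead invoke the Reidemeister--Schreier machinery (developed earlier in the book for finite-index subgroups) with the transversal $\{1,a\}$, which produces the factorization of an \emph{arbitrary} word over $\{a,b,c,d\}$ in one shot, with no need to first put it in canonical form and no induction. In fact, applied to a word already in canonical form, your telescoping product yields exactly the paper's decomposition $u_0(au_1a)u_2(au_3a)\cdots$, so the two proofs compute the same thing; what your version buys is generality (it works verbatim for any index-two kernel over an involutive generating set) and a cleaner link to the general theory, at the cost of quoting heavier machinery than the lemma strictly needs.
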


\begin{proof}
Seja $\gamma \in H$. Usaremos indução no comprimento de $\gamma$, escrita na sua forma canônica, para mostrar que ela pode ser escrita em termos desses geradores.
Se $\ell(\gamma) =0$, não há nada a ser demonstrado. Se $\ell(\gamma) = 1$, como $\gamma \in H$ implica que $\gamma$ possui um numero par de entradas iguais a $a$ na sua forma canônica, segue que $\gamma = b,c \mbox{ ou } d$, e portanto não há nada a demonstrar também nesse caso.

Seja $ \Tilde{H} = \langle b,c,d, aba,aca, ada \rangle $ e suponha por indução que $\gamma \in \Tilde{H}$, sempre que $\ell(\gamma) \leq n$.
Tomando $\gamma \in H$ com $\ell(\gamma) = n+1$, e sabendo que $\gamma$ possui um número par, digamos $2k$, de entradas iguais a $a$, podemos escrever $\gamma =  u_0au_1\ldots u_{2k-1}au_{2k}$, onde $ u_0, u_{2k} \in \{e,b,c,d\}$.
Escrevendo $\gamma =  u_0(au_1a)u_2\ldots u_{2k-1}au_{2k}= u_0(au_1a) \gamma'$, onde $\ell(\gamma')<n $, e portanto $\gamma' \in \Tilde{H}$, podemos concluir que $\gamma \in \Tilde{H}$. Logo, $H=\Tilde{H}$.
\end{proof}

\begin{lemma}
\label{lem:grig1}
Os seis elementos do lema acima induzem, respectivamente, em $(0, 1/2)$ as transformações $a, a, 1, c, d, b,$ , e em $(1/2, 1)$ as transformações $c, d, b, a, a, 1$.
\end{lemma}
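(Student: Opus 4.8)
The plan is to first make precise what "induces" means by introducing the two affine rescaling maps that identify each half of the interval with the whole, and then reduce the entire statement to a bookkeeping of the periodic patterns $PPE$, $PEP$, $EPP$ together with one observation about conjugation by $a$. First I would fix $\lambda_0\colon (0,1)\to(0,1/2)$, $\lambda_0(x)=x/2$, and $\lambda_1\colon(0,1)\to(1/2,1)$, $\lambda_1(x)=x/2+1/2$, noting that $\lambda_0(I_n)$ is the $n$-th dyadic subinterval of $(0,1/2)$ and, crucially, that $\lambda_1(I_n)=I_{n+1}$. For $g\in H$, which stabilizes both halves, the transformation induced on $(0,1/2)$ (resp. on $(1/2,1)$) is then $g_0:=\lambda_0^{-1}\,g|_{(0,1/2)}\,\lambda_0$ (resp. $g_1:=\lambda_1^{-1}\,g|_{(1/2,1)}\,\lambda_1$). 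Since $P$ and $E$ commute with affine rescalings, these conjugations merely relabel which subinterval carries $P$ and which carries $E$.

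For the three generators $b,c,d$ the $(0,1/2)$-component is immediate: since $(0,1/2)=I_1$ and each of $b,c,d$ applies a single letter of its pattern (namely $P,P,E$) to $I_1$, and since $a=P$ on the whole interval rescales under $\lambda_0$ to $P$ on $(0,1/2)$, I get $b_0=a$, $c_0=a$, $d_0=1$. For the $(1/2,1)$-component, the identity $\lambda_1(I_n)=I_{n+1}$ shows that $g_1$ acts on $I_n$ exactly as $g$ acts on $I_{n+1}$; hence the pattern of $g_1$ is the pattern of $g$ shifted one position to the left. I would then verify, directly on residues modulo $3$, that a one-step left shift sends $PPE\mapsto PEP$, $PEP\mapsto EPP$ and $EPP\mapsto PPE$, that is $b_1=c$, $c_1=d$, $d_1=b$. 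This records $b\mapsto(a,c)$, $c\mapsto(a,d)$, $d\mapsto(1,b)$.

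Finally, for the conjugates $aba,aca,ada$ I would use the general principle that conjugation by $a$ interchanges the two halves. Concretely, from $a|_{(0,1/2)}=\lambda_1\lambda_0^{-1}$ and $a|_{(1/2,1)}=\lambda_0\lambda_1^{-1}$, substituting into $(aga)|_{(0,1/2)}=a|_{(1/2,1)}\circ g|_{(1/2,1)}\circ a|_{(0,1/2)}$ and into the companion expression on the other half makes all scaling factors telescope, giving $(aga)_0=g_1$ and $(aga)_1=g_0$. Applying this to the three pairs found above yields $aba\mapsto(c,a)$, $aca\mapsto(d,a)$, $ada\mapsto(b,1)$, which is exactly the claim of the lemma.

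The main obstacle is the pattern shift in the second step: one must check carefully that the index shift $n\mapsto n+1$ coming from $\lambda_1$ carries the period-$3$ pattern of each generator precisely onto that of the next, and that the induced cyclic order comes out as $b\to c\to d\to b$ rather than reversed. Everything else is routine once the rescaling conventions and the composition order in $aga$ are fixed.
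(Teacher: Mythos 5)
Your proof is correct. There is in fact no proof in the paper to compare against: the lemma is stated and immediately left as an exercise, and the six identifications are only recorded again later as the embedding $\varphi(x)=(x_{\ell},x_r)$ in the proof of Theorem~\ref{thm:gammacommgammagamma}. Your argument supplies exactly what the exercise asks for, and the three ingredients are the right ones: the rescalings $\lambda_0,\lambda_1$ make the word ``induz'' precise; the identity $\lambda_1(I_n)=I_{n+1}$ turns the right-hand component into a one-step left shift of the periodic pattern, and that shift does act cyclically as $PPE\mapsto PEP\mapsto EPP\mapsto PPE$ (so $b_1=c$, $c_1=d$, $d_1=b$, in the non-reversed order, matching the lemma); and the telescoping computation from $a|_{(0,1/2)}=\lambda_1\lambda_0^{-1}$, $a|_{(1/2,1)}=\lambda_0\lambda_1^{-1}$ gives $(aga)_0=g_1$ and $(aga)_1=g_0$, which reduces the three conjugates $aba,aca,ada$ to the pairs already computed. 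One small point worth making explicit when writing this up: the conjugation formula for $aga$ uses that $g\in H$ preserves each half, which holds for $g\in\{b,c,d\}$, so all compositions are defined on the stated domains.
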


\begin{exercise}
    Prove o Lema \ref{lem:grig1}.
\end{exercise}

\begin{corollary}
A ação de $H$ induz em cada subintervalo $(0,1/2)$ e $(1/2,1)$ grupos isomorfos a $\Gamma$.
\end{corollary}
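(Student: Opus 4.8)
The statement to prove is the Corollary: the action of $H$ induces on each subinterval $(0,1/2)$ and $(1/2,1)$ groups isomorphic to $\Gamma$. The plan is to exploit the two preceding lemmas, which already do the heavy lifting by identifying the generators of $H$ and computing exactly how they act on each half-interval.

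First I would set up the restriction homomorphisms. For the subinterval $(0,1/2)$, every element of $H$ stabilizes this interval (by definition of $H$), so restricting its action gives a well-defined transformation of $(0,1/2)\cap \mathrm{I}$. This yields a group homomorphism $\pi_L : H \to \Isom(\mathrm{I}_L)$, where $\mathrm{I}_L$ denotes the left half, and similarly $\pi_R : H \to \Isom(\mathrm{I}_R)$ for $(1/2,1)$. The key observation is that each half-interval, after rescaling, is itself a copy of the full interval $\mathrm{I}$ on which $\Gamma$ acts. So the targets of $\pi_L$ and $\pi_R$ naturally contain copies of $\Gamma$, and what I must show is that the image of each $\pi$ is precisely such a copy of $\Gamma$.

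The crux is a direct computation using Lemma~\ref{lem:grig1}. That lemma tells us the six generators $b, c, d, aba, aca, ada$ of $H$ induce on $(0,1/2)$ the transformations $a, a, 1, c, d, b$ respectively, and on $(1/2,1)$ the transformations $c, d, b, a, a, 1$. Here the generators $a,b,c,d$ appearing on the right are understood as the Grigorchuk generators acting on the rescaled half-interval, which is again split into countably many dyadic subintervals in exactly the same pattern. Reading off the left column, the image $\pi_L(H)$ is generated by $\{a, a, 1, c, d, b\} = \{a,b,c,d\}$, which is by definition all of $\Gamma$; reading off the right column, $\pi_R(H)$ is generated by $\{c,d,b,a,a,1\} = \{a,b,c,d\}$, again all of $\Gamma$. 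Thus both restriction maps are surjective onto a copy of $\Gamma$.

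Finally, to upgrade ``image is $\Gamma$'' to ``isomorphic to $\Gamma$'', I would observe that the map $H \to \Gamma \times \Gamma$ given by $h \mapsto (\pi_L(h), \pi_R(h))$ records the complete action of $h$ on $\mathrm{I} = \mathrm{I}_L \cup \mathrm{I}_R$, hence is injective; in particular each $\pi$ need not be injective individually, but each does surject onto $\Gamma$, which is all the statement claims (the word ``isomorphic'' here refers to the induced group being a copy of $\Gamma$, i.e.\ the image is $\Gamma$). The main obstacle, which is really bookkeeping rather than a genuine difficulty, is making precise the rescaling identification between a half-interval and the full interval: one must check that the self-similar dyadic structure of $\mathrm{I}$ matches up under the affine map sending $(0,1/2)$ (resp.\ $(1/2,1)$) onto $(0,1)$, so that the symbols $a,b,c,d$ in Lemma~\ref{lem:grig1} legitimately denote the Grigorchuk generators on the rescaled copy. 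Once this identification is fixed, the corollary follows immediately from reading off the two columns of the lemma.
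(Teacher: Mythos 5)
Your proof is correct and follows exactly the route the paper intends: the corollary is an immediate consequence of Lemma~\ref{lem:grig1}, since reading off either column shows that the restriction homomorphism sends the generators $b, c, d, aba, aca, ada$ of $H$ onto the set $\{a,b,c,d,1\}$ acting on the rescaled half-interval, so the induced group is precisely the Grigorchuk group on that copy of $\mathrm{I}$. Your additional remarks — that the individual restrictions $\pi_L$, $\pi_R$ need not be injective (only the combined map $h \mapsto (\pi_L(h),\pi_R(h))$ is), and that the rescaling must respect the dyadic pattern — are exactly the right clarifications of what the paper leaves implicit.
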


\begin{corollary}
O grupo $\Gamma$ é infinito.
\end{corollary}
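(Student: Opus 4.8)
A estratégia é explorar a estrutura auto-similar de $\Gamma$ capturada pelo corolário anterior, combinando-a com um argumento de cardinalidade por contradição. O ponto central é que a restrição da ação de $H$ ao subintervalo $(0,1/2)$ define um homomorfismo de grupos $\phi : H \to \Bij(\mathrm{I}\cap(0,1/2))$ cuja imagem, pelo corolário precedente, é isomorfa a $\Gamma$. Primeiro eu verificaria cuidadosamente que $\phi$ é sobrejetivo sobre esse grupo isomorfo a $\Gamma$: pelo Lema~\ref{lem:grig1}, os seis geradores $b,c,d,aba,aca,ada$ de $H$ induzem em $(0,1/2)$, respectivamente, $a,a,1,c,d,b$, de modo que a imagem de $\phi$ contém os quatro geradores $a,b,c,d$ de $\Gamma$ e, portanto, todo $\Gamma$. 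Compondo com o isomorfismo do corolário anterior, obtemos assim um epimorfismo $\phi : H \twoheadrightarrow \Gamma$.

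Em seguida, suponho por absurdo que $\Gamma$ seja finito. Como $H$ é normal de índice $2$ em $\Gamma$, segue que $|H| = |\Gamma|/2$. Por outro lado, a existência do epimorfismo $\phi : H \to \Gamma$ força $|\Gamma| \leq |H|$. Juntando as duas relações, obtemos $|\Gamma| \leq |\Gamma|/2$, o que é impossível, pois $\Gamma$ é não trivial: o gerador $a$, sendo o intercâmbio $P$ do intervalo completo, satisfaz $a \neq 1$, logo $|\Gamma| \geq 2$. Essa contradição mostra que $\Gamma$ não pode ser finito.

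A parte mais delicada não é o argumento de contagem em si, que é elementar, mas a justificativa de que $\phi$ é de fato um homomorfismo bem definido — isto é, que a restrição a um subintervalo invariante respeita a composição — e de que sua imagem se identifica canonicamente com $\Gamma$. Esse ponto já está contido no corolário anterior (``a ação de $H$ induz em cada subintervalo grupos isomorfos a $\Gamma$''), de modo que aqui basta invocá-lo; eu apenas teria o cuidado de registrar explicitamente a \emph{sobrejetividade} de $\phi$, lida diretamente do Lema~\ref{lem:grig1}, pois essa é a única propriedade adicional necessária para fechar o argumento. Observo ainda que o mesmo raciocínio, iterado, é exatamente o que posteriormente permitirá controlar o crescimento de $\Gamma$, mas para a mera infinitude basta uma única aplicação da auto-similaridade.
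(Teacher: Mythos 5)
Sua prova está correta e segue, no fundo, a mesma ideia do livro: combinar a auto-similaridade (o corolário anterior, via o Lema~\ref{lem:grig1}) com o fato de $H$ ser subgrupo próprio para extrair uma contradição de cardinalidade. A única diferença está no mecanismo: o texto afirma que ``o subgrupo próprio $H$ contém subgrupos isomorfos a $\Gamma$'', concluindo que $\Gamma$ está em bijeção com um subconjunto próprio (infinitude no sentido de Dedekind), enquanto você usa o epimorfismo $\phi: H \twoheadrightarrow \Gamma$ e conta: $|\Gamma| \leq |H| = |\Gamma|/2$. Vale observar que a sua formulação é a leitura mais diretamente justificada do corolário: a ação induzida de $H$ em $(0,1/2)$ fornece um \emph{quociente} de $H$ isomorfo a $\Gamma$ (a imagem do homomorfismo de restrição), e não, a priori, um subgrupo de $H$ isomorfo a $\Gamma$; a frase do livro, tomada ao pé da letra, exigiria um mergulho $\Gamma \hookrightarrow H$ que o corolário não dá de imediato. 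Seu cuidado em registrar explicitamente a sobrejetividade de $\phi$ a partir do Lema~\ref{lem:grig1} fecha esse ponto de forma limpa.
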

\begin{proof}
Pelo corolário acima, o subgrupo próprio $H$ contém subgrupos isomorfos a $\Gamma$, e portanto $\Gamma$ está em bijeção com um subconjunto próprio. Isso somente é possível em um grupo infinito.
\end{proof}

\section{As propriedades principais do grupo \(\Gamma\)}

Lembramos que grupos $G_1$ e $G_2$ são ditos comensuráveis se eles possuírem subgrupos $H_1 \leq G_1$ e $H_2 \leq G_2$ de índice finito que são isomorfos. 

\begin{thm}
\label{thm:gammacommgammagamma}
O grupo $\Gamma$ é comensurável com $\Gamma \times \Gamma$.
\end{thm}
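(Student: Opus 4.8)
A estratégia é explorar a auto-similaridade de $\Gamma$ por meio do subgrupo $H$ de índice $2$ (o estabilizador dos subintervalos $(0,1/2)$ e $(1/2,1)$, definido acima) e da aplicação de restrição $\psi : H \to \Gamma \times \Gamma$ que envia cada $h \in H$ ao par de transformações que ele induz em cada uma das duas metades, metades essas identificadas com $\Gamma$ pelo corolário que afirma que $H$ induz em cada subintervalo um grupo isomorfo a $\Gamma$. O plano é mostrar que $\psi$ é um homomorfismo injetivo — de modo que $H \cong \psi(H)$ — e que a imagem $\psi(H)$ tem índice finito em $\Gamma \times \Gamma$. Como $[\Gamma : H] = 2$, isso exibe subgrupos de índice finito isomorfos em $\Gamma$ e em $\Gamma \times \Gamma$, e portanto, pela Definição~\ref{def:comm}, $\Gamma$ e $\Gamma \times \Gamma$ serão comensuráveis.

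Primeiro verificaria as propriedades de $\psi$. Como cada $h \in H$ preserva as duas metades, a restrição de sua ação a cada uma delas é uma transformação, e a composição em $H$ corresponde à composição em cada metade; logo $\psi$ é um homomorfismo. A injetividade é imediata: se $\psi(h) = (1,1)$, então $h$ age trivialmente em ambas as metades, portanto trivialmente em $\mathrm{I}$, e como $\Gamma$ é um grupo fiel de transformações, $h = 1$. Pelo Lema~\ref{lem:grig1}, $\psi$ assume nos geradores de $H$ os valores $\psi(b) = (a,c)$, $\psi(c) = (a,d)$, $\psi(d) = (1,b)$, $\psi(aba) = (c,a)$, $\psi(aca) = (d,a)$ e $\psi(ada) = (b,1)$.

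O passo central é mostrar que $K := \psi(H)$ tem índice finito em $\Gamma \times \Gamma$. Observo que $(1,b) = \psi(d)$ e $(b,1) = \psi(ada)$ pertencem a $K$, e que as projeções de $K$ sobre os dois fatores são sobrejetivas, pois os primeiros (respectivamente, os segundos) elementos dos pares acima geram $\Gamma$. Conjugando $(1,b)$ por elementos de $K$ cuja segunda coordenada percorre todo $\Gamma$, obtenho $\{1\} \times \langle\langle b\rangle\rangle \subseteq K$; de modo análogo, a partir de $(b,1)$, obtenho $\langle\langle b\rangle\rangle \times \{1\} \subseteq K$. Como esses dois subgrupos comutam elemento a elemento, seu produto $\langle\langle b\rangle\rangle \times \langle\langle b\rangle\rangle$ está contido em $K$. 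Resta então reduzir tudo à afirmação de que $\langle\langle b\rangle\rangle$ tem índice finito em $\Gamma$, pois nesse caso $\langle\langle b\rangle\rangle \times \langle\langle b\rangle\rangle$ — e com maior razão $K$ — terá índice finito em $\Gamma \times \Gamma$.

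O principal obstáculo é exatamente a finitude de $[\Gamma : \langle\langle b\rangle\rangle]$, que à primeira vista parece falhar: usando apenas as relações óbvias $a^2=b^2=c^2=d^2=1$ e $bc=d$, o quociente $\Gamma/\langle\langle b\rangle\rangle$ colapsaria em $\langle a,c \mid a^2, c^2\rangle \cong D_{\infty}$, que é infinito. A saída é empregar uma relação adicional oriunda da própria auto-similaridade de $\Gamma$. Calculando diretamente a ação nas duas metades, espero obter $\psi((ad)^2) = (b,b)$; como $(ad)^2 \in H$ e $\psi$ é injetivo, de $\psi((ad)^4) = (b^2,b^2) = (1,1)$ seguirá que $(ad)^4 = 1$ em $\Gamma$. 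No quociente $\Gamma/\langle\langle b\rangle\rangle$ tem-se $\bar b = 1$, donde $\bar c = \bar d$, e a relação $(ad)^4=1$ torna-se $(\bar a \bar c)^4 = 1$. Assim $\Gamma/\langle\langle b\rangle\rangle$ é imagem homomórfica de $\langle \bar a, \bar c \mid \bar a^2, \bar c^2, (\bar a\bar c)^4\rangle \cong D_4$, de ordem $8$; logo $[\Gamma : \langle\langle b\rangle\rangle] \le 8 < \infty$. Essa verificação da relação $(ad)^4=1$ pela recursão é o ponto delicado de todo o argumento; uma vez estabelecida, conclui-se que $K$ tem índice finito em $\Gamma\times\Gamma$, que $H \cong K$, e a comensurabilidade segue.
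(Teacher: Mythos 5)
Sua proposta está correta e segue essencialmente o mesmo caminho da demonstração do livro: o mesmo mergulho $\varphi(x)=(x_\ell,x_r)$ de $H$ em $\Gamma\times\Gamma$, a mesma relação-chave $(ad)^4=1$ obtida de $\varphi((ad)^2)=(b,b)$ e da injetividade, o mesmo fecho normal $B=\langle\langle b\rangle\rangle$ com $[\Gamma:B]\le 8$, e a mesma inclusão $B\times B\le\varphi(H)$ via conjugação de $(b,1)$ e $(1,b)$. A única variação é cosmética: você apresenta o quociente $\Gamma/B$ nos geradores $\bar a,\bar c$ em vez de $\bar a,\bar d$, mas o argumento é o mesmo.
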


\begin{proof}
Note que um elemento $x \in H$ é completamente determinado por suas ações em $(0,1/2)$ e $(1/2,1)$. Assim, denote por $x_{\ell}$ e $x_r$ as aplicações induzidas pela ação de $x$ em $(0,1/2)$ e $(1/2,1)$, respectivamente. Podemos definir um mergulho $\varphi :  H \to \Gamma \times \Gamma $ pondo $\varphi(x) = (x_{\ell}, x_r)$. Pelo lema anterior, temos

$$\arraycolsep=0pt\def\arraystretch{1.2}
\begin{array}{rcl}
\varphi :  H \*&\;\to\;&\* \Gamma \times \Gamma;  \\
b\*&\;\mapsto\;&\* (a,c);  \\
c\*&\;\mapsto\;&\*   (a,d);\\
d\*&\;\mapsto\;&\* (1,b); \\
aba\*&\;\mapsto\;&\*  (c,a); \\
aca\*&\;\mapsto\;&\*  (d,a); \\
ada\*&\;\mapsto\;&\* (b,1).

\end{array}$$

Para finalizar esta demonstração, precisaremos de uma série de lemas.

\begin{lemma}
O grupo $\langle a, d \rangle $ é o grupo diedral de ordem $8$.
\label{lemma:diedral}
\end{lemma}
\begin{proof}
Como $a^2=d^2=1$, temos que $D:=\langle a, d \rangle$ é diedral, e sua ordem é duas vezes a ordem de $ad$. Escrevendo $(ad)^2 = ada\cdot d$ vemos que $\varphi((ad)^2) = (b,b)$. Mas como $b^2=1$ e $\varphi$ é injetiva, concluímos que $(ad)^4=1$.
\end{proof}

\begin{lemma}
Seja $B:=\langle\langle b \rangle\rangle$ o fecho normal de $\langle b\rangle$ em $\Gamma.$ Então, $[\Gamma :B]\leq 8.$
\end{lemma}
\begin{proof}
Como $c=bd$,  obtemos que $\Gamma = \langle a, b, d\rangle$. Assim, o grupo quociente $\Gamma/B$ é gerado pelas imagens de $a$ e $d$ pela projeção canônica. Pelo lema anterior, temos que o índice de $B$ será menor ou igual a $8$.
\end{proof}

\begin{lemma}
O grupo $\varphi (H)$ contém o subgrupo $B\times B$ de $\Gamma \times \Gamma$.
\end{lemma}
\begin{proof}
Sabemos que $\varphi(ada)=(b,1)$ e $\varphi(d)=(1,b)$. Para cada $x\in \Gamma $, existem elementos $h_1, h_2 \in H$ que são enviados por $\varphi$ em elementos das formas $(x,y)$ e $(z,x)$ de $\Gamma \times \Gamma$. Conjugando $(b,1)$ e $(1,b)$, por $(x,y)$ e $(z,x)$ respectivamente, obtemos 
\begin{eqnarray*}
    (xbx^{-1},1) &=&(x,y)(b,1)(x,y)^{-1} \\ &=& \varphi(h_1) \varphi(ada) \varphi(h_1^{-1})\\
    &=& \varphi(h_1 ada h_1^{-1}) \in \varphi(H),
\end{eqnarray*}
\begin{eqnarray*}
    (1,xbx^{-1}) &=&(z,x)(1,b)(z,x)^{-1} \\ &=& \varphi(h_2) \varphi(d) \varphi(h_2^{-1})\\
    &=& \varphi(h_2 d h_2^{-1}) \in \varphi(H).
\end{eqnarray*}
Logo, os subgrupos $(B,1)$ e $(1,B)$ estão ambos contidos em $\varphi(H)$ e assim podemos concluir que $B\times B \leq \varphi(H)$.
\end{proof}

Finalizamos a prova do Teorema \ref{thm:gammacommgammagamma} usando os lemas anteriores para concluir que $B\times B \leq \varphi (H) \leq \Gamma \times \Gamma $, onde o índice de $B\times B$ em $\Gamma\times \Gamma $ é finito, e portanto o índice de $\varphi(H)$ em $\Gamma\times \Gamma $ é finito. Logo, $H$ é isomorfo a $\varphi(H)$, onde $H$ tem índice finito em $\Gamma$ e $\varphi(H)$ tem índice finito  em $\Gamma\times \Gamma $ e o teorema segue.
\end{proof}

\begin{remark}
    Existem grupos finitamente gerados $G$ que são até mesmo isomorfos ao produto direto $ G \times G$ (cf \cite{jones1974direct}). 
\end{remark}

\begin{corollary}
O grupo $\Gamma$ não tem crescimento polinomial.
\end{corollary}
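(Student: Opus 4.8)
O plano é argumentar por contradição, explorando a comensurabilidade estabelecida no Teorema~\ref{thm:gammacommgammagamma} juntamente com o comportamento do grau de crescimento sob produtos diretos e sob comensurabilidade. Suponha que $\Gamma$ tenha crescimento polinomial, de grau $d = d(\Gamma) < \infty$. Como $\Gamma$ é infinito (pelo corolário anterior), a Proposição~\ref{prop:crescimentogrupos} garante que $\rho_\Gamma(n) \geq n$, de modo que $d \geq 1$; em particular, $d \neq 0$. O objetivo será mostrar que a comensurabilidade força $d = 2d$, um absurdo.

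Primeiro, estabeleceria a fórmula para o grau de crescimento de um produto direto: se $G_1, G_2$ são grupos finitamente gerados de crescimento polinomial, então $d(G_1 \times G_2) = d(G_1) + d(G_2)$. Isso segue de comparar bolas na métrica das palavras do produto: escolhendo geradores $S = S_1 \cup S_2$, tem-se $|(g_1,g_2)|_S = |g_1|_{S_1} + |g_2|_{S_2}$, donde
$$\rho_{G_1}(\lfloor r/2\rfloor)\,\rho_{G_2}(\lfloor r/2\rfloor) \leq \rho_{G_1\times G_2}(r) \leq \rho_{G_1}(r)\,\rho_{G_2}(r).$$
Aplicando as cotas polinomiais $\rho_{G_i}(r) \asymp r^{d(G_i)}$, obtém-se $\rho_{G_1\times G_2}(r) \asymp r^{d(G_1)+d(G_2)}$. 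Em particular, $d(\Gamma\times\Gamma) = 2d$.

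Em seguida, usaria a comensurabilidade para comparar os graus de $\Gamma$ e $\Gamma\times\Gamma$. Pela Definição~\ref{def:comm} e o Teorema~\ref{thm:gammacommgammagamma}, existem subgrupos de índice finito $H_1 \leq \Gamma$ e $H_2 \leq \Gamma\times\Gamma$ com $H_1 \cong H_2$. Pela Proposição~\ref{prop:cresc}(2), $\rho_{H_1}\asymp\rho_\Gamma$ e $\rho_{H_2}\asymp\rho_{\Gamma\times\Gamma}$; como grupos isomorfos têm a mesma função de crescimento, $\rho_{H_1}\asymp\rho_{H_2}$, e portanto $\rho_\Gamma\asymp\rho_{\Gamma\times\Gamma}$. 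A Proposição~\ref{prop:cresc_pol}(1) então fornece $d(\Gamma) = d(\Gamma\times\Gamma)$, isto é, $d = 2d$. Logo $d = 0$, contradizendo $d\geq 1$. Assim, $\Gamma$ não pode ter crescimento polinomial.

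O principal obstáculo técnico será a verificação cuidadosa da fórmula do produto — em especial, garantir que as cotas inferior e superior sobre $\rho_{G_1\times G_2}$ se traduzam corretamente em igualdade de graus via a relação $\asymp$, cujas constantes multiplicativas e reparametrizações lineares precisam ser compatibilizadas. O restante é uma aplicação direta dos resultados já disponíveis sobre invariância do crescimento por comensurabilidade, de modo que a estrutura lógica (produto dobra o grau, comensurabilidade preserva o grau) conduz imediatamente ao absurdo $d=2d$.
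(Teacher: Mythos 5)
Sua proposta está correta e segue essencialmente o mesmo caminho da prova do livro: supor crescimento polinomial de grau $d$, usar a comensurabilidade entre $\Gamma$ e $\Gamma\times\Gamma$ (Teorema~\ref{thm:gammacommgammagamma}) junto com a invariância do crescimento por quasi-isometria/comensurabilidade para obter $d=2d$, e daí $d=0$, o que contradiz a infinitude de $\Gamma$. Você apenas explicita passos que o texto deixa implícitos, como as cotas para o crescimento do produto direto e o uso das Proposições~\ref{prop:cresc} e~\ref{prop:cresc_pol}.
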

\begin{proof}
Suponha que $\Gamma$ tivesse crescimento polinomial, digamos de grau $d$. Então, $\Gamma \times \Gamma $ teria crescimento $\rho_{\Gamma \times \Gamma}(n)\asymp n^{2d}$. Como o crescimento é um invariante por quasi-isometrias, e portanto um invariante por comensurabilidade, segue do teorema anterior que $n^d\asymp n^{2d}$. Logo teríamos $d=0$, o que implicaria que $\Gamma$ seria finito, um absurdo.
\end{proof}

Os próximos passos são necessários para provar que $\Gamma$ não tem crescimento exponencial.
\begin{lemma}
Seja $x \in H,$ e escreva $\varphi(x)= (x_{\ell},x_r)$, como antes.  Então $\ell(x_{\ell}), \ell(x_r)\leq \dfrac{1}{2}({\ell}(x)+1)$.
\end{lemma}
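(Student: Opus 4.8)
O plano é explorar a estrutura de $\varphi$ ``por blocos'' e controlar separadamente o comprimento em cada coordenada. Primeiro, organizo as projeções $\pi_\ell,\pi_r:\Gamma\times\Gamma\to\Gamma$ nas duas coordenadas, avaliadas nos seis geradores de $H$ obtidos no teorema anterior. Das fórmulas $\varphi(b)=(a,c)$, $\varphi(c)=(a,d)$, $\varphi(d)=(1,b)$, $\varphi(aba)=(c,a)$, $\varphi(aca)=(d,a)$, $\varphi(ada)=(b,1)$, extraio os dois fatos que sustentam toda a contagem: para $u\in\{b,c,d\}$ tem-se $\pi_\ell(b)=\pi_\ell(c)=a$, $\pi_\ell(d)=e$, enquanto $\pi_r(u)\in\{b,c,d\}$ é sempre não trivial; e, dualmente, para os blocos $aua$ tem-se $\pi_\ell(aua)\in\{b,c,d\}$ sempre não trivial, ao passo que $\pi_r(aba)=\pi_r(aca)=a$ e $\pi_r(ada)=e$.

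Em seguida, coloco $x$ em forma alternada. Tomo uma palavra geodésica $w$ representando $x$, de comprimento $\ell(x)$, e a reduzo aplicando apenas os movimentos $yy\mapsto e$ (para $y\in\{a,b,c,d\}$) e $yz\mapsto$ (o terceiro elemento de $\{b,c,d\}$), quando $y\ne z$ estão ambos em $\{b,c,d\}$. Cada movimento não aumenta o comprimento, e ao final obtenho uma palavra alternada $\bar w=v_0\,a\,v_1\,a\cdots a\,v_{2k}$ representando $x$, com $v_i\in\{b,c,d\}$ para $1\le i\le 2k-1$ e $v_0,v_{2k}\in\{e,b,c,d\}$; o número de letras $a$ é par, igual a $2k$, pois $x\in H$ e a paridade das letras $a$ é exatamente o homomorfismo $\Gamma\to\Gamma/H\cong C_2$. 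Escrevo $\bar n=\ell(\bar w)\le\ell(x)$ e agrupo $\bar w$ nos blocos, em ordem, $v_0,\ (av_1a),\ v_2,\ (av_3a),\dots,(av_{2k-1}a),\ v_{2k}$, cada um sendo um gerador de $H$ ou trivial, de modo que $\varphi(x)$ é o produto de suas imagens.

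A etapa de contagem é então direta. Sendo $\varphi$ homomorfismo, $x_\ell$ é a concatenação dos $\pi_\ell$ dos blocos e $x_r$ a dos $\pi_r$. Na palavra de $x_\ell$, as contribuições dos blocos pares $v_{2j}$ jazem em $\{a,e\}$ e as dos blocos $av_{2j-1}a$ em $\{b,c,d\}$; logo ela é alternada (reduções adicionais só diminuem o comprimento) e, chamando de $p\in\{0,1,2\}$ o número de extremos $v_0,v_{2k}$ não triviais, os $k-1$ blocos pares interiores são não triviais, donde
$$\ell(x_\ell)\le \#\{j: v_{2j}\in\{b,c\}\}+k\le \#\{j: v_{2j}\ne e\}+k=(k-1+p)+k=2k-1+p.$$
Trocando os papéis das coordenadas (agora os blocos pares contribuem em $\{b,c,d\}$ e os blocos $aua$ em $\{a,e\}$), o mesmo cálculo dá $\ell(x_r)\le 2k-1+p$. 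Como $\bar n=2k+(2k-1+p)=4k-1+p$, tem-se $\tfrac12(\bar n+1)=2k+\tfrac p2$, e a desigualdade $2k-1+p\le 2k+\tfrac p2$ equivale a $p\le 2$, sempre verdadeira (com igualdade em $p=2$). Concluo $\ell(x_\ell),\ell(x_r)\le 2k-1+p\le\tfrac12(\bar n+1)\le\tfrac12(\ell(x)+1)$; o caso $k=0$ é imediato, pois então $x\in\{e,b,c,d\}$ e cada coordenada tem comprimento $\le1\le\tfrac12(\ell(x)+1)$.

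Espero que o ponto mais delicado seja exatamente a passagem do comprimento de palavra $\ell(x)$ para o comprimento $\bar n$ da forma alternada. É tentador fixar uma ``forma canônica'' de $x$ e igualar $\bar n=\ell(x)$, mas como $\Gamma$ \emph{não} é apresentado apenas pelas relações $a^2=b^2=c^2=d^2=1$, $bc=d$, etc., palavras geodésicas distintas podem reduzir a formas alternadas distintas, e tal forma não é um invariante bem definido de $x$. O que torna o argumento correto é perceber que basta a desigualdade $\bar n\le\ell(x)$, garantida por reduzir uma palavra geodésica apenas por movimentos que nunca aumentam o comprimento; como o cálculo acima produz um \emph{limite superior} para $\ell(x_\ell),\ell(x_r)$ em termos de $\bar n$, a monotonicidade de $\tfrac12(\,\cdot+1)$ encerra a prova.
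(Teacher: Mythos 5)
Sua prova está correta e segue essencialmente o mesmo caminho da prova do livro: escrever $x$ em forma alternada, decompô-lo nos blocos $u$ e $ava$ (os seis geradores de $H$), aplicar $\varphi$ bloco a bloco e contar as contribuições em cada coordenada. A diferença é de cuidado, não de método: você justifica explicitamente a passagem de $\ell(x)$ ao comprimento $\bar n$ da forma alternada (ponto que o texto absorve ao invocar a ``forma canônica'') e refina a contagem distinguindo os blocos $d$ e $ada$, que têm uma coordenada trivial, enquanto o texto usa a estimativa mais grosseira de $\leq 2$ por par de fatores --- ambas chegam à mesma cota.
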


\begin{proof}
Escrevendo $x$ na forma canônica, temos que $x$ é o produto alternado de elementos da forma $u$ e $ava$, com $u,v \in \{b,c,d\}$. Seja $2k+i$ o número de fatores, onde $i=0$ ou $1$. Cada par de fatores $u$ e $ava$ contribui com $4$ unidades no comprimento de $x$, enquanto cada coordenada da imagem por $\varphi$ do produto $u\cdot ava$ tem comprimento $\leq 2$. Se $i=0$, então ${\ell}(x)=4k$, e ${\ell}(x_{\ell}),{\ell}(x_r)\leq 2k$ e portanto, ${\ell}(x_{\ell}),{\ell}(x_r) < \dfrac{1}{2}({\ell}(x)+1)$. Se $i=1$ então o comprimento do fator restante é $1$ ou $3,$ logo ${\ell}(x)=4k+1$ ou ${\ell}(x)=4k+3$. Por outro lado, esse fator restante só contribui com $1$ em cada coordenada  da imagem por $\varphi$, ou seja, ${\ell}(x_{\ell}),{\ell}(x_r)\leq 2k+1$. Portanto,  ${\ell}(x_{\ell}),{\ell}(x_r)\leq 2k+1\leq \dfrac{1}{2}({\ell}(x)+1)$, como queríamos. 
\end{proof}

Para verificarmos o crescimento subexponencial, basta analisarmos o crescimento de $H$, já que $[\Gamma: H]$ é finito.

Note que $[\Gamma \times \Gamma : H\times H] =4$, logo $[H: \varphi^{-1}(H\times H)] = [\varphi(H):\varphi(H) \cap H\times H]\leq 4$. 

Considere $K= \varphi^{-1}(H\times H)$ e $L= \varphi^{-1}(K\times K)$. Então valem 
\begin{align*}
    [\Gamma:K] &= [\Gamma:H][H:K] \leq 8;\\
    [\Gamma:L] &= [\Gamma:K][K:L] \leq 128.
\end{align*}

Portanto, $\rho_{L} \asymp \rho_{H}$. Analisemos o que ocorre em $L$. Note que $\varphi(L)= K \times K = \varphi^{-1}(H\times H) \times \varphi^{-1}(H\times H) \subset H\times H$ e $\varphi^2(L) = H \times H \times H \times H$ . Assim, é possível aplicar $\varphi$ a $L$ três vezes: 
$$x \in L \mapsto (x_{\ell}, x_r) \mapsto (x_{\ell \ell}, x_{\ell r}, x_{r \ell}, x_{ r r}) \mapsto (x_1, \ldots, x_8),$$
onde por simplicidade denotamos por $x_i$, $i=1,\ldots 8$ os elementos de $H$ obtidos após aplicar $\varphi$ a cada um dos elementos anteriores, na ordem em que eles aparecem. Escreveremos também  $(\tilde{x}_1, \tilde{x}_2, \tilde{x}_3 , \tilde{x}_4) =  (x_{\ell \ell}, x_{\ell r}, x_{r \ell}, x_{ r r})$.

\begin{lemma}
    Com a notação acima, tem-se $\displaystyle\sum_{i=1}^8 \ell(x_i) \leq \frac{3}{4}\ell(x) + 8$.
    \label{lemma:sum1to8}
\end{lemma}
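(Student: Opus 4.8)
O plano é explorar a recursão das seções já usada na prova do Teorema~\ref{thm:gammacommgammagamma}, a saber $b=(a,c)$, $c=(a,d)$, $d=(1,b)$ e, nas posições conjugadas, $aba=(c,a)$, $aca=(d,a)$, $ada=(b,1)$. Primeiro eu processaria a forma canônica de um elemento $y\in H$ sílaba por sílaba, acompanhando em qual das duas metades estamos (a ``paridade'' determinada pelo número de letras $a$ já lidas). A observação central é dupla: as letras $a$ não produzem letra alguma nas seções, apenas trocam o lado; e cada sílaba em $\{b,c,d\}$ produz uma letra em \emph{cada} coordenada, \emph{exceto} as do tipo $d$ (numa paridade) e suas conjugadas $ada$ (na outra), que contribuem uma letra em apenas uma das coordenadas e nada na outra. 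Denotando por $\beta(y),\gamma(y),\delta(y)$ o número de sílabas $b,c,d$ na forma canônica de $y$, isto fornece de imediato a estimativa bruta por nível $\ell(y_\ell)+\ell(y_r)\le 2\beta(y)+2\gamma(y)+\delta(y)$, onde o termo $\delta(y)$ (e não $2\delta(y)$) já reflete a ``morte'' de uma letra provocada por $d$ e $ada$.

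Em seguida eu iteraria três vezes. Isto é lícito precisamente porque $x\in L=\varphi^{-1}(K\times K)$ com $K=\varphi^{-1}(H\times H)$: por construção, todas as seções de $x$ até o terceiro nível estão em $H$, logo estabilizam o primeiro nível e admitem nova aplicação de $\varphi$. O ponto delicado é que a estimativa bruta, iterada ingenuamente, só produz fator multiplicativo $\approx 1$ (de fato, contando a produção de letras sem cancelamentos, uma sílaba de tipo $b$, $c$ ou $d$ de $x$ gera respectivamente $1$, $2$ e $2$ letras no terceiro nível, de modo que $\sum_{i=1}^{8}\ell(x_i)\le \beta(x)+2\gamma(x)+2\delta(x)$, quantidade comparável a $\ell(x)$). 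Portanto toda a contração de $1$ para $\tfrac34$ vem das \emph{reduções obrigatórias}, e não pode ser dispensada.

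O mecanismo dessas reduções é o $3$-ciclo $b\to c\to d\to b$ induzido por $\varphi$ sobre os tipos das sílabas: qualquer sílaba de $x$ dá origem, em pelo menos um dos três níveis, a uma sílaba do tipo $d$ (ou $ada$), e portanto a uma morte de letra numa coordenada. Essas mortes tornam as palavras-seção não geodésicas e colocam sílabas ``pesadas'' de $\{b,c,d\}$ em posições adjacentes, onde se combinam via as relações $b^2=c^2=d^2=1$ e $bc=d$, etc., diminuindo o comprimento. Para fixar a constante $\tfrac34$ eu transformaria isto numa contabilidade local: como a reescrita $\varphi$ é local (o destino de uma sílaba depende só dela e da paridade), basta verificar, por análise finita de casos, que ao aplicar $\varphi$ três vezes a cada padrão de sílabas $a$-separadas de comprimento limitado o total de letras produzido cai por um fator de pelo menos $\tfrac34$; as contribuições de bordo (dependentes da paridade das extremidades da forma canônica) são uniformemente limitadas e absorvidas na constante aditiva~$8$.

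A parte difícil, previsivelmente, é esta última: a contabilidade precisa dos cancelamentos ao longo dos três níveis. Enquanto o Lema anterior (a estimativa de ``quase metade'' $\ell(x_\ell),\ell(x_r)\le\frac12(\ell(x)+1)$) é uma estimativa \emph{por cima} que ignora reduções e por isso é insuficiente aqui, o que se precisa é de um controle \emph{por baixo} da quantidade de sílabas $d$ efetivamente criadas — ou, equivalentemente, do número de colisões de letras pesadas — garantindo que os cancelamentos não possam ser evitados em nenhum padrão local. Superado esse controle combinatório (e feito o acerto das constantes aditivas de bordo), a estimativa $\sum_{i=1}^{8}\ell(x_i)\le \tfrac34\ell(x)+8$ segue somando a contribuição reduzida de todas as sílabas de $x$.
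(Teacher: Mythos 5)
Você monta corretamente a recursão, as regras de produção ($b\mapsto(a,c)$, $c\mapsto(a,d)$, $d\mapsto(1,b)$ e as versões conjugadas) e a observação — correta e útil — de que a contagem ingênua de produção dá apenas $\sum_{i=1}^{8}\ell(x_i)\le \beta(x)+2\gamma(x)+2\delta(x)$, que no pior caso ($\beta$ pequeno) não produz contração alguma. Mas a partir daí há uma lacuna genuína: todo o peso da prova é transferido para uma ``análise finita de casos'' que não é executada nem esboçada, e o esquema proposto para ela não fecha por duas razões concretas. Primeiro, a sua afirmação-chave — cada sílaba origina uma sílaba de tipo $d$ em algum nível, logo uma morte, logo um cancelamento contabilizável — é exatamente a contagem que o exemplo do próprio texto, $abadaca\mapsto(c\cdot 1\cdot d,\,a\cdot b\cdot a)=(b,aba)$, mostra ser ilegítima: quando o $d$ pendente (descendente de um $c$) é absorvido por uma fusão $cd\to b$ antes de ``morrer'', as economias pendentes do $c$ e do $b$ originais são destruídas e substituídas por uma única economia de fusão; sem um argumento que impeça essa dupla contagem, nenhuma análise local garante o fator $\tfrac34$. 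Segundo, a contabilidade de bordo está errada: se a forma canônica é particionada em janelas de comprimento limitado $W$, os erros de fronteira são $O(1)$ \emph{por janela}, logo $O(\ell(x)/W)$ no total — um termo linear em $\ell(x)$ que não pode ser absorvido pela constante aditiva $8$; ele degradaria o fator para $\tfrac34+O(1/W)$, que não é o enunciado (embora, diga-se, qualquer fator $<1$ bastasse para a aplicação no Teorema~\ref{thm:intermediategrowth}).

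A ideia que falta no seu argumento é justamente a que o texto usa para contornar a contagem de fusões: aplica-se três vezes o lema da metade para obter a cota bruta $\sum_{i=1}^{8}\ell(x_i)\le\ell(x)+7$; subtrai-se $\ell_d(x)$, pois cada $d$ de $x$ produz uma única letra no primeiro nível \emph{incondicionalmente}; subtrai-se ainda apenas \emph{uma} dentre $\ell_b(x)$ e $\ell_c(x)$ — precisamente porque as fusões impedem subtrair ambas —; e fecha-se com a média,
\begin{equation*}
\ell_d+\max(\ell_b,\ell_c)\;\ge\;\tfrac12\bigl(\ell_b+\ell_c+\ell_d\bigr)\;\ge\;\tfrac14\bigl(\ell(x)-1\bigr),
\end{equation*}
donde $\sum_{i=1}^{8}\ell(x_i)\le \ell(x)+7-\tfrac14(\ell(x)-1)\le\tfrac34\ell(x)+8$. É esse truque ``ou um, ou outro'' combinado com a média — e não uma estimativa por baixo do número de colisões — que produz a constante $\tfrac34$ com termo aditivo constante; recomendo incorporá-lo no lugar da análise de casos proposta.
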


\begin{proof}
Pelo lema anterior, temos 
\begin{eqnarray*}
    \displaystyle\sum_{i=1}^8 \ell(x_i) &\leq & \sum_{i=1}^4 \ell(\tilde{x}_i) +4\\
    &\leq & \ell(x_{\ell})+1+ {\ell}(x_r) +1 + 4\\
    &\leq & \ell(x)+7.
\end{eqnarray*}    

Denote por $\ell_u(x)$ o número de ocorrências de $u$ na escrita de $x$ em sua forma canônica, onde $u \in \{b,c,d\}$.

Cada ocorrência de $d$ em $x$, seja apenas na forma $``d"$ ou em $``ada"$, se torna $1$ em $x_{\ell}$ ou em $x_r$. Logo, ao aplicarmos $\varphi$ uma vez, $\ell_d(x)$ é subtraída. Em seguida, cada ocorrência de $c$ se torna $1$ ao aplicarmos novamente $\varphi$. Logo, $\ell_c(x)$ é subtraída ao aplicarmos $\varphi^2$ e, analogamente, $\ell_b(x)$ é subtraída ao aplicarmos $\varphi^3$.

No entanto, observe que não podemos subtrair ambos $\ell_b$ e $\ell_c$ simultaneamente, em virtude do fato de que ocorrências de $b$ e $c$ podem ocasionar ocorrências de $b$ na palavra seguinte. Por exemplo, em $abadaca \mapsto (c\cdot 1 \cdot d, a\cdot b\cdot a) = (b,aba)$. Mas sempre podemos subtrair, ao aplicar $\varphi^3$, os valores $\ell_b(x)+ \ell_d(x)$ ou $\ell_c(x)+\ell_d(x)$.

Pelo menos um deles é  $\geq \frac{1}{2}(\ell_b(x)+ \ell_c(x) + \ell_d(x))$ e além disso $\ell_b(x)+ \ell_c(x) + \ell_d(x) \geq \frac{1}{2}(\ell(x)-1)$. Portanto, obtemos:

\begin{eqnarray*}
    \displaystyle\sum_{i=1}^8 \ell(x_i) &\leq & \ell_x+7 - (\ell_{b\ \text{ou}\ c}(x)+ \ell_d(x))\\
    & \leq & \ell(x)+7 - \frac{1}{2}(\ell_{b}(x) + \ell_c(x) + \ell_d(x))\\
    & \leq & \ell(x)+7 - \frac{1}{4}(\ell(x) -1) = \frac{3}{4}\ell(x) + 8.
\end{eqnarray*}
\end{proof}

\begin{thm}
O grupo $\Gamma$ tem crescimento intermediário.
\label{thm:intermediategrowth}
\end{thm}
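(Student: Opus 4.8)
O plano é o seguinte. Pela definição de crescimento intermediário dada no início do capítulo, precisamos estabelecer duas coisas: que $\Gamma$ tem crescimento super-polinomial e que tem crescimento sub-exponencial. A primeira afirmação já foi obtida no corolário que garante que $\Gamma$ não tem crescimento polinomial (consequência de $\Gamma$ ser comensurável com $\Gamma\times\Gamma$, Teorema~\ref{thm:gammacommgammagamma}). Resta, portanto, provar que $\gamma_S = \lim_{n\to\infty}\rho_S(n)^{1/n} = 1$. Como a propriedade ``a constante de crescimento vale $1$'' é invariante por comensurabilidade (via Lema~\ref{lemQI} e o fato de que índice finito preserva o tipo de crescimento), posso trabalhar com o subgrupo $L$ de índice finito $\leq 128$ em $\Gamma$ e, ao longo de todo o argumento, medir comprimentos de palavras sempre na métrica das palavras de $\Gamma$, restrita aos subgrupos envolvidos $L\subseteq K\subseteq H\subseteq\Gamma$, que compartilham a mesma taxa exponencial de crescimento $\gamma$.

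A ferramenta central é a auto-similaridade codificada pelo mergulho iterado $\varphi$. Como $\varphi$ é injetiva, a aplicação de três níveis $x\mapsto (x_1,\ldots,x_8)$, definida em $L$ (pois $\varphi^2(L)=H^4$) e com imagem em $H^8$, também é injetiva. Combinando isso com o Lema~\ref{lemma:sum1to8}, que fornece a contração $\sum_{i=1}^8 \ell(x_i)\leq \tfrac34\ell(x)+8$, concluo que essa aplicação injeta o conjunto $\{x\in L : \ell(x)\leq n\}$ no conjunto das $8$-uplas $(x_1,\ldots,x_8)\in H^8$ cujo comprimento total satisfaz $\sum_i \ell(x_i)\leq \tfrac34 n+8$. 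Denotando por $\gamma$ a constante de crescimento de $\Gamma$, o número de elementos de $L$ em bolas de raio $n$ cresce, em $n$, com essa mesma taxa $\gamma$, já que $L$ tem índice finito em $\Gamma$.

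O passo decisivo é estimar quantas são as $8$-uplas com comprimento total limitado por $m=\tfrac34 n+8$, e é aqui que reside o principal obstáculo. Um majorante ingênuo, limitando cada coordenada separadamente por $\rho(m)$, produziria a taxa inútil $\gamma^8$; é indispensável explorar a estrutura de convolução. Seja $f(t)=\sum_k s_H(k)\,t^k$ a série de crescimento de $H$, onde $s_H(k)$ conta os elementos de $H$ de comprimento $k$ na métrica de $\Gamma$. Como $\rho_H(n)^{1/n}\to\gamma$, tem-se $\limsup_k s_H(k)^{1/k}=\gamma$, e o raio de convergência de $f$ é $1/\gamma$. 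O número de $8$-uplas com comprimento total exatamente $k$ é o coeficiente de $t^k$ em $f(t)^8$; por ter coeficientes não negativos, essa série tem o mesmo raio de convergência $1/\gamma$, de modo que suas somas parciais crescem em $m$ com taxa $\gamma$. Logo, o número de $8$-uplas com comprimento total $\leq \tfrac34 n+8$ cresce em $n$ com taxa $\gamma^{3/4}$.

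Juntando as peças, a injetividade da aplicação de três níveis fornece a desigualdade de taxas $\gamma\leq \gamma^{3/4}$, isto é, $\gamma^{1/4}\leq 1$, donde $\gamma\leq 1$. Como $\Gamma$ é infinito, vale $\gamma\geq 1$ (Proposição~\ref{prop:crescimentogrupos}), e portanto $\gamma=1$: o crescimento é sub-exponencial. Combinando com a não-polinomialidade já estabelecida, segue que $\Gamma$ tem crescimento intermediário. Os pontos que exigem cuidado na redação rigorosa são, além da contagem das $8$-uplas via a série geradora, a consistência no uso da métrica de $\Gamma$ ao transitar entre $\Gamma$, $H$, $K$ e $L$, assegurando que todos esses grupos de índice finito partilhem a taxa $\gamma$, e a passagem limpa das estimativas de cardinalidade para as taxas exponenciais tomando raízes $n$-ésimas e limites.
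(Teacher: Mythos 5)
Sua demonstração está correta e segue essencialmente o mesmo caminho da prova do livro: redução ao subgrupo $L$ de índice finito (com comprimentos sempre medidos na métrica de $\Gamma$), injetividade de $\varphi^3$ combinada com o Lema~\ref{lemma:sum1to8}, e a desigualdade de auto-melhoria $\gamma \leq \gamma^{3/4}$, que força $\gamma = 1$, sendo o caráter super-polinomial herdado do corolário sobre a comensurabilidade $\Gamma \sim \Gamma\times\Gamma$. A única diferença está no passo de contagem das $8$-uplas de comprimento total $\leq \frac{3}{4}n+8$: o livro enumera os octetos de comprimentos (no máximo $(n+1)^8$ possibilidades) e limita cada fator por $A(\omega+\varepsilon)^{n_i}$, enquanto você usa a série de crescimento de $H$ e seu raio de convergência --- duas formas equivalentes do mesmo argumento.
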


\begin{proof}
    Seja $\omega = \displaystyle\lim_{n\to \infty} \rho_S(n)^{{1}/{n}}$. Queremos mostrar que $\omega=1$. Considere o subgrupo $L < \Gamma$ definido acima, escreva $\Gamma/L = \{z_1L, \ldots, z_{128}L\}$ e seja $k>0$ tal que $\ell(z_i)\leq k$, para todo $i=1, \ldots, 128$. 

    Dado $x \in \Gamma$, temos $x=z_iy$, para algum $i\in \{1, \ldots, 128\}$ e algum $y\in L$. Se $\ell(x) = n$, então $\ell(y) = \ell(z_i^{-1}x) \leq n+k$. 

    Denote por $\rho_L^{\Gamma}(n)$ a cardinalidade de $\{x \in L \mid \ell _{\Gamma}(x)\leq n\}$. Note que $$\rho_{\Gamma}(n) \leq \rho_L^{\Gamma}(n+k)[\Gamma:L].$$

    Para  $x \in L$, escreva $\varphi^3(x) = (x_1, \ldots , x_8)$ como na notação do  Lema \ref{lemma:sum1to8}. Sendo $\varphi^3$ um mergulho, segue que $x$ é determinado pelos elementos $x_i$. Sejam $n= \ell(x)$ e $n_i = \ell(x_i) $. Então $n_i \leq n$, e portanto o número de possibilidades  para  $(n_1, \ldots , n_8)$  é de, no máximo,  $(n + 1)^8$. 
    Dado um octeto $(n_1, \ldots , n_8)$, o  número de possibilidades  para  $(x_1, \ldots , x_8)$  é de $\displaystyle \prod_{i=1}^8 \rho_{\Gamma}(n_i)$. Dado $\varepsilon > 0$, temos  $\omega^n \leq \rho_{\Gamma}(n) \leq (\omega + \varepsilon)^n$, sendo a segunda desigualdade válida para $n$ suficientemente grande. Então existe uma constante $A$ tal que $\rho_{\Gamma}(n) \leq A(\omega + \varepsilon)^n$ para todo $n \in \N$. Dado um octeto $(n_1, \ldots , n_8)$ como acima, isso implica que o número de possibilidades para $(x_1, \ldots , x_8)$ é, no máximo, 
    
    $$\displaystyle \prod_{i=1}^8 A(\omega + \varepsilon)^{n_i} = A^8(\omega + \varepsilon)^{\displaystyle \sum_{i=1}^8n_i} \leq  A^8(\omega + \varepsilon)^{\frac{3}{4} n+8} \leq C(\omega + \varepsilon)^{\frac{3}{4}n},$$
para alguma constante $C>0$. Assim, 
$$\omega^n \leq \rho_{\Gamma}(n) \leq [\Gamma : L]\rho_L^{\Gamma}(n + k)\leq [\Gamma : L]C(n + k + 1)^8(\omega + \varepsilon)^{\frac{3}{4}
(n+k)}.$$
Tomando raízes $n$-ésimas e fazendo $n \to \infty$ e $\varepsilon \to 0$,  chegamos a 
$\omega \leq \omega^{{3}/{4}}$. Isso implica que  $\omega = 1$.    
\end{proof}

Uma análise da prova do Teorema \ref{thm:intermediategrowth} permite provar um resultado ainda mais forte.
\begin{thm}
Existem constantes $0<\alpha, \beta <1$ e $A,B>1$ tal que $A^{n^{\alpha}}\leq \rho_{\Gamma}(n)\leq  B^{n^{\beta}}.$
\end{thm}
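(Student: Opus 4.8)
The plan is to establish the two bounds separately, as they use different features of the self-similar structure of $\Gamma$. The upper bound $\rho_\Gamma(n)\le B^{n^\beta}$ is the easier half and follows by refining the recursive estimate that already appeared in the proof of Theorem~\ref{thm:intermediategrowth}. Recall that for $x\in L$ with $\varphi^3(x)=(x_1,\dots,x_8)$ we have the length contraction $\sum_{i=1}^8 \ell(x_i)\le \tfrac34\ell(x)+8$. The idea is to iterate this: instead of comparing $\rho_\Gamma(n)$ with a single application of $\varphi^3$, I would set up a recursion $\rho_\Gamma(n)\le C(n+1)^8 \max \prod_i \rho_\Gamma(n_i)$ over octets with $\sum n_i\le \tfrac34 n + 8$, and unwind it $k$ times until the argument is pushed below a fixed threshold. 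Each level multiplies the number of coordinates by $8$ while the total length scales by $\tfrac34$; after $k$ levels a word of length $n$ produces $8^k$ subwords of total length roughly $(3/4)^k n$. Choosing $k\sim \log_{4/3} n$ balances the polynomial prefactors $(n+1)^{8}$ accumulated at each stage against the shrinking lengths, and a careful bookkeeping of the constants yields an estimate of the form $\log\rho_\Gamma(n)\preceq n^{\beta}$ with $\beta = \log 8/\log(8/3)<1$. The main subtlety here is controlling the product of the polynomial correction terms across all levels of the recursion so that they do not destroy the subexponential gain; this is routine but must be done with care.

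For the lower bound $\rho_\Gamma(n)\ge A^{n^\alpha}$ I would exploit the embedding $\varphi\colon H\hookrightarrow \Gamma\times\Gamma$ and the commensurability of $\Gamma$ with $\Gamma\times\Gamma$ (Theorem~\ref{thm:gammacommgammagamma}) in the reverse direction. The key point is to build, for each $n$, a large family of distinct elements of bounded length by pulling back pairs. Since $\varphi(H)$ contains $B\times B$ and $B$ has finite index in $\Gamma$, given two elements $g_1,g_2\in\Gamma$ of length at most $m$ one can find an element of $H$ whose $\varphi$-image agrees with $(g_1,g_2)$ up to a bounded-index coset, and whose length is controlled by a \emph{linear} factor times $m$ plus a constant — this is the converse direction of the length estimate $\ell(x_\ell),\ell(x_r)\le \tfrac12(\ell(x)+1)$. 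Iterating the construction $k$ times produces $2^k$ coordinates, each an arbitrary element of length $\le m_0$ drawn from a fixed finite set of size $\ge 2$, while the length of the resulting element of $\Gamma$ grows only like $c^k$ for some constant $c>1$ governing the expansion of $\varphi^{-1}$. Thus from words of length $\sim c^k$ one manufactures at least $2^k$ distinct elements, giving $\rho_\Gamma(n)\ge 2^{k}$ with $n\sim c^k$, i.e. $\log\rho_\Gamma(n)\succeq n^{\alpha}$ with $\alpha=\log 2/\log c>0$.

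The hard part will be the lower bound, and specifically making the reverse length estimate for $\varphi^{-1}$ quantitative and uniform. The forward estimate contracts lengths, which is exactly what one wants for the upper bound; but to push the lower bound through I need a guarantee that distinct pairs $(g_1,g_2)$ of short words lift to \emph{distinct} elements of $\Gamma$ whose lengths are bounded above by a fixed multiple of $\max(\ell(g_1),\ell(g_2))$ plus a constant. Injectivity is handed to me by the fact that $\varphi$ is a monomorphism, but the length control requires choosing explicit preimages of the generators (as in the list $b\mapsto(a,c)$, $ada\mapsto(b,1)$, etc.\ from the proof of Theorem~\ref{thm:gammacommgammagamma}) and verifying that concatenating them does not cause uncontrolled cancellation in the canonical form. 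I would isolate this as a lemma stating that there is a constant $c$ such that every $(g_1,g_2)\in\Gamma\times\Gamma$ has a preimage in $\varphi^{-1}$ (within the relevant finite-index subgroup) of length at most $c(\ell(g_1)+\ell(g_2))+c$, and then the iteration and the counting argument are comparatively mechanical. Finally I would tie the two halves together, noting that passage between $\Gamma$ and its finite-index subgroups $H$, $K$, $L$ preserves the asymptotic class $\asymp$ of the growth function by Proposition~\ref{prop:cresc}(2), so the bounds established on the subgroup transfer to $\Gamma$ itself.
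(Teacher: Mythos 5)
Your overall architecture --- iterate the recursion behind the proof of Teorema~\ref{thm:intermediategrowth} for the upper bound, and run the commensurability of Teorema~\ref{thm:gammacommgammagamma} in reverse for the lower bound --- is the standard route, and it is exactly what the paper's one-line remark (``uma análise da prova do Teorema~\ref{thm:intermediategrowth}\ldots'') intends; the paper itself supplies no details. However, both halves, as you have written them, fail at a quantitative step. For the upper bound: with $8$ pieces and total-length contraction by $3/4$ per level (Lema~\ref{lemma:sum1to8}), after $k$ levels one has $8^k$ pieces of total length about $(3/4)^k n + O(8^k)$ (the additive $+8$ compounds to $O(8^k)$, so it cannot be ignored), plus accumulated polynomial prefactors whose logarithm is $O(8^k\log n)$. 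Your stopping rule $k\sim\log_{4/3}n$ makes $8^k\approx n^{\log 8/\log(4/3)}\approx n^{7.2}$, which swamps everything; and your claimed exponent $\beta=\log 8/\log(8/3)\approx 2.12$ is in fact \emph{greater} than $1$ (since $\log(8/3)<\log 8$), so it does not prove the theorem. The correct balance is to stop when $(3/4)^k n\asymp 8^k$, i.e.\ $k\approx\log n/\log(32/3)$; then both the remaining total length and the accumulated prefactors are $O\bigl(n^{\log 8/\log(32/3)}\log n\bigr)$, and since $\log 8/\log(32/3)\approx 0{,}879<1$, any $\beta$ strictly between this value and $1$ works.

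For the lower bound, the count as written proves nothing of the required strength: $2^k$ coordinates, each ranging over a set with at least two elements, produce $2^{2^k}$ distinct elements, not $2^k$. The assertion ``$\rho_\Gamma(n)\geq 2^k$ with $n\sim c^k$'' is only a polynomial bound, from which $\log\rho_\Gamma(n)\succeq n^\alpha$ does not follow; with the corrected count $2^{2^k}$ your formula $\alpha=\log 2/\log c$ is the right one. Two further points about your key lemma. First, the difficulty is not ``cancellation'' --- concatenating preimages can only shorten a word; the real issue is expressing a pair $(g_1,g_2)\in\varphi(H)$ as a word of length $O(\ell(g_1)+\ell(g_2))$ in the six generators of $\varphi(H)$, i.e.\ the undistortion of $\varphi(H)$ in $\Gamma\times\Gamma$. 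This comes for free from tools already in the paper: $\varphi\colon H\to\varphi(H)$ is an isomorphism of finitely generated groups, hence bi-Lipschitz (Exercício~\ref{metricadaspalavras}), and both $H\leq\Gamma$ and $\varphi(H)\leq\Gamma\times\Gamma$ have finite index (the latter because $\varphi(H)\supseteq B\times B$), hence are quasi-isometrically embedded (Corolário~\ref{Mil}); chaining these gives $\ell_\Gamma(x)\asymp\ell_\Gamma(x_\ell)+\ell_\Gamma(x_r)$ for $x\in H$, which is your lemma. Second, iterating the explicit construction requires the assembled preimages to lie again in a subgroup whose pairs you know how to lift (say $B$), which you never verify; it is cleaner to derive once the functional inequality $\rho_\Gamma(Cn+C)\geq\bigl(\rho_\Gamma(n-C')/D\bigr)^2$, valid because $B\times B\subseteq\varphi(H)$ and $[\Gamma:B]<\infty$, and then iterate that inequality; this yields $\rho_\Gamma(n)\geq A^{n^\alpha}$ with $\alpha=\log 2/\log C>0$ directly, and $\alpha<1$ is automatic from the upper bound.
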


Os valores de $A$ e $B$ não importam muito, dado que, para $\alpha$ fixado, todas as funções $A^{n^{\alpha}}$, para $A > 1$, são equivalentes. Por outro lado, se $\alpha \neq \beta$, então $A^{n^{\alpha}}$ e $A^{n^{\beta}}$ não são equivalentes. Portanto, existe um interesse em determinar os valores das constantes $\alpha$ e $\beta$ no teorema acima.
Analisando a prova de Grigorchuk, podemos obter $\alpha= \frac{1}{2}$ e $\beta = \log _{{32}}31 = 0.991...$.
Mais tarde, em \cite{Bartholdi98} e \cite{bartholdi2001lower} L.~Bartholdi melhorou as estimativas para $\alpha=0,5157...$ e $\beta=0.7674...$, onde 
$$\beta = \frac{\log 2}{\log \lambda},\quad \lambda^3 - \lambda^2 - 2\lambda - 4 = 0.$$

Por um longo tempo, foi um problema em aberto a existência de $\alpha$ tal que o crescimento de $\Gamma$ é da ordem de
$2^{n^{\alpha}}$.
Até mesmo a existência do limite
$${\displaystyle \lim _{n\to \infty }\log _{n}\log \rho_{\Gamma}(n)}$$ demorou para ser provada. Este problema foi finalmente resolvido em 2020 por A.~Erschler e T.~Zheng \cite{EZh2020}. Elas mostraram que o limite é igual a $\beta$.

Por outro lado, M.~Kassabov e I.~Pak \cite{KP13} construíram uma família incontável de grupos finitamente gerados de crescimento intermediário, com funções de crescimento oscilando entre limites inferiores e superiores, ambos provenientes de uma ampla classe de funções. Em particular, pode-se ter crescimento oscilando entre $e^{n^{\alpha}}$ e qualquer função prescrita, crescendo tão rapidamente quanto desejado. A construção de Kassabov e Pak é baseada nos grupos de Grigorchuk e uma variação do limite do produto entrelaçado permutacional.

\medskip

\section{Algumas outras propriedades do grupo~\(\Gamma\)}

\begin{thm}
\label{thm:2group}
O grupo de Grigorchuk $\Gamma$ é um $2$-grupo, i.e., para todo elemento $x \in \Gamma $ existe $n\in \N$ tal que $x^{2^{n}}=1.$
\end{thm}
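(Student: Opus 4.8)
The plan is to prove, by induction on the canonical word length $\ell(x)$, that every $x\in\Gamma$ has order a power of $2$. The two engines are the index-two subgroup $H\triangleleft\Gamma$ and the injective homomorphism $\varphi\colon H\to\Gamma\times\Gamma$, $\varphi(x)=(x_{\ell},x_r)$, built in the proof of Theorem~\ref{thm:gammacommgammagamma}. The injectivity is what converts the recursion into an order statement: for $x\in H$ one has $\mathrm{ord}(x)=\mathrm{lcm}\bigl(\mathrm{ord}(x_{\ell}),\mathrm{ord}(x_r)\bigr)$, so $\mathrm{ord}(x)$ is a power of $2$ as soon as both projections have $2$-power order. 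For $x\notin H$ the point of entry is $x^2\in H$ together with the divisibility $\mathrm{ord}(x)\mid 2\,\mathrm{ord}(x^2)$: indeed $x^{2m}=(x^2)^m=1$ whenever $m=\mathrm{ord}(x^2)$, so it suffices to control $x^2$.

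First I would settle the case $x\in H$. Here $\varphi(x)=(x_{\ell},x_r)$, and by the length estimate established just before the subexponential-growth argument one has $\ell(x_{\ell}),\ell(x_r)\le\tfrac12(\ell(x)+1)$, which is strictly smaller than $\ell(x)$ once $\ell(x)\ge 2$. The base cases $\ell(x)\le 1$ are trivial, since then $x\in\{1,b,c,d\}$ and $x^2=1$. Thus the inductive hypothesis applies to both coordinates, and $x$ inherits $2$-power order via injectivity of $\varphi$. This disposes of $H$ cleanly.

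For $x\notin H$ I would pass to $x^2\in H$ and apply the previous paragraph to it. Since order is a conjugacy invariant I would first replace $x$ by a cyclically reduced conjugate: using $a^{-1}(awa)a=w$ one strips matching outer occurrences of $a$, so one may assume $x$ neither begins nor ends with $a$, i.e. $x=au_1au_2\cdots au_m$ with $m$ odd (or $x$ is one of finitely many short words). The hard part is then termination: squaring does \emph{not} shorten words, for in this reduced form $\ell(x^2)=2\ell(x)$, and the projection estimate applied to $x^2$ yields only $\ell\bigl((x^2)_{\ell}\bigr),\ell\bigl((x^2)_r\bigr)\le\ell(x)$, with equality genuinely occurring — e.g. $\varphi\bigl((ab)^2\bigr)=(ca,ac)$, where no length is gained. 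So a naive induction on length stalls in exactly this case, and resolving the equality is the main obstacle.

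The resolution I would use is the finer contraction already recorded in Lemma~\ref{lemma:sum1to8}: the syllables $d\mapsto(1,b)$ and $ada\mapsto(b,1)$ feed only one of the two coordinates, so each application of $\varphi$ strictly contracts the total length whenever such syllables occur, and over the three levels on which the iterated map $\varphi$ is defined one gains the definite factor $\tfrac34$ of Lemma~\ref{lemma:sum1to8}. Concretely, one checks that $x^{2}\in H$, $x^{4}\in K$ and $x^{8}\in L$ for every $x$, so that $\varphi^{3}$ is an injective homomorphism on $x^{8}$ whose eight components $w_1,\dots,w_8$ satisfy $\sum_i\ell(w_i)\le\tfrac34\ell(x^8)+8$; for words beyond a fixed bounded length this forces each $\ell(w_i)<\ell(x^8)$, giving the strict decrease that the single step failed to provide. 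The finitely many exceptional short words are then treated directly: by the cyclic reduction they are conjugate to $a$, $ab$, $ac$, or $ad$, and since $\langle a,d\rangle$ is dihedral of order $8$ by Lemma~\ref{lemma:diedral} (and $\langle a,b\rangle$, $\langle a,c\rangle$ are finite by the same mechanism), each of these has $2$-power order. Feeding this contraction back into the induction on length closes the argument and shows $\Gamma$ is a $2$-group.
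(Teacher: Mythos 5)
Your reduction to the case $x \in H$ and your identification of the true obstacle (the equality case $\ell\left((x^2)_{\ell}\right) = \ell\left((x^2)_r\right) = \ell(x)$ for cyclically reduced $x \notin H$) are both correct, and your treatment of $H$ itself --- strict decrease from $\ell(x_{\ell}), \ell(x_r) \leq \tfrac{1}{2}(\ell(x)+1)$ plus injectivity of $\varphi$ --- is fine, and arguably cleaner than the paper's. The gap is in your resolution of the hard case. Lemma~\ref{lemma:sum1to8} applied to $x^8 \in L$ gives $\sum_{i=1}^{8}\ell(w_i)\leq \tfrac{3}{4}\ell(x^8)+8$, from which you conclude $\ell(w_i) < \ell(x^8)$. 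But your induction is on $\ell(x)$, not on $\ell(x^8)$, and for cyclically reduced $x\notin H$ one has $\ell(x^8)=8\,\ell(x)$, so your bound only yields $\sum_i \ell(w_i)\leq 6\,\ell(x)+8$. This is perfectly consistent with as many as six of the eight components having length exactly $\ell(x)$ (the level-by-level halving estimates give $\ell(w_i)\leq\ell(x)$ individually, and equality does occur: for $x=(ab)^k$ the components at levels one and two are $(ca)^k$, $(ad)^k$, etc., all of length $\ell(x)$). Components of length exactly $\ell(x)$ are not covered by the inductive hypothesis, so the induction does not close: the $\tfrac{3}{4}$-contraction over three levels cannot beat the factor $8$ you paid to enter $L$. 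Restructuring the induction does not help either: if you induct on lengths inside $L$, the components $w_i$ live in $\Gamma$, not in $L$, and re-entering $L$ via $w_i^8$ again multiplies lengths by $8$.

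What is missing is a \emph{per-component} strict decrease below $\ell(x)$, and this is exactly what the paper's proof supplies in the only case where your equality can occur, namely $\ell(x)\equiv 2 \pmod 4$. The paper squares only once and exploits the periodicity of $x^2$: since the period $\ell(x)$ is $\equiv 2 \pmod 4$, every letter of $x$ occurs among the $H$-syllables of $x^2$ once as a bare syllable and once conjugated; in particular a letter $d$ occurs both as $d$ and as $ada$. Since $\varphi(d)=(1,b)$ and $\varphi(ada)=(b,1)$, \emph{each} of the two coordinates of $\varphi(x^2)$ loses at least one letter, giving $\ell(y_{\ell}),\ell(y_r)<\ell(x)$, i.e.\ a decrease measured against $\ell(x)$ itself. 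If $x$ contains $c$ but no $d$, one application of $\varphi$ produces $d$'s (or an outright cancellation that shortens), reducing to the previous case; and if $x$ contains neither, then $x$ is a power of $ab$, whose order divides $16$. This letter cascade $b\Rightarrow c\Rightarrow d\Rightarrow 1$, combined with the periodicity trick, is the idea that your aggregate-length argument cannot replace; without it, your final step is circular precisely at words of length $\ell(x)$.
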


\begin{proof}

Seja $x \in \Gamma$. Provaremos, por indução em $\ell(x)$, que $x^{2^n} = 1$ para algum $n\in \N$. Se $\ell(x) = 1$, então $x^2 = 1$. Assuma que $\ell(x) > 1$. Se uma palavra de comprimento $\ell(x)$ representando $x$ começa com $b$, digamos, então $bxb $ é um conjugado de $x$ com, no máximo, o mesmo comprimento. Analogamente, o mesmo ocorre se $x$ começa com $c$ ou $d$. Como elementos
conjugados tem a mesma ordem, podemos assumir que $x$ começa com $a$. 

Se $\ell(x) = 2$, então $x= ab, ac$, ou $ad$. Pelo Lema \ref{lemma:diedral}, $ad$ tem ordem $4$. Para $ac$ temos $\varphi((ac)^2) = (da, ad)$, e portanto $ac$ tem ordem $8$, e analogamente, $ab$ tem ordem $16$. Seja $\ell(x) \geq 3$. Se $x$  também termina com $a$, então $x$ é da forma $aya$, portanto é conjugada a $y$, a qual é mais curta. Assumimos então que $x$ começa com $a$ e termina com $b, c$, ou $d$, e assim possui comprimento par, digamos $2k$.

Se $k$ também é par, então $x \in  H$,  e escrevendo $\varphi(x) = (x_{\ell}, x_r)$, o comprimento de ambos $x_{\ell}$ e $x_r $ é no máximo $\frac{1}{2}\ell(x)$, e como a ordem de $x$ é o mínimo múltiplo comum das ordens de $x_{\ell}$ e $x_r$, a hipótese de indução se aplica. Resta o caso $\ell(x) = 4r+2$, para algum $r\in \N$. Nesse caso, $x^2 \in H$ e, escrevendo $\varphi(x^2) = (y_{\ell}, y_r)$, cada um dos elementos $y_{\ell}, y_r$ tem comprimento no máximo $\ell(x)$. 

Suponha inicialmente que $x$ contenha a letra $d$. Como $x^2$ (da maneira que é escrito) tem comprimento $2\ell (x)$ e é periódico com período $\ell(x)$, a letra $d$ ocorre ali no mínimo duas vezes, em posições que diferem de $\ell(x) = 4r + 2$. Isto significa que podemos escrever $x^2$ como um produto dos geradores $b, c, d, aba, aca, ada$ de $H$, onde ambos $d$ e $ada$ ocorrem. Então em $y_{\ell}$ o gerador $d$ se torna $1$, e em  $y_r$ o gerador $ada$ se torna $1$, e portanto ambos $y_{\ell}$ e $y_r$ tem comprimento menor do que $x$, e por indução $x^2$ tem ordem igual a uma potência de $2$, implicando que a ordem de $x$ também é dessa forma. 

Suponha agora que $x$ não envolve $d$, mas sim $c$. Então, com a mesma notação, $y_{\ell}$ e $y_r$ ambos envolvem $d$, ou esse $d$ desaparece com cancelamentos, e então a parte relevante de $y$ tem comprimento menor. Portanto ou o caso anterior ou a hipótese de indução se aplica. Finalmente, se nenhum dos termos $d$ ou $c$ ocorre, então $x$ é uma potência de $ab$, e tem ordem dividindo $16$. Assim, o teorema está provado.
\end{proof}

Provamos que $\Gamma$ é um grupo infinito, finitamente gerado e de torção. A existência de grupos com essas propriedades foi um problema em aberto por muitos anos, e ficou conhecido como \textit{problema de Burnside}. \index{problema de Burnside} Tal problema foi resolvido pela primeira vez por E.~S.~Golod \cite{golod1964nil}. 
O grupo de Grigorchuk foi primeiro construído por S.~V.~Aleshin \cite{aleshin1972finite}, como um outro exemplo de grupo infinito tipo Burnside. 

Um problema bem mais difícil é construir um grupo infinito do tipo Burnside com torção uniformemente limitada. Em 1968, P.~S.~Novikov e S.~I.~Adian \cite{adian1979burnside} deram um exemplo de um tal grupo. O grupo de Grigorchuk não é um exemplo para esse problema. De fato, em $\Gamma$ existem elementos de ordem arbitrariamente grande.

Ainda não se sabe se existem grupos de crescimento intermediário do tipo Burnside com torção uniformemente limitada. Observe que a prova do Teorema \ref{thm:2group} fornece uma estimativa para as ordens em termos de $\ell(x)$, mas essa estimativa não é ótima.

Seja $X$ uma classe de grupos. Chamaremos de \textit{$X$-grupo} todo grupo nessa classe. 
\begin{definition}
Um grupo $\Gamma$ é denominado \textit{residualmente $X$} \index{grupo residualmente $X$} se os homeomorfismos de $\Gamma$ para um $X$-grupo separam pontos, isto é: dados dois elementos distintos $x, y \in \Gamma $, existe um homomorfismo $\psi : \Gamma \to H$, onde $H \in X$, tal que $\psi(x)\neq \psi(y)$.
\end{definition}
Observamos que, como $x\neq y$ se, e somente se, $xy^{-1} \neq 1$, é suficiente separar $1$ de qualquer elemento de $\Gamma$.

\begin{lemma}\label{lemr}
Seja $X$ uma classe de grupos. Os homeomorfismos sobrejetivos de $\Gamma$ para um $X$-grupo separam pontos se, e somente se, $\bigcap N=\{1\}$ onde $N$ percorre todos subgrupos normais de $\Gamma$ tais que $\Gamma/N$ é um $X$-grupo.
\end{lemma}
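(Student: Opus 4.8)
O plano é usar a correspondência padrão entre homomorfismos sobrejetivos e seus núcleos, fornecida pelo Teorema~\ref{1th}, que garante que todo homomorfismo $\varphi : \Gamma \to H$ tem núcleo normal em $\Gamma$ e satisfaz $\Gamma/\ker(\varphi) \cong \varphi(\Gamma)$. Como observado imediatamente antes do enunciado, basta separar o elemento neutro $1$ de qualquer $x \in \Gamma$ com $x \neq 1$, já que $x \neq y$ equivale a $xy^{-1}\neq 1$. Assumirei, como é usual para classes de grupos, que $X$ é fechada por isomorfismos, de modo que $\Gamma/N$ pertence a $X$ precisamente quando $N$ é o núcleo de algum homomorfismo sobrejetivo de $\Gamma$ sobre um $X$-grupo.

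Para a implicação direta, suponho que os homomorfismos sobrejetivos de $\Gamma$ sobre $X$-grupos separam pontos. Dado $x \in \Gamma$ com $x \neq 1$, escolho um homomorfismo sobrejetivo $\psi : \Gamma \to H$, com $H \in X$, tal que $\psi(x) \neq 1$. Pondo $N = \ker(\psi)$, o Teorema~\ref{1th} fornece $N \triangleleft \Gamma$ e $\Gamma/N \cong H \in X$; além disso $x \notin N$, pois $\psi(x) \neq 1$. Logo $x$ não pertence à interseção de todos os subgrupos normais $N$ com $\Gamma/N \in X$. Como $x \neq 1$ era arbitrário, concluo que $\bigcap N = \{1\}$.

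Para a recíproca, suponho $\bigcap N = \{1\}$ e tomo novamente $x \in \Gamma$ com $x \neq 1$. Então $x$ não pertence a pelo menos um dos subgrupos $N$ da família, digamos com $\Gamma/N \in X$. A projeção canônica $\pi : \Gamma \to \Gamma/N$ é um homomorfismo sobrejetivo sobre um $X$-grupo, e $\pi(x) = xN \neq N = \pi(1)$ porque $x \notin N$. Assim $\pi$ separa $x$ de $1$, o que, pela observação acima, fornece a separação de pontos desejada.

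Não antevejo obstáculo técnico substancial: a demonstração consiste essencialmente em traduzir a propriedade de separação para a linguagem de núcleos e vice-versa, aplicando o primeiro teorema do isomorfismo. O único ponto que exige atenção é a hipótese implícita de que a classe $X$ é fechada por isomorfismos, necessária para identificar $\Gamma/\ker(\psi)$ com um $X$-grupo na implicação direta; sem ela, o enunciado precisaria ser levemente reformulado.
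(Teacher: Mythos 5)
Sua demonstração está correta e segue essencialmente o mesmo caminho da prova do livro: a implicação direta usa o núcleo de um homomorfismo separador junto com o Teorema~\ref{1th}, e a recíproca usa a projeção canônica $\pi:\Gamma\to\Gamma/N_0$ para algum $N_0$ que não contém $x$. A única diferença é sua observação explícita de que a classe $X$ deve ser fechada por isomorfismos, hipótese que o texto assume implicitamente.
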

\begin{proof}
Suponha que os homeomorfismos sobrejetivos de $\Gamma$ para $X$-grupos separam pontos. Assim, podemos supor que para todo $x \in \Gamma \setminus \{1\}$, existe um homomorfismo sobrejetivo $\varphi: \Gamma \to H$ tal que $x \notin \ker(\varphi)$. Como $\Gamma/\ker(\varphi) \cong H \in X$ temos que $x \notin \bigcap N$, logo $\bigcap N=\{1\}$. Reciprocamente, suponha que $\bigcap N=\{1\}$ onde $N$ percorre todos subgrupos normais de $\Gamma$ tais que $\Gamma/N$ é um $X$-grupo. Seja $x \in \Gamma $, $x\neq 1$ então $x \notin \bigcap N$ e existe $N_0$ tal que $x \not\in N_0$. 
Seja $\pi: \Gamma \to \Gamma/N_0$ a projeção canônica. Então, $\pi$ é um homomorfismo sobrejetivo de $\Gamma$ para $\Gamma/N_0\in X$ e $\pi(x)\neq 1.$ Portanto, $\Gamma$ separa pontos.
\end{proof}

Dizemos que $\Gamma$ é {\it residualmente finito} \index{grupo residualmente finito} se $X$ é a classe de todos os grupos finitos, e que $\Gamma$ é {\it residualmente-$p$} \index{grupo residualmente-$p$}se $X$ é a classe de todos os $p$-grupos finitos.

\begin{thm}
O grupo de Grigorchuk $\Gamma$ é residualmente-$2$.
\end{thm}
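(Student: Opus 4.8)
A estratégia é explorar a ação fiel de $\Gamma$ sobre os intervalos diádicos de $\mathrm{I}$, que é exatamente a ação de $\Gamma$ sobre a árvore binária enraizada, para exibir uma família de quocientes finitos que são $2$-grupos e cuja interseção dos núcleos é trivial. Feito isso, o resultado seguirá do Lema~\ref{lemr}, aplicado com $X$ igual à classe dos $2$-grupos finitos. Primeiro eu introduziria, para cada $n\in\N$, os $2^n$ subintervalos diádicos de nível $n$ de $\mathrm{I}$ (os intervalos da forma $\left(\frac{m}{2^n},\frac{m+1}{2^n}\right)$, intersectados com $\mathrm{I}$) e definiria o \emph{estabilizador do nível $n$}, $\mathrm{St}_\Gamma(n)$, como o subgrupo dos elementos de $\Gamma$ que fixam (como conjunto) cada um desses intervalos. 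Como os geradores $a,b,c,d$ levam intervalos diádicos em intervalos diádicos preservando a relação de continência, a ação de $\Gamma$ sobre o conjunto dos intervalos de nível $n$ fornece um homomorfismo de $\Gamma$ para o grupo $W_n = \mathrm{Aut}(T_n)$ de automorfismos da árvore binária enraizada de profundidade $n$, com núcleo precisamente $\mathrm{St}_\Gamma(n)$. Logo $\mathrm{St}_\Gamma(n)$ é normal em $\Gamma$ e $\Gamma/\mathrm{St}_\Gamma(n)$ mergulha em $W_n$.

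O segundo passo é reconhecer que cada quociente $\Gamma/\mathrm{St}_\Gamma(n)$ é um $2$-grupo finito. De fato, $W_n$ é o produto entrelaçado iterado $\Z_2 \wr \cdots \wr \Z_2$ de $n$ cópias de $\Z_2$, de modo que $|W_n| = 2^{2^n - 1}$; em particular $W_n$ é um $2$-grupo finito. Como todo subgrupo de um $2$-grupo finito é ele próprio um $2$-grupo finito (Lagrange), concluo que $\Gamma/\mathrm{St}_\Gamma(n)$ é um $2$-grupo finito para todo $n$. Vale notar, como verificação de consistência, que $\mathrm{St}_\Gamma(1) = H$, que $\mathrm{St}_\Gamma(2) = K$ e $\mathrm{St}_\Gamma(3) = L$ nos subgrupos construídos na seção anterior, o que está de acordo com as estimativas $[\Gamma:K]\leq 8 = |W_2|$ e $[\Gamma:L]\leq 128 = |W_3|$ já obtidas via o mergulho $\varphi$.

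O ponto central é mostrar que $\bigcap_{n\in\N} \mathrm{St}_\Gamma(n) = \{1\}$. Aqui usaria que $\Gamma$, por definição, é um grupo de transformações de $\mathrm{I}$, de modo que a ação é fiel e um elemento é trivial exatamente quando age como a identidade. Dado $g \in \bigcap_n \mathrm{St}_\Gamma(n)$ e um ponto $x \in \mathrm{I}$, tomo a cadeia encaixada de intervalos diádicos $J_0 \supset J_1 \supset J_2 \supset \cdots$ que contêm $x$, com $J_n$ de nível $n$ e $\bigcap_n J_n = \{x\}$. Como $g$ fixa cada $J_n$ como conjunto, tem-se $g(x) \in g(J_n) = J_n$ para todo $n$, donde $g(x) \in \bigcap_n J_n = \{x\}$, ou seja, $g(x) = x$. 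Sendo $x$ arbitrário, $g$ é a identidade, logo $g = 1$ em $\Gamma$.

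Com a interseção trivial dos $\mathrm{St}_\Gamma(n)$ e sendo cada $\Gamma/\mathrm{St}_\Gamma(n)$ um $2$-grupo finito, o Lema~\ref{lemr} encerra a demonstração, exibindo $\Gamma$ como residualmente-$2$. O obstáculo principal, a meu ver, é o segundo passo: estabelecer rigorosamente a correspondência entre a ação nos intervalos e a ação na árvore, e identificar a imagem dentro do produto entrelaçado iterado $W_n$ de forma a garantir que seja de fato um $2$-grupo. A triviliadade da interseção, embora crucial, segue de maneira direta da fidelidade da ação e do encaixe dos intervalos diádicos; já a parte combinatória da árvore exige o cuidado de verificar que todos os geradores induzem automorfismos que respeitam a estrutura de nível.
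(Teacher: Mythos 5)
Sua demonstração está correta e segue, no essencial, o mesmo esqueleto da prova do livro: os subgrupos $\mathrm{St}_\Gamma(n)$ que você define são exatamente os núcleos $H_n$ da ação de $\Gamma$ no conjunto dos $2^n$ intervalos diádicos de nível $n$ usados no texto, o argumento da trivialidade de $\bigcap_n H_n$ (encaixe dos intervalos diádicos mais fidelidade da ação) é o mesmo, e ambos concluem via o Lema~\ref{lemr}. A diferença genuína está em como se certifica que cada quociente $\Gamma/H_n$ é um $2$-grupo finito. O livro obtém primeiro apenas a finitude (a ação em $S_n$ mergulha $\Gamma/H_n$ no grupo simétrico de $2^n$ elementos) e depois invoca o Teorema~\ref{thm:2group}: como todo elemento de $\Gamma$ tem ordem potência de $2$, o mesmo vale em qualquer quociente, e um grupo finito de torção-$2$ tem ordem $2^m$ (pelo teorema de Cauchy). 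Você, em vez disso, mergulha $\Gamma/\mathrm{St}_\Gamma(n)$ em $W_n = \mathrm{Aut}(T_n) \cong \Z_2 \wr \cdots \wr \Z_2$, de ordem $2^{2^n-1}$, obtendo finitude e a propriedade de $2$-grupo de uma só vez, sem usar o Teorema~\ref{thm:2group}. Sua rota é, portanto, autocontida em relação ao teorema de torção e mais explícita quanto às ordens dos quocientes (sua verificação de consistência $\mathrm{St}_\Gamma(1)=H$, $\mathrm{St}_\Gamma(2)=K$, $\mathrm{St}_\Gamma(3)=L$, com $|W_2|=8$ e $|W_3|=128$, bate com as estimativas de índice do texto); a do livro é mais curta dado que aquele teorema já está provado nessa altura, e exibe de quebra o fato geral de que residualidade finita junto com torção-$2$ já implica residualidade-$2$. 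O único cuidado adicional que sua versão exige — e que você mesmo aponta — é verificar que os geradores preservam a estrutura de níveis da árvore, o que de fato vale, pois $a$, $b$, $c$, $d$ permutam intervalos diádicos de cada nível respeitando continência.
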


\begin{proof}
Considere os $2^n$ subintervalos $\left(\dfrac{k}{2^n},\dfrac{k+1}{2^n}\right) \subset (0,1)$, $k=0,\ldots, 2^{n-1}$. Denote por $S_n$ o conjunto desses intervalos. Podemos ver que $\Gamma$ permuta esses intervalos entre eles, e assim temos uma ação de $\Gamma$ em $S_n.$ Seja $H_n$ o núcleo dessa ação, ou seja, o subgrupo de $\Gamma$ que fixa os intervalos em $S_n$. Por exemplo, $H_0 = \Gamma $, $H_1=H$, etc. Então cada $\Gamma/H_n$ é finito. Todo ponto de $(0,1)$ é a interseção de todos intervalos do tipo acima que o contenham, e portanto $\bigcap H_n=1$. Assim, pelo Lema~\ref{lemr}, $\Gamma$ é residualmente finito. Aplicando o Teorema \ref{thm:2group}, cada fator finito de $\Gamma$ tem ordem $2^m$, o que implica que $\Gamma$ é residualmente-$2$.
\end{proof}

\begin{thm}
O problema da palavra é solúvel em $\Gamma$.
\end{thm}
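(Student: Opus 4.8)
O plano é provar que o problema da palavra é solúvel em $\Gamma$ explorando as duas propriedades principais que já estabelecemos: a estrutura auto-similar de $\Gamma$ capturada pelo mergulho $\varphi : H \to \Gamma \times \Gamma$, e o fato de que o comprimento das coordenadas $x_\ell, x_r$ de um elemento $x \in H$ é estritamente menor que o comprimento de $x$ quando este é suficientemente longo. A ideia central é dar um algoritmo recursivo que decide, dada uma palavra $w$ no alfabeto $\{a,b,c,d\}$, se $w = 1$ em $\Gamma$, usando a contração de comprimento sob $\varphi$ como medida decrescente que garante a terminação do algoritmo.

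Primeiro eu colocaria $w$ na forma canônica, usando as relações $a^2 = b^2 = c^2 = d^2 = 1$, $bc = cb = d$, $cd = dc = b$, $db = bd = c$, de modo que $w$ se torne um produto alternado de entradas $a$ e entradas do trio $\{b,c,d\}$. Este passo é puramente combinatório e termina em tempo finito. Em seguida, contaria o número de ocorrências de $a$: se este número for ímpar, então $w \notin H$, e como $H$ tem índice $2$ em $\Gamma$ com $w \notin H \Rightarrow w \neq 1$ (pois $1 \in H$), concluímos imediatamente que $w \neq 1$. Se o número de ocorrências de $a$ for par, então $w \in H$, e podemos computar explicitamente $\varphi(w) = (w_\ell, w_r)$ reescrevendo $w$ em termos dos geradores $b, c, d, aba, aca, ada$ de $H$ e aplicando a tabela do Lema~\ref{lem:grig1}, que descreve as ações induzidas em $(0,1/2)$ e $(1/2,1)$. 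Obtemos assim duas novas palavras $w_\ell$ e $w_r$ sobre o mesmo alfabeto, e temos $w = 1$ em $\Gamma$ se, e somente se, $w_\ell = 1$ e $w_r = 1$ em $\Gamma$, pois $\varphi$ é injetiva.

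O ponto crucial que garante que essa recursão termina é o controle de comprimento: pelo lema que afirma que $\ell(x_\ell), \ell(x_r) \leq \frac{1}{2}(\ell(x) + 1)$ para $x \in H$, cada chamada recursiva produz palavras de comprimento estritamente menor, desde que $\ell(w)$ seja grande o suficiente. Mais precisamente, se $\ell(w) \geq 2$, então $\frac{1}{2}(\ell(w)+1) < \ell(w)$, de modo que o comprimento decresce em cada passo até atingir palavras de comprimento $0$ ou $1$, cujo problema da palavra é trivial ($w = 1$ tem comprimento $0$, e palavras de comprimento $1$ representam geradores não triviais). Como o par ordenado dos comprimentos das palavras produzidas em cada nível da recursão decresce estritamente numa ordem bem-fundada, o algoritmo termina após finitos passos. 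Como tanto a redução à forma canônica quanto o cálculo de $\varphi(w)$ são operações algorítmicas explícitas, isso fornece uma máquina de Turing que decide se $w$ representa a identidade.

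O principal obstáculo técnico é garantir que o cálculo das coordenadas $w_\ell$ e $w_r$ seja feito de forma algorítmica e correta, ou seja, que a reescrita de $w \in H$ em termos dos seis geradores de $H$ seguida da substituição pela tabela do Lema~\ref{lem:grig1} esteja bem definida e seja efetivamente computável. É necessário verificar cuidadosamente que a estimativa de comprimento se mantém de fato estrita no caso de números pares de ocorrências de $a$ e que o caso residual de comprimento pequeno seja tratado corretamente como base da recursão. Uma vez estabelecido isso, a solubilidade do problema da palavra segue diretamente da boa-fundamentação da medida de complexidade dada pelo comprimento, combinada com a injetividade de $\varphi$.
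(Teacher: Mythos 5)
Sua proposta está correta e segue essencialmente o mesmo caminho da prova do texto: detectar se $w \in H$ contando as ocorrências de $a$, calcular $\varphi(w) = (w_\ell, w_r)$, usar a injetividade de $\varphi$ para reduzir a questão às duas coordenadas, e concluir por recursão graças à contração de comprimento $\ell(w_\ell), \ell(w_r) \leq \frac{1}{2}(\ell(w)+1)$, com palavras de comprimento $\leq 1$ como caso base. Os detalhes adicionais que você explicita (forma canônica e computabilidade efetiva de $\varphi$) apenas tornam mais visível o caráter algorítmico que a prova do texto deixa implícito.
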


\begin{proof}
Seja $x$ qualquer palavra nos geradores $\{a, b, c, d\}$. Contamos o número de ocorrências de $a$ nessa palavra. Se esse número for ímpar, então $x\notin H$, e certamente $x \neq 1$. Se $x \in H$, podemos avaliar $\varphi (x) = (x_\ell, x_r)$. Pelo Lema $10.10$, se $\ell(x)> 1$, então $x_\ell$ e $x_r$ são palavras mais curtas, e por indução nós podemos determinar se elas são a identidade ou não, enquanto se $\ell (x) = 1$, então $x$ é um dos elementos $b, c, d,$ e portanto não é a identidade. Uma vez que $x = 1$ se, e somente se, $x_\ell = x_r = 1$ dado que $\varphi$ é injetiva, o resultado segue.
\end{proof}

Mais algumas propriedades interessantes de $\Gamma$ estão listadas a seguir:
\begin{enumerate}
\item O grupo $\Gamma$ é ``apenas infinito''. Isto significa que $\Gamma/N$ é finito para cada subgrupo normal $N \neq \{1\}$.
\item O grupo $\Gamma$ contém elementos de ordem arbitrariamente grande.
\item O grupo $\Gamma$ não é finitamente apresentado (mas é recursivamente apresentado.)
\end{enumerate}

Também listamos alguns problemas ainda em aberto, os quais são possivelmente muito difíceis:

\noindent
{\bf Problema 1.} \textit{Existe um grupo finitamente apresentado de crescimento intermediário?}

\noindent
{\bf Problema 2.} \textit{Existe um grupo de crescimento intermediário com todos os elementos de ordem limitada por uma mesma constante?}

O terceiro problema não menciona crescimento, mas o incluímos pois discutimos o problema de Burnside.

\noindent
{\bf Problema 3.} \textit{Existe um grupo infinito, finitamente apresentado, de torção?}

\chapter{Grupos amenáveis}
\label{cap11}
\section{Definição e exemplos}

\begin{definition}
Um grupo $G$ é dito  \textit{amenável} \index{grupo amenável} se for possível definir sobre $G$ um medida não trivial, finitamente aditiva e invariante à direita.
\end{definition}

Isto significa que podemos definir uma função $\mu: 2^G \to \R_{\geq 0},$ que associa a cada subconjunto $A$ de $G$ um número não negativo $\mu(A)$, e satisfaz:

\begin{enumerate}[(1)]
\item ($\mu$ é finitamente aditiva) Se $A,B \subset G$ e $A\cap B=\emptyset$, então $\mu(A\cup B)=\mu(A)+\mu(B);$

\item$ (\mu$ é invariante à direita) Se $A \subset G$ e $x\in G$, então $\mu(Ax)=\mu(A);$

\item ($\mu$ é não trivial) $\mu(G)>0.$
\end{enumerate}

Diferentemente de medidas de Lebesgue ou de Haar, a medida $\mu$ acima é definida para todos os subconjuntos de $G$, e ela somente precisa ser  aditiva  para uniões finitas, podendo não ser aditiva para uniões enumeráveis, por exemplo.

Ao multiplicar $\mu$ por qualquer constante positiva, obtém-se outra medida com as mesmas propriedades, portanto podemos sempre assumir que $\mu(G) =
1$.

Esta noção de amenabilidade foi concebida por von Neumann \cite{Neu29} na tentativa de compreender as decomposições da esfera $\mathbb{S}^2$. 

\begin{example}
 Seja $G$ um grupo finito. Então $\mu (A) =\dfrac{| A |}{| G |}$ é uma tal medida, e na verdade é a única possível (basta usar o item $1$). Logo, todo grupo finito é amenável.
\end{example}
 
Por outro lado, se $G$ é amenável e infinito, então todo subconjunto finito de $G$ tem medida nula. De fato, suponha que existe $A \subset G$ finito com $\mu(A)=\varepsilon >0$. Seja $n\in \N$ tal que $n\varepsilon> \mu(G)$ e escolha elementos $g_0 = 1, g_1, \ldots, g_{n-1}\in G$ tais que $A, Ag_1, \ldots, Ag_{n-1}$ são disjuntos (os quais existem, pois $G$ é infinito). Usando os itens $1$ e $2$, obtemos: $$\mu(G)\geq \mu\left(\bigcup_{i=0}^{n-1}Ag_i\right)=n\mu(A)>\mu(G),$$  um absurdo. Logo $\mu(A)=0$ para cada $A$ finito. 

\begin{example}
\label{ex:Zisamenable}
    Considere o grupo cíclico infinito $\Z$. Fixe algum ultrafiltro não principal $\mathcal{F}$ em $\N$ (isso requer uma forma do Axioma da Escolha, veja a Seção \ref{sec:filtrosultrafiltros}). Para cada subconjunto $A$ de $\Z$, defina 
    $$A_n = A \cap [-n, n],\ \mu_n(A) = \dfrac{|A_n|}{2n+1},\text{ e } \mu(A) = \mathcal{F}\lim \mu_n(A).$$ 
    Temos:
    \begin{enumerate}[(1)]
    \item Se $A,B \subset G$ e $A\cap B=\emptyset$, então 
    \begin{eqnarray*}
        \mu(A\cup B) &=& \mathcal{F}\lim \mu_n(A\cup B) \\ 
        & = & \mathcal{F}\lim (\mu_n(A) + \mu_n(B)) = \mu(A)+\mu(B).
    \end{eqnarray*}
    \item Vamos mostrar que $\mu$ é invariante sob a translação $g: z \to z + 1$. Observe que
    $$| |A_n| - |Ag\cap [-n, n] | \leq 2.$$
    Então 
    $$|\mu(A) - \mu(Ag)| \leq \mathcal{F}\lim \dfrac{2}{2n+1} = 0,$$
    o que implica que $\mu$ é $\Z$-invariante.
    \item Claramente, $\mu$ é não negativa e $\mu(\Z) = 1$.
    \end{enumerate}
    Concluímos que o grupo $\Z$ é amenável.
\end{example}

\begin{remark}
Este resultado foi provado pela primeira vez por Banach \cite{Ban23} usando o teorema de Hahn--Banach.
\end{remark}

\begin{example}[von Neumann \cite{Neu29}]\label{ex F_2 nao amenavel}
O grupo livre de posto dois $F_2$ não é amenável.  De fato, suponha que exista alguma medida $\mu$ (não trivial, finitamente aditiva e invariante à direita) em $G$. Sejam $x$ e $y$ geradores de $G$. Então 
$$\mu(G \setminus \{1\}) = 1,\ G \setminus \{1\} = A\cup B \cup C \cup D,$$ 
onde $A$, $B$, $C$, $D$ são os conjuntos das palavras reduzidas terminadas em $x, x^{-1}, y, y^{-1},$ respectivamente. Então 
$$A \setminus \{x\} = Ax \cup Cx \cup Dx,$$ 
uma união disjunta. Como $\mu (A\setminus \{x\}) = \mu (A) = \mu (Ax)$, segue-se que $\mu (C) = \mu (D) = 0$. Um argumento semelhante mostra que $\mu (A) = \mu (B) = 0$, e assim $\mu(G) = 0$, uma contradição. Assim $G = F_2$ não é amenável.
\end{example}

Em geral, temos a seguinte definição:
\begin{definition}
Um grupo \(G\) é chamado \textit{paradoxal} se existirem subconjuntos disjuntos \(A_1, ..., A_n\) e \(B_1, ..., B_m\) em \(G\), juntamente com elementos do grupo \(g_i, h_j\), tais que \(G = \bigcup_{i=1}^n g_iA_i = \bigcup_{j=1}^m h_jB_j\).
\end{definition}

O Exemplo~\ref{ex F_2 nao amenavel} mostra que $F_2$ é paradoxal. De fato, se um grupo $\Gamma$ contém um subgrupo isomorfo a $F_2$, então $\Gamma$ também é paradoxal e, em particular, não é amenável.
Com efeito, uma decomposição paradoxal de $\Gamma$ pode ser obtida a partir de uma decomposição paradoxal de $F_2$. Para isso, consideramos a decomposição de $\Gamma$ em classes laterais à esquerda em relação a $F_2$. Escolhendo um conjunto de representantes para essas classes (o que, em geral, requer o axioma da escolha), podemos transportar a decomposição paradoxal de $F_2$ para todo $\Gamma$ por meio de multiplicação à esquerda pelos representantes das classes laterais.
\medskip

A noção de amenabilidade foi originalmente desenvolvida para compreender as \textit{decomposições paradoxais}\index{decomposição paradoxal} da esfera $\mathbb{S}^2$. De fato, existem $8$ subconjuntos $A_1$, \ldots , $A_4$ e $B_1$, \ldots, $B_4$ da esfera $\mathbb{S}^2$ e elementos $g_i$, $h_i$ do grupo ortogonal $\mathrm{SO}(3)$ tais que:
$$\mathbb{S}^2 = A_1\cup \ldots \cup A_4 \cup B_1\cup \ldots \cup B_4;$$
$$\mathbb{S}^2 = g_1(A_1) \cup \ldots \cup g_4(A_4),\ \mathbb{S}^2 = h_1(B_1) \cup \ldots \cup h_4(B_4),$$
onde todas as uniões são disjuntas.

Este resultado é válido em grande generalidade e é frequentemente popularizado como prova de que uma maçã pode ser cortada em pedaços finitos que podem ser reunidos para obter duas cópias idênticas da maçã inicial. Foi considerada uma das conquistas mais espetaculares da matemática pura na primeira metade do século XX. Por exemplo, Feynman escreve em sua autobiografia que este resultado foi apresentado como um argumento para estudar matemática em vez de física.

Para a esfera de dimensão $1$, que é o círculo, Banach mostrou que não há decomposições paradoxais \cite{Ban23}. 


\medskip

Uma pergunta natural é se a recíproca implicação e verdadeira: 
$$\text{Um grupo } G \text{ não é amenável} \implies F_2 \subseteq G?$$ 
Esta questão, também conhecida como Conjectura de von Neumann,\index{conjectura de von Neumann} foi por muito tempo um problema em aberto. Um teorema importante sobre essa conjectura é a Alternativa de Tits, que estabelece a conjectura nos casos em que se aplica. Em geral, foi provado na década de 1980 que a conjectura não é verdadeira. Em \cite{Ol80}, A. Yu.~Ol'shanskii mostrou que o grupo chamado Monstro de Tarski, no qual vê-se facilmente que não há um subgrupo livre de posto dois, é não-amenável.  Um outro contra-exemplo é o seguinte grupo de Burnside\index{grupos de Burnside}:
$$B(2,4381) = \langle a,b \mid \omega^{4381}(a,b)\rangle,$$
onde $\omega^{n}(a,b)$ denota qualquer palavra $\omega(a,b)$ elevada à potência $n$. Em 1968, P. S.~Novikov e S. I.~Adian provaram que este grupo é infinito. Mais tarde, voltando à prova do teorema, Adian melhorou o limite do exponente ímpar para $n \geq 665$ e mostrou que esses grupos não são amenáveis \cite{Ad82} 
(\textit{nota:} Sabemos que os grupos $B(2, n)$ para $n = 2$, $3$, $4$ e $6$ são finitos. Para $n=5$ e de  $n=7$ a $100$ é desconhecido, e para maior ímpar $n$ os grupos são infinitos, com a última melhoria devido a Adian \cite{adian2015new}).
Nenhum desses contraexemplos é finitamente apresentado, e por alguns anos foi considerado possível que a conjectura fosse válida para grupos finitamente apresentados. Entretanto, em \cite{OlS03}, A. Yu.~Ol'shanskii e M. V.~Sapir exibiram uma coleção de grupos finitamente apresentados que não satisfazem a conjectura.

\medskip

Num livro recente de Cohen e Gelander \cite{CG24},  leitores interessados podem encontrar mais detalhes, exemplos e outras propriedades de amenabilidade e tópicos relacionados, com ênfase na abordagem analítica.

\section{Medida e integração}

Em qualquer espaço com uma medida, podemos desenvolver a noção de integral definida por essa medida. Isto é particularmente simples no caso de um grupo amenável $G$, porque todos os subconjuntos de $G$ são mensuráveis e, portanto, todas as funções são mensuráveis. 

Seja $f : G \to \R$ uma função limitada, tal que $a \leq f(x) \leq b$ para todo $x \in G$. Tome $\Delta = \{a_0=a<a_1<\ldots < a_n=b\}$ uma partição do intervalo $[a,b]$. Denotando por $A_i$ o conjunto $\{x\in G \mid a_{i-1}\leq f(x)< a_i\}$, para cada $i=1,\ldots,n$, note que os conjuntos $A_i$ formam uma partição de $G$. Logo, 
$$\displaystyle \sum_{i=1}^{n}\mu(A_i)=\mu(G)=1.$$ 

Escreva $S_{\Delta} = \displaystyle\sum_{i=1}^n\mu(A_i)a_i$. Então $S_{\Delta}\geq  a\displaystyle\sum_{i=1}^n\mu(A_i)=a$. 
De modo análogo, definimos $s_{\Delta}=\displaystyle\sum_{i=1}^n \mu(A_i)a_{i-1}$ e então $s_{\Delta}\leq b$.

Se $\mathcal{Q}$ é um refinamento de $\Delta$, então $S_{\mathcal{Q}} \leq S_{\Delta}$ e $s_{\mathcal{Q}} \geq s_{\Delta}$. Quaisquer duas partições têm um refinamento comum, portanto, para quaisquer duas partições $\mathcal{Q}$ e $\Delta$, temos $s_{\mathcal{Q}} \leq S_{\Delta}$. Além disso, $0 \leq S_{\Delta} - s_{\Delta} \leq \max (a_i - a_{i-1})$. Segue que o ínfimo dos números $S_{\Delta}$, tomado sobre todas as partições, é igual ao supremo dos números $s_{\Delta}$. Este valor comum é definido como sendo a \textit{integral} de $f$ relativa a $\mu$, denotado como de costume por $\int f d\mu$.

A integral definida acima tem as seguintes propriedades básicas:
\begin{itemize}
\item Aditividade: $\int(f+g)d\mu = \int fd\mu+\int gd\mu$;
\item Invariância à direita: Dada $f$ limitada e $y \in G$ defina $f_y(x)=f(xy),$ então $\int f_y d\mu = \int f d\mu$.
\end{itemize}

\begin{proposition}
Se $G$ é um grupo amenável, então existe uma medida em $G$ que é invariante à esquerda e à direita.
\end{proposition}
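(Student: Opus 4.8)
The plan is to start from a right-invariant, finitely additive probability measure $\mu$ on $G$, whose existence is guaranteed by amenability, and to manufacture from it a measure $\nu$ that is simultaneously left- and right-invariant. The natural idea is to symmetrize $\mu$ by precomposing with inversion. First I would define, for each subset $A \subset G$, the measure $\mu'(A) := \mu(A^{-1})$, where $A^{-1} = \{x^{-1} \mid x \in A\}$. Since inversion is a bijection of $G$ that sends disjoint sets to disjoint sets and sends $G$ to $G$, the map $\mu'$ is again non-negative, finitely additive, and satisfies $\mu'(G) = \mu(G) > 0$. The key computation is to check its invariance behavior: for $x \in G$ we have $(xA)^{-1} = A^{-1}x^{-1}$, so
$$\mu'(xA) = \mu((xA)^{-1}) = \mu(A^{-1}x^{-1}) = \mu(A^{-1}) = \mu'(A),$$
using the right-invariance of $\mu$ in the penultimate step. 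Thus $\mu'$ is a left-invariant, finitely additive measure on $G$.

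Having both a right-invariant measure $\mu$ and a left-invariant measure $\mu'$, the second step is to combine them into a single bi-invariant measure by a convolution-type averaging. The plan is to define, for a fixed subset $A$, a bounded function on $G$ by $f_A(y) := \mu'(Ay^{-1})$ (or equivalently $\mu'(Ay)$, chosen so the dependence on the integration variable is through a left translate of a right translate), and then set $\nu(A) := \int f_A \, d\mu$, using the integration theory developed in the preceding section for the right-invariant measure $\mu$. One then verifies that $\nu$ is non-negative and finitely additive, inheriting these from the additivity of the integral and from the finite additivity of $\mu'$ in the first argument. Non-triviality follows because the constant function $f_G \equiv \mu'(G)$ integrates to $\mu'(G)\,\mu(G) > 0$.

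The decisive steps are the two invariance verifications for $\nu$. For right-invariance one exploits the right-invariance of the outer measure $\mu$ via the invariance-under-right-translation property of the integral stated just above (namely $\int f_z \, d\mu = \int f\,d\mu$ where $f_z(y) = f(yz)$): translating $A$ on the appropriate side shifts the integration variable, and the right-invariance of $\int \cdot\, d\mu$ absorbs the shift. For left-invariance one instead uses the left-invariance of $\mu'$ in the inner argument $Ay^{-1}$: replacing $A$ by $xA$ produces $\mu'(xAy^{-1}) = \mu'(Ay^{-1})$ pointwise in $y$, so the integrand is unchanged and hence so is $\nu$. The main obstacle I anticipate is bookkeeping the order of translations correctly, so that one invariance is delivered by $\mu'$ acting in the inner slot and the other by $\mu$ acting on the outer integration variable, without the two interfering; this requires setting up $f_A$ with exactly the right combination of left and right translates. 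Once the convolution is arranged so that these two mechanisms act on disjoint ``slots,'' both invariances hold by inspection, and the proof concludes by recording that $\nu$ is the desired finitely additive, non-trivial, bi-invariant measure on $G$.
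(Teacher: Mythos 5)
Your proof is correct, and it runs on the same engine as the paper's: a convolution-type average computed with the integral from the preceding section, in which one invariance is delivered by the measure placed in the inner slot and the other by the translation invariance of the integral. The difference is in the arrangement. The paper never forms $A^{-1}$: it works with the single right-invariant measure $\mu$ and sets $\nu(A)=\int \mu(xA)\,d\mu$, averaging \emph{left} translates of $A$. Then right-invariance of $\nu$ is immediate from the inner slot, since $\mu(xAg)=\mu(xA)$, while left-invariance follows because $x\mapsto \mu(xgA)$ is the right translate $f_A(xg)$ of $f_A(x)=\mu(xA)$, and $\int (f_A)_g\,d\mu=\int f_A\,d\mu$. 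Your route pays an extra (harmless) step — manufacturing the left-invariant measure $\mu'(A)=\mu(A^{-1})$ — in exchange for a layout where each invariance is visibly carried by its own measure: left-invariance by $\mu'$ inside, right-invariance by $\mu$ outside. Both arguments are valid; the paper's is more economical, yours makes the left/right symmetry explicit (indeed your $\nu$ is the paper's measure evaluated at $A^{-1}$). One caution: your parenthetical claim that $f_A(y)=\mu'(Ay)$ would work ``equivalently'' is not right. With that choice, replacing $A$ by $Az$ gives $\mu'(Azy)=f_A(zy)$, a \emph{left} translate of the integration variable, and the integral built from the right-invariant $\mu$ is only invariant under right translates $f_y(x)=f(xy)$; it is precisely your primary choice $f_A(y)=\mu'(Ay^{-1})$, for which $f_{Az}=(f_A)_{z^{-1}}$, that makes the bookkeeping close.
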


\begin{proof}
Seja $\mu$ uma medida sobre $G$, e defina uma nova medida $\nu(A)=\int \mu(xA)d\mu$, a média da medida $\mu$ sobre o grupo. Como $\mu$ é invariante à esquerda segue que $\nu$ é invariante à esquerda. Pela propriedade de invariância da integral, $\nu$ é invariante à direita. 
\end{proof}

\begin{definition}\index{grupo localmente $\mathcal{G}$}
Seja $\mathcal{G}$ uma propriedade de grupos. Um grupo $G$ é dito \textit{localmente $\mathcal{G}$} se cada subgrupo finitamente gerado de $G$ tem a propriedade $\mathcal{G}$. 
\end{definition}

\begin{thm}
\label{thm:propamenaveis}
\begin{enumerate}
\item[(i)] Grupos finitos ou  abelianos são amenáveis;
\item[(ii)] Subgrupos ou quocientes de grupos amenáveis são amenáveis;
\item[(iii)] Uma extensão de grupos amenáveis é amenável. Lembramos que um grupo $G$ é dito uma extensão dos grupos $R$ e $N$ se existir uma sequência exata $1\to N\to G\to R\to 1$;
\item[(iv)] Um grupo localmente amenável é amenável.

\end{enumerate}
\end{thm}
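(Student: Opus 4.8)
The plan is to prove each of the four items in Theorem~\ref{thm:propamenaveis} in turn, since they build on one another. For item (i), the case of finite groups is already handled by the normalized counting measure, and the case of abelian groups is the substantive part. The strategy for abelian groups is to invoke item (iv) together with the Følner condition or, more directly, to reduce to finitely generated abelian groups. A finitely generated abelian group is a quotient of $\Z^n$, and $\Z^n$ is amenable by iterating the construction of Example~\ref{ex:Zisamenable} (or by applying item (iii) to the sequence $1 \to \Z^{n-1} \to \Z^n \to \Z \to 1$). Since quotients of amenable groups are amenable by item (ii), every finitely generated abelian group is amenable, and then item (iv) upgrades this to all abelian groups.

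For item (ii), I would argue the two cases separately. If $H \leq G$ and $\mu$ is an invariant mean on $G$, the idea is to transport $\mu$ to $H$ using a transversal: choose a set $T$ of representatives for the right cosets $Hg$, so that $G = \bigsqcup_{t \in T} Ht$, and for $A \subset H$ define $\nu(A) = \mu\left(\bigcup_{t \in T} At\right) = \mu(AT)$. One checks finite additivity and right-invariance under $H$ directly, using that $AT$ is a disjoint union over $T$ and that right multiplication by $h \in H$ permutes the pieces within each coset. For quotients $G/N$, the map is easier: given the projection $\pi: G \to G/N$, define $\bar\mu(B) = \mu(\pi^{-1}(B))$ for $B \subset G/N$; finite additivity follows from additivity of $\mu$ on the disjoint preimages, and right-invariance follows because $\pi^{-1}(B\bar g) = \pi^{-1}(B)g$ for any lift $g$ of $\bar g$.

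Item (iii) is where I expect the main obstacle. Given a short exact sequence $1 \to N \to G \xrightarrow{\pi} R \to 1$ with $N$ and $R$ amenable, carrying means $\mu_N$ on $N$ and $\mu_R$ on $R$, the plan is to build a mean on $G$ by a two-stage averaging (a ``Fubini'' construction). For $A \subset G$ and each $g \in G$, consider the slice of $A$ inside the coset $gN$; using $\mu_N$ and the identification of $gN$ with $N$, one obtains a function $f_A: G \to \R$ that is constant on cosets of $N$, hence descends to a bounded function $\bar f_A$ on $R$. Then set $\mu(A) = \int_R \bar f_A \, d\mu_R$, using the integration theory developed in the previous section. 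The delicate point is checking that $f_A$ is genuinely $N$-invariant so that it descends, and that the resulting $\mu$ is right-invariant under all of $G$ and not merely under $N$ and a transversal --- this requires carefully combining the right-invariance of $\mu_N$ on the fibers with the right-invariance of $\mu_R$ on the base, and handling the conjugation action of $G$ on $N$ that appears when one translates slices. This is the step demanding the most care.

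Finally, item (iv) follows from a compactness argument. Let $G$ be locally amenable and consider the space $M = [0,1]^{2^G}$ of all functions $2^G \to [0,1]$, which is compact by Tychonoff. For each finitely generated subgroup $H \leq G$, amenability of $H$ gives an invariant mean, which I would extend to a finitely additive, right-$H$-invariant function $\mu_H$ on all of $2^G$ with $\mu_H(G)=1$ (extending via a transversal as in item (ii)). The finitely generated subgroups form a directed set under inclusion, and the sets $K_H = \{\nu \in M \mid \nu \text{ is right-}H\text{-invariant, finitely additive on } H\text{-disjoint pairs}, \nu(G)=1\}$ are closed and have the finite intersection property, since any finite family $H_1,\dots,H_k$ is contained in the finitely generated subgroup $\langle H_1,\dots,H_k\rangle$, whose mean lies in all $K_{H_i}$. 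By compactness $\bigcap_H K_H \neq \emptyset$, and any element of this intersection is a mean invariant under every element of $G$, hence a genuine invariant mean on $G$. The only subtlety here is ensuring the defining conditions of $K_H$ are closed in the product topology, which holds because each condition involves only finitely many coordinates at a time.
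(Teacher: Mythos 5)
Your overall architecture matches the paper's proof item by item, but there is a genuine error in your treatment of subgroups in item (ii). Starting from a \emph{right}-invariant measure $\mu$ on $G$, you choose a right transversal $T$ (so $G=\bigsqcup_{t\in T}Ht$) and put it on the right: $\nu(A)=\mu(AT)$. Then $\nu(Ah)=\mu(AhT)$, and right-invariance of $\mu$ gives you nothing, because $AhT$ is not a right translate of $AT$. Your justification that ``right multiplication by $h\in H$ permutes the pieces within each coset'' is false: the piece of $AhT$ lying in the coset $Ht$ is $Aht=(At)\,(t^{-1}ht)$, i.e.\ a right translate of $At$ by an element that \emph{depends on} $t$. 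Right-invariance lets you compare $\mu(Aht)$ with $\mu(At)$ for each single $t$, but $\mu$ is only finitely additive and the index set $T$ is in general infinite, so these coset-by-coset identities cannot be summed. The correct arrangement, which is the one in the paper, puts the transversal on the side opposite to the invariance: take a \emph{left} transversal $R$, so $G=\bigsqcup_{\alpha}x_\alpha H$, and set $\mu_H(A)=\mu(RA)$ with $RA=\bigcup_\alpha x_\alpha A$. Then disjointness is preserved ($RA\cap RB=R(A\cap B)$, since distinct pieces live in distinct left cosets), and right-invariance is immediate because the group element now sits on the outside: $\mu_H(Ah)=\mu\bigl((RA)h\bigr)=\mu(RA)=\mu_H(A)$.

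A related, secondary slip occurs in your item (iv): you propose to extend a mean from a finitely generated subgroup $H$ to all of $G$ ``via a transversal as in item (ii)'', but that construction goes in the opposite direction (from a mean on $G$ down to one on $H$), and a transversal-based extension runs into the same infinite-summation problem as above. The extension the argument actually needs is simpler and is what the paper uses: define $\mu(A):=\mu_H(A\cap H)$ for $A\subset G$; this is finitely additive, satisfies $\mu(G)=1$, and is right-$H$-invariant because $Ah\cap H=(A\cap H)h$ for $h\in H$. With these two corrections your proposal coincides with the paper's proof: item (i) by reduction to finitely generated abelian groups and then (iv); quotients by pushforward along the projection; item (iii) by the slice-then-integrate construction $f_A(Nx)=\nu(N\cap Ax)$, $\mu(A)=\int f_A\,d\sigma$ (whose delicate points you correctly identify); and item (iv) by Tychonoff compactness of $[0,1]^{2^G}$ together with the finite intersection property of the closed sets of $H$-invariant normalized finitely additive measures.
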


\begin{proof}
\begin{enumerate}
\item[(i)] Já vimos que grupos finitos são amenáveis. Vimos também que $\Z$ é amenável. Agora, pelo item $(iii)$ segue que grupos abelianos finitamente gerados são amenáveis, devido ao teorema de classificação dos grupos abelianos finitamente gerados. Usando o item $(iv)$, concluímos que todo grupo abeliano é amenável.
\item[(ii)] Sejam $G$ um grupo amenável, $H\leq G$ e $N \triangleleft  G.$ Um subconjunto $A$ em $G/N$ é uma coleção de classes laterais $x_{\alpha}N$ de $N$. Defina então a seguinte medida em $G/N$: $\mu_{G/N}(A)=\mu_G(\bigcup x_{\alpha}N)$. Por outro lado, considere $R:=\{x_{\alpha}\in G \mid G=\bigcup x_{\alpha}H\}$, o conjunto de representantes de classes laterais à esquerda de $H$ em $G$. Dado $A \subset H$, defina $\mu_H(A)=\mu_G(RA)$. Deixamos como exercício a verificação de que as medidas $\mu_{G/N}$ e $\mu_{H}$ cumprem as propriedades de amenabilidade.

\item[(iii)] Sejam $\nu$ uma medida sobre $N$ e $\sigma$ uma medida sobre $G/N$, ambas invariantes à esquerda. Para uma classe lateral $Nx \in G/N$ defina $f_A(Nx)=\nu(N\cap Ax)$. Isso não depende da escolha do representante $x$, pois se $Nw=Nx$, então $wx^{-1} \in N$. Logo $w=xy$, com $y \in N$, e assim obtemos $f_A(Nw) = \nu(N\cap Axy) = \nu((N\cap Ax)y)=\nu (N\cap Ax) =f_A(Nx)$. Portanto, $f_A$ é uma função bem-definida em $G/N.$ Para $A \subset G,$ defina $\mu(A)=\int f_A d\sigma$. Assim, $\mu$ é aditiva. Note que, como $N$ é normal, temos $zN=Nz$. Logo, $f_{Az}(Nx) = f_A(Nzx)=f_A(zNx)$ e portanto, usando que $\int fd\sigma $ é invariante à direita, concluímos que  $\mu(Az) = \int f_{Az}d\sigma = \int f_A (zNx)d\sigma = \int f_A d\sigma=\mu(A).$ Note que $f_G$ é constante igual a $1$ e portanto $\mu(G)=1$, o que permite concluirmos que $G$ é amenável.
\item[(iv)] Considere o conjunto das funções $f : 2^G\to [0,1]$.
Esse conjunto pode ser visto como o produto cartesiano $[0, 1]^{2^G}$, o qual é compacto com a 
topologia produto. O conjunto de todas as medidas finitamente aditivas e invariantes $\mu$, satisfazendo $\mu(G) = 1$, é definido por várias igualdades entre os valores de $\mu$, e portanto é um conjunto fechado (possivelmente vazio) em $[0, 1]^{2^G}$. Sejam $H$ um subgrupo finitamente gerado de $G$ e $\mu$ uma medida em $H$. Estenda essa medida a $G$ pondo $\mu(A) = \mu(A \cap H)$. A  medida resultante em $G$ é finitamente aditiva e vale   $\mu(G) = 1$, mas ela somente é invariante com respeito à multiplicação por elementos de $H$. O conjunto $\mathcal{M}_H$ de todas as medidas em $G$ que são finitamente aditivas e $H$-invariantes é também fechado, e acabamos de ver que ele é não vazio. Se $H_1, \ldots , H_r$ são subgrupos de $G$ finitamente gerados, e pondo $K = \langle H_1,\ldots , H_r\rangle,$ os conjuntos $\mathcal{M}_{H_i}$
se intersectam em $\mathcal{M}_K$, e portanto essa interseção é não vazia. Como os conjuntos $\mathcal{M}_H$ são fechados num espaço compacto, sua interseção é não vazia, e toda função nessa interseção é uma medida invariante
em $G$.\end{enumerate}
\end{proof}

\begin{exercise}
Mostre que o grupo acendedor de lâmpadas (ver Seção~\ref{sec:sol-nilp-pol}) é amenável. \index{grupo acendedor de lâmpadas}
\end{exercise}

\begin{exercise}
Qualquer grupo $G$ contém um subgrupo normal amenável que contém todos os subgrupos normais amenáveis de $G$ (este subgrupo é chamado \textit{radical amenável} \index{radical amenável de um grupo} de $G$).
\end{exercise}

\begin{definition}
    A classe minimal $\mathcal{E}$ dos grupos satisfazendo os itens (i)--(iv) do Teorema~\ref{thm:propamenaveis} é denominada
 classe dos \textit{grupos amenáveis elementares}. \index{grupos amenáveis elementares}
\end{definition}

Assim, $\mathcal{E}$ é a menor classe de grupos que contém todos os grupos finitos e todos os grupos abelianos, é fechada com respeito à formação de subgrupos, quocientes e extensões. Além disso, ela contém $G$ sempre que contiver todos os  subgrupos finitamente gerados de $G$. Pelo Teorema \ref{thm:propamenaveis}, todo grupo na classe $\mathcal{E}$ é amenável.
Por muito tempo, foi uma questão em aberto a existência ou não de grupos amenáveis que não estejam na classe $\mathcal{E}$. Veremos que grupos de crescimento intermediário são exemplos neste sentido.

\section{Condição de Følner e crescimento}

\begin{definition}
Seja $G$ um grupo gerado por um conjunto $S$, e considere $A \subset G$.
A \textit{fronteira} $\partial A$ de $A$ é o conjunto dos elementos de $G$ cuja distância até $A$ é igual a $1$, isto é, dos elementos $x\in G$ tais que $x \notin A$, mas existe um elemento $y \in A$ e um  gerador $s \in S$ tal que $x = ys$ ou $x = ys^{-1}$.
\end{definition}

\begin{definition}
Dizemos que um grupo finitamente gerado $G$ satisfaz a  \textit{condição de Følner} \index{condição de Følner}
se $\inf \dfrac{|\partial X|}{|X|} = 0$, sendo o ínfimo tomado sobre todos os subconjuntos finitos $X \subset G$. Equivalentemente, podemos pedir que exista uma  \textit{sequência de Følner}\index{sequência de Følner} em $G$, isto é, uma sequência $X_n$ de subconjuntos finitos de $G$ tais que $\displaystyle \lim_{n\to \infty} \dfrac{|\partial X_n|}{|X_n|} = 0$.
\end{definition}

\begin{thm}
   Um grupo satisfaz a condição de Følner se, e somente se, é amenável.
\end{thm}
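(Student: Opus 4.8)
O plano é demonstrar as duas implicações separadamente, tratando primeiro ``Følner $\Rightarrow$ amenável'', que é a mais direta, e deixando o trabalho analítico mais pesado para a recíproca.

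Primeiro eu mostraria que a condição de Følner implica amenabilidade, imitando a construção feita para $\Z$ no Exemplo~\ref{ex:Zisamenable}. Fixemos um ultrafiltro não principal $\Fc$ em $\N$ e uma sequência de Følner $(X_n)$ com $|\partial X_n|/|X_n| \to 0$. Para cada $A \subset G$ defina $\mu_n(A) = |A \cap X_n|/|X_n|$ e ponha $\mu(A) = \Fc\lim \mu_n(A)$, que existe e pertence a $[0,1]$ por ser a sequência limitada. A aditividade finita é imediata a partir da aditividade dos $\Fc$-limites, e $\mu(G) = \Fc\lim 1 = 1$ garante a não trivialidade. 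O único ponto que requer a hipótese de Følner é a invariância à direita: para um gerador $s$ temos $\mu_n(As) = |A \cap X_n s^{-1}|/|X_n|$, e como $x \in X_n s^{-1}\setminus X_n$ força $x \in \partial X_n$, enquanto $x \in X_n \setminus X_n s^{-1}$ força $xs \in \partial X_n$, obtemos $|X_n s^{-1} \triangle X_n| \leq 2|\partial X_n|$. Logo $|\mu_n(As) - \mu_n(A)| \leq 2|\partial X_n|/|X_n| \to 0$, donde $\mu(As) = \mu(A)$; a mesma identidade para $s^{-1}$ segue substituindo $A$ por $As^{-1}$, e indução no comprimento de palavra fornece $\mu(Ag) = \mu(A)$ para todo $g \in G$.

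Para a recíproca, eu partiria de uma medida invariante $\mu$ em $G$, equivalentemente (via a integral $\int f\, d\mu$ construída na seção anterior) de uma média invariante $M$ em $\ell^\infty(G)$, com o objetivo de produzir conjuntos finitos quase invariantes. A ponte é a seguinte reformulação, que eu provaria como lema central: existe uma sequência $(f_k)$ de funções de probabilidade com suporte finito em $\ell^1(G)$ (ou seja, $f_k \geq 0$ e $\sum_x f_k(x) = 1$) tal que $\|s \cdot f_k - f_k\|_1 \to 0$ para cada gerador $s \in S$, onde $(s\cdot f)(x) = f(xs)$. Para obtê-la, consideraria o conjunto convexo $\mathcal{C} = \{(s\cdot f - f)_{s \in S} : f \in P\} \subset \bigoplus_{s \in S}\ell^1(G)$, onde $P$ denota o conjunto das funções de probabilidade. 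A média $M$ é um limite fraco-$*$ de elementos de $P$, e a sua invariância significa que $0$ pertence ao fecho fraco de $\mathcal{C}$ (usando que a topologia fraca-$*$ de $(\ell^\infty)^*$ restringe-se em $\ell^1$ à topologia $\sigma(\ell^1, \ell^\infty)$); sendo $\mathcal{C}$ convexo, o seu fecho fraco coincide com o seu fecho em norma (consequência do teorema de separação de Hahn--Banach, cf.\ Teorema~\ref{theorema:Hahn-Banach}), e como $G$ é enumerável, $\ell^1(G)$ é separável e extraímos uma sequência $(f_k)$ com $\sum_{s}\|s\cdot f_k - f_k\|_1 \to 0$.

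Por fim, converteria funções $\ell^1$ quase invariantes em conjuntos de Følner por uma decomposição em camadas (\emph{layer cake}). Escrevendo uma função não negativa $f \in \ell^1(G)$ como $f(x) = \int_0^\infty \mathbf{1}_{\{f>t\}}(x)\, dt$, obtém-se $\|f\|_1 = \int_0^\infty |\{f>t\}|\, dt$ e, para cada $s$, $\|s\cdot f - f\|_1 = \int_0^\infty |\{f>t\}s^{-1} \triangle \{f>t\}|\, dt$. Aplicando isso a $f = f_k$ e somando sobre $s \in S$, o fato de que $\sum_s \|s\cdot f_k - f_k\|_1 / \|f_k\|_1 \to 0$ força, por um argumento de média no parâmetro $t$, a existência de um nível $t_k$ para o qual o conjunto finito $X_k := \{f_k > t_k\}$ satisfaz $\sum_{s \in S}|X_k s^{-1} \triangle X_k| \leq \varepsilon_k |X_k|$ com $\varepsilon_k \to 0$. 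Como $\partial X_k \subset \bigcup_{s\in S}\big((X_k s \triangle X_k) \cup (X_k s^{-1}\triangle X_k)\big)$ e cada termo tem a mesma cardinalidade de $X_k s^{-1}\triangle X_k$ a menos de translação, isso limita $|\partial X_k|/|X_k|$ e fornece uma sequência de Følner. O maior obstáculo é o lema central do parágrafo anterior: passar da quase invariância fraca-$*$ da média para a genuína quase invariância em norma em $\ell^1$ é precisamente onde a convexidade e o Hahn--Banach são indispensáveis, e é preciso cuidado ao montar a dualidade para que a invariância de $M$ se traduza exatamente em $0 \in \overline{\mathcal{C}}^{\,w}$.
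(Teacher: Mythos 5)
Sua prova está correta. Na implicação ``Følner $\Rightarrow$ amenável'' ela coincide com a do livro: mesmo ultrafiltro não principal aplicado às medidas de contagem normalizadas $\mu_n$, com a invariância controlada pela estimativa $|X_n s^{-1} \triangle X_n| \leq 2|\partial X_n|$. Na recíproca, o esqueleto também é o mesmo --- produzir densidades de probabilidade quase invariantes em $\ell^1(G)$ e convertê-las em conjuntos de Følner por decomposição em camadas --- mas o núcleo funcional-analítico é organizado por uma rota genuinamente distinta. O livro segue Namioka: supõe, por contradição, que não existe $f$ quase invariante, separa de $0$ o conjunto convexo dos defeitos $f - f_a$ via Hahn--Banach, obtém $m \in \ell^\infty(G)$ com $m(y) - m_{a^{-1}}(y) \geq t > 0$ para todo $y$ (avaliando em funções delta) e aplica a média invariante $M$ a essa desigualdade para chegar a um absurdo. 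Você, em vez disso, segue a linha de Day: usa que as densidades finitamente suportadas são fraca-$*$ densas no conjunto das médias, deduz que $0$ está no fecho fraco do convexo $\mathcal{C}$ dos defeitos (observando corretamente que a topologia fraca-$*$ de $(\ell^\infty(G))^*$ restrita a $\ell^1(G)$ é a topologia fraca de $\ell^1$), e invoca o teorema de Mazur (fecho fraco $=$ fecho em norma para convexos) para obter quase invariância em norma. As duas rotas repousam sobre Hahn--Banach; a do livro é mais autocontida, com uma única separação usada diretamente contra a invariância de $M$, enquanto a sua é mais conceitual e corresponde ao argumento padrão ``média invariante $\Rightarrow$ vetores quase invariantes'', ao custo de dois fatos auxiliares que você enuncia mas não demonstra: a densidade fraca-$*$ das médias finitas (que exige seu próprio pequeno argumento de separação: se $M$ estivesse fora do fecho, existiria $m \in \ell^\infty(G)$ com $M(m) > \sup_x m(x)$, impossível para uma média) e o teorema de Mazur. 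O fecho das duas provas é essencialmente idêntico: o livro escreve $f = \sum_i c_i \chi_{F_i}$ com os $F_i$ encaixados, você usa a forma integral $\|f\|_1 = \int_0^\infty |\{f>t\}|\,dt$ junto com $\|s\cdot f - f\|_1 = \int_0^\infty |\{f>t\}s^{-1} \triangle \{f>t\}|\,dt$; o argumento de média no parâmetro $t$ e a comparação $|\partial X| \leq 2\sum_{s\in S} |Xs^{-1} \triangle X|$ estão corretos e fecham a demonstração.
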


\begin{proof}
Suponhamos inicialmente que $G$ satisfaz a condição de Følner. Seja $X_n$ uma sequência de Følner em $G$. Como no Exemplo \ref{ex:Zisamenable}, fixe um ultrafiltro não principal $\mathcal{F}$ e, para cada subconjunto $A$ de $G$, defina $A_n = A \cap X_n$,
$\mu_n(A) = \dfrac{|A_n|}{|X_n|}$ e $\mu(A) = \mathcal{F}\lim \mu_n(A)$. 

A aditividade de $\mu$ é clara. Verifiquemos que $\mu$ é invariante à direita. Seja $s \in S$ um dos geradores de $G$. Então $|As \cap X_n| =
|A \cap X_ns^{-1}|$ e $A \cap X_ns^{-1} \subset A_n \cup \partial X_n$. Logo, 
$$\dfrac{|As \cap X_n|}{|X_n|} \leq
\mu_n(A)+\dfrac{|\partial X_n|}{|X_n|}.$$ Após passagem ao limite, obtemos $\mu(As) \leq \mu(A)$.
Da mesma forma,  $\mu(A) = \mu(As\cdot s^{-1}) \leq \mu(As)$, e assim $\mu(As) = \mu(A)$. Portanto, o grupo $G$ é amenável.

Reciprocamente, suponhamos que $G$ é amenável. Seguiremos a prova de Namioka \cite{Nam64}. 
Seja $\Phi $ o conjunto
$$ \{ f \in \ell^1(G) \mid f \geq 0 \mbox{ tem suporte finito e }\lVert f\rVert_{\ell^1(G)} = \displaystyle\sum_{g \in G}|f(g)|=1 \}.$$
Claramente, $\Phi$ é um subconjunto convexo de $\ell^1(G)$.

Podemos mostrar que se $A\subset G$ for finito e $\varepsilon>0$, então existe $f \in \Phi$ tal que $\lVert f - f_a\rVert_{\ell^1(G)} \leq \varepsilon$, para todo $a\in A$ (onde $f_g(x) = g(xg^{-1})$). De fato, suponha que isto não acontece. Então existem  $A\subset G$ finito e $\varepsilon>0$ tais que, para todo $f \in \Phi$, há algum $a \in A$ com $\lVert f - f_a\rVert_{\ell^1(G)} > \varepsilon$. Assim, $\{ f - f_a \mid f \in \Phi \}$ é um subconjunto convexo de $\ell^1(G)$ limitado a partir de 0, e pelo Teorema da Separação de Hahn--Banach (veja \cite[pp.~5--7]{Zal02}, existem $\beta \in \ell^1(G)^*$ e $t\in \R$ tais que $\beta(f - f_a) \ge t > 0$ para todo $f \in \Phi$. Como $\ell(G)^* = \ell^\infty(G)$, o espaço de funções limitadas em $G$, existe $m\in \ell^\infty(G)$ tal que 
$$\langle f - f*\delta_a, m\rangle = \sum_{x\in G}(f - f*\delta_a)(x)m(x) \geq t,\ \forall f \in \Phi,$$
onde $\delta_a$  e $*$  denotam a função delta de Dirac e a convolução de duas funções, respectivamente.

Tomando $f = \delta_y$ para $y\in G$, temos 
\begin{eqnarray*}
    \langle \delta_y - \delta_y*\delta_a, m\rangle& =& \sum_{x\in G}\delta_y(x)m(x) - \delta_y(xa^{-1})m(x)\\ & =& m(y) - m_{a^{-1}}(y)\geq t.
\end{eqnarray*}

Então $m(y) - m_{a^{-1}}\geq t$ para todo $y\in G$. Como $G$ é amenável, existe $M: \ell^\infty(G) \to \R$ invariante à direita. Aplicando $M$ à desigualdade acima obtemos $M(m(y) - m_{a^{-1}}(y))\geq t >0$, contradizendo a invariância à direita de $M$.

Em particular, dados $A=S \subset G$ e $\varepsilon>0$, existe $f \in \Phi$ tal que $\lVert f - f_a \rVert_{\ell^1(G)} \leq \varepsilon/|A|$, para todo $a \in A$. A função $f$ tem suporte finito, então $f=\displaystyle\sum_{i=1}^{n}c_i \chi_{F_i}$, para $c_i>0$ e certos conjuntos finitos $F_1 \supset F_2 \supset \ldots \supset F_n$.

Além disso, $\displaystyle\sum_{i=1}^{n}c_i|F_i| = 1$, pois $f\in \Phi$. Agora, $|f(g) - f_a(g)| \geq c_i$, para todo $g \in \partial_aF_i$, onde 
$$ \partial_aF_i = \{x \notin F_i \mid \mbox{ existe } y \in F_i  \mbox{ tal que  }x=ya \mbox{ ou }x=ya^{-1}\}.$$ 
Então
$$\displaystyle\sum_{i=1}^{n}c_i|\partial_a F_i|\leq \lVert f-f_a\rVert_{\ell^1(G)}\leq \dfrac{\varepsilon}{|A|}\displaystyle\sum_{i=1}^{n}c_i|F_i|, \, \forall a \in A.$$
Assim, $$\displaystyle\sum_{i=1}^{n}\sum_{a\in A} c_i|\partial_a F_i|\leq \varepsilon \displaystyle\sum_{i=1}^{n}c_i| F_i|.$$
Concluímos que  existe $i$ tal que $\displaystyle\sum_{a\in A} c_i|\partial_a F_i|\leq \varepsilon |F_i|$, e portanto 
$$\dfrac{|\partial F_i|}{|F_i|}\leq \varepsilon,\ \forall a \in A.$$
Logo, podemos colocar $F_i$ em uma sequência de Følner.
\end{proof}

\begin{corollary}
    Um grupo de crescimento subexponencial é sempre amenável. Em particular, os grupos virtualmente abelianos ou virtualmente nilpotentes são  amenáveis.  
\end{corollary}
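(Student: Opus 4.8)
O plano é usar a caracterização de amenabilidade pela condição de Følner, recém-demonstrada: basta exibir, para um grupo $G$ de crescimento subexponencial com conjunto gerador finito $S$, subconjuntos finitos $X\subset G$ com a razão $|\partial X|/|X|$ arbitrariamente pequena. Os candidatos naturais a conjuntos de Følner são as bolas fechadas $\overline{B(1,n)}$ do grafo de Cayley $\cay(G,S)$, cujas cardinalidades são exatamente $\rho_S(n)$. A ideia é mostrar, por contraposição, que se tais bolas \emph{não} servirem como uma sequência de Følner, então o crescimento do grupo é forçosamente exponencial.

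Primeiro, eu estabeleceria a estimativa de fronteira $\partial\overline{B(1,n)} \subset \overline{B(1,n+1)}\setminus\overline{B(1,n)}$. De fato, se $x\in\partial\overline{B(1,n)}$, então por definição $x\notin\overline{B(1,n)}$, ou seja, $|x|_S>n$, e portanto $|x|_S\geq n+1$; por outro lado, existe $y\in\overline{B(1,n)}$ e $s\in S$ com $x=ys^{\pm1}$, donde $|x|_S\leq|y|_S+1\leq n+1$. Logo $|x|_S=n+1$. Como a fronteira é disjunta da própria bola e ambas estão contidas em $\overline{B(1,n+1)}$, obtemos a desigualdade-chave $\rho_S(n+1)\geq\rho_S(n)+|\partial\overline{B(1,n)}|$.

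A seguir vem o argumento central. Suponha que $G$ não satisfaz a condição de Følner, isto é, que existe $\varepsilon>0$ tal que $|\partial X|\geq\varepsilon|X|$ para todo subconjunto finito $X\subset G$. Aplicando isto às bolas e combinando com a desigualdade do parágrafo anterior, obtém-se $\rho_S(n+1)\geq(1+\varepsilon)\rho_S(n)$ para todo $n$. Como $\rho_S(0)=1$, segue por indução que $\rho_S(n)\geq(1+\varepsilon)^n$, e portanto $\gamma_S=\lim_{n\to\infty}\rho_S(n)^{1/n}\geq1+\varepsilon>1$, o que contradiz a hipótese de crescimento subexponencial ($\gamma_S=1$). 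Concluo assim que $G$ satisfaz a condição de Følner e, pelo teorema da seção, é amenável.

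Por fim, para a afirmação ``em particular'', eu observaria que grupos virtualmente abelianos e virtualmente nilpotentes têm crescimento polinomial — pelo teorema de Bass--Guivarc'h (Teorema~\ref{thm:BG}), válido já para o caso abeliano como classe de nilpotência $1$, combinado com a Proposição~\ref{prop:cresc}, que assegura que a classe de crescimento é invariante por passagem a subgrupos de índice finito — e que crescimento polinomial é subexponencial: se $\rho_S(n)\leq an^b$, então $\rho_S(n)^{1/n}\leq a^{1/n}n^{b/n}\to1$, donde $\gamma_S=1$. O passo que exige mais atenção é a estimativa de fronteira do segundo parágrafo, pois depende de conferir com precisão as definições de $\partial X$ e da métrica das palavras; o restante reduz-se a uma indução elementar e à aplicação direta dos resultados já disponíveis.
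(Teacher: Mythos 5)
Sua proposta está correta e segue essencialmente o mesmo caminho da prova do livro: ambas usam as bolas $\overline{B(1,n)}$ como candidatas a conjuntos de Følner, junto com a observação de que $\partial\overline{B(1,n)}$ consiste exatamente dos elementos de comprimento $n+1$, donde $\rho_S(n+1)\geq\rho_S(n)+|\partial\overline{B(1,n)}|$. A única diferença é de redação: o livro argumenta diretamente, via a desigualdade $\liminf s_{n+1}/s_n \leq \lim s_n^{1/n}=1$, para extrair das bolas uma subsequência de Følner, enquanto você argumenta por contraposição com uma indução elementar ($\rho_S(n)\geq(1+\varepsilon)^n$ implica $\gamma_S>1$); ambas as versões são válidas, e seu tratamento explícito da parte ``em particular'' (Bass--Guivarc'h mais a invariância do crescimento por subgrupos de índice finito) preenche corretamente um passo que o livro deixa implícito.
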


\begin{proof}
Seja $G$ um grupo de crescimento subexponencial, e considere $B_n = \{x \in  G \mid
\ell(x) \leq n\}$ e escreva $|B_n| = s_n$. 

Então  $\partial B_n$ é o conjunto de elementos de comprimento
$n + 1$, e $\liminf \dfrac{s_{n+1}}{s_n} \leq \lim (s_{n})^{1/n} = 1$. Mas $s_{n+1} \geq s_n$, e portanto $\liminf \dfrac{s_{n+1}}{s_n} = 1$. Como $s_{n+1} = |B_n| + |\partial B_n|$, temos
$\liminf \frac{|\partial B_n|}{|B_n|} = 0$,
e deve existir uma subsequência de ${B_n}$ a qual é uma sequência de Følner em $G$.
\end{proof}


\begin{proposition}
    Sejam $G$ e $G'$  grupos finitamente gerados quasi-isométricos. Então $G$ é amenável se, e somente se, $G'$ é amenável.
\end{proposition}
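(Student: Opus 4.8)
O plano é mostrar que a condição de Følner --- e portanto a amenabilidade, pelo teorema que acabamos de provar --- é um invariante por quasi-isometrias entre grupos finitamente gerados. Por simetria bastará provar uma das implicações: suponho $G$ amenável e construo uma sequência de Følner em $G'$ a partir de uma sequência de Følner $(F_n)$ em $G$, usando uma $(L,C)$-quasi-isometria $f: G \to G'$ com quasi-inversa $\bar f$, que também é um mergulho quasi-isométrico (podemos tomar $L,C$ comuns a ambos).

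Primeiro relembro dois fatos elementares sobre quasi-isometrias entre grafos de Cayley, ambos consequência de esses grafos serem próprios: pela invariância à esquerda da métrica das palavras, toda bola tem cardinalidade uniformemente limitada, digamos $|\overline{B(g,r)}| \le M_r$. \textbf{(a) Multiplicidade limitada:} se $f(x)=f(x')$ então $\tfrac{1}{L}d_G(x,x') - C \le 0$, logo $d_G(x,x')\le LC$ e cada fibra de $f$ tem no máximo $N:=M_{LC}$ elementos (este é exatamente o argumento usado na prova do Lema~\ref{lemQI}); o mesmo vale para $\bar f$, com alguma constante $N'$. \textbf{(b) Engrossamento limitado:} $|\mathcal{N}_r(A)|\le M_r|A|$ para todo $A$ finito, pois $\mathcal{N}_r(A)$ é união de $|A|$ bolas de raio $r$.

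Em seguida fixo $D\ge C$ tal que $f(G)$ seja $D$-denso em $G'$ e defino $E_n:=\mathcal{N}_D(f(F_n))$. De (a) e (b) obtenho $\tfrac{1}{N}|F_n|\le |f(F_n)|\le|E_n|\le M_D|F_n|$, de modo que $|E_n|$ e $|F_n|$ são comparáveis. O passo central --- e o que espero ser o principal obstáculo técnico --- é controlar $\partial E_n$ por $\partial F_n$, e não meramente por $F_n$. Dado $y\in\partial E_n$, tem-se $D<d(y,f(F_n))\le D+1$; tomando $x\in F_n$ com $d(y,f(x))\le D+1$, a propriedade de mergulho quasi-isométrico de $\bar f$ dá $d_G(\bar f(y),x)\le L(D+1)+2C=:R_0$, logo $\bar f(y)\in\mathcal{N}_{R_0}(F_n)$. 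Além disso, como $d_{G'}(f\bar f(y),y)\le C\le D$, se fosse $\bar f(y)\in F_n$ teríamos $d(y,f(F_n))\le C\le D$, contradizendo $d(y,f(F_n))>D$; portanto $\bar f(y)\notin F_n$. Um argumento de geodésica (tomando o último vértice fora de $F_n$ num caminho minimal de $\bar f(y)$ até $F_n$) mostra que $\mathcal{N}_{R_0}(F_n)\setminus F_n\subseteq\mathcal{N}_{R_0}(\partial F_n)$, donde $\bar f(\partial E_n)\subseteq\mathcal{N}_{R_0}(\partial F_n)$. É justamente aqui que a escolha $D\ge C$ e a propriedade de quasi-inversa entram de maneira essencial.

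Finalmente combino os três ingredientes. Usando a multiplicidade limitada de $\bar f$ e o engrossamento limitado,
\[
|\partial E_n|\le N'\,|\bar f(\partial E_n)|\le N'\,|\mathcal{N}_{R_0}(\partial F_n)|\le N'M_{R_0}|\partial F_n|,
\]
e como $|E_n|\ge\tfrac{1}{N}|F_n|$, concluo
\[
\frac{|\partial E_n|}{|E_n|}\le N N' M_{R_0}\,\frac{|\partial F_n|}{|F_n|}\xrightarrow[n\to\infty]{}0.
\]
Logo $(E_n)$ é uma sequência de Følner em $G'$ e $G'$ é amenável. Trocando os papéis de $G$ e $G'$ (usando $\bar f$ como quasi-isometria) obtenho a recíproca, encerrando a prova.
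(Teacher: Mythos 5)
Sua prova está correta e segue, no fundo, a mesma estratégia do livro: empurrar uma sequência de Følner de $G$ para $G'$ via a quasi-isometria e usar o critério de Følner. A diferença relevante é que o texto do livro afirma em uma única linha que a própria imagem $f(\Omega_n)$ já é uma sequência de Følner em $G'$, o que, tomado literalmente, é falso: para $f:\Z\to\Z$, $f(k)=2k$ (uma $(2,0)$-quasi-isometria) e $\Omega_n=\{-n,\dots,n\}$, a imagem $f(\Omega_n)$ consiste dos pares em $[-2n,2n]$ e sua fronteira (os ímpares adjacentes) tem cardinalidade comparável à do próprio conjunto, de modo que a razão não tende a zero. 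O engrossamento $E_n=\mathcal{N}_D(f(F_n))$ que você introduz, junto com o controle $\bar f(\partial E_n)\subseteq\mathcal{N}_{R_0}(\partial F_n)$ (via a quasi-inversa e o argumento do último vértice fora de $F_n$ num caminho minimal) e as estimativas de multiplicidade limitada e engrossamento limitado, é exatamente o que falta para tornar rigoroso o argumento esboçado no livro. Ou seja, seu texto não apenas reproduz a ideia da prova original, como preenche uma lacuna genuína dela; os detalhes que você antecipou como ``principal obstáculo técnico'' são de fato o ponto onde o esboço do livro falharia.
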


\begin{proof}
    Basta considerar uma sequência de Følner $\Omega_n \subset G$. Uma quasi-isometria $f:G \to G'$ fornece uma sequência de Følner $f(\Omega_n) \subset G'$.
\end{proof}

Em \cite{chou1980elementary}, Chou descreve propriedades importantes da classe de grupos amenáveis elementares. Ele também estendeu o trabalho de Milnor e Wolf sobre crescimento de grupos para a classe de grupos amenáveis elementares:

\begin{thm}[Chou, 1980]
    Um grupo finitamente gerado elementar amenável ou
tem crescimento exponencial ou é virtualmente nilpotente.
\end{thm}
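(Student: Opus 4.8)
The statement to prove (Chou's theorem) is that a finitely generated elementary amenable group either has exponential growth or is virtually nilpotent. This is a growth dichotomy of exactly the same shape as the Milnor--Wolf result (Corollary~\ref{cor:Milnor-Wolf}), but now applied to the larger class $\mathcal{E}$ of elementary amenable groups rather than to solvable groups. The natural approach is to exploit the inductive definition of $\mathcal{E}$ as the smallest class containing all finite and all abelian groups and closed under the four operations: taking subgroups, quotients, extensions, and ascending unions of finitely generated subgroups (local closure, item (iv) of Theorem~\ref{thm:propamenaveis}).

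The plan is to set up a transfinite induction that stratifies $\mathcal{E}$ into classes $\mathcal{E}_\alpha$ indexed by ordinals: $\mathcal{E}_0$ consists of finite and abelian groups, and $\mathcal{E}_{\alpha+1}$ is obtained from $\bigcup_{\beta\le\alpha}\mathcal{E}_\beta$ by one application of extension or local union, with unions taken at limit ordinals. Every group in $\mathcal{E}$ lands in some $\mathcal{E}_\alpha$, so one can induct on the minimal such $\alpha$, called the elementary amenable rank. The base case is immediate: finite groups are virtually nilpotent, and finitely generated abelian groups are nilpotent. First I would record the two growth-theoretic tools that drive the induction. The growth type is a quasi-isometry invariant (Lemma~\ref{lemQI}), and for a normal subgroup $N\triangleleft G$ one has $\rho_{G/N}\preceq\rho_G$ (Proposition~\ref{prop:cresc}(3)); moreover if $N$ and $G/N$ both have subexponential growth, then so does $G$, since balls in $G$ are covered by finitely many translates controlled by balls in $N$ and $G/N$. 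The crucial reduction is: a finitely generated group of subexponential growth has all its finitely generated subgroups and quotients of subexponential growth, and one wants to conclude virtual nilpotence.

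The heart of the argument is the extension step. Suppose $G$ is finitely generated, lies in $\mathcal{E}$, and does not have exponential growth; I must show $G$ is virtually nilpotent. Write $1\to N\to G\to Q\to 1$ realizing $G$ at its rank. The key subtlety, and the main obstacle, is that $N$ need not be finitely generated, so the inductive hypothesis does not apply to $N$ directly. This is exactly the difficulty Milnor faced, and the resolution should mirror Lemma~\ref{lem-mil1} and Corollary~\ref{cor:mil1}: subexponential growth forces the normal closure of finitely many elements under conjugation by a single element to be finitely generated, which lets one control the finitely generated pieces of $N$. I would argue that, after passing to a finite-index subgroup, one can assume $Q$ is infinite with an infinite cyclic quotient (as in Corollary~\ref{cor:Gr2}); then Corollary~\ref{cor:mil1} guarantees the relevant kernel is finitely generated, the inductive hypothesis gives it virtual nilpotence, and one reassembles $G$ as a virtually (nilpotent-by-cyclic) group, invoking Proposition~\ref{prop9-1} to upgrade this to virtual nilpotence precisely because the absence of eigenvalues of modulus $>1$ in the induced action on abelianized central factors is forced by subexponential growth (otherwise Lemma~\ref{lema9-4} produces exponentially many distinct elements).

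The hard part will be handling the local-union and high-rank extension cases uniformly, ensuring that the finite-generation hypothesis propagates correctly down the inductive tower despite $N$ being possibly infinitely generated. I expect to need the noetherian-type arguments of Propositions~\ref{prop:noet1} and \ref{prop:noet2} to show that a finitely generated subexponential-growth group in $\mathcal{E}$ is in fact (virtually) polycyclic at each stage, after which Wolf's theorem (Theorem~\ref{thm-Wolf}) delivers virtual nilpotence. Once virtual nilpotence is established in the non-exponential case, the dichotomy follows: virtually nilpotent groups have polynomial growth by Bass--Guivarc'h (Theorem~\ref{thm:BG}), so a group in $\mathcal{E}$ is either of exponential growth or of polynomial growth with the latter equivalent to virtual nilpotence, which is precisely the stated conclusion.
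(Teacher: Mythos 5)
First, a remark on the comparison itself: the paper does \emph{not} prove this theorem — it is stated as a citation of Chou's result \cite{chou1980elementary} — so your proposal can only be judged against the standard argument, whose architecture you have correctly identified: stratify $\mathcal{E}$ by ordinal rank, note that the local-union step is vacuous for a finitely generated group (such a group is one of its own finitely generated subgroups), run the Milnor machinery on the extension step, and finish via Proposição~\ref{prop9-1}, o Teorema~\ref{thm-Wolf} e Bass--Guivarc'h. However, one of the "tools" you record at the outset is false: it is \emph{not} true that an extension of two groups of subexponential growth has subexponential growth, and the paper itself contains the counterexample. In $G=\Z^2\rtimes_{A}\Z$ (Exemplo~\ref{exe:z2z}), both $N=\Z^2$ and $G/N\cong\Z$ have polynomial growth, yet $G$ has exponential growth; balls of $G$ are not covered by controlledly many translates of comparable balls of $N$, because $N$ may be exponentially distorted in $G$ — this failure is precisely why Milnor--Wolf is a theorem and not an observation. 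Fortunately your main induction only uses growth inheritance in the downward direction (from $G$ to its finitely generated subgroups and quotients, Proposição~\ref{prop:cresc}), so this error is excisable, but as written it is wrong.

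The more serious gap is that your inductive step does not close. After producing a surjection $G\to\Z$ and setting $K$ to be its kernel, Corolário~\ref{cor:mil1} does give that $K$ is finitely generated of subexponential growth; but your claim that "the inductive hypothesis gives it virtual nilpotence" is unjustified. The group $K$ is an extension of $N$ by a subgroup of $Q$, both of rank $\le\alpha$ (using Chou's closure of the strata under subgroups), so $K$ lies in $\mathcal{E}_{\alpha+1}$ — the \emph{same} stratum as $G$ — and the transfinite induction on rank says nothing about it. The standard repair is a secondary induction on the Hirsch length of the virtually nilpotent quotient $Q$: up to finite index the image of $K$ in $Q$ is the kernel of $Q\to\Z$, whose Hirsch length is one less, so iterating reduces to the case where the quotient is finite; at that point $N$ has finite index in a finitely generated group, hence is itself finitely generated of subexponential growth and of rank $\le\alpha$, and only then does the rank induction apply. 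Once this double induction is in place, your use of Proposição~\ref{prop9-1} (after replacing the virtually nilpotent kernel by its maximal nilpotent finite-index subgroup, exactly as in the paper's proof of Wolf's theorem) finishes the argument. A final minor point: you do not need Corolário~\ref{cor:Gr2} — which rests on the full Gromov machinery — to produce the map to $\Z$; an infinite finitely generated virtually nilpotent $Q$ has, after passing to a finite-index subgroup, infinite abelianization, which yields the surjection directly.
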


Além disso, ele provou que as operações de tomar subgrupos e quocientes são 
redundantes na definição de grupos amenáveis elementares. Isto permitiu  que fossem encontrados exemplos de grupos amenáveis que não são amenáveis elementares. O grupo de Grigorchuk foi o primeiro exemplo nesse sentido.

Os resultados anteriores implicam o seguinte:
\begin{corollary}
    Os grupos de crescimento intermediário são amenáveis, mas não amenáveis elementares.
\end{corollary}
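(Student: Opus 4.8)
O plano é deduzir o resultado diretamente dos teoremas já estabelecidos, de modo que a demonstração se reduza a uma curta combinação de dois ingredientes. Seja $G$ um grupo de crescimento intermediário, com conjunto gerador finito $S$. Por definição, isto significa que $\gamma_S = \lim_{n\to\infty}\rho_S(n)^{1/n} = 1$ (crescimento subexponencial), mas não existe $d$ com $\rho_S(n)\leq n^d$ (crescimento acima do polinomial). A amenabilidade é imediata: pelo corolário da seção anterior, todo grupo de crescimento subexponencial é amenável, e portanto $G$ o é. Resta verificar que $G$ não pode estar na classe $\mathcal{E}$ dos grupos amenáveis elementares.

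Para esta segunda parte, primeiro argumentaria por contradição usando o teorema de Chou. Suponha que $G$ seja amenável elementar. Como $G$ é finitamente gerado, o teorema de Chou garante que $G$ tem crescimento exponencial ou é virtualmente nilpotente. A primeira possibilidade é descartada de imediato, pois $\gamma_S = 1$. Na segunda possibilidade, o teorema de Bass--Guivarc'h (Teorema~\ref{thm:BG})---ou, de forma equivalente, o Corolário~\ref{cor:Milnor-Wolf}---implica que $G$ tem crescimento polinomial, isto é, $\rho_S(n)\preceq n^d$ para algum $d$, contradizendo a hipótese de que o crescimento de $G$ está acima do polinomial. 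Em ambos os casos obtemos um absurdo, de onde segue que $G\notin \mathcal{E}$.

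Por fim, observaria que o enunciado não é vazio: o Teorema~\ref{thm:intermediategrowth} mostra que o primeiro grupo de Grigorchuk $\Gamma$ tem crescimento intermediário, de modo que $\Gamma$ fornece um exemplo explícito de grupo amenável que não é amenável elementar---exatamente a situação que havia ficado em aberto por muito tempo. O passo que considero o verdadeiro núcleo de dificuldade não está nesta dedução final, que é essencialmente formal, mas sim nos resultados que ela invoca: a estimativa de crescimento subexponencial para $\Gamma$ (Teorema~\ref{thm:intermediategrowth}) e a dicotomia de crescimento de Chou, que estende os teoremas de Milnor e Wolf à classe $\mathcal{E}$. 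Uma vez disponíveis esses dois fatos, juntamente com a equivalência entre crescimento subexponencial e amenabilidade, o corolário se obtém sem cálculos adicionais.
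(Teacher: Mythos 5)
Sua demonstração está correta e segue essencialmente o mesmo caminho do texto: o livro apenas afirma que ``os resultados anteriores implicam'' o corolário, e esses resultados são exatamente os que você combina — o corolário de que crescimento subexponencial implica amenabilidade e o teorema de Chou, cuja dicotomia (crescimento exponencial ou virtualmente nilpotente, este último dando crescimento polinomial) é incompatível com crescimento intermediário. Você apenas explicita a dedução que o texto deixa implícita, o que é uma apresentação fiel e completa do argumento pretendido.
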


\medskip

Em \cite{JM13} e \cite{Nekr18}, os autores sugeriram novos métodos para construir grupos amenáveis finitamente gerados que não são elementares. Isso produziu os primeiros exemplos de \textit{grupos simples} amenáveis, mas não amenáveis elementares. É interessante notar que o método de Nekrashevych em \cite{Nekr18} é baseado na análise dos  grafos de Schreier associados e sua estrutura linear.

\chapter{Alguns problemas abertos}
\label{cap12}
Recolhemos aqui alguns problemas ainda abertos. A maioria deles foi mencionada em algum lugar neste texto. Em \cite[Seção 18]{Mann}, há também uma coleção de problemas em aberto. Uma lista interessante de exercícios avançados  pode ser encontrada em \cite[páginas 295-298]{de2000topics}.

\begin{probl}[Conjectura de Andrews--Curtis]\index{conjectura de Andrews--Curtis} Qualquer apresentação balanceada do grupo trivial pode ser reduzida a uma apresentação trivial por uma sequência de conjugações dos relatores ou transformações de Nielsen?
\end{probl}

\begin{probl}[Thurston]
    Mostrar que os volumes de 3-variedades hiperbólicas não são todos racionalmente relacionados.
\end{probl}

\begin{probl}
Existe um grupo Gromov-hiperbólico que não é $\CAT(-1)$?
\end{probl}

\begin{probl}
Suponha que $G$ é um grupo $\CAT(0)$. É verdade que $G$ contém um subgrupo isomorfo a $\Z^2$?    
\end{probl}

\begin{probl}
Determine se todos os grupos hiperbólicos são residualmente finitos.
\end{probl}

\begin{probl}
Existe um grupo finitamente apresentado de crescimento intermediário?
\end{probl}

\begin{probl}
Existe um grupo de crescimento intermediário com todos os elementos de ordens limitadas por uma mesma constante?
\end{probl}

\begin{probl}
Existe um grupo infinito, finitamente apresentado, de torção?
\end{probl}

\begin{probl}\index{grupo de Thompson}
O grupo de Thompson 
$$F = \langle A,B \mid\ [AB^{-1},A^{-1}BA],\ [AB^{-1},A^{-2}BA^{2}]\rangle$$
é amenável ou não?
\end{probl}

\printindex
\bibliographystyle{alpha}
\bibliography{refs}

\end{document}